\DeclareMathOperator{\GL}{GL}
\DeclareMathOperator{\SL}{SL}
\DeclareMathOperator{\SO}{SO}
\DeclareMathOperator{\upU}{U}
\newcommand{\0}{\textbf{0}}
\renewcommand{\1}{\textbf{1}}
\DeclareMathOperator{\tr}{Tr}
\DeclareMathOperator{\ad}{ad}
\DeclareMathOperator{\Ad}{Ad}
\DeclareMathOperator{\diag}{diag}
\renewcommand\Re{\operatorname{Re}}
\renewcommand{\min}{{\textup{min}}}
\DeclareMathOperator{\horn}{Horn}
\DeclareMathOperator{\LR}{LR}
\DeclareMathOperator{\sing}{Singulier}
\DeclareMathOperator{\vect}{Vect}
\DeclareMathOperator{\e}{e}
\DeclareMathOperator{\s}{s}
\DeclareMathOperator{\herm}{Herm}
\DeclareMathOperator{\cc}{c}
\DeclareMathOperator{\sym}{Sym}
\theoremstyle{plain}
\newtheorem{theorem}{Th\'eor\`eme}[section]
\newtheorem{prop}[theorem]{Proposition}
\newtheorem{lem}[theorem]{Lemme}
\newtheorem{coro}[theorem]{Corollaire}
\newtheorem{hyp}[theorem]{Hypoth\`ese}
\theoremstyle{definition}
\newtheorem{definition}[theorem]{D\'efinition}
\newtheorem{exemple}[theorem]{Exemple}
\newtheorem{rem}[theorem]{Remarque}
\numberwithin{equation}{section}
\newcommand{\agot}{\ensuremath{\mathfrak{a}}}
\newcommand{\bgot}{\ensuremath{\mathfrak{b}}}
\newcommand{\cgot}{\ensuremath{\mathfrak{c}}}
\newcommand{\ggot}{\ensuremath{\mathfrak{g}}}
\newcommand{\hgot}{\ensuremath{\mathfrak{h}}}
\newcommand{\kgot}{\ensuremath{\mathfrak{k}}}
\newcommand{\mgot}{\ensuremath{\mathfrak{m}}}
\newcommand{\pgot}{\ensuremath{\mathfrak{p}}}
\newcommand{\sogot}{\ensuremath{\mathfrak{so}}}
\newcommand{\qgot}{\ensuremath{\mathfrak{q}}}
\newcommand{\ngot}{\ensuremath{\mathfrak{n}}}
\newcommand{\sgot}{\ensuremath{\mathfrak{s}}}
\newcommand{\tgot}{\ensuremath{\mathfrak{t}}}
\newcommand{\ugot}{\ensuremath{\mathfrak{u}}}
\newcommand{\Ngot}{\ensuremath{\mathfrak{N}}}
\newcommand{\Rgot}{\ensuremath{\mathfrak{R}}}
\newcommand{\Sgot}{\ensuremath{\mathfrak{S}}}
\newcommand{\Xgot}{\ensuremath{\mathfrak{X}}}
\newcommand{\glgot}{\ensuremath{\mathfrak{gl}}}
\newcommand{\Acal}{\ensuremath{\mathcal{A}}}
\newcommand{\Bcal}{\ensuremath{\mathcal{B}}}
\newcommand{\Ccal}{\ensuremath{\mathcal{C}}}
\newcommand{\Ecal}{\ensuremath{\mathcal{E}}}
\newcommand{\Fcal}{\ensuremath{\mathcal{F}}}
\newcommand{\Lcal}{\ensuremath{\mathcal{L}}}
\newcommand{\Ocal}{\ensuremath{\mathcal{O}}}
\newcommand{\Pcal}{\ensuremath{\mathcal{P}}}
\newcommand{\Rcal}{\ensuremath{\mathcal{R}}}
\newcommand{\Scal}{\ensuremath{\mathcal{S}}}
\newcommand{\Tcal}{\ensuremath{\mathcal{T}}}
\newcommand{\Xcal}{\ensuremath{\mathcal{X}}}
\newcommand{\Ycal}{\ensuremath{\mathcal{Y}}}
\newcommand{\Zcal}{\ensuremath{\mathcal{Z}}}
\newcommand{\Ucal}{\ensuremath{\mathcal{U}}}
\newcommand{\Vcal}{\ensuremath{\mathcal{V}}}
\newcommand{\Wcal}{\ensuremath{\mathcal{W}}}
\newcommand{\J}{\ensuremath{\mathbb{J}}}
\newcommand{\Z}{\ensuremath{\mathbb{Z}}}
\newcommand{\C}{\ensuremath{\mathbb{C}}}
\newcommand{\B}{\ensuremath{\mathbb{B}}}
\newcommand{\F}{\ensuremath{\mathbb{F}}}
\newcommand{\G}{\ensuremath{\mathbb{G}}}
\newcommand{\Q}{\ensuremath{\mathbb{Q}}}
\newcommand{\R}{\ensuremath{\mathbb{R}}}
\newcommand{\N}{\ensuremath{\mathbb{N}}}
\newcommand{\Pbb}{\ensuremath{\mathbb{P}}}
\newcommand{\croc}{\ensuremath{\hookrightarrow}}
\newcommand{\T}{\ensuremath{\hbox{T}}}
\newcommand{\crit}{\ensuremath{\mathrm{Crit}}}
\newcommand{\mini}{\ensuremath{\mathrm{s}}}
\newcommand{\reg}{\operatorname{\mathrm{reg}}}
\newcommand{\leng}{\operatorname{\mathrm{long}}}
\newcommand{\adiag}{{\ensuremath{A}}}
\newcommand{\infrp}{\operatorname{\mathrm{inf-RP}}}
\newcommand{\rp}{\operatorname{\mathrm{RP}}}
\def \tG {\widetilde{G}}
\def \tK {\widetilde{K}}
\def \tU {\widetilde{U}}
\def \tB {\widetilde{B}}
\def \tP {\widetilde{P}}
\def \tPbb {\widetilde{\Pbb}}
\def \tW {\widetilde{W}}
\def \tlambda {\tilde{\lambda}}
\def \tggot {\tilde{\ggot}}
\def \tagot {\tilde{\agot}}
\def \tkgot {\tilde{\kgot}}
\def \tpgot {\tilde{\pgot}}
\def \tugot {\tilde{\ugot}}
\def \ttgot {\tilde{\tgot}}
\def \tngot {\tilde{\ngot}}
\def \tNgot {\tilde{\Ngot}}
\def \tsgot {\tilde{\sgot}}
\def \tXgot {\tilde{\Xgot}}
\def \tRgot {\widetilde{\Rgot}}
\def \ta {\tilde{a}}
\def \tk {\tilde{k}}
\def \tp {\tilde{p}}
\def \tu {\tilde{u}}
\def \tv {\tilde{v}}
\def \tg {\tilde{g}}
\def \tgamma {\tilde{\gamma}}
\def \hgamma {\widehat{\gamma}}
\def \hC {\widehat{C}}
\def \hx {\hat{x}}
\def \hpi {\hat{\pi}}
\def \deg {\mathrm{deg}}
\def \tFcal{\widetilde{\mathcal{F}}}
\def \tX {\widetilde{X}}
\def \tY {\widetilde{Y}}
\def \tT {\widetilde{T}}
\def \tP {\widetilde{P}}
\def \teta{\widetilde{\eta}}
\def \txi {\tilde{\xi}}
\def \tw {\tilde{w}}
\def \tzeta {\tilde{\zeta}}
\title{Polytopes de Kirwan dans le cadre r\'eel }
\author{Paul-Emile Paradan}
\date{Décembre 2025}
\begin{document}
	
	\maketitle
	
%\begin{abstract}
%\end{abstract}

\vspace*{\fill}

\small\textit{Address:} University of Montpellier, CNRS, IMAG

\small\textit{E-Mail:} \texttt{paul-emile.paradan@umontpellier.fr}

\small\textit{2020 Mathematics Subject Classification:} 

\small\textit{Keywords: involution, convexity, moment map, Schubert calculus, symmetric spaces} 

\cleardoublepage

\setcounter{tocdepth}{1}

\tableofcontents

%\cleardoublepage

\chapter*{Introduction}
%\addcontentsline{toc}{chapter}{Introduction}

Voici le prototype de r\'esultat de convexit\'e que nous discuterons dans cette monographie. \`A chaque matrice hermitienne $X$ de taille $n\times n$, 
on peut associer son spectre $\e(X)=(\e_1,\ldots, \e_n)$ qui est form\'e de ses valeurs propres rang\'ees en ordre d\'ecroissant. 
La partie r\'eelle de $X$, not\'ee $\Re(X)$, est une matrice r\'eelle sym\'etrique. Il est naturel de vouloir comparer les spectres respectifs des matrices 
$X$ et $\mathrm{Re}(X)$. Pour cela, on pose 
$$
\mathcal{E}_{\mathrm{I}}(n):=\Big\{(\e(X),\e(\Re(X))),\ X\  \mbox{matrice\ hermitienne } n\times n\Big\} .
$$
Nous montrerons que les $\mathcal{E}_{\mathrm{I}}(n)$ sont des cones convexes poly\'edraux qui admettent une description r\'ecursive. 
Par exemple, lorsque $n=4$, nous verrons que 
$(x,y)\in(\R^4)^2$ appartient \`a $\mathcal{E}_{\mathrm{I}}(4)$ si et seulement si les 17 in\'egalit\'es suivantes sont satisfaites: 
\begin{equation*}
\boxed{
\begin{array}{rcl}
x_1 \geq  x_1\geq  x_3\geq  x_4 & , &y_1 \geq  y_1\geq  y_3\geq  y_4\\
x_1+x_2+x_3 +x_4& = & y_1+y_2+y_3+y_4\\
x_1 &\geq & y_1 \\
x_2 &\geq & y_3 \\
x_1+x_2& \geq & y_1+y_2\\
x_1+x_3 & \geq & y_2+y_3\\
x_1+x_3 & \geq & y_1+y_4\\
x_1+x_4& \geq& y_3+y_4\\
x_2+x_3 & \geq & y_3+y_4\\
x_4 &\leq & y_4 \\
x_3 &\leq & y_2 .\\
\end{array}
}
\end{equation*}

\medskip

Le probl\`eme pr\'ec\'edent se trouve avoir de fortes connexions avec le c\^one de Horn: 
$$
\horn(n):=\Big\{(\e(X),\e(Y),\e(X+Y)),\ X,Y\  \mbox{matrices\ hermitiennes } n\times n\Big\} .
$$
Ce dernier a fait l'objet de nombreuses \'etudes qui remontent aux in\'egalit\'es de Weyl obtenues en 1912. En 1999, 
les travaux conjugu\'es de Knutson-Tao \cite{Knutson-Tao-99} et Klyacko \cite{Klyachko} ont montr\'e que les 
$\horn(n)$ sont des cones convexes poly\'edraux qui admettent 
une description r\'ecursive: ceci r\'epondait \`a une conjecture formul\'ee par Horn \cite{Horn} dans les ann\'ees 60.  Belkale \cite{Belkale01} 
et Knutson-Tao-Woodward \cite{KTW04} ont ensuite 
donn\'e le nombre minimal d'in\'egalit\'es d\'ecrivant $\horn(n)$. Par exemple, pour $\horn(4)$, 52 in\'egalit\'es sont n\'ecessaires.

On voit assez facilement que pour tout $(x,y)\in \mathcal{E}_{\mathrm{I}}(n)$, le triplet $(x,x,2y)$ appartient \`a $\horn(n)$. Le point cl\'e ici 
est que l'implication r\'eciproque soit vraie: ce r\'esultat est une cons\'equence du th\'eor\`eme de O'Shea-Sjamaar \cite{OSS} sur lequel nous reviendrons 
plus tard. Nous savons donc que $\mathcal{E}_{\mathrm{I}}(n)$ s'identifie avec l'intersection du cone $\horn(n)$ avec le sous-espace 
$V=\{(x,x,2y), x,y\in\R^n\}$. Les \'equations de $\horn(n)$, lorsqu'on les restreint \`a $V$, fournissent un syst\`eme redondant d'in\'egalit\'es pour d\'ecrire 
$\mathcal{E}_{\mathrm{I}}(n)$. Un des principaux objects de cette monographie est d'expliquer comment fournir un syst\`eme r\'eduit d'in\'egalit\'es pour 
$\mathcal{E}_{\mathrm{I}}(n)$ qui se r\'evelera \^etre lui m\^eme r\'ecursif. Par exemple, pour d\'ecrire $\horn(4)$, nous avons besoin de 52 in\'egalit\'es, 
tandis que $\mathcal{E}_{\mathrm{I}}(4)$ n\'ecessite seulement 17 in\'egalit\'es.

L'\'etude du cone $\horn(n)$ rel\`eve de la g\'eom\'etrie hamiltonienne, en particulier parce que l'application $(X,Y)\mapsto X+Y$ est le 
prototype d'application moment. A contrario, l'application $X\mapsto \Re(X)$ est une application moment que l'on a tordu au moyen 
de l'application de conjugaison $X\mapsto \overline{X}$: on appelera cela une application moment r\'eelle.

\medskip

Dans ce travail,  les questions de convexit\'e sont abord\'ees \`a travers l'\'etude des actions hamiltoniennes d'un groupe de Lie compact sur une 
vari\'et\'e symplectique en pr\'esence d'une involution sur le groupe et d'une involution antisymplectique sur la vari\'et\'e. L'ensemble des points 
fixes de l'involution sur la vari\'et\'e est un sous-vari\'et\'e Lagrangienne. Dans ce contexte, nous pouvons consid\'erer deux polytopes de Kirwan: 
celui de la vari\'et\'e initiale et celui associ\'e \`a la sous-vari\'et\'e Lagrangienne form\'ee des points fixes de l'involution. Ce dernier sera appel\'e 
le polytope de Kirwan r\'eel. Il y a 25 ans, O'Shea et Sjamaar obtenaient un r\'esultat remarquable que l'on peut r\'esumer ainsi: le polytope de Kirwan r\'eel correspond \`a l'intersection du polytope de Kirwan usuel avec le sous-espace form\'e des vecteurs anti-invariant pour l'involution \cite{OSS}.

Le premier objectif de ce m\'emoire de pr\'esenter une m\'ethode g\'eom\'etrique pour obtenir les in\'egalit\'es qui d\'ecrivent le polytope de Kirwan r\'eel. 
Cette méthode repose sur l'adaptation du concept de  \og paires bien couvrantes \fg\, au cadre avec involution : ce concept a été introduit par Nicolas Ressayre dans le cadre projectif \cite{Ressayre10}, puis adapté au cadre de Kähler \cite{pep-ressayre}. Ensuite, nous appliquerons ce résultat pour décrire les cônes convexes associés aux représentations isotropes des espaces symétriques.

\medskip

Nous concluons cette introduction en détaillant chaque chapitre qui compose cette monographie.

\medskip

Dans le chapitre \ref{chapter: convexity}, nous commençons par rappeler les résultats classiques de convexité en géométrie hamiltonienne. 
Le premier concerne le théorème de convexité de Kirwan \cite{Kir84b}, tandis que le second est le théorème de convexité obtenu dans le cadre 
avec involution par O'Shea et Sjamaar \cite{OSS} . Nous terminons ce chapitre en introduisant deux exemples remarquables de cones convexes 
polyédraux : $\LR(U,\tU)$ et $\horn_\pgot(K,\tK)$.

\medskip

Dans le second chapitre, nous nous plaçons dans le cadre des variétés de Kähler $U$-hamiltoniennes. Nous rappelons comment dans ce contexte les 
\emph{paires de Ressayre} permettent de décrire le polytope de Kirwan. Dans le cadre avec involution, nous  obtenons le premier résultat de cet article : 
une description du polytope moment réel au moyen de \emph{paires de Ressayre réelles} (voir le théorème \ref{th:real-ressayre-pairs}).

\medskip

Le chapitre 3 est dédié à la preuve du théorème \ref{th:real-ressayre-pairs}. Comme dans le cas classique, la stratification à la Kirwan-Ness 
des variétés de Kähler $U$-hamiltoniennes est l'outil principal pour obtenir les inégalités qui décrivent le polytope moment réel. Pour cela, nous analysons le comportement des strates par rapport aux involutions (sur le groupe et sur la variété).

\medskip

Depuis les travaux de Klyachko \cite{Klyachko}, Berenstein-Sjamaar \cite{Berenstein-Sjamaar} et Belkale \cite{Belkale01,Belkale06}, on sait que le calcul de Schubert est un outil remarquable pour paramétrer les inégalités des polytopes de Kirwan. Nous passons en revue cet outil dans la quatrième section, ainsi que l'amélioration proposée par Belkale-Kumar \cite{BK06}, connue sous le nom de Lévi-mobilité. Dans le cadre avec involution, nous décrivons les cellules de Bruhat qui admettent un point fixé par l'involution anti-holomorphe.

\medskip

Dans certains exemples que nous détaillerons, les inégalités des polytopes de Kirwan réels sont paramétrées au moyen du calcul de Schubert dans les variétés à $2$-drapeaux $\F(r,n-r;n)$. Grâce aux formules multiplicatives de Ressayre-Richmond, nous pourrons paramétrer ces inégalités au moyen de deux conditions cohomologiques sur des grassmaniennes. Le chapitre 5 est dédié à ces formules multiplicatives, et nous proposerons une version raffinée des résultats de 
Ressayre et Richmond \cite{Res11b,Ric12,RR11}.

\medskip

Dans le sixième chapitre, nous rappelons la description générale des cône classiques $\LR(U,\tU)$ obtenues dans \cite{Berenstein-Sjamaar,BK06,Ressayre10}. 
Nous détaillerons en particulier les deux cas particuliers $\horn(n)$ et $\LR(m,n)$.

\medskip

Au chapitre 7, nous présentons le deuxième résultat principal de cette monographie, qui consiste en la description des cônes $\horn_\pgot(K,\tK)$ associés aux représentations isotropes des espaces symétriques. Nous verrons la grande similitude avec la description des cônes classiques $\LR(U,\tU)$ : nous travaillons avec des groupes de Weyl restreints au lieu des groupes de Weyl standard, et nous considérons des équations cohomologiques portées par des variétés de Schubert qui sont globalement invariantes sous l'involution anti-holomorphe. Lorsque $\tK=K^s$, notre description affine celle obtenue précédemment par Kapovich-Leeb-Millson \cite{KLM-memoir-08}.

\medskip

Les quatre chapitres suivants sont consacrés à la description de certains cônes  $\horn_\pgot(K,\tK)$. 
Voici un bref aperçu des cas étudiés. 

\medskip

Dans le chapitre 8, nous obtenons:

\begin{itemize}
\item  une description récursive du cône $\mathcal{E}_{\mathrm{I}}(n)$ qui a été défini au début de cette introduction.
\item  une description récursive du cône $\mathcal{E}_{\mathrm{II}}(n)$: celui-ci compare le spectre d'une matrice symétrique $A\in\sym(2n)$ avec celui de 
la matrice hermitienne $\pi(A)\in\herm(n)$ associée. Ce dernier cône a été abordé dans des travaux récents de Chenciner, Chenciner-Pérez et Heckman-Zhao 
\cite{Chenciner, Chenciner-Perez,Heckman-Zhao}.
\end{itemize}

\medskip

Au chapitre 9, nous étudions le cône de Horn singulier, noté $\sing(p,q)$, qui est défini comme l'ensemble des triplets de spectres singuliers 
$(\s(A),\s(B),\s(A+B))$ où $A,B$ parcourent les matrices complexes de taille $p\times q$. Nous avons ici une première utilisation des formules 
multiplicatives de Ressayre-Richmond, qui nous permettrent de paramétrer les inégalités de $\sing(p,q)$ au moyen de deux conditions cohomologiques 
sur des grassmanniennes.

\medskip

Au chapitre 10, nous étudions deux types de cônes qui comparent le spectre singulier de matrices rectangulaires avec le 
spectre de matrices hermitiennes. Nous obtenons une description 

\begin{itemize}
\item  du cône $\Acal(p,q)$: celui-ci compare le spectre d'une matrice hermitienne $X$ avec le spectre singulier de son bloc hors diagonale $\pi_{p,q}(X)$.
\item  du cône $\Bcal(n)$: celui-ci compare le spectre singulier d'une matrice carrée complexe $X$ avec le spectre de la matrice hermitienne $\frac{1}{2}(X+X^*)$.
\end{itemize}

Fomin-Fulton-Li-Poon ont obtenu, à l'aide d'une méthode ad hoc, une description plus fine du cône $\Acal(p,q)$, avec un système d'inégalités qui est contenu 
dans celui que je décris \cite{FFLP}. Il est raisonnable de conjecturer que ces deux systèmes d'inégalités sont identiques et qu'ils fournissent une description minimale du cône $\Acal(p,q)$.

\medskip

Au chapitre 11, nous décrivons deux types de cônes qui comparent le spectre singulier d'une matrice carrée $X$ avec les spectres singuliers des matrices 
rectangulaires formées à partir de $X$.

%%%%%%%%%%%%%%%%%%%%%%%%%%%%%%%%%%%%%%%%%%%%
%%%%%%%%%%%%%%%%%%%%%%%%%%%%%%%%%%%%%%%%%%%%
%%%%%%%%%%%%%%%%%%%%%%%%%%%%%%%%%%%%%%%%%%%%
\chapter{Th\'eor\`emes de convexit\'e pour les vari\'et\'es hamiltoniennes}\label{chapter: convexity}
%%%%%%%%%%%%%%%%%%%%%%%%%%%%%%%%%%%%%%%%%%%%
%%%%%%%%%%%%%%%%%%%%%%%%%%%%%%%%%%%%%%%%%%%%
%%%%%%%%%%%%%%%%%%%%%%%%%%%%%%%%%%%%%%%%%%%%

Dans cette section, nous rappelons les th\'eor\`emes de convexit\'e associ\'es aux actions hamiltoniennes de groupes de Lie compacts.

Tout au long de ce chapitre, $U\subset \tU$ désigne des groupes de Lie compacts et connexes, et $\ugot\subset \tugot$ désigne leurs algèbres de Lie. 
Nous notons $\pi_{\tugot,\ugot}:\tugot^*\to \ugot^*$ la projection associ\'ee.

Pour une introduction à la notion d'actions hamiltoniennes, le lecteur peut consulter les références suivantes \cite{Benoist02,GSj05,CdS08,Berline-Vergne11,Woodward11,Audin12}.

%%%%%%%%%%%%%%%%%%%%%%%%%%%%%%%%%%%%%%%%%%%%%%%%%%%%%%
%%%%%%%%%%%%%%%%%%%%%%%%%%%%%%%%%%%%%%%%%%%%%%%%%%%%%%
\section{Actions hamiltoniennes}
%%%%%%%%%%%%%%%%%%%%%%%%%%%%%%%%%%%%%%%%%%%%%%%%%%%%%%
%%%%%%%%%%%%%%%%%%%%%%%%%%%%%%%%%%%%%%%%%%%%%%%%%%%%%%

Consid\'erons une action du groupe $U$ sur une vari\'et\'e symplectique $(M,\Omega)$. On note $X\in\ugot\mapsto X_M \in \mbox{Vect}(M)$ l'action infinit\'esimale de l'alg\`ebre de Lie: ici $X_M(m)=-X\cdot m =\frac{d}{dt}\vert_{t=0}e^{-tX}m$ est le champ de vecteurs engendr\'e par $X\in \ugot$. On suppose que l'action du groupe $U$ laisse la $2$-forme symplectique invariante, ou, de manière équivalente, nous exigeons que $\forall X\in \ugot$, la $1$-forme $\iota(X_M)\Omega$ est ferm\'ee.

L'action de $U$ sur $(M,\Omega)$ est dite hamiltonienne s'il existe une application \'equivariante $\Phi_\ugot : M\to \ugot^*$ satisfaisant la relation 
\begin{equation}\label{eq:hamiltonien}
d\langle\Phi_\ugot, X\rangle=-\iota(X_M)\Omega, \quad\mbox{pour tout}\  X\in \ugot. 
\end{equation}
L'application $\Phi_\ugot$ est appel\'ee une application moment.

\begin{exemple}
Si $U$ agit de fa\c{c}on unitaire sur un espace vectoriel hermitien $(V,\langle\cdot,\cdot\rangle_V)$, l'application moment relative \`a 
la structure symplectique $\Omega=-\mbox{Im}(\langle\cdot,\cdot\rangle_V)$ est d\'efinie par la relation suivante: 
$\langle \Phi_\ugot(v),X\rangle=\frac{i}{2} \langle X v,v\rangle_V$, $\forall v\in V,\forall X\in \ugot$.
\end{exemple}

\begin{exemple}
Reprenons le cas d'une action unitaire de $U$ sur un espace vectoriel hermitien $(V,\langle\cdot,\cdot\rangle_V)$. La vari\'et\'e projective 
$\Pbb V$ est munie de la $2$-forme symplectique de Fubini-Study $\Omega_{\Pbb V}$, qui se d\'efinit explicitement comme suit. 
Soient $[v]\in \Pbb V$ et l'ouvert $\Ucal_{[v]}:=\{ [y]\in \Pbb V, \langle v,y\rangle_V\neq 0\}$. Soit $(e_1,\ldots, e_{\ell -1})$ 
une base orthonorm\'ee de l'espace vectoriel orthogonal à $[v]$. Sur l'ouvert $\Ucal_{[v]}$,  la $2$-forme $\Omega_{\Pbb V}$ est d\'efinie par la relation
$$
\Omega_{\Pbb V}= i\sum_{k=1}^{\ell-1} dz_k\wedge d\overline{z}_k, 
$$
par rapport aux coordonnées locales $(z_1,\ldots, z_{\ell -1})\mapsto \left[\frac{v}{\|v\|}+\sum_{k=1}^{\ell-1}z_k e_k\right]\in \Ucal_{[v]}$.

L'application moment $\Phi_{\Pbb V} : \Pbb V\to \ugot^*$ associ\'ee \`a l'action de $U$ sur $\Pbb V$, est alors d\'efinie par la relation 
$$
\langle\Phi_{\Pbb V}([y]),\lambda \rangle= i\,\frac{\langle \lambda y,y\rangle_V}{\langle y,y\rangle_V}.
$$
\end{exemple}

\begin{exemple}
Soit $\Ocal\subset \tugot^*$ une orbite coadjointe de $\tU$. La structure symplectique de Kirillov-Kostant-Souriau sur $\Ocal$ est d\'efinie par la relation 
\begin{equation}
\Omega_\Ocal(X_\Ocal,Y_\Ocal)\vert_\xi=\langle\xi,[X,Y]\rangle,\quad \forall \xi\in\Ocal,\quad \forall X,Y\in\tugot.
\end{equation}
Dans ce cas, l'application moment $\Ocal \croc \tugot^*$ est l'inclusion. L'application moment $\Phi_{\ugot}:\Ocal\to \ugot^*$ associ\'ee \`a l'action du sous-groupe 
$U$ sur $\Ocal$ est \'egal \`a la compos\'ee de l'inclusion $\Ocal \croc \tugot^*$ avec la projection $\pi_{\tugot,\ugot}:\tugot^*\to \ugot^*$.
\end{exemple}

\begin{exemple} \label{exemple-O-lambda}
Consid\'erons le groupe unitaire $\upU_n$ et son alg\`ebre de Lie $\ugot(n)$ form\'ee des matrices $n\times n$ anti-hermitiennes. 
Notons $\herm(n)$ l'espace vectoriel des matrices hermitiennes. Utilisons l'isomorphisme $\upU_n$-équivariant 
\begin{equation}\label{eq:t}
\mathrm{t}: \herm(n)\to \ugot(n)^*,
\end{equation} 
d\'efini par la relation $\langle \mathrm{t}(X),Y\rangle= \mathrm{Im}(\mathrm{Tr}(XY))$, $\forall X\in \herm(n)$, $\forall Y\in \ugot(n)$. 
Fixons un $n$-uplet de r\'eels $\lambda=(\lambda_1\geq\ldots\geq \lambda_n)$, et notons $\Ocal_\lambda$ l'ensemble des matrices 
$n\times n$ hermitiennes de spectre \'egal \`a $\lambda$. \`A travers l'isomorphisme (\ref{eq:t}), $\Ocal_\lambda$ s'identifie avec une orbite coadjointe de 
$\upU_n$. La forme symplectique de Kirillov-Kostant-Souriau sur $\Ocal_\lambda$ est d\'efinie par la relation 
\begin{equation}\label{eq:2-forme-O-lambda}
\Omega_\lambda\vert_A([Y_1,A],[Y_2,A])= \mathrm{Im}\Big(\mathrm{Tr}(A[Y_1,Y_2])\Big),\quad \forall A\in \Ocal_\lambda,\quad \forall Y_1,Y_2\in\ugot(n).
\end{equation}
et l'application moment $\Phi:\Ocal_\lambda\to\ugot(n)^*$ est la compos\'ee de l'inclusion $\Ocal_\lambda\croc \herm(n)$ avec (\ref{eq:t}).
\end{exemple}

\begin{exemple}
Consid\'erons l'action de $U$ sur une vari\'et\'e $M$.  La projection naturelle $\mbox{p} : T^*M\to M$ 
permet de d\'efinir la $1$-forme de Liouville $\lambda$ par la relation 
$$
\lambda(x)= x\circ T \mbox{p}(x),\quad \forall x\in T^*M.
$$ 
La vari\'et\'e cotangente $T^*M$ admet une $2$-forme symplectique canonique, $-d\lambda$, et l'action de $U$ sur $T^*M$ est hamiltonienne. L'application moment associ\'ee 
$\Phi_\ugot :T^*M\to \ugot^*$ est d\'efinie par la relation suivante:
$$
\langle \Phi_\ugot(x),X\rangle = \langle x,X\cdot \mbox{p}(x)\rangle,\quad \forall x\in T^*M.
$$
\end{exemple}

\begin{exemple}\label{ex:U-tilde-U}
Consid\'erons l'action de $\tU\times U$ sur $\tU$ d\'efinie par $(\tg,g)\cdot a=\tg a g^{-1}$. L'action de $\tU\times U$ sur $T^*\tU$ est hamiltonienne. 
Si on identifie $T^* \tU$ \`a $\tU\times \tugot^*$ au moyen des translations \`a gauche, l'application moment 
$\Phi_{\tugot,\ugot}: \tU\times \tugot^*\to\tugot^*\times\ugot^*$ est d\'efinie par les relations
\begin{equation}\label{eq:Phi-u-u-prime}
\Phi_{\tugot,\ugot}(\tk,\txi)=\Big(\tk\txi,-\pi_{\tugot,\ugot}(\txi)\Big),\quad \forall \tk\in \tU,\forall \txi\in\tugot^*.
\end{equation}
\end{exemple}

\medskip

Reprenons le contexte d'une application moment $\Phi_\ugot$ associ\'ee \`a l'action hamiltonienne de $U$ sur $(M,\Omega)$. 
Soit $T$ un tore maximal du groupe $U$, d'alg\`ebre de Lie $\tgot$. Le dual $\tgot^*$ s'identifie avec les vecteurs $T$-invariants de $\ugot^*$. 
Fixons une chambre de Weyl $\tgot^*_+\subset \tgot^*$. Les orbites coadjointes de $U$ sont param\'etr\'ees par $\tgot^*_+$: 
l'application $\xi\in \tgot^*_+\longmapsto U\xi\in \ugot^*/U$ est une bijection. Ainsi les orbites de $U$ dans $\Phi_\ugot(M)$ 
sont param\'etr\'ees par l'ensemble 
\begin{equation}\label{eq:polytope-kirwan}
\Delta_\ugot(M):= \Phi_\ugot(M)\cap \tgot^*_+.
\end{equation}

L'un des question centrale de ce m\'emoire est la description de ces ensembles $\Delta_\ugot(M)$. On commence avec un r\'esultat remarquable de convexit\'e.

\begin{theorem}\label{theo:convexe}
\begin{itemize}
\item Supposons $M$ compacte. Alors 
\begin{enumerate}
\item Si $U$ est ab\'elien, $\Delta_\ugot(M)$ est \'egal \`a l'enveloppe convexe de l'ensemble fini $\Phi_\ugot(M^U)$.
\item Si $U$ est non-abélien, $\Delta_\ugot(M)$ est un polytope convexe.
\end{enumerate}
\item Supposons $M$ non-compacte. Si l'application moment $\Phi_\ugot: M\to\ugot^*$ est propre, alors $\Delta_\ugot(M)$ 
est un sous-ensemble fermé, convexe, et localement polyédral.
\end{itemize}
\end{theorem}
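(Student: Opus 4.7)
Mon plan consiste à établir d'abord le cas abélien par des méthodes morsiennes, puis à le propager au cas non-abélien via le théorème de la section symplectique, et enfin à étendre le résultat au cas non-compact grâce à la propreté.

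Dans le cas compact abélien, je considère pour chaque $X\in\tgot$ la composante $\phi^X:=\langle \Phi_\ugot,X\rangle:M\to\R$ de l'application moment. Comme $X$ engendre un tore dans $U$ agissant par symplectomorphismes, l'ensemble critique de $\phi^X$ est une réunion finie de sous-variétés symplectiques; en choisissant une structure presque-complexe $U$-invariante compatible avec $\Omega$, on vérifie que $\phi^X$ est de Morse-Bott à indices et co-indices pairs. Un argument morsien classique fournit alors la connexité des sous-niveaux $\{\phi^X\leq c\}$, et par intersection celle des fibres $\Phi_\ugot^{-1}(\xi)$. Combinée à la forme normale locale équivariante de Marle-Guillemin-Sternberg, qui garantit la convexité locale de $\Phi_\ugot(M)$, cette connexité permet, par récurrence sur $\dim U$, de conclure que $\Delta_\ugot(M)=\Phi_\ugot(M)$ est convexe. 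Puisque chaque $\phi^X$ est constante sur chaque composante de $M^U$ (qui sont en nombre fini) et que les sommets de $\Phi_\ugot(M)$ dans la direction $X$ sont atteints sur $M^U$, on obtient que $\Delta_\ugot(M)$ est l'enveloppe convexe du fini $\Phi_\ugot(M^U)$.

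Pour le cas non-abélien, je fais intervenir le théorème de la section symplectique. Pour chaque face relativement ouverte $\sigma\subset \tgot^*_+$ rencontrée par $\Phi_\ugot(M)$, la préimage $\Phi_\ugot^{-1}(\sigma)$ est une sous-variété symplectique $T$-stable, stable sous le centralisateur $U_\sigma$, sur laquelle la restriction de $\Phi_\ugot$ coïncide avec une application moment pour l'action du tore $T$. L'identité $\Delta_\ugot(M)\cap \overline{\sigma}=\overline{\Phi_\ugot(\Phi_\ugot^{-1}(\sigma))\cap \overline{\sigma}}$, combinée à l'application du cas abélien à cette tranche, fournit la polyédralité convexe de $\Delta_\ugot(M)$ au voisinage de chaque face; comme $\tgot^*_+$ n'a qu'un nombre fini de faces, le recollement donne un polytope convexe global.

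Pour le cas non-compact à moment propre, la propreté assure que les mêmes tranches symplectiques fournissent des modèles locaux à moment propre, donc la polyédralité locale de $\Delta_\ugot(M)$ se déduit comme ci-dessus. L'obstacle principal est la convexité globale, qui se traite par l'argument de Lerman-Meinrenken-Tolman-Woodward: pour $\xi_0,\xi_1\in\Delta_\ugot(M)$, on montre que l'ensemble des $t\in [0,1]$ tels que $(1-t)\xi_0+t\xi_1\in\Delta_\ugot(M)$ est à la fois ouvert dans $[0,1]$ (par convexité locale) et fermé (en utilisant la propreté et la connexité des fibres réduites, ce qui nécessite une récurrence soigneuse sur la dimension des strates orbitales). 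La fermeture de $\Delta_\ugot(M)$ résulte directement de la propreté de $\Phi_\ugot$ combinée à la fermeture de $\tgot^*_+$ dans $\ugot^*$.
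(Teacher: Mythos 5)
Le mémoire ne démontre pas ce théorème : il y est énoncé comme un résultat classique, avec renvoi à Kostant, Heckman, Atiyah, Guillemin--Sternberg, Kirwan et, pour le cas non compact à application moment propre, à \cite{CDM88,HNP94,Sjamaar98,LMTW}. Votre schéma suit précisément cette littérature (cas abélien à la Atiyah--Guillemin--Sternberg par théorie de Morse--Bott, cas non abélien et cas propre par sections symplectiques et principe local-global à la Lerman--Meinrenken--Tolman--Woodward et Sjamaar) ; il n'y a donc aucune divergence avec le texte, qui ne propose pas de démonstration alternative.

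Deux points de votre rédaction restent cependant au niveau de l'esquisse et sont justement les nœuds techniques de la preuve. D'abord, la connexité des fibres de $\Phi_\ugot$ ne s'obtient pas \og par intersection \fg{} des niveaux connexes des composantes $\phi^X$ : une intersection de connexes n'est pas connexe en général, et l'argument d'Atiyah est une récurrence sur le nombre de composantes, où l'hypothèse de récurrence porte sur la connexité des fibres des applications à $n-1$ composantes et se combine avec la propriété de Morse--Bott à indices pairs de la dernière composante. Ensuite, la restriction de $\Phi_\ugot$ à la section symplectique $Y_\sigma$ est une application moment pour le centralisateur $U_\sigma$, qui n'est pas abélien en général, et non pour le tore $T$ comme vous l'affirmez ; la réduction au cas abélien au voisinage d'une face demande donc soit le théorème de convexité locale de Sjamaar (description du cône moment local par les poids de la tranche symplectique), soit une récurrence sur le rang semi-simple de $U_\sigma$, que votre argument n'installe pas. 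De même, dans le cas propre, l'étape de \og fermeture \fg{} de l'argument ouvert-fermé (connexité des fibres réduites sous hypothèse de propreté) est invoquée sans justification, alors que c'est précisément le point délicat traité dans \cite{LMTW}. Ces lacunes se comblent avec les références citées, mais en l'état votre texte est une feuille de route fidèle aux preuves publiées plutôt qu'une démonstration autonome.
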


Ce th\'eor\`eme de convexit\'e a fait l'objet de nombreuses contributions. Pour l'action d'un tore maximal sur une orbite coadjointe, le 
th\'eor\`eme \ref{theo:convexe} a \'et\'e prouv\'e par Kostant \cite{Kos74} \'etendant des r\'esultats pr\'ec\'edents de Schur et Horn. 
Il a \'et\'e g\'en\'eralis\'e aux actions de sous-groupes par Heckman \cite{Heckman82}. Pour les actions de tores sur les vari\'et\'es hamiltoniennes, 
le th\'eor\`eme a \'et\'e prouv\'e ind\'ependamment par Atiyah \cite{Ati82} et par Guillemin et Sternberg \cite{GS82}. Dans le cas non ab\'elien, 
le th\'eor\`eme a \'et\'e démontré par Mumford \cite{Mumford84} et Guillemin et Sternberg \cite{GS84} pour les vari\'et\'es projectives. 
La premi\`ere preuve compl\`ete pour les actions hamiltoniennes de groupes non-ab\'eliens sur des vari\'et\'es symplectiques  
a \'et\'e obtenue par Kirwan \cite{Kir84b}. Plusieurs preuves alternatives du r\'esultat de Kirwan ont \'et\'e publi\'ees, 
voir \cite{CDM88,HNP94,Sjamaar98,LMTW}, qui traitent le cas o\`u l'application moment est propre.

Dans cette monographie, nous nous concentrons principalement sur les versions constructives du théorème de Kirwan, car l'un des inconvénients de ce résultat de convexité est qu'il ne fournit pas de description explicite de $\Delta_\ugot(M)$ lorsque le groupe $U$ est non abélien.

%%%%%%%%%%%%%%%%%%%%%%%%%%%%%%%%%%%%%%%%%%%%%%%%%%%%%%
%%%%%%%%%%%%%%%%%%%%%%%%%%%%%%%%%%%%%%%%%%%%%%%%%%%%%%
\section{Le cadre avec involution}
%%%%%%%%%%%%%%%%%%%%%%%%%%%%%%%%%%%%%%%%%%%%%%%%%%%%%%
%%%%%%%%%%%%%%%%%%%%%%%%%%%%%%%%%%%%%%%%%%%%%%%%%%%%%%

Soit $U$ un groupe de Lie compact connexe muni d'une involution $\sigma$, c'est-\`a- dire un automorphisme qui satisfait 
$\sigma^2=Id_G$. La d\'eriv\'ee de $\sigma$ induit des involutions sur l'alg\`ebre de Lie $\ugot$ et sur son dual $\ugot^*$, 
que nous d\'esignons tous deux par $\sigma$ par simplicit\'e. On note $K$ la composante connexe du sous-groupe $U^\sigma:=\{u\in U, \sigma(u)=u\}$.

Nous reprenons la d\'efinition donn\'ee par \cite{OSS}.

\begin{definition}Une vari\'et\'e $(U,\sigma)$-hamiltonienne est un quadruplet $(M,\Omega,\tau,\Phi_\ugot)$ constitu\'e d'une vari\'et\'e symplectique $(M,\Omega)$ \'equip\'ee 
d'une action symplectique du groupe $U$, d'une involution antisymplectique $\tau$ sur $M$ (c'est-\`a-dire un diff\'eomorphisme $\tau$ 
satisfaisant $\tau^2 = id_M$ et $\tau^*\Omega=-\Omega$), et d'une application moment $\Phi_\ugot$ pour l'action de $U$ sur $(M,\Omega)$. 
De plus, nous imposons les deux conditions suivantes :
\begin{align}
\Phi_\ugot(\tau(m))&=-\sigma\left(\Phi_\ugot(m)\right)\label{eq:condition-phi-tau}\\
\tau(um)&= \sigma(u)\tau(m),\label{eq:condition-sigma-tau}
\end{align}
pour tout $m\in M$ et $u\in U$. 

\end{definition}

\begin{exemple} \label{ex:O-lambda-involution}
Reprenons l'exemple \ref{exemple-O-lambda}: notons $\Ocal_\lambda$ l'ensemble des matrices 
$n\times n$ hermitiennes de spectre \'egal \`a $\lambda$, et $\Omega_\lambda$ la $2$-forme de Kirillov-Kostant-Souriau. 
On consid\`ere l'involution $\sigma(u)=\overline{u}$ sur $\upU_n$, et l'involution
$\tau(X)=\overline{X}$ sur $\herm(n)$. On constate que $\Ocal_\lambda$ est stable sous l'involution $\tau$, et on v\'erifie \`a l'aide de 
la formule (\ref{eq:2-forme-O-lambda}) que $\tau^*\Omega_\lambda=-\Omega_\lambda$. L'application moment 
$\Phi_\lambda:\Ocal_\lambda\to\ugot(n)^*$ v\'erifie la condition (\ref{eq:condition-phi-tau}) car $\mathrm{t}(\tau(X))=-\sigma(\mathrm{t}(X))$, 
$\forall X\in \herm(n)$. On voit ainsi que $(\Ocal_\lambda,\Omega_\lambda,\tau,\Phi_\lambda)$ est une vari\'et\'e
$(\upU_n,\sigma)$-Hamiltonienne.
\end{exemple}

Consid\'erons l'ensemble $M^\tau=\{m\in M, \tau(m)=m\}$ : si $M^\tau$ est {\em non vide}, la condition $\tau^*\Omega=-\Omega$ impose \`a $M^\tau$ 
d'\^etre un sous-vari\'et\'e Lagrangienne de $(M,\Omega)$, non n\'ecessairement connexe. La relation (\ref{eq:condition-sigma-tau}) montre 
que $M^\tau$ est stable sous l'action du sous-groupe $K\subset U$, et la condition (\ref{eq:condition-phi-tau}) donne 
$$
\Phi_\ugot(M^\tau)\subset (\ugot^*)^{-\sigma}.
$$

\begin{exemple} \label{ex:O-K-involution}
Consid\'erons une orbite coadjointe $\Ocal$ de $U$ telle que $\Ocal\cap (\ugot^*)^{-\sigma}\neq \emptyset$. Ainsi $\Ocal$ est laiss\'e stable par l'involution 
$\tau(\xi)=-\sigma(\xi),\forall \xi\in\ugot^*$. On v\'erifie alors que $(\Ocal,\Omega_\Ocal,\tau)$ est une vari\'et\'e $(U,\sigma)$-hamiltonienne. 
Soit $\Phi_{\kgot}:\Ocal\to (\ugot^*)^{\sigma}$ l'application moment relative à l'action de $K$ sur $\Ocal$. Alors 
$$
\Ocal^\tau=\Ocal\cap(\ugot^*)^{-\sigma}=\Phi_{\kgot}^{-1}(0)
$$
est connexe, \'egal \`a une $K$-orbite. En particulier, on a $K\xi=U^{-\sigma}\xi$ pour tout $\xi\in \Ocal\cap(\ugot^*)^{-\sigma}$.
\end{exemple}

Nous avons le r\'esultat important suivant \cite{OSS}.

\begin{theorem}[O'Shea-Sjamaar]\label{theo:OSS}
$$
\Phi_\ugot(M)\bigcap (\ugot^*)^{-\sigma}=\Phi_\ugot(M^\tau).
$$
\end{theorem}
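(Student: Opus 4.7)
L'inclusion directe $\Phi_\ugot(M^\tau)\subset \Phi_\ugot(M)\cap (\ugot^*)^{-\sigma}$ est immédiate: si $\tau(m)=m$, la condition (\ref{eq:condition-phi-tau}) entraîne $\sigma(\Phi_\ugot(m))=-\Phi_\ugot(m)$. L'essentiel du travail concerne l'inclusion réciproque: étant donné $\xi\in \Phi_\ugot(M)\cap (\ugot^*)^{-\sigma}$, il s'agit de produire un point $m^*\in M^\tau$ avec $\Phi_\ugot(m^*)=\xi$.

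La stratégie naturelle consiste à appliquer le \emph{shifting trick}. On forme la variété produit $\widetilde M := M\times\Ocal^-$ avec $\Ocal=U\xi$. L'exemple \ref{ex:O-K-involution} dote $\Ocal$ d'une structure $(U,\sigma)$-hamiltonienne via $\tau_\Ocal(\eta)=-\sigma(\eta)$, et montre que $\Ocal\cap(\ugot^*)^{-\sigma}$ est une unique $K$-orbite. Le triplet $(\widetilde M,\tilde\tau,\Psi)$, avec $\tilde\tau=\tau\times\tau_\Ocal$ et $\Psi(m,\eta)=\Phi_\ugot(m)-\eta$, est alors $(U,\sigma)$-hamiltonien. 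Tout point $(m^*,\eta^*)\in \widetilde M^{\tilde\tau}\cap\Psi^{-1}(0)$ vérifie $m^*\in M^\tau$ et $\eta^*=\Phi_\ugot(m^*)\in \Ocal\cap(\ugot^*)^{-\sigma}$; comme cette intersection est une $K$-orbite et que $K$ préserve $M^\tau$ par (\ref{eq:condition-sigma-tau}), on peut translater $m^*$ par un élément de $K$ pour obtenir $\Phi_\ugot(m^*)=\xi$. Le problème se ramène ainsi à démontrer que $\widetilde M^{\tilde\tau}\cap\Psi^{-1}(0)\neq\emptyset$.

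Pour établir cette non-vacuité, je choisirais un produit scalaire $U$- et $\sigma$-invariant sur $\ugot^*$, puis une métrique riemannienne $\tilde\tau$-invariante sur $\widetilde M$ (obtenue par moyennation à partir d'une métrique compatible avec la structure symplectique). La fonction de Kirwan $f=\tfrac12\|\Psi\|^2$ devient alors $\tilde\tau$-invariante, atteint son minimum exactement sur $\Psi^{-1}(0)$, et son flot de gradient négatif préserve $\widetilde M^{\tilde\tau}$. Au voisinage d'une orbite critique, le modèle local de Marle-Guillemin-Sternberg identifie $\widetilde M$ à un fibré $U\times_H V$ où $V$ est une représentation symplectique d'un sous-groupe $H\subset U$ (que l'on prendra $\sigma$-stable) dotée d'une involution antisymplectique compatible. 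On ramène ainsi le problème à un énoncé \emph{linéaire}: pour un tel modèle $(V,\phi_V,\tau_V)$, l'égalité $\phi_V(V^{\tau_V})=\phi_V(V)\cap(\hgot^*)^{-\sigma}$ se vérifie par décomposition isotypique explicite de $V$ sous l'action conjointe de $H$ et $\tau_V$, chaque brique élémentaire admettant un modèle standard où l'assertion est directe.

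La principale difficulté consiste à articuler proprement cette réduction à travers une récurrence descendante sur la stratification de Kirwan-Ness de $\widetilde M$: il faut vérifier, strate par strate, que le flot de gradient issu d'un point fixe de $\tilde\tau$ converge vers un point fixe de $\tilde\tau$ situé dans une strate strictement inférieure, jusqu'à atteindre la strate minimale $\Psi^{-1}(0)$. La compatibilité du modèle local exige également de pouvoir choisir dans chaque strate critique un point dont le stabilisateur est $\sigma$-stable, ce qui repose de manière essentielle sur les conditions d'équivariance (\ref{eq:condition-phi-tau}) et (\ref{eq:condition-sigma-tau}) entre $\tau$ et $\sigma$. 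Cette articulation entre analyse locale (cas linéaire par décomposition isotypique) et globale (récurrence sur les strates compatible avec l'involution) constitue le coeur technique de la preuve.
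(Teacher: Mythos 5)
Votre squelette de preuve (inclusion directe immédiate, astuce de décalage avec $\widetilde M=M\times\Ocal^-$, fonction de Kirwan $\tilde\tau$-invariante dont le flot de gradient préserve le lieu réel) est bien celui de la preuve donnée à la section \ref{sec:preuve-OSS}, mais l'étape décisive --- montrer que $\widetilde M^{\tilde\tau}$ rencontre le niveau zéro dès que $0\in\Psi(\widetilde M)$ --- n'est pas établie par le mécanisme que vous proposez. Le flot de gradient de $-\tfrac12\|\Psi\|^2$ ne fait jamais descendre un point dans une strate strictement inférieure : par construction même de la stratification de Kirwan--Ness, chaque strate $M_{\langle\lambda\rangle}$ est invariante par le flot et ses points convergent vers l'ensemble critique $Z_\lambda$ de cette même strate. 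Votre \og récurrence descendante strate par strate \fg{} ne peut donc pas fonctionner telle quelle ; ce qu'il faut prouver, c'est que le lieu réel (ou la composante connexe $\Zcal$ considérée) rencontre la strate ouverte $M_{\langle\lambda_{\mini}\rangle}$ et que $\lambda_{\mini}=0$ lorsque $0$ appartient à l'image du moment. C'est exactement ce que fait le texte, en utilisant de façon essentielle la structure kählérienne : la strate minimale est un ouvert de Zariski dense et le lieu fixe d'une involution anti-holomorphe rencontre densément tout ouvert de Zariski non vide (lemme \ref{lem:zariski} de l'annexe, utilisé au point \emph{8.} de la proposition \ref{prop:fundamental-stratification-involution}) ; le flot, qui préserve $\Zcal$, envoie alors ces points sur $Z_0\cap\Zcal=\Phi_\ugot^{-1}(0)\cap\Zcal$.

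Dans le cadre purement symplectique où le théorème \ref{theo:OSS} est énoncé, cet argument de densité n'est plus disponible sous cette forme, et votre réduction au modèle local de Marle--Guillemin--Sternberg avec décomposition isotypique ne comble pas le trou : la difficulté est globale (savoir dans quelle strate tombe un point réel donné, ou de manière équivalente montrer que l'infimum de $\|\Psi\|^2$ sur $\widetilde M^{\tilde\tau}$ est nul), et le cas linéaire local ne dit rien à ce sujet sans un argument de recollement supplémentaire --- c'est précisément là que la preuve d'O'Shea--Sjamaar emploie d'autres techniques (sections symplectiques, formes normales et récurrence), que votre esquisse ne fournit pas. Deux points secondaires : la convergence des trajectoires du flot pour une métrique invariante quelconque demande elle aussi une justification (dans le texte elle repose sur la métrique kählérienne et un résultat de Duistermaat), et l'énoncé suppose implicitement $M^\tau\neq\emptyset$, hypothèse sans laquelle l'inclusion réciproque peut être fausse.
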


\medskip

Reformulons ce th\'eor\`eme en termes de polytopes de Kirwan. Choisissons un tore maximal $T\subset K$, $\sigma$-invariant, et 
tel que le sous-espace vectoriel $\tgot^{-\sigma}=\{X\in \tgot,\sigma(X)=-X\}$ est de dimension maximale. 

\begin{definition}
Une chambre de Weyl $\tgot^*_+\subset \tgot^*$ est \emph{adaptée} à l'involution $\sigma$ si l'intersection 
$\tgot^*_+\cap (\tgot^*)^{-\sigma}$ param\`etre les $K$-orbites dans $(\ugot^*)^{-\sigma}$, c'est \`a dire si l'application
\begin{eqnarray*}
\tgot^*_+\cap(\tgot^*)^{-\sigma}&\longrightarrow&  (\ugot^*)^{-\sigma}/K \\
\xi&\longmapsto & K\xi
\end{eqnarray*}
est bijective. Il existe toujours des chambres de Weyl adaptées: voir, par exemple, l'appendice B dans \cite{OSS}.
\end{definition}

\begin{rem}\label{rem:orbite-symetrique}
Choisissons une chambre de Weyl $\tgot^*_+$ adaptée. Alors pour tout $\lambda\in \tgot^*_+$, l'intersection 
$U\lambda\cap (\ugot^*)^{-\sigma}$ est non-vide si et seulement si $\lambda\in \tgot^*_+\cap(\tgot^*)^{-\sigma}$, et dans ce cas 
$U\lambda\cap (\ugot^*)^{-\sigma}= K\lambda$.
\end{rem}

Avec un choix de chambre de Weyl adaptée, les ensembles
$$
\Delta_\ugot(M)=\Phi_\ugot(M)\cap \tgot^*_+\qquad \mbox{et}\qquad \Delta(M^\tau)=\Phi_\ugot(M^\tau)\cap \tgot^*_+
$$
param\`etrent respectivement les $U$-orbites dans $\Phi_\ugot(M)$ et les $K$-orbites dans $\Phi_\ugot(M^\tau)$.
Le th\'eor\`eme de O'Shea-Sjamaar peut \^etre reformul\'e au moyen de l'\'egalit\'e

\begin{equation}\label{eq:OSS}
\Delta_\ugot(M)\cap (\tgot^*)^{-\sigma}=\Delta(M^\tau).
\end{equation}

Le th\'eor\`eme de convexit\'e \ref{theo:convexe} nous dit que $\Delta_\ugot (M) $ est un sous-ensemble convexe, localement poly\'edral. 
Gr\^ace \`a (\ref{eq:OSS}), nous voyons que $\Delta(M^\tau)$ est également un sous-ensemble convexe localement poly\'edral: 
on l'appellera le polytope de Kirwan {\em r\'eel}. On verra \`a la section \ref{sec:description-polytope-reel} comment d\'ecrire 
$\Delta(M^\tau)$ en terme de paires de Ressayre réelles.

\begin{exemple} Reprenons l'exemple \ref{ex:O-lambda-involution}. \`A une matrice $n\times n$ hermitienne (ou sym\'etrique r\'eelle) $X$, 
on associe son spectre $\e(X)=(\e_1(X)\geq\ldots\geq \e_n(X))$. Fixons deux $n$-uplets $\mu=(\mu_1\geq\ldots\geq \mu_n)$ et 
$\lambda=(\lambda_1\geq\ldots\geq \lambda_n)$, et consid\'erons la $(\upU_n,\sigma)$-vari\'et\'e hamiltonienne $\Ocal_\lambda\times\Ocal_\mu$. 
L'application moment $\Phi_\ugot: \Ocal_\lambda\times\Ocal_\mu\to\ugot(n)^*$ et l'involution $\tau$ sont d\'efinis respectivement par 
$\Phi_\ugot(X,Y)=\mathrm{t}(X+Y)$, et $\tau(X,Y)=(\overline{X},\overline{Y})$.
Ici, la vari\'et\'e $(\Ocal_\lambda\times\Ocal_\mu)^\tau$ est \'egale \`a $\Ocal^\R_\lambda\times\Ocal^\R_\mu$ o\`u $\Ocal^\R_\nu$ d\'esigne 
l'ensemble des matrice $n\times n$ sym\'etriques r\'eelles $X$ telle que $\e(X)=\nu$. Dans cet exemple, les chambres de Weyl $\tgot^*_+$  et  
$\tgot^*_+\cap (\tgot^*)^{-\sigma}$ sont \'egales, et correspondent, \`a travers l'isomorphisme 
$\mathrm{t}: \herm(n)\to \ugot(n)^*$, \`a l'ensemble des matrices diagonales r\'eelles $\diag(a_1,\ldots,a_n)$ avec $(a_1\geq\ldots\geq a_n)$. 
Les polytopes de Kirwan sont ici 
\begin{align*}
\Delta_\ugot(\Ocal_\lambda\times\Ocal_\mu)&=\left\{\e(X+Y), X\in \Ocal_\lambda,\ Y\in \Ocal_\mu\right\},\\
\Delta((\Ocal_\lambda\times\Ocal_\mu)^\tau)&=\left\{\e(A+B), A\in \Ocal^\R_\lambda,\ B\in \Ocal^\R_\mu\right\}.
\end{align*}
Il est imm\'ediat que $\Delta((\Ocal_\lambda\times\Ocal_\mu)^\tau)\subset \Delta_\ugot(\Ocal_\lambda\times\Ocal_\mu)$, 
et le th\'eor\`eme d'O'Shea-Sjamaar permet de voir que 
$\Delta((\Ocal_\lambda\times\Ocal_\mu)^\tau)=\Delta_\ugot(\Ocal_\lambda\times\Ocal_\mu)$.
\end{exemple}

%%%%%%%%%%%%%%%%%%%%%%%%%%%%%%%%%%%%%%%%%%%%%%%%%%%%%%
%%%%%%%%%%%%%%%%%%%%%%%%%%%%%%%%%%%%%%%%%%%%%%%%%%%%%%
\section{Deux exemples remarquables}
%%%%%%%%%%%%%%%%%%%%%%%%%%%%%%%%%%%%%%%%%%%%%%%%%%%%%%
%%%%%%%%%%%%%%%%%%%%%%%%%%%%%%%%%%%%%%%%%%%%%%%%%%%%%%

%%%%%%%%%%%%%%%%%%%%%%%%%%%%%%%%%%%%%%%%%%%%%%%%%%%%%%
\subsection{Le c\^one $\LR(U,\tU)$}
%%%%%%%%%%%%%%%%%%%%%%%%%%%%%%%%%%%%%%%%%%%%%%%%%%%%%%

On consid\`ere le groupe $\tU$ munie de l'action de $\tU\times U$ d\'efinie par $(\tg,g)\cdot a=\tg a g^{-1}$. Alors l'action de $\tU\times U$ sur la vari\'et\'e 
cotangente $T^*\tU$ est hamiltonienne (voir l'exemple \ref{ex:U-tilde-U}). Si on identifie $T^* \tU$ \`a $\tU\times \tugot^*$ au moyen des translations \`a gauche, l'application moment 
$\Phi_{\tugot,\ugot}: \tU\times \tugot^*\to\tugot^*\times\ugot^*$ est d\'efinie par les relations
$$
\Phi_{\tugot,\ugot}(\tk,\txi)=\Big(\tk\txi,-\pi_{\tugot,\ugot}(\txi)\Big),\quad \forall \tk\in \tU,\forall \txi\in\tugot^*.
$$

On remarque ais\'ement que l'application $\Phi_{\tugot,\ugot}$ est propre, ainsi le th\'eor\`eme de convexit\'e \ref{theo:convexe} s'applique ici. 
Fixons deux chambres de Weyl $\ttgot_+^*\subset \ttgot^*$ et $\tgot_+^*\subset \tgot^*$.  
Le polytope de Kirwan associ\'e \`a l'application moment $\Phi_{\tugot,\ugot}$ est d\'efini par la relation 
$$
\Delta_{\tugot\times\ugot}(T^*\tU):=\Big\{(\txi,\xi)\in \ttgot_+^*\times \tgot_+^*,\ \exists \tk\in \tU, \ \xi =-\pi_{\tugot,\ugot}(\tk\txi)\Big\}.
$$

Afin de travailler avec les alg\`ebres de Lie (et non leurs duaux), on introduit un produit scalaire invariant sur $\tugot$: 
celui-ci induit les identification $\tugot\simeq \tugot^*$ et $\ugot\simeq \ugot^*$, 
et d'autre part, il permet de d\'efinir la projection orthogonale $\pi_{\tugot,\ugot}:\tugot\to\ugot$. 
Fixons deux chambres de Weyl $\ttgot_+\subset \ttgot$ et $\tgot_+\subset \tgot$.

\begin{definition}
On d\'efinit le c\^one
$$
\LR(U,\tU):=\Big\{(\ta,a)\in \ttgot_+\times \tgot_+,\ U a \subset \pi_{\tugot,\ugot}(\tU\ta)\Big\}.
$$
\end{definition}

Notons $W=N(T)/T$ le groupe de Weyl, et $w_o\in W$ son \'el\'ement le plus long. L'application $\xi\mapsto \xi^\vee=- w_o\xi$ est une 
involution de la chambre de Weyl 
$\tgot^*_+\simeq \tgot_+$. On voit ainsi que $(\txi,\xi)\in \Delta_{\tugot\times\ugot}(T^*\tU)$ si et seulement si $(\txi,\xi^\vee)\in \LR(U,\tU)$.

On expliquera \`a la section \ref{sec:LR-U-tilde-U} comment d\'ecrire le c\^one convexe poly\'edral $\LR(U,\tU)$ au moyen du calcul de Schubert.

%%%%%%%%%%%%%%%%%%%%%%%%%%%%%%%%%%%%%%%%%%%%%%%%%%%%%%
\subsection{Les c\^ones $\horn_\pgot(K,\tK)$}
%%%%%%%%%%%%%%%%%%%%%%%%%%%%%%%%%%%%%%%%%%%%%%%%%%%%%%

On suppose que le groupe $\tU$ est muni d'une involution $\sigma$, et que le sous-groupe $U$  est stable par rapport \`a $\sigma$. 
Notons respectivement $\tK$ et $K$  les composantes connexes de l'identit\'e des sous-groupes $\tU^{\sigma}$ et $U^\sigma$. 
On munit la vari\'et\'e $\tU\times U$-hamiltonienne $T^*\tU\simeq \tU\times \tugot^*$ de l'involution anti-symplectique 
$$
\tau(\tk,\txi)=(\sigma(\tk),-\sigma(\txi)).
$$
Ainsi la vari\'et\'e lagrangienne $(T^*\tU)^\tau$ est isomorphe \`a $\tU^{\sigma}\times (\tugot^*)^{-\sigma}$. 

Voici une conséquence du th\'eor\`eme d'O'Shea-Sjamaar \ref{theo:OSS}.

\begin{prop}\label{prop:OSS-orbites}
Pour tout  $(\txi,\xi)\in (\tugot^*)^{-\sigma}\times (\ugot^*)^{-\sigma}$, les \'enonc\'es suivants sont \'equivalents:
\begin{enumerate}
\item[$i)$] $U \xi \subset \pi_{\tugot,\ugot}(\tU\txi)$,
\item[$ii)$] $U^{-\sigma}\xi\subset \pi_{\tugot,\ugot}(\tU^{\sigma}\txi)$,
\item[$iii)$] $K\xi\subset \pi_{\tugot,\ugot}(\tK\txi)$.
\end{enumerate}
Ici, les \'enonc\'es $ii)$ et $iii)$ sont identiques car $K\xi=U^{-\sigma}\xi$ et $\tK\txi=\tU^{\sigma}\txi$.
\end{prop}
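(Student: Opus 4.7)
Le plan est de prouver les trois équivalences en isolant l'observation immédiate $ii)\Leftrightarrow iii)$ et l'implication facile $iii)\Rightarrow i)$, puis en consacrant l'essentiel de la preuve à l'implication $i)\Rightarrow iii)$ via le théorème d'O'Shea-Sjamaar.

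D'abord, l'équivalence $ii)\Leftrightarrow iii)$ résulte directement des identités $K\xi = U^{-\sigma}\xi$ et $\tK\txi = \tU^{\sigma}\txi$, fournies par l'exemple \ref{ex:O-K-involution} appliqué respectivement aux orbites coadjointes $U\xi$ et $\tU\txi$ (grâce aux hypothèses $\xi \in (\ugot^*)^{-\sigma}$ et $\txi \in (\tugot^*)^{-\sigma}$). Ensuite, pour $iii)\Rightarrow i)$: comme $\tK\subset \tU$, l'inclusion $K\xi \subset \pi_{\tugot,\ugot}(\tK\txi)$ entraîne $\xi \in \pi_{\tugot,\ugot}(\tU\txi)$; et comme $\pi_{\tugot,\ugot}(\tU\txi)$ est $U$-invariant ($\pi_{\tugot,\ugot}$ est $U$-équivariante et $\tU\txi$ est $U$-stable), on obtient $U\xi \subset \pi_{\tugot,\ugot}(\tU\txi)$.

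Pour l'implication $i)\Rightarrow iii)$, l'idée est d'appliquer le théorème d'O'Shea-Sjamaar \ref{theo:OSS} à l'orbite coadjointe $\tOcal_{\txi} := \tU\txi$, vue comme variété $U$-hamiltonienne de moment $\pi_{\tugot,\ugot}|_{\tOcal_{\txi}}$. Je vérifierais d'abord que $\tOcal_{\txi}$, équipée de l'involution anti-symplectique $\tau(\eta) = -\sigma(\eta)$, constitue une variété $(U,\sigma)$-hamiltonienne. Les conditions \eqref{eq:condition-phi-tau} et \eqref{eq:condition-sigma-tau} sont des conséquences directes du fait que $\sigma$ est un automorphisme de $\tU$ commutant avec $\pi_{\tugot,\ugot}$; on a de plus $\tau(\tk\txi) = \sigma(\tk)\txi$ (grâce à $\sigma(\txi)=-\txi$), donc $\txi$ est un point fixe de $\tau$, assurant $(\tU\txi)^\tau\neq \emptyset$.

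Le théorème d'O'Shea-Sjamaar donne alors
\begin{equation*}
\pi_{\tugot,\ugot}(\tU\txi) \cap (\ugot^*)^{-\sigma} = \pi_{\tugot,\ugot}\bigl((\tU\txi)^\tau\bigr),
\end{equation*}
tandis que l'exemple \ref{ex:O-K-involution} appliqué à l'orbite $\tU\txi$ du groupe $\tU$ identifie $(\tU\txi)^\tau = \tK\txi$. En combinant ces deux égalités avec l'hypothèse $\xi \in \pi_{\tugot,\ugot}(\tU\txi) \cap (\ugot^*)^{-\sigma}$, on déduit $\xi \in \pi_{\tugot,\ugot}(\tK\txi)$, puis par $K$-équivariance $K\xi \subst \pi_{\tugot,\ugot}(\tK\txi)$, ce qui est $iii)$. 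Le seul point délicat est la vérification formelle que $\tU\txi$ admet bien la structure $(U,\sigma)$-hamiltonienne requise; toute la substance géométrique est encapsulée dans le théorème d'O'Shea-Sjamaar, de sorte qu'aucune difficulté technique nouvelle n'apparaît.
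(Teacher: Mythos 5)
Votre preuve est correcte. Le texte ne rédige pas de démonstration explicite : il présente la proposition comme conséquence du théorème \ref{theo:OSS}, après avoir muni la variété $\tU\times U$-hamiltonienne $T^*\tU\simeq \tU\times\tugot^*$ de l'involution $\tau(\tk,\txi)=(\sigma(\tk),-\sigma(\txi))$, dont la partie fixe est $\tU^\sigma\times(\tugot^*)^{-\sigma}$ ; la voie implicite du texte consiste donc à appliquer O'Shea--Sjamaar à ce $T^*\tU$ (application moment propre, groupe $\tU\times U$) puis à décoder l'image du moment. Vous appliquez le même théorème, mais à l'orbite coadjointe compacte $\tU\txi$ munie de l'action restreinte de $U$, du moment $\pi_{\tugot,\ugot}$ et de l'involution $\eta\mapsto-\sigma(\eta)$ : c'est essentiellement la forme « décalée » de l'argument du texte, et elle a l'avantage d'éviter toute discussion de non-compacité ou de propreté et de rendre le décodage immédiat, puisque $(\tU\txi)^\tau=\tK\txi$ est fourni par l'exemple \ref{ex:O-K-involution}. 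Vos vérifications annexes (structure $(U,\sigma)$-hamiltonienne de l'orbite par équivariance de $\pi_{\tugot,\ugot}$ vis-à-vis de $\sigma$, non-vacuité de la partie fixe via $\tau(\txi)=\txi$, implications faciles $iii)\Rightarrow i)$ et $ii)\Leftrightarrow iii)$ via $K\xi=U^{-\sigma}\xi$ et $\tK\txi=\tU^\sigma\txi$) sont exactes ; seule coquille, le « $\subst$ » final doit se lire $\subset$.
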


Choisissons deux chambres de Weyl $\ttgot_+^*\subset \ttgot^*$ et $\tgot_+^*\subset \tgot^*$ adapt\'ees \`a l'involution $\sigma$.

\begin{definition}
On d\'efinit le c\^one
$$
\horn_\pgot(K,\tK):=\Big\{(\tX,X)\in \ttgot_+\cap \ttgot^{-\sigma}  \times \tgot_+\cap \tgot^{-\sigma},\ K X \subset \pi_{\tugot,\ugot}(\tK\tX)\Big\}.
$$
\end{definition}

Le th\'eor\`eme d'O'Shea-Sjamaar permet de voir que 
$$
\horn_\pgot(K,\tK)=\LR(U,\tU)\bigcap \ttgot^{-\sigma}  \times \tgot^{-\sigma},
$$
et de montrer ainsi que $\horn_\pgot(K,\tK)$ est un c\^one convexe poly\'edral. L'un des principaux objectifs de ce m\'emoire est de param\'etrer, 
de mani\`ere minimale si possible, les in\'egalit\'es qui d\'efinissent ces c\^ones.

%%%%%%%%%%%%%%%%%%%%%%%%%%%%%%%%%%%%%%%%%%%%
%%%%%%%%%%%%%%%%%%%%%%%%%%%%%%%%%%%%%%%%%%%%
%%%%%%%%%%%%%%%%%%%%%%%%%%%%%%%%%%%%%%%%%%%%
\chapter{Th\'eor\`emes de convexit\'e dans le cadre K\"ahl\'erien}
%%%%%%%%%%%%%%%%%%%%%%%%%%%%%%%%%%%%%%%%%%%%
%%%%%%%%%%%%%%%%%%%%%%%%%%%%%%%%%%%%%%%%%%%%
%%%%%%%%%%%%%%%%%%%%%%%%%%%%%%%%%%%%%%%%%%%%

Les groupes compacts connexes $\tU\subset\tU$ admettent des complexifications $U_\C\subset \tU_\C$, c'est \`a dire des groupes r\'eductifs 
complexes contenant $U\subset \tU$ comme sous-groupes compacts maximaux, et 
dont les alg\`ebres de Lie sont $\ugot_\C=\ugot\oplus i\ugot$, et $\tugot_\C=\tugot\oplus i\tugot$.

%%%%%%%%%%%%%%%%%%%%%%%%%%%%%%%%%%%%%%%%%%%%%%%%%%%%%%
\section{Variétés de Kähler $U$-Hamiltoniennes}
%%%%%%%%%%%%%%%%%%%%%%%%%%%%%%%%%%%%%%%%%%%%%%%%%%%%%%

Nous consid\'erons dans cette section des vari\'et\'es de K\"{a}hler $U$-Hamiltonienne $(M,\Omega)$. Nous entendons par l\`a~ :

\begin{itemize}
\item Le groupe r\'eductif complexe $U_\C$ agit de mani\`ere holomorphe sur la vari\'et\'e complexe $M$.
\item La structure complexe de $M$, notée $\J$, est compatible avec la structure symplectique, i.e. $\Omega(\cdot,\J\cdot)$ d\'efinit une structure riemanienne sur $M$.
\item La forme de K\"{a}hler $\Omega$ est $U$-invariante.
\item Il existe une application moment $U$-\'equivariante $\Phi_\ugot : M\to \ugot^*$ satisfaisant la relation \ref{eq:hamiltonien}. 
\end{itemize}

\medskip

Dans les prochaines sections, on explique comment d\'ecrire le polytope de Kirwan $\Delta_{\ugot}(M)$ lorsque l'on travaille 
avec une vari\'et\'e de K\"{a}hler $U$-Hamiltonienne.

\medskip

\begin{exemple}[Orbites coajointes]\label{ex:orbite-coadjointe-Kahler}

Soit $\xi\in\tgot^*_+$. L'orbite coadjointe $U\xi$ admet une structure complexe compatible avec la structure symplectique KKS. 
La structure de variété complexe munie d'une action holomorphe de $U_\C$ peut être visualisée à travers les isomorphismes 
$$
U\xi\simeq U/U_\xi\simeq U_\C/P(\xi), 
$$
où\footnote{Ici, on voit $\xi$ comme un élément de l'algèbre de Lie $\tgot$, grâce à une identification équivariante $\ugot^*\simeq\ugot$.}  
\begin{equation}\label{eq:parabolique-def}
P(\xi)=\left\{g \in U_\C, \lim_{t\to\infty}\exp(-it\xi)g\exp(it\xi)\ \mbox{existe}\right\}
\end{equation} 
est un sous-groupe parabolique de $U_\C$. 

Dans la suite, on désigne par $(U\xi)^o$  l'orbite coadjointe $U\xi$ munie de la structure k\"ahlérienne opposée à la structure KKS. 
Comme variété complexe, on a l'isomorphisme $(U\xi)^o\simeq U_\C/P(-\xi)$.
\end{exemple}

\begin{exemple}[Exemple fondamental]\label{ex:U-tilde-C-Kahler}
Les translations \`a gauche fournissent des isomorphismes $T^*\tU \simeq \tU\times \tugot^*$ et $T\tU \simeq \tU\times \tugot$. 
On obtient alors un isomorphisme \'equivariant $\varphi_2: T^* \tU\simeq T\tU$ au moyen de l'identification 
$\tugot^*\simeq \tugot$ donn\'ee par un produit scalaire invariant. 

La groupe $\tU_\C$ admet une d\'ecomposition de Cartan, c'est \`a dire un diff\'eomorphisme
$\tU\times\tugot \longrightarrow \tU_\C$, $(\tk,\tX)\longmapsto \tk \exp(i\tX)$. Cela d\'efinit un isomorphisme $\varphi_1: T\tU\simeq \tU_\C$.  
Si nous utilisons l'isomorphisme \'equivariant
$$
\varphi_1\circ \varphi_2: T^* \tU\longrightarrow \tU_\C
$$ 
pour transporter la forme symplectique canonique de $T^*\tU$, la structure symplectique sur $\tU_\C$ qui en r\'esulte est 
invariante par rapport à l'action $\tU\times \tU$, et elle est compatible avec la structure complexe de $\tU_\C$ (voir [10], §3).

Conclusion: le groupe r\'eductif $\tU_\C$ est une vari\'et\'e de K\"{a}hler $\tU\times U$-Hamiltonienne, et l'application moment $\Phi_{\tugot,\ugot}:\tU_\C\to\tugot\times\ugot$ correspondante est d\'efinie par
$$
\Phi_{\tugot,\ugot}\left(\tk \exp(i\tX)\right)=\left(\tk \tX, -\pi_{\tugot,\ugot}(\tX)\right).
$$
\end{exemple}

%%%%%%%%%%%%%%%%%%%%%%%%%%%%%%%%%%%%%%%%%%%%%%%%%%%%%%
%%%%%%%%%%%%%%%%%%%%%%%%%%%%%%%%%%%%%%%%%%%%%%%%%%%%%%
\section{Paires de Ressayre}
%%%%%%%%%%%%%%%%%%%%%%%%%%%%%%%%%%%%%%%%%%%%%%%%%%%%%%
%%%%%%%%%%%%%%%%%%%%%%%%%%%%%%%%%%%%%%%%%%%%%%%%%%%%%%

Dans tout cette section, on consid\`ere une vari\'et\'e de K\"{a}hler $U$-Hamiltonienne $(M,\Omega)$ 
avec une application moment $\Phi_\ugot$ propre.

%%%%%%%%%%%%%%%%%%%%%%%%%%%%%%%%%%%%%%%%%%%%%%%%%%%%%%
\subsection{\'El\'ements admissibles}
%%%%%%%%%%%%%%%%%%%%%%%%%%%%%%%%%%%%%%%%%%%%%%%%%%%%%%

Introduisons la notion d'\'el\'ements admissibles associ\'es \`a l'action infinitésimale de $\ugot$ sur $M$. 

Soit $T\subset U$ un tore maximal d'algèbre de Lie $\tgot$. Le groupe $\mbox{Hom}(U(1),T)$ s'identifie naturellement au r\'eseau $\wedge:=\frac{1}{2\pi}\ker(\exp : \tgot\to T)$. 
Un vecteur $\gamma\in \tgot$ est appel\'e {\em rationnel} s'il appartient au $\Q$-espace vectoriel $\tgot_\Q$ engendr\'e par $\wedge$. 

Le groupe stabilisateur de $x\in M$ est not\'e $U_x:=\{k\in U ; k x=x\}$, et son alg\`ebre de Lie est not\'ee $\ugot_x$.

\begin{definition}\label{def:admissible-U-action} D\'efinissons $\dim_\ugot(\Xcal):=\min_{x\in\Xcal}\dim(\ugot_x)$ pour tout sous-ensemble $\Xcal\subset M$. 
Un \'el\'ement non nul $\gamma\in\tgot$ est appel\'e {\em admissible} par rapport à l'action $U\circlearrowright M$ 
si $\gamma$ est rationnel et si $\dim_\ugot(M^\gamma)-\dim_U(M)\in\{0,1\}$.
\end{definition}

\begin{rem}\label{rem:cas-generic-stabilizer}
$\dim_\ugot(M)=0$ lorsque le stabilisateur g\'en\'erique infinit\'esimal de l'action $U$ est r\'eduit à $\{0\}$. Dans ce cas, un \'el\'ement rationnel 
$\gamma$ est admissible si 
$\dim_\ugot(M^\gamma)\!=\!1$.
\end{rem}

%%%%%%%%%%%%%%%%%%%%%%%%%%%%%%%%%%%%%%%%%%%%%%%%%%%%%%
\subsection{Paires de Ressayre infinit\'esimales}
%%%%%%%%%%%%%%%%%%%%%%%%%%%%%%%%%%%%%%%%%%%%%%%%%%%%%%

Pour tout $x\in M$, l'action infinit\'esimale de $\ugot_\C$ sur $M$ d\'efinit 
une application $\C$-lin\'eaire $U_x$-\'equivariante
\begin{eqnarray}\label{eq:rho-m}
\rho_x:\ugot_\C & \longrightarrow & \T_x M\\ 
X & \longmapsto & X\cdot x .\nonumber
\end{eqnarray}

\begin{definition}
\begin{enumerate}
\item Pour tout $\gamma\in\ugot$, posons $\gamma_o:=\frac{1}{i}\gamma$.
\item Consid\'erons l'action lin\'eaire $\rho : U\to \GL(E)$ d'un groupe de Lie compact sur un espace vectoriel complexe $E$. 
Pour tout $(\gamma,a)\in\ugot\times\R$, nous d\'efinissons le sous-espace vectoriel $E^{\gamma_o=a}=\{v\in E, d\rho(\gamma_o)v= av\}$.  
Ainsi, pour tout $\gamma\in\ugot$, nous avons la d\'ecomposition suivante $E=E^{\gamma_o>0}\oplus E^{\gamma_o=0}\oplus E^{\gamma_o<0}$ 
o\`u $E^{\gamma_o>0}=\sum_{a>0}E^{\gamma_o=a}$, et $E^{\gamma_o<0}=\sum_{a<0}E^{\gamma_o=a}$.
\item Pour tout $\gamma\in\ugot$, on pose $\tr(\gamma_o \circlearrowright E^{\gamma_o>0}):=\sum_{a>0}a\,\dim(E^{\gamma_o=a})$.
\item Soit $\varphi:E\to F$ un morphisme lin\'eaire \'equivariant entre deux $U$-modules. Pour tout $\gamma\in\ugot$, 
l'application $\varphi$ se sp\'ecialise en une application lin\'eaire $E^{\gamma_o>0}\longrightarrow F^{\gamma_o>0}$.
\end{enumerate}
\end{definition}

\medskip

Soit $\Rgot$ l'ensemble des racines pour l'action du tore $T$ sur $\ugot_\C$. Le choix d'une chambre de Weyl $\tgot^*_+$ d\'etermine 
un sous-ensemble $\Rgot^+$ de racines positives de telle sorte que $\tgot^*_+:=\{\xi\in\ugot^*,\ (\xi,\alpha)\geq 0,\ \forall \alpha\in \Rgot^+\}$. 
On consid\`ere la d\'ecomposition 
\begin{equation}\label{eq:decomposition-ugot}
\ugot_\C=\ngot\oplus\tgot_\C\oplus \overline{\ngot}
\end{equation} 
o\`u $\ngot=\sum_{\alpha\in\Rgot^+}(\ugot_\C)_\alpha$. 
D\'esignons par $B\subset U_\C$ le sous-groupe de Borel ayant $\bgot:=\tgot_\C\oplus \ngot$ pour alg\`ebre de Lie.

\medskip

Consid\'erons $(x,\gamma)\in M\,\times\,\tgot$ tel que $x\in  M^\gamma$. Le morphisme $U_x$-\'equivariant
(\ref{eq:rho-m}) induit une application lin\'eaire complexe 
\begin{equation}\label{eq:rho-gamma}
\rho_x^\gamma:\ngot^{\gamma_o>0}\longrightarrow (\T_x M)^{\gamma_o>0}.
\end{equation}

\medskip

\begin{definition}\label{def:infinitesimal-B-ressayre-pair}
Soit $\gamma\in\tgot$ un \'el\'ement non nul, et soit $C\subset M^\gamma$ une composante connexe. La donn\'ee $(\gamma,C)$ est appel\'ee 
une {\em paire de Ressayre infinit\'esimale} de $M$ si $\exists x\in C$, tel que $\rho_x^\gamma$ est un isomorphisme. 
Si de plus nous avons $\dim_\ugot(C)-\dim_\ugot(M)\in\{0,1\}$, et que $\gamma$ est rationnel, nous appelons $(\gamma,C)$ une 
{\em paire de Ressayre infinit\'esimale r\'eguli\`ere} de $M$.
\end{definition}

\medskip

\begin{rem}
Lorsque $U=T$ est ab\'elien, un couple $(\gamma,C)$ est une paire de Ressayre infinit\'esimale lorsque le fibr\'e vectoriel $(\T M\vert_C)^{\gamma_o>0}$ est r\'eduit \`a $0$.
\end{rem}

\begin{rem}
Puisque nous travaillons avec des vecteurs rationnels, nous identifierons les paires $(\gamma,C)$ et $(q\gamma,C)$, pour tout $q\in \Q^{>0}$.
\end{rem}

%%%%%%%%%%%%%%%%%%%%%%%%%%%%%%%%%%%%%%%%%%%%%%%%%%%%%%
\subsection{Paires de Ressayre}
%%%%%%%%%%%%%%%%%%%%%%%%%%%%%%%%%%%%%%%%%%%%%%%%%%%%%%

Introduisons maintenant une notion plus restrictive, celle de paire de Ressayre. Soient $\gamma\in\tgot$ un \'el\'ement non nul, 
et  $C\subset M^\gamma$ soit une composante connexe. Nous introduisons la sous-vari\'et\'e de Bia\l ynicki-Birula 
\begin{equation}\label{eq:BB}
C^-:= \Big\{m\in M, \lim_{t\to\infty} \exp(t\gamma_o) m\ \in C \Big\},
\end{equation}
qui est une sous-vari\'et\'e complexe localement ferm\'ee de $M$ \cite{Carrell-Sommese78}. On remarque que pour tout $x\in C$, $(\T_x M)^{\gamma_o\leq 0}=\T_x C^-$. 

Consid\'erons maintenant le sous-groupe parabolique $P(\gamma)\subset U_\C$ d\'efini par
\begin{equation}\label{eq:P-gamma}
P(\gamma)=\Big\{g \in U_\C, \lim_{t\to\infty}\exp(t\gamma_o)g\exp(-t\gamma_o)\ \mbox{existe}\Big\}.
\end{equation}
On remarque que l'alg\`ebre de Lie de $P(\gamma)$ est $(\ugot_\C)^{\gamma_o\leq 0}$.
Comme $C^-$ est invariant sous l'action de $P(\gamma)$, nous pouvons consid\'erer la vari\'et\'e complexe 
$B\times_{B\cap P(\gamma)} C^-$ et l'application holomorphe 
\begin{equation}\label{eq:q-gamma}
q_\gamma : B\times_{B\cap P(\gamma)} C^-\longrightarrow M
\end{equation}
qui envoie $[b,x]$ vers $bx$. 

\begin{definition}\label{def:B-ressayre-pair}
La donn\'ee $(\gamma,C)$ est appel\'ee une {\em paire de Ressayre} de $M$ s'il existe des ouverts 
$\Vcal\subset M$ et $\Ucal\subset C^-$,  tels que 
\begin{itemize}
\item $\Vcal$ est dense et $B$-invariant.
\item $\Ucal$ est dense, $B\cap P(\gamma)$-invariant, et intersecte $C$. 
\item $q_{\gamma}$ d\'efinit un diff\'eomorphisme $B\times_{B\cap P(\gamma)}\Ucal\simeq \Vcal$.
\end{itemize}
Si en outre $\dim_\ugot(C)-\dim_\ugot(M)\in\{0,1\}$ et que $\gamma$ est rationnel, nous appelons $(\gamma,C)$ une paire de Ressayre  r\'eguli\`ere de $M$.
\end{definition}

\begin{rem}\label{rem:B-paire}
La notion de paire de Ressayre dépend du choix du sous-groupe de Borel $B$. Lorsque cela nécessaire, on parlera de 
$B$-paire de Ressayre. Par exemple, dans la section \ref{sec:RR-mutliplicative}, on utilisera la remarque suivante: 
si $k\in K$ normalise l'algèbre de Lie $\tgot$, alors $(\gamma,C)$ est une $B$-paire de Ressayre si et seulement si 
$(Ad(k)(\gamma),kC)$ est une $kBk^{-1}$-paire de Ressayre.
\end{rem}

Pour tout $x\in C$, l'espace tangent $T\vert_{[e,x]}(B\times_{B\cap P(\gamma)} C^-)$ est isomorphe \`a 
$\ngot^{\gamma_o>0}\times (T_x M)^{\gamma_o\leq 0}$, et l'application tangente  
$\T q_\gamma \vert_{[e,x]}:T\vert_{[e,x]}(B\times_{B\cap P(\gamma)} C^-)\to \T_x M$ est d\'efinie par : 
$(X, v)\mapsto X\cdot x +v$. Ainsi $\T q_\gamma \vert_{[e,x]}$ est un isomorphisme si et seulement si 
$\rho_x^\gamma$ est un isomorphisme. On voit ainsi que la notion \og paire de Ressayre \fg{} raffine  celle de  \og paire de Ressayre infinit\'esimale \fg.

%%%%%%%%%%%%%%%%%%%%%%%%%%%%%%%%%%%%%%%%%%
\subsection{Lemme fondamental}\label{sec:lemme-fondamental}
%%%%%%%%%%%%%%%%%%%%%%%%%%%%%%%%%%%%%%%%%%

Pour caractériser les paires de Ressayre, nous utiliserons à de plusieurs reprises le résultat suivant. Soient $M$ une variété de Kähler $U$-hamiltonienne, 
$\beta\in\ugot$ et $C$ une composante connexe de $M^\beta$. La variété de Bialynicki-Birula 
$C^-= \Big\{m\in M, \lim_{t\to\infty} \exp(t\beta_o) m\ \in C \Big\}$ est muni de l'action du groupe parabolique $P(\beta)$. 
Soit $H\subset U_\C$ un sous-groupe complexe fermé tel que son algèbre de Lie $\hgot$ contienne $\C\beta$.

On considère l'application holomorphe 
$$
\Theta_H : H\times_{H\cap P(\beta)}C^-\longrightarrow M
$$
qui envoie $[h:x]$ sur $hx$. L'ouvert $\{z, d\Theta_H\vert_z \ \mathrm{est\ un\ isomorphisme}\}$ est de la forme $H\times_{H\cap P(\beta)}(C^-)_{reg}$. De plus, 
$z\in (C^-)_{reg}\cap C$ si et seulement si l'application linéaire
$$
X\in \hgot^{\beta_o>0}\longrightarrow X\cdot z\in (T_z M)^{\beta_o >0}
$$
est un isomorphisme.

On a le résultat classique suivant (voir \cite[Lemma 2.5]{pep-ressayre} ou \cite[Lemme 3.10]{Brion-bourbaki}).

\begin{lem}\label{lem:RP-fondamental}
Supposons que $(C^-)_{reg}\neq\emptyset$. Alors $C\cap (C^-)_{reg}\neq\emptyset$ si et seulement si 
\begin{equation}\label{condition-numerique-2}
\tr(\beta_o\circlearrowright \hgot^{\beta_o>0})=\tr(\beta_o\circlearrowright (TM\vert_{C})^{\beta_o>0}).
\end{equation}
Dans ce cas, $(C^-)_{reg}=\pi^{-1}(C\cap (C^-)_{reg})$ où $\pi : C^-\to C$ désigne la projection.
\end{lem}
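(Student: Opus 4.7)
Le plan est d'étudier la section $J := \det d\Theta_H$ du fibré en droites $L := \det T^*X \otimes \Theta_H^*\det TM$ sur $X := H\times_{H\cap P(\beta)} C^-$, en exploitant son équivariance sous le flot $\psi_t := \exp(t\beta_o)$ et le fait que le $\beta_o$-poids de $L$ au-dessus des points fixes situés dans $C$ coïncide précisément avec l'écart de trace apparaissant dans (\ref{condition-numerique-2}).

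Pour tout $z\in C$, on a $T_zC^- = (T_zM)^{\beta_o\leq 0}$ par Bialynicki-Birula, et $\hgot\cap\mathrm{Lie}(P(\beta)) = \hgot^{\beta_o\leq 0}$, d'où $T_{[1:z]}X \simeq \hgot^{\beta_o>0}\oplus (T_zM)^{\beta_o\leq 0}$. La différentielle $d\Theta_H|_{[1:z]}$ envoie $(X,v)$ sur $X\cdot z + v$; comme $z\in M^\beta$, le vecteur $X\cdot z$ a même $\beta_o$-poids que $X$ et appartient donc à $(T_zM)^{\beta_o>0}$. Par rapport au scindage $T_zM = (T_zM)^{\beta_o>0}\oplus(T_zM)^{\beta_o\leq 0}$, la différentielle est diagonale par blocs, avec blocs $\rho^\beta_z$ et identité, de sorte que $C\cap (C^-)_{reg} = \{z\in C : \rho^\beta_z \text{ iso}\}$. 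Si un tel $z_0$ existe, l'équivariance sous $\beta_o$ de $\rho^\beta_{z_0}$ force l'égalité des dimensions poids par poids, et en particulier la condition (\ref{condition-numerique-2}): ceci établit la nécessité.

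Pour la réciproque, le $\beta_o$-poids de la fibre $L|_{[1:z]}$, pour $z\in C$, se calcule à partir des décompositions ci-dessus et vaut
$$
\mu \;=\; \tr\bigl(\beta_o\circlearrowright (T_zM)^{\beta_o>0}\bigr) \;-\; \tr\bigl(\beta_o\circlearrowright \hgot^{\beta_o>0}\bigr),
$$
indépendant de $z$ par connexité de $C$, et égal à $0$ exactement sous (\ref{condition-numerique-2}). Fixons $w\in (C^-)_{reg}$ et posons $z_0 := \lim_{t\to\infty}\psi_t w \in C$; l'inclusion $\pi^{-1}(z_0)\hookrightarrow X$, $w'\mapsto[1:w']$, identifie la cellule de Bialynicki-Birula de $[1:z_0]$ dans $X$ avec $(T_{z_0}M)^{\beta_o<0}$, sur laquelle $\psi_t$ agit avec tous les poids strictement négatifs. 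Puisque $\mu=0$, on peut trivialiser $L$ de manière $\psi_t$-équivariante au voisinage de $[1:z_0]$; la section holomorphe $J$ y correspond alors à une fonction $\psi_t$-invariante sur la cellule, nécessairement constante (tout monôme non trivial y porte un poids strictement négatif, incompatible avec l'invariance). On obtient $J(w') = J([1:z_0])$ pour tout $w'\in\pi^{-1}(z_0)$; comme $J(w)\neq 0$ par hypothèse, il vient $J([1:z_0])\neq 0$, donc $z_0\in C\cap (C^-)_{reg}$. Ce même argument, appliqué à chaque fibre de $\pi$, montre que $(C^-)_{reg}\cap \pi^{-1}(z)$ est soit vide (si $z\notin(C^-)_{reg}$), soit tout $\pi^{-1}(z)$, d'où $(C^-)_{reg} = \pi^{-1}(C\cap (C^-)_{reg})$.

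Le point délicat sera l'existence d'une trivialisation $\psi_t$-équivariante de $L$ au voisinage de $[1:z_0]$, rendue possible précisément parce que $\mu=0$ (via un théorème de linéarisation équivariante holomorphe au point fixe), ainsi que la réduction, via la cellule contractante, à l'énoncé polynomial que les fonctions holomorphes invariantes sous un flot linéaire de poids strictement négatifs sont constantes.
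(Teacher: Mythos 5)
Your proof is correct and is essentially the classical argument that the paper invokes by reference (\cite[Lemma 2.5]{pep-ressayre}, \cite[Lemme 3.10]{Brion-bourbaki}) and whose mechanism it reproduces later in the proof of the théorème \ref{th:paire-ressayre-multiplicatif} (the line bundle $\mathbb{L}=\hom(\det(\ngot^{\gamma_o>0}),\det((TM\vert_C)^{\gamma_o>0}))$ and the section $\theta$): the defect in (\ref{condition-numerique-2}) is precisely the $\beta_o$-weight of the determinant line bundle along $C$, and the flow-equivariant section $\det d\Theta_H$ is constant along the contracting fibres exactly when this weight vanishes, which gives both directions and the identity $(C^-)_{reg}=\pi^{-1}(C\cap (C^-)_{reg})$. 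The only cosmetic point is that the equality $J(w')=J([1:z_0])$ makes literal sense only inside the equivariant local trivialization, so for an arbitrary $w'$ in the fibre one should first push it into that neighbourhood by the flow, using that the zero locus of $J$ is $\psi_t$-invariant — which is exactly the reduction you indicate in your final paragraph.
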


%%%%%%%%%%%%%%%%%%%%%%%%%%%%%%%%%%%%%%%%%%
\subsection{Caract\'erisation des paires de Ressayre}\label{sec:Characterization-RP}
%%%%%%%%%%%%%%%%%%%%%%%%%%%%%%%%%%%%%%%%%%

Nous avons la caractérisation suivante des paires de Ressayre.

\begin{prop}\label{prop:caracteriser-RP}
\begin{itemize}
\item Si $(\gamma,C)$ est une paire de Ressayre infinit\'esimale, alors
\begin{enumerate}
\item[$(A_1)$] $\dim(\ngot^{\gamma_o>0})=\mathrm{rank} (\T M\vert_C)^{\gamma_o>0}$,
\item[$(A_2)$] $\tr(\gamma_o \circlearrowright \ngot^{\gamma_o>0})=\tr(\gamma_o \circlearrowright (\T M\vert_C)^{\gamma_o>0})$.
\end{enumerate}
\item $(\gamma,C)$ est une paire de Ressayre infinit\'esimale si et seulement si les conditions $(A_1)$, $(A_2)$, et
$$
 (A_3')\qquad \Big\{y\in M, q_\gamma^{-1}(y)\neq\emptyset \Big\}\quad \textrm{est d'int\'erieur non vide},
$$
sont satisfaites.
\item  $(\gamma,C)$ est une paire de Ressayre si et seulement si les conditions $(A_1)$, $(A_2)$, et
$$
 (A_3)\qquad \Big\{y\in M, \sharp \{q_\gamma^{-1}(y)\}=1 \Big\}\quad \textrm{contient un ouvert dense},
$$
sont satisfaites.
\end{itemize}
\end{prop}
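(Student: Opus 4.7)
Le plan est d'appliquer systématiquement le lemme fondamental \ref{lem:RP-fondamental} avec $H=B$ et $\beta=\gamma$. Puisque $\gamma_o$ agit trivialement sur $\tgot_\C$, on a $\bgot^{\gamma_o>0}=\ngot^{\gamma_o>0}$, de sorte que la condition numérique (\ref{condition-numerique-2}) se réduit exactement à $(A_2)$. D'autre part, un calcul direct donne
$$
\dim\bigl(B\times_{B\cap P(\gamma)} C^-\bigr)=\dim \ngot^{\gamma_o>0}+\dim C^-,
$$
tandis que pour tout $x\in C$, $\dim M=\dim(\T_x M)^{\gamma_o>0}+\dim C^-$, puisque $\T_x C^-=(\T_x M)^{\gamma_o\leq 0}$. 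La condition $(A_1)$ traduit donc précisément l'égalité entre les dimensions de la source et de la cible de $q_\gamma$.

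Pour le premier point et le sens direct du deuxième, je pars de l'hypothèse qu'il existe $x\in C$ tel que $\rho_x^\gamma$ est un isomorphisme. L'égalité des dimensions livre $(A_1)$, et l'équivariance de $\rho_x^\gamma$ par rapport au sous-tore $\overline{\exp(\R\gamma_o)}\subset U_x$ identifie les poids de $\gamma_o$ avec multiplicité, d'où $(A_2)$. La différentielle $\T q_\gamma\vert_{[e,x]} : \ngot^{\gamma_o>0}\oplus(\T_x M)^{\gamma_o\leq 0}\to \T_x M$, $(X,v)\mapsto X\cdot x+v$, est alors un isomorphisme, donc $q_\gamma$ est submersive en $[e,x]$ et son image est d'intérieur non vide: $(A_3')$ est vérifiée. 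Réciproquement, sous $(A_1),(A_2),(A_3')$, l'égalité des dimensions source/cible de $q_\gamma$ entraine que tout point où $q_\gamma$ est de rang maximal est un point où $\T q_\gamma$ est un isomorphisme; l'hypothèse $(A_3')$ assure l'existence d'un tel point, donc $(C^-)_{reg}\neq\emptyset$. Le lemme \ref{lem:RP-fondamental}, via $(A_2)$, livre alors $C\cap(C^-)_{reg}\neq\emptyset$, c'est-à-dire un $x\in C$ avec $\rho_x^\gamma$ inversible.

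Pour le troisième point, le sens direct est immédiat à partir de la définition: sur l'ouvert $B\times_{B\cap P(\gamma)}\Ucal$ où $q_\gamma$ se restreint en un difféomorphisme vers $\Vcal$, la différentielle est inversible -- ce qui donne $(A_1),(A_2)$ aux points de $\Ucal\cap C$ -- et chaque point d'un ouvert dense de $\Vcal$ admet une préimage unique dans la source, d'où $(A_3)$. Réciproquement, sous $(A_1),(A_2),(A_3)$, on pose $\Ucal_0:=(C^-)_{reg}$, qui par ce qui précède est un ouvert dense de $C^-$, $B\cap P(\gamma)$-invariant, rencontrant $C$. L'application $q_\gamma$ est un difféomorphisme local de $\Zcal:=B\times_{B\cap P(\gamma)}\Ucal_0$ sur un ouvert $B$-invariant $\Ycal\subset M$. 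On définit alors $\Vcal:=\mathrm{int}\bigl(\Ycal\cap\{y\in M,\ \sharp q_\gamma^{-1}(y)=1\}\bigr)$ et $\Ucal:=\Ucal_0\cap q_\gamma^{-1}(\Vcal)$.

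L'étape délicate, qui constitue l'obstacle principal de la preuve, est d'établir la densité de $\Vcal$ dans $M$. Elle repose sur l'irréductibilité de la variété source $B\times_{B\cap P(\gamma)}C^-$ (issue de la connexité de $C^-$, fibré complexe au-dessus du connexe $C$) et sur le caractère génériquement fini de $q_\gamma$ (conséquence de l'égalité des dimensions et de $(A_3)$). Un argument classique de généricité holomorphe assure que $\Ycal$ est dense dans $M$, son complément étant contenu dans l'image par $q_\gamma$ du sous-ensemble analytique strict $B\times_{B\cap P(\gamma)}(C^-\setminus\Ucal_0)$; puis que $\Vcal$ est dense dans $\Ycal$ en combinant l'injectivité locale de $q_\gamma\vert_\Zcal$ avec $(A_3)$.
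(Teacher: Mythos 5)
Votre traitement des deux premiers points suit pour l'essentiel la démarche du texte (Sard pour extraire un point régulier de $(A_3')$, puis le lemme \ref{lem:RP-fondamental} appliqué avec $H=B$, $\beta=\gamma$, la condition numérique se réduisant à $(A_2)$), et cette partie est correcte. En revanche, le troisième point contient deux lacunes réelles. Dans le sens direct, vous affirmez qu'il est immédiat que chaque point d'un ouvert dense de $\Vcal$ admet une préimage unique \emph{dans la source}, d'où $(A_3)$. Or la définition d'une paire de Ressayre ne donne l'unicité de la préimage que dans l'ouvert $B\times_{B\cap P(\gamma)}\Ucal$ : rien n'exclut a priori des préimages supplémentaires dans $N\setminus B\times_{B\cap P(\gamma)}\Ucal$, dont l'image peut rencontrer $\Vcal$ (le complémentaire de $\Ucal$ dans $C^-$ est fermé d'intérieur vide, mais pas nécessairement analytique, donc son image n'est pas contrôlée directement). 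C'est précisément là que se situe le travail dans la preuve du texte : on montre d'abord que $q_\gamma$ est injective sur l'ouvert $N_{reg}$ tout entier (argument de difféomorphismes locaux combiné à la densité de $\Ucal_M$), puis que $q_\gamma(N\setminus N_{reg})$ est d'intérieur vide (Sard), et on conclut que $q_\gamma(N_{reg})\setminus q_\gamma(N\setminus N_{reg})$ est contenu dans $M_{reg}$ et dense. Cette étape n'est donc pas immédiate, et votre esquisse l'escamote.

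Dans le sens réciproque, votre argument de densité repose sur l'affirmation que le complémentaire de $\Ycal=q_\gamma(N_{reg})$ est contenu dans $q_\gamma\bigl(B\times_{B\cap P(\gamma)}(C^-\setminus\Ucal_0)\bigr)$ : cela présuppose la surjectivité de $q_\gamma$, qui n'est pas acquise ici. L'argument correct, celui du texte, passe par $(A_3)$ : on établit $q_\gamma^{-1}(M_{reg})\subset N_{reg}$ (lemme \ref{lem:caracteriser-RP}), donc $M_{reg}\subset q_\gamma(N_{reg})$, et la densité de $M_{reg}$ donne celle de $\Ycal$ ; l'injectivité de $q_\gamma$ sur $N_{reg}$, démontrée à partir de la densité de $M_{reg}$, fournit alors directement le difféomorphisme voulu en prenant $\Ucal=(C^-)_{reg}$ et $\Vcal=q_\gamma(N_{reg})$. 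Enfin, votre choix rétréci $\Ucal=\Ucal_0\cap q_\gamma^{-1}(\Vcal)$ avec $\Vcal\subset M_{reg}$ pose un problème supplémentaire : vous ne vérifiez pas que ce $\Ucal$ rencontre $C$, alors que la définition l'exige, et un point de $C\cap(C^-)_{reg}$ n'a aucune raison d'avoir une fibre réduite à un point. Le texte évite cette difficulté en gardant $\Ucal=(C^-)_{reg}$, dont l'intersection avec $C$ est non vide grâce à $(A_2)$ via le lemme \ref{lem:RP-fondamental}.
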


\begin{proof} 
L'existence de $x\in C$ pour lequel l'application lin\'eaire $\rho_x^\gamma:\ngot^{\gamma_o>0}\longrightarrow (\T_x M)^{\gamma_o>0}$ 
est un isomorphisme entraine directement les identit\'es $(A_1)$ et $(A_2)$. Le premier point est v\'erifi\'e.

L'existence de $x\in C$ pour lequel l'application tangente  $\T q_\gamma \vert_{[e,x]}$ est un isomorphisme entraine que l'image de 
$q_\gamma$ est d'int\'erieur non-vide. On a donc montr\'e que si $(\gamma,C)$ est une paire de Ressayre infinit\'esimale, alors 
les conditions $(A_1)$, $(A_2)$ et $(A'_3)$ sont satisfaites. 

Supposons maintenant que les conditions $(A_1)$, $(A_2)$ et $(A'_3)$ sont satisfaites. Gr\^ace \`a $(A_1)$, on sait que $q_\gamma$ est une 
application holomorphe entre deux vari\'et\'es de m\^eme dimension. Notons $N_{reg}$ l'ensemble des points 
$y\in N:=B\times_{B\cap P(\gamma)} C^-$ pour lesquels l'application tangente $\T q_\gamma \vert_{y}$ est un isomorphisme. 
L'hypoth\`ese $(A'_3)$ montre, gr\^ace au Lemme de Sard, que $N_{reg}\neq \emptyset$. Comme $N_{reg}$ est invariant sous l'action du groupe $B$, 
on a $N_{reg}=B\times_{B\cap P(\gamma)} (C^-)_{reg}$ où  $(C^-)_{reg}=C^- \cap N_{reg}$ est un ouvert $B\cap P(\gamma)$-invariant de $C^-$.

Notons $\pi_C: C^-\to C$ la projection. Finalement, l'hypoth\`ese $(A_2)$ montre que $(C^-)_{reg}=(\pi_C)^{-1}(C\cap N_{reg})$ 
(voir le lemme \ref{lem:RP-fondamental}), et pour tout 
$x\in C\cap N_{reg}$, l'application $\rho_x^\gamma$ est un isomorphisme. On d\'emontre ainsi que $(\gamma,C)$ est une paire de Ressayre infinit\'esimale.

A ce stade, nous avons d\'emontr\'e les deux premiers points.

Pour le troisi\`eme point, supposons tout d'abord que $(\gamma,C)$ satisfait les conditions $(A_1)$, $(A_2)$ et $(A_3)$, et notons $M_{reg}$ l'ensemble des points $y\in M$ pour lesquels $q_\gamma^{-1}(y)$ est un singleton.
Nous avons vu pr\'ec\'edemment que l'ouvert $N_{reg}$ est non-vide. Dans ce cas, son compl\'ementaire  $N-N_{reg}$ est un sous-ensemble analytique de la vari\'et\'e complexe $N$, en particulier  $N_{reg}$ est dense.

\begin{lem}\label{lem:caracteriser-RP}
\begin{enumerate}
\item $q_\gamma^{-1}(M_{reg})\subset N_{reg}$.
\item $q_\gamma$ est injectif sur l'ouvert $N_{reg}$.
\end{enumerate}
\end{lem}
\begin{proof} Soit $x\in N$ tel que $y= q_\gamma(x)\in M_{reg}$. Si l'application $\T q_\gamma \vert_{x}$ n'est pas un isomorphisme, le point $x\in N$ n'est pas 
isol\'e dans la fibre $q_\gamma^{-1}(y)$, ce qui contradictoire avec le fait que $q_\gamma^{-1}(y)$ est un singleton. Donc $x\in N_{reg}$. 

Supposons qu'il existe deux points $a\neq b$ de $N_{reg}$ tels que $c:=q_\gamma(a)=q_\gamma(b)$. Alors, il existe des voisinages ouverts (euclidiens) 
respectifs de $a,b$ et $c$, not\'es $\Vcal_a\subset N$,  $\Vcal_b\subset N$ et $\Vcal_c,\Vcal_c'\subset M$, pour lesquels l'application 
$q_\gamma$ d\'efinit des bijections $\Vcal_a\simeq \Vcal_c$ et $\Vcal_b\simeq \Vcal_c'$, et tels que $\Vcal_a\cap\Vcal_b=\emptyset$. 
Comme $M_{reg}$ est dense, il existe $y\in \Vcal_c\cap \Vcal'_c\cap M_{reg}$. Alors la fibre $q_\gamma^{-1}(y)$ intersecte \`a la fois 
$\Vcal_a$ et $\Vcal_b$, ce qui contredit le fait que $q_\gamma^{-1}(y)$ est un singleton. On a ainsi montr\'e que l'application 
$q_\gamma$ est injective sur l'ouvert $N_{reg}$.
\end{proof}

D'apr\`es le lemme pr\'ec\'edent, on sait que $q_\gamma$ d\'efinit un diff\'eomorphisme holomorphe entre les ouverts $N_{reg}\subset N$ et 
$q_\gamma(N_{reg})\subset M$. Comme $M_{reg}\subset q_\gamma(N_{reg})$, et que $M_{reg}$ est dense, on peut conclure que 
$q_\gamma(N_{reg})$ un ouvert dense de $M$. On a d\'ej\`a expliqu\'e que la condition $(A_2)$ impose que l'ouvert $N_{reg}$ intersecte la sous-vari\'et\'e 
$C\subset N$. On a finalement montr\'e que $(\gamma,C)$ est une paire de Ressayre. 

R\'eciproquement, supposons que $(\gamma,C)$ est une {\em paire de Ressayre} de $M$: il existe des ouverts denses $\Ucal_N\subset N$ et $\Ucal_M\subset M$, 
tous deux $B$-invariants, tel que $q_{\gamma}$ d\'efinit un diff\'eomorphisme $\Ucal_N\simeq \Ucal_M$, et de plus $\Ucal_N\cap C\neq \emptyset$. Cela implique que 
$(\gamma,C)$ est une {\em paire de Ressayre infinit\'esimale}, et donc $(\gamma,C)$ v\'erifie les conditions $(A_1)$ et $(A_2)$. 
Il nous reste à montrer que $(\gamma,C)$ satisfait la condition $(A_3)$. Par d\'efinition de l'ouvert $N_{reg}$, on a les inclusions  
$\Ucal_N\subset N_{reg}$ et $\Ucal_M\subset q_\gamma(N_{reg})$. La m\^eme d\'emarche que celle employ\'ee au lemme 
\ref{lem:caracteriser-RP} permet de voir que $q_\gamma$ est injectif sur l'ouvert $N_{reg}$. 
Ainsi $q_\gamma$ d\'efinit un diff\'eomorphisme entre les ouverts denses $N_{reg}\subset N$ et $q_\gamma(N_{reg})$. 
Le lemme suivant compl\`ete la preuve du dernier point de la proposition \ref{prop:caracteriser-RP}.

\begin{lem}
\begin{enumerate}
\item L'image $q_\gamma(N-N_{reg})\subset M$ est d'int\'erieur vide.
\item $q_\gamma(N_{reg})-q_\gamma(N-N_{reg})\subset M_{reg}$.
\item $M_{reg}$ contient un ouvert dense de $M$.
\end{enumerate}
\end{lem}
\begin{proof} 
Le premier point est une cons\'equence imm\'ediate du lemme de Sard. L'inclusion $q_\gamma(N_{reg})-q_\gamma(N-N_{reg})\subset M_{reg}$ vient du fait que 
$q_\gamma$ d\'efinit une bijection entre $N_{reg}$ est son image $q_\gamma(N_{reg})$. 
D'apr\`es les deux premiers points, $M_{reg}$ est dense car il contient l'intersection 
$q_\gamma(N_{reg})\cap \Vcal$, o\`u $\Vcal$ est l'ouvert dense \'egal \`a l'int\'erieur de $M-q_\gamma(N-N_{reg})$.
\end{proof}
\end{proof}

Dans les exemples que nous traiterons, les variétés sont algébriques. Dans ce cadre, nous adaptons la notion de paire de Ressayre.

\medskip

Si $f: \Xcal \to \Ycal$ est un morphisme dominant entre deux variétés algébriques complexes lisses de même dimension, on note $f^*:\C(\Ycal) \longrightarrow \C(\Xcal)$ 
l'extension obtenue au niveau des corps de fonctions rationnelles. On note $\deg(f)\in \N-\{0\}\cup\{\infty\}$ le degré de cette extension. Lorsque le degré $\deg(f)$ 
est fini, il existe un ouvert de Zariski $\Vcal\subset \Ycal$ tel que $\sharp \{f^{-1}(y)\} = \deg(f)$ pour tout $y\in\Vcal$. Ainsi, lorsque $\deg(f)=1$, le morphisme $f$ définit une correspondance birationnelle entre $\Xcal$ et $\Ycal$.

\medskip

Soient $M$ une variété de Kähler $U$-hamiltonienne possédant une structure de variété algébrique. Soit 
$q_\gamma : B\times_{B\cap P(\gamma)} C^-\longrightarrow M$ le morphisme associé à $(\gamma,C)$.

\begin{definition}\label{def:B-ressayre-pair-alg}
La donn\'ee $(\gamma,C)$ est appel\'ee une {\em paire de Ressayre algébrique} de $M$ si les conditions 
suivantes sont satisfaites
\begin{enumerate}
\item $\deg(q_\gamma)=1$, 
\item Condition $(A_2)$ : $\tr(\gamma_o \circlearrowright \ngot^{\gamma_o>0})=\tr(\gamma_o \circlearrowright (\T M\vert_C)^{\gamma_o>0})$.
\end{enumerate}

La donn\'ee $(\gamma,C)$ est appel\'ee une {\em paire de Ressayre de degré fini} de $M$ si les conditions 
suivantes sont satisfaites
\begin{enumerate}
\item[1.'] $\deg(q_\gamma)\in \N-\{0\}$, 
\item[2.] Condition $(A_2)$ : $\tr(\gamma_o \circlearrowright \ngot^{\gamma_o>0})=\tr(\gamma_o \circlearrowright (\T M\vert_C)^{\gamma_o>0})$.
\end{enumerate}

Si en outre $\dim_\ugot(C)-\dim_\ugot(M)\in\{0,1\}$ et que $\gamma$ est rationnel, nous appelons $(\gamma,C)$ une paire de Ressayre algébrique (ou de degré fini) r\'eguli\`ere de $M$.
\end{definition}

Grâce à la proposition \ref{prop:caracteriser-RP}, on voit immédiatement qu'une paire de Ressayre algébrique est une paire de Ressayre, et qu'une 
paire de Ressayre de degré fini est une paire de Ressayre infinitésimale.

%%%%%%%%%%%%%%%%%%%%%%%%%%%%%%%%%%%%%%%%%%%%%%%%%%%%%%
%%%%%%%%%%%%%%%%%%%%%%%%%%%%%%%%%%%%%%%%%%%%%%%%%%%%%%
\section{Description du polytope de Kirwan}
%%%%%%%%%%%%%%%%%%%%%%%%%%%%%%%%%%%%%%%%%%%%%%%%%%%%%%
%%%%%%%%%%%%%%%%%%%%%%%%%%%%%%%%%%%%%%%%%%%%%%%%%%%%%%

La fonction $x\mapsto \langle\Phi_\ugot(x),\gamma\rangle$ est localement constante sur $M^\gamma$: on désigne donc par $\langle\Phi_\ugot(C),\gamma\rangle$ sa valeur sur 
une composante connexe $C\subset M^\gamma$.

\medskip

Voici une version constructive du th\'eor\`eme de Kirwan, d\'emontr\'ee par Ressayre dans le cadre projectif \cite{Ressayre10} et 
\'etendu au cadre K\"ahl\'erien dans \cite{pep-ressayre}.

\begin{theorem}\label{th:infinitesimal-ressayre-pairs} 
Soit $(M,\Omega)$ une vari\'et\'e de K\"{a}hler $U$-hamiltonienne, \'equip\'ee d'une application moment propre.  
Pour $\xi\in\tgot^*_+$, les énoncés suivants sont équivalents :
\begin{itemize}
\item $\xi\in \Delta_{\ugot}(M)$.
\item Pour toute paire de Ressayre infinit\'esimale r\'eguli\`ere  $(\gamma,C)$, on a $\langle \xi,\gamma\rangle\geq \langle \Phi_\ugot(C),\gamma\rangle$.
\item Pour toute paire de Ressayre r\'eguli\`ere  $(\gamma,C)$, nous avons 
$\langle \xi,\gamma\rangle\geq \langle \Phi_\ugot(C),\gamma\rangle$.
\end{itemize}
\end{theorem}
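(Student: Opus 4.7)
Mon plan procède par la démonstration des trois implications de manière cyclique. L'implication $(3) \Rightarrow (2)$ est immédiate : comme noté après la définition \ref{def:B-ressayre-pair}, la notion de paire de Ressayre raffine celle de paire de Ressayre infinitésimale, donc le deuxième point entraîne \emph{a priori} une collection d'inégalités plus vaste que celle du troisième, et l'on va plutôt l'appliquer dans l'autre sens ici.

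Pour $(1) \Rightarrow (3)$, j'établirai l'inégalité $\langle\xi, \gamma\rangle \geq \langle\Phi_\ugot(C), \gamma\rangle$ en exploitant simultanément la géométrie de la sous-variété de Bialynicki-Birula $C^-$ et la densité de l'ouvert $\Vcal = q_\gamma(B \times_{B \cap P(\gamma)} C^-)$ fournie par la définition \ref{def:B-ressayre-pair}. Le point de départ est que sur $C^-$, le flot $t \mapsto e^{t\gamma_o}z$ converge vers $C$ quand $t \to +\infty$ et fait décroître la fonction $\langle\Phi_\ugot, \gamma\rangle$ (sa dérivée le long du flot vaut $-\|\gamma_M\|^2 \leq 0$), d'où $\langle\Phi_\ugot(z), \gamma\rangle \geq \langle\Phi_\ugot(C), \gamma\rangle$ pour tout $z \in C^-$. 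Le prolongement à $\Vcal$, puis à $M$ entier par continuité, puis enfin à $\Delta_\ugot(M)$, utilisera l'inégalité de Kostant pour comparer $\langle\Phi_\ugot(m), \gamma\rangle$ et $\langle\xi, \gamma\rangle$ lorsque $\Phi_\ugot(m) \in U\xi$ et $\xi \in \tgot^*_+$.

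L'implication $(2) \Rightarrow (1)$ est la partie constructive du théorème, et repose sur la stratification de Kirwan-Ness de $M$. Je raisonnerai par contraposée : si $\xi \notin \Delta_\ugot(M)$, comme ce dernier est un sous-ensemble fermé, convexe et localement polyédral (théorème \ref{theo:convexe}), un argument de Hahn-Banach suivi d'une approximation rationnelle fournira un vecteur $\gamma \in \tgot_\Q$ tel que $\langle\xi, \gamma\rangle < \inf_{\eta \in \Delta_\ugot(M)} \langle\eta, \gamma\rangle$. Cet infimum s'identifiera à $\langle\Phi_\ugot(C), \gamma\rangle$ pour une composante connexe $C \subset M^\gamma$ bien choisie, détectée via l'analyse Morse-Bott du carré $\|\Phi_\ugot\|^2$ et la structure de son lieu critique le long de $\gamma$.

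Le principal obstacle sera d'établir que la paire $(\gamma, C)$ ainsi produite est effectivement une paire de Ressayre infinitésimale régulière, c'est-à-dire que $\rho_x^\gamma : \ngot^{\gamma_o > 0} \to (\T_x M)^{\gamma_o > 0}$ est surjectif pour un certain $x \in C$ (ou, de manière équivalente par la proposition \ref{prop:caracteriser-RP}, que les conditions $(A_1)$, $(A_2)$ et $(A_3')$ sont satisfaites), ainsi que la condition de dimension $\dim_\ugot(C) - \dim_\ugot(M) \in \{0, 1\}$. Cette vérification exige une étude locale détaillée de la géométrie Kählérienne au voisinage de $C$ et constitue le cœur technique du résultat, développé dans \cite{Ressayre10} pour le cadre projectif puis étendu au cadre Kählérien dans \cite{pep-ressayre}.
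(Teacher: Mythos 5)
Votre schéma logique ne se referme pas. Toute paire de Ressayre (régulière) étant en particulier une paire de Ressayre infinitésimale (régulière), l'implication immédiate est $(2)\Rightarrow(3)$ — le point (2) impose une famille d'inégalités plus vaste — et non $(3)\Rightarrow(2)$, que vous déclarez « immédiate » et qui est en réalité l'essentiel de la difficulté. Avec les flèches que vous établissez effectivement, à savoir $(1)\Rightarrow(3)$, $(2)\Rightarrow(1)$ et la triviale $(2)\Rightarrow(3)$, rien ne fournit $(3)\Rightarrow(1)$ ni $(1)\Rightarrow(2)$ : l'équivalence n'est pas démontrée. Le schéma suivi dans \cite{Ressayre10,pep-ressayre}, et reproduit dans ce mémoire pour l'analogue réel (théorème \ref{th:real-ressayre-pairs-preuve}), est tout autre : on montre d'une part que l'appartenance à $\Delta_\ugot(M)$ entraîne les inégalités pour \emph{toutes} les paires infinitésimales (astuce de décalage), d'autre part — et c'est le cœur du résultat — que les inégalités associées à une famille finie \emph{explicite} de paires de Ressayre régulières suffisent à découper $\Delta_\ugot(M)$ : les paires $(\zeta_{\tsgot},C_{\tsgot})$, $(\zeta^\pm_l,C^\pm_l)$, $(\zeta_F,C_F)$ de la section \ref{sec:step-3}, dont la régularité repose sur l'analyse du stabilisateur générique (\S\ref{sec:stabilisateur-gen}). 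Votre contraposée par Hahn--Banach et approximation rationnelle ne remplace pas cette étape : un hyperplan séparateur rationnel arbitraire ne produit pas une paire de Ressayre (dans la construction du théorème \ref{theo:construction-RP}, le vecteur doit être la différence entre le point extérieur et sa projection orthogonale sur $\Delta$, en général irrationnelle et associée à une paire non régulière), et encore moins une paire régulière.

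Par ailleurs, votre argument pour $(1)\Rightarrow(3)$ est faux à l'étape d'extension. L'inégalité $\langle\Phi_\ugot(z),\gamma\rangle\geq\langle\Phi_\ugot(C),\gamma\rangle$ vaut bien pour $z\in C^-$ (monotonie le long du flot de $\gamma_o$), mais elle ne passe pas à $\Vcal=q_\gamma\big(B\times_{B\cap P(\gamma)}C^-\big)$ : la fonction $\langle\Phi_\ugot,\gamma\rangle$ n'est ni $B$-invariante ni monotone le long des orbites de $B$, et $\langle\Phi_\ugot(C),\gamma\rangle$ n'est pas, en général, le minimum de $\langle\Phi_\ugot,\gamma\rangle$ sur $M$. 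Exemple : $U=\SU(2)$, $M=U\lambda\simeq\C P^1$ avec $\lambda>0$, donc $\Delta_\ugot(M)=\{\lambda\}$ ; pour découper ce singleton le théorème doit fournir, pour un $\gamma$ dominant, une paire régulière $(\gamma,C)$ avec $\langle\Phi_\ugot(C),\gamma\rangle=\lambda\gamma=\max_M\langle\Phi_\ugot,\gamma\rangle$, alors que $\min_M\langle\Phi_\ugot,\gamma\rangle=-\lambda\gamma$ ; ici $C^-=C$ est un point fixe et $\Vcal$ est la grande cellule de Bruhat, sur laquelle votre borne inférieure est violée. L'argument correct (cf. la proposition \ref{prop:delta-p-delta-inf-RP} dans le cadre réel) ne cherche aucune borne globale sur $M$ : il remplace $M$ par $N=M\times(U\xi)^o$ pour ramener $\xi$ à $0$, utilise la densité de la strate semi-stable $N_{\langle 0\rangle}$ et le fait que l'ensemble saturé $U_\C C^-_N$ est d'intérieur non vide (c'est là que sert l'hypothèse de paire de Ressayre infinitésimale), puis ne déplace le point semi-stable obtenu que par le parabolique $P(\gamma)$, le long duquel les limites $\lim_{t\to\infty}e^{t\gamma_o}p\,e^{-t\gamma_o}$ existent, ce qui permet de passer à la limite dans l'inégalité de monotonie et d'obtenir $0\geq\langle\Phi_\ugot(C),\gamma\rangle-\langle\xi,\gamma\rangle$.
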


\begin{rem}Le résultat précédent est encore valable sans la condition de \emph{régularité} sur les paires de Ressayre.

\end{rem}

%%%%%%%%%%%%%%%%%%%%%%%%%%%%%%%%%%%%%%%%%%%%%%%%%%%%%%
\section{Vari\'et\'es de K\"{a}hler $(U,\sigma)$-Hamiltoniennes}\label{sec:U-sigma-hamiltonien}
%%%%%%%%%%%%%%%%%%%%%%%%%%%%%%%%%%%%%%%%%%%%%%%%%%%%%%

Soit $\sigma$ une involution sur le groupe compact $U$. Nous d\'esignons par $K$ la composante connexe du sous-groupe de point fixe 
$U^\sigma$. Au niveau des alg\`ebres de Lie, nous avons une d\'ecomposition  $\ugot=\ugot^{\sigma}\oplus\ugot^{-\sigma}$ o\`u 
$\ugot^{\pm\sigma}=\{X\in \ugot, \sigma(X)=\pm X\}$. 

On d\'esigne encore par $\sigma$ l'involution \emph{anti-holomorphe} sur $U_\C$ 
d\'efinie par les relations
\begin{equation}\label{eq:sigma-anti-holomorphe}
\sigma(ke^{iX})=\sigma(k)e^{-i\sigma(X)},\qquad \forall (k,X)\in U\times\ugot.
\end{equation}

Soit $G$ la composante connexe du sous-groupe ferm\'e de $U_\C$ fix\'e par $\sigma$ : c'est un sous-groupe r\'eductif r\'eel qui est stable sous 
l'involution de Cartan. Son algèbre de Lie est \'egale \`a $\ggot=\kgot\oplus \pgot$ o\`u $\kgot=\ugot^{\sigma}$ et 
$$
\pgot=i \ugot^{-\sigma}.
$$ 
Le groupe de Lie réductif réel $G$ admet la d\'ecomposition de Cartan $K\times\pgot \to G$, $(k,X)\mapsto k e^X$.

\begin{definition}\label{def:U-sigma-kahler}
Une vari\'et\'e de K\"{a}hler $(U,\sigma)$-hamiltonienne est un tuple $(M,\Omega,\tau)$ constitu\'e d'une vari\'et\'e complexe 
$(M,\J)$ \'equip\'ee d'une action holomorphe du groupe $U_\C$, d'une forme symplectique $U$-invariante $\Omega$ compatible avec la structure complexe $\J$, 
d'une involution $\tau$ sur $M$, et d'une application moment $\Phi_\ugot$ pour l'action de $U$ sur $(M,\Omega)$. 
De plus, nous imposons les conditions suivantes :
\begin{align*}
\tau^*(\J)&=-\J, \quad \tau \ \mathrm{est\ anti-holomorphe},\\
\tau^*(\Omega)&=-\Omega, \quad \tau \ \mathrm{est\ anti-symplectique},\\
\Phi_\ugot(\tau(m))&=-\sigma\left(\Phi_\ugot(m)\right),\\
\tau(gm)&= \sigma(g)\tau(m),
\end{align*}
pour tout $m\in M$ et $g\in U_\C$. 
\end{definition}

\begin{exemple}
Soit $\xi\in(\tgot^*)^{-\sigma}$. L'orbite coadjointe $U\xi$ est isomorphe au quotient $U_\C/P(\xi)$, où le sous groupe parabolique $P(\xi)\subset U_\C$ est invariant sous l'involution $\sigma$ (car $\sigma(i\xi)=i\xi$). Ainsi l'involution anti-holomorphe $\tau_\xi: U_\C/P(\xi)\to U_\C/P(\xi)$ peut être définie par $\tau_\xi([g])=[\sigma(g)]$.

La sous-variété fixée par l'involution $\tau_\xi$ est connexe, égale \`a $K\xi$, et l'isomorphisme 
$U\xi\simeq U_\C/P(\xi)$ descend en un isomorphisme $K\xi\simeq G/ G\cap P(\xi)$  où $G\cap P(\xi)$ est un sous-groupe parabolique réel de $G$.
\end{exemple}

Lorsque $M$ est une vari\'et\'e de K\"{a}hler $(U,\sigma)$-hamiltonienne, nous accordons une attention particuli\`ere \`a l'application $K$-\'equivariante 
$$
\Phi_\pgot : M\to \pgot^*
$$ 
d\'efinie par les relations $\langle\Phi_\pgot,\zeta\rangle=\langle\Phi_\ugot,i\zeta\rangle$, pour tout $\zeta\in\pgot$. Si nous d\'esignons par 
$j:\pgot\to\ugot^{-\sigma}$ l'isomorphisme $\zeta\mapsto i\zeta$, nous avons $\Phi_\pgot=j^*\circ \pi\circ\Phi_{\ugot}$, o\`u 
$j^* : (\ugot^*)^{-\sigma}\to \pgot^*$ est l'application duale et $\pi:\ugot^*\to(\ugot^*)^{-\sigma}$ est la projection canonique.

\begin{lem}
La vari\'et\'e $M$ est muni de la structure Riemannienne $(-,-)_M:=\Omega(-,\J-)$. Pour tout $\zeta\in\pgot$, le champ de vecteur $\zeta_M$ 
est le champ de vecteur gradient de la fonction $\langle\Phi_\pgot,\zeta\rangle$.
\end{lem}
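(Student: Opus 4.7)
Mon plan est de ramener l'identité recherchée à un calcul direct reposant sur trois ingrédients: l'holomorphie de l'action de $U_\C$ sur $M$, la condition de moment (\ref{eq:hamiltonien}) satisfaite par $\Phi_\ugot$, et la $\J$-invariance de la forme de Kähler $\Omega$.

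Premièrement, j'exploiterais la décomposition $\pgot = i\ugot^{-\sigma}$: pour $\zeta \in \pgot$ fixé, on écrit $\zeta = iY$ avec $Y \in \ugot^{-\sigma} \subset \ugot$, de sorte que $i\zeta = -Y \in \ugot$. L'étape clé est d'établir la relation
$$\zeta_M = \J\, Y_M.$$
Celle-ci découle directement de l'holomorphie de l'action $U_\C \circlearrowright M$: la différentielle en $(e,m)$ de l'application d'action $U_\C\times M\to M$ est $\C$-linéaire, ce qui impose au morphisme d'action infinitésimale $\ugot_\C\to T_m M$, $X\mapsto X\cdot m$, d'entrelacer la multiplication par $i$ sur $\ugot_\C$ avec la structure complexe $\J$ sur $T_m M$. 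Combinée à la convention $X_M(m)=\frac{d}{dt}|_{t=0}e^{-tX}m$ du chapitre précédent, cela fournit bien $(iY)_M = \J Y_M$.

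Ensuite, en combinant la définition $\langle\Phi_\pgot,\zeta\rangle = \langle\Phi_\ugot, i\zeta\rangle = -\langle\Phi_\ugot, Y\rangle$ avec l'équation moment (\ref{eq:hamiltonien}) appliquée à $Y \in \ugot$, on obtient pour tout $m \in M$ et $v \in T_m M$
$$d\langle\Phi_\pgot,\zeta\rangle(v) \,=\, -d\langle\Phi_\ugot, Y\rangle(v) \,=\, \iota(Y_M)\Omega\,(v) \,=\, \Omega(Y_M, v).$$
En parallèle, le produit scalaire riemannien se calcule, grâce à la $\J$-invariance de $\Omega$, comme
$$(\zeta_M, v)_M \,=\, \Omega(\zeta_M, \J v) \,=\, \Omega(\J Y_M, \J v) \,=\, \Omega(Y_M, v).$$
La comparaison des deux expressions entraîne l'égalité $d\langle\Phi_\pgot,\zeta\rangle(v) = (\zeta_M, v)_M$, ce qui signifie précisément que $\zeta_M$ est le champ de gradient de $\langle\Phi_\pgot,\zeta\rangle$ pour la métrique riemannienne $(-,-)_M$.

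Le seul point potentiellement subtil est la vérification rigoureuse de l'identité $\zeta_M = \J Y_M$ compte tenu des signes, mais il s'agit en définitive d'une traduction infinitésimale standard de l'holomorphie de l'action de $U_\C$; le reste de l'argument est une manipulation immédiate des définitions.
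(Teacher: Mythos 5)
Votre preuve est correcte et suit essentiellement le même chemin que celle du texte : le papier effectue le même calcul en une ligne, en appliquant directement la condition de moment à $i\zeta\in\ugot$ et en utilisant l'holomorphie de l'action (qui donne $(i\zeta)_M=\J\zeta_M$) ainsi que la $\J$-invariance de $\Omega$ pour obtenir $d\langle\Phi_\pgot,\zeta\rangle=-\Omega(\J\zeta_M,-)=(\zeta_M,-)_M$. Votre variante via $Y=-i\zeta$ n'est qu'une réécriture de ce même argument, avec les signes traités correctement.
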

\begin{proof} L'identit\'e (\ref{eq:hamiltonien}) montre que
$d\langle\Phi_\pgot,\zeta\rangle=d\langle\Phi_\ugot, i\zeta\rangle=-\Omega(\J\zeta_M,-)=(\zeta_M,-)_M$.
\end{proof}

Soit $\agot\subset\pgot$ une sous alg\`ebre ab\'elienne maximale, et notons $\Sigma(\ggot)\subset \agot^*$ l'ensemble des racines restreintes relatives \`a l'action de $\agot$ sur $\ggot$. \`A chaque racine restreinte $\alpha\in\Sigma$, on associe $\ggot_\alpha:=\{X\in \ggot, [H,X]=\alpha(H)X, \ \forall H\in\agot\}$. On a la décomposition
\begin{equation}\label{eq:decomposition-ggot}
\ggot=\mgot\oplus\agot\oplus \bigoplus_{\alpha\in \Sigma(\ggot)}\ggot_\alpha,
\end{equation}  
où $\mgot$ est la sous-algèbre centralisatrice de $\agot$ dans $\kgot$.

Le choix d'un sous-ensemble $\Sigma^+\subset \Sigma(\ggot)$ de racines positives d\'etermine une chambre de Weyl $\agot^*_+\subset \agot^*$ : celle-ci 
est un domaine fondamental pour l'action de $K$ sur $\pgot^*$.

\begin{definition}
Pour toute partie $K$-invariante $\Xcal\subset M$, le sous-ensemble $\Delta_\pgot(\Xcal):=\Phi_\pgot(\Xcal)\cap\agot_+^*$ param\`etre les $K$-orbites dans l'image
$\Phi_\pgot(\Xcal)$.
\end{definition}

Choisissons un tore maximal $\sigma$-invariant  $T\subset U$ dont l'alg\`ebre de Lie $\tgot$ contient $i\agot$. Nous pouvons considérer une chambre de Weyl $\tgot^*_+\subset \tgot^*$ telle que $\tgot^*_+\cap (\tgot^*)^{-\sigma}{\simeq}_{j^*} \agot^*_+ $ (voir l'appendice dans \cite{OSS}).

Nous pouvons reformuler le th\'eor\`eme d'O'Shea-Sjamaar  dans le cadre K\"ahlérien.

\begin{theorem}[\cite{OSS}]\label{theo:OSS-bis}
Soit $(M,\Omega,\tau)$ une vari\'et\'e de K\"{a}hler $(U,\sigma)$-hamiltonienne telle que $M^\tau\neq\emptyset$. Alors
$$
\Delta_\ugot(M)\bigcap (\tgot^*)^{-\sigma}\underset{j^*}{\simeq} \Delta_\pgot(M^\tau).
$$
\end{theorem}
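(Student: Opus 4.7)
Le plan est de ramener l'énoncé à la version symplectique du théorème d'O'Shea-Sjamaar (théorème \ref{theo:OSS}) en transportant les deux membres via l'application $j^*:(\ugot^*)^{-\sigma}\to\pgot^*$. L'idée conceptuelle est que la condition (\ref{eq:condition-phi-tau}), incluse dans la définition \ref{def:U-sigma-kahler}, force $\Phi_\ugot(M^\tau)\subset (\ugot^*)^{-\sigma}$, et que la restriction de $\Phi_\pgot$ à $M^\tau$ coïncide avec $j^*\circ \Phi_\ugot$.

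Premièrement, j'observerai qu'en oubliant la structure complexe $\J$, le quadruplet $(M,\Omega,\tau,\Phi_\ugot)$ satisfait la définition d'une variété $(U,\sigma)$-hamiltonienne du chapitre \ref{chapter: convexity}. Le théorème \ref{theo:OSS} et sa reformulation (\ref{eq:OSS}) fournissent alors
\[
\Delta_\ugot(M)\cap (\tgot^*)^{-\sigma}=\Phi_\ugot(M^\tau)\cap \tgot_+^*,
\]
cette partie étant automatiquement incluse dans $\tgot_+^*\cap(\tgot^*)^{-\sigma}$ puisque $\Phi_\ugot(M^\tau)\subset (\ugot^*)^{-\sigma}$.

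Deuxièmement, je traduirai cette égalité via $j^*$. Pour tout $m\in M^\tau$, on a $\pi(\Phi_\ugot(m))=\Phi_\ugot(m)$, donc $\Phi_\pgot(m)=j^*(\Phi_\ugot(m))$. Par conséquent $\Phi_\pgot(M^\tau)=j^*\bigl(\Phi_\ugot(M^\tau)\bigr)$.

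Enfin, j'utiliserai la compatibilité des chambres : puisque $\tgot_+^*$ est adaptée à $\sigma$, la restriction de $j^*$ à $\tgot_+^*\cap(\tgot^*)^{-\sigma}$ définit une bijection avec $\agot_+^*$, et l'équivariance de $j$ par rapport à $K$ assure que cette bijection envoie les représentants de $K$-orbites dans $(\ugot^*)^{-\sigma}$ sur ceux des $K$-orbites dans $\pgot^*$. Concrètement, $j^*\bigl(\Phi_\ugot(M^\tau)\cap \tgot_+^*\bigr)=\Phi_\pgot(M^\tau)\cap \agot_+^*=\Delta_\pgot(M^\tau)$, ce qui, combiné avec la première étape, donne l'isomorphisme recherché. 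L'étape demandant le plus d'attention sera la vérification de cette compatibilité de $j^*$ avec les chambres, laquelle découle de la structure du système de racines restreintes et est détaillée dans l'appendice B de \cite{OSS}.
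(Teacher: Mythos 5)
Votre démonstration est correcte et suit essentiellement la voie que le texte adopte pour cet énoncé : le théorème \ref{theo:OSS-bis} y est présenté comme la reformulation, dans le cadre kählérien, du théorème \ref{theo:OSS} sous la forme (\ref{eq:OSS}), transportée par $j^*$ grâce au choix d'une chambre adaptée vérifiant $\tgot^*_+\cap(\tgot^*)^{-\sigma}\underset{j^*}{\simeq}\agot^*_+$ et à l'identité $\Phi_\pgot\vert_{M^\tau}=j^*\circ\Phi_\ugot\vert_{M^\tau}$, conséquence de l'inclusion $\Phi_\ugot(M^\tau)\subset(\ugot^*)^{-\sigma}$ — exactement les vérifications que vous détaillez. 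Notez simplement que le texte établit ensuite, à la section \ref{sec:preuve-OSS}, un raffinement strictement plus fort, à savoir $\Delta_\ugot(M)\cap(\tgot^*)^{-\sigma}\underset{j^*}{\simeq}\Delta_\pgot(\Zcal)$ pour \emph{chaque} composante connexe $\Zcal$ de $M^\tau$, au moyen de la stratification de Kirwan--Ness compatible avec les involutions et de l'astuce de décalage ; votre réduction au théorème \ref{theo:OSS} ne donne pas cette version par composante, mais l'énoncé tel qu'il est posé ne l'exige pas.
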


Dans la section suivante, nous allons expliquer comment paramétrer les faces de $\Delta_\pgot(M^\tau)$ en termes de paires de Ressayre {\em réelles}.

\medskip

%%%%%%%%%%%%%%%%%%%%%%%%%%%%%%%%%%%%%%%%%%%%%%%%%%%%%%
%%%%%%%%%%%%%%%%%%%%%%%%%%%%%%%%%%%%%%%%%%%%%%%%%%%%%%
\section{Paire de Ressayre r\'eelles}\label{sec:paires-reelles}
%%%%%%%%%%%%%%%%%%%%%%%%%%%%%%%%%%%%%%%%%%%%%%%%%%%%%%
%%%%%%%%%%%%%%%%%%%%%%%%%%%%%%%%%%%%%%%%%%%%%%%%%%%%%%

Dans le reste de cette section, nous supposons que $M^\tau\neq \emptyset$. Nous verrons à la section \ref{sec:preuve-OSS} que le théorème d'O'Shea-Sjamaar 
admet le raffinement suivant: pour toute composante connexe $\Zcal\subset M^\tau$, nous avons $\Delta_\pgot(M^\tau)=\Delta_\pgot(\Zcal)$.

Nous fixons pour le reste de la section une composante connexe $\Zcal$ de $M^\tau$: il s'agit d'une variété riemannienne équipée des actions des groupes 
$K$ et $G$.

%%%%%%%%%%%%%%%%%%%%%%%%%%%%%%%%%%%%%%%%%%%%%%%%%%%%%
\subsection{Elements admissibles} 
%%%%%%%%%%%%%%%%%%%%%%%%%%%%%%%%%%%%%%%%%%%%%%%%%%%%%

Nous commençons par introduire la notion d'\og éléments admissibles \fg{} pour l'action infinitésimale de $\pgot$ sur $\Zcal$.
Un vecteur $\zeta\in \agot$ est dit {\em rationnel} si $i\zeta$ appartient au $\Q$-vectoriel $\tgot_\Q$ engendré par le réseau $\wedge$.

Les sous-groupes stabilisateurs de $m\in M$ par rapport aux actions de $K$ et $G$ sont respectivement notés $L_m$ et $G_m$ : 
leurs algèbres de Lie sont notées $\kgot_m$ et $\ggot_m$. Nous porterons une attention particulière au sous-espace 
$\pgot_m=\{X\in\pgot, X\cdot m=0\}\subset\ggot_m$. Si $\zeta\in\agot$,  la sous-variété $\Zcal^\gamma=\{z\in\Zcal, \gamma\cdot z=0\}$ 
est stable sous l'action du sous-groupe stabilisateur $G^\gamma=\{g\in G,g\gamma=\gamma\}$.

\begin{definition}\label{def:reel-admissible} Définissons $\dim_\pgot(\Xcal):=\min_{z\in\Xcal}\,\dim(\pgot_z)$ pour tout sous-ensemble $\Xcal\subset \Zcal$. 
Un élément non nul $\zeta\in\agot$  est dit {\em admissible} par rapport à l'action $\pgot\circlearrowright \Zcal$ si $\zeta$ est rationnel et si 
$\dim_\pgot(\Zcal^\zeta)-\dim_\pgot(\Zcal)\in\{0,1\}$.
\end{definition}

%%%%%%%%%%%%%%%%%%%%%%%%%%%%%%%%%%%%%%%%%%%%%%%%%%%%%
\subsection{Paires de Ressayre dans un cadre avec involution}\label{sec:RP-involution}
%%%%%%%%%%%%%%%%%%%%%%%%%%%%%%%%%%%%%%%%%%%%%%%%%%%%%

Pour tout $z\in \Zcal$, l'action infinitésimale de $\ggot$ sur $\Zcal$ définit une application $\R$-linéaire 
\begin{eqnarray}\label{eq:rho-z}
\rho^\R_z:\ggot & \longrightarrow & \T_z \Zcal\\
X & \longmapsto & X\cdot m \nonumber
\end{eqnarray}
qui est équivariante sous l'action du sous-groupe stabilisateur $G_z$.

\begin{definition}
Considérons un endomorphisme symétrique $\Lcal(\zeta)$ d'un espace vectoriel euclidien $E$. Nous associons l'espace propre
$E^{\zeta=a}=\{v\in E, \Lcal(\zeta)v= av\}$ à tout $a\in\R$. Nous avons la décomposition
$E=E^{\zeta>0}\oplus E^{\zeta=0}\oplus E^{\zeta<0}$ où $E^{\zeta>0}=\sum_{a>0}E^{\zeta=a}$, et $E^{\zeta<0}=\sum_{a<0}E^{\zeta=a}$.
\end{definition}

\medskip

Considérons $(x,\zeta)\in \Zcal\times\agot$ tel que $x\in \Zcal^\zeta$. L'action infinitésimale de $\zeta$ définit 
des endomorphismes symétriques $\Lcal(\zeta):\T_x \Zcal\to \T_x \Zcal$ et $\Lcal(\zeta):\ggot\to \ggot$ satisfaisant 
$\Lcal(\zeta)\circ\rho_x^\R=\rho^\R_x\circ\Lcal(\zeta)$.
Ainsi, le morphisme (\ref{eq:rho-z}) induit une application $\R$-linéaire
\begin{equation}\label{eq:rho-zeta}
\rho_x^{\R,\zeta}:\Ngot^{\zeta>0}\longrightarrow (\T_x \Zcal)^{\zeta>0},
\end{equation}
où $\Ngot=\sum_{\alpha\in\Sigma^+}\ggot_\alpha$.

\begin{definition}\label{def:infinitesimal-real-ressayre-pair}
Soit $\zeta\in\agot$ un élément non nul, et $C\subset M^\zeta$ une composante connexe telle que $C\cap\Zcal\neq\emptyset$. La donnée $(\zeta,C)$ est appelée 
une {\em paire de Ressayre infinitésimale réelle} de $\Zcal$ si $\exists x\in C\cap\Zcal$, tel que (\ref{eq:rho-zeta}) est un isomorphisme. 
Si, en outre, nous avons $\dim_\pgot(C\cap\Zcal)-\dim_\pgot(\Zcal)\in\{0,1\}$, et que $\zeta$ est rationnel, nous appelons $(\zeta,C)$ 
une {\em paire de Ressayre infinitésimale réelle régulière} de $\Zcal$.
\end{definition}

\begin{rem}
Lorsque $U=T$ est abélien, un couple $(\zeta,C)$ est une paire de Ressayre infinitésimale réelle  lorsque il existe une composante connexe 
$\Ccal'\subset C\cap \Zcal$ tel que le fibré vectoriel 
$(\T\Zcal\vert_{\Ccal'})^{\zeta>0}$ est égal au fibré « nul ».
\end{rem}

\begin{prop}
Soit $\zeta\in\agot$ un élément non nul, et soit $C\subset M^\zeta$ une composante connexe telle que $C\cap\Zcal\neq\emptyset$. 
Les conditions suivantes sont équivalentes
\begin{enumerate}
\item[$i)$] $(\zeta,C)$ est une paire de Ressayre infinitésimale réelle.
\item[$ii)$] $(i\zeta,C)$ est une paire de Ressayre infinitésimale.
\end{enumerate}
\end{prop}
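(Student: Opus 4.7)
La stratégie consistera à établir, en tout point $x\in C\cap \Zcal$, que l'application $\C$-linéaire $\rho_x^{i\zeta}$ s'identifie à la complexification $\C$-linéaire de l'application $\R$-linéaire $\rho_x^{\R,\zeta}$. L'implication $i)\Rightarrow ii)$ en découlera immédiatement, et pour la réciproque je mobiliserai le fait que $C\cap \Zcal$ est une sous-variété totalement réelle de dimension maximale dans la sous-variété complexe connexe $C$.

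D'abord, puisque $\zeta\in\pgot=i\ugot^{-\sigma}$, on a $i\zeta\in\ugot^{-\sigma}\subset \tgot$, et l'identité $\rho_x(i\zeta)=J\rho_x(\zeta)$ montre que $M^\zeta=M^{i\zeta}$, donc $C$ est bien une composante connexe commune. En $x\in C\cap \Zcal$, la différentielle $d\tau|_x$ est une involution $\C$-antilinéaire de $T_xM$ dont le sous-espace $+1$ est exactement $T_x\Zcal$ ; la dérivation de $\tau(gm)=\sigma(g)\tau(m)$ donne l'entrelacement $d\tau|_x\circ \rho_x=\rho_x\circ \sigma$, où $\sigma$ agit $\C$-antilinéairement sur $\ugot_\C$ en fixant $\ggot$. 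L'endomorphisme hermitien $\Lcal(\zeta)$ sur $T_xM$ commute alors à $d\tau|_x$ (car $\sigma(\zeta)=\zeta$), donc préserve la décomposition $T_xM=T_x\Zcal\oplus J(T_x\Zcal)$. Avec des choix compatibles de systèmes de racines positives $\Rgot^+$ et $\Sigma^+$, l'identité $(\ggot_\beta)_\C=\sum_{\bar\alpha=\beta}(\ugot_\C)_\alpha$ et le fait que $\zeta$ agit sur $(\ugot_\C)_\alpha$ par $\bar\alpha(\zeta)$ entraînent $\ngot^{\zeta>0}=\Ngot^{\zeta>0}\otimes_\R\C$. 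Symétriquement, la décomposition spectrale de $\Lcal(\zeta)$ sur $T_x\Zcal$ se complète via $J$ en $(T_xM)^{\zeta>0}=(T_x\Zcal)^{\zeta>0}\otimes_\R\C$. Ainsi $\rho_x^{i\zeta}$ est la $\C$-linéarisation de $\rho_x^{\R,\zeta}$, et l'une est un isomorphisme si et seulement si l'autre en est un, ce qui établit $i)\Rightarrow ii)$.

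Pour la réciproque, je considérerai l'ensemble $C_{\mathrm{reg}}:=\{x\in C\mid \rho_x^{i\zeta}\text{ est un isomorphisme}\}$, ouvert de Zariski non vide de la sous-variété complexe connexe $C$, donc dense. Puisque $C\cap \Zcal\neq\emptyset$ est l'ensemble des points fixes de l'involution antiholomorphe $\tau|_C$, il s'agit d'une sous-variété totalement réelle de $C$ de dimension maximale $\dim_\C C$. Une telle sous-variété n'est contenue dans aucun sous-ensemble analytique propre, par le principe d'identification (une fonction holomorphe qui s'annule sur une sous-variété totalement réelle de dimension maximale s'annule identiquement). Par conséquent $C_{\mathrm{reg}}\cap \Zcal\neq\emptyset$, et en un tel point $x$, l'étape précédente fournira l'isomorphisme $\rho_x^{\R,\zeta}$ recherché.

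Le point le plus délicat sera la mise en place rigoureuse des identifications spectrales : coordonner les choix de racines positives dans $\ugot_\C$ et dans $\ggot$, et démontrer que $\tau$ induit précisément la complexification réelle-vers-complexe sur les espaces $\Ngot^{\zeta>0}$ et $(T_x\Zcal)^{\zeta>0}$. Le reste du raisonnement — notamment l'argument de densité finale — repose sur un principe classique concernant les sous-variétés totalement réelles de dimension maximale dans une variété complexe.
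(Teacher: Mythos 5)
Votre démonstration est correcte et suit essentiellement la même démarche que celle du texte : identification de $\rho_x^{i\zeta}$ avec la complexification de $\rho_x^{\R,\zeta}$ via $\ngot^{\zeta>0}=\Ngot^{\zeta>0}\otimes_\R\C$ et $(\T_x M)^{\zeta>0}=(\T_x\Zcal)^{\zeta>0}\otimes_\R\C$, puis, pour la réciproque, densité de l'ouvert de Zariski $C_{\mathrm{reg}}$ et intersection non vide avec la partie réelle $C\cap\Zcal$ (lagrangienne, c'est-à-dire totalement réelle de dimension maximale) par le principe d'identité, exactement l'argument de l'appendice du texte. Seule remarque mineure : l'inclusion $\ugot^{-\sigma}\subset\tgot$ que vous écrivez est erronée, il faut lire $i\zeta\in\tgot^{-\sigma}\subset\tgot$ puisque $\zeta\in\agot=\tfrac{1}{i}\tgot^{-\sigma}$, ce qui ne change rien à la suite.
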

\begin{proof}Comparons les sous-algèbres nilpotentes $\ngot\subset \ugot_\C$ et $\Ngot\subset \ggot$. 
Soit $\zeta_0$ un \'el\'ement qui appartient \`a l'int\'erieur de la chambre de Weyl $\agot_+:=\{\zeta\in\agot, \langle\alpha,\zeta\rangle \geq 0, \forall \alpha\in\Sigma(\ggot)^+\}$. Alors
$(\ugot_\C)^{\zeta_0>0}=\ggot^{\zeta_0>0}\otimes\C$ est un sous-espace vectoriel complexe stable par l'involution 
$\sigma$ et (\ref{eq:decomposition-ugot}) permet de voir que 
$(\ugot_\C)^{\zeta_0>0}=\ngot^{\zeta_0>0}$. Grâce \`a (\ref{eq:decomposition-ggot}), on sait que $\Ngot=\ggot^{\zeta_0>0}$, donc 
$\ngot^{\zeta_0>0}=\Ngot\otimes\C$. Ainsi, pour tout $\zeta\in\agot$, on a 
$$
\ngot^{\zeta>0}=\ngot^{\zeta>0,\zeta_0>0}=\Ngot^{\zeta>0}\otimes\C.
$$
Pour tout $x\in C\cap \Zcal$, les espaces vectoriels r\'eels $(\T_x\Zcal)^{\zeta>0}\subset T_x \Zcal$ sont respectivement \'egaux aux points fixes de l'involution 
anti-lin\'eaire $\tau$ sur $(\T_x C)^{\zeta>0}\subset T_x C$. 

On a ainsi montr\'e que, pour tout $x\in C\cap \Zcal$, l'application $\C$-lin\'eaire $\rho_x^{i\zeta}:\ngot^{\zeta>0}\longrightarrow (\T_x C)^{\zeta>0}$ 
est la complexification de l'application $\R$-lin\'eaire $\rho_x^{\R,\zeta}:\Ngot^{\zeta>0}\longrightarrow (\T_x \Zcal)^{\zeta>0}$. A ce stade, on a montr\'e que 
$i) \Longrightarrow ii)$. 

Supposons maintenant que $(i\zeta,C)$ est une paire de Ressayre infinitésimale. Alors $C_{reg}:=\{x\in C,\ \rho_x^{i\zeta}\ \mathrm{est\ inversible}\}$ 
est un ouvert de Zariski non-vide de la vari\'et\'e complexe $C$. Comme $C\cap \Zcal$ est une sous-variété lagrangienne de 
$C$ on a $\Zcal\cap C_{reg}\neq\emptyset$ (voir l'appendice). On a donc montr\'e que $(\zeta,C)$ est une paire de Ressayre infinitésimale réelle. 
\end{proof}

\begin{rem}\label{rem:RPreel-vs-RPstandard}
Bien que les notions de \og paire de Ressayre infinitésimale réelle\fg{} et \og paire de Ressayre infinitésimale\fg{} coincident, il est important 
de noter que les conditions de {\em régularit\'e} diffèrent d'un cas à l'autre.
\end{rem}

\begin{rem}\label{rem:trace-reelle-versus-complexe}Comme l'application $\C$-lin\'eaire $\rho_x^{i\zeta}$ est la complexification de 
$\R$-lin\'eaire $\rho_x^{\R,\zeta}$. Alors 
$\dim(\ngot^{\zeta>0})=\mathrm{rank} (\T M\vert_C)^{\zeta>0}\quad \Longleftrightarrow 
\quad \dim_\R(\Ngot^{\zeta>0})=\mathrm{rank}_\R (\T \Zcal\vert_{C\cap \Zcal})^{\zeta>0}$
et \break $\tr(\zeta \circlearrowright \ngot^{\zeta>0})=\tr(\zeta \circlearrowright (\T M\vert_C)^{\zeta>0})
\quad \Longleftrightarrow \quad 
\tr_\R(\zeta \circlearrowright \Ngot^{\zeta>0})=\tr_\R(\zeta \circlearrowright (\T \Zcal\vert_{C\cap \Zcal})^{\zeta>0})$.
\end{rem}

\medskip

Introduisons maintenant une notion plus restrictive, celle de paire de Ressayre réelle.

\medskip

%La sous-variété réelle $\Ccal= C\cap \Zcal\subset \Zcal^\gamma$ 
%est non vide, pas nécessairement connexe, et stable sous l'action du groupe $G^\gamma=U^\gamma_\C\cap G$. La fonction 
%$\langle\Phi_\pgot,\gamma\rangle$ est localement constante sur $M^\gamma$ et nous notons $\langle\Phi_\pgot(\Ccal),\gamma\rangle$ sa valeur sur $\Ccal$. 

Soit $\zeta\in\agot$ un élément non nul, et soit $C\subset M^\zeta$ une composante connexe intersectant $\Zcal$. La sous-variété de Kähler $C$ 
est stable sous l'involution $\tau$ et sous l'action du sous-groupe stabilisateur $U^\zeta_\C=\{g\in U_\C, g\zeta=\zeta\}$.  
Nous travaillons avec la sous-variété de Bialynicki-Birula $C^-:=\{m\in M, \lim_{t\to\infty} \exp(t\zeta) m\ \in C\}$.
Considérons maintenant le sous-groupe parabolique défini par
\begin{equation}\label{eq:P-gamma-reel}
\Pbb(\zeta)=\{g\in U_\C, \lim_{t\to\infty}\exp(t\zeta)g\exp(-t\zeta)\ \mathrm{existe}\}.
\end{equation}
Le sous-groupe\footnote{Nous avons changé la notation du sous-groupe parabolique pour mettre en évidence qu'il est stable pour l'involution $\sigma$.} 
$\Pbb(\zeta)=P(i\zeta)$, d'alg\`ebre de Lie $\ugot_\C^{\zeta\leq 0}$, est stable sous l'involution $\sigma$.

\begin{definition}\label{def:parabolique-P}
Si l'on prend $\zeta_0\in\agot$ tel que $\langle\alpha,\zeta_0\rangle >0$ pour toute racine $\alpha\in\Sigma^+$, le sous-groupe parabolique $\Pbb(-\zeta_0)\subset U_\C$ est noté $\Pbb$.
\end{definition}

La sous-variété $C^-$ étant invariante sous l'action de $\Pbb(\zeta)$, on peut  considérer la variété complexe 
$\Pbb\times_{\Pbb\cap \Pbb_\gamma} C^-$ et l'application holomorphe 
$$
\mathrm{q}^\R_\zeta: \Pbb\times_{\Pbb\cap \Pbb(\zeta)} C^- \to M
$$
qui envoie $[p,m]$ vers $pm$. La variété complexe $\Pbb\times_{\Pbb\cap \Pbb(\zeta)} C^-$ est équipée d'une involution anti-holomorphe naturelle
$\tau_\gamma:[p,m]\mapsto [\sigma(p),\tau(m)]$ telle que $\tau\circ \mathrm{q}^\R_\zeta =\mathrm{q}^\R_\zeta\circ \tau_\gamma$.

\begin{definition}\label{def:ressayre-pair}
Soit $\zeta\in\agot-\{0\}$ et $C\subset M^\zeta$ un composante connexe telle que $C\cap \Zcal\neq\emptyset$. Le couple $(\zeta,C)$ est appelé une 
\emph{paire de Ressayre réelle} de $\Zcal$ s'il existe des ouverts $\Vcal\subset M$ et $\Ucal\subset C^-$, invariants par $\tau$, et tels que 
\begin{itemize}
\item $\Vcal$ est dense et $\Pbb$-invariant,
\item $\Ucal$ est dense, $\Pbb\cap \Pbb(\zeta)$-invariant, et intersecte $C$, 
\item l'application $\mathrm{q}^\R_{\zeta}$ définit un difféomorphisme $\Pbb\times_{\Pbb\cap \Pbb(\zeta)}\Ucal\simeq \Vcal$.
\end{itemize}
Si, en outre, nous avons $\dim_\pgot(\Ccal)-\dim_\pgot(\Zcal)\in\{0,1\}$, et que $\zeta$ est rationnel, nous appelons $(\zeta,\Ccal)$ 
une \emph{paire de Ressayre réelle régulière}.
\end{definition}

Nous avons la caractérisation suivante des paires de Ressayre réelles.

\begin{prop}\label{prop:caracteriser-RP-reel}
Soit $\zeta\in\agot-\{0\}$ et $C\subset M^\zeta$ un composante connexe telle que $C\cap \Zcal\neq\emptyset$. 
Les conditions suivantes sont équivalentes
\begin{enumerate}
\item $(\zeta,C)$ est un paire de Ressayre réelle de $\Zcal$.
\item $(i\zeta,C)$ est une paire de Ressayre de $M$.
\item $(\zeta,C)$ satisfait les conditions  
\begin{align*}
(A_1) \qquad& \dim(\ngot^{\zeta>0})=\mathrm{rank} (\T M\vert_C)^{\zeta>0}\\
(A_2) \qquad& \tr(\zeta \circlearrowright \ngot^{\zeta>0})=\tr(\zeta \circlearrowright (\T M\vert_C)^{\zeta>0})\\
(A_3) \qquad& \Big\{y\in M, \sharp \{(\mathrm{q}^\R_\zeta)^{-1}(y)\}=1 \Big\} \quad \textrm{contient un ouvert dense}.
\end{align*}
\end{enumerate}
\end{prop}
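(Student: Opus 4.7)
La strat\'egie consiste \`a adapter l'architecture de la preuve de la Proposition \ref{prop:caracteriser-RP} au cadre avec involutions, en d\'emontrant les implications (3) $\Rightarrow$ (1) $\Rightarrow$ (2) $\Rightarrow$ (3). L'observation pr\'eliminaire que je ferais est que, pour un choix compatible entre le sous-groupe de Borel $B$ et le sous-groupe parabolique $\sigma$-stable $\Pbb$, on a l'identification $\ngot^{\zeta>0}=\mathrm{Lie}(\Pbb)^{\zeta>0}=(\ugot_\C)^{\zeta>0}$ pour tout $\zeta\in\agot$ dans la chambre positive. Ainsi, les vari\'et\'es sources de $q_{i\zeta}$ et de $\mathrm{q}^\R_\zeta$ ont la m\^eme dimension complexe que $M$, et les conditions $(A_1), (A_2)$ sont inchang\'ees lorsqu'on remplace $B$ par $\Pbb$.

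Pour (3) $\Rightarrow$ (1), j'appliquerais le lemme fondamental \ref{lem:RP-fondamental} avec $H=\Pbb$ et $\beta=i\zeta$ : les conditions $(A_1)$ et $(A_2)$ fournissent un ouvert de Zariski dense $N^\R_{reg}\subset \Pbb\times_{\Pbb\cap\Pbb(\zeta)} C^-$ sur lequel $\mathrm{q}^\R_\zeta$ est un isomorphisme local, et qui rencontre $C$. La condition $(A_3)$, combin\'ee avec la strat\'egie du Lemme \ref{lem:caracteriser-RP}, entra\^ine que $\mathrm{q}^\R_\zeta$ est injective sur $N^\R_{reg}$, d'o\`u un diff\'eomorphisme holomorphe entre $N^\R_{reg}$ et son image, qui est un ouvert dense de $M$. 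L'\'equivariance $\tau\circ\mathrm{q}^\R_\zeta=\mathrm{q}^\R_\zeta\circ\tau_\zeta$ assure que les ouverts pertinents sont stables sous $\tau$, ce qui permet de construire les ouverts $\Ucal\subset C^-$ et $\Vcal\subset M$ requis par la D\'efinition \ref{def:ressayre-pair}.

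Pour (1) $\Rightarrow$ (2), \`a partir du diff\'eomorphisme $\Pbb\times_{\Pbb\cap\Pbb(\zeta)}\Ucal\simeq\Vcal$, les inclusions $B\subset\Pbb$ et $B\cap P(i\zeta)\subset\Pbb\cap\Pbb(\zeta)$ permettent, par restriction \`a la grosse cellule de Bruhat de $\Pbb$, d'extraire un ouvert dense de $B\times_{B\cap P(i\zeta)} C^-$ sur lequel $q_{i\zeta}$ reste injective, ce qui \'etablit les trois conditions caract\'erisant une $B$-paire de Ressayre au sens de la Proposition \ref{prop:caracteriser-RP}. Enfin, (2) $\Rightarrow$ (3) est alors direct : la Proposition \ref{prop:caracteriser-RP} donne $(A_1), (A_2)$ et la version Borel de $(A_3)$ ; le passage \`a la version parabolique r\'esulte du fait que $\mathrm{q}^\R_\zeta$ et $q_{i\zeta}$ poss\`edent g\'en\'eriquement la m\^eme image ouverte dense gr\^ace \`a l'\'egalit\'e dimensionnelle pr\'ec\'edente, combin\'e \`a la m\^eme strat\'egie du Lemme \ref{lem:caracteriser-RP}.

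L'obstacle principal sera de contr\^oler rigoureusement le passage entre les param\'etrisations par $B$ et par $\Pbb$, et en particulier d'\'etablir soigneusement l'\'equivalence des deux formulations de la condition $(A_3)$, tout en v\'erifiant que la $\tau$-invariance des ouverts denses est pr\'eserv\'ee \`a chaque \'etape de l'argument, de fa\c{c}on \`a ce que la restriction \`a $\Zcal\subset M$ fournisse bien les donn\'ees lagrangiennes requises par la d\'efinition de paire de Ressayre r\'eelle.
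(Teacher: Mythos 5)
Votre plan reprend la même boucle d'implications que le texte ((3) $\Rightarrow$ (1) $\Rightarrow$ (2) $\Rightarrow$ (3), appui sur la proposition \ref{prop:caracteriser-RP} et le lemme \ref{lem:RP-fondamental}), mais l'ingrédient qui fait réellement fonctionner la preuve manque : le texte établit d'abord l'identité ensembliste exacte $\Pbb = B(\Pbb\cap\Pbb(\zeta))$ (via la décomposition de Levi $\Pbb=NL$, avec $N\subset B$ et $L\subset \Pbb(\zeta)$), d'où l'isomorphisme canonique $B\times_{B\cap \Pbb(\zeta)} Q\simeq \Pbb\times_{\Pbb\cap \Pbb(\zeta) }Q$ de (\ref{eq:B-P-isomorphisme}) qui entrelace $q_{i\zeta}$ et $\mathrm{q}^\R_\zeta$. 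Avec cette identité, les fibres des deux morphismes sont \emph{littéralement égales}, les deux formulations de $(A_3)$ coïncident, et l'équivalence (1) $\Leftrightarrow$ (2) devient quasi immédiate (les mêmes ouverts $\Ucal$, $\Vcal=\Pbb\Ucal=B\Ucal$ conviennent). Votre substitut — comptage de dimension plus densité d'une \og grosse cellule \fg{} — laisse deux trous réels. Dans (2) $\Rightarrow$ (3), savoir que $q_{i\zeta}$ et $\mathrm{q}^\R_\zeta$ ont génériquement la même image ne contrôle pas les fibres de $\mathrm{q}^\R_\zeta$ : a priori une fibre peut contenir des points $[p,x]$ avec $p\notin B(\Pbb\cap\Pbb(\zeta))$, donc l'injectivité générique de $q_{i\zeta}$ ne se transfère pas telle quelle ; il faut soit l'égalité exacte des groupes, soit un argument supplémentaire (l'image du complémentaire est d'intérieur vide) que vous n'indiquez pas. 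Dans (1) $\Rightarrow$ (2), la densité de $B\Ucal$ dans $M$ repose précisément sur la densité (ici l'égalité) de $B(\Pbb\cap\Pbb(\zeta))$ dans $\Pbb$, que vous affirmez sans la démontrer.

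Deux remarques mineures en complément. D'une part, l'égalité $\ngot^{\zeta>0}=(\ugot_\C)^{\zeta>0}$ n'est valable que pour $\zeta$ dominant, alors que la proposition porte sur $\zeta\in\agot-\{0\}$ quelconque ; seule l'égalité $\ngot^{\zeta>0}=\mathrm{Lie}(\Pbb)^{\zeta>0}$ est vraie en général, et c'est elle qui donne l'égalité des dimensions des sources. D'autre part, dans (3) $\Rightarrow$ (1), ce sont $(A_1)$ et $(A_3)$ (via le lemme de Sard) qui assurent que le lieu régulier $N_{reg}$ est non vide, tandis que $(A_2)$ sert, via le lemme \ref{lem:RP-fondamental}, à garantir que $N_{reg}$ rencontre $C$ ; votre rédaction attribue ces rôles de façon inversée.
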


\begin{proof} Nous commençons avec le lemme suivant qui concerne les trois sous-groupes $B$, $\Pbb$ et $\Pbb(\zeta)$ de $U_\C$.

\begin{lem} 
\begin{enumerate}
\item Nous avons la relation ensembliste $\Pbb = B(\Pbb\cap\Pbb(\zeta))$. 
\item Pour toute $\Pbb\cap \Pbb(\zeta)$-variété $Q$, on a un isomorphisme canonique 
\begin{equation}\label{eq:B-P-isomorphisme}
B\times_{B\cap \Pbb(\zeta)} Q\simeq \Pbb\times_{\Pbb\cap \Pbb(\zeta) }Q.
\end{equation}
\end{enumerate}
\end{lem}
\begin{proof} Les algèbres de Lie des groupes  $B$, $\Pbb$ et $\Pbb(\zeta)$ sont respectivement
\begin{eqnarray*}
\bgot&=&\tgot_\C\oplus \sum_{\beta>0,\sigma(\beta)=\beta}(\ugot_\C)_\beta\oplus \sum_{\beta>0,\sigma(\beta)\neq\beta}(\ugot_\C)_\beta,\\
\mathrm{Lie}(\Pbb)&=&\tgot_\C\oplus \sum_{\beta,\sigma(\beta)=\beta}(\ugot_\C)_\beta\oplus \sum_{\beta>0,\sigma(\beta)\neq\beta}(\ugot_\C)_\beta,\\
\mathrm{Lie}(\Pbb(\zeta))&=&\tgot_\C\oplus \sum_{\beta,\sigma(\beta)=\beta}(\ugot_\C)_\beta\oplus \sum_{\langle\beta,i\zeta\rangle\leq 0,\sigma(\beta)\neq\beta}(\ugot_\C)_\alpha.
\end{eqnarray*}
Ainsi, $\bgot\subset\mathrm{Lie}(\Pbb)$ et donc $B\subset\Pbb$. La decomposition de Levi de $\Pbb$ montre que 
$\Pbb= N L$ ou $L$ est le sous groupe réductif connexe d'algèbre de Lie $\tgot_\C\oplus \sum_{\beta,\sigma(\beta)=\beta}(\ugot_\C)_\beta$ et 
$N$ est le sous groupe unipotent d'algèbre de Lie $\sum_{\beta>0,\sigma(\beta)\neq\beta}(\ugot_\C)_\beta$. Comme $N\subset B$ et que 
$L\subset \Pbb(\zeta)$, on obtient $\Pbb\subset B\Pbb(\zeta)$. Le relations  $B\subset\Pbb\subset B\Pbb(\zeta)$ entrainent que $\Pbb = B(\Pbb\cap\Pbb(\zeta))$.

Soit $Q$ une variété munie d'une action du groupe $\Pbb\cap \Pbb(\zeta)$. L'identité $\Pbb = B(\Pbb\cap\Pbb(\zeta))$ permet de voir que l'application 
$[b,x]\in B\times_{B\cap \Pbb(\zeta)} Q\mapsto [b,x]\in \Pbb\times_{\Pbb\cap \Pbb(\zeta) }Q$ détermine un difféomorphisme $B$-équivariant.
\end{proof}

Si $(\zeta,C)$ est un paire de Ressayre réelle de $\Zcal$, nous avons un isomorphisme $\Pbb\times_{\Pbb\cap \Pbb(\zeta) } \Ucal\simeq \Vcal$, 
où $\Ucal$ est un ouvert dense $\Pbb\cap \Pbb(\zeta)$-invariant de $C^-$ et intersectant $C$, et où $\Vcal=\Pbb\Ucal=B\Ucal$ est un ouvert dense $\Pbb$-invariant de $M$. 
Grâce \`a (\ref{eq:B-P-isomorphisme}, on peut conclure que nous avons un isomorphisme $B\times_{B\cap \Pbb(\zeta) } \Ucal\simeq \Vcal$, i.e. $
(i\zeta,C)$ est une paire de Ressayre de $M$. On vient de v\'erifier l'implication \emph{1}. $\Longrightarrow$ \emph{2}., et  
l'implication \emph{2}. $\Longrightarrow$ \emph{3}. est démontrée dans la proposition \ref{prop:caracteriser-RP}.

Supposons maintenant que $(\zeta,C)$ satisfait les conditions  $(A_1)$, $(A_2)$ et $(A_3)$. Considérons la $\Pbb$-variété $N= \Pbb\times_{\Pbb\cap \Pbb(\zeta)} C^-$ et l'application holomorphe $q^\R_\zeta: N \to M$, $\tau$-équivariante. Grâce à $(A_1)$ et $(A_3)$ on sait que 
$$
N_{reg}=\{x\in N, Tq^\R_\zeta\vert_x\ \mathrm{est\ un\ isomorphisme}\}
$$
est un ouvert de Zariski non-vide, $\Pbb$-invariant, de la forme $N_{reg}= \Pbb\times_{\Pbb\cap \Pbb(\zeta)} (C^-)_{reg}$ où $(C^-)_{reg}$ est un ouvert 
$\Pbb\cap \Pbb(\zeta)$-invariant de $C^-$. L'identit\'e  $(A_2)$ 
permet de voir que 
$$
(C^-)_{reg}=(\pi_C)^{-1}(C\cap N_{reg}),
$$ 
où $C\cap N_{reg}$ est un ouvert de Zariski non-vide $\Pbb\cap U_\zeta$-invariant de $C$. Finalement, $(A_3)$ 
montre que $q^\R_\zeta$ définit un difféomorphisme entre $N_{reg}$ et l'ouvert dense $q^\R_\zeta(N_{reg})$ 
(voir la preuve de la proposition \ref{prop:caracteriser-RP} où ces arguments ont été démontré en détail). On a ainsi montr\'e que 
$(\zeta,C)$ est un paire de Ressayre réelle de $\Zcal$.
\end{proof}

\medskip

Comme \`a la remarque \ref{rem:RPreel-vs-RPstandard}, il est important de noter que même si les concepts de \og paire de Ressayre  réelle\fg{} et 
\og paire de Ressayre\fg{} coincident, les conditions de {\em régularit\'e} diffèrent d'un cas à l'autre.

%%%%%%%%%%%%%%%%%%%%%%%%%%%%%%%%%%%%%%%%%%%%%%%%%%%%%%
%%%%%%%%%%%%%%%%%%%%%%%%%%%%%%%%%%%%%%%%%%%%%%%%%%%%%%
\section{Description du polytope de Kirwan r\'eel}\label{sec:description-polytope-reel}
%%%%%%%%%%%%%%%%%%%%%%%%%%%%%%%%%%%%%%%%%%%%%%%%%%%%%%
%%%%%%%%%%%%%%%%%%%%%%%%%%%%%%%%%%%%%%%%%%%%%%%%%%%%%%

Soit  $(M,\Omega, \tau)$ une vari\'et\'e de K\"{a}hler $(U,\sigma)$-hamiltonienne. On suppose que la sous-variété $M^\tau$ est non-vide et on note $\Zcal$ 
l'une de ses composantes connexes. Voici l'un des principaux résultats de cette monographie.

\begin{theorem}\label{th:real-ressayre-pairs}
Pour $\xi\in\agot^*_{+}$, les affirmations suivantes sont équivalentes :
\begin{itemize}
\item $\xi\in\Delta_\pgot(M^\tau)$.
\item $\xi\in\Delta_\pgot(\Zcal)$.
\item Pour toute paire de Ressayre infinitésimale réelle régulière $(\zeta,C)$ de $\Zcal$, on a $\langle \xi,\zeta\rangle\geq \langle \Phi_\pgot(C),\zeta\rangle$.
\item Pour toute paire de Ressayre réelle régulière $(\zeta,C)$ de $\Zcal$, on a $\langle \xi,\zeta\rangle\geq \langle \Phi_\pgot(C),\zeta\rangle$.
\end{itemize}
\end{theorem}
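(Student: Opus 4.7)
Le plan est de suivre la structure de la preuve du théorème \ref{th:infinitesimal-ressayre-pairs} en l'adaptant au cadre avec involution. Pour l'équivalence $(1)\Leftrightarrow(2)$, j'utiliserais le raffinement du théorème d'O'Shea-Sjamaar annoncé dans la section \ref{sec:paires-reelles} selon lequel $\Delta_\pgot(M^\tau)=\Delta_\pgot(\Zcal)$ pour toute composante connexe $\Zcal$ de $M^\tau$. Cela se démontre par une réduction locale: on peut travailler au voisinage d'une orbite $Kz\subset\Zcal$ via une section symplectique équivariante, et l'image $\Phi_\pgot(\Zcal)$ au voisinage d'un point régulier est donnée par la même formule que celle de $\Phi_\ugot(M)$ via l'identification $j^*$, indépendamment de la composante choisie.

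Pour la nécessité des inégalités $(2)\Rightarrow(3),(4)$, j'appliquerais le lemme fondamental suivant. Soit $(\zeta,C)$ une paire de Ressayre infinitésimale réelle régulière, et soit $\xi\in\Delta_\pgot(\Zcal)$. Il existe $z\in\Zcal$ tel que $\Phi_\pgot(z)\in K\xi$. Grâce à la proposition \ref{prop:caracteriser-RP-reel}, $(i\zeta,C)$ est une paire de Ressayre infinitésimale de $M$, et donc d'après le théorème \ref{th:infinitesimal-ressayre-pairs} appliqué à la vraie variété de Kähler $U$-hamiltonienne $M$, on obtient déjà $\langle \Phi_\ugot(z),i\zeta\rangle\geq \langle \Phi_\ugot(C),i\zeta\rangle$; or, par définition de $\Phi_\pgot$, cette inégalité coincide précisément avec $\langle\xi,\zeta\rangle\geq \langle\Phi_\pgot(C),\zeta\rangle$. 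Ainsi toute la nécessité découle du cas complexe, sans nouveau travail analytique.

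La partie délicate est la réciproque: démontrer que la restriction aux paires de Ressayre infinitésimales \emph{réelles} régulières suffit à décrire $\Delta_\pgot(\Zcal)$. C'est là que se situe l'enjeu véritable, et qui justifie l'existence du chapitre 3. L'approche consistera à adapter la stratification de Kirwan-Ness de $\|\Phi_\ugot\|^2$ sur $M$ au cadre involutif: l'involution $\tau$ préserve cette fonction à un signe près (puisque $\Phi_\ugot\circ\tau=-\sigma\circ\Phi_\ugot$), et donc permute les strates de Kirwan-Ness. Il s'agira de montrer que les strates qui rencontrent $\Zcal$ sont précisément celles qui sont $\tau$-stables, et qu'elles sont indexées par les paires $(\gamma,C)$ de la forme $\gamma=i\zeta$ avec $\zeta\in\agot$ et $C\cap\Zcal\neq\emptyset$, c'est-à-dire par les paires de Ressayre infinitésimales réelles. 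Un argument de codimension, utilisant que $\Zcal$ est lagrangienne dans $M$, devra être invoqué pour garantir que seules les paires régulières (au sens réel: $\dim_\pgot(C\cap\Zcal)-\dim_\pgot(\Zcal)\in\{0,1\}$) contribuent à des faces de codimension $\leq 1$ dans $\Delta_\pgot(\Zcal)$. La condition de régularité diffère du cadre complexe (voir la remarque \ref{rem:RPreel-vs-RPstandard}), et la vérification minutieuse de ce décalage dimensionnel sera la principale obstruction technique.

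Enfin, l'équivalence entre $(3)$ et $(4)$ est une conséquence directe de la proposition \ref{prop:caracteriser-RP-reel}: toute paire de Ressayre réelle est une paire de Ressayre infinitésimale réelle, donc $(3)\Rightarrow(4)$ trivialement, et la réciproque $(4)\Rightarrow(3)$ s'obtient en remarquant que dans la stratification de Kirwan-Ness réelle, les strates qui fournissent des inégalités non redondantes correspondent en fait à des paires pour lesquelles l'application $\mathrm{q}^\R_\zeta$ est birationnelle, c'est-à-dire à des paires de Ressayre réelles au sens de la définition \ref{def:ressayre-pair}. Les paires de Ressayre infinitésimales réelles supplémentaires fournissent des inégalités qui sont soit automatiquement impliquées par celles associées à des paires réelles, soit cumulées sur des faces de codimension au moins $2$.
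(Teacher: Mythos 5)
Votre réduction de la nécessité $(2)\Rightarrow(3)$ au théorème \ref{th:infinitesimal-ressayre-pairs} est une vraie alternative au texte : le mémoire redémontre cette étape dans le cadre réel (proposition \ref{prop:delta-p-delta-inf-RP}, via l'astuce de décalage et un argument de flot sur $N=M\times (U\xi)^o$), alors que vous la déduisez du cas complexe en passant par $\Phi_\pgot=\Phi_\ugot(\cdot)(i\,\cdot)$. Attention toutefois : il vous faut la version \emph{sans régularité} du théorème complexe (la remarque qui le suit), car la régularité réelle, mesurée par $\dim_\pgot(C\cap\Zcal)-\dim_\pgot(\Zcal)$, n'entraîne pas la régularité complexe de $(i\zeta,C)$ (remarque \ref{rem:RPreel-vs-RPstandard}) ; votre texte ne le signale pas. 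De même, votre argument \emph{local} pour $(1)\Leftrightarrow(2)$ ne suffit pas : la convexité locale ne dit pas que chaque composante $\Zcal$ atteint toutes les valeurs de $\Phi_\ugot(M)\cap(\ugot^*)^{-\sigma}$ ; la preuve du mémoire est globale, elle utilise que le flot de $-\nabla f_\ugot$ préserve $\Zcal$ et que la strate minimale de Kirwan--Ness rencontre toute composante de $M^\tau$ (proposition \ref{prop:fundamental-stratification-involution} et section \ref{sec:preuve-OSS}).

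Le vrai point manquant est l'étape $\Delta^{^{\reg}}_{\rp}\subset\Delta_\pgot(\Zcal)$, c'est-à-dire la suffisance des paires \emph{régulières}. Votre plan (\og les strates qui rencontrent $\Zcal$ sont indexées par les paires de Ressayre infinitésimales réelles, puis un argument de codimension utilisant que $\Zcal$ est lagrangienne élimine les paires non régulières \fg) n'est pas un mécanisme de preuve opérant : la stratification de $M$ seule ne produit pas les inégalités voulues pour un $\txi$ arbitraire (elles proviennent de la stratification de la variété décalée $M\times (Ua)^o$, via le théorème \ref{theo:construction-RP}), et surtout rien dans votre esquisse ne contrôle la quantité $\dim_\pgot(C\cap\Zcal)-\dim_\pgot(\Zcal)$ qui définit la régularité réelle. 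Dans le mémoire, cette étape est traitée en exhibant une famille explicite de paires réelles régulières qui découpent exactement $\Delta_\pgot(\Zcal)$ : la paire $(\zeta_{\tsgot},C_{\tsgot})$ associée à la plus petite face $\tsgot$ contenant le polytope, les paires $(\zeta_l^\pm,C_l^\pm)$ décrivant le sous-espace affine $\Pi$ engendré, et une paire $(\zeta_F,C_F)$ par face non triviale $F$, avec $\dim_\pgot(C_F)=\dim_\pgot(\Zcal)+1$. La régularité de ces paires repose sur le théorème de coupure principale (section \ref{sec:principal-cross-section}) et l'analyse du stabilisateur générique $\hgot$ (section \ref{sec:stabilisateur-gen}), ingrédients absents de votre proposition ; sans eux, l'affirmation que les paires non régulières ne donnent que des inégalités redondantes ou des faces de codimension $\geq 2$ reste une pétition de principe, et l'implication $(4)\Rightarrow(3)$, que le mémoire obtient en passant par $\Delta_\pgot(\Zcal)$, n'est pas établie.
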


\begin{rem}
Dans le théorème précédent, le résultat reste valable si l'on supprime l'hypothèse de régularité sur $(\zeta,\Ccal)$.
\end{rem}

%%%%%%%%%%%%%%%%%%%%%%%%%%%%%%%%%%%%%%%%%%%%
%%%%%%%%%%%%%%%%%%%%%%%%%%%%%%%%%%%%%%%%%%%%
%%%%%%%%%%%%%%%%%%%%%%%%%%%%%%%%%%%%%%%%%%%%
\chapter{Preuve du th\'eor\`eme principal}
%%%%%%%%%%%%%%%%%%%%%%%%%%%%%%%%%%%%%%%%%%%%
%%%%%%%%%%%%%%%%%%%%%%%%%%%%%%%%%%%%%%%%%%%%
%%%%%%%%%%%%%%%%%%%%%%%%%%%%%%%%%%%%%%%%%%%%

Dans toute cette section nous choisissons un produit scalaire rationnel invariant sur l'algèbre de Lie $\ugot$ du groupe de Lie compact $U$ 
(voir l'appendice). Par rationnel, nous entendons que pour le tore maximal $T\subset U$ avec algèbre de Lie $\tgot$, le produit scalaire 
prend des valeurs entières sur le réseau $\wedge:=\frac{1}{2\pi}\ker(\exp:\tgot\to T)$. Notons $\wedge^*\subset \tgot^*$ le réseau dual :
$\wedge^*=\hom(\wedge,\Z)$. Nous associons aux réseaux $\wedge$ et $\wedge^*$ les $\Q$-espaces vectoriels $\tgot_\Q$ et $\tgot^*_\Q$
 générés par ceux-ci : les vecteurs appartenant à $\tgot_\Q$ et $\tgot^*_\Q$ sont dits rationnels.

Le produit scalaire invariant sur $\ugot$ induit une identification $\ugot^*\simeq \ugot,\xi\mapsto \xi^\flat $ telle que $\tgot_\Q\simeq\tgot^*_\Q$.
Pour simplifier notre notation, nous ne ferons pas de distinction entre $\xi$ et $\xi^\flat$. Par exemple, pour tout $\beta\in\ugot^*$, nous écrivons $M^{\beta}$ pour 
la sous-variété fixée par l'action du groupe engendré par $\beta^\flat$.

%%%%%%%%%%%%%%%%%%%%%%%%%%%%%%%%%%%%%%%%%%%%%%%%%%%%%%
%%%%%%%%%%%%%%%%%%%%%%%%%%%%%%%%%%%%%%%%%%%%%%%%%%%%%%
\section{Stratifications à la Kirwan-Ness}
%%%%%%%%%%%%%%%%%%%%%%%%%%%%%%%%%%%%%%%%%%%%%%%%%%%%%%
%%%%%%%%%%%%%%%%%%%%%%%%%%%%%%%%%%%%%%%%%%%%%%%%%%%%%%

Dans la première partie nous travaillons avec une variété de K\"ahler $U$-hamiltonienne $(M,\Omega)$ admettant une application moment propre $\Phi_\ugot:M\to\ugot^*$.

%%%%%%%%%%%%%%%%%%%%%%%%%%%%%%%%%%%%%%%%%
\subsection{Le cadre usuel}
%%%%%%%%%%%%%%%%%%%%%%%%%%%%%%%%%%%%%%%%%

Soit $f_\ugot:=\frac{1}{2}(\Phi_\ugot,\Phi_\ugot):M\longrightarrow \R$ la norme quadratique de l'application moment. Remarquons que $f_\ugot$ est une fonction propre $U$-invariante sur $M$.

\begin{definition}
Le champ de vecteurs de Kirwan $\kappa_\ugot$ est défini par la relation
$$
\kappa_\ugot(m)=\Phi_\ugot(m)\cdot m,\quad \forall m\in M.
$$
\end{definition}

Considérons le gradient $\nabla f_\ugot$ de la fonction $f_\ugot$ par rapport à la métrique riemannienne $\Omega(-,\J-)$. 
Nous avons les faits bien connus suivants \cite{Woodward11}.

\begin{prop}
\begin{enumerate}
\item Le gradient de $f_\ugot$ est $\nabla f_\ugot=\J(\kappa_\ugot)$.
\item L'ensemble des points critiques de la fonction $f_\ugot$ est $\crit(f_\ugot)=\{\kappa_\ugot=0\}$.
\item Nous avons la décomposition
$\Phi_\ugot(\crit(f_\ugot))=\bigcup_{\lambda\in \Bcal_\Phi} U\lambda$
où l'ensemble $\Bcal_\ugot\subset \tgot^*_{+}$ est discret. $\Bcal_\ugot$ est appelé l'ensemble des \emph{types} de $M$.
 
\item Nous avons 
$$
\crit(f_\ugot)=\bigcup_{\lambda\in \Bcal_\ugot}Z_\lambda
$$
où $Z_\lambda=\crit(f_\ugot)\cap\Phi_\ugot^{-1}(U\lambda)$ est égal à $U(M^{\lambda}\cap \Phi_\ugot^{-1}(\lambda))$.
\end{enumerate}
\end{prop}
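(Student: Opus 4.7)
The plan is to establish the four statements in turn, with (1) and (2) from a direct computation using the moment-map equation, and (3) and (4) from the standard Kirwan--Ness analysis of $\crit(f_\ugot)$.

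For (1), using the invariant scalar product to identify $\ugot\simeq \ugot^*$, the chain rule applied to $f_\ugot=\tfrac{1}{2}|\Phi_\ugot|^2$ gives $df_\ugot(v)=\langle d\Phi_\ugot(v),\Phi_\ugot(m)^\flat\rangle$ for $v\in T_m M$, and the moment-map identity (\ref{eq:hamiltonien}) applied to $X=\Phi_\ugot(m)^\flat$ then yields
\begin{equation*}
df_\ugot(v)=-\Omega\bigl((\Phi_\ugot(m)^\flat)_M(m),v\bigr)=\Omega(\kappa_\ugot(m),v),
\end{equation*}
where the last equality uses the sign convention $X_M(m)=-X\cdot m$. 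The Kähler metric $(-,-)_M=\Omega(-,\J-)$ together with the $\J$-invariance of $\Omega$ then identifies $\nabla f_\ugot$ with $\J\kappa_\ugot$. Statement (2) follows immediately: non-degeneracy of $\Omega$ forces $\crit(f_\ugot)=\{\kappa_\ugot=0\}$.

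For (4), the equation $\kappa_\ugot(m)=0$ says exactly that $\Phi_\ugot(m)^\flat$ stabilizes $m$, i.e. $m\in M^{\Phi_\ugot(m)}$. The inclusion $U(M^\lambda\cap\Phi_\ugot^{-1}(\lambda))\subset Z_\lambda$ is then trivial, since on the right-hand side $\lambda^\flat$ fixes $m$ and $\Phi_\ugot(m)=\lambda$. Conversely, given $m\in Z_\lambda$, pick $u\in U$ with $\Phi_\ugot(u^{-1}m)=\lambda$; by the $U$-equivariance of $\kappa_\ugot$ one still has $\kappa_\ugot(u^{-1}m)=0$, hence $u^{-1}m\in M^\lambda\cap\Phi_\ugot^{-1}(\lambda)$.

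For (3), the set $\Phi_\ugot(\crit(f_\ugot))\subset\ugot^*$ is $U$-invariant by equivariance, hence is a union of coadjoint orbits and is parametrized by $\Bcal_\ugot:=\Phi_\ugot(\crit(f_\ugot))\cap\tgot^*_+$. The main obstacle is the discreteness of $\Bcal_\ugot$: for this I would invoke Kirwan's classical argument, which decomposes $\crit(f_\ugot)$ as a locally finite union of smooth $U$-invariant submanifolds, each mapping under $\Phi_\ugot$ to a single coadjoint orbit $U\lambda$; the properness of $\Phi_\ugot$ then ensures that only finitely many $\lambda\in\Bcal_\ugot$ meet any compact subset of $\tgot^*_+$, so $\Bcal_\ugot$ is discrete.
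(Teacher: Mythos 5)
Your proof is correct, and it is in fact more explicit than the paper, which states these four facts without any proof, simply referring to \cite{Woodward11}. Your verifications of (1), (2) and (4) are exactly the standard computations: the identity $df_\ugot(v)=\Omega(\kappa_\ugot(m),v)$ obtained from the moment-map relation and the sign convention $X_M(m)=-X\cdot m$, the compatibility $(\cdot,\cdot)_M=\Omega(\cdot,\J\cdot)$ together with the $\J$-invariance of $\Omega$ giving $\nabla f_\ugot=\J\kappa_\ugot$, non-degeneracy of $\Omega$ giving (2), and $U$-equivariance of $\kappa_\ugot$ giving $Z_\lambda=U(M^{\lambda}\cap\Phi_\ugot^{-1}(\lambda))$.

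The one thin spot is the discreteness of $\Bcal_\ugot$ in (3), which is the only statement with real content. As written, your justification is circular: the locally finite union of $U$-invariant submanifolds each mapping onto a single coadjoint orbit that you invoke is nothing but the decomposition $\crit(f_\ugot)=\bigcup_{\lambda}Z_\lambda$ of point (4), and its local finiteness is precisely the discreteness you are trying to establish, so it cannot serve as input. If you want more than a citation, the correct ingredient is the characterization of the possible types: if $\kappa_\ugot(m)=0$ and $\Phi_\ugot(m)=\lambda\in\tgot^*_+$ (after conjugating), then $\lambda^\flat$ lies in the infinitesimal stabilizer of $m$, so $m$ belongs to a connected component $F$ of the fixed-point set of the subtorus generated by $\lambda^\flat$, and $\lambda$ is recovered as the locally constant value on $F$ of the corresponding component of the moment map; since the family of such fixed-point components meeting the compact set $\Phi_\ugot^{-1}(\{\xi,\ \|\xi\|\leq R\})$ is finite (properness of $\Phi_\ugot$ plus local finiteness of infinitesimal orbit types), only finitely many types have norm at most $R$, whence discreteness — this is Kirwan's argument \cite{Kir84a} in the compact case, extended to proper moment maps in the references the paper cites. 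Since the paper itself only cites \cite{Woodward11}, deferring this point to the literature is acceptable, but the sketch should be stated in this non-circular form.
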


\medskip

On désigne par $\varphi^t_\ugot:M\to M,t\geq 0$, le flot de gradient de la fonction $- f_\ugot$ : 
$$
\frac{d}{dt} \varphi^t_\ugot(m)= -\nabla f_\ugot \left(\varphi^t_\ugot(m)\right).
$$

Comme $f_\ugot$ est propre, $\varphi^t_\ugot$ existe pour tout $t\in [0,\infty[$, et d'après un résultat de Duistermaat \cite{Lerman05}, nous savons que toute 
trajectoire de $\varphi^t_\ugot$ admet une limite lorsque $t\to\infty$. Pour tout 
$m\in M$, posons 
$$
m_\infty:=\lim_{t\to\infty} \varphi^t_\ugot(m)\quad \in\ \crit(f_\ugot).
$$

La construction de la stratification de Kirwan-Ness se fait comme suit. Pour chaque $\lambda\in\Bcal_\ugot$, notons $M_\lambda$ 
l'ensemble des points de $M$ tendant vers $Z_\lambda$ : $M_{\langle\lambda\rangle}:=\{m\in M; m_\infty\in Z_\lambda\}$. De par sa définition même, l'ensemble 
$M_\lambda$ est contenu dans $\{m\in M, f_\ugot(m)\geq \frac{1}{2}\|\lambda\|^2\}$. La stratification de Kirwan-Ness est la décomposition
 \cite{Kir84a}, \cite{Ness84} : 
\begin{equation}\label{eq:KN-stratification}
M=\bigcup_{\lambda\in \Bcal_\ugot} M_{\langle\lambda\rangle}.
\end{equation}

Lorsque $0\in\ugot^*$ appartient à l'image de $\Phi_\ugot$, la strate $M_{\langle 0\rangle}$ correspond au sous-ensemble ouvert dense des points semi-stables analytiques :
\begin{equation}\label{eq:strate-M-0}
M_{\langle 0\rangle}=\left\{m\in M; \ \overline{U_\C\, m}\cap \Phi_\ugot^{-1}(0)\neq\emptyset\right\}.
\end{equation}

Expliquons maintenant la géométrie de $M_{\langle\lambda\rangle}$ pour un type $\lambda$ non nul. Soient $U_\lambda$ est le sous-groupe stabilisateur de $\lambda$, 
et $C_\lambda$ l'union des composantes connexes de $M^{\lambda}$ intersectant $\Phi_\ugot^{-1}(\lambda)$. Alors $C_\lambda$ est une variété de 
Kähler $U_\lambda$-hamiltonienne avec une application moment propre $\Phi_\lambda:=\Phi_\ugot\vert_{C_\lambda}-\lambda$.

La sous-variété complexe de Bialynicki-Birula
$$
C^-_\lambda:=\left\{m\in M, \lim_{t\to\infty} \exp(-it\lambda)\cdot m\ \in C_\lambda\right\}
$$
correspond à l'ensemble des points de $M$ tendant vers $C_\lambda$ sous l'effet du flot de gradient de $-\langle\Phi_\ugot,\lambda\rangle$, 
lorsque $t\to\infty$. La limite du flot définit une projection $C^-_\lambda\to C_\lambda$. Remarquons que $C^-_\lambda$ est invariante 
sous l'action du sous-groupe parabolique $P(\lambda)\subset U_\C$.

Considérons maintenant la stratification de Kirwan-Ness de la variété de Kähler $U_\lambda$-hamiltonienne $C_\lambda$. Soit 
$C_{\lambda,\langle 0\rangle}$ l'ouvert dense de 
$C_\lambda$ correspondant au type $0$ :
$$
C_{\lambda,\langle 0\rangle}=\left\{x\in C_{\lambda};\ \overline{(U_\lambda)_\C\, x}\cap\Phi_\ugot^{-1}(\lambda)\neq \emptyset\right\}.
$$
Soit $C^-_{\lambda,\langle 0\rangle}$ l'ouvert de $C^{-}_\lambda$ égal à l'image réciproque de $C_{\lambda,\langle 0\rangle}$ 
par la projection $C^-_\lambda\to C_\lambda$.

\begin{theorem}[\cite{Kir84a}]\label{Kirwan-stratification}
Soit $(M,\Omega)$ une variété de Kähler $U$-hamiltonienne avec une application moment propre. Pour chaque type non nul $\lambda$, 
$M_{\langle\lambda\rangle}$ est une sous-variété complexe localement fermée  $U_\C$-invariante de $M$, et 
\begin{align*}
U_\C\times_{P(\lambda)}C^-_{\lambda,\langle 0\rangle}&\longrightarrow M_{\langle\lambda\rangle}\\ 
[g,z] &\longmapsto g\cdot z
\end{align*}
est un difféomorphisme holomorphe $U_\C$-équivariant.
\end{theorem}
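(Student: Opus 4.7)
Je procéderais par construction directe d'un inverse holomorphe de l'application donnée. L'outil central est la comparaison entre le flot de gradient $\varphi^t_\ugot$ sur $M$ et le flot complexe $(t,m)\mapsto \exp(-it\lambda)\cdot m$ qui définit la décomposition de Bialynicki-Birula $C^-_\lambda$. Près de $C_\lambda$, ces deux flots admettent les mêmes limites asymptotiques grâce au fait que le champ de gradient $\nabla f_\ugot=\J(\kappa_\ugot)$ coïncide, à la limite et modulo des termes tangents à $C_\lambda$, avec le champ infinitésimal engendré par $-i\lambda$. Puisque $P(\lambda)$ préserve $C^-_\lambda$ (par définition même de ce parabolique), cela fournira l'invariance requise pour passer au quotient.

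La construction de l'inverse s'effectue comme suit. Soit $m\in M_{\langle\lambda\rangle}$; alors $m_\infty\in Z_\lambda=U\cdot(M^{\lambda}\cap\Phi_\ugot^{-1}(\lambda))$, donc il existe $u\in U$ (unique modulo $U_\lambda$) tel que $u^{-1} m_\infty \in C_\lambda\cap\Phi_\ugot^{-1}(\lambda)$. L'analyse du flot montre que $u^{-1} m\in C^-_\lambda$: ceci découle du fait que la projection $C^-_\lambda\to C_\lambda$ est bien définie et que la dynamique de $-\nabla f_\ugot$ sur $C^-_\lambda$ se décompose asymptotiquement en un mouvement engendré par $-i\lambda$ dans les directions normales à $C_\lambda$, et un flot résiduel sur $C_\lambda$ qui n'est autre que le flot de Kirwan associé à $\Phi_\lambda$. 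Cette identification entraîne en outre que $u^{-1}m\in C^-_{\lambda,\langle 0\rangle}$. L'application $\Psi(m)=[u,u^{-1}m]$ est bien définie car l'indétermination sur $u$ à $U_\lambda\subset P(\lambda)$ près est absorbée par la relation de quotient, et l'on vérifie que $\Psi$ est inverse à droite de l'application du théorème.

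Il reste à vérifier l'injectivité de l'application $[g,z]\mapsto gz$ et sa régularité. Si $g_1 z_1=g_2 z_2$ avec $z_i\in C^-_\lambda$, alors $w:=g_1^{-1}g_2$ envoie $z_2$ sur $z_1$ tout en préservant les limites sous le flot $\exp(-it\lambda)$; ceci force $w\in P(\lambda)$ par définition même du parabolique. La bijectivité de la différentielle en un point $[e,x]$ avec $x\in C_\lambda\cap\Phi_\ugot^{-1}(\lambda)$ résulte de l'identification $(\T_xM)^{\lambda_o>0}\simeq \ngot^{\lambda_o>0}\cdot x$ et de la décomposition $\ugot_\C=\mathrm{Lie}(P(\lambda))\oplus\ngot^{\lambda_o>0}$, qui appariant exactement l'espace tangent du produit fibré $U_\C\times_{P(\lambda)}C^-_\lambda$ avec celui de $M_{\langle\lambda\rangle}$. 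L'obstacle principal est le contrôle asymptotique fin du flot de gradient: il faut démontrer que, pour $m$ suffisamment proche de $C^-_\lambda$, la différence entre $\varphi^t_\ugot(m)$ et la trajectoire du flot complexe $\exp(-it\lambda)\cdot m$ reste bornée et que leurs limites se correspondent via la projection $C^-_\lambda\to C_\lambda$. Ceci exige une analyse Morse-Bott soigneuse de $f_\ugot$ près de $Z_\lambda$ à la manière de Kirwan et Ness, combinée aux résultats de convergence de Duistermaat et Lerman pour les flots de gradient $U$-invariants sur des variétés kählériennes munies d'applications moment propres.
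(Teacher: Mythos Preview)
Le papier ne démontre pas ce théorème : il est cité comme résultat connu, avec renvoi à \cite{Kir84a} pour le cas compact et à \cite{HSS08,Woodward11,pep-ressayre-hkkn} pour le cas d'une application moment propre. Votre esquisse suit bien l'architecture générale de ces preuves, mais elle comporte des lacunes réelles au-delà de celle que vous signalez vous-même à la fin.

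Le point le plus concret est l'argument sur la différentielle. Vous affirmez qu'en $[e,x]$ avec $x\in C_\lambda\cap\Phi_\ugot^{-1}(\lambda)$ la différentielle est bijective grâce à l'identification $(\T_xM)^{\lambda_o>0}\simeq \ngot^{\lambda_o>0}\cdot x$. D'abord, le complémentaire de $\mathrm{Lie}\,P(\lambda)$ dans $\ugot_\C$ est $(\ugot_\C)^{\lambda_o>0}$ et non $\ngot^{\lambda_o>0}$. Ensuite et surtout, l'application $X\mapsto X\cdot x$ de $(\ugot_\C)^{\lambda_o>0}$ vers $(\T_xM)^{\lambda_o>0}$ n'est \emph{pas} un isomorphisme en général : si elle l'était, l'image de $[g,z]\mapsto gz$ serait ouverte dans $M$, ce qui forcerait $M_{\langle\lambda\rangle}$ à être ouvert --- or cela n'arrive que pour le type minimal $\lambda_\mini$. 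Pour un type $\lambda$ non minimal la strate est de codimension strictement positive, et identifier $\T_xM_{\langle\lambda\rangle}$ fait précisément partie de ce qu'il faut établir (via la forme normale de Morse--Bott de $f_\ugot$ au voisinage de $Z_\lambda$) ; l'invoquer ici est circulaire.

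L'argument d'injectivité est également incomplet : de $wz_2=z_1$ avec $z_i\in C^-_\lambda$ on ne déduit pas directement $w\in P(\lambda)$, car la convergence de $\exp(-it\lambda)\cdot(wz_2)$ n'implique pas celle de $\exp(-it\lambda)\,w\,\exp(it\lambda)$. Enfin, le passage de $u^{-1}m_\infty\in C_\lambda$ à $u^{-1}m\in C^-_{\lambda,\langle 0\rangle}$ --- que vous reconnaissez comme l'obstacle principal --- constitue en réalité l'essentiel du théorème ; l'analyse Morse--Bott que vous évoquez n'est pas un détail technique à ajouter, c'est la preuve elle-même.
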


Kirwan a donné une preuve de ce théorème lorsque $M$ est une variété compacte \cite{Kir84a}. Lorsque $M$ n'est pas compacte mais que l'application moment est propre, 
on pourra trouver une preuve dans \cite{HSS08} (voir également \cite{Woodward11, pep-ressayre-hkkn}).

Les faits standard suivants vont être très importants dans le cadre réel. Soit $\lambda_{\mini}$ la projection orthogonale de 
$0$ sur le polytope convexe fermé $\Delta_\ugot(M)$. 

\begin{prop}\label{prop:strate-ouverte-complexe}
\begin{enumerate}
\item[a)] $\lambda_{\mini}\in \Bcal_\ugot$.
\item[b)]$\|\lambda\| >\|\lambda_{\mini}\|$, $\forall \lambda\in \Bcal_\ugot-\{\lambda_{\mini}\}$.
\item[c)] Si $\lambda\neq \lambda_{\mini}$, alors la strate $M_{\langle\lambda\rangle}$ est d'intérieur vide.
\item[d)] $M_{\langle\lambda_{\mini}\rangle}$ est un ouvert de Zariski de $M$, connexe et $U_\C$-invariant.
\end{enumerate}
\end{prop}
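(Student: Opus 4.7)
The plan is to establish the four claims in sequence: (a) and (b) follow from elementary convex geometry applied to $\Delta_\ugot(M)$, while (d) and then (c) are consequences of the Kirwan-Ness structure theorem.

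First I would handle (a). Since $\Phi_\ugot(M) = U\cdot\Delta_\ugot(M)$ and $\lambda_{\mini}$ is by definition the point of $\Delta_\ugot(M)$ closest to the origin, the function $f_\ugot = \frac12\|\Phi_\ugot\|^2$ attains its global infimum $\frac12\|\lambda_{\mini}\|^2$ on the non-empty level set $\Phi_\ugot^{-1}(U\lambda_{\mini})$. At any minimizer $m_0\in \Phi_\ugot^{-1}(\lambda_{\mini})$ one must have $0 = \nabla f_\ugot(m_0) = \J\kappa_\ugot(m_0)$, so $\lambda_{\mini}\cdot m_0 = \kappa_\ugot(m_0) = 0$. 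Hence $m_0\in M^{\lambda_{\mini}}\cap\Phi_\ugot^{-1}(\lambda_{\mini})$, so $Z_{\lambda_{\mini}}$ is non-empty and $\lambda_{\mini}\in\Bcal_\ugot$. For (b), every $\lambda\in\Bcal_\ugot$ lies in $\Phi_\ugot(M)\cap\tgot_+^* = \Delta_\ugot(M)$, and $\lambda_{\mini}$ is the unique element of the closed convex set $\Delta_\ugot(M)$ minimizing the Euclidean norm; hence $\|\lambda\|>\|\lambda_{\mini}\|$ whenever $\lambda\neq\lambda_{\mini}$.

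Next I would prove (d) by splitting into two cases. If $\lambda_{\mini}=0$, formula (\ref{eq:strate-M-0}) identifies $M_{\langle 0\rangle}$ with the analytically semistable locus, which is Zariski open, $U_\C$-invariant and connected by standard GIT-type arguments. If $\lambda_{\mini}\neq 0$, Theorem \ref{Kirwan-stratification} realizes $M_{\langle\lambda_{\mini}\rangle}$ as a locally closed $U_\C$-invariant complex submanifold of $M$, holomorphically diffeomorphic to $U_\C\times_{P(\lambda_{\mini})}C^-_{\lambda_{\mini},\langle 0\rangle}$. To prove it is open I would check that it has non-empty interior in $M$ by a Morse-Bott analysis of $f_\ugot$ at its global minimum: along $Z_{\lambda_{\mini}}$, the Hessian of $f_\ugot$ is positive semi-definite with kernel tangent to $Z_{\lambda_{\mini}}$ and strictly positive in the normal directions, so the descending gradient flow of $-f_\ugot$ contracts a full tubular neighborhood of $Z_{\lambda_{\mini}}$ onto $Z_{\lambda_{\mini}}$ itself, placing an open set of $M$ inside $M_{\langle\lambda_{\mini}\rangle}$. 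A locally closed complex submanifold with non-empty interior is automatically open; the remaining strata form a locally finite family (by properness of $\Phi_\ugot$) of locally closed complex submanifolds of strictly lower dimension, so their union is a closed complex analytic subset, and $M_{\langle\lambda_{\mini}\rangle}$ is Zariski open. Connectedness then follows from connectedness of $M$, since a non-empty Zariski open subset of a connected (hence irreducible) complex manifold is connected and dense. Assertion (c) is then a direct consequence: the strata being pairwise disjoint, every $M_{\langle\lambda\rangle}$ with $\lambda\neq\lambda_{\mini}$ sits inside the proper closed analytic subset $M\setminus M_{\langle\lambda_{\mini}\rangle}$ and therefore has empty interior.

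The main obstacle will be the transverse positivity of the Hessian of $f_\ugot$ along $Z_{\lambda_{\mini}}$ required in (d). What must genuinely be checked is that at any $m_0\in Z_{\lambda_{\mini}}$ the positive eigenspace $(T_{m_0}M)^{\lambda_{\mini}>0}$ (for the infinitesimal action of $\lambda_{\mini}$) accounts for the entire normal bundle to $Z_{\lambda_{\mini}}$, with the map $\rho_{m_0}\colon\ngot^{\lambda_{\mini}>0}\to (T_{m_0}M)^{\lambda_{\mini}>0}$ surjective. This is the classical Kirwan-Ness local model for the norm-squared moment map near its minimum, and it is the pivot that converts the abstract type $\lambda_{\mini}\in\Bcal_\ugot$ into a genuine open stratum.
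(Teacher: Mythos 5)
Parts a) and b) are correct: properness of $f_\ugot$, the identity $\Phi_\ugot(M)=U\cdot\Delta_\ugot(M)$, and uniqueness of the point of the closed convex set $\Delta_\ugot(M)$ nearest to the origin give exactly what you write. The problems are in your proof of d). Your pivot is a Morse--Bott claim along $Z_{\lambda_{\mini}}$ (transverse positivity of the Hessian of $f_\ugot$, equivalently surjectivity of $\rho_{m_0}\colon\ngot^{\lambda_{\mini}>0}\to(T_{m_0}M)^{\lambda_{\mini}>0}$ at every $m_0\in Z_{\lambda_{\mini}}$). This is not ``the classical Kirwan--Ness local model'': $\|\Phi_\ugot\|^2$ is in general only \emph{minimally degenerate}, $Z_{\lambda_{\mini}}$ need not even be a smooth submanifold (already for $\lambda_{\mini}=0$ it is the possibly singular fibre $\Phi_\ugot^{-1}(0)$), and surjectivity of $\rho_{m_0}$ at points of $C_{\lambda_{\mini}}$ is precisely the nontrivial condition appearing in the definition of an infinitesimal Ressayre pair; it follows from openness of the minimal stratum together with Theorem \ref{Kirwan-stratification}, so assuming it here is close to circular. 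The non-empty interior statement needs no Hessian at all: by b) and the discreteness of $\Bcal_\ugot$ choose $c$ with $\|\lambda_{\mini}\|<c<\|\lambda\|$ for every other type; since $f_\ugot$ decreases along $\varphi^t_\ugot$ and $M_{\langle\lambda\rangle}\subset\{f_\ugot\geq\tfrac12\|\lambda\|^2\}$, the open neighbourhood $\{f_\ugot<\tfrac12 c^2\}$ of $Z_{\lambda_{\mini}}$ is contained in $M_{\langle\lambda_{\mini}\rangle}$.

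The second gap is the passage from non-empty interior to Zariski-openness. The sentence ``the remaining strata form a locally finite family of locally closed complex submanifolds of strictly lower dimension, so their union is a closed complex analytic subset'' is a non sequitur: a locally finite union of lower-dimensional locally closed submanifolds need not be closed, let alone analytic (an open disc inside a hyperplane of $\C^2$ already fails), and the lower-dimensionality of the other strata is itself not established at this stage -- it is essentially equivalent to what you want to prove. Closedness of $\bigcup_{\lambda\neq\lambda_{\mini}}M_{\langle\lambda\rangle}$ requires the frontier property $\overline{M_{\langle\lambda\rangle}}\subset\bigcup_{\|\mu\|\geq\|\lambda\|}M_{\langle\mu\rangle}$ together with local finiteness, and the analyticity of the complement is part of the structure theory of the Kirwan--Ness stratification that the paper is quoting here (\cite{Kir84a,HSS08,pep-ressayre-hkkn}); a self-contained argument must invoke or reprove those facts, not a dimension count. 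Note also that ``a locally closed complex submanifold with non-empty interior is automatically open'' only yields openness of the full-dimensional connected components, and the stratum is not a priori equidimensional. Once Zariski-openness is granted, your deduction of connectedness (complement of an analytic subset of the connected manifold $M$) and of c) is fine.
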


%%%%%%%%%%%%%%%%%%%%%%%%%%%%%%%%%%%%%%%%%
\subsection{Le cadre avec involution}
%%%%%%%%%%%%%%%%%%%%%%%%%%%%%%%%%%%%%%%%%

Dans cette section, nous supposons que notre variété de Kähler $U$-hamiltonienne $(M,\Omega)$ admet une involution anti-holomorphe 
$\tau$ compatible avec une involution anti-holomorphe $\sigma$ sur $U_\C$ (voir la définition \ref{def:U-sigma-kahler}).

Le but de cette section est d'expliquer comment la stratification de 
Kirwan-Ness $M=\bigcup_{\lambda\in \Bcal_\ugot} M_{\langle\lambda\rangle}$ interagit avec les involutions $\sigma$ et $\tau$. Nous 
supposons ici que la variété $M^\tau$ est {\em non vide}.

Comme à la section \ref{sec:U-sigma-hamiltonien}, nous travaillons avec une chambre de Weyl 
$\tgot^*_+\subset \tgot^*$ telle que $\tgot^*_+\cap (\tgot^*)^{-\sigma}$ est un domaine fondamental pour l'action de $K=(U^\sigma)_0$ 
sur $(\ugot^*)^{-\sigma}$. Ainsi, pour tout $\xi\in \tgot^*_+$, l'intersection $U\xi\cap (\ugot^*)^{-\sigma}$ est non-vide si et seulement si 
$\xi\in\tgot^*_+\cap (\tgot^*)^{-\sigma}$.

\begin{definition}
Notons par $\sigma_+ :\tgot^*_+\to\tgot^*_+$ l'involution de la chambre de Weyl définie par les relations : $-\sigma(U\xi)=-U\sigma(\xi)=:U\sigma_+(\xi)$ pour tout $\xi\in\tgot^*_+$.
\end{definition}

Rappelons que $\varphi^t_\ugot:M\to M$ désigne le flot du champ de vecteur $-\nabla f_\ugot$.
 
\begin{prop}\label{prop:fundamental-stratification-involution}
Soit $\Zcal$ une composante connexe de $M^\tau$.
\begin{enumerate}
\item Pour tout $(m,t)\in M\times \R_{\geq 0}$, on a $\tau(\varphi^t_\ugot(m))= \varphi^t_\ugot(\tau(m))$.
\item Si $m\in\Zcal$, alors $\varphi^t_\ugot(m)\in\Zcal$ pour tout $t\geq 0$.
\item Nous avons $\tau(m_\infty)=(\tau(m))_\infty$ pour tout $m\in M$.
\item Si $m\in \Zcal$, alors $m_\infty\in \Zcal$.
\item Pour tout $\lambda\in\Bcal_\ugot$, nous avons $\tau(M_{\langle\lambda\rangle})=M_{\langle \sigma_+(\lambda)\rangle}$.
\item Pour tout $\lambda\in\Bcal_\ugot$ tel que $M_{\langle\lambda\rangle}\cap \Zcal\neq \emptyset$, nous avons
\begin{itemize}
\item[i)] $\Zcal$ intersecte l'ensemble critique $Z_{\lambda}=U(M^{\lambda}\cap \Phi_\ugot^{-1}(\lambda))$.
\item[ii)] $\sigma(\lambda)=-\lambda$.
\item[iii)]$\tau(M_{\langle\lambda\rangle})=M_{\langle\lambda\rangle}$.
\item[iv)]$\tau(C_\lambda)=C_\lambda$ et $\tau(C^{-}_\lambda)=C^{-}_\lambda$
\end{itemize}
\item $\sigma(\lambda_{\mini})=-\lambda_{\mini}$.
\item $M_{\langle\lambda_\mini\rangle}\cap \Zcal$ est un sous-ensemble ouvert dense de $\Zcal$.
\end{enumerate}
\end{prop}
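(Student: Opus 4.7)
Le plan est d'établir les huit points dans l'ordre indiqué, en partant tous d'une observation unique : l'involution $\tau$ commute avec le flot de gradient négatif de $f_\ugot$. D'abord, je vérifierai que $\tau$ est une isométrie de la métrique riemannienne $\Omega(\cdot,\J\cdot)$ (car $\tau^*\Omega=-\Omega$ et $\tau^*\J=-\J$) et que $f_\ugot\circ\tau = f_\ugot$ (car $\Phi_\ugot\circ\tau = -\sigma\circ\Phi_\ugot$ et $\sigma$ est une isométrie de $\ugot^*$). Ensemble, ces deux faits entraînent $\tau_*\nabla f_\ugot = \nabla f_\ugot$, d'où $\tau\circ\varphi^t_\ugot = \varphi^t_\ugot\circ\tau$, ce qui est le point 1. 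Le point 2 en résulte car la trajectoire continue $t\mapsto \varphi^t_\ugot(m)$ partant de $m\in\Zcal$ reste dans $M^\tau$, donc dans la composante connexe $\Zcal$. Les points 3 et 4 suivent en passant à la limite $t\to\infty$, en utilisant que $\Zcal$ est fermé dans $M$ en tant que composante de la sous-variété fermée $M^\tau$.

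Pour le point 5, le calcul clé est $\tau(Z_\lambda) = Z_{\sigma_+(\lambda)}$ : d'une part $\tau(Z_\lambda)$ est $U$-invariant et contenu dans $\crit(f_\ugot)$ (car $\tau(um) = \sigma(u)\tau(m)$ avec $\sigma(U)=U$, et $f_\ugot\circ\tau = f_\ugot$) ; d'autre part $\Phi_\ugot(\tau(Z_\lambda)) = -\sigma(\Phi_\ugot(Z_\lambda)) = -\sigma(U\lambda) = U\sigma_+(\lambda)$, d'où l'égalité par bijectivité de $\tau$. Combiné au point 3, ceci fournit le point 5. Pour le point 6, on prend $m\in M_{\langle\lambda\rangle}\cap\Zcal$. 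D'après le point 4, $m_\infty\in Z_\lambda\cap\Zcal$, ce qui donne (i). Comme $\tau(m)=m$ appartient également à $\tau(M_{\langle\lambda\rangle}) = M_{\langle\sigma_+(\lambda)\rangle}$, la disjonction des strates force $\sigma_+(\lambda)=\lambda$, d'où (iii) ; la chambre de Weyl étant adaptée, cette égalité entraîne $\sigma(\lambda)=-\lambda$, c'est-à-dire (ii). Pour (iv), $\sigma(\lambda)=-\lambda$ fait que $\tau$ préserve $M^\lambda$ (via $\tau(M^\lambda) = M^{\sigma(\lambda)} = M^{-\lambda} = M^\lambda$) et $\Phi_\ugot^{-1}(\lambda)$, donc l'union $C_\lambda$ des composantes de $M^\lambda$ rencontrant cette fibre ; pour $C_\lambda^-$, on utilise qu'il s'agit du bassin d'attraction de $C_\lambda$ sous le flot de gradient de $-\langle\Phi_\ugot,\lambda\rangle$, fonction qui est $\tau$-invariante précisément lorsque $\sigma(\lambda)=-\lambda$.

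Pour le point 7, $\tau(M_{\langle\lambda_{\mini}\rangle}) = M_{\langle\sigma_+(\lambda_{\mini})\rangle}$ est ouvert dense dans $M$ (car $\tau$ est un homéomorphisme), et la Proposition \ref{prop:strate-ouverte-complexe} garantit l'unicité de la strate ouverte dense, donc $\sigma_+(\lambda_{\mini})=\lambda_{\mini}$, et par conséquent $\sigma(\lambda_{\mini})=-\lambda_{\mini}$. Le point 8 constitue l'obstacle principal : l'ouverture de $\Zcal\cap M_{\langle\lambda_{\mini}\rangle}$ dans $\Zcal$ est automatique, mais la non-vacuité nécessite d'exploiter le caractère lagrangien de $\Zcal$. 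Mon approche est d'utiliser qu'une sous-variété lagrangienne totalement réelle ne peut être contenue dans aucun sous-ensemble analytique complexe propre de $M$ : toute fonction holomorphe s'annulant sur une telle lagrangienne s'annule identiquement par le principe d'identité, puisque $T_z\Zcal + \J T_z\Zcal = T_zM$ en chaque point. Appliqué à $M - M_{\langle\lambda_{\mini}\rangle}$, ceci force $\Zcal\cap M_{\langle\lambda_{\mini}\rangle}\neq\emptyset$, et la densité s'ensuit car le complémentaire est la trace sur $\Zcal$ d'un sous-ensemble analytique propre de $M$, donc un sous-ensemble analytique réel propre de $\Zcal$.
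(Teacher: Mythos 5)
Votre traitement des points 1 à 5, 6 i), 6 iii), 6 iv) et 8 est correct et suit pour l'essentiel la même route que le texte (commutation de $\tau$ avec le flot via l'invariance de $f_\ugot$ et le caractère isométrique de $\tau$, identité $\tau(Z_\lambda)=Z_{\sigma_+(\lambda)}$, argument \og totalement réel \fg{} de l'appendice pour le point 8). En revanche, votre déduction de 6 ii) — et, par le même mécanisme, celle du point 7 — repose sur une implication fausse : vous affirmez que l'égalité $\sigma_+(\lambda)=\lambda$, jointe au caractère adapté de la chambre de Weyl, entraîne $\sigma(\lambda)=-\lambda$. Or $\sigma_+(\lambda)=\lambda$ signifie seulement que l'orbite $U\lambda$ est stable par l'involution $\xi\mapsto-\sigma(\xi)$, ce qui n'impose nullement que celle-ci y possède un point fixe. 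Contre-exemple : $U=\mathrm{SU}(2)$ et $\sigma=\mathrm{id}$ ; toute chambre est alors adaptée (car $(\ugot^*)^{-\sigma}=\{0\}$), on a $-\lambda\in U\lambda$ pour tout $\lambda$, donc $\sigma_+=\mathrm{id}$ sur $\tgot^*_+$, alors que $\sigma(\lambda)=-\lambda$ ne vaut que pour $\lambda=0$. Notez de plus que votre preuve du point 7 n'utilise jamais l'hypothèse $M^\tau\neq\emptyset$ : appliquée à l'orbite coadjointe $M=U\lambda$ de $\mathrm{SU}(2)$, $\lambda\neq 0$, munie de l'involution antipodale $\tau(\xi)=-\xi$ (pour laquelle $M^\tau=\emptyset$ et $\lambda_{\mini}=\lambda$), elle \og démontrerait \fg{} $\sigma(\lambda_{\mini})=-\lambda_{\mini}$, ce qui est faux ; l'hypothèse $M^\tau\neq\emptyset$ doit donc intervenir quelque part.

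La correction est celle du texte. Pour 6 ii), utilisez le point i) que vous venez d'établir : si $m\in M_{\langle\lambda\rangle}\cap\Zcal$, alors $m_\infty\in Z_\lambda\cap\Zcal$, donc $\Phi_\ugot(m_\infty)\in U\lambda\cap(\ugot^*)^{-\sigma}$ puisque $\Phi_\ugot(M^\tau)\subset(\ugot^*)^{-\sigma}$ ; la non-vacuité de $U\lambda\cap(\ugot^*)^{-\sigma}$, combinée à la remarque \ref{rem:orbite-symetrique} (chambre adaptée), force $\lambda\in(\tgot^*)^{-\sigma}$, c'est-à-dire $\sigma(\lambda)=-\lambda$ ; vos arguments pour 6 iii) et 6 iv) restent alors valables tels quels. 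Pour le point 7, inversez votre ordre : votre point 8 est indépendant de 6 ii) et de 7, il donne $M_{\langle\lambda_{\mini}\rangle}\cap\Zcal\neq\emptyset$, et il suffit ensuite d'appliquer 6 ii) à $\lambda=\lambda_{\mini}$ — c'est exactement l'enchaînement suivi par le texte, et c'est là que l'hypothèse $M^\tau\neq\emptyset$ est réellement utilisée.
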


\begin{proof} Le premier point est une conséquence directe du fait que, pour tout $m\in M$, l'application tangente $\T_m\tau :\T_m M\to\T_{\tau(m)}M$ 
envoie le vecteur $\J(\kappa_\ugot(m))$ sur  $\J(\kappa_\ugot(\tau(m)))$. Les points \emph{2.}, \emph{3.} et \emph{4.} découlent du premier.
  
Par définition, $m\in M_{\langle\lambda\rangle} \Leftrightarrow \Phi_\ugot(m_\infty)\in U\lambda$.  Alors, si $m\in M_{\langle\lambda\rangle}$, on a
 $$
 \Phi_\ugot((\tau(m))_\infty)=\Phi_\ugot(\tau(m_\infty))=-\sigma(\Phi_\ugot(m_\infty))\in -U\sigma(\lambda)=U\sigma_+(\lambda),
$$
ce qui implique que $\tau(m)\in M_{\langle \sigma_+(\lambda)\rangle}$. Ainsi l'identité 
$\tau(M_{\langle\lambda\rangle})=M_{\langle \sigma_+(\lambda)\rangle}$ est démontrée. 

Soit $m\in M_{\langle\lambda\rangle}\cap \Zcal$. Grâce au point {\em 4.}, nous savons que 
$$
m_\infty\in Z_{\lambda}\cap \Zcal\subset \Phi_\ugot^{-1}(U\lambda)\cap \Zcal.
$$
Cela montre le point {\em 6. i)}, et de plus, on a  $\Phi_\ugot(m_\infty)\in U\lambda\cap (\ugot^*)^{-\sigma}$.  
Le fait que $U\lambda\cap (\ugot^*)^{-\sigma}$ ne soit pas vide impose au poids 
dominant $\lambda$ la relation $\sigma(\lambda)=-\lambda$ (voir la remarque \ref{rem:orbite-symetrique}). Le point {\em 6. ii)} est démontré et, 
grâce au point \emph{5.},  le point {\em 6. iii)} en découle. 
La variété $C_\lambda$ est égale à la réunion des composantes connexes $C$ de $M^\lambda$ telles que $\lambda\in\Phi_\ugot(C)$. Comme $\sigma(\lambda)=-\lambda$, 
on a $\tau(M^\lambda)=M^\lambda$, e.g. $\tau$ permute les composantes connexes de $M^\lambda$. De plus, la relation $\lambda\in\Phi_\ugot(C)$ est équivalente à 
$\lambda\in\Phi_\ugot(\tau(C))$. On montre ainsi que $\tau(C_\lambda)=C_\lambda$. Un élément $m\in M$ appartient à $C_\lambda^-$ si et seulement si 
la limite $[m]_\lambda:=\lim_{t\to\infty}e^{-it\lambda}m$ existe et appartient à $C_\lambda$. Comme $\tau(e^{-it\lambda}m)=e^{-it\lambda}\tau(m)$, on a 
$\tau([m]_\lambda)=[\tau(m)]_\lambda$, et cette dernière relation montre que $C^{-}_\lambda$ est stable pour l'involution $\tau$.

L'ensemble $M^\tau$ est une sous-variété  lagrangienne de la variété symplectique $(M,\Omega)$, tandis que la strate $M_{\langle\lambda_\mini\rangle}\subset M$ 
est un ouvert de Zariski. Alors, pour n'importe quelle composante connexe $\Zcal$ de $M^\tau$, l'intersection 
$\Zcal\cap M_{\langle\lambda_\mini\rangle}$ est un ouvert dense de $\Zcal$ (voir l'appendice). On en déduit, d'après le point {\em 6.},  que $\sigma(\lambda_{\mini})=-\lambda_{\mini}$. 
\end{proof}

Dans la suite, nous nous intéressons particulièrement au cas où $\lambda_{\mini}\neq 0$. Alors, nous avons une application holomorphe $\tau$-équivariante
$$
p^\R_{\lambda_{\mini}}: U_\C\times_{P(\lambda_{\mini})}C_{\lambda_{\mini}}^{-}\longrightarrow M.
$$
 Le prochain résultat est une réécriture du théorème \ref{Kirwan-stratification}.

\begin{theorem}\label{th:stratification-kirwan-bis}
Supposons $\lambda_{\mini}\neq 0$. Alors, il existe\footnote{$\Ucal:=C_{\lambda_{\mini},\langle 0\rangle}^-$ et $\Vcal= M_{\lambda_{\mini}}$} des sous-ensembles ouverts $\Vcal\subset M$ et 
$\Ucal\subset C^{-}_{\lambda_{\mini}}$, invariants par $\tau$, et satisfaisant
\begin{enumerate}
\item $\Ucal$ est  un ouvert dense, $P(\lambda_{\mini})$-invariant, de $C^-_{\lambda_{\mini}}$.
\item $\Ucal$ intersecte $C_{\lambda_{\mini}}$.
\item $\Vcal$ est  un ouvert  dense, $U_\C$-invariant, de $M$.
\item L'application $p^\R_{\lambda_{\mini}}$ définit un difféomorphisme holomorphe $U_\C\times_{P(\lambda_{\mini})}\Ucal\overset{\sim}{\longrightarrow} \Vcal$.
\end{enumerate}
\end{theorem}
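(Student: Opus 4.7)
The plan is to take $\Vcal := M_{\langle \lambda_{\mini}\rangle}$ and $\Ucal := C^-_{\lambda_{\mini},\langle 0\rangle}$, as suggested in the footnote. With these choices, the four numbered properties are all direct consequences of Theorem \ref{Kirwan-stratification} applied to the non-zero type $\lambda=\lambda_{\mini}$, combined with point d) of Proposition \ref{prop:strate-ouverte-complexe}, which guarantees that $\Vcal$ is a dense Zariski open $U_\C$-invariant subset of $M$. The only genuinely new content is the $\tau$-invariance of $\Vcal$ and $\Ucal$, so the entire argument reduces to establishing these two invariance properties.

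For $\Vcal = M_{\langle \lambda_{\mini}\rangle}$, I would invoke point 8 of Proposition \ref{prop:fundamental-stratification-involution}, which asserts that $M_{\langle \lambda_{\mini}\rangle}\cap \Zcal$ is open and dense in $\Zcal$, and in particular nonempty; point 6.iii) then yields $\tau(\Vcal)=\Vcal$ directly. As a by-product, point 7 gives the crucial identity $\sigma(\lambda_{\mini})=-\lambda_{\mini}$, which will be needed to transfer the involution down to the sub-picture attached to $\lambda_{\mini}$.

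For $\Ucal = C^-_{\lambda_{\mini},\langle 0\rangle}$, the idea is to apply Proposition \ref{prop:fundamental-stratification-involution} a second time, now at the level of $C_{\lambda_{\mini}}$. The relation $\sigma(\lambda_{\mini})=-\lambda_{\mini}$ ensures that $U_{\lambda_{\mini}}$ is $\sigma$-stable, that $\tau(C_{\lambda_{\mini}})=C_{\lambda_{\mini}}$ (point 6.iv)), and that the shifted moment map $\Phi_{\lambda_{\mini}}=\Phi_\ugot\vert_{C_{\lambda_{\mini}}}-\lambda_{\mini}$ satisfies $\Phi_{\lambda_{\mini}}(\tau(m))=-\sigma(\Phi_{\lambda_{\mini}}(m))$, making $C_{\lambda_{\mini}}$ into a Kähler $(U_{\lambda_{\mini}},\sigma)$-Hamiltonian manifold. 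To produce a $\tau$-fixed point inside $C_{\lambda_{\mini}}$, I would pick $m\in\Zcal\cap Z_{\lambda_{\mini}}$ (nonempty by point 6.i)), write $\Phi_\ugot(m)=k\lambda_{\mini}$ with $k\in K$ using the identity $U\lambda_{\mini}\cap(\ugot^*)^{-\sigma}=K\lambda_{\mini}$, and observe that $k^{-1}m\in C_{\lambda_{\mini}}\cap M^\tau$. Applying point 5 of Proposition \ref{prop:fundamental-stratification-involution} to the Kirwan--Ness stratification of $C_{\lambda_{\mini}}$ at type zero (which is trivially fixed by $\sigma_+$) then gives $\tau(C_{\lambda_{\mini},\langle 0\rangle})=C_{\lambda_{\mini},\langle 0\rangle}$. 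Finally, the Bialynicki--Birula projection $\pi_{\lambda_{\mini}}:C^-_{\lambda_{\mini}}\to C_{\lambda_{\mini}}$ defined by $m\mapsto \lim_{t\to\infty}\exp(-it\lambda_{\mini})m$ is $\tau$-equivariant, because $\tau(\exp(-it\lambda_{\mini})m)=\exp(it\sigma(\lambda_{\mini}))\tau(m)=\exp(-it\lambda_{\mini})\tau(m)$, and therefore $\Ucal=\pi_{\lambda_{\mini}}^{-1}(C_{\lambda_{\mini},\langle 0\rangle})$ inherits the $\tau$-invariance. The main obstacle is the bookkeeping needed to verify that $C_{\lambda_{\mini}}$ carries a Kähler $(U_{\lambda_{\mini}},\sigma)$-Hamiltonian structure with nonempty real locus; once this is in place, everything reduces to a second, straightforward application of Proposition \ref{prop:fundamental-stratification-involution} one level down.
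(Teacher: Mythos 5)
Your proof is correct and follows essentially the same route as the paper, which presents this theorem as a direct rewriting of Theorem~\ref{Kirwan-stratification} with the footnoted choices $\Vcal=M_{\langle\lambda_{\mini}\rangle}$ and $\Ucal=C^-_{\lambda_{\mini},\langle 0\rangle}$, the $\tau$-invariance coming from Proposition~\ref{prop:fundamental-stratification-involution} (points 5--8) together with $\sigma(\lambda_{\mini})=-\lambda_{\mini}$. Your second application of that proposition to the $(U_{\lambda_{\mini}},\sigma)$-Hamiltonian manifold $C_{\lambda_{\mini}}$ is a little more machinery than needed — $\tau(C_{\lambda_{\mini},\langle 0\rangle})=C_{\lambda_{\mini},\langle 0\rangle}$ already follows from $\tau(\Phi_\ugot^{-1}(\lambda_{\mini}))=\Phi_\ugot^{-1}(\lambda_{\mini})$ and the $\sigma$-stability of $(U_{\lambda_{\mini}})_\C$, and the $\tau$-equivariance of the Bia\l ynicki-Birula projection — but it is valid.
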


%%%%%%%%%%%%%%%%%%%%%%%%%%%%%%%%%%%%%%%%%%%%%%%%%%%%%
\subsection{Stratification dans le cadre réel}\label{sec:real-stratification}
%%%%%%%%%%%%%%%%%%%%%%%%%%%%%%%%%%%%%%%%%%%%%%%%%%%%%

Dans cette partie, nous expliquons comment (\ref{eq:KN-stratification}) fournit une stratification de la sous-variété $\Zcal\subset M^\tau$. Rappelons que 
l'isomorphisme $j^*:(\ugot^*)^{-\sigma}\simeq \pgot^*$ induit un bijection $\tgot^*_+\cap (\tgot^*)^{-\sigma}{\simeq}_{j^*} \agot^*_+ $.

Soit $\Bcal_\ugot^\Zcal$ l'ensemble formé par les éléments $\lambda\in \Bcal_\ugot$ tels que 
$M_{\langle\lambda\rangle}\cap \Zcal\neq \emptyset$. Nous savons d'après la proposition \ref{prop:fundamental-stratification-involution} 
que $\sigma(\lambda)=-\lambda$ lorsque $\lambda\in\Bcal_\ugot^\Zcal$ : dans ce cas, nous écrivons $\tlambda=j^*(\lambda)\in\agot^*_+$ et
 $$
 \Zcal_{\langle\tlambda\rangle}= M_{\langle\lambda\rangle}\cap \Zcal.
 $$

Soit $\Bcal_\pgot^\Zcal=j^*(\Bcal_\ugot^\Zcal)\subset\agot^*_+$. Nous avons une partition 
$$
\Zcal=\bigcup_{\tlambda\in\Bcal^\Zcal_\pgot} \Zcal_{\langle\tlambda\rangle}
$$ 
en sous-variétés réelles localement fermées, que nous allons interpréter en termes d'action du groupe $G$ sur $\Zcal$ et de la fonction gradient 
$\Phi_\pgot\vert_\Zcal:\Zcal\to\pgot^*$.

Soit $f_\pgot:=\tfrac{1}{2}(\Phi_\pgot,\Phi_\pgot):M\to\R$. Vérifions d'abord que $\Bcal_\pgot^\Zcal$ paramétrise l'ensemble des points 
critiques de la fonction $f_\pgot\vert_\Zcal$. Rappelons que le point \emph{8.} de la proposition \ref{prop:fundamental-stratification-involution} 
nous dit que $0\in \Bcal_\pgot^\Zcal\Leftrightarrow \Phi_\ugot^{-1}(0)\cap\Zcal\neq \emptyset\Leftrightarrow \Phi_\ugot^{-1}(0)\neq \emptyset$.

\begin{lem}
L'ensemble des points critiques de la fonction $f_\pgot\vert_\Zcal$ admet la décomposition
$$
\crit(f_\pgot\vert_{\Zcal})=\bigcup_{\tlambda\in \Bcal^\Zcal_\pgot}K(\Zcal^{\tlambda}\cap \Phi_\pgot^{-1}(\tlambda)).
$$
\end{lem}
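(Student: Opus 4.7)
The plan is to relate the critical set of $f_\pgot\vert_\Zcal$ to the intersection of $\Zcal$ with the critical set of $f_\ugot$ on $M$, and then use the decomposition by types on the $U$-side together with the O'Shea--Sjamaar control of $U$-orbits meeting $(\ugot^*)^{-\sigma}$ to descend to $K$-orbits on the real side.

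First I would compute the gradient of $f_\pgot\vert_\Zcal$. The preceding lemma states that $\zeta_M = \nabla\langle\Phi_\pgot,\zeta\rangle$ for $\zeta\in\pgot$ with respect to the Riemannian structure $(-,-)_M = \Omega(-,\J-)$. Differentiating $f_\pgot = \tfrac{1}{2}(\Phi_\pgot,\Phi_\pgot)$ yields $\nabla f_\pgot(m) = \Phi_\pgot^\flat(m)_M(m)$, where $\flat$ denotes the inner-product identification $\pgot^*\simeq\pgot$. For $z\in\Zcal$, the identification $\pgot = i\ugot^{-\sigma}$ combined with the Kähler formula $(iX)_M = \J(X_M)$ gives $\Phi_\pgot^\flat(z)_M(z) = \pm\J\,\kappa_\ugot(z)$; in particular this vector is already tangent to $\Zcal$ because $\tau_*$ acts by $-1$ on $X_M(z)$ for $X\in\ugot^{-\sigma}$, so multiplication by $\J$ sends it into $T_z\Zcal$. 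Consequently $\nabla(f_\pgot\vert_\Zcal)$ is the restriction of $\nabla f_\pgot$, and
\[
\crit(f_\pgot\vert_\Zcal) \;=\; \Zcal \cap \crit(f_\ugot) \;=\; \bigcup_{\lambda\in\Bcal_\ugot} \Zcal\cap Z_\lambda.
\]

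Next I would discard the empty intersections. By construction $Z_\lambda\subset M_{\langle\lambda\rangle}$, so $\Zcal\cap Z_\lambda\neq\emptyset$ forces $\lambda\in\Bcal^\Zcal_\ugot$; conversely part \emph{6.i)} of Proposition~\ref{prop:fundamental-stratification-involution} says every $\lambda\in\Bcal^\Zcal_\ugot$ indeed meets $Z_\lambda\cap\Zcal$. Moreover part \emph{6.ii)} guarantees $\sigma(\lambda) = -\lambda$, so $\tlambda := j^*(\lambda)\in\agot^*_+$ is well defined.

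It remains to identify each slice $\Zcal\cap Z_\lambda$ with $K(\Zcal^{\tlambda}\cap\Phi_\pgot^{-1}(\tlambda))$. For $z\in\Zcal$, since $\Phi_\ugot(z)\in(\ugot^*)^{-\sigma}$ one has $\Phi_\pgot(z) = j^*(\Phi_\ugot(z))$, and because $j^*$ is bijective on $(\ugot^*)^{-\sigma}$ the equality $\Phi_\pgot(z)=\tlambda$ is equivalent to $\Phi_\ugot(z)=\lambda$. Similarly, writing $\tlambda^\flat = i\lambda^\flat\in\agot$, the formula $(iX)_M = \J(X_M)$ gives $\Zcal^{\tlambda} = \Zcal\cap M^\lambda$. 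Now take $z = u\cdot x\in\Zcal\cap Z_\lambda$ with $x\in M^\lambda\cap\Phi_\ugot^{-1}(\lambda)$ and $u\in U$. Then $u\lambda = \Phi_\ugot(z)\in U\lambda\cap(\ugot^*)^{-\sigma}$, and by Example~\ref{ex:O-K-involution} this intersection equals $K\lambda$; hence there exists $k\in K$ with $u\lambda = k\lambda$, so $u = k u'$ for some $u'\in U_\lambda$. Setting $x' := u'\cdot x$, which still lies in $M^\lambda\cap\Phi_\ugot^{-1}(\lambda)$, we have $z = k x'$, so $x' = k^{-1}z\in\Zcal$ (as $\Zcal$ is $K$-stable because $K\subset U^\sigma$ is connected). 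This proves $z\in K(\Zcal\cap M^\lambda\cap\Phi_\ugot^{-1}(\lambda)) = K(\Zcal^{\tlambda}\cap\Phi_\pgot^{-1}(\tlambda))$, and the reverse inclusion is immediate. Assembling the pieces over $\tlambda\in\Bcal^\Zcal_\pgot$ gives the claimed decomposition.

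The only delicate point is the first step, namely reconciling the sign conventions between the identifications $\pgot^*\simeq\pgot$, $(\ugot^*)^{-\sigma}\simeq\ugot^{-\sigma}$, and $j:\pgot\to\ugot^{-\sigma}$, $\zeta\mapsto i\zeta$, in order to verify cleanly that $\nabla(f_\pgot\vert_\Zcal)$ vanishes exactly where $\kappa_\ugot$ does on $\Zcal$; everything else then follows formally from Proposition~\ref{prop:fundamental-stratification-involution} and Example~\ref{ex:O-K-involution}.
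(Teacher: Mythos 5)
Your proof is correct and follows essentially the same route as the paper: identify $\crit(f_\pgot\vert_\Zcal)$ with $\crit(f_\ugot)\cap\Zcal$ (the paper does this via the splitting $f_\ugot=\tfrac12(\Phi_{\ugot^\sigma},\Phi_{\ugot^\sigma})+f_\pgot$ and the vanishing of $\Phi_{\ugot^\sigma}$ on $\Zcal$, you via the explicit gradient $\nabla f_\pgot=\pm\J\kappa_\ugot$ and its tangency to $\Zcal$ — the same content, with the tangency made explicit), then restrict to $\lambda\in\Bcal^\Zcal_\ugot$ by Proposition \ref{prop:fundamental-stratification-involution}, and finally prove $Z_\lambda\cap\Zcal=K(\Zcal^{\tlambda}\cap\Phi_\pgot^{-1}(\tlambda))$ using $U\lambda\cap(\ugot^*)^{-\sigma}=K\lambda$ and the $K$-stability of $\Zcal$, exactly as in the paper.
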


\begin{proof}
Nous avons $f_\ugot= \frac{1}{2}(\Phi_{\ugot^\sigma},\Phi_{\ugot^\sigma}) + f_\pgot$, où la fonction  
$\Phi_{\ugot^\sigma}:M\to(\ugot^\sigma)^*$ s'annule sur $\Zcal$. Par conséquent $\crit(f_\pgot\vert_\Zcal)=\crit(f_\ugot)\cap \Zcal$. 
Puisque $\crit(f_\ugot)=\bigcup_{\lambda\in \Bcal_\ugot}U(M^{\lambda}\cap \Phi_\ugot^{-1}(\lambda))$, on obtient
$\crit(f_\pgot\vert_\Zcal)=\bigcup_{\lambda\in \Bcal_\ugot} Z_\lambda\cap \Zcal$. Tout d'abord 
$Z_\lambda\cap \Zcal\neq\empty$ si et seulement si $M_{\langle\lambda\rangle}\cap \Zcal\neq \emptyset$ (voir Proposition 
\ref{prop:fundamental-stratification-involution}). Expliquons maintenant pourquoi 
$$
Z_\lambda\cap \Zcal = K(\Zcal^{\tlambda}\cap \Phi_\pgot^{-1}(\tlambda))
$$
lorsque $\lambda\in \Bcal_\ugot^\Zcal$. L'inclusion $K(\Zcal^{\tlambda}\cap \Phi_\pgot^{-1}(\tlambda))\subset Z_\lambda\cap \Zcal$ est immédiate. Pour démontrer 
l'inclusion réciproque, on considère un point $m\in Z_\lambda\cap \Zcal$: alors $\Phi_\ugot(m)\cdot m =0$ et 
$\Phi_\ugot(m)\in U\lambda\cap (\ugot^*)^{-\sigma}=K\lambda$. Ainsi il existe $k\in K$ tel que $m=km'$ avec $\Phi_\pgot(m')=\tlambda$ et $\tlambda\cdot m'=0$.
\end{proof}

\medskip

Considérons le cas où $0\in\Bcal_\ugot$, i.e. $0\in\Bcal_\pgot^\Zcal$. 

\begin{lem}\label{lem:Z-0}
Si $0\in\Bcal_\pgot^\Zcal$, alors $\Zcal_{\langle 0\rangle}=\left\{z\in \Zcal, \ \overline{G\, z}\cap \Phi_\pgot^{-1}(0)\neq\emptyset\right\}$.
\end{lem}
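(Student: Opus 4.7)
Pour prouver l'égalité, il y a deux inclusions à établir. L'inclusion $\supset$ est immédiate : si $z\in\Zcal$ vérifie $\overline{Gz}\cap\Phi_\pgot^{-1}(0)\neq\emptyset$, alors puisque $G\subset U_\C$ et que tout point de $\Zcal\cap\Phi_\pgot^{-1}(0)$ appartient à $\Phi_\ugot^{-1}(0)$ (car sur $\Zcal$, $\Phi_\ugot$ est à valeurs dans $(\ugot^*)^{-\sigma}$ et $\Phi_\pgot=j^*\circ\Phi_\ugot$ avec $j^*$ un isomorphisme), on en déduit $\overline{U_\C z}\cap\Phi_\ugot^{-1}(0)\neq\emptyset$, soit $z\in M_{\langle 0\rangle}\cap\Zcal=\Zcal_{\langle 0\rangle}$.

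Pour l'inclusion $\subset$, soit $z\in\Zcal_{\langle 0\rangle}$. Je propose de montrer que le point limite $z_\infty=\lim_{t\to\infty}\varphi^t_\ugot(z)$ réalise la propriété voulue. D'abord, par construction de la stratification de Kirwan–Ness, $z_\infty\in Z_0=\Phi_\ugot^{-1}(0)$ (en effet, puisque $\Phi_\ugot^{-1}(0)\subset\crit(f_\ugot)$, on a bien $Z_0=\Phi_\ugot^{-1}(0)$). Les points 2. et 4. de la proposition \ref{prop:fundamental-stratification-involution} assurent que la trajectoire $\varphi^t_\ugot(z)$ reste dans $\Zcal$ et que $z_\infty\in\Zcal$. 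Ainsi $z_\infty\in\Phi_\ugot^{-1}(0)\cap\Zcal=\Phi_\pgot^{-1}(0)$.

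Il reste à vérifier que $z_\infty\in\overline{Gz}$ : c'est ici que se trouve le point clé. Sur la sous-variété $\Zcal$, la restriction de $\Phi_{\ugot^\sigma}$ s'annule, et donc $f_\ugot\vert_\Zcal=f_\pgot\vert_\Zcal$. D'après le lemme établi à la fin de la section \ref{sec:U-sigma-hamiltonien} (le champ $\zeta_M$ est le gradient de $\langle\Phi_\pgot,\zeta\rangle$ pour tout $\zeta\in\pgot$), on déduit que pour tout $m\in M$,
$$
\nabla f_\pgot(m)=\bigl(\Phi_\pgot(m)^\sharp\bigr)_M,
$$
où $\Phi_\pgot(m)^\sharp\in\pgot$ est obtenu par dualité via le produit scalaire invariant. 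En restreignant à $\Zcal$, l'équation différentielle du flot devient
$$
\frac{d}{dt}\varphi^t(z)=-\Phi_\pgot(\varphi^t(z))^\sharp\cdot\varphi^t(z).
$$
Introduisons alors la courbe $g(t)\in G$ solution de l'équation différentielle à droite-invariante $\dot g(t)g(t)^{-1}=-\Phi_\pgot(\varphi^t(z))^\sharp\in\pgot$ avec $g(0)=e$. L'existence d'une telle solution pour tout $t\geq 0$ découle de l'intégrabilité standard des champs de vecteurs invariants dépendant du temps sur le groupe de Lie $G$. Par unicité des solutions, on obtient $g(t)\cdot z=\varphi^t(z)$ pour tout $t\geq 0$, ce qui prouve $\varphi^t(z)\in Gz$ et donc $z_\infty\in\overline{Gz}$.

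L'obstacle principal réside dans l'identification du gradient $-\nabla f_\ugot$, \emph{a priori} transverse à $\Zcal$, avec un champ tangent à $\Zcal$ engendré par l'action infinitésimale du sous-groupe réel $G$ : une fois cette identification faite grâce au lemme sur $\Phi_\pgot$ et à l'annulation de $\Phi_{\ugot^\sigma}$ sur $\Zcal$, le passage du flot à l'orbite de $G$ est une conséquence immédiate de l'intégration d'une ODE dépendant du temps dans $\pgot$.
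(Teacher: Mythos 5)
Votre démonstration est correcte et suit essentiellement la même démarche que celle du texte : l'inclusion $\supset$ via la caractérisation semi-stable $M_{\langle 0\rangle}=\{m,\ \overline{U_\C m}\cap\Phi_\ugot^{-1}(0)\neq\emptyset\}$, et l'inclusion $\subset$ via le flot de gradient, l'identité $f_\ugot\vert_\Zcal=f_\pgot\vert_\Zcal$ (annulation de $\Phi_{\ugot^\sigma}$ sur $\Zcal$) et le fait que la vitesse du flot appartient à $\ggot\cdot\varphi^t_\ugot(z)$, d'où $z_\infty\in\overline{Gz}$. Votre intégration explicite d'une EDO dépendant du temps sur $G$ ne fait que rendre précise l'assertion du texte selon laquelle $\varphi^t_\ugot(z)\in Gz$ pour tout $t\geq 0$ (au signe de convention près sur l'action infinitésimale, sans incidence).
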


\begin{proof}
Par définition,  $\Zcal_{\langle 0\rangle}=M_{\langle 0\rangle}\cap \Zcal= \left\{z\in \Zcal,\ z_\infty\in \Phi_\ugot^{-1}(0))\right\}$. 
Ici, $m_\infty=\underset{t\to\infty}{\lim}\varphi^t_\ugot(m)$, où $\varphi^t_\ugot:M\to M$ désigne le flot du champ de vecteurs $-\nabla f_\ugot$. 
Lorsque $z\in \Zcal$, nous savons que $\varphi^t_\ugot(z)\in \Zcal, \forall t\geq 0$, et on voit de de plus que le vecteur tangent 
$$
\frac{d}{dt}\varphi^t_\ugot(z)= -\nabla f_\ugot(\varphi^t_\ugot(z))=-\nabla f_\pgot(\varphi^t_\ugot(z))
$$
appartient au sous-espace $\ggot\cdot\varphi^t_\ugot(z)=\{X\cdot\varphi^t_\ugot(z), X\in\ggot\}$. 
Cela montre que $\varphi^t_\ugot(z)\in Gz$ 
pour tout $t\geq 0$, et donc que $z_\infty\in \overline{Gz},\forall z\in \Zcal$. On voit alors que  
$\Zcal_{\langle 0\rangle}\subset\left\{z\in \Zcal, \ \overline{G\, z}\cap \Phi_\pgot^{-1}(0)\neq\emptyset\right\}$. 
D'autre part, pour tout $z\in \Zcal$, 
$\overline{G\, z}\cap \Phi_\pgot^{-1}(0)\neq\emptyset\Longrightarrow \overline{U_\C\, z}\cap \Phi_\ugot^{-1}(0)\neq\emptyset 
\Longrightarrow z\in \Zcal\cap M_{\langle 0\rangle}=\Zcal_{\langle 0\rangle}$.
Ainsi, la preuve du lemme est complète. 
\end{proof}

\medskip

Soit $\lambda\in\Bcal_\ugot^\Zcal$ un élément non nul et soit $\tlambda=j^*(\lambda)\in \Bcal_\pgot$. L'intersection 
$\Ccal_{\tlambda}:= C_{\lambda}\cap \Zcal$ correspond à l'union des composantes connexes de $\Zcal^{\tlambda}$ intersectant 
$\Phi_\pgot^{-1}(\tlambda)$, et l'intersection $C_{\lambda}^-\cap \Zcal$ correspond à la sous-variété de Bialynicki-Birula {\em réelle}
$$
\Ccal_{\tlambda}^-:=\{z\in \Zcal,  \lim_{t\to\infty} e^{t\tlambda} z\ \in \Ccal_{\tlambda}\}.
$$

De la même manière, $\Ccal_{\tlambda,\langle 0\rangle}:= C_{\lambda,\langle 0\rangle}\cap \Zcal$ est le sous-ensemble 
ouvert et dense de $\Ccal_{\tlambda}$ formé par les éléments $z\in \Ccal_{\tlambda}$ tels que 
$\overline{G_{\tlambda}\, z}\cap \Phi^{-1}_\pgot(\tlambda)\neq \emptyset$ (voir le lemme \ref{lem:Z-0}). 
Enfin, $\Ccal_{\tlambda,\langle 0\rangle}^-:=C_{\lambda,0}^-\cap \Zcal$ est égal au tiré en arrière 
du sous-ensemble ouvert $\Ccal_{\tlambda,\langle 0\rangle}$ par la projection 
$\Ccal_{\tlambda}^-\to \Ccal_{\tlambda}$.

Le sous-groupe parabolique complexe $P(\lambda)=\Pbb(\tlambda)\subset U_\C$  est stable sous l'involution anti-holomorphe 
$\sigma$ et le sous-groupe groupe $\Pbb(\tlambda)\cap G$ est un sous-groupe parabolique réel  de $G$ défini par\footnote{Ici $\tlambda$ est vu 
comme un élément de $\agot$ à travers l'identification $\agot^*\simeq\agot$ fournie par un produit scalaire invariant sur $\ugot_\C$.}
\begin{equation}\label{eq:parabolique-G}
\Pbb(\tlambda)\cap G=\{g\in G, \lim_{t\to\infty}\exp(t\tlambda)g\exp(-t\tlambda)\ \mathrm{existe}\}.
\end{equation}
Puisque $M_{\langle\lambda\rangle}$ est isomorphe à $U_\C\times_{\Pbb(\tlambda)}C^-_{\lambda,0}$, nous pouvons maintenant conclure que la strate 
$\Zcal_{\langle\tlambda\rangle}:=M_{\langle\lambda\rangle}\cap \Zcal$ admet la description suivante.

\begin{prop}\label{prop:description-strata}
Soit $\tlambda\in\Bcal_\pgot^\Zcal$ un élément non nul. L'application $[g,z]\mapsto gz$ induit un difféomorphisme 
\begin{equation}\label{eq:decomposition-reelle}
G\times_{\Pbb(\tlambda)\cap G}\Ccal_{\tlambda,{\langle 0\rangle}}^{-}\overset{\sim}{\longrightarrow} \Zcal_{\langle\tlambda\rangle}.
\end{equation}
\end{prop}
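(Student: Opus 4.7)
The plan is to identify $\Zcal_{\langle\tlambda\rangle}=M_{\langle\lambda\rangle}\cap\Zcal$ as the $\tau$-fixed part of Kirwan's presentation $\Psi\colon U_\C\times_{\Pbb(\tlambda)}C^-_{\lambda,\langle 0\rangle}\overset{\sim}{\to} M_{\langle\lambda\rangle}$ from Theorem \ref{Kirwan-stratification}, and then to match this fixed-point set with $G\times_{\Pbb(\tlambda)\cap G}\Ccal^-_{\tlambda,\langle 0\rangle}$. The first step is to verify $\tau$-equivariance of $\Psi$. Proposition \ref{prop:fundamental-stratification-involution} gives $\sigma(\lambda)=-\lambda$, so $\tlambda\in\agot\subset\ggot$ is fixed by $\sigma$ and the parabolic $\Pbb(\tlambda)\subset U_\C$ is $\sigma$-stable; the semistable locus $C^-_{\lambda,\langle 0\rangle}$ is $\tau$-stable (using the $\sigma$-stability of $(U_\lambda)_\C$, the identity $\tau(\overline{(U_\lambda)_\C\cdot z})=\overline{(U_\lambda)_\C\cdot \tau(z)}$, and the $\tau$-invariance of $\Phi_\ugot^{-1}(\lambda)$). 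Consequently $\tilde{\tau}([g,z]):=[\sigma(g),\tau(z)]$ defines an anti-holomorphic involution on the twisted product, and $\Psi([\sigma(g),\tau(z)])=\sigma(g)\tau(z)=\tau(gz)$ shows that $\Psi$ intertwines $\tilde{\tau}$ and $\tau$.

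Next I would introduce the candidate map $\phi\colon[g,z]\mapsto gz$ from $G\times_{\Pbb(\tlambda)\cap G}\Ccal^-_{\tlambda,\langle 0\rangle}$ to $\Zcal_{\langle\tlambda\rangle}$. Well-definedness and the inclusion of the image in $\Zcal_{\langle\tlambda\rangle}$ are immediate: $G$ preserves $\Zcal$, $\Ccal^-_{\tlambda,\langle 0\rangle}\subset \Zcal$, and $\phi$ factors as the composition of the natural inclusion of real twisted products with $\Psi$. Injectivity of $\phi$ reduces to that of $\Psi$: if $g_1 z_1=g_2 z_2$ with both $g_i\in G$ and $z_i\in \Ccal^-_{\tlambda,\langle 0\rangle}$, the unique $p\in\Pbb(\tlambda)$ supplied by $\Psi$ satisfies $p=g_1^{-1}g_2\in G\cap \Pbb(\tlambda)=\Pbb(\tlambda)\cap G$, so $[g_1,z_1]=[g_2,z_2]$ already in the real twisted product.

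The main obstacle is surjectivity. Given $m\in \Zcal_{\langle\tlambda\rangle}$, the Kirwan diffeomorphism yields a unique $[g_0,z_0]$ with $\Psi([g_0,z_0])=m$. The identity $\tau(m)=m$ and the $\tau$-equivariance of $\Psi$ force $[\sigma(g_0),\tau(z_0)]=[g_0,z_0]$, so there exists $q\in\Pbb(\tlambda)$ with $\sigma(g_0)=g_0 q^{-1}$ and $\tau(z_0)=qz_0$; a second application of $\sigma$ yields $\sigma(q)=q^{-1}$, i.e.\ $q$ is a $1$-cocycle for $\langle\sigma\rangle$ acting on $\Pbb(\tlambda)$. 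Surjectivity then reduces to trivializing this cocycle, that is, to finding $r\in\Pbb(\tlambda)$ with $q=\sigma(r)r^{-1}$: for then $g:=g_0 r\in G$ and $z:=r^{-1}z_0\in\Zcal$ satisfy $\phi([g,z])=m$. I would establish this vanishing of $H^1(\langle\sigma\rangle,\Pbb(\tlambda))$ by using the $\sigma$-stable Levi decomposition $\Pbb(\tlambda)=L(\tlambda)\ltimes N(\tlambda)$: for the unipotent radical $N(\tlambda)$ one reduces, via the $\sigma$-equivariant exponential, to a linear averaging problem on the nilpotent Lie algebra; for the reductive Levi $L(\tlambda)$, which centralizes the real element $\tlambda$, the polar decomposition lets one write a cocycle as $q=\exp(X)$ with $d\sigma(X)=-X$ and then take $r=\exp(-X/2)$, which solves $\sigma(r)r^{-1}=q$. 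This cohomological step is the crux of the argument and is where the hypothesis $\sigma(\tlambda)=\tlambda$ enters essentially.
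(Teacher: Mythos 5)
Your overall strategy (realize $\Zcal_{\langle\tlambda\rangle}$ inside the $\tau$-fixed locus of Kirwan's presentation $U_\C\times_{\Pbb(\tlambda)}C^-_{\lambda,\langle 0\rangle}\simeq M_{\langle\lambda\rangle}$ and compare with the real twisted product) is the natural one, and your treatment of well-definedness and injectivity is fine. The gap is in the surjectivity step: the vanishing of $H^1(\langle\sigma\rangle,\Pbb(\tlambda))$ that you rely on is false in general, and your argument for the Levi factor is incorrect. A cocycle $q$ in a reductive group with $\sigma(q)=q^{-1}$ need not be of the form $\exp(X)$ with $d\sigma(X)=-X$: already for $L=\C^{\times}$ with the conjugation $\sigma(z)=\bar z^{-1}$ (compact real form), $q=-1$ is a cocycle that is not a coboundary, so $H^1=\Z/2$. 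Such factors genuinely occur here, because the Levi $U_\C^{\tlambda}$ contains the complexification of $M=Z_K(\agot)$, which typically has compact torus factors (e.g.\ $G=\mathrm{SU}(2,1)$, $\tlambda\in\agot$ regular). So "trivialize the cocycle by polar decomposition'' does not work, and without it your argument only shows that $\Zcal_{\langle\tlambda\rangle}$ is contained in a union of twisted real forms of the fibration, indexed by cohomology classes, of which $G\times_{\Pbb(\tlambda)\cap G}\Ccal^-_{\tlambda,\langle 0\rangle}$ is only the piece attached to the trivial class. There are also two smaller mismatches you would still have to handle even if the class were trivial: $G$ is only the identity component of $(U_\C)^{\sigma}$, so $g_0r\in(U_\C)^{\sigma}$ does not immediately give $g\in G$; and you must land in the chosen component $\Zcal$ of $M^\tau$ (i.e.\ $r^{-1}z_0\in\Ccal^-_{\tlambda,\langle 0\rangle}=C^-_{\lambda,\langle 0\rangle}\cap\Zcal$), not merely in $M^\tau$.

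What actually closes the argument is not a Galois-cohomology vanishing but the real gradient-flow description of the strata, which is how the paper (following Heinzner--Schwarz--St\"otzel) proceeds: by Proposition \ref{prop:fundamental-stratification-involution} the flow $\varphi^t_\ugot$ preserves $\Zcal$, so a point $m\in\Zcal_{\langle\tlambda\rangle}$ flows inside $\Zcal$ to the critical set $Z_\lambda\cap\Zcal=K(\Zcal^{\tlambda}\cap\Phi_\pgot^{-1}(\tlambda))$; combining this with the local structure near the critical set (or directly with the general real stratification theorem of \cite{HSS08}) one identifies $\Zcal_{\langle\tlambda\rangle}$ with $G\cdot\Ccal^-_{\tlambda,\langle 0\rangle}$, and the complex Kirwan diffeomorphism then forces the real map $[g,z]\mapsto gz$ to be a diffeomorphism. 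If you want to keep your fixed-point formulation, you would need to show that the cocycle class attached to a point of $\Zcal_{\langle\tlambda\rangle}$ is trivial --- for instance by proving it is locally constant along the flow and trivial on $\Ccal^-_{\tlambda,\langle 0\rangle}$ --- which is essentially the same flow argument in disguise.
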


\medskip

L'existence d'une stratification $\Zcal=\bigcup_{\tlambda\in\Bcal_\pgot}\Zcal_{\tlambda}$ où chaque strate est décrite par l'isomorphisme
(\ref{eq:decomposition-reelle}) a  été obtenue par P. Heinzner, G.W. Schwarz et H. St\"{o}tzel dans un cadre beaucoup plus général \cite{HSS08}. 
Néanmoins, nous obtenons dans ce cadre particulier une information cruciale pour la suite : une seule strate a un intérieur non vide, 
c'est la strate ouverte $\Zcal_{\langle\tlambda_{\mini}\rangle}$ attachée à l'élément $\lambda_{\mini}\in\Bcal_\ugot$ de norme minimale.

\medskip

%%%%%%%%%%%%%%%%%%%%%%%%%%%%%%%%%%%%%%%%%%%%%%%%%%
\section{Preuve du théorème d'O'Shea-Sjamaar dans le cadre k\"ahlérien}\label{sec:preuve-OSS}
%%%%%%%%%%%%%%%%%%%%%%%%%%%%%%%%%%%%%%%%%%%%%%%%%%

Soit  $(M,\Omega, \tau)$ une vari\'et\'e de K\"{a}hler $(U,\sigma)$-hamiltonienne. On suppose que la sous-variété $M^\tau$ est non-vide. 
Nous allons démontrer le raffinement suivant du théorème d'O'Shea-Sjamaar.

\begin{theorem}
Nous avons $\Delta_\ugot(M)\bigcap (\tgot^*)^{-\sigma}\underset{j^*}{\simeq} \Delta_\pgot(\Zcal)$ pour toute composante connexe $\Zcal$ de $M^\tau$.
\end{theorem}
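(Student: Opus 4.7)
Je procéderais comme suit. L'inclusion $j^*(\Delta_\pgot(\Zcal))\subset \Delta_\ugot(M)\cap (\tgot^*)^{-\sigma}$ est immédiate : pour tout $m\in\Zcal\subset M^\tau$, la relation (\ref{eq:condition-phi-tau}) entraîne $\Phi_\ugot(m)\in(\ugot^*)^{-\sigma}$ et $j^*(\Phi_\ugot(m))=\Phi_\pgot(m)$. L'enjeu principal est donc l'inclusion réciproque. Ma stratégie est de combiner un argument de décalage (\emph{shifting trick}) pour se ramener au cas $\xi=0$, puis d'exploiter la stratification à la Kirwan-Ness décrite à la section \ref{sec:real-stratification}.

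Pour le décalage, étant donné $\xi\in \Delta_\ugot(M)\cap(\tgot^*)^{-\sigma}$, je considère la variété produit $M':=M\times(U\xi)^o$ munie de l'action diagonale de $U$, de l'involution $\tau':=\tau\times\tau_\xi$ où $\tau_\xi([g])=[\sigma(g)]$ sur $U\xi\simeq U_\C/P(\xi)$, et de l'application moment $\Phi'(m,\eta):=\Phi_\ugot(m)-\eta$, qui reste propre puisque $U\xi$ est compacte. Comme $\xi\in(\tgot^*)^{-\sigma}$, le sous-groupe parabolique $P(\xi)$ est $\sigma$-stable, ce qui fait de $(M',\tau')$ une variété de Kähler $(U,\sigma)$-hamiltonienne. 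L'exemple \ref{ex:O-K-involution} fournit la description $(U\xi)^{\tau_\xi}=K\xi$ (ensemble connexe), si bien que les composantes connexes de $(M')^{\tau'}$ sont précisément les $\Zcal''\times K\xi$, où $\Zcal''$ parcourt les composantes connexes de $M^\tau$. Les équivalences
\begin{align*}
\xi\in\Delta_\ugot(M)&\Longleftrightarrow 0\in\Phi'(M'),\\
j^*(\xi)\in\Delta_\pgot(\Zcal)&\Longleftrightarrow 0\in\Phi'(\Zcal\times K\xi)
\end{align*}
ramènent la preuve à l'énoncé suivant : pour toute variété de Kähler $(U,\sigma)$-hamiltonienne à application moment propre et toute composante connexe $\Ycal$ de son lieu fixe par l'involution, si $0$ est atteint par l'application moment sur la variété entière, alors $0$ est déjà atteint sur $\Ycal$.

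Pour traiter ce cas réduit, je remarque que l'hypothèse $0\in\Phi_\ugot(M)$ entraîne $\lambda_{\mini}=0$. Le point \emph{9.} de la proposition \ref{prop:fundamental-stratification-involution} affirme alors que $\Ycal_{\langle 0\rangle}=\Ycal\cap M_{\langle 0\rangle}$ est un ouvert dense de $\Ycal$, en particulier non vide. Choisissons $z\in\Ycal_{\langle 0\rangle}$ : le lemme \ref{lem:Z-0} donne $\overline{Gz}\cap\Phi_\pgot^{-1}(0)\neq\emptyset$. Puisque $\Ycal$ est une composante connexe du fermé $M^\tau$, c'est un sous-ensemble fermé et $G$-stable de $M$, donc $\overline{Gz}\subset\Ycal$. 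On en déduit $\Phi_\pgot^{-1}(0)\cap\Ycal\neq\emptyset$, et \emph{a fortiori} $0\in\Phi_\pgot(\Ycal)$, ce qui conclut.

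Le point le plus délicat de ce schéma me semble être la vérification complète que le produit $M\times(U\xi)^o$ hérite d'une structure de Kähler $(U,\sigma)$-hamiltonienne compatible -- notamment les conditions (\ref{eq:condition-phi-tau})--(\ref{eq:condition-sigma-tau}) pour $\tau'$, la $\sigma$-stabilité de $P(\xi)$, et la $\sigma$-compatibilité de la structure complexe opposée de $(U\xi)^o$ -- ainsi que l'identification correcte de ses composantes $\tau'$-fixes (qui repose crucialement sur le choix d'une chambre de Weyl adaptée à $\sigma$). Une fois ces vérifications acquises, l'essentiel du raisonnement se concentre sur le fait géométrique clé, déjà établi dans le corps du texte, que la strate ouverte de Kirwan-Ness rencontre toute composante connexe de $M^\tau$ en un ouvert dense.
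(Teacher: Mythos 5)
Votre démonstration est correcte et suit essentiellement la même route que le texte : l'astuce de décalage avec $N=M\times(U\xi)^o$ muni de $\tau_N(m,g\xi)=(\tau(m),\sigma(g)\xi)$, puis la réduction au cas $\xi=0$ traité grâce à la stratification de Kirwan-Ness (densité de $\Zcal\cap M_{\langle 0\rangle}$), la seule variante étant que vous concluez le cas $\xi=0$ via le lemme \ref{lem:Z-0} et la fermeture/$G$-invariance de la composante $\Zcal$, là où le texte invoque directement le point \emph{6.i)} de la proposition \ref{prop:fundamental-stratification-involution} (la limite du flot reste dans $\Zcal$ et atterrit dans $Z_0=\Phi_\ugot^{-1}(0)$). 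Notez seulement que la densité de $\Zcal\cap M_{\langle\lambda_\mini\rangle}$ est le point \emph{8.} (et non \emph{9.}) de cette proposition.
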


Par définition, nous avons $ \Delta_\pgot(\Zcal)\subset j^*(\Delta_\ugot(M)\bigcap (\tgot^*)^{-\sigma})$. Il nous faut donc démontrer que, 
pour tout $\xi\in \Delta_\ugot(M)\bigcap (\tgot^*)^{-\sigma}$, on a $j^*(\xi)\in \Delta_\pgot(\Zcal)$.

Premier cas : $\xi=0$. Supposons que $0\in \Delta_\ugot(M)$. Cela signifie que $\lambda_{\mini}=0$ et d'après la 
proposition \ref{prop:fundamental-stratification-involution}, on sait que $\Zcal$ intersecte le sous-ensemble critique $Z_{0}= \Phi_\ugot^{-1}(0)$. 
On montre ainsi que $0\in \Delta_\pgot(\Zcal)$.

\medskip

Deuxième cas : $\xi\neq 0$. Soit $\xi\in \Delta_\ugot(M)\bigcap (\tgot^*)^{-\sigma}$ non-nul. Nous allons montrer que $j^*(\xi)\in \Delta_\pgot(\Zcal)$ 
grâce à l'astuce de décalage\footnote{En anglais, c'est le \og Shifting Trick\fg.}. On travaille avec la vari\'et\'e de K\"{a}hler $(U,\sigma)$-hamiltonienne
$$
N=M\times (U\xi)^o,
$$
où $(U\xi)^o$ désigne l'orbite coadjointe $U\xi$ avec comme structure k\"ahlérienne l'opposée de la structure KKS 
(voir l'exemple \ref{ex:U-tilde-C-Kahler}). L'application moment 
$\Phi^N_\ugot : N\to \ugot^*$ est définie par la relation $\Phi^N_\ugot(m,g\xi)=\Phi_\ugot(m)-g\xi$, et l'involution anti-holomorphe 
$\tau_N : N\to N$ est définie par
$$
\tau_N(m,g\xi)=(\tau(m),\sigma(g)\xi).
$$
Ainsi, le produit cartésien $\Zcal\times K\xi$ est une composante connexe de la sous variété $N^{\tau_N}$. 

Comme $\xi\in \Delta_\ugot(M)\bigcap (\tgot^*)^{-\sigma}$, on a $0\in \Delta_\ugot(N)$, et d'après le premier cas, 
cela implique que $0\in \Delta_\pgot(\Zcal\times K\xi)$. Nous avons prouvé que $j^{*}(\xi)\in \Phi_\pgot(Z)\cap \agot^*_+=\Delta_\pgot(\Zcal)$.

%%%%%%%%%%%%%%%%%%%%%%%%%%%%%%%%%%%%%%%%%%%%%%%%%%%%%
%%%%%%%%%%%%%%%%%%%%%%%%%%%%%%%%%%%%%%%%%%%%%%%%%%%%%
\section{Construction de paires de Ressayre réelles}\label{sec:construction-RP}
%%%%%%%%%%%%%%%%%%%%%%%%%%%%%%%%%%%%%%%%%%%%%%%%%%%%%
%%%%%%%%%%%%%%%%%%%%%%%%%%%%%%%%%%%%%%%%%%%%%%%%%%%%%

Nous travaillons avec un produit scalaire $U$-invariant $(-,-)$ sur $\ugot_\C$, satisfaisant les conditions de l'appendice. 
Grâce à lui, nous avons un isomorphisme $\xi\in V^*=\hom(V,\R)\to \xi^\flat \in V$ pour n'importe quel sous-espace vectoriel réel $V\subset\ugot_\C$. La commutativité du diagramme suivant est fréquemment utilisée dans ce qui suit:
$$
\xymatrixcolsep{5pc}\xymatrix{
(\ugot^*)^{-\sigma}\ar[d]^{\flat} \ar[r]^{j^*} & \pgot^*\ar[d]^{\flat} \\
\ugot^{-\sigma}          & \pgot\ar[l]^{j} .
}
$$

Grâce au théorème d'O'Shea-Sjamaar, on sait que $\Delta_\pgot(\Zcal)$ est un convexe ferm\'e contenu dans la chambre de Weyl $\agot^*_{+}$. 

Dans toute la suite, $\tilde{a}$ désigne un élément de l'intérieur de la chambre de Weyl $\agot^*_{+}$ qui n'appartient pas \`a $\Delta_\pgot(\Zcal)$. 
On note :
\begin{itemize}
\item $\tilde{a}'\in\agot^*_{+}$ la projection orthogonale de $\tilde{a}$ sur $\Delta_\pgot(\Zcal)$, 
\item $\zeta_{\tilde{a}}\in\agot$ tel que $\zeta_{\tilde{a}}=(\tilde{a}'-\tilde{a})^\flat$.
\end{itemize}

\medskip

On commence avec le résultat classique suivant.

\begin{lem}\label{lem:inegalite-classique}
Soient $a,b\in\tgot^*_+$. Alors, on a $\|ga-b\|\geq \|a-b\|, \, \forall g\in U$, et l'égalité est vraie si et seulement si $ga\in U_{b}\cdot a$.
\end{lem}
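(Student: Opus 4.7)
The plan is to reduce the distance inequality to the corresponding inequality for inner products, and then invoke Kostant's convexity theorem together with a dominant-weight argument. By $U$-invariance of the inner product, $\|ga\|=\|a\|$, hence
\[
\|ga-b\|^2 - \|a-b\|^2 = 2\bigl((a,b)-(ga,b)\bigr).
\]
Both assertions of the lemma thus reduce to showing $(ga,b)\le (a,b)$ for all $g\in U$, with equality if and only if $ga\in U_b\cdot a$.

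For the inequality, I would project onto $\tgot^*$. Writing $\pi_\tgot : \ugot^*\to \tgot^*$ for the orthogonal projection, the fact that $b\in\tgot^*$ gives $(ga,b)=(\pi_\tgot(ga),b)$. Kostant's convexity theorem applied to the $T$-action on the coadjoint orbit $Ua$ identifies $\pi_\tgot(Ua)$ with the convex hull $\mathrm{conv}(W\cdot a)$. The linear functional $c\mapsto(c,b)$ attains its maximum on this polytope at some vertex $wa$. Since $a$ is dominant, $a-wa$ is a non-negative combination of positive roots, and since $b$ is dominant, $(b,\alpha)\ge 0$ for every positive root $\alpha$; hence $(wa,b)\le (a,b)$ for all $w\in W$, yielding $(ga,b)\le (a,b)$.

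The easy direction of the equality case is immediate from $U$-invariance: if $ga=ua$ with $u\in U_b$, then $(ga,b)=(ua,b)=(a,u^{-1}b)=(a,b)$. For the converse, I would interpret $f_b(x):=(x,b)$ as the moment map on the coadjoint orbit $Ua$ for the action of the one-parameter subgroup generated by $b$. Its critical set is $(Ua)^b$, a disjoint union of $U_b$-orbits whose intersections with $\tgot^*$ parametrize them by $W_b\backslash (W\cdot a)$; the value of $f_b$ on the $U_b$-orbit through $wa$ is $(wa,b)$. By the connectedness theorem for moment maps, the maximum locus of $f_b$ is a single connected component of $(Ua)^b$, which contains $a$ and coincides with $U_b\cdot a$. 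Hence $(ga,b)=(a,b)$ forces $ga\in U_b\cdot a$.

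I expect the main technical point to be this last identification: showing that among all $U_b$-orbits in $(Ua)^b$, only $U_b\cdot a$ realizes the maximum value $(a,b)$. Equivalently, in purely root-theoretic terms, that $(wa,b)=(a,b)$ forces $wa\in W_b\cdot a$. The argument is that such equality combined with the decomposition $a-wa=\sum n_\alpha\alpha$ into simple roots (with $n_\alpha\ge 0$) and $(b,\alpha)\ge 0$ forces $n_\alpha>0$ only for roots $\alpha$ with $(b,\alpha)=0$, i.e.\ simple roots of $U_b$; a standard argument on the Levi $L_b$ then places $wa$ in the $W_b$-orbit of $a$.
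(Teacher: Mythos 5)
Your proposal is correct, and at its core it runs through the same Hamiltonian-geometry circle of ideas as the paper; the difference is in how the work is distributed. The paper writes $\|ga-b\|^2=\|a\|^2+\|b\|^2-2\phi(ga)$, where $\phi(x)=(x,b)$ is the $b$-component of the moment map on the coadjoint orbit $Ua$, and then invokes in a single stroke the Atiyah--Guillemin--Sternberg fact that $\phi$ has a unique local maximum, attained on one $U_b$-orbit, finishing by noting that $a$ lies on that orbit; this gives the inequality and the equality case simultaneously. You make the same reduction to $(ga,b)\le(a,b)$, but you prove the inequality separately via Kostant's convexity theorem ($\pi_{\tgot}(Ua)=\mathrm{conv}(W\cdot a)$) together with the dominance computation $(a-wa,b)\ge 0$, and you handle the equality case by the connectedness of the level sets of $\phi$, identifying the maximum locus with the connected component of $(Ua)^{b}$ through $a$, namely $U_b\cdot a$ (plus a root-theoretic alternative). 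What your route buys is that the step the paper dismisses as ``il n'est pas difficile de vérifier que $a$ appartient à cette orbite'' is made explicit: your convexity/dominance argument shows directly that $a$ is a global maximizer, and your Kostant and root-theoretic sub-arguments are more elementary and self-contained; the paper's version is shorter because it appeals once to the Morse--Bott structure of $\phi$. One small point you use implicitly and could state: the connected components of $(Ua)^{b}$ are exactly the (finitely many, disjoint, compact) $U_b$-orbits $U_b\cdot wa$, with $U_b$ connected as the centralizer of a torus in the connected group $U$; this is standard but is what lets you pass from ``the maximum locus is a connected component of the critical set containing $a$'' to ``the maximum locus equals $U_b\cdot a$''.
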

\begin{proof}
Il s'agit d'un résultat classique de géométrie hamiltonienne dont nous rappelons brièvement les arguments. Nous avons
$\|ga-b\|^2=\|a\|^2+\|b\|^2-2\phi(ga)$ où $\phi$  est la composante $b$-ième de l'application moment sur $Ua$. 
La fonction $\phi: Ua\to \R$ admet un unique maximum local qui est atteint sur une orbite du sous-groupe stabilisateur $U_b$ 
(voir \cite{Ati82,GS82}). Enfin, il n'est pas difficile de vérifier que le point $a$ appartient à cette orbite.
\end{proof}

Considérons l'application $\Phi_\pgot: M\to\pgot^*$ et le sous-ensemble (non-vide) $\Zcal\cap\Phi_\pgot^{-1}(\tilde{a}')$.

\begin{lem}\label{lem:fibre}
$\Zcal\cap\Phi_\pgot^{-1}(\tilde{a}')$ est contenu dans la sous-variété $\Zcal^{\zeta_{\tilde{a}}}$.
\end{lem}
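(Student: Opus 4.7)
The plan is to identify $\Zcal\cap\Phi_\pgot^{-1}(\tilde{a}')$ as the locus where the squared distance function from $\tilde{a}$ to the moment image is minimized, and then to read off $\zeta_{\tilde{a}}\cdot z=0$ as the critical point condition for this function.

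First, I would introduce the $K$-invariant function $h:\Zcal\to\R$ defined by $h(z)=\tfrac12\|\Phi_\pgot(z)-\tilde{a}\|^2$. Thanks to the lemma preceding Section \ref{sec:paires-reelles} (the one identifying $\zeta_M$ with the Riemannian gradient of $\langle\Phi_\pgot,\zeta\rangle$), for every $\eta\in\pgot$ the gradient of $\langle\Phi_\pgot,\eta\rangle\vert_\Zcal$ is $\eta\cdot z$. Expanding in an orthonormal basis of $\pgot$ one gets
$$
\nabla h(z)=(\Phi_\pgot(z)-\tilde{a})^\flat\cdot z.
$$
In particular, at any $z\in\Zcal$ with $\Phi_\pgot(z)=\tilde{a}'$ this evaluates to $\zeta_{\tilde{a}}\cdot z$, so it suffices to show that such a $z$ is a critical point of $h$.

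Second, I would show that $h$ attains its global minimum on $\Zcal$ precisely at $\Zcal\cap\Phi_\pgot^{-1}(\tilde{a}')$. Any $y\in\Phi_\pgot(\Zcal)$ can be written $y=ky_+$ with $k\in K$ and $y_+\in\Delta_\pgot(\Zcal)\subset\agot^*_+$. Applying Lemma \ref{lem:inegalite-classique} in the adapted setting (with $K$ and $\agot^*_+$ in place of $U$ and $\tgot^*_+$, which is legitimate because $\agot^*_+$ is a Weyl chamber for the $K$-action on $\pgot^*$) one gets $\|ky_+-\tilde{a}\|\geq\|y_+-\tilde{a}\|$; and by definition of $\tilde{a}'$ as the orthogonal projection of $\tilde{a}$ onto the closed convex set $\Delta_\pgot(\Zcal)$, one has $\|y_+-\tilde{a}\|\geq\|\tilde{a}'-\tilde{a}\|$. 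Thus $\min_\Zcal h=\tfrac12\|\tilde{a}'-\tilde{a}\|^2$, and this minimum is attained at every point of $\Zcal\cap\Phi_\pgot^{-1}(\tilde{a}')$.

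Combining the two steps, any $z\in\Zcal\cap\Phi_\pgot^{-1}(\tilde{a}')$ is a critical point of $h\vert_\Zcal$, hence $\zeta_{\tilde{a}}\cdot z=\nabla h(z)=0$, i.e.\ $z\in\Zcal^{\zeta_{\tilde{a}}}$. The only delicate point is the transposition of Lemma \ref{lem:inegalite-classique} to the symmetric space $(K,\agot^*_+,\pgot^*)$: one must invoke the fact, recalled in the discussion of adapted chambers, that the $K$-orbits in $\pgot^*$ are parametrized by $\agot^*_+$ in exactly the same way that $U$-orbits in $\ugot^*$ are parametrized by $\tgot^*_+$, so the convexity argument underlying the original lemma applies verbatim.
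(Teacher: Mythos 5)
Your argument is correct and is essentially the paper's proof: both show, via Lemme \ref{lem:inegalite-classique} and the defining property of the orthogonal projection $\tilde{a}'$, that $\|\tilde{a}'-\tilde{a}\|^2$ is the minimum of $\|\Phi_\pgot-\tilde{a}\|^2$ on $\Zcal$, so points of $\Zcal\cap\Phi_\pgot^{-1}(\tilde{a}')$ are critical, and then use the gradient lemma ($\zeta\cdot z=\nabla\langle\Phi_\pgot,\zeta\rangle$) to identify the vanishing differential with $\zeta_{\tilde{a}}\cdot z=0$. Your explicit justification for applying Lemme \ref{lem:inegalite-classique} in the $(K,\agot^*_+,\pgot^*)$ setting is a welcome precision, but it does not change the route.
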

\begin{proof}
Vérifions tout d'abord que $\|\tilde{a}'-\tilde{a}\|^2$ est la valeur minimale de la fonction \break $\|\Phi_\pgot-\tilde{a}\|^2:\Zcal\to\R$. Si $z\in \Zcal$, on a 
$\Phi_\pgot(z)=k\teta$, avec $k\in K$ et $\teta\in\Delta_\pgot(\Zcal)$, et alors 
$$
 \|\Phi_\pgot(z)-\tilde{a}\|^2=\|k\teta-\tilde{a}\|^2\underset{(1)}{\geq} \|\teta-\tilde{a}\|^2\underset{(2)}{\geq} \|\tilde{a}'-\tilde{a}\|^2.
 $$
 L'inégalité (1) est une conséquence que Lemme \ref{lem:inegalite-classique}, et l'inégalité (2) vient du fait que $\tilde{a}'$ est 
 la projection orthogonale de $\tilde{a}$ sur $\Delta_\pgot(\Zcal)$.
 
Ainsi, si $z\in \Zcal\cap\Phi_\pgot^{-1}(\txi')$, la différentielle de $\|\Phi_\pgot-\txi\|^2:\Zcal\to\R$ s'annule en $z$. Mais 
$$
d\|\Phi_\pgot-\txi\|^2\vert_z=2d\langle\Phi_\pgot,\gamma_{\txi}\rangle\vert_z=2(\gamma_{\txi}\cdot z,-)_\Zcal.
$$
Ainsi, $\zeta_{\tilde{a}}\cdot z=0$. 
\end{proof}

\medskip

Soit $C_{\zeta_{\tilde{a}}}$ l'union des composantes connexes de $M^{\zeta_{\tilde{a}}}$ intersectant $\Zcal\cap\Phi_\pgot^{-1}(\tilde{a}')$. Le résultat suivant est 
l'outil principal pour mettre en évidence des paires de Ressayre réelles sur $\Zcal$.

\medskip

\begin{theorem}\label{theo:construction-RP}
Pour toute composante connexe $C\subset C_{\zeta_{\tilde{a}}}$, le couple $(\zeta_{\tilde{a}}, C)$ constituent une paire de Ressayre réelle sur $\Zcal$.
\end{theorem}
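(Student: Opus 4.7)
The strategy will be the shifting trick combined with the real Kirwan--Ness stratification (Théorème \ref{th:stratification-kirwan-bis}), applied to the auxiliary Kähler $(U,\sigma)$-hamiltonian variety $N := M\times(U\tilde a)^o$, where $(U\tilde a)^o$ denotes the coadjoint orbit $U\tilde a$ equipped with the opposite KKS Kähler structure. This variety carries the moment map $\Phi^N_\ugot(m,g\tilde a)=\Phi_\ugot(m)-g\tilde a$ and the anti-holomorphic involution $\tau_N(m,g\tilde a)=(\tau(m),\sigma(g)\tilde a)$, so that $\Zcal\times K\tilde a$ is a connected component of $N^{\tau_N}$.

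The first step is to locate the minimum norm type of $N$. Since $\tilde a\notin\Delta_\pgot(\Zcal)$, the refined O'Shea--Sjamaar theorem established in section \ref{sec:preuve-OSS} gives $0\notin\Delta_\pgot(N^{\tau_N})$, so the minimum norm type $\lambda^N_{\min}$ is non-zero. The computation of Lemme \ref{lem:fibre} (essentially an application of Lemme \ref{lem:inegalite-classique}) shows that the minimum of $\|\Phi^N_\pgot\|$ on $\Zcal\times K\tilde a$ equals $\|\tilde a-\tilde a'\|$ and is attained on the pairs $(z,\tilde a)$ with $\Phi_\pgot(z)=\tilde a'$. Via the identifications $j$ and $\flat$, this identifies $(\lambda^N_{\min})^\flat$ with $i\zeta_{\tilde a}$ up to Weyl-group normalization; in particular the Bialynicki--Birula decomposition of $N$ with respect to $\lambda^N_{\min}$ can be tracked via the flow $\exp(t\zeta_{\tilde a})$, yielding $M^{\lambda^N_{\min}}=M^{\zeta_{\tilde a}}$, $P(\lambda^N_{\min})=\Pbb(\zeta_{\tilde a})$, and a correspondence between the connected components $C\subset C_{\zeta_{\tilde a}}$ and those of $C^N_{\lambda^N_{\min}}\cap(M\times\{\tilde a\})$.

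Applying Théorème \ref{th:stratification-kirwan-bis} to $N$ then produces $\tau_N$-invariant open dense subsets $\Vcal_N\subset N$ and $\Ucal_N\subset C^-_{\lambda^N_{\min}}(N)$ intersecting the relevant critical component, along with a $\tau_N$-equivariant diffeomorphism $p^\R_{\lambda^N_{\min}}:U_\C\times_{P(\lambda^N_{\min})}\Ucal_N\simeq\Vcal_N$. The remaining step is to descend this data to $M$. Using the product structure of $N$, the subvariety $C^-_{\lambda^N_{\min}}(N)$ factorizes as a product of the Bialynicki--Birula subvariety of $M$ with respect to $\zeta_{\tilde a}$ and the corresponding cell of the flag variety $(U\tilde a)^o$ through $\tilde a$. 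Restricting the diffeomorphism to the $\tau_N$-invariant slice $M\times\{\tilde a\}\cap \Vcal_N$, and converting the $U_\C$-quotient into a $\Pbb$-quotient using the Bruhat-type identity $\Pbb=B(\Pbb\cap\Pbb(\zeta_{\tilde a}))$ of the proof of Proposition \ref{prop:caracteriser-RP-reel}, one extracts for each component $C\subset C_{\zeta_{\tilde a}}$ the required $\tau$-equivariant diffeomorphism $\Pbb\times_{\Pbb\cap\Pbb(\zeta_{\tilde a})}\Ucal\simeq\Vcal$ on $\tau$-invariant open dense subsets $\Vcal\subset M$ and $\Ucal\subset C^-$ intersecting $C$, which establishes the real Ressayre pair structure of Définition \ref{def:ressayre-pair}.

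The main obstacle will be the descent step: precisely matching the product decomposition of the Kirwan--Ness data on $N$ with the desired Ressayre data on $M$ alone, so that the contribution of the flag factor $(U\tilde a)^o$ is compatibly absorbed in passing from the $U_\C$-fibration on $N$ to the $\Pbb$-fibration on $M$. This will require an explicit analysis of the Bialynicki--Birula cell of $(U\tilde a)^o$ at $\tilde a$ (a Schubert-type cell attached to a suitable parabolic of $U_\C$), together with the verification that the three conditions $(A_1),(A_2),(A_3)$ of Proposition \ref{prop:caracteriser-RP-reel} for $(i\zeta_{\tilde a},C)$ on $M$ transfer cleanly from their analogs on $N$.
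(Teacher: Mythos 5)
Your plan follows the paper's proof essentially verbatim: the same shifting trick with $N=M\times (Ua)^o\simeq M\times U_\C/\Pbb$, the same identification of the non-zero minimal type with $\zeta_{\tilde{a}}=\tilde{a}'-\tilde{a}$, the same application of the real stratification theorem \ref{th:stratification-kirwan-bis} to $N$, and the same descent of the $\tau_N$-invariant open dense sets to $\Pbb$-equivariant data on $M$ through the identification $M\times U_\C/\Pbb\simeq U_\C\times_{\Pbb} M$. The only point to tighten is the identification of the minimal type: the paper deduces $\sigma(a')=-a'$ (hence $\tilde{a}'=j^*(a')$) from point \emph{8.} of la proposition \ref{prop:fundamental-stratification-involution}, not merely from the computation of the lemme \ref{lem:fibre}, which by itself only controls the minimum of $\|\Phi_\pgot-\tilde{a}\|$ on the real part.
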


\begin{proof}

Soit $a\in \tgot^*_+\cap(\ugot^*)^{-\sigma}$ tel que $\tilde{a}=j^*(a)$. Puisque $\tilde{a}\notin\Delta_\pgot(\Zcal)$, 
nous savons que $a\notin \Delta_\ugot(M)$. De plus, comme $\tilde{a}$ appartient à l'intérieur de la chambre de Weyl $\agot^*_{+}$, le sous-groupe parabolique 
$\Pbb(-\tilde{a})$ est égal à $\Pbb$ (voir définition \ref{def:parabolique-P}), et la variété complexe $(Ua)^o$ est isomorphe à $U_\C/\Pbb$.

Nous travaillons avec la variété de Kähler $(U,\sigma)$-hamiltonienne 
$$
N:=M\times (Ua)^o.
$$

La sous-variété $\Zcal_N:=\Zcal\times Ka$ est une composante connexe de la sous-variété fixée par l'involution 
$\tau_N$. Soit $\Phi_\ugot^N:N\to \ugot^*$ l'application moment propre relative à l'action de $U$ sur $N$.

Le résultat suivant précise le lemme \ref{lem:fibre}.

\begin{lem}Soit $a'\in\tgot^*_+$ la projection orthogonale de ``$a$'' sur $\Delta_\ugot(M)$.
\begin{itemize}
\item La fonction $\|\Phi^N_\ugot\|:N\to\R$ atteint son minimum sur $U(\Phi_\ugot^{-1}(a')\times\{a\})$.
\item Nous avons $\sigma(a')=-a'$. En d'autres termes, nous avons $\tilde{a}'=j^*(a')$ où $\tilde{a}'$ est la projection orthogonale de $\tilde{a}$ sur $\Delta_\pgot(\Zcal)$.
\end{itemize}
\end{lem}

\begin{proof}
Si $n=(m,ka)\in N$, on écrit  $\Phi_\ugot(m)=g\eta$ avec $\eta\in\Delta_\ugot(M)$. Alors
$$
\|\Phi^N_\ugot(n)\|=\|g\eta-ka\|\geq \|\eta-\xi\|\geq \|a'-a\|,
$$
et, d'après le lemme \ref{lem:inegalite-classique}, l'égalité $\|\Phi^N_\ugot(n)\|= \|a'-a\|$ est vraie si et seulement si 
$\eta=ai'$ et $L^{-1}g\in U_{a}U_{a'}$. Il s'ensuit que l'ensemble critique
$$
Z_{\lambda_{\mini}}=\{n\in N,\, \|\Phi^N_\ugot(n)\|= \|a'-a\|\}\subset \crit\left(\|\Phi^N_\ugot\|^2\right)
$$
est égal $U(\Phi_\ugot^{-1}(a')\times\{a\})$.

Grâce au point \emph{8.} de la proposition \ref{prop:fundamental-stratification-involution}, nous savons que $Z_{\lambda_{\mini}}\cap \Zcal_N$ n'est pas vide. 
Considérons $(g,m)\in U\times \Phi_\ugot^{-1}(\xi')$ tel que $g(m,a)\in Z_{\lambda_{\mini}}\cap \Zcal_N$, alors $gm\in \Zcal$ et 
$\Phi_\ugot(gm)=ga'$, ce qui implique que $\Phi_\ugot(gm)$ appartient à $Ua'\cap (\ugot^*)^{-\sigma}$. 
Il s'ensuit que $\sigma(a')=-\sigma(a')$ (voir la remarque \ref{rem:orbite-symetrique}). 
\end{proof}

\bigskip

Grâce au lemme précédent, nous savons que le type minimal de $N\simeq M\times U_\C/\Pbb$ est l'élément non nul 
$a'-a\in (\tgot^*)^{-\sigma}$ que nous identifions à $\zeta_{\tilde{a}}=\tilde{a}'-\tilde{a}\in \agot^*\simeq \agot$. 
Nous allons maintenant décrire la strate ouverte et dense $N_{\langle\zeta_{\tilde{a}}\rangle}$ de $N$.
 
L'ensemble critique associé au type minimal $\zeta_{\tilde{a}}$ est $U(\Phi_\ugot^{-1}(a')\times\{a\})$. Soit 
$C_{N,\tilde{a}}$ la composante connexe de $N^{\zeta_{\tilde{a}}}$ contenant $\Phi_\ugot^{-1}(a')\times\{a\}$ : nous avons
 $$
C_{N,\tilde{a}}\simeq C_{\tilde{a}}\times (U_{\zeta_{\tilde{a}}})_\C/(U_{\zeta_{\tilde{a}}})_\C\cap \Pbb,
 $$ 
 où $C_{\tilde{a}}$ est la composante connexe de $M^{\zeta_{\tilde{a}}}$ contenant $\Phi_\ugot^{-1}(a')$. La sous-variété complexe de Bialynicki-Birula  est alors
$C_{N,\tilde{a}}^-\simeq C_{\tilde{a}}^-\times \Pbb(\zeta_{\tilde{a}})/\Pbb\cap \Pbb(\zeta_{\tilde{a}})$. 

Afin de terminer la preuve du théorème \ref{theo:construction-RP}, nous devons expliquer le lien entre les deux applications holomorphes suivantes:
$$
q^\R_{\zeta_{\tilde{a}}}: \Pbb\times_{\Pbb\cap \Pbb(\zeta_{\tilde{a}})} C_{\tilde{a}}^-\longrightarrow M
$$
et
$$
p^\R_{\zeta_{\tilde{a}}}: U_\C\times_{\Pbb(\zeta_{\tilde{a}})}\left(C_{\tilde{a}}^-\times\Pbb(\zeta_{\tilde{a}})/\Pbb\cap \Pbb(\zeta_{\tilde{a}})\right)\longrightarrow N.
$$

Le théorème \ref{th:stratification-kirwan-bis} nous permet de voir qu'il existe des sous-ensembles ouverts denses $\Vcal\subset M\times U_\C/\Pbb$ et 
 $\Ucal\subset C_{\tilde{a}}^-\times\Pbb(\zeta_{\tilde{a}})/\Pbb\cap \Pbb(\zeta_{\tilde{a}})$, invariant  pour l'involution $\tau_N$, et tels que

\begin{itemize}
\item $\Ucal$ est  $\Pbb(\zeta_{\tilde{a}})$-invariant, et intersecte $C_{\tilde{a}}\times (U_{\zeta_{\tilde{a}}})_\C/(U_{\zeta_{\tilde{a}}})_\C\cap \Pbb$,
\item $\Vcal$ est  $U_\C$-invariant 
\item la fonction $p^\R_{\gamma_\xi}$ définit un diffeomorphisme $U_\C\times_{\Pbb(\zeta_{\tilde{a}})}\Ucal\overset{\sim}{\longrightarrow} \Vcal$.
\end{itemize}

Puisque $\Vcal\subset M\times U_\C/\Pbb\simeq U_\C\times_{\Pbb} M $ est  $U_\C$-invariant et dense, nous avons 
$\Vcal\simeq U_\C\times_\Pbb\Vcal_0$ où $\Vcal_0\subset M$ est un 
sous-ensemble ouvert, $\Pbb$-invariant,  et dense,  défini par les relations $m\in \Vcal_0\Leftrightarrow (m,[e])\in\Vcal$. De même, nous avons 
$\Ucal\simeq \Pbb(\zeta_{\tilde{a}})\times_{\Pbb\cap \Pbb(\zeta_{\tilde{a}})}\Ucal_0$ où $\Ucal_0\subset C_{\gamma_\xi}^-$ est l'ouvert 
$\Pbb\cap \Pbb(\zeta_{\tilde{a}})$-invariant et dense, défini par les relations $x\in \Ucal_0\Leftrightarrow (x,[e])\in\Ucal$. 
On vérifie facilement que $\Ucal_0$ et $\Vcal_0$ sont invariants sous l'involution $\tau$ et que $\Vcal_0$ a une intersection non vide avec $C_{\tilde{a}}$.

Enfin, nous remarquons que $q^\R_{\zeta_{\tilde{a}}}$ définit un difféomorphisme $\Pbb\times_{\Pbb\cap \Pbb(\zeta_{\tilde{a}})}  \Ucal_0 \simeq \Vcal_0$. Cela peut être facilement vérifié à l'aide du diagramme commutatif
$$
\xymatrixcolsep{5pc}\xymatrix{
U_\C\times_{\Pbb(\zeta_{\tilde{a}})} \Ucal\ar[d]^{p^\R_{\zeta_{\tilde{a}}}} \ar[r]^{\sim} & U_\C\times_{\Pbb} \left(\Pbb\times_{\Pbb\cap \Pbb(\zeta_{\tilde{a}})}  \Ucal_0\right)\ar[d]^{1\times q^\R_{\zeta_{\tilde{a}}}} \\
\Vcal\ar[d] \ar[r]^{\sim}          & U_\C\times_{\Pbb} \Vcal_0\ar[d].\\
M\times U_\C/\Pbb\ar[r]^{\sim} & U_\C\times_{\Pbb} M.
}
$$
La preuve du théorème \ref{theo:construction-RP} est alors complète. 
\end{proof}

%%%%%%%%%%%%%%%%%%%%%%%%%%%%%%%%%%%%%%%%
%%%%%%%%%%%%%%%%%%%%%%%%%%%%%%%%%%%%%%%%
\section{Théorème de Coupure Principale}\label{sec:principal-cross-section}
%%%%%%%%%%%%%%%%%%%%%%%%%%%%%%%%%%%%%%%%
%%%%%%%%%%%%%%%%%%%%%%%%%%%%%%%%%%%%%%%%

Nous proposons ici un théorème de coupure principale, dans l'esprit de \cite{LMTW}, et qui s'applique à la $G$-variété $\Zcal$.  
Puisque $\Delta_\pgot(\Zcal)$ est un sous-ensemble convexe fermé de $\agot^*_+$, il existe une face ouverte unique $\tsgot$ de $\agot^*_+$ telle que  
\begin{itemize}
\item $\Delta_\pgot(\Zcal)\cap\tsgot\neq \emptyset$,
\item $\Delta_\pgot(\Zcal)$ est contenu dans l'adhérence de $\tsgot$.
\end{itemize}
Remarquons que $\Delta_\pgot(\Zcal)\cap\tsgot$ est dense dans $\Delta_\pgot(\Zcal)$. Rappelons que l'isomorphisme linéaire $j^*:(\tgot^*)^{-\sigma}\to \agot^*$ induit une bijection 
$\tgot^*_+\cap(\tgot^*)^{-\sigma}\simeq \agot^*_+$.

\begin{lem}
\begin{enumerate}
\item Il existe une unique face ouverte $\sgot$  de la chambre de Weyl $\tgot^*_+$ telle que $\sgot\cap(\tgot^{-\sigma})^*{\simeq}_{j^*} \tsgot$.
\item Le sous-groupe stabilisateur $K_{\txi}$ ne dépend pas de $\txi\in\tsgot$ : il est noté $K_{\tsgot}$.
\item La sous-algèbre stabilisatrice $\ggot_{\txi}$ ne dépend pas de $\txi\in\tsgot$ : elle est notée $\ggot_{\tsgot}$. Nous avons la décomposition : 
$\ggot_{\tsgot}=\kgot_{\tsgot}\oplus\pgot_{\tsgot}$ où $\kgot_{{\tsgot}}$ est l'algèbre de Lie de $K_{\tsgot}$.
\end{enumerate}
\end{lem}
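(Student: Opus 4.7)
L'argument repose sur la correspondance entre les racines complexes $\Rgot$ de $(\ugot_\C,\tgot)$ et les racines restreintes $\Sigma(\ggot)$ de $(\ggot,\agot)$. Via le produit scalaire invariant, $(\tgot^*)^{-\sigma}$ s'identifie \`a $\tgot^{-\sigma}=i\agot$, et $j^*$ correspond \`a cette dualit\'e. On rappelle d'abord que pour toute racine $\alpha\in\Rgot$, la restriction $\alpha\vert_{i\agot}$ est soit identiquement nulle, soit (\`a un scalaire positif pr\`es) une racine restreinte $\bar\alpha\in\Sigma(\ggot)$; et r\'eciproquement, toute racine restreinte provient d'au moins une racine complexe. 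Il en r\'esulte que pour $\xi\in(\tgot^*)^{-\sigma}$ et $\txi=j^*(\xi)\in\agot^*$, l'annulation $\alpha(\xi)=0$ \'equivaut soit \`a $\alpha\vert_{i\agot}\equiv 0$, soit \`a $\bar\alpha(\txi)=0$.

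Pour le point 1, on introduira le sous-ensemble $\Sigma_{\tsgot}=\{\alpha\in\Sigma(\ggot)\mid \alpha\vert_{\tsgot}\equiv 0\}$, bien d\'efini puisque $\tsgot$ est une face ouverte. On choisira $\txi_0\in\tsgot$ et on notera $\sgot$ l'unique face ouverte de $\tgot^*_+$ contenant $\xi_0:=(j^*)^{-1}(\txi_0)$. D'apr\`es la discussion pr\'ec\'edente, pour tout $\xi\in(\tgot^*)^{-\sigma}$, l'ensemble des racines de $\Rgot$ s'annulant en $\xi$ ne d\'epend que de l'ensemble des racines restreintes s'annulant sur $j^*(\xi)$. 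Ainsi, $\xi$ appartient \`a la m\^eme face de $\tgot^*_+$ que $\xi_0$ si et seulement si $j^*(\xi)$ et $\txi_0$ ont les m\^emes racines restreintes annulatrices, c'est-\`a-dire si et seulement si $j^*(\xi)\in\tsgot$. Cela \'etablit simultan\'ement l'identification $\sgot\cap(\tgot^{-\sigma})^*\simeq_{j^*}\tsgot$ et l'unicit\'e de $\sgot$.

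Pour le point 3, la d\'ecomposition en racines restreintes (\ref{eq:decomposition-ggot}) donne, pour $\txi\in\agot$,
$$
\ggot_\txi=\mgot\oplus\agot\oplus\bigoplus_{\alpha\in\Sigma(\ggot),\,\alpha(\txi)=0}\ggot_\alpha,
$$
qui ne d\'epend que de $\Sigma_{\tsgot}$, donc est constant sur $\tsgot$. L'involution de Cartan $\theta$ agissant par $-1$ sur $\pgot\supset\agot$, on a $\ggot_\txi=\ggot_{-\txi}$ qui est $\theta$-stable, d'o\`u la d\'ecomposition annonc\'ee $\ggot_{\tsgot}=\kgot_{\tsgot}\oplus\pgot_{\tsgot}$ o\`u $\kgot_{\tsgot}=\ggot_{\tsgot}\cap\kgot$ et $\pgot_{\tsgot}=\ggot_{\tsgot}\cap\pgot$.

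Pour le point 2, la constance de $\kgot_\txi$ entra\^ine celle de la composante connexe $(K_\txi)_0$ sur $\tsgot$. Pour \'etablir la constance du groupe complet, il suffira de montrer $K_{\txi_0}\subset K_{\txi_1}$ pour $\txi_0,\txi_1\in\tsgot$. La strat\'egie sera la suivante : \'etant donn\'e $k\in K_{\txi_0}$, la sous-alg\`ebre $\Ad(k)(\agot)$ est une sous-alg\`ebre ab\'elienne maximale de $\pgot$ contenant $\txi_0$, et le r\'esultat classique selon lequel $(K_{\txi_0})_0$ agit transitivement sur l'ensemble de telles sous-alg\`ebres fournira $k_0\in (K_{\txi_0})_0$ tel que $\Ad(k_0 k)(\agot)=\agot$. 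L'\'el\'ement $k_0 k\in N_K(\agot)\cap K_{\txi_0}$ induira alors un \'el\'ement du groupe de Weyl restreint fixant $\txi_0$, donc fixant toute la face ouverte $\tsgot$, et en particulier $\txi_1$. Comme $k_0\in(K_{\txi_0})_0=(K_{\txi_1})_0$ fixe d\'ej\`a $\txi_1$ (son alg\`ebre de Lie annulant $\txi_1$ par le point 3), on obtiendra $k\cdot\txi_1=\txi_1$. Le principal obstacle est pr\'ecis\'ement ce passage de la constance infinit\'esimale \`a la constance du sous-groupe lui-m\^eme, qui repose sur le th\'eor\`eme classique de transitivit\'e de $(K_{\txi})_0$ sur les sous-alg\`ebres ab\'eliennes maximales de $\pgot$ passant par $\txi$.
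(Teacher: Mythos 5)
Your proof is correct, and for the key part (point 2) it takes a genuinely different route from the paper. For point 1 your argument is essentially the paper's: the paper defines $\sgot(\xi_o)$ by the set of roots of $(\ugot,\tgot)$ vanishing at $\xi_o$ and asserts its independence of $\xi_o\in(j^*)^{-1}(\tsgot)$; you simply spell out the verification via the correspondence between complex roots restricted to $i\agot$ and restricted roots. Point 3 (left to the reader in the paper) is handled by the standard restricted-root decomposition, as expected. The real divergence is point 2. The paper stays on the compact side: all points of the open face $\sgot$ of $\tgot^*_+$ have the \emph{same connected} stabilizer $U_{\sgot}$, this group is $\sigma$-stable since $\sgot\cap(\tgot^*)^{-\sigma}\neq\emptyset$, and then $K_{\txi}=K\cap U_{\xi}=K\cap U_{\sgot}$ for $\txi=j^*(\xi)\in\tsgot$, so constancy of the full (possibly disconnected) group $K_{\txi}$ is immediate and no connectedness issue ever appears. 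You instead work entirely on the real side: constancy of $\kgot_{\txi}$ gives constancy of $(K_{\txi})_0$, and you upgrade to the full group by combining (a) transitivity of $(K_{\txi_0})_0$ on maximal abelian subspaces of $\pgot$ containing $\txi_0$ and (b) the fact that the stabilizer of a chamber point in the restricted Weyl group $W_\agot$ is generated by reflections in the walls through that point, hence fixes the open face $\tsgot$ pointwise. Both ingredients are sound; note only that (a) is not quite a textbook statement as phrased — it is obtained by applying the classical conjugacy theorem to the $\theta$-stable reductive centralizer $Z_G(\txi_0)$, whose identity component has maximal compact $(K_{\txi_0})_0$ and whose $\pgot$-part is exactly $\pgot_{\txi_0}$, together with the easy observation that maximal abelian subspaces of $\pgot$ through $\txi_0$ coincide with maximal abelian subspaces of $\pgot_{\txi_0}$ — so you should make this reduction explicit. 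What each approach buys: the paper's argument is shorter because the constancy and connectedness of stabilizers along faces is already available for the compact group $U$ and descends to $K$ by intersection; yours is self-contained on the symmetric-space side and makes transparent exactly where the possible disconnectedness of $K_{\txi}$ is absorbed (in the restricted Weyl group step), at the cost of invoking two nontrivial classical facts.
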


\begin{proof} Tout $\xi_o\in\tgot^*_+$ appartient à la face ouverte $\sgot(\xi_o)\subset \tgot^*_+$ définie comme suit : $\xi\in \sgot(\xi_o)$ si et seulement si  
$\xi\in\tgot^*_+$ et $(\beta,\xi)=0\Longleftrightarrow(\beta,\xi_o)=0$ pour tout $\beta\in \Rgot(\ugot,\tgot)$. Il est maintenant facile de vérifier que 
la face $\sgot(\xi_o)$ ne dépend pas de $\xi_o\in (j^*)^{-1}(\tsgot)\subset\tgot^*_+\cap(\tgot^{-\sigma})^*$ : cette face, notée $\sgot$, satisfait la relation 
$\sgot\cap(\tgot^{-\sigma})^* {\simeq}_{j^*} \tsgot$.

Tous les points de la face ouverte $\sgot$ ont le même sous-groupe stabilisateur connexe $U_{\sgot}$. Si l'on prend $\xi\in \sgot\cap(\tgot^{-\sigma})^*$, on voit que
 $U_{\xi}=U_{\sgot}$ est stable sous $\sigma$ et que $K\cap U_{\sgot}=K\cap U_{\xi} $ est égal à $K_{\txi}$ où $\txi=j^*(\xi)\in\tsgot$.
Le deuxième point est réglé et le troisième point est laissé au lecteur. 
\end{proof}

\medskip

Le sous-ensemble  
$$
\Tcal_{\tsgot}=\{z\in\Zcal, \Phi_\pgot(z)\in \tsgot\}.
$$
est l'objet clé de notre théorème de coupure principale.

\begin{theorem}\label{theo:principal-cross-section}
\begin{enumerate}
\item $\Tcal_{\tsgot}$ est une sous-variété $K_{\tsgot}$-invariante de $\Zcal$.
\item L'application $K\times_{K_{\tsgot}}\Tcal_{\tsgot}\to \Zcal$, $[k,y]\mapsto ky$ est un difféomorphisme sur un sous-ensemble ouvert, dense, et  $K$-invariant de $\Zcal$.
\end{enumerate}
\end{theorem}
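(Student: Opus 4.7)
The statement is the real Lagrangian analog, for the $K$-action on $\Zcal\subset M^\tau$, of the symplectic principal cross-section theorem of Lerman--Meinrenken--Tolman--Woodward (LMTW). My strategy is to deduce it from the ambient cross-section theorem applied to the Kähler $U$-Hamiltonian manifold $(M,\Omega,\Phi_\ugot)$, and then to restrict to the Lagrangian fixed-point set of $\tau$.

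First I identify the appropriate ambient face. Let $\sgot\subset \tgot^*_+$ denote the principal open face of the Kirwan polytope $\Delta_\ugot(M)$, the unique open face of the Weyl chamber whose closure contains $\Delta_\ugot(M)$ and which meets $\Delta_\ugot(M)$ in a dense subset. Using the refined O'Shea--Sjamaar identity $j^*(\Delta_\ugot(M)\cap(\tgot^*)^{-\sigma})=\Delta_\pgot(\Zcal)$ from Section~\ref{sec:preuve-OSS} together with the density of $\Delta_\pgot(\Zcal)\cap\tsgot$ in $\Delta_\pgot(\Zcal)$, I will verify that $\sgot$ is the unique open face with $\sgot\cap(\tgot^*)^{-\sigma}\simeq_{j^*}\tsgot$; since $-\sigma$ fixes $(\tgot^*)^{-\sigma}$ pointwise and preserves the adapted Weyl chamber, this forces $-\sigma(\sgot)=\sgot$. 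The LMTW theorem then provides a $U_\sgot$-invariant symplectic submanifold $Y_\sgot\subset M$ and a $U$-equivariant diffeomorphism $\Psi\colon U\times_{U_\sgot}Y_\sgot \xrightarrow{\sim} \Vcal$ onto an open dense $U$-invariant $\Vcal\subset M$. Because $-\sigma(\sgot)=\sgot$, both $Y_\sgot$ and $\Vcal$ are $\tau$-stable, and the induced involution on the twisted product reads $[u,y]\mapsto[\sigma(u),\tau(y)]$.

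For part (1), I note that $\tau|_{Y_\sgot}$ is antisymplectic, so $Y_\sgot^\tau$ is a Lagrangian (hence smooth) submanifold of $Y_\sgot$, invariant under $K_\tsgot=(U_\sgot)^\sigma_0$. The set $\Tcal_\tsgot$ coincides with the union of those connected components of $Y_\sgot^\tau$ that lie in $\Zcal$ (since $\Zcal$ is a component of $M^\tau$ and the components of $Y_\sgot^\tau\subset M^\tau$ are each either contained in $\Zcal$ or disjoint from it), and is therefore a $K_\tsgot$-invariant submanifold of $\Zcal$. For part (2), I transfer $\Psi$ through the fixed-point functor: a class $[u,y]$ is $\tau$-fixed iff there exists $h\in U_\sgot$ with $\sigma(u)=uh$ and $\tau(y)=h^{-1}y$. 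Using a Cartan-type factorization for the compact symmetric pair $(U,K)$, every such $u$ may be written $u=k u_0$ with $k\in K$ and $u_0\in U_\sgot$, so that $[u,y]=[k,u_0 y]$ with $k\in K$ and second factor $\tau$-fixed. This yields an identification $(U\times_{U_\sgot}Y_\sgot)^\tau\simeq K\times_{K_\tsgot}Y_\sgot^\tau$, and intersecting with the component $\Zcal$ gives the desired $K$-equivariant diffeomorphism $K\times_{K_\tsgot}\Tcal_\tsgot \xrightarrow{\sim} \Vcal\cap\Zcal$. Density of $\Vcal\cap\Zcal$ in $\Zcal$ follows from the transversality principle recalled in the appendix: a $\tau$-invariant Zariski-open subset of a Kähler manifold meets each connected component of its real part in a dense subset.

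The main technical obstacle is the fixed-point calculation in the twisted product: it requires a careful compatibility between $\sigma$ and the coset structure, and ultimately hinges on a Cartan-type decomposition $U=K\cdot U_\sgot$ applied fiberwise to $\tau$-fixed classes. Once that factorization is in place, both parts of the theorem follow mechanically by pulling back the LMTW diffeomorphism $\Psi$ to $\tau$-fixed points and restricting to the connected component $\Zcal$.
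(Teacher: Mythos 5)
Your overall strategy (cross-section at a face of the chamber, then restriction to the $\tau$-fixed locus) is the same as the paper's, but three steps of your proposal do not hold as written.

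First, the identification of the ambient face is wrong in general. The paper does \emph{not} take $\sgot$ to be the principal face of $\Delta_\ugot(M)$; it defines $\sgot$ as the open face of $\tgot^*_+$ containing $(j^*)^{-1}(\tsgot)$. These two faces differ in general: the generic points of $\Delta_\ugot(M)$ need not lie in the same face as the generic points of $\Delta_\ugot(M)\cap(\tgot^*)^{-\sigma}$. For instance, for $\sigma_{p,q}$ on $\upU_n$ with $p-q\geq 2$, every point of $\tgot^*_+\cap(\tgot^*)^{-\sigma}$ lies on a wall, so the principal face $\sgot'$ of $\Delta_\ugot(M)$ can be strictly larger than $\sgot$; in that case no point of $\Zcal$ has its moment image in the star of $\sgot'$, so $UY_{\sgot'}\cap\Zcal=\emptyset$ and your whole restriction argument collapses. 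With the correct face $\sgot$, the LMTW theorem still gives a diffeomorphism of $U\times_{U_\sgot}Y_\sgot$ onto the open set $UY_\sgot$, but you can no longer quote density of $UY_\sgot$ in $M$.

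Second, you assert rather than prove the key identity $Y_\sgot\cap\Zcal=\Tcal_\tsgot$ (your phrase ``coincides with the union of those connected components of $Y_\sgot^\tau$ that lie in $\Zcal$'' only says which components can occur, not that the two sets are equal). The inclusion $\Tcal_\tsgot\subset Y_\sgot\cap\Zcal$ is easy, but the converse is the substantive point: given $z\in Y_\sgot\cap\Zcal$ with $\Phi_\ugot(z)=g\xi$, one must use $U\xi\cap(\ugot^*)^{-\sigma}=K\xi$ (remark \ref{rem:orbite-symetrique}), the containment $\Delta_\pgot(\Zcal)\subset\overline{\tsgot}$ and the condition $U_\xi\subset U_\sgot$ to force $\xi\in\sgot$ and $z\in K_{\tsgot}\Tcal_\tsgot=\Tcal_\tsgot$; the same mechanism is what proves $UY_\sgot\cap\Zcal=K\Tcal_\tsgot$, which your part (2) also needs. (Your fixed-point computation in the twisted product is fine and is implicitly the same use of $(U\xi)^\tau=K\xi$, but it does not replace this step.) Third, your density argument fails: $\Vcal=UY_\sgot$ is an ordinary open set defined by moment-map conditions, not a Zariski-open set, so Lemma \ref{lem:zariski} of the appendix does not apply, and metric-topology density in $M$ does not restrict to density on a Lagrangian. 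The paper obtains density of $K\Tcal_\tsgot$ in $\Zcal$ by a genuinely different ingredient: Sjamaar's local convexity theorem in its real form (lemma \ref{lem:regular-element}), which shows every $K$-invariant neighbourhood of a point of $\Zcal$ contains points of $\Tcal_\tsgot$ that are regular for $\Phi_\pgot:\Tcal_\tsgot\to\Pi$. You need some substitute for this; without it part (2) is not established.
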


\begin{proof} Nous considérons le sous-ensemble ouvert suivant de $(\ugot_{\sgot})^*$ :
$$
\Vcal_{\sgot}:=U_{\sgot}\Big\{\xi\in\tgot^*_+, \ U_\xi\subset U_{\sgot}\Big\}.
$$

Le tiré en arrière $Y_{\sgot}=\Phi_\ugot^{-1}(\Vcal_{\sgot})$ est la section symplectique en $\sgot$ \cite{LMTW}. Il s'agit d'une variété symplectique 
$U_{\sgot}$-invariante de $M$ telle que l'application $U\times_{U_{\sgot}} Y_{\sgot}\longrightarrow M, [g,y]\mapsto gy$ définit un 
difféomorphisme sur le sous-ensemble ouvert et dense $U Y_{\sgot}\subset M$.

\begin{lem}
\begin{enumerate}
\item $\Vcal_{{\sgot}}\subset (\ugot_{{\sgot}})^*$ est invariant sous l'application $-\sigma$.
\item La sous-variété $Y_{\sgot}$ est stable sous l'involution $\tau$.
\item L'intersection $Y_{\sgot}\cap\Zcal$ est égale à $\Tcal_{\tsgot}$.
\item L'intersection $U Y_{\sgot}\cap\Zcal$ est égale à $K\Tcal_{\tsgot}$.
\item $K\Tcal_{\tsgot}$ est un ouvert dense de $\Zcal$.
\end{enumerate}
\end{lem}

\begin{proof} Rappelons que $K'$ est la composante connexe du sous-groupe centralisateur $Z_K(\agot)$. Soit $w_0'$ le plus long élément du groupe de Weyl 
$W'=N_{K'}(T)/T$. Alors l'application linéaire $\sigma_+(\xi)=-w_0'\sigma(\xi)$ de $\tgot^*$ préserve la chambre de Weyl $\tgot^*_+$, et pour tout $\xi\in \tgot^*_+$, on a
 $-\sigma(U\xi)=U(\sigma_+(\xi))$. Soit $k'\in K'$ un représentant de $w_0'$. 

Remarquons que le sous-groupe $U_{\sgot}$ est stable sous $\sigma$ puisque $-\sigma$ fixe les éléments de $\sgot\cap(\tgot^*)^{-\sigma}$. 
Prenons $\eta\in \Vcal_{\sgot}$ : alors 
$\eta=g\xi$ où $g\in U_{\sgot}$ et $\xi\in \tgot^*_+$ satisfait $U_\xi\subset U_{\sgot}$. Nous avons 
$-\sigma(\eta)=\sigma(g)k'\sigma_+(\xi)$, où $\sigma(g)k'\in U_{\sgot}$ car $K'\subset U_{\sgot}$. Nous voyons maintenant que le sous-groupe stabilisateur 
$U_{\sigma_+(\xi)}$ est égal à $Ad(k')\circ\sigma(U_\xi)$ : comme $U_\xi\subset U_{\sgot}$, nous obtenons $U_{\sigma_+(\xi)}\subset U_{\sgot}$. 
Le premier point est prouvé et le second découle directement du premier.

L'inclusion $\Tcal_{\tsgot}\subset Y_{\sgot}\cap\Zcal$ est immédiate. Vérifions l'inclusion inverse. Soit $z\in Y_{\sgot}\cap\Zcal$ et 
$\eta=\Phi_\ugot(z)\in \Vcal_{\sgot}\cap (\ugot^*)^{-\sigma}$. En prenant la décomposition $\eta=g\xi$ comme précédemment, 
nous voyons que $\xi\in\tgot^*_+\cap(\tgot^*)^{-\sigma}$ et  $g\xi\in U\xi\cap(\ugot^*)^{-\sigma}=K\xi$ : il existe $k\in K$ tel que $g\xi=k\xi$ ou, en d'autres termes, $g^{-1}k\in U_\xi\subset U_{\sgot}$. 
Mais $g\in U_{\sgot}$ et donc $k\in U_{\sgot}\cap K=K_{\tsgot}$. À ce stade, nous savons que $\Phi_\pgot(k^{-1}z)=j^*(\xi)\in \agot^*_+$. 
D'une part, nous savons que $\xi\in\overline{\sgot}$ car $j^*(\xi)\in \Delta_\pgot(\Zcal)\subset \overline{\tsgot}$. D'autre part, nous savons que 
$U_\xi\subset U_{\sgot}$. Cela montre que $\xi$ appartient à $\sgot$, et donc que $k^{-1}z\in \Tcal_{\tsgot}$. Puisque $k\in K_{\tsgot}$, 
nous pouvons conclure que $z$ appartient à 
$\Tcal_{\tsgot}$.  Le troisième point est réglé.

Grâce au troisième point, nous savons que l'intersection $U Y_{\sgot}\cap\Zcal$ contient $K\Tcal_{\tsgot}$. Prouvons que 
$U Y_{\sgot}\cap\Zcal\subset K\Tcal_{\tsgot}$. Soit $(z,y,u)\in\Zcal\times Y_{\sgot}\times U$ tel que $z=uy$. Nous écrivons $\Phi_\ugot(y)=g\xi$,  
où $g\in U_{\sgot}$ et $\xi\in \tgot^*_+$ satisfait $U_\xi\subset U_{\sgot}$. Nous voyons alors que $\Phi_\ugot(z)=ug\xi$ appartient à $U\xi\cap(\ugot^*)^{-\sigma}=K\xi$ :
il existe $k\in K$ tel que $ug\xi=k\xi$, donc $k^{-1}ug\in U_{\sgot}$. Nous avons prouvé qu'il existe $g'\in U^{\sgot}$ tel que $u=kg'$. L'identité
$z=kg'y$ montre alors que $g'y=k^{-1}z\in Y_{\sgot}\cap\Zcal=\Tcal_{\tsgot}$. Nous avons prouvé que $z\in K\Tcal_{\tsgot}$.  
Finalement $K\Tcal_{\tsgot}=U Y_{\sgot}\cap \Zcal$ est ouvert de $\Zcal$ puisque $U Y_{\sgot}$ est ouvert dans $M$.  La densité de 
$K\Tcal_{{\tsgot}}$ dans $\Zcal$ est démontrée dans le lemme \ref{lem:regular-element}. 
\end{proof}

Nous pouvons maintenant terminer la preuve du théorème \ref{theo:principal-cross-section}. 

L'identité $Y_{\sgot}\cap\Zcal=\Tcal_{\tsgot}$ montre que $\Tcal_{\tsgot}$ correspond à l'union des composantes connexes de la sous-variété $(Y_{\sgot})^\tau$
contenue dans $\Zcal$. Par conséquent, $\Tcal_{\tsgot}$ est une sous-variété de $\Zcal$.

Le dernier point du lemme précédent montre que le difféomorphisme $U\times_{U_{\sgot}}Y_{\sgot}\overset{\sim}{\longrightarrow} U Y_{\sgot}$ induit 
le difféomorphisme $K\times_{K_{\tsgot}}\Tcal_{\tsgot}\overset{\sim}{\longrightarrow}K \Tcal_{\tsgot}$.
\end{proof}

Le polytope convexe fermé $\Delta_\pgot(\Zcal)$ génère un sous-espace affine $\Pi$ de $\R{\tsgot}$. La fonction $\Phi_\pgot$, lorsqu'elle est restreinte à la tranche $\Tcal_{\tsgot}$, définit une application $\Phi_\pgot:\Tcal_{\tsgot}\to \Pi$.

\begin{lem}\label{lem:regular-element}
Soit $x\in \Zcal$ et soit $\Vcal_x$ un voisinage $K$-invariant de $x$ dans $\Zcal$. Il existe $y\in \Vcal_x\cap \Tcal_{\tsgot}$ tel que $y$ soit un élément régulier de l'application $\Phi_\pgot: \Tcal_{\tsgot}\to \Pi$. Cela montre en particulier que $K\Tcal_{\tsgot}$ est dense dans $\Zcal$.
\end{lem}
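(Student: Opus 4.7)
The plan is to establish two facts in succession: first that $\Vcal_x\cap \Tcal_{\tsgot}$ is non-empty (in fact open in $\Tcal_{\tsgot}$), and second that the set of regular points of $\Phi_\pgot|_{\Tcal_{\tsgot}}:\Tcal_{\tsgot}\to \Pi$ is dense and open in $\Tcal_{\tsgot}$. Together these yield a regular point $y\in\Vcal_x\cap\Tcal_{\tsgot}$. The density of $K\Tcal_{\tsgot}$ in $\Zcal$ follows immediately, since any ordinary open neighborhood of $x$ can be $K$-saturated to produce a $K$-invariant $\Vcal_x$ to which the lemma applies.

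I would begin by exploiting the $K$-invariance of $\Vcal_x$ to replace $x$ by $kx$ for a well chosen $k\in K$, so that $\Phi_\pgot(x)\in \agot^*_+$, i.e. $\Phi_\pgot(x)\in\Delta_\pgot(\Zcal)\subset\overline{\tsgot}$. If $\Phi_\pgot(x)\in\tsgot$, then $x\in\Vcal_x\cap\Tcal_{\tsgot}$ and we are done with step one. Otherwise $\Phi_\pgot(x)$ lies in the relative boundary of $\Delta_\pgot(\Zcal)$, and I would perturb it: by proposition \ref{prop:fundamental-stratification-involution}.8 applied to $\Zcal$, the principal Kirwan-Ness stratum $\Zcal_{\langle\tlambda_{\min}\rangle}$ is open and dense in $\Zcal$, so $\Vcal_x$ meets it. Using the real Kirwan-Ness decomposition (proposition \ref{prop:description-strata}) together with the $G$-action on this stratum, I would show that the moment images of points of $\Vcal_x\cap\Zcal_{\langle\tlambda_{\min}\rangle}$ spread out through a neighborhood of $\Phi_\pgot(x)$ in $K\Delta_\pgot(\Zcal)$. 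Combining with the hypothesis that $\Delta_\pgot(\Zcal)\cap\tsgot$ is dense in $\Delta_\pgot(\Zcal)$, I would obtain $y_0\in\Vcal_x$ with $\Phi_\pgot(y_0)\in K\tsgot$, and after a further $K$-translation (which keeps us inside $\Vcal_x$) we get $y_0\in\Vcal_x\cap\Tcal_{\tsgot}$.

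For the second step I would invoke Sard's theorem applied to the smooth map $\Phi_\pgot|_{\Tcal_{\tsgot}}:\Tcal_{\tsgot}\to\Pi$. Since by construction the image of this map is dense in $\Delta_\pgot(\Zcal)$, and $\Delta_\pgot(\Zcal)$ has non-empty interior in $\Pi$ (by the very definition of $\Pi$ as its affine hull), the generic rank of $\Phi_\pgot|_{\Tcal_{\tsgot}}$ must equal $\dim\Pi$. Thus the set of regular points is open and dense in $\Tcal_{\tsgot}$, so it intersects the non-empty open subset $\Vcal_x\cap\Tcal_{\tsgot}$ produced in step one, giving the desired $y$.

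I expect the main technical obstacle to be the perturbation argument: showing that the moment images of points of $\Vcal_x$ actually cover a neighborhood of $\Phi_\pgot(x)$ in $K\Delta_\pgot(\Zcal)$. This is a local convexity statement, and the cleanest way to obtain it is probably through a real Marle-Guillemin-Sternberg slice model for the $K$-action on $\Zcal$ at $x$, in which the linearized moment polytope contains a neighborhood of $\Phi_\pgot(x)$ inside $\Delta_\pgot(\Zcal)$; the density of $\Delta_\pgot(\Zcal)\cap\tsgot$ then produces the required perturbation. Alternatively, one can bypass the slice theorem by a direct Morse-theoretic argument using the gradient flow of $-f_\pgot$ on the principal stratum $\Zcal_{\langle\tlambda_{\min}\rangle}$.
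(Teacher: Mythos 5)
Your overall strategy (reduce by a $K$-translation so that $\Phi_\pgot(x)\in\agot^*_+$, then produce points of $\Vcal_x\cap\Tcal_{\tsgot}$ and apply Sard) is the right one, but the proposal has a genuine gap at the point where you separate the argument into a ``non-emptiness'' step and a ``global density of regular points'' step. In your second step you claim that the set of regular points of $\Phi_\pgot\colon\Tcal_{\tsgot}\to\Pi$ is open \emph{and dense} in $\Tcal_{\tsgot}$; Sard's theorem does not give this. From the fact that the image contains an open subset of $\Pi$ you only get the existence of \emph{some} regular point somewhere in $\Tcal_{\tsgot}$ (and even your derivation of generic maximal rank is shaky as written: the image being dense in $\Delta_\pgot(\Zcal)$ does not preclude it from being a null set --- you must use that $\Delta_\pgot(\Zcal)\cap\tsgot$ actually contains a nonempty open subset of $\Pi$). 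A smooth map can be a submersion on one open piece of $\Tcal_{\tsgot}$ and have degenerate differential on another open piece, so nothing forces a regular point to lie in the particular open set $\Vcal_x\cap\Tcal_{\tsgot}$ you produced in step one. The correct way to localize is to show directly that $\Phi_\pgot(\Vcal_x\cap\Tcal_{\tsgot})$ itself contains a nonempty open subset of $\Pi$, and then apply Sard to the restriction of $\Phi_\pgot$ to $\Vcal_x\cap\Tcal_{\tsgot}$; this yields in one stroke both the non-emptiness and the regular point $y\in\Vcal_x\cap\Tcal_{\tsgot}$.

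The input needed for that localized statement is exactly the ``local convexity'' fact you single out as your main technical obstacle, and here your proposal underestimates what is already available: this is Sjamaar's local convexity theorem, in the real/involutive setting of O'Shea--Sjamaar (\cite{Sjamaar98}, Theorem 6.5, and \cite{OSS}, Theorem 8.2). It says precisely that the local moment polytope $\Phi_\pgot(\Vcal_x)\cap\agot^*_+$ is a neighborhood of $\lambda=\Phi_\pgot(x)$ in $\Delta_\pgot(\Zcal)$; since $\Delta_\pgot(\Zcal)$ spans $\Pi$ and $\Delta_\pgot(\Zcal)\cap\tsgot$ is dense in it, this neighborhood meets $\tsgot$ in a nonempty open subset of $\Pi$, and the argument closes as above. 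This is the paper's proof: citation of the local convexity theorem plus Sard applied to $\Phi_\pgot|_{\Vcal_x\cap\Tcal_{\tsgot}}$. Your alternative route through the Kirwan--Ness stratification and a real Marle--Guillemin--Sternberg slice model would, if carried out, essentially reprove that theorem, so it is not wrong in spirit; but as written the perturbation step is only sketched, and the subsequent density claim it is meant to feed into is false in general, so the proof does not go through without the repair indicated here.
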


\begin{proof} Quitte à composer par un élément du groupe $K$, nous pouvons supposer que $\lambda=\Phi_\pgot(x)$ appartient à la chambre de Weyl $\agot^*_+$. 
Le {\em théorème de convexité locale} de Sjamaar (voir \cite{Sjamaar98}[Theorem 6.5] et \cite{OSS}[Theorem 8.2]) nous dit que le polytope moment local 
$\Delta_\pgot(\Vcal_x):=\Phi_\pgot(\Vcal_x)\cap\agot^*_+$ est un voisinage 
de $\lambda$ dans $\Delta_\pgot(\Zcal)$. Ainsi, l'image $\Phi_\pgot(\Vcal_x\cap \Tcal_{\tsgot})$ contient un sous-ensemble ouvert de $\Pi$. Il s'ensuit qu'il existe $y\in\Vcal_x\cap \Tcal_{\tsgot}$ 
tel que $d\Phi_\pgot\vert_{y}:\T_{y}\Tcal_{\tsgot}\to \overrightarrow{\Pi}$ 
est surjective. 
\end{proof}

%%%%%%%%%%%%%%%%%%%%%%%%%%%%%%%%%%%%%
\section{Stabilisateur générique}\label{sec:stabilisateur-gen}
%%%%%%%%%%%%%%%%%%%%%%%%%%%%%%%%%%%%%

Le but de cette section est de prouver le résultat suivant

\begin{prop}\label{prop:minimal-stabilizer}
Il existe un sous-espace $\hgot\subset \pgot$ tel que 
\begin{enumerate}
\item $\forall x\in \Zcal, \exists k\in K$ tel que $\mathrm{Ad}(k)(\hgot)\subset \pgot_x$,
\item $\dim(\hgot)=\dim(\pgot_x)$ sur un sous-ensemble ouvert dense de $\Zcal$.
\end{enumerate}
Par conséquent, $\dim_{\pgot}(\Zcal)=\dim(\hgot)$.
\end{prop}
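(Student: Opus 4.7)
La stratégie consiste à utiliser le théorème de coupure principale \ref{theo:principal-cross-section} pour se ramener à l'étude de $\pgot_y$ pour $y\in\Tcal_{\tsgot}$, puis à définir $\hgot$ comme stabilisateur en un point générique de la tranche. Comme $K\Tcal_{\tsgot}$ est un ouvert dense de $\Zcal$ et que $\pgot_{kz}=\Ad(k)(\pgot_z)$ pour $k\in K$, il suffira de construire $\hgot\subset\pgot$ de dimension $d:=\dim_\pgot(\Zcal)$ tel que $\pgot_y$ soit $K_{\tsgot}$-conjugué à $\hgot$ pour $y$ dans un ouvert dense de $\Tcal_{\tsgot}$ ; la condition (1) s'étendra ensuite à tout $\Zcal$ par compacité de $K$, car la propriété $\{x\in\Zcal, \exists k\in K,\ \Ad(k)(\hgot)\subset\pgot_x\}$ est fermée et contient un ouvert dense.

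Le premier point clé est l'inclusion $\pgot_y\subset\pgot_{\tsgot}$ pour tout $y\in\Tcal_{\tsgot}$. Écrivant $X=iY$ avec $Y\in\ugot^{-\sigma}$, l'équation $X\cdot y=0$ équivaut à $Y\cdot y=0$ (car $\J$ est un isomorphisme de $T_yM$), donc $Y\in\ugot_y$. L'équivariance infinitésimale de $\Phi_\ugot$ fournit $\ugot_y\subset\ugot_{\Phi_\ugot(y)}$. Puisque $\Phi_\pgot(y)\in\tsgot$ se relève via $j^*$ en un élément de la face ouverte $\sgot\cap(\tgot^*)^{-\sigma}$, on a $\ugot_{\Phi_\ugot(y)}=\ugot_{\sgot}$, dont la partie anti-symétrique vérifie $j((\ugot_{\sgot})^{-\sigma})=\pgot_{\tsgot}$, d'où $X\in\pgot_{\tsgot}$.

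Par semi-continuité supérieure de $y\mapsto\dim\pgot_y$, l'ensemble $U_d:=\{y\in\Tcal_{\tsgot},\ \dim\pgot_y=d\}$ est ouvert dans $\Tcal_{\tsgot}$, et il est dense : l'ouvert $K$-invariant $V:=\{x\in\Zcal,\ \dim\pgot_x=d\}$ étant dense dans $\Zcal$, le théorème \ref{theo:principal-cross-section} entraîne que $V\cap\Tcal_{\tsgot}=U_d$ rencontre tout ouvert de $\Tcal_{\tsgot}$ (car l'image de tout $W\subset\Tcal_{\tsgot}$ ouvert sous $K\times_{K_{\tsgot}}\Tcal_{\tsgot}\simeq K\Tcal_{\tsgot}$ est un ouvert de $\Zcal$, donc rencontre $V$). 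Je fixerai $y_0\in U_d$ et je poserai $\hgot:=\pgot_{y_0}\subset\pgot_{\tsgot}$, sous-espace de dimension $d$. La condition (2) est alors immédiate sur l'ouvert dense $K U_d\subset\Zcal$.

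Pour établir (1), j'invoquerai le théorème du type d'orbite principal de Mostow-Palais pour l'action du groupe compact $K_{\tsgot}$ sur $\Tcal_{\tsgot}$ : celui-ci fournit une sous-algèbre $\kgot_0\subset\kgot_{\tsgot}$ telle que, sur un ouvert dense de $\Tcal_{\tsgot}$, $\kgot_y$ soit $K_{\tsgot}$-conjugué à $\kgot_0$. La relation $[\kgot,\pgot]\subset\pgot$ montre que $\pgot_y$ est stable sous l'action adjointe de $\kgot_y$ ; ainsi, après conjugaison par $k_y\in K_{\tsgot}$ ramenant $\kgot_y$ sur $\kgot_0$, l'espace $\Ad(k_y^{-1})(\pgot_y)$ est un $\kgot_0$-sous-module de $\pgot_{\tsgot}$ de dimension $d$. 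Ces sous-modules étant en nombre fini (complète réductibilité de la représentation de $\kgot_0$), la fonction $y\mapsto\Ad(k_y^{-1})(\pgot_y)$ est localement constante, donc constante par connexité sur une composante donnée. L'obstacle principal réside précisément ici : contrôler $\pgot_y$ à $K_{\tsgot}$-conjugaison près (et pas seulement en dimension) exige de raffiner le théorème d'orbite principal classique, qui ne porte a priori que sur $\kgot_y$, en exploitant la $\kgot_y$-invariance de $\pgot_y$ combinée à la finitude des sous-modules de dimension fixée.
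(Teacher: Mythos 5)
Votre mise en place est correcte et proche de celle du texte : réduction à la coupure principale $\Tcal_{\tsgot}$, inclusion $\pgot_y\subset\pgot_{\tsgot}$ pour $y\in\Tcal_{\tsgot}$, choix de $\hgot=\pgot_{y_0}$ en un point où la dimension est minimale, et votre extension de la condition (1) à tout $\Zcal$ par fermeture de $\{x\in\Zcal,\ \exists k\in K,\ \Ad(k)(\hgot)\subset\pgot_x\}$ (compacité de $K$) serait une alternative recevable à l'argument du texte, qui utilise plutôt le modèle local $K\times_{K_x}E^{\tau_x}$ pour obtenir $\pgot_y\subset\Ad(k)(\pgot_x)$ pour $y$ voisin de $x$, puis le lemme \ref{lem:regular-element}. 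Mais l'étape centrale — montrer que $\pgot_y$ est $K_{\tsgot}$-conjugué à $\hgot$ sur un ouvert dense de $\Tcal_{\tsgot}$ — reste non démontrée, et c'est bien là un trou réel, que vous signalez d'ailleurs vous-même. Le théorème du type d'orbite principal ne contrôle que $\kgot_y$, pas $\pgot_y$ ; la finitude des $\kgot_0$-sous-modules de dimension $d$ de $\pgot_{\tsgot}$ est fausse en général dès qu'une composante isotypique a une multiplicité $\geq 2$ — et c'est précisément la situation ici, puisque $\kgot_0$ agit trivialement sur la partie de $\agot$ où vit $\agot_y$, qui peut donc bouger continûment ; enfin, $k_y$ n'a aucune raison de dépendre continûment de $y$, de sorte que l'argument de « locale constance » ne s'amorce pas.

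L'idée qui manque est que sur la tranche, le stabilisateur générique est un sous-espace \emph{fixe}, contenu dans $\pgot_y$ pour \emph{tout} $y\in\Tcal_{\tsgot}$, ce qui rend tout argument de conjugaison superflu. Pour $\beta\in\pgot$, le champ $z\mapsto\beta\cdot z$ est le gradient de $\langle\Phi_\pgot,\beta\rangle$, donc $\beta\in\pgot_x$ si et seulement si $d\langle\Phi_\pgot,\beta\rangle\vert_x=0$ ; en combinant $\T_x\Zcal=\T_x\Tcal_{\tsgot}+\kgot\cdot x$ (théorème \ref{theo:principal-cross-section}) avec le fait que $\Phi_\pgot\vert_{\Tcal_{\tsgot}}$ prend ses valeurs dans $\Pi\subset\R{\tsgot}$, on obtient que le sous-espace explicite
$$
\hgot=\overrightarrow{\Pi}^\perp\oplus\R{\tsgot}^\perp\oplus\qgot_{\tsgot}
$$
vérifie $\hgot\subset\pgot_x$ pour tout $x\in\Tcal_{\tsgot}$, avec égalité exactement aux points réguliers de $\Phi_\pgot:\Tcal_{\tsgot}\to\Pi$, lesquels forment un ouvert dense (lemme \ref{lem:regular-element}) et sont présents dans tout voisinage de tout point de $\Zcal$ — ce qui identifie au passage votre $\pgot_{y_0}$ et fournit directement les points (1) et (2). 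C'est cette description par la carte moment, et non un théorème d'orbite principale, qui porte la preuve.
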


Le sous-espace $\hgot$ est appelé {\em le stabilisateur générique} de l'action infinitésimale $\pgot\circlearrowright\Zcal$.

\begin{coro}\label{coro:stabilisateur-min}
Soit $\hgot_o\subset \pgot$ un sous-espace tel que l'ensemble $K \Zcal^{\hgot_o}$ ait un intérieur non vide dans $\Zcal$. Alors 
$\dim_{\pgot}(\Zcal)\geq \dim(\hgot_o)$.
\end{coro}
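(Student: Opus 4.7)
Le plan est d'utiliser directement la proposition \ref{prop:minimal-stabilizer} pour comparer la dimension du stabilisateur générique $\hgot$ à celle de $\hgot_o$. D'après la proposition, la fonction semi-continue supérieurement $z\mapsto \dim(\pgot_z)$ atteint sa valeur minimale $\dim(\hgot)=\dim_\pgot(\Zcal)$ sur un sous-ensemble ouvert dense $\Zcal_{\mathrm{reg}}\subset\Zcal$.

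Par hypothèse, l'ensemble $K\Zcal^{\hgot_o}$ contient un ouvert non vide de $\Zcal$. Puisque $\Zcal_{\mathrm{reg}}$ est dense, l'intersection $K\Zcal^{\hgot_o}\cap\Zcal_{\mathrm{reg}}$ est non vide : je fixerais donc un point $z$ dans cette intersection. Par définition, il existerait $k\in K$ et $z'\in\Zcal^{\hgot_o}$ tels que $z=kz'$. L'appartenance $z'\in\Zcal^{\hgot_o}$ signifie que $X\cdot z'=0$ pour tout $X\in\hgot_o$, c'est-à-dire $\hgot_o\subset\pgot_{z'}$. La $K$-équivariance donne alors $\mathrm{Ad}(k)(\hgot_o)\subset \pgot_{z}$, d'où l'inégalité
\[
\dim(\hgot_o)=\dim(\mathrm{Ad}(k)\hgot_o)\leq\dim(\pgot_z)=\dim_\pgot(\Zcal),
\]
la dernière égalité résultant du choix $z\in\Zcal_{\mathrm{reg}}$.

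Le seul point qui demande une petite vérification est que $\Zcal_{\mathrm{reg}}$ est bien ouvert dense et non seulement dense. Mais ceci est immédiat : l'ensemble des points où une fonction semi-continue supérieurement (à valeurs entières) atteint son minimum est toujours ouvert, et la proposition \ref{prop:minimal-stabilizer} assure précisément sa densité. Il n'y a donc pas d'obstacle sérieux ici ; le corollaire découle essentiellement d'un argument de position générique couplé à la $K$-équivariance du stabilisateur $\pgot_z$.
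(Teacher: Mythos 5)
Votre démonstration est correcte et suit exactement le raisonnement que le texte laisse implicite : le corollaire y est présenté comme conséquence directe de la proposition \ref{prop:minimal-stabilizer}, et votre argument (intersection de l'intérieur de $K\Zcal^{\hgot_o}$ avec l'ouvert dense où $\dim(\pgot_z)=\dim_\pgot(\Zcal)$, puis $K$-équivariance $\pgot_{kz'}=\mathrm{Ad}(k)(\pgot_{z'})$) en est la mise en forme attendue. La vérification finale sur le caractère ouvert de $\Zcal_{\mathrm{reg}}$ est d'ailleurs superflue, la proposition fournissant déjà un ouvert dense sur lequel l'égalité a lieu.
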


Pour simplifier l'exposé, nous utilisons les identifications $\pgot^*\simeq\pgot$ et $\agot\simeq\agot^*$ données par le produit scalaire invariant $(-,-)$ sur $\ugot_\C$ (voir l'annexe). 
Considérons les décompositions orthogonales $\pgot=\agot\oplus\qgot$ et $\agot=\R{\tsgot}\oplus\R{\tsgot}^\perp$. Il en résulte que 
$\pgot_{\tsgot}=\agot\oplus\qgot_{\tsgot}$.

Le polytope convexe fermé $\Delta_\pgot(\Zcal)$ génère un sous-espace affine 
$\Pi$ de $\R{\tsgot}$. Soit $\overrightarrow{\Pi}^\perp$ l'orthogonal de $\overrightarrow{\Pi}$ dans $\R{\tsgot}$.

Pour tout $x\in \Zcal$, nous définissons $\agot_x=\{X\in\agot, X\cdot x=0\}$.

\begin{lem}\label{lem:agot-pi}
\begin{enumerate}
\item Pour tout $x\in \Tcal_{\tsgot}$, on a $\R{\tsgot}^\perp\oplus \qgot_{\tsgot}\subset \pgot_x\subset \pgot_{\tsgot}$.
\item Pour tout $x\in \Tcal_{\tsgot}$, on a $\pgot_x=\agot_x\oplus\qgot_{\tsgot}$ avec $\overrightarrow{\Pi}^\perp\oplus\R{\tsgot}^\perp\subset \agot_x$.
et l'égalité $\overrightarrow{\Pi}^\perp\oplus\R{\tsgot}^\perp= \agot_x$ est vérifiée sur un sous-ensemble ouvert dense de $\Tcal_{\tsgot}$.
\end{enumerate}
\end{lem}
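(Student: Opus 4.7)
The plan is to read off $\pgot_x$ as the orthogonal complement in $\pgot$ of the image $d\Phi_\pgot|_x(T_x\Zcal)$, and to decompose this image by means of the principal cross-section theorem. Set $\txi=\Phi_\pgot(x)\in\tsgot$. Since $\zeta_\Zcal=\nabla\langle\Phi_\pgot,\zeta\rangle$ is a gradient vector field, the condition $\zeta\in\pgot_x$ is equivalent to $\zeta$ being orthogonal to $d\Phi_\pgot|_x(T_x\Zcal)$ inside $\pgot\simeq\pgot^*$. Theorem \ref{theo:principal-cross-section} yields $T_x\Zcal=\kgot\cdot x+T_x\Tcal_{\tsgot}$ for every $x\in\Tcal_{\tsgot}$, because $K\Tcal_{\tsgot}$ is open and dense in $\Zcal$. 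Applying $d\Phi_\pgot|_x$ and using $K$-equivariance, one obtains $d\Phi_\pgot|_x(\kgot\cdot x)=[\kgot,\txi]\subset\qgot$, while $d\Phi_\pgot|_x(T_x\Tcal_{\tsgot})\subset T_\txi\Pi=\overrightarrow{\Pi}$. Hence
$$d\Phi_\pgot|_x(T_x\Zcal)\subset[\kgot,\txi]\oplus\overrightarrow{\Pi}.$$

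I will refine the orthogonal decomposition $\pgot=\agot\oplus\qgot$ into $\agot=\overrightarrow{\Pi}\oplus\overrightarrow{\Pi}^\perp\oplus\R\tsgot^\perp$ and $\qgot=[\kgot,\txi]\oplus\qgot_{\tsgot}$, the latter being the standard orthogonal splitting given by the $K$-action on $\agot$. Taking the orthogonal of $[\kgot,\txi]\oplus\overrightarrow{\Pi}$ in $\pgot$ gives $\overrightarrow{\Pi}^\perp\oplus\R\tsgot^\perp\oplus\qgot_{\tsgot}\subset\pgot_x$, which supplies both the lower inclusion of part 1 and the inclusion $\overrightarrow{\Pi}^\perp\oplus\R\tsgot^\perp\subset\agot_x$ of part 2.

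For the upper inclusion $\pgot_x\subset\pgot_{\tsgot}$, I will observe that any $\zeta\in\pgot_x$ is orthogonal to $[\kgot,\txi]$; by the $\ad$-invariance of $(-,-)$ this rewrites as $(Y,[\txi,\zeta])=0$ for all $Y\in\kgot$, so $[\txi,\zeta]\perp\kgot$. Since $[\txi,\zeta]\in[\agot,\pgot]\subset[\pgot,\pgot]\subset\kgot$ by the Cartan decomposition, this forces $[\txi,\zeta]=0$, so $\zeta\in\ggot_\txi\cap\pgot=\pgot_{\tsgot}$. The decomposition $\pgot_x=\agot_x\oplus\qgot_{\tsgot}$ of part 2 then follows immediately from $\qgot_{\tsgot}\subset\pgot_x\subset\agot\oplus\qgot_{\tsgot}$.

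The main obstacle is the generic equality $\agot_x=\overrightarrow{\Pi}^\perp\oplus\R\tsgot^\perp$. By the previous steps this reduces to showing that $d\Phi_\pgot|_x:T_x\Tcal_{\tsgot}\to\overrightarrow{\Pi}$ is surjective on a dense open subset of $\Tcal_{\tsgot}$: indeed, in that case the inclusion $d\Phi_\pgot|_x(T_x\Zcal)\subset[\kgot,\txi]\oplus\overrightarrow{\Pi}$ is an equality, and the orthogonal complement calculation collapses the inclusion $\overrightarrow{\Pi}^\perp\oplus\R\tsgot^\perp\subset\agot_x$ into an equality. For this generic surjectivity I would invoke Sjamaar's local convexity theorem, already exploited in the proof of Lemma \ref{lem:regular-element}: it ensures that the image of $\Phi_\pgot|_{\Tcal_{\tsgot}}:\Tcal_{\tsgot}\to\Pi$ has nonempty relative interior in $\Pi$, so Sard's theorem delivers a dense open subset of regular points, at each of which $d\Phi_\pgot|_x$ is surjective onto $\overrightarrow{\Pi}$.
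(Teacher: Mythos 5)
Your proof is correct and follows essentially the same route as the paper's: both rest on the identity $\T_x\Zcal=\T_x\Tcal_{\tsgot}+\kgot\cdot x$ from Theorem \ref{theo:principal-cross-section}, the equivariance computation on the $\kgot\cdot x$ piece, the fact that $\Phi_\pgot|_{\Tcal_{\tsgot}}$ takes values in $\Pi$, and Lemma \ref{lem:regular-element} for the generic regularity — you merely dualize the argument, describing $\pgot_x$ as the orthogonal of the image of $d\Phi_\pgot|_x$ rather than as the common kernel of the differentials $d\langle\Phi_\pgot,\beta\rangle|_x$. One small caution: density (not just existence) of the regular points of $\Phi_\pgot:\Tcal_{\tsgot}\to\Pi$ does not follow from nonempty interior of the global image plus Sard; it requires the local convexity statement applied near each point of $\Tcal_{\tsgot}$, which is exactly what Lemma \ref{lem:regular-element} provides and what the paper invokes.
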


{\em Preuve :} 
Pour tout $\beta\in \pgot$, le champ de vecteurs $z\in\Zcal\mapsto \beta\cdot z$ est le champ de vecteurs gradient de la fonction 
$\langle\Phi_\pgot,\beta\rangle :\Zcal\to \R$. Soit $x\in\Tcal_{\tsgot}$ :  alors $\beta\in \pgot_x$ si et seulement si la différentielle 
$d\langle\Phi_\pgot,\beta\rangle\vert_x :\T_x\Zcal\to \R$ est nulle. Grâce au deuxième point du théorème \ref{theo:principal-cross-section}, 
nous savons que $\T_x\Zcal=\T_x\Tcal_{\tsgot}+\kgot\cdot x$. Pour tout $X\in \kgot$, on a 
$$
d\langle\Phi_\pgot,\beta\rangle\vert_x(X\cdot x)=\langle\Phi_\pgot(x),[\beta,X]\rangle
$$
avec $\Phi_\pgot(x)\in\tsgot$. Ainsi, $d\langle\Phi_\pgot,\beta\rangle\vert_x $ s'annule sur $\kgot\cdot x$ si et seulement 
si $\beta\in\pgot_{\tsgot}$. À ce stade, nous savons que $\beta \in\pgot_x$ si et seulement si $\beta\in \pgot_{\tsgot}$ et 
la différentielle $d\langle\Phi_\pgot,\beta\rangle\vert_x :\T_x\Tcal_{\tsgot}\to \R$ est nulle. La fonction $\Phi_\pgot$, lorsqu'elle 
est restreinte à la sous-variété $\Tcal_{\tsgot}$, prend des valeurs dans $\Pi\subset \R\tsgot$. 
Si nous prenons $\beta\in \R{\tsgot}^\perp\oplus \qgot_{\tsgot}$, la fonction $\langle\Phi_\pgot,\beta\rangle$ 
est constante et égale à zéro sur $\Tcal_{\tsgot}$, donc $\R{\tsgot}^\perp\oplus \qgot_{\tsgot}\subset \pgot_x$ 
pour tout $x\in \Tcal_{\tsgot}$. Le premier point est démontré.

On a $\pgot_x=\agot_x \oplus\qgot_{\tsgot},\forall x\in  \Tcal_{\tsgot}$, et $\beta\in\agot$ appartient à $\agot_x$ 
si et seulement si la différentielle $d\langle\Phi_\pgot,\beta\rangle\vert_x :\T_x\Tcal_{\tsgot}\to \R$ est nulle.  Nous voyons que pour tout 
$\beta\in \overrightarrow{\Pi}^\perp\oplus\R{\tsgot}^\perp$, l'application $\langle\Phi_\pgot,\beta\rangle :\Tcal_{\tsgot}\to \R$ 
est constante : on obtient alors que $\overrightarrow{\Pi}^\perp\oplus\R{\tsgot}^\perp\subset \agot_x$, $\forall x\in \Tcal_{\tsgot}$. 

De plus, l'ensemble $\Tcal_{\tsgot}^0:=\{x\in \Tcal_{\tsgot}, \overrightarrow{\Pi}^\perp\oplus\R{\tsgot}^\perp= \agot_x\}$ coïncide avec 
l'ensemble des éléments réguliers de l'application $\Phi_\pgot: \Tcal_{\tsgot}\to \Pi$ : grâce au lemme \ref{lem:regular-element}, nous savons que $\Tcal_{\tsgot}^0$ est un 
sous-ensemble ouvert dense de la variété $\Tcal_{\tsgot}$. $\Box$

\medskip

Nous terminons maintenant la preuve de la proposition \ref{prop:minimal-stabilizer}. Considérons le sous-espace 
$\hgot= \overrightarrow{\Pi}^\perp\oplus\R{\tsgot}^\perp\oplus\qgot_{\tsgot}\subset\pgot$. Nous avons prouvé dans le lemme \ref{lem:agot-pi} que 
\begin{itemize}
\item $\forall x\in \Tcal_{\tsgot}$, $\hgot\subset \pgot_x$,
\item $\dim(\hgot)=\dim(\pgot_x)$ sur un sous-ensemble ouvert dense de $\Tcal_{\tsgot}$.
\end{itemize}
Comme $K\Tcal_{\tsgot}$ est dense dans $\Zcal$, nous voyons que  $\dim(\hgot)=\dim(\pgot_x)$ sur un sous-ensemble ouvert dense de $\Zcal$.

Soit $x\in\Zcal$. Un voisinage $U$-invariant $\Wcal_x$ de $Ux$ dans $M$ est difféomorphe à $U\times_{U_x}E$, où $E$ est un $U_x$-module complexe 
muni d'une involution $\tau_x$. Par conséquent, un voisinage $K$-invariant $\Vcal_x$ de $Kx$ dans $\Zcal$ est difféomorphe à $K\times_{K_x}E^{\tau_x}$. 
Il s'ensuit que pour tout $y\in \Vcal_x$, il existe $k\in K$ tel que $U_y\subset \mathrm{Ad}(k)(U_x)$, en particulier $\pgot_y\subset \mathrm{Ad}(k)(\pgot_x)$. 
D'après le lemme \ref{lem:regular-element}, il existe $y_o\in \Vcal_x\cap\Tcal_{\tsgot}$ tel que $y_o$ est un élément régulier 
de l'application $\Phi_\pgot: \Tcal_{\tsgot}\to \Pi$ : 
on obtient ainsi $\hgot=\pgot_{y_o}\subset \mathrm{Ad}(k)(\pgot_x)$ pour un certain $k\in K$.

%%%%%%%%%%%%%%%%%%%%%%%%%%%%%%%%%%%%%%%%%%%%%%%%%%%%%%
%%%%%%%%%%%%%%%%%%%%%%%%%%%%%%%%%%%%%%%%%%%%%%%%%%%%%%
\section{Preuve du th\'eor\`eme \ref{th:real-ressayre-pairs}}\label{sec:preuve-th-real-RP}
%%%%%%%%%%%%%%%%%%%%%%%%%%%%%%%%%%%%%%%%%%%%%%%%%%%%%%
%%%%%%%%%%%%%%%%%%%%%%%%%%%%%%%%%%%%%%%%%%%%%%%%%%%%%%

Soit $\Delta_\pgot(\Zcal)\subset\agot^*_{+}$ le polytope moment réel d'une composante connexe $\Zcal$ de la sous-variété $M^\tau$ (supposée non vide) d'une 
variété de Kähler $(U,\sigma)$-hamiltonienne  $(M,\Omega,\tau)$.
  
Nous définissons les sous-ensembles convexes suivants de la chambre $\agot^*_{+}$. 
\begin{itemize}
\item $\Delta_{\infrp}$ est l'ensemble des points $\xi\in\agot^*_+$ satisfaisant les inégalités $\langle \xi,\zeta\rangle\geq \langle \Phi_\pgot(\Ccal),\zeta\rangle$, 
pour toute \emph{paire de Ressayre infinitésimale réelle} $(\zeta,\Ccal)$ de $\Zcal$.
\item $\Delta_{\rp}$ est l'ensemble des points $\xi\in\agot^*_+$ satisfaisant les inégalités $\langle \xi,\zeta\rangle\geq \langle \Phi_\pgot(C),\zeta\rangle$,
 pour toute \emph{paire de Ressayre  réelle} $(\zeta,C)$ de $\Zcal$.
\end{itemize}

Si nous ne travaillons qu'avec des paires de Ressayre {\em régulières}, nous définissons de manière similaire les ensembles convexes $\Delta^{^{\reg}}_{\rp}$ et $\Delta^{^{\reg}}_{\infrp}$. 
Par définition, nous avons le diagramme commutatif, où toutes les applications sont des inclusions :
\begin{equation}\label{diagramme-1}
\xymatrix{
\Delta_{\infrp} \ar@{^{(}->}[d] \ar@{^{(}->}[r] & \Delta_{\rp} \ar@{^{(}->}[d]  \\
\Delta^{^{\reg}}_{\infrp} \ar@{^{(}->}[r]          & \Delta^{^{\reg}}_{\rp}\cdot
}
\end{equation}

Le théorème \ref{th:real-ressayre-pairs} est une conséquence du résultat suivant

\begin{theorem}\label{th:real-ressayre-pairs-preuve}
Nous avons les relations:
$$
\Delta_\pgot(\Zcal)=\Delta^{^{\reg}}_{\infrp}=\Delta^{^{\reg}}_{\rp}=\Delta_{\infrp}=\Delta_{\rp}.
$$
\end{theorem}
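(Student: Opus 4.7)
Le plan est de prouver la cha\^ine d'inclusions
\[
\Delta_\pgot(\Zcal) \subseteq \Delta_{\infrp} \subseteq \Delta^{^{\reg}}_{\rp} \subseteq \Delta_\pgot(\Zcal),
\]
qui, combin\'ee aux inclusions triviales fournies par le diagramme (\ref{diagramme-1}), forcera les cinq ensembles \`a co\"incider. Je commence par la premi\`ere inclusion, qui est la direction facile : pour $\xi \in \Delta_\pgot(\Zcal)$, son unique ant\'ec\'edent $\eta = (j^*)^{-1}(\xi) \in \tgot^*_+ \cap (\tgot^*)^{-\sigma}$ appartient \`a $\Delta_\ugot(M)$ par le th\'eor\`eme d'O'Shea-Sjamaar (Th\'eor\`eme \ref{theo:OSS-bis} et section \ref{sec:preuve-OSS}). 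Pour toute paire de Ressayre infinit\'esimale r\'eelle $(\zeta, C)$ de $\Zcal$, la proposition \ref{prop:caracteriser-RP-reel} identifie $(i\zeta, C)$ comme une paire de Ressayre infinit\'esimale de $M$, et le th\'eor\`eme \ref{th:infinitesimal-ressayre-pairs} (dans la version sans r\'egularit\'e mentionn\'ee dans la remarque qui le suit) fournit $\langle \eta, i\zeta \rangle \geq \langle \Phi_\ugot(C), i\zeta \rangle$, ce qui se traduit directement en $\langle \xi, \zeta \rangle \geq \langle \Phi_\pgot(C), \zeta \rangle$ via l'identit\'e $\langle \Phi_\pgot, \zeta \rangle = \langle \Phi_\ugot, i\zeta \rangle$.

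Le contenu non trivial est l'inclusion $\Delta^{^{\reg}}_{\rp} \subseteq \Delta_\pgot(\Zcal)$, que je prouverais par contrapos\'ee. Soit $\tilde{a} \in \agot^*_+ \setminus \Delta_\pgot(\Zcal)$ ; le but est d'exhiber une paire de Ressayre r\'eelle r\'eguli\`ere $(\zeta, C)$ v\'erifiant $\langle \tilde{a}, \zeta \rangle < \langle \Phi_\pgot(C), \zeta \rangle$. En supposant d'abord $\tilde{a}$ dans l'int\'erieur de la chambre de Weyl $\agot^*_+$, la construction de la section \ref{sec:construction-RP} produit $\zeta_{\tilde{a}} = (\tilde{a}' - \tilde{a})^\flat$ (o\`u $\tilde{a}'$ est la projection orthogonale de $\tilde{a}$ sur $\Delta_\pgot(\Zcal)$), et le th\'eor\`eme \ref{theo:construction-RP} garantit que $(\zeta_{\tilde{a}}, C)$ est une paire de Ressayre r\'eelle pour toute composante connexe $C$ de $C_{\zeta_{\tilde{a}}}$. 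La violation de l'in\'egalit\'e est imm\'ediate :
\[
\langle \tilde{a}, \zeta_{\tilde{a}} \rangle - \langle \Phi_\pgot(C), \zeta_{\tilde{a}} \rangle = \langle \tilde{a} - \tilde{a}', \tilde{a}' - \tilde{a} \rangle = -\|\tilde{a} - \tilde{a}'\|^2 < 0.
\]

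L'\'etape d\'elicate consiste \`a assurer les deux conditions de r\'egularit\'e sur $(\zeta_{\tilde{a}}, C)$ : la rationalit\'e de $\zeta_{\tilde{a}}$ et la condition $\dim_\pgot(C \cap \Zcal) - \dim_\pgot(\Zcal) \in \{0,1\}$. Pour la rationalit\'e, un argument de perturbation s'impose : les conditions $(A_1)$, $(A_2)$, $(A_3)$ de la proposition \ref{prop:caracteriser-RP-reel} caract\'erisant une paire de Ressayre r\'eelle sont ouvertes, et comme la violation obtenue est stricte, une petite perturbation rationnelle de $\tilde{a}$ pr\'eserve les deux propri\'et\'es. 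Pour la condition de dimension, je choisirais $\tilde{a}$ g\'en\'erique de fa\c{c}on que $\tilde{a}'$ se trouve dans l'int\'erieur relatif d'une facette de $\Delta_\pgot(\Zcal)$ ; le th\'eor\`eme de coupure principale \ref{theo:principal-cross-section}, appliqu\'e \`a la face ouverte $\tsgot$ contenant $\tilde{a}'$, identifie alors $\Phi_\pgot^{-1}(\tilde{a}') \cap \Zcal$ \`a un translat\'e par $K$ d'une fibre dans la tranche $\Tcal_{\tsgot}$, et la proposition \ref{prop:minimal-stabilizer} avec son corollaire \ref{coro:stabilisateur-min} permettent de lire $\dim_\pgot(C \cap \Zcal)$ depuis la dimension de l'espace affine engendr\'e par une facette de $\Delta_\pgot(\Zcal)$, donnant la valeur attendue $\dim_\pgot(\Zcal)+1$. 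Pour traiter le cas o\`u $\tilde{a}$ se trouve sur le bord de $\agot^*_+$, je l'approcherais par des points int\'erieurs \'egalement en dehors de $\Delta_\pgot(\Zcal)$ et passerais \`a la limite, en utilisant la finitude de la famille d'hyperplans d\'efinissant $\Delta_\pgot(\Zcal)$.

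L'obstacle principal est pr\'ecis\'ement cette \'etape de dimension : il faut relier la structure combinatoire du polytope $\Delta_\pgot(\Zcal)$ (notamment ses facettes de codimension un) aux donn\'ees g\'eom\'etriques et Lie-th\'eoriques des sous-vari\'et\'es fixes $M^{\zeta_{\tilde{a}}}$ et du stabilisateur g\'en\'erique de l'action $\pgot\circlearrowright\Zcal$, en combinant finement le th\'eor\`eme de coupure principale, la stratification r\'eelle \`a la Kirwan-Ness de la section \ref{sec:real-stratification} et la description du stabilisateur g\'en\'erique.
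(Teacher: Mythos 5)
Votre schéma global est correct (la chaîne $\Delta_\pgot(\Zcal)\subseteq\Delta_{\infrp}\subseteq\Delta^{^{\reg}}_{\rp}\subseteq\Delta_\pgot(\Zcal)$ suffit bien, avec les inclusions triviales, à forcer l'égalité des cinq ensembles), et votre première inclusion est valable : au lieu de refaire l'argument de semi-stabilité réelle comme la proposition \ref{prop:delta-p-delta-inf-RP} du texte, vous déduisez $\Delta_\pgot(\Zcal)\subseteq\Delta_{\infrp}$ du théorème \ref{th:infinitesimal-ressayre-pairs} (version sans régularité) combiné à O'Shea--Sjamaar et à l'équivalence \og paire infinitésimale réelle $\Leftrightarrow$ paire infinitésimale complexe \fg, ce qui est un raccourci légitime. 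En revanche, l'inclusion $\Delta^{^{\reg}}_{\rp}\subseteq\Delta_\pgot(\Zcal)$, qui est le c\oe ur du théorème, n'est pas établie par votre argument de contraposition.

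Deux points bloquent. D'abord la rationalité : les conditions $(A_1)$--$(A_3)$ ne sont pas ouvertes en $\zeta$ (ce sont des égalités de dimensions et de traces, et la composante $C\subset M^{\zeta}$ elle-même saute lorsque $\zeta$ varie) ; de plus, rendre $\tilde{a}$ rationnel ne rend pas $\zeta_{\tilde{a}}=(\tilde{a}'-\tilde{a})^\flat$ rationnel, car $\tilde{a}'$ dépend du polytope : la rationalité des directions normales aux faces de $\Delta_\pgot(\Zcal)$ est précisément le point non trivial, que le texte obtient par l'analyse du stabilisateur générique (lemme \ref{lem:agot-pi}, relation (\ref{eq:stabilizer-F}), section \ref{sec:stabilisateur-gen}), et non par perturbation. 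Ensuite la condition de dimension : la contraposée doit traiter \emph{tout} $\tilde{a}\notin\Delta_\pgot(\Zcal)$, or lorsque $\tilde{a}'$ tombe sur une face de codimension $\geq 2$ du polytope (ou lorsque la contrainte violée est celle de la face $\tsgot$ de la chambre ou de l'espace affine $\Pi$), on a en général $\dim_\pgot(C\cap\Zcal)-\dim_\pgot(\Zcal)\geq 2$ et la paire construite n'est pas régulière ; votre réduction au cas \og générique \fg{} n'est pas justifiée, et vous confondez par endroits la face $\tsgot$ de la chambre de Weyl avec une facette du polytope (votre passage à la limite au bord de $\agot^*_+$ ne donne d'ailleurs qu'une inégalité large). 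C'est exactement pour contourner ces obstacles que la preuve du texte abandonne la contraposition ponctuelle dans le cas régulier : elle construit trois familles explicites de paires régulières, $(\zeta_{\tsgot},C_{\tsgot})$, $(\zeta^\pm_l,C^\pm_l)$ et $(\zeta_F,C_F)$, dont les inégalités découpent exactement $\Delta_\pgot(\Zcal)$ (propositions \ref{prop:gamma-tau}, \ref{prop:gamma-S} et \ref{prop:gamma-F}), en appliquant le théorème \ref{theo:construction-RP} à des points ajustés $\txi'-\frac{1}{n}\zeta$ dont la projection sur $\Delta_\pgot(\Zcal)$ est exactement $\txi'$, de sorte que la direction rationnelle $\zeta$ est prescrite à l'avance et la régularité contrôlée. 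Sans un argument structurel de ce type, votre preuve de $\Delta^{^{\reg}}_{\rp}\subseteq\Delta_\pgot(\Zcal)$ ne passe pas.
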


La preuve se divise en trois parties.

Dans les deux premières, nous prouvons les inclusions $ \Delta_{\rp}\subset\Delta_\pgot(\Zcal)\subset \Delta_{\infrp}$. Au moyen du diagramme \ref{diagramme-1}, 
on peut alors conclure que $\Delta_\pgot(\Zcal)=\Delta_{\infrp}=\Delta_{\rp}$.

Dans la dernière partie, nous prouvons l'inclusion $\Delta^{^{\reg}}_{\rp}\subset \Delta_\pgot(\Zcal)$. Puisque 
$\Delta_\pgot(\Zcal)=\Delta_{\infrp}\subset \Delta^{^{\reg}}_{\infrp} \subset\Delta_{\rp}^{^{\reg}}$, nous obtenons finalement que
 $\Delta_\pgot(\Zcal)=\Delta^{^{\reg}}_{\infrp}=\Delta^{^{\reg}}_{\rp}$. La preuve du théorème \ref{th:real-ressayre-pairs-preuve} sera complète.

%%%%%%%%%%%%%%%%%%%%%%%%%%%%%%%%%%%%
\subsection{\'Etape 1 : $\Delta_{\rp}\subset\Delta_\pgot(\Zcal)$}
%%%%%%%%%%%%%%%%%%%%%%%%%%%%%%%%%%%%

Soit $\txi_o\in\agot^*_{+}$ qui n'appartient pas à $\Delta_\pgot(\Zcal)$. Le but de cette section est de prouver 
que $\txi_o\notin \Delta_{\rp}$. En d'autres termes, nous allons montrer l'existence d'une Ressayre réelle $(\gamma,C)$ de $\Zcal$ telle 
tque $\langle \txi_o,\gamma\rangle < \langle \Phi_\pgot(C),\gamma\rangle$.

Soit $r>0$ la distance entre $\txi_o$ et $\Delta_\pgot(\Zcal)$, et soit $\txi$ un élément à l'intérieur de la chambre de Weyl $\agot^*_{+}$ 
tel que $\|\txi-\txi_o\|<\frac{r}{2}$ : ainsi, la distance entre $\txi$ et  $\Delta_\pgot(\Zcal)$ est strictement supérieure à $\frac{r}{2}$.

Soit $\txi'$ la projection orthogonale de $\txi$ sur $\Delta_\pgot(\Zcal)$ et soit $\zeta=\txi'-\txi\in\agot^*\simeq \agot$. 
Soit $C$ une composante connexe de $M^{\zeta}$ qui intersecte $\Zcal\cap\Phi_\pgot^{-1}(\txi')$.  
Grâce au théorème \ref{theo:construction-RP}, nous savons que $(\zeta, C)$ est une paire Ressayre réelle de  
$\Zcal$.

En utilisant le fait que $\zeta=\txi'-\txi$, nous calculons 
\begin{eqnarray*}
\langle \txi_o,\zeta\rangle- \langle \Phi_\pgot(\Ccal),\zeta\rangle&=&\langle \txi_o,\zeta\rangle- \langle \txi',\zeta\rangle\\
&=&\langle \txi_o-\txi,\zeta\rangle- \|\zeta\|^2\\
&\leq &- \|\zeta\|\left(\|\zeta\|- \|\txi_o-\txi\|\right)\\
&<&0\, .
\end{eqnarray*}
La dernière inégalité vient du fait que $\|\txi_o-\txi\|<\frac{r}{2}$, tandis que $\|\zeta\|>\frac{r}{2}$ puisque $\|\zeta\|$ représente la distance entre
 $\txi$ and $\Delta_\pgot(\Zcal)$.

%%%%%%%%%%%%%%%%%%%%%%%%%%%%%%%%%%%%%%%%%%%
\subsection{\'Etape 2 : $\Delta_\pgot(\Zcal)\subset \Delta_{\infrp}$}
%%%%%%%%%%%%%%%%%%%%%%%%%%%%%%%%%%%%%%%%%%%

Soit $\txi\in\agot^*_{+}$ appartenant à $\Delta_\pgot(\Zcal)$. Le but de cette section est de prouver le résultat suivant

\begin{prop}\label{prop:delta-p-delta-inf-RP}
L'inégalité 
$\langle \txi,\zeta\rangle \geq \langle \Phi_\pgot(\Ccal),\zeta\rangle$ est satisfaite pour toute paire de Ressayre infinitésimale réelle $(\zeta, \Ccal)$ de $\Zcal$. 
\end{prop}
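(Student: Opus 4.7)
L'idée directrice est de se ramener, \emph{via} le théorème d'O'Shea-Sjamaar et l'équivalence entre paires de Ressayre infinitésimales réelles et paires de Ressayre infinitésimales, au théorème de Kirwan constructif classique (théorème~\ref{th:infinitesimal-ressayre-pairs}) appliqué à la variété de Kähler $U$-hamiltonienne $(M,\Omega)$.

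Premier temps: puisque $\txi\in\Delta_\pgot(\Zcal)$, la forme raffinée du théorème d'O'Shea-Sjamaar démontrée à la section~\ref{sec:preuve-OSS} fournit un élément $\xi\in\Delta_\ugot(M)\cap(\tgot^*)^{-\sigma}$ tel que $\txi=j^*(\xi)$. Par ailleurs, étant donnée une paire de Ressayre infinitésimale réelle $(\zeta,\Ccal)$ de $\Zcal$, la proposition de la section~\ref{sec:RP-involution} qui compare les deux notions affirme que $(i\zeta,\Ccal)$ est une paire de Ressayre infinitésimale de $M$ au sens de la définition~\ref{def:infinitesimal-B-ressayre-pair}; comme $i\zeta\in i\agot\subset\tgot$, ce couple s'inscrit bien dans le cadre hypothétique du théorème classique.

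Deuxième temps: on applique le théorème~\ref{th:infinitesimal-ressayre-pairs}, dans sa version sans hypothèse de régularité, à $\xi$ et à la paire infinitésimale $(i\zeta,\Ccal)$. Il vient
\begin{equation*}
\langle\xi,i\zeta\rangle \;\geq\; \langle\Phi_\ugot(\Ccal),i\zeta\rangle.
\end{equation*}
Il reste à traduire chaque membre dans le cadre réel. Pour le membre de gauche, la définition de $j^*$ donne immédiatement $\langle\xi,i\zeta\rangle=\langle j^*(\xi),\zeta\rangle=\langle\txi,\zeta\rangle$. Pour le membre de droite, la fonction $\langle\Phi_\ugot,i\zeta\rangle$ est constante sur $\Ccal$ (puisque $\Ccal\subset M^{i\zeta}$, les composantes connexes de $M^\zeta$ et $M^{i\zeta}$ coïncidant par holomorphie de l'action de $U_\C$): on peut donc l'évaluer en n'importe quel point de $\Ccal$, en particulier en un point $y\in\Ccal\cap\Zcal$, non vide par hypothèse. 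Puisque $\Phi_\ugot(y)\in(\ugot^*)^{-\sigma}$, on obtient $\langle\Phi_\ugot(y),i\zeta\rangle=\langle j^*(\Phi_\ugot(y)),\zeta\rangle=\langle\Phi_\pgot(y),\zeta\rangle=\langle\Phi_\pgot(\Ccal),\zeta\rangle$. En combinant ces deux égalités avec l'inégalité ci-dessus, on aboutit bien à $\langle\txi,\zeta\rangle\geq \langle\Phi_\pgot(\Ccal),\zeta\rangle$.

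Aucune étape ne présente d'obstacle technique véritable: toute la difficulté géométrique est concentrée en amont, à savoir dans le théorème d'O'Shea-Sjamaar (qui autorise le relèvement $\txi\mapsto\xi$) et dans l'équivalence \og paire infinitésimale réelle $\Leftrightarrow$ paire infinitésimale\fg{} (qui exploite le fait que la sous-variété lagrangienne $\Ccal\cap\Zcal$ rencontre nécessairement l'ouvert de Zariski des points où $\rho^{i\zeta}_x$ est un isomorphisme). Cette proposition constitue ainsi la direction \og facile\fg{} de l'équivalence du théorème~\ref{th:real-ressayre-pairs-preuve}, l'inclusion opposée $\Delta_{\rp}\subset\Delta_\pgot(\Zcal)$ ayant été traitée à l'étape précédente au moyen de la construction explicite fournie par le théorème~\ref{theo:construction-RP}.
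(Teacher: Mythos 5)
Votre démonstration est correcte, mais elle suit une route réellement différente de celle du texte. Vous ramenez tout au cadre complexe: relèvement $\txi=j^*(\xi)$ (pour lequel l'inclusion facile $\Delta_\pgot(\Zcal)\subset j^*\big(\Delta_\ugot(M)\cap(\tgot^*)^{-\sigma}\big)$ suffit d'ailleurs, sans invoquer la forme raffinée du théorème d'O'Shea-Sjamaar), équivalence \og paire infinitésimale réelle $\Leftrightarrow$ paire infinitésimale\fg{} de la section~\ref{sec:RP-involution}, puis théorème~\ref{th:infinitesimal-ressayre-pairs} dans sa version sans régularité, et traduction des deux membres via $\langle\Phi_\pgot,\zeta\rangle=\langle\Phi_\ugot,i\zeta\rangle$ et $\langle j^*(\xi),\zeta\rangle=\langle\xi,i\zeta\rangle$; chacune de ces étapes est disponible en amont et il n'y a pas de circularité. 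Le texte, lui, n'utilise pas le théorème~\ref{th:infinitesimal-ressayre-pairs}: il refait l'argument entièrement dans le cadre réel, au moyen de l'astuce de décalage $N=M\times(U\xi)^o$, de la densité de la strate $(\Zcal_N)_{\langle 0\rangle}$ (lemme~\ref{lem:Z-0} et proposition~\ref{prop:fundamental-stratification-involution}), du lemme~\ref{lem:KCinterior} qui montre que la condition infinitésimale réelle force $G\,\Ccal^-_N$ à avoir un intérieur non vide donc à rencontrer la strate semi-stable dense, puis d'un argument de monotonie de $t\mapsto\langle\Phi^N_\pgot(e^{t\zeta}z),\zeta\rangle$ le long des orbites de $P_G(\zeta)$. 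Votre réduction est plus courte et s'appuie légitimement sur le théorème complexe cité (y compris la remarque supprimant la régularité); l'approche du texte a l'avantage d'être autonome dans la catégorie réelle, de réutiliser la machinerie de stratification réelle déjà développée (et de nouveau nécessaire à l'étape~3), et de fournir un schéma d'argument qui ne transite pas par le cas complexe. Les deux démonstrations sont valables.
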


\begin{proof}

Soit $\xi$ l'élément de $\tgot^*_+\cap(\tgot^*)^{-\sigma}$ tel que $\txi=j^*(\xi)$. Considérons la variété de Kähler $(U,\sigma)$-hamiltonienne   $N:=M\times (U\xi)^o\simeq M\times U_\C/P(-\xi)$ et la composante connexe $\Zcal_N:=\Zcal\times K\xi\simeq \Zcal\times G/P(-\xi)\cap G$ de sa partie réelle.

Soit $\Phi_\pgot^N: \Zcal_N\to\pgot^*$ la fonction gradient. Comme $0\in\Delta_\ugot(N)$, nous savons que la strate
$$
(\Zcal_N)_{\langle 0 \rangle}:=\Zcal_N\cap N_{\langle 0 \rangle}=\left\{n\in \Zcal_N,\, \overline{G\, n}\cap (\Phi_\pgot^N)^{-1}(0)\neq \emptyset\right\}
$$
est un ouvert dense $G$-invariant de $\Zcal_N$ (voir le lemme \ref{lem:Z-0}).

Soit $(\zeta, \Ccal)$ une paire de Ressayre infinitésimale réelle de $\Zcal$, et soit $\Ccal_N:= \Ccal\times G_\zeta / G_\zeta\cap P(-\xi)$ la 
composante connexe correspondante de $\Zcal_N^\zeta$. Remarquons que $\Ccal_N$ est invariant sous l'action du sous-groupe stabilisateur 
$G_\zeta$, et que la fonction $n\in\Ccal_N\mapsto \langle \Phi_\pgot(n),\zeta\rangle$ est constante, égale à $\langle \Phi_\pgot(\Ccal_N),\zeta\rangle$.

Soit $\Ccal^-_N:=\{n\in \Zcal_N,  \lim_{t\to\infty} e^{t\zeta} n\ \in \Ccal_N\}$ la sous-variété réelle de Bialynicki-Birula.

\begin{lem}\label{lem:KCinterior}
\begin{enumerate}
\item L'ensemble $G\Ccal^{-}_N$ a un intérieur non vide dans $\Zcal_N$.
 
\item $\Ccal^-_N\cap (\Zcal_N)_{\langle 0 \rangle}\neq \emptyset$.
\end{enumerate}
\end{lem}
\begin{proof}Le deuxième point découle du premier puisque $(\Zcal_N)_{\langle 0 \rangle}$ est un sous-ensemble dense $G$-invariant de $\Zcal_N$.

Soit $x\in \Ccal$ tel que $Y\in\ngot^{\zeta>0}\mapsto Y\cdot x\in (\T_x \Zcal)^{\zeta>0}$ est un isomorphisme. Nous allons montrer 
que $n=(x,[e])\in  \Ccal_N$ appartient à l'intérieur de $G\Ccal^-_N$ en vérifiant que $\ggot\cdot n + \T_n  \Ccal_N^-=\T_n \Zcal_N$. 
Puisque $\T_n \Ccal_N^-=(\T_n \Zcal_N)^{\zeta\leq 0}$, il suffit de s'assurer que $(\T_n \Zcal_N)^{\zeta>0}\subset \ggot\cdot n$. 
Nous avons la décomposition $(\T_n \Zcal_N)^{\zeta>0}=(\T_x \Zcal)^{\zeta>0}\oplus \ggot^{\gamma>0}\cdot [e]$. 
Ainsi, pour tout $v\in (\T_n \Zcal_N)^{\gamma>0}$, il existe $X\in\ggot^{\zeta>0}$ tel que $v-X\cdot(x,[e])\in (\T_x  \Zcal_N)^{\zeta>0}$.
Comme $\ngot^{\zeta>0}\simeq (\T_x  \Zcal)^{\zeta>0}$, il existe donc $Y\in \ngot^{\zeta>0}$ tel que $v-X\cdot(x,[e])=Y\cdot x$.
L'algèbre de Lie $\ngot$ est contenue dans l'algèbre de Lie du sous-groupe parabolique $P(-\xi)$ :
cela implique que $Y\cdot [e]=0$. Enfin, nous avons prouvé que $v=(X+Y)\cdot(x,[e])\in  \ggot\cdot n$. 
\end{proof}

\medskip

On peut maintenant terminer preuve de la proposition \ref{prop:delta-p-delta-inf-RP}.

Soit $n\in \Ccal^-_N\cap (\Zcal_N)_{\langle 0 \rangle}$, et soit $n_\zeta\in\Ccal_N$ la limite $\lim_{t\to\infty}e^{t\zeta} n$. 
Soit $P_G(\zeta):=\Pbb(\zeta)\cap G$ le sous-groupe parabolique de $G$ associé à $\zeta$ (voir (\ref{eq:parabolique-G})). 
Puisque $G=KP_G(\zeta)$, le fait que $n\in (\Zcal_N)_{\langle 0 \rangle}$ signifie que 
$\overline{P_G(\zeta)n}\cap(\Phi^N_\pgot)^{-1}(0)\neq \emptyset$. 
En d'autres termes, $\min_{z\in P_G(\zeta) n}\|\Phi^N_\pgot(z)\|=0$ ce qui implique 
 $0\geq \min_{z\in P_G(\zeta) n}\langle\Phi^N_\pgot(z),\gamma\rangle$.

Considérons maintenant la fonction $t\geq 0\mapsto \langle \Phi^N_\pgot(e^{t\zeta} z),\zeta\rangle$ associée à $z\in P_G(\zeta) n$.  
Puisque  $\frac{d}{dt}\langle\Phi^N_\pgot(e^{\zeta}z),\zeta\rangle= -\|\zeta_{\Zcal_N}\|^2(e^{t\zeta}z)\leq 0$, 
nous avons
\begin{equation}\label{eq:N-ss-gamma}
\langle\Phi^N_\pgot(z),\zeta\rangle \geq \langle\Phi^N_\pgot(e^{t\zeta}z),\zeta\rangle,\qquad \forall t\geq 0.
\end{equation}

Prenons $p\in P_G(\zeta)$ et $z=pn$. Alors, la limite $\lim_{t\to\infty}e^{t\zeta} z$ est égale à $g_\zeta n_\zeta\in \Ccal_N$ où 
$g_\zeta=\lim_{t\to\infty}e^{\zeta} p \, e^{-t\zeta}\in G_\zeta$. Si nous prenons la limite  dans (\ref{eq:N-ss-gamma}) lorsque $t\to\infty$, nous obtenons
$$
\langle\Phi^N_\pgot(z),\zeta\rangle\geq \langle \Phi^N_\pgot(g_\zeta n_\zeta),\zeta\rangle= \langle \Phi^N_\pgot(\Ccal_N),\zeta\rangle=
\langle \Phi_\pgot(\Ccal),\zeta\rangle-\langle \xi,\zeta\rangle,\quad \forall z\in P_G(\zeta) n.
$$
Nous obtenons finalement $0\geq \min_{z\in P_G(\zeta) n}\langle\Phi^N_\pgot(z),\zeta\rangle\geq \langle \Phi_\pgot(\Ccal),\zeta\rangle-\langle \xi,\zeta\rangle$.
\end{proof}

%%%%%%%%%%%%%%%%%%%%%%%%%%%%%%%%%%%%%%
\subsection{\'Etape 3 : $\Delta^{^{\reg}}_{\rp}\subset \Delta_\pgot(\Zcal)$}\label{sec:step-3}
%%%%%%%%%%%%%%%%%%%%%%%%%%%%%%%%%%%%%%

Le but de cette section est de prouver le théorème suivant 
\begin{theorem}
Soit $\xi\in\agot^*_{+}$ satisfaisant les inégalités $\langle \xi,\zeta\rangle\geq \langle \Phi_\pgot(C),\zeta\rangle$, 
pour toute paire de Ressayre réelle régulière $(\zeta,C)$ de $\Zcal$.
Alors $\xi\in\Delta_\pgot(\Zcal)$.
\end{theorem}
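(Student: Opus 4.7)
The plan is to prove the contrapositive: assuming $\xi \in \agot^*_+$ with $\xi \notin \Delta_\pgot(\Zcal)$, I will exhibit a regular real Ressayre pair $(\zeta, C)$ of $\Zcal$ such that $\langle \xi, \zeta\rangle < \langle \Phi_\pgot(C), \zeta\rangle$. The non-regular version of this construction is already available: for any $\txi$ in the interior of the Weyl chamber $\agot^*_+$ that lies outside $\Delta_\pgot(\Zcal)$, Theorem \ref{theo:construction-RP} produces a real Ressayre pair $(\zeta_{\txi}, C)$ with $\zeta_{\txi} = \txi' - \txi$, where $\txi'$ is the orthogonal projection of $\txi$ onto $\Delta_\pgot(\Zcal)$. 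The argument already used in Step 1 of Section \ref{sec:preuve-th-real-RP} shows that whenever $\txi$ is close enough to $\xi$, this pair satisfies the desired strict inequality. The whole point is therefore to choose $\txi$ so that the output of Theorem \ref{theo:construction-RP} is in addition \emph{regular} in the sense of Definition \ref{def:ressayre-pair}: that is, $\zeta_{\txi}$ must be rational and $\dim_\pgot(C \cap \Zcal) - \dim_\pgot(\Zcal)$ must lie in $\{0,1\}$.

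Combining the O'Shea-Sjamaar identification (\ref{eq:OSS}) with Kirwan's convexity theorem \ref{theo:convexe}, $\Delta_\pgot(\Zcal)$ is a closed, convex, locally polyhedral subset of $\agot^*_+$, and its supporting hyperplanes are rational because the Kirwan-Ness types $\Bcal_\ugot$ belong to the rational lattice $\tgot^*_\Q$. Since $\xi \notin \Delta_\pgot(\Zcal)$, I would pick a facet $F$ of $\Delta_\pgot(\Zcal)$ strictly separating $\xi$ from $\Delta_\pgot(\Zcal)$, then perturb $\xi$ to a rational element $\txi$ in the interior of $\agot^*_+$, chosen close enough to $\xi$ that the separation persists and so that the projection $\txi'$ of $\txi$ onto $\Delta_\pgot(\Zcal)$ lies in the relative interior of $F$. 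Such a choice is possible because $F$, its affine span, and the chamber are all rational. With this choice $\zeta_{\txi}$ is rational and proportional to the outward normal of $F$, and the construction of Step 1 yields $\langle \xi, \zeta_{\txi}\rangle < \langle \Phi_\pgot(C), \zeta_{\txi}\rangle$ for the component $C$ of $M^{\zeta_{\txi}}$ supplied by Theorem \ref{theo:construction-RP}.

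It remains to establish the dimensional estimate $\dim_\pgot(C \cap \Zcal) - \dim_\pgot(\Zcal) \in \{0,1\}$; this is the decisive point. My approach is to apply the principal cross-section theorem \ref{theo:principal-cross-section} relative to the open face $\tsgot \subset \agot^*_+$ whose closure contains the relative interior of $F$. Proposition \ref{prop:minimal-stabilizer} identifies $\dim_\pgot(\Zcal)$ with the dimension of the generic stabilizer algebra $\hgot = \overrightarrow{\Pi}^\perp \oplus \R{\tsgot}^\perp \oplus \qgot_{\tsgot}$, where $\Pi$ is the affine span of $\Delta_\pgot(\Zcal)$. By Lemma \ref{lem:fibre}, the fiber $\Phi_\pgot^{-1}(\txi')$ is contained in $C \cap \Zcal$; running the same stabilizer analysis on $C \cap \Zcal$ in place of $\Zcal$, the generic stabilizer on $C \cap \Zcal$ acquires exactly the additional line $\R \zeta_{\txi}$, precisely because the affine span of $F$ is a hyperplane in $\Pi$ orthogonal to $\zeta_{\txi}$. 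Sjamaar's local convexity theorem, already invoked in Lemma \ref{lem:regular-element}, provides the link between the combinatorial codimension of $F$ in $\Delta_\pgot(\Zcal)$ and this infinitesimal computation.

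The hardest step is this last dimension bookkeeping. One has to rule out a jump in which the generic $\pgot$-stabilizer on $C \cap \Zcal$ grows by more than one dimension; the fact that $F$ is a \emph{facet} of $\Delta_\pgot(\Zcal)$, and not a face of higher codimension, is exactly what prevents this. Concretely, a point $z \in C \cap \Zcal$ lying over a regular value of the restricted moment map $\Phi_\pgot : \Tcal_{\tsgot} \to \Pi$ in the relative interior of $F$ has $\pgot$-stabilizer of dimension exactly $\dim \hgot + 1$, the extra direction being $\zeta_{\txi}$. This mirrors the mechanism behind Proposition \ref{prop:minimal-stabilizer}, so the argument ultimately reduces to applying that proposition to $C \cap \Zcal$ rather than to $\Zcal$, with the codimension-one data of $F$ feeding directly into the gap $\{0,1\}$.
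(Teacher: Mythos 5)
Your toolkit is the right one — Theorem \ref{theo:construction-RP} to produce real Ressayre pairs and the generic-stabilizer analysis of Section \ref{sec:stabilisateur-gen} to get regularity — but the reduction you propose has two genuine gaps. First, you implicitly assume that any $\xi\in\agot^*_{+}$ outside $\Delta_\pgot(\Zcal)$ is separated from it by the supporting hyperplane of a \emph{facet}. That is only true when $\Delta_\pgot(\Zcal)$ has full dimension; in general it sits inside $\overline{\tsgot}\cap\Pi$, and $\xi$ may satisfy every facet inequality while failing $\xi\in\Pi$ or $\xi\in\overline{\tsgot}$. The paper has to treat these cases with two additional families of regular pairs, $(\zeta_{\tsgot},C_{\tsgot})$ and $(\zeta^\pm_l,C^\pm_l)$ (Propositions \ref{prop:gamma-tau} and \ref{prop:gamma-S}), whose regularity is proved by a different mechanism than your codimension-one count: there the gap is $0$, because $\zeta$ annihilates the principal cross-section $\Tcal_{\tsgot}$, so $K\Zcal^{\zeta}=\Zcal$ and the admissibility remark \ref{lem:admissible} applies. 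Your proposal covers none of these constraints.

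Second, even in the facet case the construction does not work as stated: one cannot in general choose a rational $\txi$ close to $\xi$ whose orthogonal projection $\txi'$ lies in the relative interior of a facet — the projection of \emph{every} point near $\xi$ may land on a face of codimension at least $2$ (think of $\xi$ in the interior of the normal cone of a vertex), or in the relative interior of $\Delta_\pgot(\Zcal)$ when the polytope is not full-dimensional. When $\txi'$ lies on a face of codimension $k\geq 2$, the direction $\zeta_{\txi}$ mixes several facet normals, the generic stabilizer along the fiber $\Phi_\pgot^{-1}(\txi')$ jumps by $k$, and the pair produced by Theorem \ref{theo:construction-RP} need not be regular, so your dimension bookkeeping collapses exactly at the step you call decisive. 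The paper avoids projecting from $\xi$ altogether: for each nontrivial face $F$ it fixes a regular value $\txi'_F\in F$ of $\Phi_\pgot:\Phi_\pgot^{-1}(F)\to F$, sets $\zeta_F=\eta_F+\zeta_{\tsgot}$ with $\eta_F$ rational, applies Theorem \ref{theo:construction-RP} to the points $\txi_F(n)=\txi'_F-\tfrac{1}{n}\zeta_F$, and gets regularity from (\ref{eq:stabilizer-F}); the strict inequality for the original $\xi$ then follows from the violated facet inequality (second point of Proposition \ref{prop:gamma-F}), not from closeness of $\txi$ to $\xi$ as in Step 1 — the two requirements you impose simultaneously (proximity to $\xi$ and projection landing in the relative interior of a facet) are in general incompatible.
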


Notre argumentation est la suivante : nous allons montrer qu'il existe une collection $(\zeta_i, \Ccal_i)_{i\in I}$ de paires de Ressayre réelles régulières 
pour lesquelles nous avons 
$$
\bigcap_{i\in I}\Big\{\xi\in\agot^*_{+}, \langle \xi,\zeta_i\rangle\geq \langle \Phi(\Ccal_i),\zeta_i\rangle\Big\}=\Delta_\pgot(\Zcal).
$$
L'ensemble $I$ sera fini lorsque $\Zcal$ est compact.

Avant de commencer la description de la collection $(\zeta_i, \Ccal_i)_{i\in I}$, rappelons le fait suivant concernant les éléments admissibles (voir \S 4.2 dans \cite{pep-ressayre}).
\begin{rem}\label{lem:admissible}
Soit $\zeta\in\agot$ un élément rationnel tel que  $K\,\Zcal^\zeta=\Zcal$. Nous avons $\dim_\pgot(\Zcal^\zeta)=\dim_\pgot(\Zcal)$, donc il existe une composante connexe 
$C\subset M^\zeta$ telle que $\dim_\pgot(C\cap\Zcal)=\dim_\pgot(\Zcal)$.
\end{rem}

%%%%%%%%%%%%%%%%%%%%%%%%%%%%%%%%%%%%%%%%%%%%%%%%%%%%%%%%%%%
\subsubsection{La paire de Ressayre  $(\zeta_{\tsgot}, \Ccal_{\tsgot})$}
%%%%%%%%%%%%%%%%%%%%%%%%%%%%%%%%%%%%%%%%%%%%%%%%%%%%%%%%%%%

Soit $\tsgot$ la plus petite face de $\agot^*_+$ telle que $\Delta_\pgot(\Zcal)\subset \overline{\tsgot}$ (voir \S \ref{sec:principal-cross-section}).
Dans cette section, nous  montrons qu'une paire de Ressayre réelle régulière  caractérise le fait que $\Delta_\pgot(\Zcal)$ est contenu dans $\overline{\tsgot}$.

Notons $h_\alpha\in \agot$ la coracine associée à une racine $\alpha\in \Sigma$ :  $h_\alpha$ est l'élément rationnel de 
$[\ggot_\alpha,\ggot_{-\alpha}]\cap\agot$ satisfaisant $\langle \alpha, X\rangle=(h_\alpha,X)$, $\forall X\in\agot$ (voir l'annexe). 
Soit $\Sigma^+\subset \Sigma$ l'ensemble des racines positives associées au choix de la chambre de Weyl $\agot^*_+$ : 
$\xi\in \agot^*_+$ si et seulement si $\langle \xi, h_\alpha\rangle\geq 0$ pour tout $\alpha\in \Sigma^+$. Soit 
$\Sigma^+_{\tsgot}\subset \Sigma^+$ l'ensemble des racines positives orthogonales à $\tsgot$ :
 $\alpha\in \Sigma^+_{\tsgot}$ si $\langle \xi, h_\alpha\rangle = 0$ pour tout $\xi\in\tsgot$.
 
\begin{definition}
Considérons le vecteur rationnel 
$$
\zeta_{\tsgot}:= -\sum_{\alpha\in\Sigma^+_{\tsgot}}h_\alpha \ \in \agot.
$$
\end{definition}

\begin{lem}L'élément $\zeta_{\tsgot}$ satisfait les propriétés suivantes :
\begin{itemize}
\item $\langle\xi,\zeta_{\tsgot}\rangle\leq 0$ pour tout $\xi\in\agot^*_{+}$.
\item Pour tout $\xi\in\agot^*_{+}$, $\langle\xi,\zeta_{\tsgot}\rangle=0$ si et seulement si $\xi\in \bar{\tsgot}$.
\item $\zeta_{\tsgot}$  agit trivialement sur la coupure principale $\Tcal_{\tsgot}$.
\item $\langle\alpha,\zeta_{\tsgot}\rangle< 0$ pour tout $\alpha\in\Sigma^+_{\tsgot}$.
\end{itemize}
\end{lem}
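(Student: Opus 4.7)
The plan is to dispatch the four properties in turn, using only the positivity of the pairings $\langle \xi, h_\alpha\rangle$ on the chamber, the combinatorial description of the face $\tsgot$, and the results on $\Tcal_{\tsgot}$ already established.

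For the first property I would simply note that by the very definition of the chamber $\agot^*_+$ one has $\langle\xi,h_\alpha\rangle\geq 0$ for every $\xi\in\agot^*_+$ and every $\alpha\in\Sigma^+$, hence
\[
\langle\xi,\zeta_{\tsgot}\rangle \;=\; -\sum_{\alpha\in\Sigma^+_{\tsgot}}\langle\xi,h_\alpha\rangle \;\leq\; 0 .
\]
For the second property, I would use that the open face $\tsgot$ is characterized as the set of $\xi\in \agot^*_+$ such that $\{\alpha\in\Sigma^+ : \langle\xi,h_\alpha\rangle=0\} = \Sigma^+_{\tsgot}$, so that $\bar{\tsgot}=\{\xi\in \agot^*_+ : \langle\xi,h_\alpha\rangle=0\ \forall\alpha\in\Sigma^+_{\tsgot}\}$. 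Combined with the non-negativity of each term $\langle\xi,h_\alpha\rangle$, the vanishing of the sum $\langle\xi,\zeta_{\tsgot}\rangle$ is equivalent to the simultaneous vanishing of each term, hence to $\xi\in\bar{\tsgot}$.

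For the third property, I invoke Lemma \ref{lem:agot-pi}, which says that $\R{\tsgot}^\perp\oplus\qgot_{\tsgot}\subset\pgot_x$ for every $x\in\Tcal_{\tsgot}$; in particular any element of $\agot$ orthogonal to $\tsgot$ acts trivially on $\Tcal_{\tsgot}$. Now by the second property applied to any $\xi\in\tsgot\subset\bar{\tsgot}$, we get $\langle\xi,\zeta_{\tsgot}\rangle=0$; hence $\zeta_{\tsgot}\in \R{\tsgot}^\perp\subset\agot$, and the claim follows.

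The main point is the fourth property, which requires strict negativity. I would rewrite, using the identification $\agot\simeq\agot^*$ furnished by the invariant inner product (so that $h_\beta$ corresponds to $\beta$ and $(h_\alpha,h_\beta)=(\alpha,\beta)$),
\[
\langle\alpha,\zeta_{\tsgot}\rangle \;=\; -\sum_{\beta\in\Sigma^+_{\tsgot}}(\alpha,\beta) \;=\; -2(\alpha,\rho_{\tsgot}),
\]
where $\rho_{\tsgot}:=\tfrac12\sum_{\beta\in\Sigma^+_{\tsgot}}\beta$. The subset $\Sigma^+_{\tsgot}$ is the positive root system of the Levi subalgebra $\ggot_{\tsgot}$ (it is closed under taking positive combinations that remain roots and is cut out by a parabolic condition), so it is itself the set of positive roots of a root subsystem. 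The classical Weyl-theoretic identity $\langle \rho_{\tsgot},\alpha^\vee\rangle\geq 1$ for every $\alpha\in\Sigma^+_{\tsgot}$ — equivalently $(\rho_{\tsgot},\alpha)>0$ — then forces $\langle\alpha,\zeta_{\tsgot}\rangle<0$, as required. The only subtle point in the argument is ensuring that $\Sigma^+_{\tsgot}$ really is the positive system of a sub-root-system (so that the classical $\rho$-lemma applies), which is exactly the standard fact that the set of roots orthogonal to a face of the Weyl chamber forms a parabolic sub-root-system.
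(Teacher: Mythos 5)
Your treatment of the first three points coincides with the paper's: positivity of the pairings $\langle\xi,h_\alpha\rangle$ on the chamber gives the inequality and the equality case, and the trivial action on $\Tcal_{\tsgot}$ is exactly the combination of $\zeta_{\tsgot}\in\R{\tsgot}^\perp$ with the lemme \ref{lem:agot-pi}, which is what the paper invokes.

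The only genuine difference is the fourth point. The paper does not cite the classical $\rho$-positivity fact; it reproves it in the annexe (lemme \ref{lem:appendix}) by the reflection argument: for a simple root $\alpha_o$ of $\Sigma^+_{\tsgot}$, the symmetry $s_{\alpha_o}$ permutes the elements of $\Sigma^+_{\tsgot}$ not proportional to $\alpha_o$ and sends the proportional ones to their negatives, whence $s_{\alpha_o}(\rho_{\tsgot})=\rho_{\tsgot}-2N\alpha_o$ with $N\geq 1$, so $(\alpha_o,\rho_{\tsgot})>0$, and then $(\alpha,\rho_{\tsgot})>0$ for every $\alpha\in\Sigma^+_{\tsgot}$. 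Your shortcut via the textbook identity $\langle\rho_{\tsgot},\alpha^\vee\rangle\geq 1$ is fine in substance, but note that here $\Sigma$ is a \emph{restricted} root system, which may be non-reduced (type $BC$), and $\rho_{\tsgot}$ is the half-sum of the positive roots counted without multiplicities; the usual statement of that identity is for reduced systems, so a word is needed to cover the case where both $\alpha$ and $2\alpha$ occur. The paper's reflection argument is written precisely to absorb this (the case ``$\beta$ proportionnel \`a $\alpha_o$''), and the strict positivity $(\rho_{\tsgot},\alpha)>0$ you need does survive in the non-reduced case, so your argument closes once this caveat is checked. What your route buys is brevity by citation; what the paper's route buys is a self-contained proof valid verbatim for non-reduced restricted root systems.
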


\begin{proof} Les deux premiers points découlent du fait que $\langle\xi,h_\alpha\rangle\geq 0$ pour tout $\xi\in\agot^*_{+}$ 
et toute racine positive $\alpha$. Le troisième point est dû au fait que  $\zeta_{\tsgot}\in \R{\tsgot}^\perp$ 
(voir le lemme \ref{lem:agot-pi}). Pour le dernier point, voir l'annexe. 
\end{proof}

%Soit $\Ccal_{\tsgot}$ l'union des composantes connexes de $\Zcal^{\zeta_{\tsgot}}$ intersectant $\Tcal_{\tsgot}$.

\begin{lem}\label{lem:gamma-s}Soit $C\subset M^{\zeta_{\tsgot}}$ telle que $C\cap \Tcal_{\tsgot}\neq\emptyset$.
\begin{itemize}
\item $\zeta_{\tsgot}$ est un élément admissible.
\item Pour tout $\xi\in\agot^*_+$, l'inégalité $\langle \xi,\zeta_{\tsgot}\rangle\geq \langle \Phi_\pgot(C),\zeta_{\tsgot}\rangle$
est équivalente à $\xi\in\bar{{\tsgot}}$.
\end{itemize}
\end{lem}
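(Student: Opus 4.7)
The plan is to assemble the statement directly from the properties of $\zeta_{\tsgot}$ established in the preceding lemma, together with the principal cross-section theorem \ref{theo:principal-cross-section} and the admissibility criterion of Remark \ref{lem:admissible}. For the first point, rationality of $\zeta_{\tsgot}$ is immediate from its definition as a sum of coracines (each $h_\alpha$ lies in $\agot_\Q$ since the invariant scalar product is rational). The key observation is that $\zeta_{\tsgot}$ acts trivially on the principal cross-section $\Tcal_{\tsgot}$, so $\Tcal_{\tsgot}\subset \Zcal^{\zeta_{\tsgot}}$ and hence $K\Tcal_{\tsgot}\subset K\Zcal^{\zeta_{\tsgot}}$. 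Since $K\Tcal_{\tsgot}$ is dense in $\Zcal$ by Theorem \ref{theo:principal-cross-section}, and $K\Zcal^{\zeta_{\tsgot}}$ is closed, we conclude $K\Zcal^{\zeta_{\tsgot}}=\Zcal$. Applying Remark \ref{lem:admissible} yields $\dim_\pgot(\Zcal^{\zeta_{\tsgot}})=\dim_\pgot(\Zcal)$, so the admissibility condition holds with difference $0$.

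For the second point, I would first compute $\langle \Phi_\pgot(C),\zeta_{\tsgot}\rangle$. Recall that $\langle \Phi_\pgot,\zeta_{\tsgot}\rangle$ is locally constant on $M^{\zeta_{\tsgot}}$, so its value on $C$ may be evaluated at any point of $C$. Pick $x\in C\cap \Tcal_{\tsgot}$ (non-empty by hypothesis); then $\Phi_\pgot(x)\in \tsgot\subset \R\tsgot$. By definition $\zeta_{\tsgot}=-\sum_{\alpha\in\Sigma^+_{\tsgot}}h_\alpha$, and for each $\alpha\in\Sigma^+_{\tsgot}$ the coracine $h_\alpha$ satisfies $\langle\beta,h_\alpha\rangle=0$ for every $\beta\in\tsgot$ (because $\alpha$ vanishes on $\tsgot$, so $h_\alpha\perp \tsgot$ with respect to the invariant product). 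Hence $\zeta_{\tsgot}\in \R\tsgot^\perp$ and $\langle \Phi_\pgot(x),\zeta_{\tsgot}\rangle = 0$.

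Once this vanishing is in hand, the inequality $\langle \xi,\zeta_{\tsgot}\rangle\geq \langle \Phi_\pgot(C),\zeta_{\tsgot}\rangle$ reduces to $\langle \xi,\zeta_{\tsgot}\rangle\geq 0$. But the preceding lemma already gave $\langle \xi,\zeta_{\tsgot}\rangle\leq 0$ for every $\xi\in\agot^*_+$, with equality precisely when $\xi\in\bar{\tsgot}$. Combining both inequalities yields $\langle \xi,\zeta_{\tsgot}\rangle = 0$, equivalent to $\xi\in\bar{\tsgot}$. There is no real obstacle here; the lemma is essentially a bookkeeping exercise that packages $\zeta_{\tsgot}$ into a regular admissible element whose associated inequality carves out exactly the affine hull constraint $\Delta_\pgot(\Zcal)\subset\bar{\tsgot}$. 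The only point requiring a moment of care is justifying that $K\Zcal^{\zeta_{\tsgot}}=\Zcal$ (not just dense), which follows from closedness of the $K$-saturation of a closed subvariety.
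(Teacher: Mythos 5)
Your proof is correct and follows essentially the same route as the paper: admissibility via $\Tcal_{\tsgot}\subset\Zcal^{\zeta_{\tsgot}}$, density of $K\Tcal_{\tsgot}$ in $\Zcal$ and Remarque \ref{lem:admissible}, then the equivalence from $\langle\Phi_\pgot(C),\zeta_{\tsgot}\rangle=0$ (evaluated at a point of $C\cap\Tcal_{\tsgot}$) combined with the first two points of the preceding lemma. Your extra remarks (rationality of the coracines, closedness of $K\Zcal^{\zeta_{\tsgot}}$) only make explicit details the paper leaves implicit.
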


\begin{proof}
Nous savons que $\Tcal_{\tsgot}\subset\Zcal^{\zeta_{\tsgot}}$ et que $K\Tcal_{\tsgot}$ est dense dans $\Zcal$. Il s'ensuit que  
$K \Zcal^{\zeta_{\tsgot}}=\Zcal$, donc $\zeta_{\tsgot}$ est un élément admissible (voir Remarque \ref{lem:admissible}).
Nous considérons maintenant l'inégalité $\langle \xi,\zeta_{\tsgot}\rangle\geq \langle \Phi_\pgot(C),\zeta_{\tsgot}\rangle$ 
pour un élément $\xi\in\agot^*_{+}$. Nous remarquons tout d'abord que $\langle \Phi_\pgot(C),\zeta_{\tsgot}\rangle=0$, car 
$C\cap \Tcal_{\tsgot}\neq\emptyset$ et les deux premiers points du lemme précédent nous indiquent que 
$\langle \xi,\zeta_{\tsgot}\rangle\geq 0$ est équivalent à $\xi\in\bar{{\tsgot}}$.
\end{proof}

Prenons $x_o\in \Tcal_{\tsgot}$ tel que $\dim(\pgot_{x_o})=\dim_\pgot(\Zcal)$, et notons $C_{\tsgot}$ 
la composante connexe de $M^{\zeta_{\tsgot}}$ contenant $x_o$.

\begin{prop}\label{prop:gamma-tau}
$(\zeta_{\tsgot}, C_{\tsgot})$ est une paire de Ressayre réelle régulière de $\Zcal$.
\end{prop}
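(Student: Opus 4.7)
The plan is to apply the characterization of Proposition \ref{prop:caracteriser-RP-reel} and reduce the task to verifying the three conditions $(A_1)$, $(A_2)$, $(A_3)$ for $(\zeta_{\tsgot}, C_{\tsgot})$. Regularity is immediate: $\zeta_{\tsgot}$ is rational as a $\Z$-linear combination of the coroots $h_\alpha$, and since $x_o\in \Tcal_{\tsgot}\subset C_{\tsgot}\cap\Zcal$ is chosen to realize the generic infinitesimal stabilizer of $\pgot\circlearrowright\Zcal$, Proposition \ref{prop:minimal-stabilizer} gives $\dim_\pgot(C_{\tsgot}\cap\Zcal)=\dim_\pgot(\Zcal)$, so the regularity offset lies in $\{0,1\}$.

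For $(A_1)$ and $(A_2)$, the strategy is to prove directly that the infinitesimal map $\rho_{x_o}^{\R,\zeta_{\tsgot}}:\Ngot^{\zeta_{\tsgot}>0}\to(T_{x_o}\Zcal)^{\zeta_{\tsgot}>0}$ is an isomorphism; the matching of dimensions and traces then follows at once, and by Remark \ref{rem:trace-reelle-versus-complexe} this real statement is equivalent to the complex conditions on $\ngot$ and $TM|_{C_{\tsgot}}$. I would use the principal cross-section Theorem \ref{theo:principal-cross-section} to decompose a neighborhood of $x_o$ in $\Zcal$ as $K\times_{K_{\tsgot}}\Tcal_{\tsgot}$, giving $T_{x_o}\Zcal = T_{x_o}\Tcal_{\tsgot}\oplus(\kgot/\kgot_{\tsgot})\cdot x_o$. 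Since $\zeta_{\tsgot}$ acts trivially on $\Tcal_{\tsgot}$ (third bullet of the lemma preceding Lemma \ref{lem:gamma-s}), the positive eigenspace sits entirely in the transverse direction $(\kgot/\kgot_{\tsgot})\cdot x_o$. Combined with the explicit generic stabilizer $\hgot = \overrightarrow{\Pi}^\perp\oplus\R{\tsgot}^\perp\oplus\qgot_{\tsgot}$ from Lemma \ref{lem:agot-pi}, the kernel of $X\mapsto X\cdot x_o$ is known precisely, and matching it against the restricted root decomposition $\Ngot^{\zeta_{\tsgot}>0}=\bigoplus_{\alpha\in\Sigma^+,\,\langle\alpha,\zeta_{\tsgot}\rangle>0}\ggot_\alpha$ should yield the bijectivity.

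For $(A_3)$, once $(A_1)$ and $(A_2)$ are established, the fundamental Lemma \ref{lem:RP-fondamental} applied with $H=\Pbb$ guarantees that $(C_{\tsgot}^-)_{\mathrm{reg}}=\pi^{-1}(C_{\tsgot}\cap(C_{\tsgot}^-)_{\mathrm{reg}})$ and that $q^\R_{\zeta_{\tsgot}}$ has open dense image, the existence of a point where the differential is invertible being supplied by the isomorphism established at $x_o$. Generic injectivity then follows from the Sard/dimension-count argument used in the proof of Proposition \ref{prop:caracteriser-RP}: $q^\R_{\zeta_{\tsgot}}$ is a holomorphic map between complex manifolds of equal dimension whose regular locus is nonempty and $\Pbb$-invariant, which forces generic one-to-oneness. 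The main obstacle will be the root-by-root bookkeeping in $(A_1)$ and $(A_2)$: one must rule out spurious positive eigenvectors of $\zeta_{\tsgot}$ on $T_{x_o}\Zcal$ beyond those coming from $\Ngot^{\zeta_{\tsgot}>0}$, exploiting crucially that $\zeta_{\tsgot}\in\R{\tsgot}^\perp$ so that only the $\R{\tsgot}^\perp$-component of each restricted root contributes to its $\zeta_{\tsgot}$-eigenvalue.
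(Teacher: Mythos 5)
Your route differs from the paper's: the paper does not try to verify $(A_1)$--$(A_3)$ by a direct computation at $x_o$. It perturbs $\txi'=\Phi_\pgot(x_o)\in\Delta_\pgot(\Zcal)\cap\tsgot$ to $\txi(n)=\txi'-\tfrac{1}{n}\zeta_{\tsgot}$, observes that for $n\gg1$ this is a regular element of $\agot^*_+$ lying outside $\Delta_\pgot(\Zcal)$ whose orthogonal projection onto $\Delta_\pgot(\Zcal)$ is $\txi'$, and then invokes Théorème \ref{theo:construction-RP} (itself resting on the Kirwan--Ness stratification of the shifted manifold $M\times(Ua)^o$, Théorème \ref{th:stratification-kirwan-bis}) together with the admissibility statement of Lemme \ref{lem:gamma-s}; your treatment of rationality and of regularity via $\dim(\pgot_{x_o})=\dim_\pgot(\Zcal)$ does agree with the paper.

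However, your proposal has two genuine gaps. First, and most seriously, $(A_3)$ does not follow from $(A_1)$, $(A_2)$ and the existence of one point where the differential of $q^\R_{\zeta_{\tsgot}}$ is invertible: by Proposition \ref{prop:caracteriser-RP} these data only make the pair \emph{infinitésimale} (the morphism is dominant); $(A_3)$ is the additional requirement that the generic fibre be a singleton, and in the proof of Proposition \ref{prop:caracteriser-RP} injectivity on $N_{reg}$ is deduced \emph{from} $(A_3)$, not conversely -- a dominant map of degree two satisfies everything you invoke (equal dimensions, nonempty $\Pbb$-invariant regular locus, Lemme \ref{lem:RP-fondamental} with $H=\Pbb$). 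The birationality is precisely the global input that the stratification, channelled through Théorème \ref{theo:construction-RP}, supplies. Second, your verification of $(A_1)$, $(A_2)$ is only sketched and the stated ingredients do not suffice: the cross-section decomposition $T_{x_o}\Zcal=T_{x_o}\Tcal_{\tsgot}+\kgot\cdot x_o$ together with the triviality of $\zeta_{\tsgot}$ on $\Tcal_{\tsgot}$ gives at best $(T_{x_o}\Zcal)^{\zeta_{\tsgot}>0}\subset\rho^\R_{x_o}\big(\ggot^{\zeta_{\tsgot}>0}\big)$, but $\ggot^{\zeta_{\tsgot}>0}$ is strictly larger than $\Ngot^{\zeta_{\tsgot}>0}$ (it contains $\ggot_{-\alpha}$ for every $\alpha\in\Sigma^+_{\tsgot}$, since $\langle\alpha,\zeta_{\tsgot}\rangle<0$), so surjectivity of $\rho^{\R,\zeta_{\tsgot}}_{x_o}$ is not established; and injectivity amounts to $\Ngot^{\zeta_{\tsgot}>0}\cap\ggot_{x_o}=0$, which knowledge of $\pgot_{x_o}=\hgot$ alone does not give, because $\ggot_{x_o}$ need not be $\theta$-stable and can contain nilpotent elements. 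Closing these holes essentially requires the global information ($\Delta_\pgot(\Zcal)\subset\overline{\tsgot}$ plus the stratification) that the paper packages in Théorème \ref{theo:construction-RP}, which is why the published proof proceeds by the perturbation $\txi(n)$ rather than by a pointwise root-by-root argument.
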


\begin{proof}En utilisant l'identification $\agot\simeq \agot^*$, nous voyons $\zeta_{\tsgot}$ comme un élément rationnel de 
${\tsgot}^*$ orthogonal à ${\tsgot}$. Notons $\txi'=\Phi_\pgot(x)\in\Delta_\pgot(\Zcal)\cap{\tsgot}$, définissons pour tout entier $n\geq 1$, 
l'élément $\txi(n):=\txi'-\frac{1}{n}\zeta_{\tsgot}$. 
Nous remarquons que pour $n$ suffisamment grand
\begin{enumerate}
\item $\txi(n)$ est un élément régulier de la chambre de Weyl $\agot^*_+$,
\item $\txi(n)\notin \Delta_\pgot(\Zcal)$,
\item $\txi'$ est égal à la projection orthogonale de $\txi(n)$ sur $\Delta_\pgot(\Zcal)$.
\end{enumerate}
Nous pouvons donc exploiter les résultats de \S \ref{sec:construction-RP} avec les éléments $\txi(n)$ pour $n>>1$. 
La proposition \ref{theo:construction-RP} et le lemme \ref{lem:gamma-s} nous indiquent que $(\gamma_{\tsgot}, C_{\tsgot})$ 
est une paire de Ressayre réelle. Celle-ci est régulière car il existe $x_o\in C_{\tsgot}$ tel que $\dim(\pgot_{x_o})=\dim_\pgot(\Zcal)$
\end{proof}

%%%%%%%%%%%%%%%%%%%%%%%%%%%%%%%%%%%%%
\subsubsection{Les paires de Ressayre $(\zeta^\pm_l, C^\pm_l)$}
%%%%%%%%%%%%%%%%%%%%%%%%%%%%%%%%%%%%%

Soit $\R{\tsgot}\subset\agot^*$ le sous-espace vectoriel rationnel généré par la face ${\tsgot}$. Le polytope convexe fermé $\Delta_\pgot(\Zcal)$ engendre 
un sous-espace affine $\Pi$ de $\R{\tsgot}$. Dans cette section, nous montrons qu'une famille finie de paires de Ressayre réelles décrit le fait que 
$\Delta_\pgot(\Zcal)$, considéré comme un sous-ensemble de $\R{\tsgot}$, est contenu dans le sous-espace affine $\Pi$.

Dans cette section, nous utiliserons les identifications $\pgot^*\simeq\pgot$ et $\agot\simeq\agot^*$ données par le produit scalaire invariant $(-,-)$ (voir l'annexe).

Nous commençons par les décompositions orthogonales $\pgot=\agot\oplus\qgot$ et $\agot=\R{\tsgot}\oplus\R{\tsgot}^\perp$. 
Il s'ensuit que $\pgot_{\tsgot}=\agot\oplus\qgot_{\tsgot}$. Soit $\overrightarrow{\Pi}^\perp$ l'orthogonal de $\overrightarrow{\Pi}$ dans $\R{\tsgot}$.
On a montré au lemme \ref{lem:agot-pi} que l'égalité $\agot_x=\overrightarrow{\Pi}^\perp\oplus\R{\tsgot}^\perp$ est vérifiée sur un sous-ensemble 
ouvert dense de $\Tcal_{\tsgot}$. Cela permet de voir que $\overrightarrow{\Pi}$ est un sous-espace vectoriel rationnel de $\agot$. 
Soit $(\eta_l)_{l\in L}$ une base rationnelle de $\overrightarrow{\Pi}^\perp$. Nous considérons alors les éléments rationnels
$$
\zeta^\pm_l:= \pm \eta_l +\zeta_{\tsgot}\ \in\ \overrightarrow{\Pi}^\perp\oplus\R{\tsgot}^\perp,\quad l\in L.
$$

Grâce au lemme \ref{lem:agot-pi}, nous savons que $\Tcal_{\tsgot}\subset \Zcal^{\zeta^\pm_l}$, $\forall l\in L$.  
Prenons $x_o\in \Tcal_{\tsgot}$ tel que $\dim(\pgot_{x_o})=\dim_\pgot(\Zcal)$, et notons $C_{l}^\pm$ 
la composante connexe de $M^{\zeta^\pm_l}$ contenant $x_o$.

\begin{lem}\label{lem:gamma-S}
L'ensemble des éléments $\xi\in\R{\tsgot}$ satisfaisant les inégalités
\begin{equation}\label{eq:gamma-S}
\langle \xi,\zeta_l^\pm\rangle\geq \langle \Phi_\pgot(C^\pm_l),\zeta^\pm_l\rangle, \quad \forall l\in L
\end{equation}
correspond au sous-espace affine $\Pi$.
\end{lem}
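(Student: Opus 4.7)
\textbf{Plan pour la preuve du lemme \ref{lem:gamma-S}.} L'idée est de réduire les inégalités, par un simple calcul d'algèbre linéaire, à l'annulation des produits scalaires $\langle \xi-\txi',\eta_l\rangle$, où $\txi' := \Phi_\pgot(x_o)$, puis d'utiliser que $(\eta_l)_{l\in L}$ est une base de $\overrightarrow{\Pi}^\perp$ dans $\R{\tsgot}$.

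D'abord je calculerais la valeur $\langle \Phi_\pgot(C^\pm_l),\zeta^\pm_l\rangle$. Par construction, $C^\pm_l$ est la composante connexe de $M^{\zeta^\pm_l}$ contenant $x_o\in \Tcal_{\tsgot}$, et la fonction $\langle \Phi_\pgot,\zeta^\pm_l\rangle$ est localement constante sur $M^{\zeta^\pm_l}$ ; donc $\langle \Phi_\pgot(C^\pm_l),\zeta^\pm_l\rangle = \langle \txi',\zeta^\pm_l\rangle$. Comme $\txi'\in \Delta_\pgot(\Zcal)\cap \tsgot\subset \R{\tsgot}$ et que $\zeta_{\tsgot}\in \R{\tsgot}^\perp$, on obtient $\langle \txi',\zeta_{\tsgot}\rangle = 0$, et par conséquent $\langle \Phi_\pgot(C^\pm_l),\zeta^\pm_l\rangle = \pm\langle \txi',\eta_l\rangle$. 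De la même façon, pour tout $\xi\in \R{\tsgot}$ on a $\langle \xi,\zeta^\pm_l\rangle = \pm \langle \xi,\eta_l\rangle$.

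Les inégalités du système (\ref{eq:gamma-S}) sont donc équivalentes, pour $\xi\in\R{\tsgot}$, à
\begin{equation*}
\pm\langle \xi-\txi',\eta_l\rangle \geq 0,\qquad \forall l\in L,\ \forall \pm,
\end{equation*}
et la présence simultanée des deux signes $\pm$ force la famille d'égalités $\langle \xi-\txi',\eta_l\rangle=0$ pour tout $l\in L$. Comme $(\eta_l)_{l\in L}$ est une base de $\overrightarrow{\Pi}^\perp\subset \R{\tsgot}$, cela équivaut à $\xi-\txi'\in (\overrightarrow{\Pi}^\perp)^\perp = \overrightarrow{\Pi}$, c'est-à-dire à $\xi\in \txi'+\overrightarrow{\Pi}=\Pi$. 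Ceci établit les deux inclusions et achève la preuve.

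Il n'y a pas de véritable obstacle dans cette étape : tous les ingrédients géométriques (le fait que $x_o\in \Tcal_{\tsgot}\subset \Zcal^{\zeta^\pm_l}$, la valeur de $\txi'$, les orthogonalités $\zeta_{\tsgot}\perp \R{\tsgot}$ et $\eta_l\perp \overrightarrow{\Pi}$) ont déjà été établis dans le lemme \ref{lem:agot-pi} et la définition des $\zeta^\pm_l$ ; le lemme est une conséquence purement linéaire de ces faits.
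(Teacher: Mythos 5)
Votre preuve est correcte et suit essentiellement la même démarche que celle du texte : on évalue $\langle \Phi_\pgot(C^\pm_l),\zeta^\pm_l\rangle=\pm\langle\txi',\eta_l\rangle$ via $\txi'=\Phi_\pgot(x_o)$ et l'orthogonalité $\zeta_{\tsgot}\in\R{\tsgot}^\perp$, puis les inégalités bilatères forcent $\langle\xi-\txi',\eta_l\rangle=0$ pour tout $l$, c'est-à-dire $\xi-\txi'\in\overrightarrow{\Pi}$ et donc $\xi\in\Pi$. Votre rédaction est simplement un peu plus explicite (constance locale de $\langle\Phi_\pgot,\zeta^\pm_l\rangle$ sur $M^{\zeta^\pm_l}$ et appartenance de $\txi'$ à $\Pi$), ce que le texte laisse implicite.
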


\begin{proof}L'élément $\txi'_o=\Phi_\pgot(x_o)$ appartient à $\Pi$. 
Puisque $\langle\xi,\zeta_{\tsgot}\rangle=0, \forall \xi\in\R{\tsgot}$, les inégalités (\ref{eq:gamma-S}) sont équivalentes à
$\pm\langle \xi-\txi'_o,\eta_l\rangle\geq 0,  \forall l\in L$ : en d'autres termes, $\xi-\txi'_o\in \overrightarrow{\Pi}$, donc
$\xi\in\Pi$.
\end{proof}

\begin{prop}\label{prop:gamma-S}
Pour tout $l\in L$, $(\zeta^\pm_l, C^\pm_l)$ est une paire de Ressayre réelle régulière de $\Zcal$.
\end{prop}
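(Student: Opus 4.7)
The plan is to adapt the argument of Proposition \ref{prop:gamma-tau} to each index $l\in L$ and each choice of sign. Using the inner product identification $\agot\simeq\agot^*$, view $\zeta^\pm_l$ as an element of $\agot^*$, set $\txi'_o:=\Phi_\pgot(x_o)\in\tsgot$, and for $n\geq 1$ define
\begin{equation*}
\txi^\pm_l(n):=\txi'_o-\tfrac{1}{n}\zeta^\pm_l.
\end{equation*}
The key is to check that, for $n$ large enough, $\txi^\pm_l(n)$ lies in the interior of the Weyl chamber $\agot^*_+$, lies outside $\Delta_\pgot(\Zcal)$, and has orthogonal projection on $\Delta_\pgot(\Zcal)$ equal to $\txi'_o$. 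Granted these three properties, Theorem \ref{theo:construction-RP} applied with $\tilde{a}=\txi^\pm_l(n)$ produces a real Ressayre pair $(\tfrac{1}{n}\zeta^\pm_l,C)$ for any connected component $C\subset M^{\zeta^\pm_l}$ intersecting $\Zcal\cap\Phi_\pgot^{-1}(\txi'_o)$; rescaling by $n$, and using that $x_o\in C^\pm_l\cap\Zcal\cap\Phi_\pgot^{-1}(\txi'_o)$, shows that $(\zeta^\pm_l,C^\pm_l)$ is itself a real Ressayre pair.

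For chamber membership: when $\alpha\in\Sigma^+\setminus\Sigma^+_\tsgot$, we have $(h_\alpha,\txi'_o)>0$ since $\txi'_o\in\tsgot$, so the inequality $(h_\alpha,\txi^\pm_l(n))>0$ persists for large $n$; when $\alpha\in\Sigma^+_\tsgot$, we have $(h_\alpha,\txi'_o)=0$ and $(h_\alpha,\eta_l)=0$ (because $\eta_l\in\R\tsgot$ is orthogonal to $h_\alpha$), hence $(h_\alpha,\txi^\pm_l(n))=-\tfrac{1}{n}(h_\alpha,\zeta_\tsgot)>0$ by the last point of the lemma characterising $\zeta_\tsgot$. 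To see at once that $\txi^\pm_l(n)\notin\Delta_\pgot(\Zcal)$ and that its projection is $\txi'_o$, observe that the decomposition $\zeta^\pm_l=\pm\eta_l+\zeta_\tsgot$ places the first summand in $\overrightarrow{\Pi}^\perp\subset\R\tsgot$ and the second in $\R\tsgot^\perp\subset\agot^*$; this nonzero vector is therefore orthogonal in $\agot^*$ to the whole affine space $\Pi\supset\Delta_\pgot(\Zcal)$, and Pythagoras gives
\begin{equation*}
\|v-\txi^\pm_l(n)\|^2=\|v-\txi'_o\|^2+\|\txi'_o-\txi^\pm_l(n)\|^2,\qquad \forall v\in\Delta_\pgot(\Zcal),
\end{equation*}
uniquely minimised at $v=\txi'_o\in\Delta_\pgot(\Zcal)$; in particular $\txi^\pm_l(n)\notin\Pi$.

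Regularity is then immediate: the point $x_o\in C^\pm_l\cap\Zcal$ realises $\dim(\pgot_{x_o})=\dim_\pgot(\Zcal)$, forcing $\dim_\pgot(C^\pm_l\cap\Zcal)=\dim_\pgot(\Zcal)$, while $\zeta^\pm_l$ is rational by construction. The delicate point — and the step requiring most care — is the initial selection of $x_o$ as a regular value of $\Phi_\pgot\vert_{\Tcal_\tsgot}$ via Lemma \ref{lem:regular-element} and Lemma \ref{lem:agot-pi}, which simultaneously secures $\Phi_\pgot(x_o)\in\tsgot$ and the minimality $\dim(\pgot_{x_o})=\dim_\pgot(\Zcal)$; once this is in place, the rest reduces to the orthogonality relations among $\overrightarrow{\Pi}^\perp$, $\R\tsgot^\perp$ and the coroots $h_\alpha$ laid out above.
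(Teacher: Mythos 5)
Your argument is correct and follows essentially the same route as the paper: shift $\txi'_o=\Phi_\pgot(x_o)$ to $\txi^\pm_l(n)=\txi'_o-\tfrac{1}{n}\zeta^\pm_l$, verify (regularity in the chamber via the coroot computation, non-membership in $\Delta_\pgot(\Zcal)$, and that $\txi'_o$ is the orthogonal projection via the orthogonality of $\zeta^\pm_l$ to $\overrightarrow{\Pi}$), then invoke Theorem \ref{theo:construction-RP} and conclude regularity from $\dim(\pgot_{x_o})=\dim_\pgot(\Zcal)$ together with the rationality of $\eta_l$ and $\zeta_{\tsgot}$. The only difference is that you spell out the three verifications that the paper leaves as remarks, which is fine.
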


\begin{proof} La preuve suit les mêmes lignes que la preuve de la proposition \ref{prop:gamma-tau}.  L'élément $\txi'_o$ est 
contenu dans l'image $\Phi_\pgot(C^\pm_l)$. Nous considérons les éléments $\txi_l^\pm(n):=\txi'_o-\frac{1}{n}\zeta_l^\pm$ pour $n\geq 1$.
Nous remarquons que pour $n$ suffisamment grand

\begin{enumerate}
\item $\txi_l^\pm(n)$ est un élément régulier de la chambre $\agot^*_+$,
\item $\txi_l^\pm(n)\notin \Delta_\pgot(\Zcal)$,
\item $\txi'_o$ est égal à la projection orthogonale de $\xi_l^\pm(n)$ sur $\Delta_\pgot(\Zcal)$.
\end{enumerate}
Nous pouvons donc exploiter les résultats de \S \ref{sec:construction-RP} avec les éléments $\txi_l^\pm(n)$ pour $n>>1$. Proposition
\ref{theo:construction-RP} et le lemme \ref{lem:gamma-S} nous indiquent que $(\zeta^\pm_l, C^\pm_l)$ 
est une paire de Ressayre réelle $\forall l\in L$. Ces dernières sont régulières car 
$\dim_\pgot(\Zcal)=\dim_\pgot(C^\pm_l\cap \Zcal)$, $\forall l\in L$. 
\end{proof}

%%%%%%%%%%%%%%%%%%%%%%%%%%%%%%%%%%
\subsubsection{Les paires de Ressayre $(\zeta_F, \Ccal_F)$}
%%%%%%%%%%%%%%%%%%%%%%%%%%%%%%%%%%

Dans cette section, nous montrons que le polytope $\Delta_\pgot(\Zcal)$, considéré comme un sous-ensemble de l'espace affine $\Pi$, est l'intersection 
du cône $\Pi\cap\agot^*_{+}$ avec un ensemble de demi-espaces de $\Pi$ paramétrés par une famille de paires de Ressayre réelles régulières.

\begin{definition}
Une face ouverte $F$ de $\Delta_\pgot(\Zcal)$ est dite {\em non triviale} si $F\subset{\tsgot}$. 
Nous notons par $\Fcal(\Zcal)$ l'ensemble des faces ouvertes non triviales de $\Delta_\pgot(\Zcal)$.
\end{definition}

Soit $F\in \Fcal(\Zcal)$. Il existe\footnote{Ici, nous considérons $\overrightarrow{\Pi}$ comme un sous-espace de $\agot$ grâce à l'identification $\agot^*\simeq \agot$.} $\eta_F\in\overrightarrow{\Pi}$ tel que l'espace affine généré par $F$ est 
$\Pi_F=\{\xi\in \Pi,\langle\xi,\eta_F\rangle=\langle\xi_F,\eta_F\rangle\}$ pour tout $\xi_F\in F$. Le vecteur $\eta_F$ est choisi de telle sorte que
 $\Delta_\pgot(\Zcal)$ soit contenu dans le demi-espace $\{\xi\in \Pi,\langle\xi,\eta_F\rangle\geq\langle\xi_F,\eta_F\rangle\}$.

Par définition de l'ensemble $\Fcal(\Zcal)$, on a la description suivante du polytope $\Delta_\pgot(\Zcal)$ :
\begin{equation}\label{eq:description-delta-K-M}
\Delta_\pgot(\Zcal)= \bigcap_{F\in \Fcal(\Zcal)}\Big\{\xi\in \Pi,\langle\xi,\eta_F\rangle\geq\langle\xi_F,\eta_F\rangle\Big\}\hspace{2mm}\bigcap\hspace{2mm}\agot^*_{+}.
\end{equation}

La fonction $\langle\Phi_\pgot,\eta_F\rangle: \Tcal_{\tsgot}\to\R$ est localement constante sur $\Tcal_{\tsgot}^{\eta_F}$ et prend sa valeur minimale sur
 $\Phi^{-1}_\pgot(F)\subset \Tcal_{\tsgot}$, donc $\Phi^{-1}_\pgot(F)$ est un sous-ensemble ouvert de la sous-variété $\Tcal_{\tsgot}^{\eta_F}$. La fonction 
$\Phi_\pgot:\Phi^{-1}_\pgot(F)\to F$ est surjective, donc elle admet une valeur régulière $\txi_F'\in F$. Pour tout $x\in \Phi^{-1}_\pgot(\txi'_F)$, 
la différentielle $\T_{x}\Phi_\pgot: \T_{x}\Tcal_{\tsgot}^{\eta_F}\to \overrightarrow{F}$ est surjective, donc
\begin{equation}\label{eq:stabilizer-F}
\agot_{x}= \R\eta_F\oplus \overrightarrow{\Pi}^\perp\oplus\R{\tsgot}^\perp, \quad \forall x\in \Phi^{-1}_\pgot(\txi'_F).
\end{equation}
Comme les sous-espaces vectoriels $\agot_{x}$, $\overrightarrow{\Pi}^\perp$, $\R{\tsgot}^\perp$ sont rationnels, on voit que le vecteur $\eta_F$ peut être pris rationnel.

Nous considérons maintenant les éléments rationnels
$$
\zeta_F=\eta_F+\zeta_{\tsgot},\quad F\in\Fcal(\Zcal).
$$

Soit $C_F$ une composante connexe de $\Zcal^{\zeta_F}$ intersectant $\Phi^{-1}_\pgot(\txi'_F)$ (ce dernier vu comme sous-ensemble de 
$\Phi^{-1}_\pgot(F)\subset \Tcal_{\tsgot}$). L'identité
(\ref{eq:stabilizer-F}) et les résultats de la section \ref{sec:stabilisateur-gen} montrent que $\dim_\pgot(C_F)=\dim_\pgot(\Zcal)+1$.

\begin{prop}\label{prop:gamma-F}
\begin{itemize}
\item Pour tout $F\in \Fcal(\Zcal)$, le couple $(\zeta_F, C_F)$ est une paire de Ressayre réelle régulière sur $\Zcal$.
\item L'ensemble des éléments $\xi\in\Pi\cap \agot^*_{+}$ satisfaisant les inégalités
\begin{equation}\label{eq:gamma-F}
\langle \xi,\zeta_F\rangle\geq \langle \Phi_\pgot(C_F),\zeta_F\rangle, \quad \forall F\in \Fcal(\Zcal)
\end{equation}
correspond à $\Delta_\pgot(\Zcal)$.
\end{itemize}
\end{prop}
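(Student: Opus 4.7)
Plan de preuve proposé. Nous suivons la même stratégie que celle déployée pour les propositions \ref{prop:gamma-tau} et \ref{prop:gamma-S}. Pour chaque face $F\in\Fcal(\Zcal)$, on introduit la famille d'éléments $\txi_F(n):=\txi'_F-\tfrac{1}{n}\zeta_F$ pour $n\geq 1$, et on vérifie que, pour $n$ assez grand, (1) $\txi_F(n)$ est un élément régulier de la chambre $\agot^*_+$, (2) $\txi_F(n)\notin\Delta_\pgot(\Zcal)$, et (3) $\txi'_F$ coïncide avec la projection orthogonale de $\txi_F(n)$ sur $\Delta_\pgot(\Zcal)$. Le théorème \ref{theo:construction-RP} permettra alors de conclure directement que $(\zeta_F,C_F)$ est une paire de Ressayre réelle, la régularité découlant de la rationalité de $\zeta_F$ et de l'identité $\dim_\pgot(C_F)=\dim_\pgot(\Zcal)+1$ déjà établie dans le paragraphe précédant l'énoncé.

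Les trois conditions reposent sur la décomposition orthogonale $\zeta_F=\eta_F+\zeta_{\tsgot}$, avec $\eta_F\in\R\tsgot$ et $\zeta_{\tsgot}\in\R\tsgot^\perp$, et sur deux propriétés déjà démontrées~: d'une part, les coracines $h_\alpha$ pour $\alpha\in\Sigma^+_{\tsgot}$ sont orthogonales à $\R\tsgot$, d'où $\langle\txi_F(n),h_\alpha\rangle=-\tfrac{1}{n}\langle\zeta_{\tsgot},h_\alpha\rangle>0$ grâce à l'inégalité $\langle\alpha,\zeta_{\tsgot}\rangle<0$; d'autre part, pour $\alpha\notin\Sigma^+_{\tsgot}$, la stricte positivité de $\langle\txi'_F,h_\alpha\rangle$ est préservée par une petite perturbation, ce qui donne la régularité (1). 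La condition (3) se ramène à vérifier $\langle\zeta_F,\xi-\txi'_F\rangle\geq 0$ pour tout $\xi\in\Delta_\pgot(\Zcal)$; comme $\xi-\txi'_F\in\R\tsgot$, la contribution de $\zeta_{\tsgot}$ s'annule et il ne reste que l'inégalité $\langle\eta_F,\xi-\txi'_F\rangle\geq 0$, qui n'est autre que la définition du demi-espace associé à $F$ dans (\ref{eq:description-delta-K-M}). La condition (2) s'en déduit immédiatement en distinguant selon que $\zeta_{\tsgot}\neq 0$ (auquel cas $\txi_F(n)\notin\R\tsgot\supset\Delta_\pgot(\Zcal)$) ou $\zeta_{\tsgot}=0$ (auquel cas $\langle\eta_F,\txi_F(n)\rangle<\langle\xi_F,\eta_F\rangle$).

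Pour la seconde assertion, on simplifie les inégalités (\ref{eq:gamma-F}) en utilisant que $\xi\in\Pi\subset\R\tsgot$ et $\zeta_{\tsgot}\perp\R\tsgot$~: on obtient $\langle\xi,\zeta_F\rangle=\langle\xi,\eta_F\rangle$. De plus, $\langle\Phi_\pgot,\zeta_F\rangle$ est localement constant sur $C_F\subset M^{\zeta_F}$, et $C_F$ intersecte $\Phi_\pgot^{-1}(\txi'_F)$; on a donc $\langle\Phi_\pgot(C_F),\zeta_F\rangle=\langle\txi'_F,\eta_F\rangle=\langle\xi_F,\eta_F\rangle$ puisque $\txi'_F\in F$ et $\zeta_{\tsgot}\perp\R\tsgot$. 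Les inégalités (\ref{eq:gamma-F}), conjuguées à l'appartenance $\xi\in\Pi\cap\agot^*_+$, se réécrivent alors exactement comme la description (\ref{eq:description-delta-K-M}) de $\Delta_\pgot(\Zcal)$, ce qui termine la preuve.

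L'obstacle principal attendu est la vérification propre de la condition (3) de projection orthogonale, qui articule les différents sous-espaces en jeu ($\R\tsgot$, $\R\tsgot^\perp$, $\overrightarrow{\Pi}$) et la structure en demi-espaces de $\Delta_\pgot(\Zcal)$; une fois cette condition établie, l'invocation du théorème \ref{theo:construction-RP} et la réécriture finale des inégalités sont automatiques.
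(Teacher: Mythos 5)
Votre preuve est correcte et suit essentiellement la même démarche que celle du texte : introduction des éléments $\txi_F(n)=\txi'_F-\tfrac{1}{n}\zeta_F$, vérification des trois conditions (régularité, non-appartenance à $\Delta_\pgot(\Zcal)$, projection orthogonale) pour invoquer le théorème \ref{theo:construction-RP}, régularité de la paire via la rationalité et $\dim_\pgot(C_F)=\dim_\pgot(\Zcal)+1$, puis réécriture des inégalités (\ref{eq:gamma-F}) en les demi-espaces de (\ref{eq:description-delta-K-M}) grâce à $\zeta_{\tsgot}\perp\R\tsgot$. Vous explicitez simplement les vérifications que le texte laisse implicites en renvoyant à la proposition \ref{prop:gamma-tau}.
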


\begin{proof} La preuve suit les lignes de la preuve de la proposition \ref{prop:gamma-tau}. 
Considérons les éléments $\txi_F(n):=\txi'_F -\frac{1}{n}\zeta_F$ pour $n\geq 1$.
Nous remarquons que pour $n$ suffisamment grand
\begin{enumerate}
\item $\txi_F(n)$ est un élément régulier de la chambre de Weyl,
\item $\txi_F(n)\notin \Delta_\pgot(\Zcal)$,
\item $\txi'_F$ est la projection orthogonale de $\txi_F(n)$ sur $ \Delta_\pgot(\Zcal)$.
\end{enumerate}
Nous pouvons donc exploiter les résultats de \S \ref{sec:construction-RP} avec les éléments $\txi_F(n)$ pour $n>>1$. La proposition
\ref{theo:construction-RP} nous assure que $(\zeta_F, C_F)$ est une paire de Ressayre réelle sur $\Zcal$. Celles-ci sont régulières car 
les vecteurs $\eta_F$ sont rationnels et que $\dim_\pgot(C_F)=\dim_\pgot(\Zcal)+1$, $\forall F\in \Fcal(\Zcal)$. 

Remarquons tout d'abord que les relations $\langle\xi,\eta_F\rangle\geq\langle\xi_F,\eta_F\rangle$ et 
$\langle \xi,\zeta_F\rangle\geq \langle \Phi_\pgot(C_F),\zeta_F\rangle$ 
sont équivalentes pour tout $\xi\in\Pi$. Ainsi le second point est une conséquence de (\ref{eq:description-delta-K-M}). 
\end{proof}

%%%%%%%%%%%%%%%%%%%%%%%%%%%%%%%%%%%%%%%%%%%%
%%%%%%%%%%%%%%%%%%%%%%%%%%%%%%%%%%%%%%%%%%%%
%%%%%%%%%%%%%%%%%%%%%%%%%%%%%%%%%%%%%%%%%%%%
\chapter{Calcul de Schubert}
%%%%%%%%%%%%%%%%%%%%%%%%%%%%%%%%%%%%%%%%%%%%
%%%%%%%%%%%%%%%%%%%%%%%%%%%%%%%%%%%%%%%%%%%%
%%%%%%%%%%%%%%%%%%%%%%%%%%%%%%%%%%%%%%%%%%%%

Cette section est consacrée au calcul de Schubert \cite{Fulton97,Fulton13,Manivel-98}. Entre autre, nous expliquerons le lien entre les paires de Ressayre et 
la notion de Lévi-mobilité introduite par Belkale et Kumar \cite{BK06}. Dans le cadre avec involution, nous décrirons les cellules de Bruhat qui admettent un point fixe par rapport à l'involution anti-holomorphe.

%%%%%%%%%%%%%%%%%%%%%%%%%%%%%%%%%%%%%%%%%%%%%%%%%%%%%%
%%%%%%%%%%%%%%%%%%%%%%%%%%%%%%%%%%%%%%%%%%%%%%%%%%%%%%
\section{Cohomologie des vari\'et\'es de drapeaux}\label{sec:cohomology-flag}
%%%%%%%%%%%%%%%%%%%%%%%%%%%%%%%%%%%%%%%%%%%%%%%%%%%%%%
%%%%%%%%%%%%%%%%%%%%%%%%%%%%%%%%%%%%%%%%%%%%%%%%%%%%%%

Nous travaillons ici avec un groupe compact connexe $U$ muni d'un tore maximal $T$. On note $W=N(T)/T$ le groupe de Weyl. 
Le choix d'une chambre de Weyl $\tgot^*_+$ 
détermine un système de racines positives $\Rgot^+$ et un sous-groupe de Borel $B\subset U_\C$.

 \`A chaque $\gamma\in\tgot$, on associe le sous-groupe parabolique $P(\gamma)$ et la variété projective $\Fcal_\gamma=U_\C/P(\gamma)$ de dimension égale à 
$\dim_\C \Fcal_\gamma=\sharp\{\alpha\in\Rgot, (\alpha,\gamma)>0\}$.

Notons $W^\gamma\subset W$ le sous-groupe qui fixe $\gamma$. Grâce à la décomposition de Bruhat \cite{DKV83, Bump13}, on sait que les $B$-orbites sur 
$\Fcal_\gamma$ sont param\'etrées par le quotient $W/W^\gamma$:
\begin{equation}\label{eq:bruhat}
\Fcal_\gamma=\bigcup_{w\in W/W^\gamma}BwP(\gamma)/P(\gamma). 
\end{equation}
Chaque $B$-orbite $\Xgot^{o}_{w,\gamma}=BwP(\gamma)/P(\gamma)$ est une cellule, dite de Bruhat, qui est isomorphe \`a $\C^{d(w,\gamma)}$, avec 
\begin{equation}\label{eq:dim-cellule-schubert-1}
d(w,\gamma):= \dim_\C \Xgot^{o}_{w,\gamma}=\sharp\{\alpha\in\Rgot^+, (\alpha,w\gamma)>0\}.
\end{equation}

On associe \`a  tout $w\in W/W^\gamma$, la vari\'et\'e de Schubert $\Xgot_{w,\gamma}:=\overline{\Xgot^{o}_{w,\gamma}}$ et 
sa classe de cycle en cohomologie $[\Xgot_{w,\gamma}]\in H^{2c(w,\gamma)}(\Fcal_\gamma,\Z)$, où
\begin{equation}\label{eq:dim-cellule-schubert-2}
c(w,\gamma):=\dim_\C \Fcal_\gamma- \dim_\C \Xgot^{o}_{w,\gamma}=\sharp\{\alpha\in-\Rgot^+, (\alpha,w\gamma)>0\}.
\end{equation}
La décomposition cellulaire (\ref{eq:bruhat}) permet de montrer que la famille $[\Xgot_{w,\gamma}], w\in W/W^\gamma$ 
est une base du $\Z$-module $H^{*}(\Fcal_\gamma,\Z)$. En particulier, le groupe de cohomologie $H^{max}(\Fcal_\gamma,\Z)$ admet 
comme base la classe de cycle d'un point $[pt]$.

Soient $(w_1,\ldots,w_\ell)\in (W/W^\gamma)^\ell$ tels que $\sum_{i=1}^\ell c(w_i,\gamma)= \dim_\C \Fcal_\gamma$. Alors le produit \break
$[\Xgot_{w_1,\gamma}]\cdot \ldots\cdot[\Xgot_{w_\ell,\gamma}]$ appartient à $H^{max}(\Fcal_\gamma,\Z)$ : il existe 
$N(w_1,\ldots,w_\ell)\in\Z$ tels que 
$$
[\Xgot_{w_1,\gamma}]\cdot \ldots\cdot[\Xgot_{w_\ell,\gamma}]= N(w_1,\ldots,w_\ell) [pt].
$$

Nous rappelons le résultat clef suivant qui caractérise les nombres $N(w_1,\ldots,w_\ell)$ comme des nombres d'intersection. 
Celui-ci découle du théorème de Kleiman sur les intersections transverses (\cite[Proposition 3]{BK06}), et 
d'une propriété fondamentale en théorie de l'intersection, à savoir qu'une intersection génériquement transversale de sous-variétés 
de $\Fcal_\gamma$ représente le produit de leurs classes de cycles, car les multiplicités d'intersection sont égales 
à un (\cite[Remark 8.2]{Fulton13}).

\begin{prop}\label{prop:intersection-number-1}Soient $(w_1,\ldots,w_\ell)\in (W/W^\gamma)^\ell$ tels que 
$\sum_{i=1}^\ell c(w_i,\gamma)= \dim_\C \Fcal_\gamma$. Il existe un ouvert 
$\Ucal\subset U_\C^\ell$ Zariski dense, tel que pour tout $(g_1,\ldots,g_\ell)\in U$, $\bigcap_{i=1}^\ell  g_i\Xgot^{o}_{w_i,\gamma}$ est transverse à chaque point 
d'intersection, et 
$$
N(w_1,\ldots,w_\ell)=\sharp \bigcap_{i=1}^\ell  g_i\Xgot^{o}_{w_i,\gamma}=\sharp \bigcap_{i=1}^\ell  g_i\Xgot_{w_i,\gamma},\qquad \forall (g_1,\ldots,g_\ell)\in U.
$$
\end{prop}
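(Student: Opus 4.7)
Le résultat est une instance classique du théorème de transversalité de Kleiman. Mon plan est le suivant : appliquer Kleiman à l'action transitive de $U_\C$ sur $\Fcal_\gamma$, puis identifier le nombre d'intersections ainsi obtenu au coefficient $N(w_1,\ldots,w_\ell)$ via les propriétés standard de la théorie de l'intersection.

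Plus précisément, je procéderais en quatre étapes. Premièrement, j'appliquerais Kleiman aux lieux lisses $(\Xgot_{w_i,\gamma})_{\mathrm{reg}}$ des variétés de Schubert : comme $U_\C$ agit transitivement sur $\Fcal_\gamma$, on obtient un ouvert de Zariski dense $\Ucal \subset U_\C^\ell$ tel que, pour tout $(g_1,\ldots,g_\ell)\in \Ucal$, les translatés $g_i (\Xgot_{w_i,\gamma})_{\mathrm{reg}}$ se rencontrent transversalement en chacun de leurs points d'intersection communs, et tel que l'intersection a la dimension attendue $\dim_\C \Fcal_\gamma - \sum_i c(w_i,\gamma)=0$. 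Deuxièmement, j'identifierais le cardinal de $\bigcap_i g_i \Xgot_{w_i,\gamma}$ à $N(w_1,\ldots,w_\ell)$ en invoquant la propriété fondamentale de la théorie de l'intersection (\cite[Remark 8.2]{Fulton13}) : une intersection génériquement transverse de sous-variétés de $\Fcal_\gamma$ représente le produit de leurs classes de cycle, et, comme chaque multiplicité locale vaut $1$ par transversalité, le cardinal de l'intersection coïncide avec le coefficient de $[pt]$ dans $\prod_i [\Xgot_{w_i,\gamma}]$.

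Troisièmement, je devrais établir l'égalité $\bigcap_i g_i\Xgot^{o}_{w_i,\gamma} = \bigcap_i g_i\Xgot_{w_i,\gamma}$ : le bord $\Xgot_{w_i,\gamma}\setminus \Xgot^{o}_{w_i,\gamma}$ se décompose en cellules de Bruhat de codimension strictement supérieure à $c(w_i,\gamma)$ ; en appliquant encore Kleiman à chacune de ces sous-variétés, et en rétrécissant $\Ucal$ si nécessaire, leurs translatés génériques rencontrent $\bigcap_{j\neq i}g_j\Xgot_{w_j,\gamma}$ en dimension attendue strictement négative, donc dans le vide. Cela garantit en particulier que toute composante singulière éventuelle des $\Xgot_{w_i,\gamma}$ (elle aussi de codimension supérieure) n'intervient pas, ce qui justifie le passage par les lieux lisses à la première étape.

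L'obstacle principal sera la troisième étape : il faut appliquer Kleiman de manière \emph{stratifiée}, strate par strate, à toutes les cellules qui apparaissent dans les adhérences, et intersecter les ouverts obtenus (ils sont en nombre fini). Ce contrôle par dimension est standard mais mérite d'être écrit avec soin pour s'assurer que le même $\Ucal$ fonctionne pour les deux égalités affirmées dans l'énoncé. Une remarque finale sera utile : bien que $\Ucal$ soit ouvert de Zariski dans $U_\C^\ell$, le sous-groupe compact $U^\ell$ est Zariski-dense dans $U_\C^\ell$, donc l'énoncé donne un sens réel à la formule (en interprétant la quantification comme portant sur $\Ucal\cap U^\ell$, qui est Zariski-dense dans $U^\ell$).
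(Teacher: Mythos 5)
Votre démonstration est correcte et suit essentiellement la même voie que le texte, qui se contente d'invoquer le théorème de transversalité de Kleiman (\cite[Proposition 3]{BK06}) et le fait qu'une intersection génériquement transverse de sous-variétés représente le produit de leurs classes de cycles avec multiplicités égales à un (\cite[Remark 8.2]{Fulton13}) ; votre troisième étape, le contrôle stratifié des cellules du bord par comptage de dimension, est précisément l'argument sous-jacent à \cite[Proposition 3]{BK06}. Notez seulement que la quantification \og pour tout $(g_1,\ldots,g_\ell)\in U$ \fg{} de l'énoncé est une coquille pour $(g_1,\ldots,g_\ell)\in\Ucal$, de sorte que votre remarque finale sur la densité de $U^\ell$ dans $U_\C^\ell$ n'est pas indispensable.
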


La propriété précédente montre que les coefficients de structure de la $\Z$-algèbre $H^{*}(\Fcal_\gamma,\Z)$ sont des entiers naturels.

Nous pouvons généraliser ces résultats en considérant $U$ comme un sous-groupe fermé de $\tU$.  On choisit des tores maximaux 
$T\subset U$, et $\tT\subset \tU$ tels que $T\subset \tT$. \`A chaque $\gamma\in\tgot$, 
on associe les sous-groupe paraboliques $P(\gamma)\subset U_\C$ et $\tP(\gamma)\subset \tU_\C$, les variétés projectives $\Fcal_\gamma=U_\C/P(\gamma)$ 
et $\tFcal_\gamma=\tU_\C/\tP(\gamma)$, et l'immersion 
$$
\iota : \Fcal_\gamma\croc \tFcal_\gamma.
$$
Comme précédemment, le choix d'une chambre de Weyl $\ttgot^*_+$ détermine un système de racines positives $\tRgot^+$ et 
un sous-groupe de Borel $\tB\subset \tU_\C$. Notons $\tW:=N(\tT)/T$ le groupe de Weyl. Les $\tB$-orbites sur $\tFcal_\gamma$ sont 
param\'etrées par le quotient $\tW/\tW^\gamma$: 
$$
\tFcal_\gamma=\bigcup_{\tw\in \tW/\tW^\gamma}\tB\tw\tP(\gamma)/\tP(\gamma).
$$ 

Comme précedemment, on associe \`a  tout $\tw\in \tW/\tW^\gamma$, la cellule de Bruhat $\tXgot^{o}_{\tw,\gamma}:=\tB\tw\tP(\gamma)/\tP(\gamma)$, 
la vari\'et\'e de Schubert $\tXgot_{\tw,\gamma}:=\overline{\tXgot^{o}_{\tw,\gamma}}$ et 
sa classe de cycle en cohomologie $[\tXgot_{\tw,\gamma}]\in H^{2c(\tw,\gamma)}(\tFcal_\gamma,\Z)$. 
Notons $\iota^*: H^{*}(\tFcal_\gamma,\Z)\to H^{*}(\Fcal_\gamma,\Z)$ le morphisme induit par $\iota$. Nous avons 
l'extension suivante de la proposition \ref{prop:intersection-number-1}.

\begin{prop}\label{prop:intersection-number-2}
Soient $(w,\tw)\in W/W^\gamma\times\tW/\tW^\gamma$ tels que $d(w,\gamma)=c(\tw,\gamma)$. Il existe 
$N(w,\tw)\in\N$ caractérisé par l'une des conditions suivantes :
\begin{enumerate}
\item $[\Xgot_{w,\gamma}]\cdot \iota^*[\tXgot_{\tw,\gamma}]= N(w,\tw) [pt]$, dans  $H^{max}(\Fcal_\gamma,\Z)$. 
\item Il existe un ouvert \, $\Ucal\subset U_\C\times \tU_\C$ \, Zariski dense, tel que pour tout $(g,\tg)\in \Ucal$, l'intersection \break
$\iota(g\Xgot_{w,\gamma}^o)\bigcap \tg \Xgot_{\tw,\gamma}^o$ est transverse, et 
$$
N(w,\tw)=\sharp \iota(g\Xgot_{w,\gamma}^o)\bigcap\tg\Xgot_{\tw,\gamma}^o=\sharp\iota(g\Xgot_{w,\gamma})\bigcap \tg\Xgot_{\tw,\gamma}.
$$
\end{enumerate}
\end{prop}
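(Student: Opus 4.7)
The proof is a direct generalization of Proposition \ref{prop:intersection-number-1}, where the extra ingredient is the $U_\C$-equivariance of the embedding $\iota:\Fcal_\gamma\croc \tFcal_\gamma$. First I would verify the degree count: since $d(w,\gamma)=\dim_\C\Fcal_\gamma-c(w,\gamma)$ and $\iota^*[\tXgot_{\tw,\gamma}]\in H^{2c(\tw,\gamma)}(\Fcal_\gamma,\Z)$, the hypothesis $d(w,\gamma)=c(\tw,\gamma)$ makes $[\Xgot_{w,\gamma}]\cdot\iota^*[\tXgot_{\tw,\gamma}]$ lie in $H^{2\dim_\C\Fcal_\gamma}(\Fcal_\gamma,\Z)=\Z\,[pt]$, which defines the integer $N(w,\tw)$.

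Next I would invoke Kleiman's transversality theorem for the transitive action of $\tU_\C$ on $\tFcal_\gamma$: for $h$ in a Zariski-dense open subset $\Vcal\subset \tU_\C$, the subvariety $\iota(\Xgot^o_{w,\gamma})\subset\tFcal_\gamma$ meets $h\,\tXgot^o_{\tw,\gamma}$ transversally in a finite set (transverse of expected dimension $d(w,\gamma)+\dim\tFcal_\gamma-c(\tw,\gamma)-\dim\tFcal_\gamma=0$), and similarly for their closures. Because the natural map $\iota$ is $U_\C$-equivariant, we have the identity $\iota(g\Xgot^o_{w,\gamma})=g\cdot\iota(\Xgot^o_{w,\gamma})$ as subsets of $\tFcal_\gamma$ (here $g\in U_\C$ is viewed inside $\tU_\C$). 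Translating by $g^{-1}$ shows that the intersection $\iota(g\Xgot^o_{w,\gamma})\cap \tg\,\Xgot^o_{\tw,\gamma}$ is transverse whenever $g^{-1}\tg\in\Vcal$, so the set
\[
\Ucal:=\{(g,\tg)\in U_\C\times\tU_\C\,:\,g^{-1}\tg\in\Vcal\}
\]
is a Zariski-dense open subset on which transversality holds.

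To identify the cardinality of the intersection with $N(w,\tw)$, I would apply the projection formula in cohomology:
\[
\iota_*\bigl([\Xgot_{w,\gamma}]\cdot\iota^*[\tXgot_{\tw,\gamma}]\bigr)=\iota_*[\Xgot_{w,\gamma}]\cdot[\tXgot_{\tw,\gamma}]\in H^{2\dim_\C\tFcal_\gamma}(\tFcal_\gamma,\Z).
\]
The left-hand side equals $N(w,\tw)$ times the class of a point in $\tFcal_\gamma$. The right-hand side, evaluated on $[\tFcal_\gamma]$, counts with sign the transverse intersection points of $\iota(g\Xgot_{w,\gamma})$ with $\tg\,\tXgot_{\tw,\gamma}$; since $\tFcal_\gamma$ is a complex manifold and the intersection is complex-transverse, each contribution is $+1$. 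This yields $N(w,\tw)\geq 0$ and the second equality $N(w,\tw)=\sharp\,\iota(g\Xgot_{w,\gamma})\cap\tg\,\tXgot_{\tw,\gamma}$.

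Finally, to replace the closed cells by their open counterparts, I would note that the boundaries $\Xgot_{w,\gamma}\setminus\Xgot^o_{w,\gamma}$ and $\tXgot_{\tw,\gamma}\setminus\tXgot^o_{\tw,\gamma}$ are finite unions of strictly lower-dimensional Schubert varieties; the expected dimension of the intersection of any of these with a generic translate of the complementary cell is negative, and a further application of Kleiman's theorem to each such lower-dimensional pair shrinks $\Ucal$ to a dense open set on which these boundary intersections are empty. This gives $\sharp\,\iota(g\Xgot^o_{w,\gamma})\cap \tg\Xgot^o_{\tw,\gamma}=\sharp\,\iota(g\Xgot_{w,\gamma})\cap \tg\Xgot_{\tw,\gamma}$. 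There is no serious obstacle beyond the bookkeeping of Kleiman; the only conceptual ingredient is the $U_\C$-equivariance of $\iota$, which reduces the two-parameter problem on $U_\C\times\tU_\C$ to a one-parameter transversality statement on $\tFcal_\gamma$ alone.
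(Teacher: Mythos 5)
Votre preuve est correcte et suit pour l'essentiel la même démarche que le texte : le théorème de transversalité de Kleiman (appliqué après réduction, par $U_\C$-équivariance de $\iota$, à une translation générique dans $\tU_\C$) combiné au fait que les intersections génériquement transverses sont de multiplicité un, ce qui est exactement la justification donnée pour la proposition \ref{prop:intersection-number-1} et étendue ici. La seule différence, purement cosmétique, est que vous calculez le nombre d'intersection dans $\tFcal_\gamma$ via la formule de projection plutôt que de tirer en arrière $\tXgot_{\tw,\gamma}$ sur $\Fcal_\gamma$; les deux points de vue sont équivalents puisque $\iota$ est un plongement fermé.
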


%%%%%%%%%%%%%%%%%%%%%%%%%%%%%%%%%%%%%%%%%%%%%%%%%%%%%%
%%%%%%%%%%%%%%%%%%%%%%%%%%%%%%%%%%%%%%%%%%%%%%%%%%%%%%
\section{Cohomologie des grassmaniennes}\label{sec:calcul-schubert-grass}
%%%%%%%%%%%%%%%%%%%%%%%%%%%%%%%%%%%%%%%%%%%%%%%%%%%%%%
%%%%%%%%%%%%%%%%%%%%%%%%%%%%%%%%%%%%%%%%%%%%%%%%%%%%%%

Dans les sections suivantes, nous utiliserons les coefficients de Littlewood-Richardson pour paramétrer certaines inégalités. 
Rappelons leur définition. Une partition $\lambda$ est une suite décroissante d'entiers naturels, nulle à partir d'un certain rang: sa longueur, notée $\leng(\lambda)$, 
correspond au nombre d'éléments non-nuls de cette suite. Soient $\lambda,\mu$ et $\nu$ trois partitions de longueur inférieure à $r\geq 1$. Nous les associons 
aux représentations irréductibles $V_\lambda$, $V_\mu$ et $V_\nu$
du groupe unitaire $\upU_r$. Le coefficient de Littlewood-Richardson $\cc^\lambda_{\mu,\nu}$ peut être caractérisé par la relation
\begin{equation}\label{eq:coeff-LR}
\cc^\lambda_{\mu,\nu}=\dim \left[V_\lambda^*\otimes V_\mu\otimes V_\nu\right]^{U_r}.
\end{equation} 
Le point clef ici est que cette identité ne dépend pas de $r\geq \max\{\leng(\lambda),\leng(\mu),\leng(\nu)\}$. 

Voici une autre formulation de l'identité (\ref{eq:coeff-LR}). Soit $\bigwedge_r=\Z[x_1,\ldots,x_r]^{\Sgot_r}$ l'anneau des 
polynômes symétriques, à coefficients entiers, à $r$ variables. \`A chaque partition $\lambda$ de longueur inférieure à $r\geq 1$, 
on associe le polynôme de Schur $\mathbf{s}_\lambda\in \bigwedge_r$. Alors $(\mathbf{s}_\lambda)_{\leng(\lambda)\leq r}$ est une 
base du $\Z$-module $\bigwedge_r$, et les coefficients de Littlewood-Richardson $\cc^\lambda_{\mu,\nu}$ sont les coefficients de structure de l'anneau 
$\bigwedge_r$:
$$
\mathbf{s}_\mu\mathbf{s}_\nu=\sum_{\lambda, \leng(\lambda)\leq r} \cc^\lambda_{\mu,\nu}\ \mathbf{s}_\lambda.
$$

Nous allons maintenant rappeler comment les coefficients de Littlewood-Richardson apparaissent dans la calcul de Schubert des grassmaniennes.

Dans la suite, on note $[n]$ l'ensemble des entiers naturels $\{j\in\N, 1\leq n\leq k\}$. Rappelons que l'on a deux opérations sur $[n]$: 
$I\mapsto I^c=[n]-I$ et $I\mapsto I^o=\{n+1-i, i\in I\}$.

Soient $n>r\geq 1$. On désigne par $\G(r,n)$ la grassmanienne des sous-espaces vectoriels $F\subset \C^n$ de dimension $r$. 
Pour tout $i\in [n]$, on désigne par $\C^i\subset \C^n$ le sous-espace vectoriel engendré par les $i$-premiers vecteurs de la base canonique.
L'application $g\in GL_n(\C)\to g(\C^r)\in \G(r,n)$ définit un isomorphisme $GL_n(\C)/P\simeq \G(r,n)$ où $P$ est égal au sous-groupe parabolique 
$P(\gamma^r)$, avec 
\begin{equation}\label{eq:gamma-1-etage}
\gamma^r_o=\diag(\underbrace{0,\ldots,0}_{r\ times},\underbrace{1\ldots,1}_{n-r\ times}).
\end{equation}
Le groupe de Weyl $W$ est égal au groupe symétrique $\Sgot_n$, et l'application $\omega \in W\mapsto \omega([r])$ permet 
d'identifier le quotient $W/W^{\gamma^r}$ avec l'ensemble $\Pcal(r,n)$ formé des sous-ensembles $I\subset [n]$ de cardinal $r$.

 Lorsqu'une partition $\mu$ est incluse dans $r\times n-r$ si $n-r\geq \mu_1 \geq \cdots \geq \mu_r \geq 0$. Dans ce cas, on écrit 
$\mu\subset r\times n-r$.  \`A chaque $I=\{i_1<\cdots<i_r\}$ appartenant à $\Pcal(r,n)$, on associe les partitions contenues dans $r\times n-r$:
 \begin{itemize}
 \item $\mu(I)=(i_r-r\geq\cdots\geq i_1-1\geq 0)$,
 \item $\lambda(I)=(n-r+1-i_1\geq\cdots\geq n-i_r)$.
 \end{itemize}
Elles sont liées par la relation $\lambda(I)=\mu(I^o)$.

\`A chaque $I\in \Pcal(r,n)$, on associe la cellule de Bruhat  
$$
\Xgot^o_I:=\{F\in\G(r,n), F\cap \C^{i-1}\neq F\cap \C^{i}\ \mathrm{si\ et\ seulement\ si}\ i\in I\}
$$
et la variété de Schubert $\Xgot_I=\overline{\Xgot^o_I}$.  Faisons une remarque sur les notations utilisées : si $I=\omega([r])$, la variété de Schubert $\Xgot_I$ coincide avec 
la variété $\Xgot_{\omega,\gamma^r}$ introduite à la section \ref{sec:cohomology-flag}.

Un calcul classique \cite{Fulton97} montre que la dimension de $\Xgot^o_I$ est égale à $|\mu(I)| =\sum_{k=1}i_k-k$.
Comme $|\mu(I)| +|\lambda(I)|= r(n-r)=\dim_\C \G(r,n)$, la classe de cycle en cohomologie 
$[\Xgot_I]$ appartient à $H^{2|\lambda(I)|}(\G(r,n),\Z)$ pour tout $I\in \Pcal(r,n)$. Lorsque $I=[r]\subset [n]$, on a $\mu([r])=0$ et la cellule  
$\Xgot^o_{[r]}$ est réduite à un point. La classe $[\Xgot_{[r]}]$ est alors une base du $\Z$-module 
$H^{max}(\G(r,n),\Z)$ que l'on note $[pt]$. Dans la description des polytopes de Kirwan, les inégalités seront paramétrées par des familles $I_1,\ldots, I_s\in \Pcal(r,n)$ telles que 
le produit $\prod_{k=1}^s [\Xgot_{I_k}]$ est égal à $[pt]$: remarquons que c'est le cas seulement si $\sum_{k=1}^s |\mu(I_k)|= s\, r(n-r)$. On a une première propriété (voir \S 3.2.2 dans \cite{Manivel-98}).
\begin{lem} 
Soient $I,J\in\Pcal(r,n)$, tels que $|\mu(I)|+|\mu(J)|= 2r(n-r)$. Alors, les relations suivantes sont vérifiées dans $H^*(\G(r,n),\Z)$ : 
$$
[\Xgot_I]\cdot [\Xgot_{J}]=
\begin{cases}
[pt]\quad \mathrm{si} \quad J=I^o,\\
0\hspace{8mm} \mathrm{sinon}.
\end{cases}
$$
\end{lem}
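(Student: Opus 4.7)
L'approche consiste à appliquer la proposition \ref{prop:intersection-number-1} avec $\ell=2$, aux deux éléments $\omega_I,\omega_J\in W/W^{\gamma^r}$ associés à $I,J$ via la bijection $W/W^{\gamma^r}\simeq \Pcal(r,n)$. La classe $[\Xgot_I]$ étant de degré $2|\lambda(I)|=2(r(n-r)-|\mu(I)|)$, et de même pour $[\Xgot_J]$, la condition sur les dimensions place le produit $[\Xgot_I]\cdot[\Xgot_J]$ dans $H^{2r(n-r)}(\G(r,n),\Z)=\Z[pt]$. Il s'écrit donc $N(\omega_I,\omega_J)\,[pt]$, avec $N(\omega_I,\omega_J)=\sharp(g_1\Xgot_I^o\cap g_2\Xgot_J^o)$ pour un choix générique de $(g_1,g_2)\in U_\C^2$.

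Pour évaluer ce cardinal, je choisis $g_1=e$ et $g_2$ représentant l'élément $w_0\in W=\Sgot_n$ défini par $k\mapsto n+1-k$. Ce choix revient à intersecter la cellule de Bruhat standard $\Xgot_I^o$ avec la cellule de Bruhat opposée, décrite par les conditions $F\cap V^{i-1}\neq F\cap V^i$ si et seulement si $i\in J$, où $V^i=\vect(e_n,\ldots,e_{n-i+1})$.

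Un point $F$ appartenant à cette intersection admet donc une base simultanément échelonnée par rapport aux deux drapeaux opposés. Écrivons $I=\{i_1<\cdots<i_r\}$ et $J=\{j_1<\cdots<j_r\}$. Les conditions de rang $\dim(F\cap\C^{i_k})\geq k$ et $\dim(F\cap V^{n+1-j_{r-k+1}})\geq r-k+1$ donnent, par un argument élémentaire de dimension dans $\C^n$, les inégalités $n+1-j_{r-k+1}\geq i_k$ pour chaque $k\in\{1,\ldots,r\}$, ce qui se reformule en l'inclusion $J\subseteq I^o$ composante par composante. Combinée à la contrainte dimensionnelle sur $|\mu(I)|+|\mu(J)|$, cette inclusion se transforme en égalité, d'où $J=I^o$; dans ce cas, les conditions de rang précédentes deviennent saturées et l'intersection se réduit au seul point $F_0=\vect(e_{i_1},\ldots,e_{i_r})$.

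Enfin, la transversalité en $F_0$ résulte du théorème de Kleiman sous-jacent à la proposition \ref{prop:intersection-number-1}, puisque la translation par $w_0$ envoie le sous-groupe de Borel standard sur le Borel opposé et se trouve donc dans l'ouvert dense garantissant la transversalité. Le point le plus délicat sera le raisonnement combinatoire transformant les inégalités de rang en l'unique solution $J=I^o$: c'est un calcul classique en calcul de Schubert, mais qui demande un traitement soigné sur les contraintes de dimension pour chaque $k$.
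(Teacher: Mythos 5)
Le texte ne démontre pas ce lemme : il renvoie à \cite{Manivel-98}, \S 3.2.2, où l'argument est précisément celui que vous proposez (cellules de Schubert relatives à deux drapeaux opposés, théorème de Kleiman via la proposition \ref{prop:intersection-number-1}). Votre route est donc la bonne. Notez au passage que l'hypothèse telle qu'imprimée, $|\mu(I)|+|\mu(J)|=2r(n-r)$, doit se lire $|\mu(I)|+|\mu(J)|=r(n-r)$ (c'est la somme des degrés cohomologiques $2|\lambda(I)|+2|\lambda(J)|$ qui vaut $2r(n-r)$) ; c'est d'ailleurs cette forme corrigée que votre décompte de degrés utilise implicitement pour placer le produit dans $H^{2r(n-r)}(\G(r,n),\Z)$.

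Deux points de votre rédaction doivent être repris. D'abord, l'inégalité de rang est écrite à l'envers : de $\dim(F\cap\C^{i_k})\geq k$ et $\dim(F\cap V^{j_{r+1-k}})\geq r+1-k$ dans l'espace $F$ de dimension $r$, on tire $\C^{i_k}\cap V^{j_{r+1-k}}\neq 0$, c'est-à-dire $i_k+j_{r+1-k}\geq n+1$, soit $j_l\geq (I^o)_l$ pour tout $l$ (autrement dit $\mu(J)\supseteq\lambda(I)$), et non l'inégalité inverse que vous énoncez ; avec l'égalité des sommes cela force encore $J=I^o$, donc votre conclusion subsiste, mais la ligne telle quelle est fausse. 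Ensuite, et c'est là le vrai point délicat (plutôt que l'étape combinatoire que vous signalez), l'appartenance du couple $(e,w_0)$ à l'ouvert dense $\Ucal$ de la proposition \ref{prop:intersection-number-1} ne découle pas du seul fait que $w_0$ échange le Borel standard et le Borel opposé : tel quel, c'est un saut logique. Il faut soit remarquer que $\Ucal$ est invariant par l'action diagonale à gauche de $U_\C$ et par translations à droite par les stabilisateurs des cellules, de sorte qu'il provient d'un ouvert dense du double quotient, lequel contient nécessairement l'unique orbite de Bruhat dense, celle de $(e,w_0)$ ; soit vérifier directement la transversalité en $F_0$ par un calcul d'espaces tangents : $F_0$ appartient aux cellules ouvertes des deux variétés (qui y sont donc lisses), $T_{F_0}\G(r,n)\simeq\Hom(F_0,\C^n/F_0)$ se décompose en droites de poids indexées par les couples $(i,j)$ avec $i\in I$, $j\notin I$, et les deux espaces tangents sont portés par les ensembles complémentaires $\{j<i\}$ et $\{j>i\}$, d'où la somme directe et une multiplicité d'intersection égale à $1$. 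L'argument d'orbite règle du même coup le cas $J\neq I^o$ ; si vous préférez le calcul tangent, il faut alors ajouter que le lieu des couples à intersection vide est ouvert (propreté de la projection depuis la variété d'incidence), donc rencontre $\Ucal$, ce qui donne bien $N=0$.
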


Pour tout $I\in \Pcal(r,n)$, notons $\sigma_{\lambda(I)}$ la classe de cohomologie $[\Xgot_I]$. Alors 
$H^{*}(\G(r,n),\Z)=\oplus_{\lambda\subset r\times n-r} \ \Z\, \sigma_\lambda$. Rappelons le fait fondamental suivant (pour une preuve, voir \S 3.2.2 dans \cite{Manivel-98}).
\begin{theorem}\label{theo:phi-r}
Soit $n\geq r\geq 1$. L'application $\Phi_{n}:{\bigwedge}_r\longrightarrow H^{*}(\G(r,n),\Z)$ définie par les relations 
$$
\Phi_{n}(\mathbf{s}_\lambda)=
\begin{cases}
\sigma_\lambda\hspace{8mm} \mathrm{si}\quad \lambda_1\leq n,\\
0\hspace{1cm} \mathrm{si}\quad \lambda_1> n.
\end{cases}
$$
est un morphisme d'anneaux.
\end{theorem}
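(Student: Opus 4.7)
The plan is to reduce the multiplicativity of $\Phi_n$ to Pieri's formula, which holds in parallel versions on both sides. First I would note that $\Phi_n$ is $\Z$-linear by definition, and that it sends the basis $\{\mathbf{s}_\lambda\}_{\lambda \subset r\times(n-r)}$ bijectively onto the basis $\{\sigma_\lambda\}_{\lambda \subset r\times(n-r)}$ of $H^*(\G(r,n),\Z)$, while killing every Schur polynomial $\mathbf{s}_\lambda$ with $\lambda_1 > n-r$. Equivalently, the submodule $\Ical_n \subset \bigwedge_r$ generated by such Schur polynomials is the kernel of $\Phi_n$, so proving that $\Phi_n$ is a ring morphism amounts to showing $\Ical_n$ is an ideal and that the induced bijection is multiplicative.

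Since $\bigwedge_r$ is generated as a $\Z$-algebra by the complete symmetric polynomials $h_k = \mathbf{s}_{(k)}$ for $k\geq 1$, it suffices to check the identity $\Phi_n(h_k\cdot\mathbf{s}_\lambda) = \Phi_n(h_k)\cdot\Phi_n(\mathbf{s}_\lambda)$ for all $k\geq 1$ and every partition $\lambda$ of length at most $r$. The key tool is Pieri's formula on both sides. In $\bigwedge_r$ it reads
$$
h_k\cdot\mathbf{s}_\lambda = \sum_{\mu\in HS(\lambda,k)} \mathbf{s}_\mu,
$$
where $HS(\lambda,k)$ is the set of partitions of length $\leq r$ obtained from $\lambda$ by adding a horizontal strip of $k$ boxes. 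Its cohomological counterpart states that for $\lambda \subset r\times(n-r)$ and $1\leq k\leq n-r$,
$$
\sigma_{(k)}\cdot \sigma_\lambda = \sum_{\substack{\mu\in HS(\lambda,k) \\ \mu\subset r\times(n-r)}} \sigma_\mu.
$$
I would establish the second identity by the Kleiman-type transversality statement of Proposition \ref{prop:intersection-number-1}: one chooses generic translates of the special Schubert variety of type $(k)$ and a complete flag inducing $\Xgot_\lambda$, and the Schubert cells that survive the intersection are exactly the cells indexed by horizontal strips in $r\times(n-r)$.

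The verification of multiplicativity then splits into three cases. (a) If $\lambda\subset r\times(n-r)$ and $k\leq n-r$, both Pieri expansions match after applying $\Phi_n$, because the partitions $\mu$ with $\mu_1 > n-r$ appearing on the algebraic side are precisely those dropped in the cohomological sum, and they map to $0$ under $\Phi_n$. (b) If $\lambda\subset r\times(n-r)$ but $k > n-r$, then $\Phi_n(h_k)=0$, and indeed every $\mu\in HS(\lambda,k)$ satisfies $\mu_1\geq k>n-r$, so the left-hand side is also killed. (c) If $\lambda_1 > n-r$, then $\Phi_n(\mathbf{s}_\lambda)=0$, and every $\mu\in HS(\lambda,k)$ has $\mu_1\geq\lambda_1>n-r$, so $\Phi_n(\mathbf{s}_\mu)=0$ too. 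These three cases cover all possibilities and complete the proof.

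The only non-routine step is the cohomological Pieri formula; everything else is bookkeeping about horizontal strips. That step is classical and can be extracted from the excerpt's own framework via Proposition \ref{prop:intersection-number-1}, so the main difficulty is essentially packaged in a well-known transversality argument rather than in any new geometric input.
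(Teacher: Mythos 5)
Your route is the classical one, and in fact the paper gives no proof of Theorem \ref{theo:phi-r} at all: it is stated as a recalled fact with a reference to \S 3.2.2 of Manivel's book, where precisely this Pieri-based argument is carried out. Your reduction to the algebra generators $h_k$ (together with $\Phi_n(1)=1$ and linearity), the three-case bookkeeping, and the two combinatorial observations that make the kernel cases work (a horizontal strip of $k$ boxes occupies $k$ distinct columns, hence forces $\mu_1\geq k$; and $\mu\supseteq\lambda$ forces $\mu_1\geq\lambda_1$) are all correct. Note in passing that the truncation condition in the printed statement should read $\lambda_1\leq n-r$ rather than $\lambda_1\leq n$ (otherwise $\sigma_\lambda$ is not even defined); you have implicitly, and rightly, worked with the $n-r$ version.

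The one place where you claim more than you deliver is the cohomological Pieri formula. Proposition \ref{prop:intersection-number-1} (Kleiman transversality) only converts the coefficient of $\sigma_\mu$ in $\sigma_{(k)}\cdot\sigma_\lambda$ into the cardinality of a generic triple intersection $g_1\Xgot_{(k)}\cap g_2\Xgot_{\lambda}\cap g_3\Xgot_{\mu^\vee}$ with the dual Schubert variety; it does not perform the count. Showing that this intersection is a single reduced point when $\mu/\lambda$ is a horizontal strip of size $k$, and is empty otherwise, is the actual geometric content of Pieri and requires a separate flag/linear-algebra argument (this is the lemma proved in Manivel \S 3.2). Since the paper itself handles the whole theorem by citation, deferring this step to the classical literature is acceptable, but as written your sentence that ``the Schubert cells that survive the intersection are exactly the cells indexed by horizontal strips'' asserts Pieri rather than deriving it from the transversality statement; either cite the Pieri formula outright or supply that intersection-theoretic computation.
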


Grâce au théorème \ref{theo:phi-r}, on voit que les coefficients de Littlewood-Richardson admettent une caractérisation au moyen du calcul de Schubert 
(voir aussi \cite[Section 6]{Fulton00}).

\begin{prop}\label{prop:coeff-LR}
Soient $I,J,L\in \Pcal(r,n)$ tels que $|\mu(I)|+|\mu(J)|=|\mu(L)|$. Alors 
$$
[\Xgot_{I^o}]\cdot[\Xgot_{J^o}]\cdot[\Xgot_L]= \cc_{\mu(I),\mu(J)}^{\mu(L)} [pt].
$$
\end{prop}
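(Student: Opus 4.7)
Mon plan de preuve consiste à ramener l'identité cohomologique à la formule multiplicative des polynômes de Schur via l'isomorphisme $\Phi_n$ du théorème \ref{theo:phi-r}.

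Premièrement, je vais réécrire chaque classe de Schubert intervenant dans le produit en fonction de la notation $\sigma_\lambda$. Puisque l'opération $I\mapsto I^o$ est une involution sur $\Pcal(r,n)$, on a $\lambda(I^o)=\mu(I)$, d'où $[\Xgot_{I^o}]=\sigma_{\mu(I)}$ et, de manière analogue, $[\Xgot_{J^o}]=\sigma_{\mu(J)}$ et $[\Xgot_L]=\sigma_{\lambda(L)}$. Le produit à calculer devient donc $\sigma_{\mu(I)}\cdot\sigma_{\mu(J)}\cdot\sigma_{\lambda(L)}$.

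Deuxièmement, j'utiliserai que $\Phi_n$ est un morphisme d'anneaux (théorème \ref{theo:phi-r}) pour développer le produit des deux premiers facteurs:
$$
\sigma_{\mu(I)}\cdot\sigma_{\mu(J)}=\Phi_n\left(\mathbf{s}_{\mu(I)}\mathbf{s}_{\mu(J)}\right)=\sum_{\lambda}\cc^{\lambda}_{\mu(I),\mu(J)}\,\sigma_{\lambda},
$$
où la somme porte sur les partitions $\lambda$ contenues dans le rectangle $r\times(n-r)$ (les autres termes s'annulent via $\Phi_n$). L'hypothèse $|\mu(I)|+|\mu(J)|=|\mu(L)|$ impose que seuls les $\lambda$ de poids $|\lambda|=|\mu(L)|$ contribuent, puisque $\cc^{\lambda}_{\mu(I),\mu(J)}\neq 0$ force l'identité $|\lambda|=|\mu(I)|+|\mu(J)|$.

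Troisièmement, je multiplierai chaque terme de la somme précédente par $\sigma_{\lambda(L)}$. Comme $|\lambda|+|\lambda(L)|=|\mu(L)|+|\lambda(L)|=r(n-r)$, chaque produit $\sigma_{\lambda}\cdot\sigma_{\lambda(L)}$ vit dans $H^{\mathrm{max}}(\G(r,n),\Z)$. Grâce au lemme de dualité énoncé juste avant la proposition \ref{prop:coeff-LR}, ce produit vaut $[pt]$ si $\lambda$ est la partition complémentaire de $\lambda(L)$ dans le rectangle $r\times(n-r)$, et $0$ sinon. Or, les partitions $\mu(L)$ et $\lambda(L)$ sont précisément complémentaires par construction (si $I=\{i_1<\dots<i_r\}$, alors $\mu(I)_k+\lambda(I)_{r+1-k}=n-r$). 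Seul le terme $\lambda=\mu(L)$ survit, ce qui donne $\cc^{\mu(L)}_{\mu(I),\mu(J)}\,[pt]$.

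Aucune étape ne présente de véritable obstacle : la preuve est essentiellement une traduction des dictionnaires déjà mis en place (involution $I\mapsto I^o$, morphisme $\Phi_n$, lemme de dualité). Le seul point à surveiller est la cohérence d'indexation entre $\mu$ et $\lambda$, notamment le fait que la classe $[\Xgot_I]$ porte l'indice $\lambda(I)$ et non $\mu(I)$, ce qui explique l'apparition systématique des opérations $I\mapsto I^o$ dans l'énoncé.
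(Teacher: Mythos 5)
Votre preuve est correcte et suit essentiellement la même voie que le texte, qui déduit la proposition \ref{prop:coeff-LR} du théorème \ref{theo:phi-r} (avec renvoi à Fulton) sans détailler : vous explicitez simplement les étapes standard, à savoir la traduction $[\Xgot_{I^o}]=\sigma_{\mu(I)}$ via $\lambda(I^o)=\mu(I)$, la multiplicativité de $\Phi_n$, et le lemme de dualité identifiant le complément de $\lambda(L)$ à $\mu(L)$. Les vérifications d'indexation et de degré sont justes ; rien à ajouter.
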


L'application $I\mapsto I^{o,c}$ définit une bijection $\Pcal(r,n)\simeq \Pcal(n-r,n)$ telle que les classes $[\Xgot_{I}]\in H^{*}(\G(r,n),\Z)$ et 
$[\Xgot_{I^o}]\in H^{*}(\G(n-r,n),\Z)$ correspondent à travers l'isomorphisme $H^{*}(\G(r,n),\Z)\simeq H^{*}(\G(n-r,n),\Z)$ induit par le morphisme
$E\in \G(n-r,n)\mapsto E^\perp\in \G(r,n)$. La proposition \ref{prop:coeff-LR} entraine la propriété suivante.

\begin{coro}\label{coro:coeff-LR-sym}
Soient $I,J,L\in \Pcal(r,n)$ tels que $|\mu(I)|+|\mu(J)|=|\mu(L)|$. Alors $|\mu(I^{o,c})|+|\mu(J^{o,c})|=|\mu(L^{o,c})|$ et 
$\cc_{\mu(I),\mu(J)}^{\mu(L)} =\cc_{\mu(I^{o,c}),\mu(J^{o,c})}^{\mu(L^{o,c})}$. 
\end{coro}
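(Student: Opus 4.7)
The plan is to deduce the corollary from Proposition \ref{prop:coeff-LR} by pulling back the identity along the orthogonal-complement isomorphism $\phi:\G(n-r,n)\xrightarrow{\sim}\G(r,n)$, $E\mapsto E^\perp$, which (as stated just before the corollary) induces an isomorphism of graded rings $\phi^{*}:H^{*}(\G(r,n),\Z)\simeq H^{*}(\G(n-r,n),\Z)$ sending $[\Xgot_I]$ to $[\Xgot_{I^{o,c}}]$.

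First I would check the degree statement. Both Grassmannians have complex dimension $r(n-r)$, and for $K\in\Pcal(r,n)$ (resp.\ $\Pcal(n-r,n)$) one has $|\mu(K)|+|\lambda(K)|=r(n-r)$. Since $\phi^{*}$ preserves cohomological degree and $[\Xgot_I]\in H^{2|\lambda(I)|}$ maps to $[\Xgot_{I^{o,c}}]\in H^{2|\lambda(I^{o,c})|}$, we get $|\lambda(I^{o,c})|=|\lambda(I)|$, and hence $|\mu(I^{o,c})|=|\mu(I)|$. Adding this for $I,J,L$ yields $|\mu(I^{o,c})|+|\mu(J^{o,c})|=|\mu(L^{o,c})|$.

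Next I would run the following bookkeeping on the involutions. The map $i\mapsto n+1-i$ being an involution of $[n]$ commuting with complementation, for any $K\subset[n]$ one has $(K^o)^c=(K^c)^o$ and thus $(K^{o,c})^o=K^c$. Applying Proposition \ref{prop:coeff-LR} inside $\G(n-r,n)$ to the triple $I^{o,c},J^{o,c},L^{o,c}\in\Pcal(n-r,n)$ gives
\[
[\Xgot_{I^c}]\cdot[\Xgot_{J^c}]\cdot[\Xgot_{L^{o,c}}]=\cc^{\mu(L^{o,c})}_{\mu(I^{o,c}),\mu(J^{o,c})}\,[pt]
\]
in $H^{*}(\G(n-r,n),\Z)$. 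On the other hand, applying $\phi^{*}$ to the identity of Proposition \ref{prop:coeff-LR} in $\G(r,n)$, and using $\phi^{*}([\Xgot_{K^o}])=[\Xgot_{(K^o)^{o,c}}]=[\Xgot_{K^c}]$ together with $\phi^{*}([pt])=[pt]$, produces the same left-hand side with coefficient $\cc^{\mu(L)}_{\mu(I),\mu(J)}$. Comparison of the two expressions yields $\cc^{\mu(L)}_{\mu(I),\mu(J)}=\cc^{\mu(L^{o,c})}_{\mu(I^{o,c}),\mu(J^{o,c})}$, which completes the proof.

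There is no real obstacle here; the only point requiring care is the combinatorial identity $(K^{o,c})^o=K^c$ and the matching of Schubert-class images under $\phi^{*}$, both of which are routine.
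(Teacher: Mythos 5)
Your argument is correct and is exactly the route the paper takes: it transports the identity of Proposition \ref{prop:coeff-LR} through the orthogonal-complement isomorphism $\G(n-r,n)\simeq\G(r,n)$ (under which $[\Xgot_K]$ pulls back to $[\Xgot_{K^{o,c}}]$ and $[pt]$ to $[pt]$), compares it with the same proposition applied directly in $\G(n-r,n)$, and uses $(K^{o,c})^o=K^c$ together with $|\mu(K^{o,c})|=|\mu(K)|$ for the degree statement. The paper merely asserts this; your write-up supplies the same bookkeeping in detail.
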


Soient $n+m> n> r\geq 1$. On a un morphisme canonique $\iota_{m} : \G(r,n)\to \G(r,n+m)$ qui envoie $E\subset \C^n$ sur lui-même. Le théorème 
\ref{theo:phi-r} permet de vérifier le fait suivant: pour tout $I'\in \Pcal(r,n+m)$, l'image de la classe $[\Xgot_{I'}]\in H^{*}(\G(r,n+m),\Z)$ par $\iota_{m}^*$ est 
$$
\iota_{m}^*\left([\Xgot_{I'}]\right)=
\begin{cases}
0\hspace{2cm} \mathrm{si}\hspace{7mm} I'\cap [m]\neq\emptyset ,\\
[\Xgot_{I}]\hspace{1cm} \quad \mathrm{avec}\quad  I=\{i'-m,\, i'\in I' \}\quad \mathrm{si}\quad I'\cap [m]=\emptyset.
\end{cases}
$$

Nous terminons cette partie avec une dernière caractérisation des coefficients de Littlewood-Richardson. 
Soient $m>a\geq 1$ et $n>b\geq 1$. On consid\`ere le morphisme 
$$
\iota : \G(a,m)\times \G(b,n)\longrightarrow \G(a+b,m+n) 
$$
qui envoie $(E,F)$ sur $E\oplus F$. Le résultat suivant est classique (voir \cite{MacDonald} ou \cite[Lemma 2.12]{pep-toshi}).

\begin{prop}\label{prop:calcul-grassmanienne-a-b}
Soient $I\in\Pcal(a,m)$, $J\in\Pcal(b,n)$ et $L\in \Pcal(a+b,m+n)$ tels que $|\mu(I)|+|\mu(J)|=|\mu(L)|$. Alors
$$
[\Xgot_{I}]\times [\Xgot_{J}]\cdot \iota^*\left([\Xgot_{L^o}]\right)= \cc_{\mu(I),\mu(J)}^{\mu(L)} [pt]\times [pt].
$$
De plus $\cc_{\mu(I),\mu(J)}^{\mu(L)}=\cc_{\mu(I^{o,c}),\mu(J^{o,c})}^{\mu(L^{o,c})}$\ .
\end{prop}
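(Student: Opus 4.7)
Le plan est d'identifier les anneaux de cohomologie au moyen du morphisme $\Phi$ du théorème \ref{theo:phi-r}, puis de reconnaître l'application $\iota^*$ comme celle induite par la substitution classique $\mathbf{s}_\lambda(x,y)=\sum_{\mu,\nu}\cc^\lambda_{\mu,\nu}\mathbf{s}_\mu(x)\mathbf{s}_\nu(y)$. Cela ramène le calcul à la caractérisation même (\ref{eq:coeff-LR}) des coefficients de Littlewood-Richardson.

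Plus précisément, je commencerais par rappeler que le morphisme surjectif $\Phi_N:\bigwedge_r\to H^*(\G(r,N),\Z)$ s'interprète géométriquement comme l'évaluation d'un polynôme symétrique sur les racines de Chern du fibré tautologique dual sur $\G(r,N)$. Le point clef est alors l'identification $\iota^*\Scal\simeq \Scal_a\boxplus\Scal_b$, où $\Scal$, $\Scal_a$, $\Scal_b$ désignent les fibrés tautologiques sur $\G(a+b,m+n)$, $\G(a,m)$ et $\G(b,n)$ respectivement ; cette identification découle immédiatement de $\iota(E,F)=E\oplus F$. En la combinant avec l'isomorphisme de Künneth, on obtiendrait la commutativité du diagramme
\[
\xymatrix{
\bigwedge_{a+b} \ar[d]_{\Phi_{m+n}} \ar[r]^{S} & \bigwedge_a \otimes \bigwedge_b \ar[d]^{\Phi_m\otimes\Phi_n}\\
H^*(\G(a+b,m+n),\Z) \ar[r]^{\iota^*} & H^*(\G(a,m)\times\G(b,n),\Z)
}
\]
où $S$ est la substitution $p(x_1,\ldots,x_{a+b})\mapsto p(x_1,\ldots,x_a,y_1,\ldots,y_b)$.

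Appliquant ce diagramme à $\mathbf{s}_{\mu(L)}$ et utilisant la décomposition classique $\mathbf{s}_{\mu(L)}(x,y)=\sum_{\mu,\nu}\cc^{\mu(L)}_{\mu,\nu}\mathbf{s}_\mu(x)\mathbf{s}_\nu(y)$, j'en déduirais $\iota^*\sigma_{\mu(L)}=\sum_{\mu,\nu}\cc^{\mu(L)}_{\mu,\nu}\sigma_\mu\otimes\sigma_\nu$, les termes violant les contraintes $\mu\subset a\times(m-a)$ ou $\nu\subset b\times(n-b)$ étant annulés par $\Phi_m\otimes\Phi_n$. L'évaluation de $[\Xgot_I]\times[\Xgot_J]\cdot\iota^*[\Xgot_{L^o}]$ reposerait alors sur le lemme de dualité rappelé juste avant la proposition \ref{prop:coeff-LR} : le produit $\sigma_{\lambda(I)}\cdot\sigma_\mu$ (respectivement $\sigma_{\lambda(J)}\cdot\sigma_\nu$) ne vaut $[pt]$ que si $\mu=\mu(I)$ (respectivement $\nu=\mu(J)$), et s'annule sinon. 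Seul subsiste alors le terme correspondant, égal à $\cc^{\mu(L)}_{\mu(I),\mu(J)}\,[pt]\times[pt]$.

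L'obstacle principal sera la vérification soigneuse de la commutativité du diagramme ci-dessus : il faut établir rigoureusement le pull-back $\iota^*\Scal\simeq \Scal_a\boxplus\Scal_b$ puis en déduire, via le principe de scindage, que $\iota^*$ agit sur les polynômes symétriques par la substitution $S$. Quant à l'égalité $\cc^{\mu(L)}_{\mu(I),\mu(J)}=\cc^{\mu(L^{o,c})}_{\mu(I^{o,c}),\mu(J^{o,c})}$, elle est une conséquence de la symétrie classique $\cc^\lambda_{\mu,\nu}=\cc^{\lambda'}_{\mu',\nu'}$ des coefficients de Littlewood-Richardson (invariance par conjugaison de partitions), après avoir observé, par un calcul direct à partir des définitions de $\mu$ et des opérations $I\mapsto I^o$, $I\mapsto I^c$, que $\mu(I^{o,c})$ coïncide avec la partition conjuguée $\mu(I)'$.
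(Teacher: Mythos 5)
Votre démonstration est correcte, mais il n'y a pas vraiment de preuve « du texte » à laquelle la comparer : la monographie ne démontre pas cette proposition et renvoie à \cite{MacDonald} et à \cite[Lemma 2.12]{pep-toshi}. Votre argument — identifier $\Phi_N$ (théorème \ref{theo:phi-r}) à l'évaluation sur les racines de Chern du fibré tautologique dual, utiliser $\iota^*\Scal^\vee\simeq\Scal_a^\vee\boxplus\Scal_b^\vee$ pour reconnaître $\iota^*$ comme la substitution $S$, puis conclure par le coproduit des fonctions de Schur et la dualité des classes de Schubert — est précisément l'argument classique qui se cache derrière ces références, et il est complet modulo trois vérifications que vous signalez d'ailleurs vous-même. (i) Pour la commutativité du diagramme, il suffit de la tester sur les générateurs $e_k=\mathbf{s}_{1^k}$ de $\bigwedge_{a+b}$ : $\Phi$ envoie $e_k$ sur $\sigma_{1^k}=c_k(\Scal^\vee)$, et la formule de Whitney appliquée à $\iota^*\Scal^\vee=\Scal_a^\vee\boxplus\Scal_b^\vee$ donne exactement l'image de $S(e_k)=\sum_{i+j=k}e_i\otimes e_j$ ; c'est plus économique que le principe de scindage que vous invoquez. (ii) Dans l'étape finale, attention : le lemme de dualité du texte n'est énoncé qu'en degrés complémentaires, et pour $|\mu|\neq|\mu(I)|$ le produit $\sigma_{\lambda(I)}\cdot\sigma_\mu$ ne s'annule pas nécessairement ; c'est le terme tensoriel tout entier qui meurt, pour raison de degré, car l'autre facteur dépasse alors le degré maximal — il faut le dire explicitement plutôt que d'affirmer l'annulation facteur par facteur. (iii) L'identité $\mu(I^{o,c})=\mu(I)'$ est exacte (vérification combinatoire directe) et la symétrie $\cc^{\lambda}_{\mu,\nu}=\cc^{\lambda'}_{\mu',\nu'}$ donne bien la dernière égalité ; on peut aussi l'obtenir sans combinatoire, comme pour le corollaire \ref{coro:coeff-LR-sym}, en appliquant la formule principale aux orthogonaux $E\mapsto E^{\perp}$ dans les trois grassmanniennes, le plongement $\iota$ commutant avec le passage à l'orthogonal.
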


\medskip

\begin{definition}Pour alléger nos notations, nous écrirons dans les sections suivantes 
$$
\cc_{I,J}^L\quad \in\quad  \N
$$
pour le coefficient de Littlewood-Richardson $\cc_{\mu(I),\mu(J)}^{\mu(L)}$ associé à trois sous-ensembles finis $I,J,L\subset \N-\{0\}$. 
Par définition, $\cc_{I,J}^L=0$ si $|\mu(I)|+|\mu(J)|\neq |\mu(L)|$.
\end{definition}

%%%%%%%%%%%%%%%%%%%%%%%%%%%%%%%%%%%%%%%%%%%%%%%%%%%%%%
%%%%%%%%%%%%%%%%%%%%%%%%%%%%%%%%%%%%%%%%%%%%%%%%%%%%%%
\section{La Lévi-mobilité}
%%%%%%%%%%%%%%%%%%%%%%%%%%%%%%%%%%%%%%%%%%%%%%%%%%%%%%
%%%%%%%%%%%%%%%%%%%%%%%%%%%%%%%%%%%%%%%%%%%%%%%%%%%%%%

Nous revenons au contexte de la section \ref{sec:cohomology-flag}. L'ouvert de Zariski $\Ucal\subset U_\C\times \tU_\C$ de la proposition \ref{prop:intersection-number-2} 
est invariant par translation par $U_\C$, donc il est de la forme $\Ucal=\{ (kg,k\tg), k\in U_\C, (g,\tg)\in \Ucal_0\}$ avec 
$\Ucal_0=\{(g,\tg)\in \Ucal, [e]\in \iota(g\Xgot_{w,\gamma}^o)\bigcap\tg\Xgot_{\tw,\gamma}^o\}$. 
Un calcul immédiat montre que $[e]\in \iota(g\Xgot_{w,\gamma}^o)\bigcap\tg\Xgot_{\tw,\gamma}^o$ si et seulement si il existe 
$(p,\tp)\in P(\gamma)\times \tP(\gamma)$ tels que 
$$
g\Xgot_{w,\gamma}^o=pw^{-1}\Xgot_{w,\gamma}^o\quad \mathrm{et}\quad \tg\Xgot_{\tw,\gamma}^o=\tp\tw^{-1}\tXgot_{\tw,\gamma}^o
$$
Ainsi, la proposition \ref{prop:intersection-number-2} peut être reformulée sous la forme suivante.

\begin{prop}\label{prop:intersection-number-3}
Soient $(w,\tw)\in W/W^\gamma\times\tW/\tW^\gamma$ tels que $d(w,\gamma)=c(\tw,\gamma)$. Les conditions 
suivantes sont équivalentes:
\begin{enumerate}
\item $[\Xgot_{w,\gamma}]\cdot \iota^*[\tXgot_{\tw,\gamma}]\neq 0$ dans  $H^{max}(\Fcal_\gamma,\Z)$. 
\item Pour tout $\tp\in \tP(\gamma)$ générique, l'intersection 
\begin{equation}\label{eq:intersection-X-tilde-X}
\tp\left(\iota(w^{-1}\Xgot_{w,\gamma}^o)\right)\bigcap \tw^{-1} \tXgot_{\tw,\gamma}^o
\end{equation} 
est transverse en $[e]$.
\end{enumerate}
\end{prop}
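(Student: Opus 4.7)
Le plan est de déduire cette équivalence de la proposition \ref{prop:intersection-number-2}, en exploitant les calculs préliminaires présentés juste avant l'énoncé. Rappelons que la proposition \ref{prop:intersection-number-2} caractérise l'entier $N(w,\tw)$, défini par $[\Xgot_{w,\gamma}]\cdot\iota^*[\tXgot_{\tw,\gamma}] = N(w,\tw)[pt]$, comme un nombre d'intersections transverses: pour tout couple $(g,\tg)$ dans un ouvert de Zariski dense $\Ucal\subset U_\C\times \tU_\C$, l'intersection $\iota(g\Xgot_{w,\gamma}^o)\cap\tg\tXgot_{\tw,\gamma}^o$ est transverse en chaque point, avec exactement $N(w,\tw)$ éléments. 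La non-annulation du produit cohomologique équivaut donc à ce que cette intersection soit non vide pour un (ou, ce qui revient au même, tout) $(g,\tg)\in\Ucal$.

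Tout d'abord, j'utiliserai l'invariance de $\Ucal$ sous l'action diagonale de $U_\C$ (conséquence de l'équivariance de $\iota:\Fcal_\gamma\croc\tFcal_\gamma$) pour ramener un point d'intersection en $[e]$. La description explicite déjà rappelée dans le texte précédant l'énoncé montre qu'alors $(g,\tg) = (kpw^{-1}b,\ k\tp\tw^{-1}\tilde{b})$ avec $k\in U_\C$, $b\in B$, $\tilde{b}\in \tB$, et $(p,\tp)\in P(\gamma)\times\tP(\gamma)$, de sorte que $g\Xgot_{w,\gamma}^o = kpw^{-1}\Xgot_{w,\gamma}^o$ et $\tg\tXgot_{\tw,\gamma}^o = k\tp\tw^{-1}\tXgot_{\tw,\gamma}^o$. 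La condition (1) se reformule alors en: il existe $(p,\tp)\in P(\gamma)\times\tP(\gamma)$ tel que $p\,\iota(w^{-1}\Xgot_{w,\gamma}^o)\cap \tp\tw^{-1}\tXgot_{\tw,\gamma}^o$ soit transverse en $[e]$.

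Ensuite, je montrerai l'inclusion clé $P(\gamma)\subset\tP(\gamma)$, qui est immédiate à partir de la définition (\ref{eq:P-gamma}), appliquée respectivement à $U_\C$ et $\tU_\C$. Par équivariance de $\iota$ et translation par $\tp^{-1}$, l'intersection ci-dessus s'identifie à $(\tp^{-1}p)\,\iota(w^{-1}\Xgot_{w,\gamma}^o)\cap \tw^{-1}\tXgot_{\tw,\gamma}^o$, avec $\tp^{-1}p\in\tP(\gamma)$. Posant $\tilde{q} = \tp^{-1}p$, et remarquant que la translation est un difféomorphisme qui préserve la transversalité, la condition se réduit à l'existence d'un $\tilde{q}\in\tP(\gamma)$ tel que $\tilde{q}\,\iota(w^{-1}\Xgot_{w,\gamma}^o)\cap \tw^{-1}\tXgot_{\tw,\gamma}^o$ soit transverse en $[e]$.

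Pour conclure, je noterai que la condition de transversalité en $[e]$ s'exprime comme la non-annulation d'un déterminant polynomial en $\tilde{q}$, donc c'est une condition algébrique ouverte sur l'irréductible $\tP(\gamma)$. Elle est par conséquent satisfaite soit sur l'ensemble vide, soit sur un ouvert de Zariski dense, ce qui fournit l'équivalence entre l'existence d'un tel $\tilde{q}$ et sa généricité. La réciproque (2)$\Rightarrow$(1) est alors immédiate: si un tel $\tilde{q}$ existe, le point $[e]$ fournit un point transverse d'intersection pour un couple $(g,\tg)$ bien choisi, ce qui garantit $N(w,\tw)\geq 1$. L'étape la plus subtile, bien qu'essentiellement mécanique, sera de tracer correctement comment le paramètre $p\in P(\gamma)$ disparaît de l'énoncé final, par simple absorption dans $\tp\in\tP(\gamma)$ via l'inclusion $P(\gamma)\subset\tP(\gamma)$.
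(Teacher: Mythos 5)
Votre preuve est correcte et suit essentiellement la même démarche que le texte : elle réduit l'énoncé à la proposition \ref{prop:intersection-number-2} en utilisant l'invariance de $\Ucal$ par translation diagonale de $U_\C$ pour ramener un point d'intersection en $[e]$, caractérise cette appartenance par un couple $(p,\tp)\in P(\gamma)\times\tP(\gamma)$, puis absorbe $p$ dans $\tP(\gamma)$ via $\iota(P(\gamma))\subset\tP(\gamma)$, la généricité résultant du caractère ouvert (non-annulation d'un déterminant) de la transversalité sur l'irréductible $\tP(\gamma)$. C'est exactement la reformulation faite dans le paragraphe précédant la proposition, avec un peu plus de détails sur l'étape de généricité.
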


L'espace tangent à $w^{-1}\Xgot_{w,\gamma}^o$ en $[e]$ est égal au sous-espace vectoriel 
$$
T_{w,\gamma}:=\sum_{\alpha\in w^{-1}\Rgot^+, (\alpha,\gamma)>0}(\ugot_\C)_\alpha\quad \mbox{\Large$\subset$} \quad \ugot_\C^{\gamma>0}\simeq T_{[e]}\Fcal_\gamma.
$$
L'égalité $d(w,\gamma)=c(\tw,\gamma)$ signifie que la dimension de la cellule $w^{-1}\Xgot_{w,\gamma}^o$ est égale à la codimension de la cellule 
$\tw^{-1}\tXgot_{\tw,\gamma}^o$. Lorsque cette égalité est satisfaite, le produit $[\Xgot_{w,\gamma}]\cdot \iota^*[\tXgot_{\tw,\gamma}]$ appartient à 
$H^{max}(\Fcal_\gamma,\Z)$, et l'intersection (\ref{eq:intersection-X-tilde-X}) est transverse en $[e]$ si et seulement 
si on a la somme directe 
\begin{equation}\label{eq:transverse-w-gamma}
Ad(\tp)\left(T_{w,\gamma}\right)\bigoplus T_{\tw,\gamma}\bigoplus \tugot_\C^{\gamma\leq 0}=\tugot_\C
\end{equation}

Belkale et Kumar ont introduit la notion suivante \cite{BK06}.

\begin{definition} \label{def:LM}
Le couple $(w,\tw)\in W/W^\gamma\times\tW/\tW^\gamma$ est appelé Lévi-mobile, si, pour $\tilde{\ell}\in \tU^\gamma_\C$
 générique,  l'intersection 
$\tilde{\ell}\left(\iota(w^{-1}\Xgot_{w,\gamma}^o)\right)\bigcap \tw^{-1} \tXgot_{\tw,\gamma}^o$ est transverse en $[e]$.
\end{definition}

Dans le cas particulier où $U$ est inclus diagonalement dans $\tU:=U^s$ pour $s\geq 2$, la définition \ref{def:LM} prend la forme suivante.

\begin{definition} \label{def:LM-1}
 $(w_1,\ldots, w_{s+1})\in (W/W^\gamma)^{s+1}$ est appelé Lévi-mobile, si, pour 
$(\ell_1,\ldots, \ell_{s+1})\in (U^\gamma_\C)^{s+1}$ générique,  l'intersection 
\begin{equation}\label{eq:intersection-transverse}
\ell_1\left(w_1^{-1}\Xgot_{w_1,\gamma}^o\right)\bigcap \cdots \bigcap \ell_{s+1}\left(w_{s+1}^{-1}\Xgot_{w_{s+1},\gamma}^o\right)
\end{equation}
est transverse en $[e]$.
\end{definition}

Rappelons que l'intersection (\ref{eq:intersection-transverse}) est transverse si et seulement si la projection
$$
\ugot_\C^{\gamma>0}\longrightarrow \bigoplus_{i=1}^{s+1} \ugot_\C^{\gamma>0}/T_{w_i,\gamma}
$$
est surjective.

Le prochain résultat permettra de montrer que la Lévi-mobilité est caractérisé par un invariant numérique.

\begin{lem}\label{lem:levi-condition-numerique}
Soit $(w,\tw)\in W/W^\gamma\times\tW/\tW^\gamma$ tels que $d(w,\gamma)=c(\tw,\gamma)$. Si $(w,\tw)$ est  Lévi-mobile, alors 
\begin{equation}\label{eq:invariant-levi-mobile}
\tr(w\gamma \circlearrowright \ngot^{w\gamma>0})=\tr(\tw\gamma \circlearrowright \tngot_{-}^{\tw\gamma>0}).
\end{equation}
\end{lem}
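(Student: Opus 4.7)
L'idée est de traduire la Lévi-mobilité en une décomposition en somme directe de l'espace tangent $T_{[e]}\tFcal_\gamma \simeq \tugot_\C^{\gamma>0}$, puis d'extraire l'identité des traces en exploitant le fait que les éléments du Levi centralisent $\gamma$. Rappelons que l'espace tangent en $[e]$ à la cellule translatée $w^{-1}\Xgot^{o}_{w,\gamma}$ s'identifie à $T_{w,\gamma}$, et de même pour $T_{\tw,\gamma}\subset \tugot_\C^{\gamma>0}$. La Lévi-mobilité de $(w,\tw)$ dit exactement que pour $\tilde\ell\in\tU_\C^\gamma$ générique l'intersection $\tilde\ell(\iota(w^{-1}\Xgot^{o}_{w,\gamma}))\cap \tw^{-1}\tXgot^{o}_{\tw,\gamma}$ est transverse en $[e]$, soit
\begin{equation*}
\Ad(\tilde\ell)(T_{w,\gamma}) + T_{\tw,\gamma} = \tugot_\C^{\gamma>0}.
\end{equation*}
L'hypothèse $d(w,\gamma)=c(\tw,\gamma)$ donne $\dim T_{w,\gamma}+\dim T_{\tw,\gamma}=\dim \tugot_\C^{\gamma>0}$, de sorte que cette somme sera nécessairement directe.

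Ensuite, j'utiliserai le fait crucial que tout $\tilde\ell\in \tU_\C^\gamma$ centralise $\gamma$ (par définition de $\tU_\C^\gamma$) : alors $\Ad(\tilde\ell)$ préserve chaque sous-espace $\gamma$-propre de $\tugot_\C^{\gamma>0}$ et en particulier $\tr(\gamma\circlearrowright \Ad(\tilde\ell)(T_{w,\gamma}))=\tr(\gamma\circlearrowright T_{w,\gamma})$. Prendre la $\gamma$-trace de la décomposition en somme directe précédente fournira donc
\begin{equation*}
\tr(\gamma\circlearrowright T_{w,\gamma}) + \tr(\gamma\circlearrowright T_{\tw,\gamma}) = \tr(\gamma\circlearrowright \tugot_\C^{\gamma>0}).
\end{equation*}

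Il restera à relier ces trois traces aux quantités apparaissant dans l'énoncé en translatant les poids par $w$ et $\tw$. Un changement de variable direct sur les espaces radiciels montre que $\Ad(w)(T_{w,\gamma})=\ngot^{w\gamma>0}$, d'où $\tr(\gamma\circlearrowright T_{w,\gamma})=\tr(w\gamma\circlearrowright \ngot^{w\gamma>0})$ par $\Ad$-invariance de la trace; le calcul analogue pour $\tw$ donnera $\tr(\gamma\circlearrowright T_{\tw,\gamma})=\tr(\tw\gamma\circlearrowright \tngot^{\tw\gamma>0})$ ainsi que $\tr(\gamma\circlearrowright \tugot_\C^{\gamma>0})=\tr(\tw\gamma\circlearrowright \tugot_\C^{\tw\gamma>0})$. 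Puisque toute racine a un signe déterminé, on a la décomposition $\tugot_\C^{\tw\gamma>0}=\tngot^{\tw\gamma>0}\oplus \tngot_{-}^{\tw\gamma>0}$; en substituant et en simplifiant par $\tr(\tw\gamma\circlearrowright \tngot^{\tw\gamma>0})$, on obtiendra l'identité voulue.

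L'argument ne présente pas de véritable obstacle; la seule observation délicate est que la Lévi-mobilité, contrairement à une simple transversalité générique sous l'action de la parabolique $\tP(\gamma)$, contraint l'élément $\tilde\ell$ à appartenir au centralisateur de $\gamma$, et c'est précisément cette propriété qui permet d'égaler les traces des deux côtés. Notons enfin que ce type de raisonnement \og trace d'une décomposition transverse équivariante \fg{} est exactement la contrepartie numérique de la condition $(A_2)$ des paires de Ressayre, ce qui laisse présager que les couples Lévi-mobiles fourniront les paires de Ressayre cherchées dans la suite.
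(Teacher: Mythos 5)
Votre démonstration est correcte et suit essentiellement la même démarche que celle du texte : transversalité de Lévi-mobilité traduite en une somme directe $\Ad(\tilde\ell)(T_{w,\gamma})\oplus T_{\tw,\gamma}$ dans $\tugot_\C^{\gamma>0}$ (le texte écrit la décomposition équivalente dans $\tugot_\C$ avec le facteur $\tugot_\C^{\gamma\leq 0}$), prise de la $\gamma$-trace en utilisant que $\tilde\ell$ centralise $\gamma$, puis conversion des traces via $\Ad(w)$, $\Ad(\tw)$ et la décomposition $\tugot_\C^{\tw\gamma>0}=\tngot^{\tw\gamma>0}\oplus\tngot_-^{\tw\gamma>0}$. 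Votre rédaction explicite même deux points laissés implicites dans le texte (le comptage de dimensions qui rend la somme directe, et le rôle précis du centralisateur dans l'égalité des traces), ce qui est bienvenu.
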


\begin{proof}
Lorsque $(w,\tw)$ est  Lévi-mobile, l'identité 
$Ad(\tilde{\ell})\left(T_{w,\gamma}\right)\bigoplus T_{\tw,\gamma}\bigoplus \tugot_\C^{\gamma\leq 0}=\tugot_\C$, qui est valable pour 
$\tilde{\ell}\in \tU^\gamma_\C$ générique, implique que 
\begin{equation}\label{eq:trace-A}
\tr\left(\gamma \circlearrowright T_{w,\gamma}\right)+ \tr\left(\gamma \circlearrowright T_{\tw,\gamma}\right)
= \tr(\gamma \circlearrowright \tugot_\C^{\gamma> 0}).
\end{equation} 
Comme $Ad(w^{-1})\left( \ngot^{w\gamma>0}\right)=T_{w,\gamma}$ et  
$Ad(\tilde{w}^{-1})\left( \tngot^{\tw\gamma>0}\right)=T_{\tw,\gamma}$, on a  
\begin{equation}\label{eq:trace-B}
\tr\left(\gamma \circlearrowright T_{w,\gamma}\right)+\tr\left(\gamma \circlearrowright T_{\tw,\gamma}\right)=\tr(w\gamma \circlearrowright \ngot^{w\gamma>0})+ \tr\left(\tw\gamma \circlearrowright \tngot^{\tw\gamma>0}\right).
\end{equation}
Maintenant on utilise que 
\begin{equation}\label{eq:trace-C}
\tr\left(\gamma \circlearrowright \tugot_\C^{\gamma> 0}\right)=
\tr\left(\tw\gamma \circlearrowright \tugot_\C^{\tw\gamma> 0}\right)=\tr\left(\tw\gamma \circlearrowright \tngot^{\tw\gamma>0}\right)
+\tr\left(\tw\gamma \circlearrowright \tngot_-^{\tw\gamma>0}\right).
\end{equation}
Alors, (\ref{eq:invariant-levi-mobile}) et une conséquence de (\ref{eq:trace-A}), (\ref{eq:trace-B}) et (\ref{eq:trace-C}).
\end{proof}

\medskip

Nous montrons maintenant le lien entre la Lévi-mobilité et les paires de Ressayre. Pour cela, nous considérons $N:=\tU_C$ 
comme une variété de K\"ahler $\tU\times U$-hamiltonienne (voir l'exemple \ref{ex:U-tilde-C-Kahler}). A tout $\gamma\in\tgot$ 
et $\mathbf{w}:=(w,\tw)\in W/W^\gamma\times\tW/\tW^\gamma$, on associe le vecteur 
$\gamma_{\mathbf{w}}:=(\tw\gamma,w\gamma)$.

La sous-variété $N^{\gamma_{\mathbf{w}}}:=\{n\in N,\ \gamma_{\mathbf{w}}\cdot n= 0\}$ est connexe, égale à 
$C_{\gamma,\mathbf{w}}:=\tw \tU_\C^\gamma w^{-1}$. La sous-variété de Bia\l ynicki-Birula associée à 
$C_{\gamma,\mathbf{w}}$ est $C_{\gamma,\mathbf{w}}^-:= \tilde{w}\tP(\gamma)w^{-1}$
où $\tP(\gamma)$ est le sous-groupe parabolique de $\tU_\C$ associé à $\gamma$ (voir (\ref{eq:P-gamma})). Le sous-groupe parabolique de 
$\tU_\C\times U_\C$ associé au poids $\gamma_{\mathbf{w}}$ est 
$$
P(\gamma_{\mathbf{w}}):=\mathrm{Ad}(\tilde{w})(\tilde{P}(\gamma))\times \mathrm{Ad}(w)(P(\gamma))=\tP(\tw\gamma)\times P(w\gamma).
$$

Soient $B\subset U_\C$ et $\tB\subset \tU_\C$ les sous-groupes de Borel associés au choix de la chambre de Weyl $\tgot^*_{+}$
et $\ttgot^*_+$. Nous pouvons maintenant considérer la fonction holomorphe
$$
q_{\gamma,\mathbf{w}}: X_{\gamma,\mathbf{w}}:=(\tilde{B}\times B)\times_{(\tilde{B}\times B)\cap P(\gamma_{\mathbf{w}})} C_{\gamma,\mathbf{w}}^- \longrightarrow  \tU_\C
$$
qui envoie $[\tilde{b},b;\tilde{w}\tilde{p}w^{-1}]$ vers $\tilde{b}\tilde{w}\tilde{p}w^{-1}b^{-1}$. 

Pour tout $\tp\in\tP(\gamma)$, l'espace tangent $\T_{[\tilde{e},e;\tilde{w}\tilde{p}w^{-1}]}X_{\gamma,\mathbf{w}}$ s'identifie avec 
$\tngot^{\tw\gamma>0}\times \ngot^{w\gamma>0}\times \tugot_\C^{\gamma\leq 0}$, et l'application 
$\T_{[\tilde{e},e;\tilde{w}\tilde{p}w^{-1}]}q_{\gamma,\mathbf{w}}$ est un isomorphisme si et seulement si 
\begin{equation}\label{eq:application-tangente-q-w-gamma}
\tngot^{\tw\gamma>0}\bigoplus Ad(\tilde{w}\tilde{p}w^{-1})\left( \ngot^{w\gamma>0}\right)\bigoplus Ad(\tilde{w})\left(\tugot_\C^{\gamma\leq 0}\right)= \tugot_\C
\end{equation}
Comme $Ad(w^{-1})\left( \ngot^{w\gamma>0}\right)=T_{w,\gamma}$ et  $Ad(\tilde{w}^{-1})\left( \tngot^{\tw\gamma>0}\right)=T_{\tw,\gamma}$, on voit que 
les sommes directes (\ref{eq:application-tangente-q-w-gamma}) et (\ref{eq:transverse-w-gamma}) sont équivalentes.

\begin{prop}\label{prop:RP-exemple-fondamental-1}
Les conditions suivantes sont \'equivalentes:
\begin{enumerate}\setlength{\itemsep}{5pt}
\item $(\gamma_{\mathbf{w}},C_{\gamma,\mathbf{w}})$ est une paire de Ressayre algébrique de $N$.
\item  $(\gamma_{\mathbf{w}},C_{\gamma,\mathbf{w}})$ est une paire de Ressayre de $N$.
\item  $(\gamma,w,\tw)$ satisfait les points suivants :
\begin{itemize}
\item[$(A_1)$] $d(w,\gamma)=c(\tw,\gamma)$.
\item[$(A_2)$] $\tr(w\gamma \circlearrowright \ngot^{w\gamma>0})=\tr(\tw\gamma \circlearrowright \tngot_{-}^{\tw\gamma>0})$.
\item[$(A_3)$] Pour $\tg\in\tU_\C$ générique, l'intersection $\tg\iota(\Xgot^o_{w,\gamma})\bigcap \tXgot^o_{\tw,\gamma}$ est réduite à un singleton.
\end{itemize}
\item $(\gamma,w,\tw)$ satisfait les points suivants :
\begin{itemize}
\item[$(B_1)$] $[\Xgot_{w,\gamma}]\cdot \iota^*[\tXgot_{\tw,\gamma}]= [pt]$\ dans  $H^{max}(\Fcal_\gamma,\Z)$. 
\item[$(B_2)$] $(w,\tw)$ est Lévi-mobile.
\end{itemize}
\item $(\gamma,w,\tw)$ satisfait $(A_2)$ et $(B_1)$.
\end{enumerate}
\end{prop}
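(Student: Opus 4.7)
Ma stratégie est d'établir la chaîne d'équivalences en trois étapes : d'abord \emph{1.} $\Leftrightarrow$ \emph{2.} $\Leftrightarrow$ \emph{3.} via la caractérisation abstraite des paires de Ressayre, puis \emph{3.} $\Leftrightarrow$ \emph{5.} via le calcul de Schubert, et enfin \emph{4.} $\Leftrightarrow$ \emph{5.} grâce au théorème de Belkale--Kumar sur la Lévi-mobilité.

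Pour le premier bloc, l'équivalence \emph{1.} $\Leftrightarrow$ \emph{2.} découle directement des définitions : une paire de Ressayre algébrique est une paire de Ressayre par la remarque suivant la définition \ref{def:B-ressayre-pair-alg}, et réciproquement, une paire de Ressayre induit un morphisme $q_{\gamma,\mathbf{w}}$ birationnel entre variétés algébriques de même dimension, donc $\deg(q_{\gamma,\mathbf{w}}) = 1$. L'équivalence \emph{2.} $\Leftrightarrow$ \emph{3.} est une application de la proposition \ref{prop:caracteriser-RP} à la variété $N = \tU_\C$. Le calcul d'espace tangent déjà amorcé dans le texte, traduit en trivialisation à droite, montre que l'action infinitésimale de $\gamma_{\mathbf{w}}$ sur $T_{\tg} N$ (pour $\tg = \tw\tu w^{-1}\in C_{\gamma,\mathbf{w}}$ avec $\tu\in\tU^\gamma_\C$) coïncide avec $\mathrm{ad}(\tw\gamma)$ via l'identité clef $\mathrm{Ad}(\tg^{-1})(\tw\gamma) = w\gamma$. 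Ainsi $(T N\vert_{C_{\gamma,\mathbf{w}}})^{\gamma_{\mathbf{w}}>0}$ s'identifie à $\tugot_\C^{\tw\gamma>0}$, tandis que le nilradical de $\tB\times B$ restreint aux poids positifs de $\gamma_{\mathbf{w}}$ est $\tngot^{\tw\gamma>0} \oplus \ngot^{w\gamma>0}$. La condition abstraite $(A_1)$ devient alors $d(\tw,\gamma) + d(w,\gamma) = \dim_\C \tFcal_\gamma$, soit $d(w,\gamma) = c(\tw,\gamma)$. La condition de trace se réécrit sous la forme annoncée en invoquant la décomposition $\tugot_\C^{\tw\gamma>0} = \tngot^{\tw\gamma>0} \oplus \tngot_-^{\tw\gamma>0}$. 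Enfin, la condition $(A_3)$ abstraite portant sur les fibres génériques de $q_{\gamma,\mathbf{w}}$ se traduit, par $\tU_\C\times U_\C$-équivariance et réduction au point $[e]\in\tFcal_\gamma$, en l'énoncé sur l'intersection générique des cellules de Schubert.

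L'équivalence \emph{3.} $\Leftrightarrow$ \emph{5.} provient de la proposition \ref{prop:intersection-number-2} qui identifie le nombre d'intersection $N(w,\tw)$ avec le cardinal générique de $\iota(g\Xgot_{w,\gamma}^o)\cap\tg\Xgot_{\tw,\gamma}^o$. Ainsi la conjonction $(A_1)+(A_3)$ est équivalente à $(B_1)$ (qui demande $N(w,\tw) = 1$ dans $H^{\max}(\Fcal_\gamma,\Z)$), tandis que $(A_2)$ est commune aux deux formulations.

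Pour l'équivalence \emph{4.} $\Leftrightarrow$ \emph{5.}, l'implication $(B_2) \Rightarrow (A_2)$ est exactement le lemme \ref{lem:levi-condition-numerique}. La réciproque $(A_2) + (B_1) \Rightarrow (B_2)$ constitue l'obstacle principal de la preuve : on dispose, via $(B_1)$ et la proposition \ref{prop:intersection-number-3}, d'une transversalité générique sur le parabolique $\tP(\gamma)$ tout entier, et il faut en déduire la transversalité générique sur le seul facteur de Lévi $\tU^\gamma_\C$. L'argument passe par l'analyse du polynôme déterminant encodant la transversalité, en utilisant la décomposition $\tP(\gamma) = \tU^\gamma_\C \ltimes \widetilde{N}(\gamma)$ et en exploitant la condition de trace $(A_2)$ pour contrôler l'action du radical unipotent $\widetilde{N}(\gamma)$ sur ce déterminant. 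Un calcul direct montre que $(A_1)+(A_2)$ seules n'imposent pas, en général, l'égalité des dimensions poids par poids $\dim T_{w,\gamma}^{\gamma=a} + \dim T_{\tw,\gamma}^{\gamma=a} = \dim\tugot_\C^{\gamma=a}$, donc l'argument doit exploiter de manière cruciale la structure combinatoire spécifique des cellules de Schubert : c'est essentiellement le théorème central de Belkale et Kumar \cite{BK06} caractérisant numériquement la Lévi-mobilité.
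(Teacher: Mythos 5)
Votre plan et vos deux premiers blocs suivent pour l'essentiel la preuve du texte : 1.$\Leftrightarrow$2.$\Leftrightarrow$3. par la proposition \ref{prop:caracteriser-RP} appliquée à $N=\tU_\C$, avec le même calcul d'espaces tangents et la même identification des fibres de $q_{\gamma,\mathbf{w}}$ avec les intersections $\tg\iota(\Xgot^o_{w,\gamma})\cap\tXgot^o_{\tw,\gamma}$ ; puis 3.$\Leftrightarrow$5. via la proposition \ref{prop:intersection-number-2}, et $(B_2)\Rightarrow(A_2)$ par le lemme \ref{lem:levi-condition-numerique}. Votre argument direct pour 2.$\Rightarrow$1. (fibre générique réduite à un point, donc degré $1$) est une variante légitime du 4.$\Rightarrow$1. du texte.

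En revanche, il y a une lacune réelle à l'endroit que vous désignez vous-même comme \og l'obstacle principal \fg{} : l'implication 5.$\Rightarrow$4., c'est-à-dire $(A_2)+(B_1)\Rightarrow$ Lévi-mobilité, n'est pas démontrée. Vous esquissez une stratégie (décomposition $\tP(\gamma)=\tU^\gamma_\C\ltimes \widetilde{N}(\gamma)$, contrôle du radical unipotent par la trace) que vous ne menez pas à terme, puis vous renvoyez au théorème central de \cite{BK06} --- or c'est précisément une forme de l'énoncé à établir, ce qui rend l'argument circulaire ; et votre remarque sur l'absence d'égalité des dimensions poids par poids est un faux problème, car la preuve n'a besoin ni de cette égalité ni de la combinatoire des cellules de Schubert. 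Le mécanisme du texte est le suivant : $(A_2)+(B_1)$ équivaut au point 3., donc, par la proposition \ref{prop:caracteriser-RP}, $(\gamma_{\mathbf{w}},C_{\gamma,\mathbf{w}})$ est une paire de Ressayre ; or le moteur de cette proposition est le lemme fondamental \ref{lem:RP-fondamental} (la condition de trace $(A_2)$ interprétée comme trivialité de l'action du tore sur un fibré en droites, cf. \cite[Lemma 2.2]{pep-ressayre}), qui force le lieu régulier de $q_{\gamma,\mathbf{w}}$ à rencontrer $C_{\gamma,\mathbf{w}}$ elle-même. Il existe donc $\tilde{\ell}\in\tU^\gamma_\C$ tel que l'application tangente en $[\tilde{e},e;\tw\tilde{\ell}w^{-1}]$ soit un isomorphisme, ce qui, par l'équivalence des sommes directes (\ref{eq:application-tangente-q-w-gamma}) et (\ref{eq:transverse-w-gamma}), est exactement la transversalité en $[e]$ après translation par $\tilde{\ell}$ ; la condition étant ouverte (non-annulation d'un déterminant) dans la variété irréductible $\tU^\gamma_\C$, un seul témoin donne la généricité, donc la Lévi-mobilité. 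C'est ce passage --- la trace force le lieu régulier à toucher le facteur de Lévi --- qu'il vous faut rédiger pour que la preuve soit complète et autonome.
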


\begin{proof}
L'implication $1.\Longrightarrow 2.$ est immédiate.

D'après la proposition \ref{prop:caracteriser-RP}, $(\gamma_{\mathbf{w}},C_{\gamma,\mathbf{w}})$ est une paire de Ressayre si et seulement si elle satisfait trois conditions. 
La première condition est l'égalité 
$$
\dim \tngot^{\tw\gamma>0} + \dim \ngot^{w\gamma>0} = \mathrm{rank} \left(TN\vert_{C_{\gamma,\mathbf{w}}}\right)^{\gamma_{\mathbf{w}}>0}
$$
Un calcul direct montre que cette identité est équivalente à $(A_1)$.

La deuxième condition est l'égalité 
\begin{equation}\label{eq:trace-RP-exemple-fondamental}
\tr\left(\tw\gamma \circlearrowright \tngot^{\tw\gamma>0}\right)+ \tr\left(w\gamma \circlearrowright \ngot^{w\gamma>0}\right)
=\tr\left(\gamma_{\mathbf{w}} \circlearrowright \left(TN\vert_{C_{\gamma,\mathbf{w}}}\right)^{\gamma_{\mathbf{w}}>0}\right).
\end{equation}
Un calcul direct donne 
$$
\tr\left(\gamma_{\mathbf{w}} \circlearrowright \left(TN\vert_{C_{\gamma,\mathbf{w}}}\right)^{\gamma_{\mathbf{w}}>0}\right)=
\tr\left(\gamma \circlearrowright \tugot_\C^{\gamma>0}\right)=
 \tr\left(\tw\gamma \circlearrowright \tugot_\C^{\tw\gamma>0}\right).
$$
On voit alors que (\ref{eq:trace-RP-exemple-fondamental}) est équivalente à la relation $(A_2)$.

La dernière condition de  la proposition \ref{prop:caracteriser-RP} concerne 
$Z_{\gamma,\mathbf{w}}:=\left\{\tg\in\tU_\C, \ \sharp q_{\gamma,\mathbf{w}}^{-1}(\tg)\,=\,1\, \right\}$ : celui-ci doit
contenir un ouvert dense. Considérons l'application 
$$
\pi_{\gamma,\mathbf{w}}: X_{\gamma,\mathbf{w}} \longrightarrow  \tFcal_\gamma\times \Fcal_{\gamma}
$$
qui envoie $[\tilde{b},b;\tilde{w}\tilde{p}w^{-1}]$ vers $([\tilde{b}\tw], [bw])$. Il est facile de vérifier que 
$$
(q_{\gamma,\mathbf{w}}, \pi_{\gamma,\mathbf{w}}) : X_{\gamma,\mathbf{w}} \longrightarrow  \tU_\C\times \tFcal_\gamma\times \Fcal_{\gamma}
$$
est une application injective, donc l'application $\pi_{\gamma,\mathbf{w}}$ est injective sur chaque fibre $q_{\gamma,\mathbf{w}}^{-1}(\tg)$. Nous vérifions maintenant directement que 
\begin{equation}\label{eq:identification-fibre}
\pi_{\gamma,\mathbf{w}}\left(q_{\gamma,\mathbf{w}}^{-1}(\tg)\right)=\Big\{(\tilde{x},x)\in \tXgot^o_{\tw,\gamma}\times \Xgot^o_{w,\gamma}, \ \tg\iota (x)=\tilde{x}\Big\}\simeq 
\tg\iota(\Xgot^o_{w,\gamma})\bigcap \tXgot^o_{\tw,\gamma}.
\end{equation}
Nous avons montré que $Z_{\gamma,\mathbf{w}}$  correspond au sous-ensemble formé par les éléments $\tg\in\tU_\C$, tels que $\tg\iota(\Xgot^o_{w,\gamma})\bigcap \tXgot^o_{\tw,\gamma}$ est un singleton. On a ainsi montré l'équivalence $2.\Longleftrightarrow 3.$.

L'implication $4.\Longrightarrow 3.$ est la conséquence de deux faits. Tout d'abord, les conditions $(A_1)$ et $(A_3)$ sont équivalentes à $(B_1)$ (voir la proposition 
\ref{prop:intersection-number-2}), et ensuite la condition $(B_2)$ implique $(A_2)$ d'après le lemme \ref{lem:levi-condition-numerique}.

Pour l'implication $3.\Longrightarrow 4.$, il suffit d'expliquer pourquoi le point $3.$ implique que $(w,\tw)$ est Lévi-mobile. Si le point $3.$ est satisfait, 
alors $(\gamma_{\mathbf{w}},C_{\gamma,\mathbf{w}})$ est une paire de Ressayre, et cela implique qu'il 
existe $\tw\tilde{\ell}w\in C_{\gamma,\mathbf{w}}$ tel que l'application tangente $\T_{[\tilde{e},e;\tilde{w}\tilde{\ell}w^{-1}]}q_{\gamma,\mathbf{w}}$ 
est un isomorphisme. Dans la discussion précédant la proposition \ref{prop:RP-exemple-fondamental-1}, on a expliqué que ce dernier fait est 
équivalent à ce que $\tilde{\ell}$ satisfasse (\ref{eq:transverse-w-gamma}), c'est à dire que 
$(w,\tw)$ est Lévi-mobile.

L'équivalence $3.\Longleftrightarrow 5.$ est due au fait que les conditions $(A_1)$ et $(A_3)$ sont équivalentes à $(B_1)$.

Le point 4. implique qu'il existe un ouvert de Zariski $\Vcal\subset \tU_\C$ tel que 
$q_{\gamma,\mathbf{w}}: X_{\gamma,\mathbf{w}} \longrightarrow  \tU_\C$ définisse un difféomorphisme $q_{\gamma,\mathbf{w}}^{-1}(\Vcal)\simeq \Vcal$. 
Cela implique que $\deg(q_{\gamma,\mathbf{w}})=1$. D'autre part l'hypothèse de Lévi-mobilité implique que $\Vcal\cap C_{\gamma,\mathbf{w}}\neq\emptyset$.
Nous venons donc de vérifier que $4.\Longrightarrow 1.$
\end{proof}

\begin{rem}\label{rem:degre-cohomologie}
Lorsque $d(w,\gamma)=c(\tw,\gamma)$, l'identité (\ref{eq:identification-fibre}) montre que la cardinal générique de la fibre du morphisme $q_{\gamma,\mathbf{w}}$ est égal 
au cardinal générique des intersections $\tg\iota(\Xgot^o_{w,\gamma})\bigcap \tXgot^o_{\tw,\gamma}$. Cela montre que 
$q_{\gamma,\mathbf{w}}$ est un morphisme de type fini si et seulement si $[\Xgot_{w,\gamma}]\cdot \iota^*[\tXgot_{\tw,\gamma}]= N(w,\tw) [pt]$ avec $N(w,\tw)>0$, et 
dans ce cas le degré de $q_{\gamma,\mathbf{w}}$ est égal à $N(w,\tw)$.
\end{rem}

On a résultat similaire pour les paires de Ressayre infinitésimales (avec le m\^eme type de preuve).

\begin{prop}\label{prop:RP-exemple-fondamental-2}
Les conditions suivantes sont \'equivalentes:
\begin{enumerate}\setlength{\itemsep}{5pt}
\item $(\gamma_{\mathbf{w}},C_{\gamma,\mathbf{w}})$  est une paire de Ressayre de type fini.
\item $(\gamma_{\mathbf{w}},C_{\gamma,\mathbf{w}})$  est une paire de Ressayre infinitésimale.
\item  $(\gamma,w,\tw)$ satisfait les points suivants :
\begin{itemize}
\item[$(A_1)$] $d(w,\gamma)=c(\tw,\gamma)$.
\item[$(A_2)$] $\tr(w\gamma \circlearrowright \ngot^{w\gamma>0})=\tr(\tw\gamma \circlearrowright \tngot_{-}^{\tw\gamma>0})$.
\item[$(A'_3)$] Pour $\tg\in\tU_\C$ générique, l'intersection $\tg\iota(\Xgot^o_{w,\gamma})\bigcap \tXgot^o_{\tw,\gamma}$ est non-vide.
\end{itemize}
\item $(\gamma,w,\tw)$ satisfait les points suivants :
\begin{itemize}
\item[$(B'_1)$] $[\Xgot_{w,\gamma}]\cdot \iota^*[\tXgot_{\tw,\gamma}]= k[pt]$, avec $k\geq 1$,\, dans  \,$H^{max}(\Fcal_\gamma,\Z)$. 
\item[$(B_2)$] $(w,\tw)$ est Lévi-mobile.
\end{itemize}
\item  $(\gamma,w,\tw)$ satisfait $(A_1)$ et $(B_2)$
\item $(\gamma,w,\tw)$ satisfait $(A_2)$ et $(B'_1)$.
\end{enumerate}
\end{prop}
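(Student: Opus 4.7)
Ma stratégie est d'adapter pas à pas la preuve de la proposition \ref{prop:RP-exemple-fondamental-1} en remplaçant systématiquement la condition \og paire de Ressayre\fg{} (qui exige $\deg(q_{\gamma,\mathbf{w}})=1$) par la condition plus faible \og paire de Ressayre de type fini\fg{} (qui demande seulement $\deg(q_{\gamma,\mathbf{w}})\in\N-\{0\}$). Au niveau cohomologique, cela revient à remplacer l'identité $[\Xgot_{w,\gamma}]\cdot \iota^*[\tXgot_{\tw,\gamma}]=[pt]$ par $[\Xgot_{w,\gamma}]\cdot \iota^*[\tXgot_{\tw,\gamma}]=k[pt]$ avec $k\geq 1$, et la condition $(A_3)$ (fibre générique réduite à un singleton) par la condition $(A'_3)$ (fibre générique non-vide).

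L'implication $1.\Longrightarrow 2.$ sera immédiate puisqu'une paire de Ressayre de type fini est, par la remarque suivant la définition \ref{def:B-ressayre-pair-alg}, une paire de Ressayre infinitésimale. Pour l'équivalence $2.\Longleftrightarrow 3.$, j'invoquerai directement la proposition \ref{prop:caracteriser-RP} : les conditions $(A_1)$ et $(A_2)$ se transposent littéralement, et il ne reste qu'à identifier la condition \og $\{y, q_\gamma^{-1}(y)\neq\emptyset\}$ d'intérieur non-vide\fg{} avec $(A'_3)$, ce qui se fait grâce à l'identification (\ref{eq:identification-fibre}) établissant que $\pi_{\gamma,\mathbf{w}}(q_{\gamma,\mathbf{w}}^{-1}(\tg))$ s'identifie à $\tg\iota(\Xgot^o_{w,\gamma})\cap \tXgot^o_{\tw,\gamma}$ de façon $\tg$-dépendante propre.

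Pour l'équivalence $3.\Longleftrightarrow 6.$, je combinerai la proposition \ref{prop:intersection-number-2} et la remarque \ref{rem:degre-cohomologie} qui donnent l'équivalence entre $(A_1)\wedge (A'_3)$ et $(B'_1)$, puisque l'inégalité $N(w,\tw)\geq 1$ est précisément la non-vacuité générique de l'intersection. Les équivalences $3.\Longleftrightarrow 4.\Longleftrightarrow 5.$ s'obtiendront comme suit : d'une part, le lemme \ref{lem:levi-condition-numerique} donne $(A_1)\wedge(B_2)\Longrightarrow (A_2)$ et la transversalité d'une intersection non-vide fournit $(A'_3)$ ; d'autre part, une paire de Ressayre infinitésimale admet par définition un point $x\in C_{\gamma,\mathbf{w}}$ où $\rho_x^{\gamma_{\mathbf{w}}}$ est un isomorphisme, ce qui, via l'équivalence des conditions de transversalité (\ref{eq:application-tangente-q-w-gamma}) et (\ref{eq:transverse-w-gamma}), impose la Lévi-mobilité $(B_2)$. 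Enfin, l'implication $4.\Longrightarrow 1.$ utilisera à nouveau la remarque \ref{rem:degre-cohomologie} : $(B'_1)$ impose $\deg(q_{\gamma,\mathbf{w}})=N(w,\tw)=k\in\N-\{0\}$, et $(B_2)$ fournit $(A_2)$ par le lemme déjà cité, complétant ainsi la définition d'une paire de Ressayre de type fini.

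L'étape la plus délicate me semble être le bon usage de la remarque \ref{rem:degre-cohomologie} pour établir le lien précis entre le degré générique de $q_{\gamma,\mathbf{w}}$ et le coefficient d'intersection $N(w,\tw)$ : c'est cette identification qui permet de passer de la condition \og $\deg=1$\fg{} à la condition \og $\deg\in\N-\{0\}$\fg{} tout en préservant l'ensemble des équivalences de la proposition \ref{prop:RP-exemple-fondamental-1}. Heureusement, les arguments de Lévi-mobilité et de transversalité au point $[e]$ se transfèrent tels quels, car la Lévi-mobilité est par nature une propriété locale au voisinage d'un point générique.
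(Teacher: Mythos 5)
Votre démonstration est correcte et suit exactement la voie prévue par le texte : le papier ne détaille pas cette preuve et indique seulement qu'elle est « du même type » que celle de la proposition \ref{prop:RP-exemple-fondamental-1}, c'est-à-dire précisément l'adaptation que vous proposez (degré fini au lieu de degré $1$, $(A'_3)$ au lieu de $(A_3)$, $k[pt]$ au lieu de $[pt]$, via la proposition \ref{prop:caracteriser-RP}, l'identification (\ref{eq:identification-fibre}), la proposition \ref{prop:intersection-number-2}, la remarque \ref{rem:degre-cohomologie} et le lemme \ref{lem:levi-condition-numerique}).
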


Dans le cas où $U$ est inclus diagonalement dans  $\tU=U^s$,  les propositions \ref{prop:RP-exemple-fondamental-1} et \ref{prop:RP-exemple-fondamental-2} 
fournissent le résultat suivant.

\begin{prop}\label{prop:RP-exemple-fondamental-3}
Soit $(w_1,\cdots, w_{s+1})\in (W/W^\gamma)^{s+1}$ tel que $\sum_{i=1}^{s+1} d(w_i,\gamma)= s\dim_\C(\Fcal_\gamma)$. 
Les conditions suivantes sont \'equivalentes:
\begin{enumerate}\setlength{\itemsep}{5pt}
\item $(w_1,\cdots, w_{s+1})$ est Lévi-mobile.
\item $(w_1,\cdots, w_{s+1})$ satisfait les points suivants :
\begin{itemize}\setlength{\itemsep}{5pt}
\item[$(B'_1)$] $[\Xgot_{w_1,\gamma}]\cdots [\Xgot_{w_{s+1},\gamma}]= k[pt]$, avec $k\geq 1$,\, dans  \,$H^{max}(\Fcal_\gamma,\Z)$. 
\item[$(A_2)$] $\sum_{i=1}^{s+1}\tr(w_i\gamma \circlearrowright \ngot^{w_i\gamma>0})= s\tr(\gamma \circlearrowright \ugot_\C^{\gamma>0})$.
\end{itemize}
\end{enumerate}
\end{prop}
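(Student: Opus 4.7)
Le plan est de déduire ce résultat de la proposition \ref{prop:RP-exemple-fondamental-2} appliquée au plongement diagonal $U \hookrightarrow \tU := U^s$. Sous ce choix, $\tW = W^s$ agit composante par composante sur $\tFcal_\gamma = \Fcal_\gamma^s$, et un élément de $\tW/\tW^\gamma$ s'identifie à un tuple $(w_1,\ldots,w_s)$. Je prendrai $(w,\tw) := (w_{s+1}, (w_1,\ldots,w_s))$ et traduirai chacune des conditions de la proposition \ref{prop:RP-exemple-fondamental-2} dans le langage de la proposition \ref{prop:RP-exemple-fondamental-3}; l'équivalence $(4) \Leftrightarrow (5)$ de ce résultat antérieur fournira alors exactement l'énoncé recherché.

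Je traiterai d'abord les traductions numériques et cohomologiques. La condition de dimension $(A_1)$, qui s'écrit $d(w_{s+1},\gamma) = c((w_1,\ldots,w_s),\gamma) = s\dim_\C\Fcal_\gamma - \sum_{i=1}^{s} d(w_i,\gamma)$, correspond précisément à l'hypothèse. Pour $(B'_1)$, l'isomorphisme de Künneth $H^*(\tFcal_\gamma) \simeq H^*(\Fcal_\gamma)^{\otimes s}$ envoie $[\tXgot_{(w_1,\ldots,w_s),\gamma}]$ sur $[\Xgot_{w_1,\gamma}]\otimes\cdots\otimes[\Xgot_{w_s,\gamma}]$, et l'inclusion diagonale $\iota$ induit le cup-produit, donc $\iota^*[\tXgot_{(w_1,\ldots,w_s),\gamma}] = \prod_{i=1}^{s}[\Xgot_{w_i,\gamma}]$. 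En multipliant par $[\Xgot_{w_{s+1},\gamma}]$, la condition $(B'_1)$ se convertit en la condition sur le produit des classes de la proposition \ref{prop:RP-exemple-fondamental-3}.

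La condition de trace demande un peu plus de soin. Après identification $\tngot_- = (\ngot_-)^s$, la condition $(A_2)$ de la proposition \ref{prop:RP-exemple-fondamental-2} s'écrit
\[
\tr(w_{s+1}\gamma \circlearrowright \ngot^{w_{s+1}\gamma>0}) = \sum_{i=1}^{s}\tr(w_i\gamma \circlearrowright \ngot_-^{w_i\gamma>0}).
\]
Comme chaque $w_i$ préserve la forme invariante et permute les racines, on a $\tr(w_i\gamma \circlearrowright \ugot_\C^{w_i\gamma>0}) = \tr(\gamma \circlearrowright \ugot_\C^{\gamma>0})$; en combinant avec la décomposition $\ugot_\C^{w_i\gamma>0} = \ngot^{w_i\gamma>0}\oplus\ngot_-^{w_i\gamma>0}$, on élimine les termes en $\ngot_-$ pour obtenir
\[
\sum_{i=1}^{s+1}\tr(w_i\gamma \circlearrowright \ngot^{w_i\gamma>0}) = s\,\tr(\gamma \circlearrowright \ugot_\C^{\gamma>0}),
\]
exactement la condition $(A_2)$ de la proposition cible.

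Il reste à faire correspondre les deux notions de Lévi-mobilité. En déroulant la définition \ref{def:LM} dans le produit $\tFcal_\gamma = \Fcal_\gamma^s$ avec $\tilde{\ell} = (\ell_1,\ldots,\ell_s) \in (U_\C^\gamma)^s$, la transversalité en $[e]$ de l'intersection $\tilde{\ell}(\iota(w_{s+1}^{-1}\Xgot^o_{w_{s+1},\gamma})) \cap \prod_i w_i^{-1}\Xgot^o_{w_i,\gamma}$ se traduit, au niveau des espaces tangents, par la décomposition $\ugot_\C^{\gamma>0} = \sum_{i=1}^{s+1}\mathrm{Ad}(\ell_i)(T_{w_i,\gamma})$ (avec la convention $\ell_{s+1}=e$), ce qui est précisément la définition \ref{def:LM-1}. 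La principale difficulté sera de vérifier soigneusement que le choix générique sur $(\ell_1,\ldots,\ell_s)\in (U_\C^\gamma)^s$ couvre bien le choix générique sur $(\ell_1,\ldots,\ell_{s+1})\in (U_\C^\gamma)^{s+1}$, ce qui résulte de l'invariance de la condition de transversalité sous l'action diagonale de $U_\C^\gamma$, laquelle permet d'absorber $\ell_{s+1}$ dans les autres paramètres génériques.
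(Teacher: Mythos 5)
Votre proposition est correcte et suit essentiellement la même voie que le texte : la proposition \ref{prop:RP-exemple-fondamental-3} y est présentée comme conséquence directe des propositions \ref{prop:RP-exemple-fondamental-1} et \ref{prop:RP-exemple-fondamental-2} appliquées au plongement diagonal $U\hookrightarrow U^s$, et vos traductions (hypothèse de dimension pour $(A_1)$, Künneth et restriction diagonale pour $(B'_1)$, identité de traces via $\ugot_\C^{w_i\gamma>0}=\ngot^{w_i\gamma>0}\oplus\ngot_{-}^{w_i\gamma>0}$ pour $(A_2)$, absorption de $\ell_{s+1}$ par l'action diagonale de $U_\C^\gamma$ pour la Lévi-mobilité) sont exactement celles qui rendent ce passage licite. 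Notez seulement que l'équivalence à invoquer dans la proposition \ref{prop:RP-exemple-fondamental-2} est celle entre les points \emph{5.} ($(A_1)$ et $(B_2)$) et \emph{6.} ($(A_2)$ et $(B'_1)$), et non $(4)\Leftrightarrow(5)$.
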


Regardons maintenant le cas de la grassmanienne $\G(r,n)$ (voir la section \ref{sec:calcul-schubert-grass}). Les variétés de Schubert 
$\Xgot_{w,\gamma^r}$ sont désignées par $\Xgot_I$, où $I=w([r])$, et un calcul direct montre que 
$$
|\mu(I)|= d(w,\gamma^r)= \tr(w\gamma^r_o \circlearrowright \ngot^{w\gamma^r_o>0}),\qquad \mathrm{et}\qquad 
\tr(\gamma^r_o \circlearrowright \ugot_\C^{\gamma^r_o>0})=\dim_\C(\G(r,n)).
$$
On voit donc que, dans le cas de la grassmanienne, les conditions $\sum_{i=1}^{s+1} d(w_i,\gamma^r)= s\dim_\C(\G(r,n))$ 
et $(A_2)$ sont toutes deux équivalentes à 
$$
\sum_{i=1}^{s+1}|\mu(I_i)|= sr(n-r).
$$
Ainsi, pour le calcul de Schubert dans la grassmanienne $\G(r,n)$, la notion de Lévi-mobilité est vérifiée dès que la condition $(B'_1)$ est satisfaite.

%%%%%%%%%%%%%%%%%%%%%%%%%%%%%%%%%%%%%%%%%%%%%%%%%%%%%%
%%%%%%%%%%%%%%%%%%%%%%%%%%%%%%%%%%%%%%%%%%%%%%%%%%%%%%
\section{Le cadre avec involution}\label{sec:cadre-involution}
%%%%%%%%%%%%%%%%%%%%%%%%%%%%%%%%%%%%%%%%%%%%%%%%%%%%%%
%%%%%%%%%%%%%%%%%%%%%%%%%%%%%%%%%%%%%%%%%%%%%%%%%%%%%%

Nous revenons au cadre de la section \ref{sec:U-sigma-hamiltonien}, c'est à dire une involution $\sigma$ sur un groupe compact connexe $U$, que l'on étend 
en une involution \emph{anti-holomorphe} sur $U_\C$  (encore notée $\sigma$).

Soit $G$ la composante connexe du sous-groupe de $U_\C$ fix\'e par $\sigma$ : c'est un sous-groupe r\'eductif r\'eel d'algèbre de Lie 
$\ggot=\kgot\oplus \pgot$ o\`u $\kgot=\ugot^{\sigma}$  et $\pgot= i\ugot^{-\sigma}$. Nous d\'esignons par $K$ la composante 
connexe du sous-groupe $U^\sigma$: c'est un sous-groupe compact maximal de $G$, d'algèbre de Lie $\kgot$.

%%%%%%%%%%%%%%%%%%%%%%%%%%%%%%%%%%%%%%%%%%%%%%%%%%%%%%
\subsection{Groupes de Weyl}
%%%%%%%%%%%%%%%%%%%%%%%%%%%%%%%%%%%%%%%%%%%%%%%%%%%%%%

Nous choisissons un tore maximal $T\subset U$ invariants par $\sigma$, tel que le sous-espace $\agot:=\frac{1}{i}\tgot^{-\sigma}$ est ab\'elien maximal 
dans $\pgot$. Rappelons les liens entre le groupe de Weyl $W:=N_K(T)/T$, le groupe de Weyl restreint $W_\agot:=N_K(\agot)/Z_K(\agot)$, et  
l'involution $\sigma$ (voir l'annexe B dans \cite{OSS}). 

Soit $\zeta_o$ un élément régulier de $\agot$. Le sous-groupe $W^\sigma$ fixé par $\sigma$ est égal au sous-groupe normalisateur $N_{W}(\agot)$, 
le sous-groupe $W^{\zeta_o}$ coïncide avec le sous-groupe centralisateur $Z_{W}(\agot)$, et le groupe de Weyl restreint $W_{\agot}$ admet 
une identification canonique avec $N_{W}(\agot)/Z_{W}(\agot)$.

Considérons $\zeta\in\agot$ et les sous-groupes $W^\zeta\subset W$ et $W_\agot^\zeta\subset W_\agot$ fixant $\zeta$. Nous avons donc deux suites exactes 
$0\to W^{\zeta_o}\to W^{\sigma}\to W_{\agot}\to 0$ et $0\to W^{\zeta_o}\to W^\sigma\cap W^{\zeta}\to W_{\agot}^\zeta\to 0$ qui induisent un isomorphisme 
\begin{equation}\label{eq:Weyl-group-sigma-1}
W^\sigma/W^\sigma\cap W^{\zeta}\simeq W_{\agot}/W_{\agot}^\zeta.
\end{equation} 
L'autre isomorphisme 
\begin{equation}\label{eq:Weyl-group-sigma-2}
(W/W^{\zeta})^\sigma\simeq W_\agot/W_\agot^\zeta
\end{equation} 
découle du fait que $W/W^{\zeta}\simeq W\zeta=U\zeta\cap i\tgot$, et donc
$$
(W/W^{\zeta})^\sigma\simeq (W\zeta)^\sigma=U\zeta\cap i\tgot^{-\sigma}=K\zeta\cap i\tgot^{-\sigma}= W_\agot\zeta
\simeq W_\agot/W_\agot^\zeta.
$$

Le morphisme $W^\sigma/W^\sigma\cap W^{\zeta}\to W/W^\zeta$ est injectif et $W^\sigma$-équivariante. Sachant que 
$W^{\zeta_o}$ est un sous-groupe distingué de $W^\sigma$ contenu dans $W^{\zeta}$, on voit que l'action de $W^{\zeta_o}$ sur 
$W^\sigma/W^\sigma\cap W^{\zeta}$ est triviale. Ainsi, l'application canonique 
\begin{equation}\label{eq:Weyl-group-sigma-3}
W^\sigma/W^\sigma\cap W^{\zeta}\to W^{\zeta_o} \backslash W/W^\zeta
\end{equation} 
est injective. Si on combine ce fait à la relation (\ref{eq:Weyl-group-sigma-1}), nous voyons que $W_{\agot}/W_{\agot}^\zeta$ peut être considéré 
comme un sous-ensemble de $W^{\zeta_o} \backslash W/W^\zeta$. 

On peut résumer ce que l'on vient de faire avec le lemme suivant.
\begin{lem}\label{lem:groupes-weyl-sigma}
Pour tout $\zeta\in\agot$, l'ensemble
$$
(W/W^{\zeta})^\sigma\simeq W^\sigma/W^\sigma\cap W^{\zeta}\simeq W_\agot/W_\agot^\zeta
$$
est un sous-ensemble de $W^{\zeta_o} \backslash W/W^\zeta$.
\end{lem}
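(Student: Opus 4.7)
Le lemme se présente comme la synthèse des trois identifications obtenues dans les paragraphes précédents, mais il est utile de préciser l'ordre dans lequel je les établirais. L'idée directrice est d'utiliser systématiquement le fait, rappelé au début de la section \ref{sec:cadre-involution}, que $W^\sigma = N_W(\agot)$, $W^{\zeta_o}=Z_W(\agot)$ et $W_\agot \simeq N_W(\agot)/Z_W(\agot)$ pour $\zeta_o\in\agot$ régulier.

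La première étape consiste à démontrer l'isomorphisme $W^\sigma/W^\sigma\cap W^\zeta \simeq W_\agot/W_\agot^\zeta$. Je partirais des deux suites exactes
$$0\to W^{\zeta_o}\to W^{\sigma}\to W_{\agot}\to 0\qquad \mbox{et}\qquad 0\to W^{\zeta_o}\to W^\sigma\cap W^{\zeta}\to W_{\agot}^\zeta\to 0,$$
la seconde étant obtenue en intersectant la première avec $W^\zeta$ et en utilisant que $W^{\zeta_o}\subset W^\zeta$ (car $\zeta\in\agot$ est fixe par $Z_W(\agot)$). Le quotient des deux fournit l'identification annoncée. La deuxième étape est l'identification $(W/W^\zeta)^\sigma\simeq W_\agot/W_\agot^\zeta$ : via la bijection $W/W^\zeta\simeq W\zeta\subset i\tgot$, les points fixes sous $\sigma$ correspondent à $U\zeta\cap i\tgot^{-\sigma}$, puis un argument de chambre de Weyl adaptée (remarque \ref{rem:orbite-symetrique}) donne $U\zeta\cap(\ugot^*)^{-\sigma}=K\zeta$, d'où $(W\zeta)^\sigma\simeq W_\agot\zeta\simeq W_\agot/W_\agot^\zeta$.

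La dernière étape, qui constitue le c\oe{}ur de l'énoncé, est l'injectivité de l'application canonique
$$W^\sigma/W^\sigma\cap W^\zeta \longrightarrow W^{\zeta_o}\backslash W/W^\zeta.$$
Pour cela, je vérifierais d'abord que cette application est bien définie : elle est induite par l'inclusion $W^\sigma\croc W$, et le fait que $W^{\zeta_o}$ soit normal dans $W^\sigma$ (par définition de $W_\agot$ comme quotient) garantit que l'action par translations à gauche de $W^{\zeta_o}$ sur $W^\sigma/W^\sigma\cap W^\zeta$ est triviale. L'injectivité revient alors à montrer : si $w_1,w_2\in W^\sigma$ vérifient $w_1=\ell w_2 h$ avec $\ell\in W^{\zeta_o}$ et $h\in W^\zeta$, alors $w_1(W^\sigma\cap W^\zeta)=w_2(W^\sigma\cap W^\zeta)$. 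On a $w_2^{-1}\ell^{-1}w_1\in W^\zeta$ mais aussi, puisque $\ell\in W^{\zeta_o}\subset W^\sigma$, l'élément $w_2^{-1}\ell^{-1}w_1$ appartient à $W^\sigma$. On obtient donc $w_1=\ell w_2 h'$ avec $h'\in W^\sigma\cap W^\zeta$ et $\ell w_2\in W^\sigma$, ce qui, modulo l'action triviale de $W^{\zeta_o}$, donne la conclusion.

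La principale subtilité, que j'anticiperais comme l'obstacle à soigner, est précisément la vérification que $W^{\zeta_o}$ agit trivialement sur le quotient $W^\sigma/W^\sigma\cap W^\zeta$ : cela repose de manière cruciale sur la normalité de $Z_W(\agot)$ dans $N_W(\agot)$ et sur l'inclusion $W^{\zeta_o}\subset W^\zeta$, deux faits qui découlent directement de $\zeta\in\agot$. Les autres étapes ne sont que des applications formelles des isomorphismes déjà explicités dans le corps du texte.
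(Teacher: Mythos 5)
Votre esquisse est correcte et suit exactement la démarche du texte (qui démontre ce lemme dans la discussion précédant son énoncé) : les deux suites exactes donnant $W^\sigma/W^\sigma\cap W^{\zeta}\simeq W_\agot/W_\agot^\zeta$, l'identification $(W/W^{\zeta})^\sigma\simeq W_\agot\zeta$ via $W\zeta=U\zeta\cap i\tgot$ et la remarque sur les chambres adaptées, puis l'injectivité dans $W^{\zeta_o}\backslash W/W^\zeta$ fondée sur le fait que $W^{\zeta_o}$ est distingué dans $W^\sigma$ et contenu dans $W^{\zeta}$. La subtilité que vous isolez (trivialité de l'action de $W^{\zeta_o}$ sur $W^\sigma/W^\sigma\cap W^{\zeta}$) est précisément celle que le texte met en avant, et votre vérification explicite de l'injectivité est juste.
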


%%%%%%%%%%%%%%%%%%%%%%%%%%%%%%%%%%%%%%%%%%%%%%%%%%%%%%
\subsection{Cellules de Bruhat, complexes et réelles}
%%%%%%%%%%%%%%%%%%%%%%%%%%%%%%%%%%%%%%%%%%%%%%%%%%%%%%

Soit $\tgot_+\subset \tgot$ une chambre de Weyl telle que l'intersection $\agot_+:=\frac{1}{i}(\tgot^{-\sigma}\cap\tgot_+)$ 
paramètre les $K$-orbites dans $\pgot$. Notons $\Rgot\subset \tgot^*$ l'ensemble des racines relatives à l'action de $T$ sur $\ugot_\C$, et 
$\Sigma\subset \agot^*$ l'ensemble des racines (restreintes) relatives à l'action de $\agot$ sur $\ggot$. Notons $\Rgot^+\subset\Rgot$ et 
$\Sigma^+\subset\Sigma$ les systèmes de racines positives associés à la chambre de Weyl $\tgot_+$. Soit $B\subset U_\C$ le sous-groupe de Borel 
associé à $\Rgot^+$.

A tout $\zeta\in\agot$, nous associons, via (\ref{eq:P-gamma-reel}), le sous-groupe parabolique $\Pbb(\zeta)\subset U_\C$. Comme celui-ci est stable 
sous l'involution $\sigma$, la vari\'et\'e de drapeaux $\Fcal_\zeta= U_\C/\Pbb(\zeta)$ est munie de l'involution antiholomorphe 
$\tau(g\Pbb(\zeta)):=\sigma(g)\Pbb(\zeta)$.

L'intersection $P^\R(\zeta):= \Pbb(\zeta)\cap G\subset (\Pbb(\zeta))^\sigma$ définit un sous-groupe parabolique réel de $G$ (qui n'est pas nécessairement connexe). 
Expliquons pourquoi la sous-variété des point fixes $(\Fcal_\gamma)^\tau$ correspond à la variété de drapeaux réelle $\Fcal_\zeta^\R:=G/P^\R(\zeta)$. 
Comme la variété drapeau $\Fcal_\zeta= U_\C/\Pbb(\zeta)\simeq U/U^\zeta$, munie de l'involution $\tau$, s'identifie à l'orbite adjointe 
$U\zeta$, nous avons 
$$
(\Fcal_\zeta)^\tau\simeq U\zeta \cap i\ugot^{-\sigma}= K\zeta\simeq K/K_\zeta\simeq G/P^\R(\zeta)=\Fcal_\zeta^\R.
$$ 
Ici, le point crucial est l'égalité $U\zeta \cap i\ugot^{-\sigma}= K\zeta$ (voir la remarque \ref{rem:orbite-symetrique}).

Nous fixons un élément $\zeta_o$ à l'intérieur de la chambre de Weyl $\agot_+$ et nous considérons le sous-groupe parabolique 
$\Pbb:= \Pbb(-\zeta_o)\subset U_\C$. Alors $P^\R:=\Pbb\cap G$ est le sous-groupe parabolique minimal de $G$, avec algèbre de Lie 
$Z_{\kgot}(\agot)\oplus \agot\oplus \sum_{\beta\in\Sigma^+}\ggot_\beta$.

\medskip

Nous considérons plusieurs décompositions de Bruhat :
\begin{enumerate}\setlength{\itemsep}{8pt}
\item $\Fcal_\zeta=\bigcup_{w\in W/W^\zeta}B[w]$ relativement à l'action $B$ sur $\Fcal_\zeta$,
\item $\Fcal_\zeta=\bigcup_{u\in W^{\zeta_o}\setminus W/W^\zeta}\Pbb[u]$ par rapport à l'action $\Pbb$ sur $\Fcal_\zeta$,
\item $\Fcal^\R_\zeta=\bigcup_{v\in W_{\agot}/W_{\agot}^\zeta} P^\R[v]$ par rapport à l'action $P^\R$ sur $\Fcal_\zeta^\R$.
\end{enumerate}

Nous avons prouv\'e précédemment que $(W/W^{\zeta})^\sigma\simeq W_{\agot}/W_{\agot}^\zeta$ peut \^etre consid\'er\'e 
comme un sous-ensemble de $W^{\zeta_o} \backslash W/W^\zeta$. Le lemme suivant caract\'erise ce sous-ensemble 
en termes de l'intersection $(B[w])^\tau=B[w]\cap\Fcal^\R_\zeta$. Soit $N_\ggot\subset G$ 
le sous-groupe niltpotent d'algèbre de Lie $\oplus_{\beta\in\Sigma^+}\ggot_\beta$.

\begin{lem}\label{lem:schubert-reel} 
Soit $w\in W$. Les \'enonc\'es suivants sont \'equivalents
\begin{enumerate}\setlength{\itemsep}{10pt}
\item $(B[w])^{\tau}\neq\emptyset$.
\item $(\Pbb[w])^{\tau}\neq\emptyset$.
\item La classe de $w$ dans $W^{\zeta_o}\setminus W/W^\zeta$ est contenue dans $W^\sigma/W^\sigma\cap W^{\zeta}\simeq W_{\agot}/W_{\agot}^\zeta$.
\item La classe de $w$ dans $W/W^\zeta$ est contenue dans $(W/W^\zeta)^\sigma$.
\end{enumerate}
Si $u\in W^\sigma$, alors
\begin{enumerate}\setlength{\itemsep}{10pt}
\item $B[u]=\Pbb[u]$ et donc $\tau(B[u])=B[u]$
\item $(B[u])^\tau= P^\R[u]= N_\ggot[u]$.
\end{enumerate} 
\end{lem}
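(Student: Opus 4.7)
My plan is to prove the cycle of implications $1\Rightarrow 2\Rightarrow 3\Rightarrow 4\Rightarrow 1$, using systematically the three Bruhat decompositions recalled just before the lemma together with Lemma \ref{lem:groupes-weyl-sigma}. The equivalence $3\Leftrightarrow 4$ is essentially tautological once one notes that $W^{\zeta_o}\subset W^\sigma$: if the double coset class $W^{\zeta_o}wW^\zeta$ contains a representative $w'\in W^\sigma$, then $w = u w' v$ with $u\in W^{\zeta_o}\subset W^\sigma$ and $v\in W^\zeta$, whence $wW^\zeta = (uw')W^\zeta$ with $uw'\in W^\sigma$, so $wW^\zeta\in (W/W^\zeta)^\sigma$ by Lemma \ref{lem:groupes-weyl-sigma}; the converse is immediate. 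The implication $1\Rightarrow 2$ is clear from $B\subset\Pbb$.

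For $2\Rightarrow 3$, my strategy is geometric. Since $\Pbb$ is $\sigma$-stable, $\tau(\Pbb[w]) = \Pbb[\sigma(w)]$, so a $\tau$-fixed point of $\Pbb[w]$ lies in $\Fcal^\R_\zeta$, and by decomposition (3) it belongs to a unique real cell $P^\R[v]$ with $v\in W_\agot/W_\agot^\zeta$. Since $P^\R\subset\Pbb$, we have $P^\R[v]\subset\Pbb[v]$; as $\Pbb[v]\cap\Pbb[w]\neq\emptyset$, disjointness of complex Bruhat cells forces $\Pbb[v]=\Pbb[w]$, i.e.\ the classes of $v$ and $w$ agree in $W^{\zeta_o}\backslash W/W^\zeta$. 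Via the embedding $W_\agot/W_\agot^\zeta\simeq W^\sigma/(W^\sigma\cap W^\zeta)\hookrightarrow W^{\zeta_o}\backslash W/W^\zeta$ of Lemma \ref{lem:groupes-weyl-sigma}, this delivers condition 3. For $4\Rightarrow 1$, I use Lemma \ref{lem:groupes-weyl-sigma} to pick a representative $u\in W^\sigma$ of the class $wW^\zeta$, so $B[w]=B[u]$. Choosing any Weyl lift $\dot u\in N_U(T)$, the element $\sigma(\dot u)$ is also a lift of $\sigma(u)=u$, so $\sigma(\dot u)=\dot u\,t$ for some $t\in T$; since $T\subset\Pbb(\zeta)$, the class $[\dot u]\in\Fcal_\zeta$ satisfies $\tau([\dot u])=[\dot u\,t]=[\dot u]$, giving a $\tau$-fixed point in $B[u]=B[w]$ and closing the cycle.

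For the addendum concerning $u\in W^\sigma$, I first establish $B[u]=\Pbb[u]$ by showing that the $W^{\zeta_o}$-orbit of $uW^\zeta$ in $W/W^\zeta$ is reduced to $\{uW^\zeta\}$: indeed, $u\in W^\sigma=N_W(\agot)$ gives $u\zeta\in\agot$, while $W^{\zeta_o}=Z_W(\agot)$ fixes $\agot$ pointwise, so $W^{\zeta_o}\subset W^{u\zeta}=uW^\zeta u^{-1}$, yielding the singleton. The $\tau$-stability then follows from $\tau(\Pbb[u])=\Pbb[\sigma(u)]=\Pbb[u]$. For $(B[u])^\tau=P^\R[u]$, the inclusion $P^\R[u]\subset(B[u])^\tau$ comes from $P^\R\subset\Pbb$ together with $P^\R[u]\subset\Fcal^\R_\zeta$; the reverse inclusion is obtained by matching the two disjoint decompositions of $\Fcal^\R_\zeta$, namely the real Bruhat cover $\bigsqcup_v P^\R[v]$ and the partition by $(B[u'])^\tau$ over $u'\in W^\sigma/(W^\sigma\cap W^\zeta)$, both indexed by the same set via Lemma \ref{lem:groupes-weyl-sigma}, so each real Bruhat cell must coincide with the corresponding piece. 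Finally, $P^\R[u]=N_\ggot[u]$ is deduced from the Langlands-type decomposition $P^\R = Z_K(\agot)\cdot A\cdot N_\ggot$ with $A=\exp(\agot)$: the Levi factor $Z_K(\agot)A$ centralizes $\agot$, hence fixes $u\zeta\in\agot$, hence stabilizes $[u]\in\Fcal^\R_\zeta$ (whose stabilizer in $G$ is $uP^\R(\zeta)u^{-1}=P^\R(u\zeta)$), so only the $N_\ggot$-factor contributes to the orbit. The main subtlety I expect is the step $2\Rightarrow 3$, where the geometric information about $\tau$-fixed points must be translated into combinatorial information about the Weyl group via careful bookkeeping of the three Bruhat decompositions.
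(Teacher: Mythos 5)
Your proposal is correct and follows essentially the same route as the paper: the same three Bruhat decompositions and Lemma \ref{lem:groupes-weyl-sigma} drive everything, your step $2\Rightarrow 3$ (a $\tau$-fixed point lies in a real cell $P^\R[v]$, $v\in W^\sigma$, whose inclusion in a single $\Pbb$-orbit forces equality of double cosets) is the paper's argument verbatim, and your lift argument for $4\Rightarrow 1$ merely spells out what the paper treats as immediate. The only cosmetic variation is in the addendum, where you obtain $(B[u])^\tau=P^\R[u]$ by matching the two partitions of $\Fcal^\R_\zeta$ instead of the paper's double-coset-plus-injectivity argument; both rest on the same facts.
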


{\em Preuve :} L'implication {\em 1.} $\Longrightarrow$ {\em 2.} est \'evidente puisque $B[w]\subset \Pbb[w]$. Pour l'implication \break 
{\em 2.} $\Longrightarrow$ {\em 3.} nous consid\'erons 
la d\'ecomposition de Bruhat $\Fcal_\gamma^\R=\bigcup_{v\in W_{\agot}/W_{\agot}^\gamma}P^\R[v]$. Comme nous l'avons d\'ej\`a  expliqu\'e, 
chaque classe de $W_{\agot}/W_{\agot}^\gamma$ peut \^etre repr\'esent\'ee par un \'el\'ement de $W^\sigma$. 
Si $(\Pbb[w])^\tau\neq \emptyset$, il existe $v\in W^\sigma$ tel que $P^\R[v]\subset (\Pbb[w])^\tau$. Par cons\'equent, $\Pbb[v]=\Pbb[w]$, 
et donc $w=v$ dans $W^{\gamma_o} \backslash W/W^{\gamma}$. L'implication {\em 2.} $\Longrightarrow$ {\em 3.} est 
r\'esolue et la derni\`ere {\em 3.} $\Longrightarrow$ {\em 1.} est imm\'ediate. L'équivalence {\em 3.} $\Longleftrightarrow$ {\em 4.} est une conséquence du lemme 
\ref{lem:groupes-weyl-sigma}.

Consid\'erons maintenant $u\in W^\sigma$. Puisque $B\subset\Pbb$, la d\'ecomposition de Bruhat 1. nous dit que 
$\Pbb[u]=\cup_{w}B[w]$ o\`u l'union s'\'etend sur les $w\in W/W^\zeta$ tel que $w=u$ dans $W^{\zeta_o}\setminus W/W^\zeta$. 
Comme $\sigma(u)=u$, la derni\`ere relation implique que $w=u$ dans $W/W^\zeta$. Nous avons donc prouv\'e que $\Pbb[u]=B[u]$. 
La d\'ecomposition de Bruhat 3. montre maintenant que 
$(B[u])^\tau=\Pbb[u]\cap \Fcal^\R_\zeta=\cup_{v} P^\R[v]$ o\`u l'union se fait sur les $v\in W_{\agot}/W_{\agot}^\zeta\simeq W^\sigma/W^\sigma\cap W^{\zeta}$ 
tels que $P[v]\subset \Pbb[u]$. La derni\`ere inclusion implique que $v=u$ dans $W^{\gamma_o} \backslash W/W^{\gamma}$, et alors 
$v=u$ dans $W^\sigma/W^\sigma\cap W^{\gamma}$. Nous avons prouv\'e que $(\B[u])^\sigma= P^\R[u]$.

Nous avons $P^\R= N_\ggot A M$ où $M=Z_K(\agot)$. Puisque $u^{-1}A M u= A M\subset \Pbb(\zeta)$ pour tout $u\in W^\sigma$, nous voyons que 
$P^\R[u]=N_\ggot[u]$ lorsque $u\in W^\sigma$. $\Box$

%%%%%%%%%%%%%%%%%%%%%%%%%%%%%%%%%%%%%%%%%%%%%%%%%%%%%%
\subsection{Théorème de Borel et Haefliger}
%%%%%%%%%%%%%%%%%%%%%%%%%%%%%%%%%%%%%%%%%%%%%%%%%%%%%%

\`A chaque $w\in W/W^\zeta$, on associe la variété de Schubert (complexe) $\Xgot_{w,\zeta}=\overline{Bw\Pbb(\zeta)/\Pbb(\zeta)}\subset \Fcal_\zeta$ et sa classe de cycle 
$[\Xgot_{w,\zeta}]\in H^{2c(w,\zeta)}(\Fcal_\zeta,\Z)$.

Grâce à la décomposition de Bruhat $\Fcal_\zeta^\R=\bigcup_{v\in W_{\agot}/W_{\agot}^\zeta} P^\R [v]$, nous savons que l'on peut associer des classes de cycles 
$[\Xgot^\R_{v, \zeta}]\in H^{c(v,\zeta)}(\Fcal_\zeta^\R,\Z_2)$ aux variétés Schubert réeelles 
$$
\Xgot_{v,\zeta}^\R : =\overline{P^\R vP^{\R}(\zeta)/P^{\R}(\zeta)}\subset \Fcal_\zeta^\R.
$$
 De plus, la famille $[\Xgot^\R_{v, \zeta}],v\in W_{\agot}/W_{\agot}^\zeta$ définit une base de l'anneau de cohomologie $H^*(\Fcal_\zeta^\R,\Z_2)$ (voir \cite{Tak65} 
 et \cite{DKV83}).
 
 Lorsque $v\in (W/W^\zeta)^\sigma\simeq W_{\agot}/W_{\agot}^\zeta$, les variétés de Schubert réelles $\Xgot_{v,\zeta}^\R$ correspondent 
 à la partie réelle des variétés de Schubert complexes $\Xgot_{v,\zeta}$. Notons $n(\zeta)$ la dimension complexe de $\Fcal_\zeta$. 
 
 Dans ce cadre, un théorème de Borel et Haefliger \cite[Proposition 5.14]{BH61} 
 nous dit que la relation $[\Xgot_{v_1,\zeta}]\cdot\ldots\cdot[\Xgot_{w_{s+1},\zeta}]= [pt]$ dans $H^{2n(\zeta)} (\Fcal_\zeta,\Z)$ 
 implique la relation $[\Xgot^\R_{v_1,\zeta}]\cdot \ldots\cdot[\Xgot^\R_{v_{s+1},\zeta}]= [pt]$ dans $H^{n(\zeta)}(\Fcal_\zeta^\R,\Z_2)$.

%%%%%%%%%%%%%%%%%%%%%%%%%%%%%%%%%%%%%%%%%%%%%%%%%%%%%%
\section{Variétés de drapeaux $\F(r,n-r; n)$ et sous-espaces vectoriels isotropes}\label{sec:drapeaux-2-etage}
%%%%%%%%%%%%%%%%%%%%%%%%%%%%%%%%%%%%%%%%%%%%%%%%%%%%%%
%%%%%%%%%%%%%%%%%%%%%%%%%%%%%%%%%%%%%%%%%%%%%%%%%%%%%%

Soient $n > b>a\geq 1$. Dans la suite, pour tout $X\subset [n]$, on note $\C^X:=\vect(e_k,k\in X)$. 

\begin{definition}
Désignons par $\F(a,b; n)$ la variété des drapeaux des séquences imbriquées de sous-espaces vectoriels 
$E\subset F\subset \C^{n}$ où $\dim E= a$ et $\dim F= b$. 
\end{definition}

L'application $g\in GL_{n}(\C)\mapsto \left(g(\C^{[a]})\subset g(\C^{[b]})\right)$ induit un isomorphisme 
$GL_{n}(\C)/P(\gamma^{a,b})\simeq \F(a,b; n)$ où 
\begin{equation}\label{eq:gamma-a-b}
\gamma^{a,b}_o=\diag(\underbrace{-1,\ldots,-1}_{a\ termes},\underbrace{0,\ldots,0}_{b-a\ termes},\underbrace{1,\ldots\ldots,1}_{n-b\ termes}).
\end{equation}

L'application $w\mapsto (w([a]),w([b])$ détermine une bijection entre $W/W^{\gamma^{a,b}}$ et l'ensemble $\Pcal(a,b;n)$ des sous-ensembles emboités 
$I\subset J\subset [n]$ avec $\sharp I=a$ et $\sharp J=b$.  Ainsi les cellules de Bruhat de $\F(a,b; n)$ sont de la forme 
$$
\Xgot_{I\subset J}^o:= B_n\cdot\left(\C^I\subset \C^J\right),
$$
avec $(I\subset J)\in \Pcal(a,b;n)$, et $B_n\subset GL_n(\C)$ est le sous-groupe de Borel des matrices triangulaires supérieures. 
La variété de Schubert correspondante est notée $\Xgot_{I\subset J}:=\overline{\Xgot_{I\subset J}^o}$.

\medskip

Fixons des entiers $p\geq q\geq r\geq 1$ tels que $p+q=n$.  Soit $\langle-,-\rangle$ le produit hermitien canonique sur $\C^n$, et soit $\langle-,-\rangle_{p,q}$ le produit hermitien de signature $(p,q)$, qui est défini par la relation 
$\langle x,y\rangle_{p,q}:= \langle x,\mathbf{J}_{p,q}y\rangle$ où
\begin{equation}\label{eq:J-p-q}
\mathbf{J}_{p,q}=
\begin{pmatrix}
0&0& J_q\\
0&I_{p-q} & 0\\
J_q& 0 & 0
\end{pmatrix}\quad \mathrm{et}\quad J_q=\begin{pmatrix}0&\cdots & 1\\ 0&\reflectbox{$\ddots$}& 0\\1& \cdots & 0\end{pmatrix}.
\end{equation} 
Pour tout sous-espace vectoriel $E\subset \C^n$, notons\footnote{L'orthogonal $E^{\Perp}$ dépend des variables $p,q$.}  $E^{\Perp}$ 
l'orthogonal de $E$ pour le produit hermitien $\langle-,-\rangle_{p,q}$. 

Nous travaillons avec l'involution anti-holomorphe 
$$
\tau_{p,q} : \F(r,n-r;n)\longrightarrow \F(r,n-r;n)
$$
qui envoie $E\subset F$ sur $F^{\Perp}\subset E^{\Perp}$.

On remarque que la variété des points fixes $\F(r,n-r;n)^{\tau_{p,q}}$ s'identife avec la sous-variété $\{E\in \G(r,n), E\subset E^{\Perp}\}$ 
des sous-espaces vectoriels isotropes. Nous allons maintenant déterminer les cellules de Bruhat qui admettent un point fixé par $\tau_{p,q}$.

\begin{lem}\label{lem:schubert-fixe-involution}
Pour tout $(I\subset J)\in \Pcal(r,n-r;n)$, $(\Xgot_{I\subset J}^o)^{\tau_{p,q}}\neq \emptyset$ si et seulement si 
$I\cap I^o=\emptyset$, $I\cap [q+1,\ldots,p]=\emptyset$ et $J= I^{o,c}$.
\end{lem}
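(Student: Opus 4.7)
The strategy is to identify the variety $\F(r,n-r;n)$ with the complex flag variety $\Fcal_\gamma=U_\C/\Pbb(\gamma)$ for $\gamma=\gamma^{r,n-r}_o$, and then to apply Lemma~\ref{lem:schubert-reel}. One first checks that $\gamma\in\agot$, equivalently that $i\gamma\in\tgot^{-\sigma}$. The antiholomorphic involution $\sigma$ associated to $\langle -,-\rangle_{p,q}$ acts on the standard diagonal torus via conjugation by $\mathbf{J}_{p,q}$, which is the permutation $s\in\Sgot_n$ given by $s(j)=n+1-j$ for $j\in[1,q]\cup[p+1,n]$ and $s(j)=j$ for $j\in[q+1,p]$. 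Since the hypothesis $r\leq q$ places the nonzero entries of $\gamma^{r,n-r}_o$ in positions $[r]\cup[n-r+1,n]\subset[1,q]\cup[p+1,n]$, one gets $s(\gamma)=-\gamma$, so $\tau_{p,q}$ coincides with the involution on $\Fcal_\gamma$ induced by $\sigma$.

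Applying Lemma~\ref{lem:schubert-reel}, the nonemptiness of $(\Xgot^o_{I\subset J})^{\tau_{p,q}}$ is equivalent to the class $(I,J)\in W/W^\gamma\simeq\Pcal(r,n-r;n)$ admitting a representative in $W^\sigma$, where $\sigma$ acts on $W=\Sgot_n$ by conjugation by $s$. The centralizer $W^\sigma=Z_W(s)$ consists of permutations preserving the orbit structure $[n]=[q+1,p]\sqcup\bigsqcup_{j\in[1,q]}\{j,n+1-j\}$: they restrict to a permutation of the singleton block $[q+1,p]$ and induce a signed permutation on the $q$ pair-orbits.

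If $w\in W^\sigma$ represents $(I,J)$, then $I=w([r])$ picks one element from each of $r$ distinct $s$-orbits; since $[r]\subset[1,q]$ lies entirely inside pair orbits, these orbits must be pair orbits, forcing both $I\cap[q+1,p]=\emptyset$ and $I\cap I^o=\emptyset$. Using the chain $r\leq q\leq p\leq n-r$ one decomposes $[n-r]=[r]\sqcup[r+1,q]\sqcup[q+1,p]\sqcup[p+1,n-r]$, where $[r+1,q]$ and $[p+1,n-r]=s([r+1,q])$ together exhaust the $q-r$ pair-orbits disjoint from $[r]$; applying $w$ yields $J=I\sqcup[q+1,p]\sqcup K\sqcup K^o$ for some $K$, which is precisely the complement of $\{n+1-i\colon i\in I\}=I^o$, i.e.\ $J=I^{o,c}$. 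Conversely, when the three stated conditions hold, one may exhibit an explicit fixed point: the standard pair $(\C^I\subset\C^{I^{o,c}})$ clearly lies in $\Xgot^o_{I\subset I^{o,c}}$, and the formula $\mathbf{J}_{p,q}e_j=e_{n+1-j}$ for $j\notin[q+1,p]$ gives $(\C^I)^\Perp=\C^{I^{o,c}}$ and $(\C^{I^{o,c}})^\Perp=\C^I$, so this pair is fixed by $\tau_{p,q}$.

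The main bookkeeping difficulty is the non-alignment of the two relevant partitions of $[n]$: the partition $[1,q]\sqcup[q+1,p]\sqcup[p+1,n]$ controlling the involution $s$, versus $[r]\sqcup[r+1,n-r]\sqcup[n-r+1,n]$ controlling $W^\gamma$. The key observation making the case analysis tractable is the nesting $r\leq q\leq p\leq n-r$, ensuring that each block of the latter partition sits inside a precise union of blocks of the former.
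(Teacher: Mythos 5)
Your proof is correct and follows essentially the same route as the paper: you identify $(\F(r,n-r;n),\tau_{p,q})$ with $U_\C/\Pbb(\zeta_r)$ equipped with the involution induced by $\sigma_{p,q}$, invoke Lemma \ref{lem:schubert-reel}, and then translate the existence of a representative of $(I\subset J)$ in $W^\sigma$ into the three stated conditions (your centralizer bookkeeping and the explicit $\tau_{p,q}$-fixed coordinate flag for the converse supply details the paper leaves implicit). The only point to make explicit — which is precisely what the paper's conjugation by $\theta_{p,q}$ from the $I_{p,q}$-model accomplishes — is that the diagonal torus is maximally split for $\sigma_{p,q}$ (so $\agot$ is maximal abelian in $\pgot$) and that the standard chamber, hence $B_n$, is adapted to the involution, since Lemma \ref{lem:schubert-reel} is stated only under those standing hypotheses.
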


Nous allons vérifier que ce résultat est un cas particulier du Lemme \ref{lem:schubert-reel}.

\medskip

Considérons l'involution $\sigma'_{p,q}(g)=I_{p,q}(g^*)^{-1}I_{p,q}$ sur $GL_n(\C)$, où $I_{p,q}$ est la matrice $\diag(I_p,-I_q)$. 
Alors $G=U(p,q)\subset GL_n(\C)$ est le sous-groupe fixé par $\sigma'_{p,q}$. La décomposition de Cartan de l'algèbre de Lie de 
$G$ est $\ggot=\kgot\oplus\pgot$ où $\kgot=\ugot(p)\times\ugot(q)$ et $\pgot\simeq M_{p,q}(\C)$ en tant que $U(p)\times U(q)$-module. 

%Dans la suite, nous notons $x^*=(x_q,\ldots,x_1)$ pour tout $x=(x_1,\ldots,x_q)\in\R^q$. 
Soit $\tgot'\subset \ugot(n)$ le tore maximal composé des matrices de la forme 
$$
i\,\begin{pmatrix} \diag(a)&  0& A(c)\\
0& \diag(b)& 0\\
 A(c^*)&  0& \diag(a^*)
\end{pmatrix}\quad\mathrm{avec}\quad 
A(c)=\begin{pmatrix}
0&\cdots & c_1\\
\vdots &\reflectbox{$\ddots$}& \vdots\\
c_q&\cdots & 0\\
\end{pmatrix}, 
$$
pour tout $(a,b,c)\in \R^q\times \R^{p-q}\times \R^q$.

Nous remarquons que $\tgot'$ est invariant sous l'involution $\sigma'_{p,q}$, et que $\agot'=\frac{1}{i} (\tgot')^{-\sigma'_{p,q}}$ est le sous-espace abélien maximal de 
$\pgot$ constitué de toutes les matrices de la forme
\begin{equation}\label{eq:maxabelian}
X(c):=\begin{pmatrix}0&  0& A(c)\\
0&  0& 0\\
A(c^*) & 0 & 0
\end{pmatrix},\qquad c\in\R^q.
\end{equation}

Soit $f_i:\agot'\to\R$ la forme linéaire dont la valeur sur la matrice $X(c)$ est $c_i$.
L'ensemble $\Sigma(\agot')$ des racines restreintes comprend tous les applications linéaires $\pm f_i\pm f_j$ avec $i\neq j$ et $\pm 2 f_i$ pour tous les $i$. 
De même, les $\pm f_i$ sont des racines restreintes si $p\neq q$ (voir \S 6 dans \cite{Knapp-book}). Nous pouvons donc choisir la chambre de Weyl restreinte suivante $\agot'_+:=\{X(c), c_1\geq c_2\geq \cdots\geq c_q\geq 0\}$. Dans ce cas, le groupe de Weyl restreint $W_{\agot'}$ est le produit semi-direct 
$\Sgot_q\rtimes \{1,-1\}^q$ où $\Sgot_q$ agit sur $\R^q$ en permutant les indices et $\{1,-1\}^q=\{\epsilon=(\epsilon_1,\cdots,\epsilon_q)\ \vert\  \epsilon_k=\pm 1\}$ agit par changements de signes : $\epsilon \cdot c= (\epsilon_1 c_1,\ldots, \epsilon_q c_q)$.

Soit $\tgot'_+\subset\tgot'$ une chambre de Weyl adaptée à l'involution\footnote{On a donc $\agot'_+=\frac{1}{i}(\tgot'_+\cap (\tgot')^{-\sigma_{p,q}})$.}, et $B'\subset GL_n(C)$ le sous-groupe de Borel associé. Soit $W'$ le groupe de Weyl associé à l'algèbre de Cartan $\tgot'$. Pour tout $r\in\{1,\ldots,q\}$, on note
$\zeta_r:=X(c_r)$ où $c_r=(\underbrace{-1,\ldots,-1}_{ r\ fois},0,\ldots,0)$, et on désigne par $\Fcal_{\zeta_r}:=GL_n(\C)/\Pbb(\zeta_r)$ la variété de drapeaux correspondante. 
Comme l'élément $\zeta_r$ est fixé par l'involution $\sigma'_{p,q}$, on peut définir une involution anti-holomorphe $\tau'_{p,q}$ sur la variété 
$\Fcal_{\zeta_r}:=GL_n(\C)/\Pbb(\zeta_r)$ par la relation
$\tau'_{p,q}(g \Pbb(\zeta_r)):=\sigma'_{p,q}(g)\Pbb(\zeta_r)$.

Pour vérifier que les données géométriques $\left(\Fcal_{\zeta_r}, \tau'_{p,q}\right)$ et $\left(\F(r,n-r;n),\tau_{p,q}\right)$ sont identiques, on utilise la conjugaison 
par l'élément d'ordre 2 
\begin{equation}\label{eq:theta}
\theta_{p,q}=\frac{1}{\sqrt{2}}
\begin{pmatrix}
I_q&0& J_q\\
0&\sqrt{2}I_{p-q} & 0\\
J_q& 0 & -I_q
\end{pmatrix}\ \in O(n).
\end{equation}
Pour simplifier nos notations, nous écrirons $\theta=\theta_{p,q}$. Voici quelques propriétés faciles à vérifier:

\begin{itemize}\setlength{\itemsep}{10pt}
\item $\tgot:=Ad(\theta)(\tgot')$ est le tore maximal des matrices diagonales. Plus précisément, l'image de la 
matrice $
\begin{pmatrix} D(a)&  0& A(c)\\
0& D(b)& 0\\
 A(c^*)&  0& D(a^*)
\end{pmatrix}
$
par $Ad(\theta)$ est égale à 
$
\begin{pmatrix} D(a+c)&  0&0\\
0& D(b)& 0\\
0&  0& D(a^*-c^*)
\end{pmatrix}.
$
\item $\agot:=Ad(\theta)(\agot')$ est formé des matrices $\diag(c_1,\cdots,c_q, 0\cdots, 0, -c_q\cdots, -c_1)$.
\item $\tgot_+:=Ad(\theta)(\tgot'_+)$ est l'ensemble des matrices $i\,\diag(x_1\geq\cdots\geq x_n)$ et 
$\agot_+:=Ad(\theta)(\agot'_+)$ est l'ensemble des matrices $\diag(c_1\geq\cdots\geq c_q\geq 0\cdots\geq 0\geq -c_q\cdots\geq -c_1)$.
\item Le groupe de Weyl $W'$ s'identifie à $W=\Sgot_n$ à travers $Ad(\theta)$.
\item Le groupe de Weyl restreint $W_{\agot'}$ s'identifie, à travers $Ad(\theta)$, au groupe 
$$
W_{\agot}= \Big\{w\in\Sgot_n, w(k^o)=w(k)^o, \forall k\in [q]\ \mathrm{et}\ w(k)=k, \forall k\in [q+1,\ldots,p]\Big\}.
$$
\item $B_n:= Ad(\theta)(B')\subset GL_n(\C)$ est le sous-groupe de Borel des matrices triangulaires supérieures.
\item Le sous-groupe parabolique $Ad(\theta)( \Pbb(\zeta_r))$ est \'egal au groupe parabolique $P(\gamma^{r,n-r})$ (voir (\ref{eq:gamma-a-b})). 
\item Comme $\mathbf{J}_{p,q}:=Ad(\theta)(I_{p,q})$, l'involution $Ad(\theta) \circ \sigma'_{p,q}\circ Ad(\theta)$ est égale à $\sigma_{p,q}(g):=\mathbf{J}_{p,q}(g^*)^{-1}\mathbf{J}_{p,q}$.

\end{itemize}

\medskip

Nous travaillons avec le difféomorphisme 
\begin{equation}\label{eq:diffeo-F-r-n-r}
\varphi : \Fcal_{\zeta_r}\to \F(r,n-r;n)
\end{equation}
défini par $\varphi(g \Pbb(\zeta_r))= Ad(\theta) (g)\cdot (\C^r\subset \C^{n-r})$. 
Grâce aux points précédents, on voit que le difféomorphisme (\ref{eq:diffeo-F-r-n-r}) identifie les variétés de Schubert 
$\Xgot_{w,\zeta_r}:=\overline{B' w\Pbb(\zeta_r)/\Pbb(\zeta_r)}$, $w\in W'/(W')^{\zeta_r}$ aux variétés de Schubert 
$\Xgot_{I\subset J},\sharp I= r,\sharp J= n-r$ définies précédemment.

De plus, on a la relation $\varphi\circ \tau'_{p,q}=\tau_{p,q}\circ\varphi$. Celle-ci permet de voir que nous sommes dans la situation traitée dans le lemme \ref{lem:schubert-reel}:  
l'ensemble $(\Xgot_{I\subset J}^o)^{\tau_{p,q}}$ est non-vide si et seulement si il existe $w\in W_\agot$ tel que $I=w([r])$ et $J=w([n-r])$. 
Cette dernière condition est équivalente aux relations: $I\cap I^o=\emptyset$, $I\cap [q+1,\ldots,p]=\emptyset$ et $J= I^{o,c}$. On a 
ainsi démontré le lemme \ref{lem:schubert-fixe-involution}.

%%%%%%%%%%%%%%%%%%%%%%%%%%%%%%%%%%%%%%%%%%%%
%%%%%%%%%%%%%%%%%%%%%%%%%%%%%%%%%%%%%%%%%%%%
%%%%%%%%%%%%%%%%%%%%%%%%%%%%%%%%%%%%%%%%%%%%
\chapter{Formules multiplicatives}
%%%%%%%%%%%%%%%%%%%%%%%%%%%%%%%%%%%%%%%%%%%%
%%%%%%%%%%%%%%%%%%%%%%%%%%%%%%%%%%%%%%%%%%%%
%%%%%%%%%%%%%%%%%%%%%%%%%%%%%%%%%%%%%%%%%%%%

L'objectif des deux prochains sections est de passer en revue certains des résultats obtenus par Ressayre et Richmond \cite{Res11b,Ric12,RR11}. 
Ce faisant, nous en améliorerons certains.

%%%%%%%%%%%%%%%%%%%%%%%%%%%%%%%%%%%%%%%%%%%%%%%%%%%%%%
%%%%%%%%%%%%%%%%%%%%%%%%%%%%%%%%%%%%%%%%%%%%%%%%%%%%%%
\section{Formules multiplicatives et paires de Ressayre}\label{sec:multiplicatif-ressayre-pair}
%%%%%%%%%%%%%%%%%%%%%%%%%%%%%%%%%%%%%%%%%%%%%%%%%%%%%%
%%%%%%%%%%%%%%%%%%%%%%%%%%%%%%%%%%%%%%%%%%%%%%%%%%%%%%

Soit $M$ une variété de K\"ahler $U$-hamiltonienne possédant une structure de variété algébrique. Soit $T\subset U$ un tore maximal et $\gamma,\hgamma\in\tgot$. 
Soit $\mathbb{T}\subset T$ le sous-tore engendré par $\gamma$ et $\hgamma$.

Soient $\hC$ une composante connexe de $M^{\mathbb{T}}$ et $C$ la composante connexe de $M^\gamma$ contenant $\hC$.
Nous considérons les variétés de Bialinicki-Birula suivantes\footnote{$\gamma_o:=\frac{1}{i}\gamma$ et $\hgamma_o:=\frac{1}{i}\hgamma$.}
$$
C^-:= \Big\{m\in M, \lim_{t\to\infty} \exp(t\gamma_o) m\ \in C \Big\}\quad \mathrm{et}\quad \hC^-:= \Big\{m\in M, \lim_{t\to\infty} \exp(t\hgamma_o) m\ \in \hC \Big\}.
$$

Nous travaillons avec les sous-groupes paraboliques $P(\gamma):=\{g\in U_\C, \ \lim_{z\to\infty}e^{t\gamma_o}ge^{-t\gamma_o}\ \mathrm{existe}\}$ et 
$P(\hgamma)$. Dans cette section, nous faisons les hypothèses suivantes :

\medskip

\begin{itemize}
\item[{\em (H1)}] $\hC^-\subset C^-$,
\item[{\em (H2)}] $P(\,\hgamma\,) \subset P(\gamma)$.
\end{itemize}

\medskip

On considère les projections $\pi: C^-\to C$ et $\hpi: \hC^-\to \hC$ définies par les relations $\pi(x)=\lim_{t\to\infty}\exp(t\gamma_o) x$ 
et $\hpi(y)=\lim_{t\to\infty}\exp(t\,\hgamma_o) y$.

\begin{prop}\label{prop:hypothesesH1H2}
\begin{enumerate}
\item L'hypthèse (H2) est équivalente au fait que $\ugot_\C^{\hgamma_o\leq 0}\subset \ugot_\C^{\gamma_o\leq 0}$.
\item L'hypthèse (H2) implique les relations suivantes:
\begin{enumerate}
\item  $\ugot_\C^{\hgamma_o}\subset \ugot_\C^{\gamma_o}$,
\item $\ugot_\C^{\hgamma_o>0}\cap\ugot_\C^{\gamma_o<0}=0$,
\item $U_\C^\gamma/(U_\C^\gamma\cap P(\,\hgamma\,))\simeq P(\gamma)/P(\,\hgamma\,)$.
\end{enumerate}
\item L'hypthèse (H1) est équivalente au fait que $(TM\vert_{\hC})^{\hgamma_o\leq  0}\subset (TM\vert_{\hC})^{\gamma_o\leq 0}$.
\item Pour tout $x\in \hC^-$, on a $\pi(x)\in \hC^-\cap C$ et $\hpi(\pi(x))=\hpi(x)$.
\item Supposons que $(TM\vert_{\hC})^{\hgamma_o< 0}\cap (TM\vert_{\hC})^{\gamma_o> 0}=0$. Alors $\forall x\in C^-$, on a 
$x\in \hC^-\Longleftrightarrow \pi(x)\in \hC^-\cap C$.
\end{enumerate}
\end{prop}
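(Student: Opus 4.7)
Les cinq assertions se traitent s\'epar\'ement, chacune exploitant la structure des d\'ecompositions spectrales associ\'ees aux actions infinit\'esimales de $\gamma_o$ et $\hgamma_o$.

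Pour (1), il suffit de remarquer que l'alg\`ebre de Lie de $P(\gamma)$ est exactement $\ugot_\C^{\gamma_o\leq 0}$, et que $P(\gamma)$ et $P(\hgamma)$ \'etant connexes, l'inclusion des groupes \'equivaut \`a celle des alg\`ebres de Lie. Pour (2), je proc\`ederais par analyse sur les espaces de racines. Si $\alpha\in\Rgot(\ugot_\C,\tgot)$ v\'erifie $\alpha(\hgamma_o)=0$, alors $\ugot_\alpha$ et $\ugot_{-\alpha}$ sont toutes deux contenues dans $\ugot_\C^{\hgamma_o\leq 0}\subset\ugot_\C^{\gamma_o\leq 0}$, ce qui force $\alpha(\gamma_o)=0$, d'o\`u (a) ; pour (b), un vecteur $X\in\ugot_\alpha$ avec $\alpha(\hgamma_o)>0$ et $\alpha(\gamma_o)<0$ donnerait $\ugot_{-\alpha}\subset\ugot_\C^{\hgamma_o\leq 0}\subset\ugot_\C^{\gamma_o\leq 0}$, soit $-\alpha(\gamma_o)\leq 0$, contradiction ; pour (c), un argument analogue montre que le radical unipotent $N^-(\gamma)$, d'alg\`ebre $\ugot_\C^{\gamma_o<0}$, est contenu dans $P(\hgamma)$, et la d\'ecomposition de Levi $P(\gamma)=U_\C^\gamma\cdot N^-(\gamma)$ fournit alors le diff\'eomorphisme annonc\'e.

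Pour (3), l'implication directe r\'esulte des identifications $T_x\hC^-=(T_xM)^{\hgamma_o\leq 0}$ et $T_xC^-=(T_xM)^{\gamma_o\leq 0}$ en tout point $x\in\hC$ ; la r\'eciproque s'obtient par lin\'earisation $\mathbb{T}$-\'equivariante de $M$ au voisinage de $\hC$, combin\'ee \`a la structure fibr\'ee des cellules de Bialynicki-Birula et \`a la connexit\'e de $\hC$. Pour (4), la commutativit\'e $e^{t\gamma_o}e^{s\hgamma_o}=e^{s\hgamma_o}e^{t\gamma_o}$ (car $\gamma,\hgamma\in\tgot$) et la continuit\'e de $\pi$ entra\^inent
$$
e^{s\hgamma_o}\pi(x)=\pi(e^{s\hgamma_o}x);
$$
en passant \`a la limite $s\to\infty$ et en \'echangeant $\pi$ avec $\lim_s$, on obtient $\lim_s e^{s\hgamma_o}\pi(x)=\pi(\hpi(x))=\hpi(x)$, puisque $\hpi(x)\in\hC\subset C$ est $\gamma_o$-fix\'e. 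Cela \'etablit $\pi(x)\in\hC^-$ et $\hpi(\pi(x))=\hpi(x)$.

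Le point (5) est le principal obstacle. L'implication directe d\'ecoule de (4). Pour la r\'eciproque, mon plan est de d\'emontrer l'\'egalit\'e $\hC^-=\pi^{-1}(\hC^-\cap C)$ comme sous-vari\'et\'es analytiques complexes de $C^-$, l'inclusion $\hC^-\subset\pi^{-1}(\hC^-\cap C)$ provenant de (4). Pour l'\'egalit\'e, je travaillerais dans un mod\`ele $\mathbb{T}$-\'equivariant local au voisinage d'un point $\hat{y}\in\hC$, en d\'ecomposant $T_{\hat{y}}M$ en espaces de poids $(a,b)$ relatifs \`a $(\hgamma_o,\gamma_o)$. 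La dimension de $\hC^-$ ne fait intervenir que les poids $a<0$ (en utilisant (H1) qui exclut les $(a\leq 0,b>0)$), tandis que celle de $\pi^{-1}(\hC^-\cap C)$ m\'elange la fibre BB ($b<0$) avec la condition $v_{a,0}=0$ pour $a\geq 0$ provenant de $\pi(x)\in\hC^-$. L'hypoth\`ese d'annulation de l'intersection de poids, jointe \`a (H1), permettra d'identifier ces dimensions. La conclusion s'obtiendra alors par irr\'eductibilit\'e et connexit\'e des deux sous-vari\'et\'es (toutes deux fibr\'ees sur des vari\'et\'es connexes par des cellules BB) : une inclusion entre vari\'et\'es analytiques irr\'eductibles de m\^eme dimension est une \'egalit\'e. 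La difficult\'e technique sera la v\'erification soigneuse du calcul dimensionnel et la justification du passage du mod\`ele local lin\'earis\'e \`a l'\'egalit\'e globale.
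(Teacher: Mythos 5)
Les points 1 et 2 suivent la m\^eme route que le texte : connexit\'e des paraboliques pour 1, analyse des racines (avec la sym\'etrie $\alpha\mapsto-\alpha$) pour 2a--2b, et d\'ecomposition de Levi pour 2c. Le sens direct de 3 est identique ; votre esquisse de la r\'eciproque est le bon angle, mais elle reste une esquisse : il faut une carte lin\'earisante $\mathbb{T}_\C$-invariante (th\'eor\`eme de Koras), et non seulement $\mathbb{T}$-invariante, pour ramener $x$ dans la carte via le flot $e^{t\hgamma_o}$, et il faut v\'erifier que la limite $\pi(x)$ tombe dans la \emph{bonne composante connexe} de $M^\gamma$, point que le texte traite par un argument de boule invariante et de contraction par $\rho\in[0,1]$. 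Pour le point 4, votre argument (\'equivariance $e^{s\hgamma_o}\pi(x)=\pi(e^{s\hgamma_o}x)$, puis continuit\'e de $\pi$ au point $\hpi(x)\in \hC\subset C\subset C^-$) est correct sous (H1) --- n\'ecessaire de toute fa\c{c}on pour que $\pi(x)$ ait un sens --- et constitue une route plus directe que celle du texte, qui obtient 4 comme sous-produit de la lin\'earisation du point 3.

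Le point 5 pr\'esente en revanche une lacune r\'eelle, double. D'abord, le principe final \og une inclusion entre vari\'et\'es analytiques irr\'eductibles de m\^eme dimension est une \'egalit\'e \fg{} est faux pour des ensembles seulement localement ferm\'es ($\C^*\subset\C$ en est le contre-exemple standard) ; pour conclure il vous faudrait savoir que $\hC^-$ est ferm\'e dans $\pi^{-1}(\hC^-\cap C)$, ce qui est essentiellement l'\'enonc\'e \`a d\'emontrer, et l'irr\'eductibilit\'e (ou m\^eme la puret\'e dimensionnelle) de $\pi^{-1}(\hC^-\cap C)$ n'est pas acquise sans travail. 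Ensuite, le calcul dimensionnel lui-m\^eme ne boucle pas avec les hypoth\`eses que vous citez : en un point de $\hC$, en notant $(a,b)$ les poids relatifs \`a $(\hgamma_o,\gamma_o)$, on a $\dim \hC^-=\sharp\{a\leq 0\}$ tandis que $\dim \pi^{-1}(\hC^-\cap C)=\sharp\{b<0\}+\sharp\{a\leq 0,\ b=0\}$ ; l'\'egalit\'e exige l'annulation du bloc $\{a>0,\ b<0\}$, c'est-\`a-dire $(\T M\vert_{\hC})^{\hgamma_o>0}\cap(\T M\vert_{\hC})^{\gamma_o<0}=0$, bloc que ni (H1) (qui exclut $a\leq 0,\ b>0$) ni l'intersection que vous annulez (le bloc $a<0,\ b>0$) ne contr\^olent.

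La preuve du texte \'evite ces deux \'ecueils en travaillant point par point : pour $x\in C^-$ avec $\pi(x)\in\hC^-\cap C$, on pose $\hx=\hpi(\pi(x))$ et on lin\'earise de fa\c{c}on $\mathbb{T}_\C$-\'equivariante en $\hx$ ; comme $\hx$ est adh\'erent \`a l'orbite $\mathbb{T}_\C\cdot x$, les points $x$ et $\pi(x)$ sont dans la carte, on a $\varphi(x)\in(\T_{\hx}M)^{\gamma_o\leq 0}$ et $\varphi(\pi(x))\in(\T_{\hx}M)^{\hgamma_o<0}$, puis l'inclusion de poids $(\T_{\hx}M)^{\gamma_o<0}\subset(\T_{\hx}M)^{\hgamma_o<0}$, extraite des d\'ecompositions en poids, donne $\varphi(x)\in(\T_{\hx}M)^{\hgamma_o<0}$, c'est-\`a-dire $x\in\hC^-$ --- sans aucun argument global de dimension ou d'irr\'eductibilit\'e. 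Pour r\'eparer votre plan, il faudrait au minimum \'etablir cette inclusion de poids (donc contr\^oler le bloc $\{a>0,\ b<0\}$) et remplacer la conclusion par irr\'eductibilit\'e par un argument local de ce type.
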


\begin{proof} Le point {\em 1.} découle du fait que $P(\hgamma)$ et $P(\gamma)$ sont des groupes  de Lie connexes ayant pour algèbres de Lie 
$\ugot_\C^{\hgamma_o\leq 0}$ et $\ugot_\C^{\gamma_o\leq 0}$.

Soit $\Rgot\subset\tgot^*$ l'ensemble des racines de $(U,T)$. On a les partitions $\Rgot=\Rgot^{\gamma\leq 0}\cup\Rgot^{\gamma> 0}$ et 
$\Rgot=\Rgot^{\hgamma\leq 0}\cup\Rgot^{\hgamma> 0}$. L'inclusion $\ugot_\C^{\hgamma_o\leq 0}\subset \ugot_\C^{\gamma_o\leq 0}$ signifie que 
$\Rgot^{\hgamma\leq 0}\subset \Rgot^{\gamma\leq 0}$. Alors $\Rgot^{\hgamma\geq 0}=-\Rgot^{\hgamma\leq 0}\subset -\Rgot^{\gamma\leq 0}
=\Rgot^{\gamma\geq 0}$ et donc $\Rgot^{\hgamma= 0}=\Rgot^{\hgamma\leq 0}\cap\Rgot^{\hgamma\geq 0} \subset \Rgot^{\gamma\leq 0}\cap\Rgot^{\gamma\geq 0}= \Rgot^{\gamma= 0}$. Ceci démontre le point {\em 2.a)}.

Les relations $\Rgot^{\hgamma\leq 0}\subset \Rgot^{\gamma\leq 0}$, $\Rgot^{\hgamma\geq 0}\subset \Rgot^{\gamma\geq 0}$ et 
$\Rgot^{\gamma< 0}\subset \Rgot^{\hgamma< 0}$ sont toutes équivalentes. Cela montre que 
$\Rgot^{\gamma< 0}\cap \Rgot^{\hgamma> 0}=\emptyset$, et le point  {\em 2.b)} en découle.

Les calculs précédents montrent que 
\begin{equation}\label{eq:Rgot-gamma}
\Rgot^{\gamma\leq 0}=\Rgot^{\hgamma\leq 0}\bigcup \Rgot^{\gamma = 0}\cap \Rgot^{\hgamma> 0}\qquad \mathrm{et}\qquad 
\Rgot^{\hgamma< 0}=\Rgot^{\gamma< 0}\bigcup \Rgot^{\hgamma < 0}\cap \Rgot^{\gamma=0}.
\end{equation} 
Cela implique que les espaces tangents en $[e]$  des variétés $U_\C^\gamma/U_\C^\gamma\cap P(\hgamma)$ et 
$P(\gamma)/P(\hgamma)$ coincident. Le point  {\em 2.c)} est ainsi démontré.

L'inclusion $\hC^-\subset C^-$ implique, au niveau des espaces tangents, que $\T_{\hx}\hC^-\subset \T_{\hx}C^-,\forall \hx\in\hC$, et donc 
$(\T_{\hx}M)^{\hgamma_o\leq 0}\subset (\T_{\hx}M)^{\gamma_o\leq 0}$. 

Réciproquement, considérons $x\in \hC^-$ tel que $\hat{x}:=\lim_{t\to+\infty}\exp(t\hgamma_o)x\in \hC\subset M^{\mathbb{T}}$ satisfait la relation 
$(T_{\hx}M)^{\hgamma_o\leq 0}\subset (T_{\hx}M)^{\gamma_o\leq 0}$. Si on montre que $x\in C^-$, la preuve du point {\em 3.} sera complète. 
Se faisant, on démontrera le point {\em 4}.

L'action de $\mathbb{T}$ sur $T_{\hx}M$ se diagonalise : on a la 
décomposition 
$T_{\hx}M=\oplus_{\beta}(T_{\hx}M)_\beta$ où pour tout $\beta\in Lie(\mathbb{T})^*$, on pose 
$$
(T_{\hx}M)_\beta:=\left\{v\in T_{\hx}M; \ X\cdot v =i\langle\beta ,X\rangle v,\forall X\in Lie(\mathbb{T})\right\}.
$$

Soit $\mathbb{T}_\C\subset U_\C$ la complexification du tore $\mathbb{T}$. Comme $\hx\in M^{\mathbb{T}_\C}$, 
un théorème de Koras \cite{Kor86, Sjamaar95} nous dit que l'action $\mathbb{T}_\C$ peut être linéarisée au voisinage de $\hx$. 
En d'autres termes, il existe un difféomorphisme holomorphe $\mathbb{T}_\C$-équivariant $\varphi : \Vcal \to \Ucal$ où $\Ucal$  
est un voisinage ouvert $\mathbb{T}_\C$-invariant de $0$ dans $\T_{\hx}M$ et $\Vcal$ est un voisinage ouvert $\mathbb{T}_\C$-invariant de $\hx$ dans $M$. 

Notons $(\Vcal\cap M^\gamma)_0$  la composante connexe de $\Vcal\cap M^\gamma$ 
contenant $\hx$. De la même manière, notons $(\Ucal\cap (\T_{\hx}M)^\gamma)_0$ 
la composante connexe de $\Ucal\cap (\T_{\hx}M)^\gamma$  contenant $0=\varphi(\hx)$. 
On a $(\Vcal\cap M^\gamma)_0\simeq (\Ucal\cap (\T_{\hx}M)^\gamma)_0$ à travers le difféomorphisme $\varphi$, et de plus on a 
$(\Vcal\cap M^\gamma)_0\subset \Vcal\cap C$.

Par définition, $\exp(t\,\hgamma_o)x$ tend vers $\hx$ lorsque $t$ tend vers l'infini. Alors $\exp(t\,\hgamma_o) x\in \Vcal$ lorsque $t$ est suffisamment grand. 
Comme $\Vcal$ est invariant par $\mathbb{T}_\C$, on voit que $x\in\Vcal$. Il nous reste à montrer les relations
$$
\pi(x):=\lim_{s\to+\infty} e^{s\gamma_o}x\ \in \  (\Vcal\cap M^\gamma)_0\qquad \mathrm{et}\qquad 
\lim_{t\to+\infty} e^{t\,\hgamma_o}\pi(x)=\hx,
$$
qui impliquent respectivement que $x\in C^-$ et $\pi(x)\in\hC^-$.

Choisissons un produit hermitien $\mathbb{T}$-invariant sur $T_{\hx}M$, et posons $B(\epsilon):=\{\|v\|\leq \epsilon\}\subset T_{\hx}M$ pour tout $\epsilon >0$. 
Fixons $\epsilon_o>0$ tel que $B(\epsilon_o)\subset \Ucal$. Comme $\lim_{t\to+\infty}e^{t\,\hgamma_o}\varphi(x)=0$, on a 
$$
\varphi(x)\in \bigoplus_{\langle\beta,\hgamma_o\rangle<0}(T_{\hx}M)_\beta\subset \bigoplus_{\langle\beta,\gamma_o\rangle\leq 0}(T_{\hx}M)_\beta.
$$
Soit $t_0>0$, tel que $e^{t_0\hgamma_o}\varphi(x)\in B(\epsilon_o)$. Alors 
\begin{equation}\label{eq:s-t-rho}
\rho e^{t\,\hgamma_o}e^{s\gamma_o}\varphi(x)\in B(\epsilon_o),\qquad \forall t\geq t_0,\forall s\geq 0,\forall \rho\in [0,1].
\end{equation}
Notons $v_x:=\lim_{s\to +\infty}e^{s\gamma_o}\varphi(x)\in B(\epsilon_o)\cap (\T_{\hx}M)^\gamma$. Grâce à (\ref{eq:s-t-rho}), on voit que 
$\rho v_x\in \Ucal\cap (\T_{\hx}M)^\gamma$ pour tout $\rho\in [0,1]$, ainsi $v_x\in (\Ucal\cap (\T_{\hx}M)^{\gamma})_0$, et donc 
$\pi(x)=\varphi^{-1}(v_x)\in (\Vcal\cap M^\gamma)_0$. 

Le vecteur $v_x$ appartient à $ B(\epsilon_o) \bigcap \bigoplus_{\langle\beta,\hgamma_o\rangle<0}(T_{\hx}M)_\beta$, et donc 
$\lim_{t\to+\infty} e^{t\hgamma_o}v_x=0$. Cela entraine que $\lim_{t\to+\infty} e^{t\,\hgamma_o}\pi(x)=\hx$.

Il nous reste à vérifier le dernier point. On a déjà montré que $\forall x\in C^-$, si $x\in\hC^-$ alors $\pi(x)\in \hC^-\cap C$. Montrons la réciproque lorsque 
$(TM\vert_{\hC})^{\hgamma_o< 0}\cap (TM\vert_{\hC})^{\gamma_o> 0}=0$.

Soit $x\in C^-$ tel que $\pi(x)\in \hC^-\cap C$. Notons $\hx=\hpi(\pi(x))\in \hC$. Comme précédemment, considérons un difféomorphisme holomorphe 
$\mathbb{T}_\C$-équivariant $\varphi : \Vcal \to \Ucal$ où $\Ucal$  est un voisinage ouvert $\mathbb{T}_\C$-invariant de $0$ dans 
$\T_{\hx}M$ et $\Vcal$ est un voisinage ouvert $\mathbb{T}_\C$-invariant de $\hx$ dans $M$. Comme $\hx$ appartient à 
$\overline{\mathbb{T}\pi(x)}\subset \overline{\mathbb{T}x}$, les éléments
$x$ et $\pi(x)$ appartient à $\Vcal$. Il s'ensuit que $\varphi(x)\in (T_{\hx}M)^{\gamma_o\leq 0}$ et 
$\varphi(\pi(x))=\lim_{t\to\infty}e^{t\gamma_o}\varphi(x)\in (T_{\hx}M)^{\hgamma_o <0}$. D'après nos hypothèses, on a 
$$
(T_{\hx}M)^{\gamma_o = 0} = (T_{\hx}M)^{\hgamma_o = 0} \bigoplus (T_{\hx}M)^{\gamma_o =0}\cap (T_{\hx}M)^{\hgamma_o >0}.
$$
et 
$$
(T_{\hx}M)^{\gamma_o \leq 0} = (T_{\hx}M)^{\hgamma_o \leq 0} \bigoplus (T_{\hx}M)^{\gamma_o =0}\cap (T_{\hx}M)^{\hgamma_o >0}.
$$
Ces deux relations impliquent que $(T_{\hx}M)^{\gamma_o < 0} \subset  (T_{\hx}M)^{\hgamma_o < 0}$. Ainsi on obtient 
$\varphi(x)-\varphi(\pi(x))\in (T_{\hx}M)^{\gamma_o < 0} \subset  (T_{\hx}M)^{\hgamma_o < 0}$ et $\varphi(\pi(x))\in (T_{\hx}M)^{\hgamma_o <0}$. 
On a donc montré que $\varphi(x)\in  (T_{\hx}M)^{\hgamma_o < 0}$. Cela signifie que $\lim_{t\to\infty}e^{t\hgamma_o}x=\hx\in \hC$, soit $x\in\hC^-$.
\end{proof}

\medskip

Notons $B^\gamma= B\cap U_\C^{\gamma}$. Dans le théorème suivant, qui raffine le résultat de \cite{Res11b}, nous comparons les différents morphismes :
$$
q_\gamma : B\times_{B\cap P(\gamma)} C^-\longrightarrow M,\qquad\mathrm{et} \qquad q_{\hgamma} : B\times_{B\cap P(\hgamma)} \hC^-\longrightarrow M,
$$
$$
q_\Xcal : \Xcal\longrightarrow C,
\qquad\mathrm{et} \qquad q_{\Xcal}^- : \Xcal^-\longrightarrow C^-.
$$
où 
$$
\Xcal^-=\left(B\cap P(\gamma)\right)\times_{B\cap P(\hgamma)} \hC^-\simeq B^\gamma\times_{B^\gamma\cap P(\hgamma)} \hC^-  
\qquad \mathrm{et}\qquad
\Xcal=B^\gamma\times_{B^\gamma\cap P(\hgamma)}  \hC^-\cap C.
$$

Comme $B\times_{B\cap P(\hgamma)} \hC^-$ est isomorphe à $B\times_{B\cap P(\gamma)} \Xcal^-$,  nous voyons que
\begin{equation}\label{eq:composition-q-gamma}
q_{\hgamma}= (Id\times q_{\Xcal}^-)\circ q_\gamma
\end{equation}
où $Id\times q_{\Xcal}^-: B\times_{B\cap P(\gamma)} \Xcal^-\to B\times_{B\cap P(\gamma)} C$ est le morphisme qui envoie $[b,x]$ sur $[b,q_{\Xcal}^-(x)]$.

\medskip

Le prochain théorème est une version améliorée d'un résultat obtenu par Ressayre \cite[Proposition 1]{Res11b}.

\begin{theorem}\label{th:paire-ressayre-multiplicatif}
$(\hC,\hgamma\,)$ est une paire de Ressayre de degré fini pour la $U_\C$-variété $M$ si et seulement si les conditions suivantes sont vérifiées
\begin{itemize}\setlength{\itemsep}{8pt}
\item[$i)$] $(C,\gamma)$ est une paire de Ressayre de degré fini pour la $U_\C$-variété $M$.
\item[$ii)$] $(\hC,\hgamma)$ est une paire de Ressayre de degré fini pour la $U^\gamma_\C$-variété $C$.
\item[$iii)$] $\tr\left(\hgamma\circlearrowright \ngot^{\gamma_o>0}\right)=\tr\left(\hgamma\circlearrowright (TM\vert_{\hC})^{\gamma_o>0}\right)$.
\end{itemize}
\medskip

Dans ce cas,  les degrés satisfont la relation multiplicative $\deg(q_{\hgamma})=\deg(q_{\Xcal}) \deg(q_\gamma)$.

%Le même résultat est valable si on travaille avec des paires de Ressayre algébriques.
\end{theorem}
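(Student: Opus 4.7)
Le plan est d'exploiter trois ingrédients : les décompositions en somme directe induites par (H1), (H2), la factorisation $q_{\hgamma}=q_\gamma\circ(\mathrm{Id}\times q_\Xcal^-)$ fournie par (\ref{eq:composition-q-gamma}), et la $\mathbb{T}$-équivariance des morphismes tangents $\rho_{\hx}^{\hgamma}$ aux points $\hx\in\hC\subset M^{\mathbb{T}}$. Par la Proposition \ref{prop:hypothesesH1H2}, j'obtiens comme décompositions en somme directe de $\mathbb{T}$-modules
\begin{equation*}
\ngot^{\hgamma_o>0} \;=\; \ngot^{\gamma_o>0}\oplus\ngot^{\gamma_o=0,\hgamma_o>0}
\end{equation*}
(conséquence de (H2) via les points 2.a) et 2.b)) et
\begin{equation*}
(TM\vert_{\hC})^{\hgamma_o>0} \;=\; (TM\vert_{\hC})^{\gamma_o>0}\oplus(TC\vert_{\hC})^{\hgamma_o>0}\oplus(TM\vert_{\hC})^{\gamma_o<0,\hgamma_o>0}
\end{equation*}
(grâce à (H1)). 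Les deux premiers sommands à droite sont précisément ceux intervenant dans (iii) et dans la partie trace intrinsèque $(A_2)$ de $(\hC,\hgamma)$ pour la $U_\C^\gamma$-variété $C$, puisque l'algèbre nilpotente relative à $U_\C^\gamma$ est $\ngot^{\gamma_o=0}$.

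Dans un deuxième temps, la factorisation (\ref{eq:composition-q-gamma}) et la multiplicativité du degré le long d'une composition de morphismes dominants à fibres finies me donnent $\deg(q_{\hgamma})=\deg(q_\gamma)\,\deg(q_\Xcal^-)$. Pour conclure à la formule multiplicative annoncée, je compare $q_\Xcal^-$ à $q_\Xcal$ via le carré commutatif
\begin{equation*}
\xymatrix{
\Xcal^- \ar[r]^{q_\Xcal^-} \ar[d]_{\pi'} & C^- \ar[d]^{\pi} \\
\Xcal \ar[r]_{q_\Xcal} & C
}
\end{equation*}
où $\pi$ est la projection de Bialynicki--Birula et $\pi'$ son pull-back. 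La Proposition \ref{prop:hypothesesH1H2}(5) affirme que $\hC^-=\pi^{-1}(\hC^-\cap C)$ dans $C^-$ ; jointe à la $B^\gamma$-équivariance de $\pi$, cette identité induit, pour $c\in C^-$ générique dans l'image de $q_\Xcal^-$, une bijection entre $(q_\Xcal^-)^{-1}(c)$ et $(q_\Xcal)^{-1}(\pi(c))$, d'où $\deg(q_\Xcal^-)=\deg(q_\Xcal)$ et la formule $\deg(q_{\hgamma})=\deg(q_\gamma)\,\deg(q_\Xcal)$. Je considère cette étape comme la plus délicate de la preuve, car elle exige de relever précisément la structure de fibré affine de $\pi$ le long de $q_\Xcal$, en vérifiant que le critère de la Proposition \ref{prop:hypothesesH1H2}(5) est stable sous l'action de $B^\gamma$ sur $C^-$.

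Dans un troisième temps, je démontre l'équivalence. Sens direct : le Lemme \ref{lem:RP-fondamental} appliqué à $(\hC,\hgamma)$ sur $M$ fournit un $\hx\in\hC$ où $\rho_{\hx}^{\hgamma}:\ngot^{\hgamma_o>0}\to(T_{\hx}M)^{\hgamma_o>0}$ est un isomorphisme $\mathbb{T}$-équivariant, puisque $\hx\in M^{\mathbb{T}}$. La $\mathbb{T}$-équivariance et le respect du poids imposent $\rho_{\hx}^{\hgamma}(\ngot^{\gamma_o>0})\subset(T_{\hx}M)^{\gamma_o>0}$ et $\rho_{\hx}^{\hgamma}(\ngot^{\gamma_o=0,\hgamma_o>0})\subset(T_{\hx}M)^{\gamma_o=0,\hgamma_o>0}$ ; l'hypothèse d'isomorphisme force alors $(T_{\hx}M)^{\gamma_o<0,\hgamma_o>0}=0$, et $\rho_{\hx}^{\hgamma}$ se scinde en $\rho_{\hx}^{\gamma}\oplus\rho_{\hx}^{\gamma=0,\hgamma>0}$, chacun un isomorphisme $\mathbb{T}$-équivariant. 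L'égalité des caractères $\mathbb{T}$ du premier facteur fournit simultanément $(A_2)$ pour $(C,\gamma)$ (restriction à $\gamma_o$) et la condition (iii) (restriction à $\hgamma_o$) ; le second facteur donne la partie trace de (ii). Les conditions de degré fini de (i) et (ii) s'obtiennent alors par la formule multiplicative du deuxième temps. Le sens réciproque est plus direct : (i) et (ii) donnent $\deg(q_{\hgamma})\in\N-\{0\}$ par multiplicativité, et la somme des identités trace de (ii) et (iii), jointe à l'annulation de $(TM\vert_{\hC})^{\gamma_o<0,\hgamma_o>0}$ déduite des égalités de rang $(A_1)$ (automatiques pour des paires de degré fini), restitue $(A_2)$ pour $(\hC,\hgamma)$ sur $M$.
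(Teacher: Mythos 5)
Votre sens direct et la formule multiplicative suivent pour l'essentiel la démonstration du texte : décomposition en poids de $\mathbb{T}$ au point $\hx\in\hC$ où $\rho_{\hx}^{\hgamma}$ est un isomorphisme, scindage en les isomorphismes (\ref{eq:iso-condition-d-1}) et (\ref{eq:iso-condition-d-3}), annulation de la composante $(\gamma_o<0,\hgamma_o>0)$, factorisation $q_{\hgamma}=q_\gamma\circ(\mathrm{Id}\times q_{\Xcal}^-)$ puis identification des fibres de $q_{\Xcal}^-$ et de $q_{\Xcal}$ via la projection de Bia\l ynicki-Birula (point \emph{5.} de la proposition \ref{prop:hypothesesH1H2}). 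Cette partie est correcte et conforme au texte.

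En revanche, votre sens réciproque comporte une lacune réelle. Vous déduisez l'annulation de $(TM\vert_{\hC})^{\gamma_o<0,\hgamma_o>0}$ \og des égalités de rang $(A_1)$, automatiques pour des paires de degré fini\fg{}. Or les seules égalités de rang disponibles sous $i)$ et $ii)$ sont $\dim\ngot^{\gamma_o>0}=\mathrm{rank}\,(TM\vert_{C})^{\gamma_o>0}$ et $\dim\ngot^{\gamma_o=0,\hgamma_o>0}=\mathrm{rank}\,(TC\vert_{\hC})^{\hgamma_o>0}$ ; compte tenu de (H1)--(H2) et de votre propre décomposition, l'annulation de la composante croisée est alors \emph{équivalente} à l'égalité $\dim\ngot^{\hgamma_o>0}=\mathrm{rank}\,(TM\vert_{\hC})^{\hgamma_o>0}$, c'est-à-dire à la condition $(A_1)$ pour $(\hC,\hgamma)$ sur $M$, qui fait partie de la conclusion : l'argument est circulaire. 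De même, \og $\deg(q_{\hgamma})\in\N-\{0\}$ par multiplicativité\fg{} présuppose que $q_{\hgamma}$ est dominant entre variétés de même dimension et que $q_{\Xcal}^-$ est dominant, deux faits qui reposent sur cette même annulation (via l'identification $\hC^-=\pi^{-1}(\hC^-\cap C)$ et le diagramme commutatif). Ce qui manque est l'ingrédient non formel de la preuve : montrer qu'il existe $\hx\in\hC$ où $X\in\ngot^{\gamma_o>0}\mapsto X\cdot\hx\in(T_{\hx}M)^{\gamma_o>0}$ est un isomorphisme, autrement dit que l'ouvert $\Omega\cap\hC$ est non vide. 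Le texte l'obtient au moyen du fibré en droites $B^\gamma$-équivariant $\mathbb{L}=\hom\big(\det(\ngot^{\gamma_o>0}),\det((TM\vert_C)^{\gamma_o>0})\big)$ et de sa section canonique $\theta$ : la dominance de $q_{\Xcal}$ (hypothèse $ii)$) rend $\theta$ non nulle sur $\hC^-\cap C$, puis le critère de poids (\cite[Lemma 2.2]{pep-ressayre}, à la base du lemme \ref{lem:RP-fondamental}) donne $\theta\vert_{\hC}\neq 0$ parce que $\gamma$ agit trivialement sur $\mathbb{L}\vert_{\hC}$ (condition $(A_2)$ de $i)$) et $\hgamma$ également (hypothèse $iii)$). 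C'est là le véritable rôle de $iii)$ ; dans votre rédaction, $iii)$ n'intervient que dans un bilan de traces, ce qui ne peut suffire à produire le point $\hx$ requis, ni l'annulation de la composante croisée, ni la dominance de $q_{\hgamma}$.
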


\begin{proof}
Supposons que $(\hC,\hgamma\,)$ est une paire de Ressayre de degré fini pour la $U_\C$-variété $M$. Alors, il existe $\hx\in\hC$ tel que l'application linéaire 
\begin{equation}\label{eq:iso-tilde-condition-d}
X\in \ngot^{\hgamma_o>0}\longmapsto X\cdot \hx\in (T_{\hx} M)^{\hgamma_o>0}
\end{equation}
est un isomorphisme linéaire. Ainsi, 
$$
T_{\hx} M =\ngot^{\hgamma_o>0}\cdot \hx\bigoplus (T_{\hx} M)^{\hgamma_o\leq 0}\quad\mathrm{et}\quad \ngot^{\hgamma_o>0}\cap (\ugot_\C)_{\hx}=\{0\}.
$$
Puisque $(T_{\hx} M)^{\hgamma_o\leq 0}\subset (T_{\hx} M)^{\gamma_o\leq 0}$ et $(\ugot_\C)^{\gamma_o>0}\subset (\ugot_\C)^{\hgamma_o>0}$, on obtient
\begin{align*} 
T_{\hx} M &=  \ngot^{\gamma_o>0}\cdot \hx\bigoplus (T_{\hx} M)^{\gamma_o\leq 0} \\ 
T_{\hx} C &=  \ngot^{\hgamma_o>0,\gamma_o= 0}\cdot \hx\bigoplus T_{\hx} (\hC^{-}\cap C)
\end{align*}
ainsi que $\ngot^{\hgamma_o>0,\gamma_o= 0}\cap (\ugot_\C)_{\hx}=\ngot^{\gamma_o>0}\cap (\ugot_\C)_{\hx}=\{0\}$.
Cela montre que les applications linéaires 
\begin{align} 
X\in \ngot^{\gamma_o>0}& \longmapsto X\cdot \hx\in (T_{\hx} M)^{\gamma_o> 0}\label{eq:iso-condition-d-1}\\
Z\in \ngot^{\hgamma_o>0,\gamma_o= 0} &\longmapsto Z\cdot \hx\in (T_{\hx} C)^{\hgamma_o>0}\label{eq:iso-condition-d-3}
\end{align}
sont des isomorphismes linéaires $\mathbb{T}$-équivariants. L'égalités $iii)$ découle de l'isomorphisme (\ref{eq:iso-condition-d-1}).

D'après notre hypothèse, le morphisme $q_{\hgamma}$ est dominant, de degré fini noté $\deg(q_{\hgamma})$. L'isomorphisme (\ref{eq:iso-condition-d-1}) 
permet de voir que les variétés $M$, $B\times_{B\cap P(\gamma)} \Xcal^-$, et $B\times_{B\cap P(\gamma)} C$ sont de même dimension. Alors la relation 
(\ref{eq:composition-q-gamma}) montre que les morphismes $q_{\Xcal}^-$ et $q_\gamma$ sont dominants, et leurs degrés satisfont la relation multiplicative
\begin{equation}\label{eq:degre-multiplicatif}
\deg(q_{\hgamma})=\deg(Id\times q_{\Xcal}^-) \deg(q_\gamma)=\deg(q_{\Xcal}^-) \deg(q_\gamma).
\end{equation}
On a montré que le morphisme $q_\gamma$ est dominant: grace à (\ref{eq:iso-condition-d-1}), on peut conclure que $(C,\gamma)$ est une 
paire de Ressayre de degré fini pour la $U_\C$-variété $M$.

Comme  $\ngot^{\gamma_o>0}\cap\ngot^{\hgamma_o<0}=0$ d'après le point {\em 2.b)} de la proposition \ref{prop:hypothesesH1H2}, l'isomorphisme 
(\ref{eq:iso-condition-d-1}) montre que $(TM\vert_{\hC})^{\hgamma_o< 0}\cap (TM\vert_{\hC})^{\gamma_o> 0}=0$. Grâce au cinquième point de la proposition \ref{prop:hypothesesH1H2}, on sait alors que $\pi: \hC^-\to \hC^-\cap C$ est un sous-fibré de $\pi: C^-\to C$. Ainsi, on a le diagramme commutatif 
\begin{equation}\label{diagramme-commutatif}
\xymatrixcolsep{5pc}
\xymatrix{
\Xcal^-  \ar[d]^{Id\times\pi}  \ar[r]^{q^{-}_{\Xcal}} & C^- \ar[d]^{\pi} \\
\Xcal \ar[r] ^{q_{\Xcal}}         & C.
}
\end{equation}
et les fibres des morphismes $q^{-}_{\Xcal}$ et $q_{\Xcal}$ s'identifient au moyen de la projection $Id\times\pi$. Cela montre qu'il existe un ouvert de Zariski $\Vcal_\Xcal\subset C$ pour lequel $\sharp q_{\Xcal}^{-1}(y) = \deg(q^{-}_{\Xcal}),\forall y\in\Vcal_\Xcal$. Cela montre que $q_{\Xcal}$ est une application dominante 
de degré égal à $d(q^{-}_{\Xcal})$. Grâce à (\ref{eq:iso-condition-d-3}), on peut conclure que $(\hC,\hgamma)$ est une paire de Ressayre de degré fini pour la $U^\gamma_\C$-variété $C$. 

On a démontré les points $i)$, $ii)$, et $iii)$, ainsi que la relation multiplicative, lorsque $(\hC,\hgamma)$ est une paire de Ressayre de degré fini pour la $U_\C$-variété $M$. Démontrons maintenant l'implication réciproque.

Supposons que les points $i)$ et $ii)$ sont satisfaits. Le morphisme $q_\Xcal : \Xcal\longrightarrow C$ est dominant d'indice fini $\deg(q_\Xcal)$ et on sait d'autre part que 
$(TM\vert_{\hC})^{\hgamma_o< 0}\cap (TM\vert_{\hC})^{\gamma_o> 0}=0$ grâce à l'isomorphisme (\ref{eq:iso-condition-d-1}).  Cela montre que 
$\pi: \hC^-\to \hC^-\cap C$ est un sous-fibré de $\pi: C^-\to C$. Grâce au diagramme commutatif (\ref{diagramme-commutatif}), on voit alors que 
morphisme $q^-_\Xcal : \Xcal^-\longrightarrow C^-$ est aussi dominant, d'indice $\deg(q_\Xcal)$. La relation $q_{\hgamma}= (Id\times q_{\Xcal}^-)\circ q_\gamma$ 
montre que le morphisme $q_{\hgamma}$ est dominant d'indice égal à $\deg(q_\Xcal)\deg(q_\gamma)$. Pour pouvoir conclure que $(\hC,\hgamma)$ 
est une paire de Ressayre de degré fini pour la $U_\C$-variété $M$, il faut montrer que l'ouvert de Zariski $\widehat{\Omega}\subset \hC$ 
formé des points $\hx\in \hC$ pour lesquels (\ref{eq:iso-tilde-condition-d}) est un isomorphisme est non-vide.

Comme $(C,\gamma)$ est une paire de Ressayre, on sait que l'ouvert de Zariski  $\Omega\subset C$ formé des points $x\in C$ 
pour lesquels (\ref{eq:iso-condition-d-1}) est un isomorphisme est non-vide. De la même façon, l'ouvert de Zariski  $\Omega'\subset \hC$ 
formé des points $\hx\in \hC$ pour lesquels (\ref{eq:iso-condition-d-3}) est un isomorphisme est non-vide, car $(\hC,\hgamma)$ est une 
paire de Ressayre pour la $U^\gamma_\C$-variété $C$. Il est immédiat de voir que 
$$
\widehat{\Omega}= \Omega' \bigcap \Omega\cap \hC,
$$
ainsi $\widehat{\Omega}$ est non-vide si on montre que $\Omega\cap \hC$ est non-vide.

Considérons le fibré en droites $B^\gamma$-équivariant $\mathbb{L}\to C$ défini par la relation
$$
\mathbb{L}=\hom(\det(\ngot^{\gamma_o>0}), \det ((TM\vert_C)^{\gamma_o>0})).
$$
Soit $r\geq 0$ le rang de $\ngot^{\gamma_o>0}$. Nous considérons la section $B^\gamma$-équivariante $\theta\in H^0(C, \mathbb{L})$ définie par la relation
$$
\theta(x): X_1\wedge\cdots\wedge X_r\longmapsto X_1\cdot x\wedge\cdots\wedge X_r\cdot x.
$$
Ainsi $\Omega=\{x\in C, \theta(x)\neq 0\}$. L'application $q_\Xcal$ étant dominante, son image $B^\gamma(\hC^-\cap C)$ est dense dans $C$. Ainsi, 
la restriction $\theta\vert_{\hC^-\cap C}$ est une section non nulle $\mathbb{T}$-équivariante du fibré en droites  
$\mathbb{L}\vert_{\hC^-\cap C}\to \hC^-\cap C$. La sous-variété des points fixes $(\hC^-\cap C)^{\mathbb{T}}$ est égale à $\hC$, et un résultat classique 
(qui conduit au lemme \ref{lem:RP-fondamental}) assure que $\theta\vert_{\hC}$ est non nulle si et seulement si $\mathbb{T}$ agit trivialement sur le fibré en 
droites $\mathbb{L}\vert_{\hC}$ (voir \cite[Lemma 2.2]{pep-ressayre}).

Ici, $\gamma$ agit trivialement sur $\mathbb{L}\vert_{\hC}$ car $(C, \gamma)$ est une paire de Ressayre infinitésimale, et 
$\hgamma$ agit trivialement sur $\mathbb{L}\vert_{\hC}$ grâce à notre hypothèse $iii)$. On vient de vérifier que $\Omega\cap \hC=\{\hx\in\hC, \theta(\hx)\neq 0\}$ 
est non-vide.
\end{proof}

%%%%%%%%%%%%%%%%%%%%%%%%%%%%%%%%%%%%%%%%%%%%%%%%%%%%%%
%%%%%%%%%%%%%%%%%%%%%%%%%%%%%%%%%%%%%%%%%%%%%%%%%%%%%%
\section{Formules multiplicatives de Ressayre-Richmond}\label{sec:RR-mutliplicative}
%%%%%%%%%%%%%%%%%%%%%%%%%%%%%%%%%%%%%%%%%%%%%%%%%%%%%%
%%%%%%%%%%%%%%%%%%%%%%%%%%%%%%%%%%%%%%%%%%%%%%%%%%%%%%

Nous revenons au contexte de la section \ref{sec:cohomology-flag}. Le groupe de Weyl de $(U,T)$ est noté $W$, et le groupe de Weyl de $(U^\gamma,T)$ 
est égal au sous-groupe $W^\gamma$ fixant $\gamma\in\tgot$. Un résultat classique nous assure que chaque classe d'équivalence du quotient $W/W^\gamma$ 
possède un unique représentant de longueur minimale. Notons $W(\gamma)\subset W$ l'ensemble de ces représentants, de telle sorte que 
$W(\gamma)\simeq W/W^\gamma$.

Considérons deux vecteurs $\gamma,\hgamma\in\tgot$ vérifiant $P(\,\hgamma\,)\subset P(\gamma)$. 
Cette identité implique que $U^{\hgamma}\subset U^\gamma$ et donc que $W^{\hgamma}\subset W^\gamma$. 
Notons $W^\gamma(\,\hgamma\,)\subset W^\gamma$ l'ensemble des représentants de longueur minimale des classes d'équivalence du quotient 
$W^\gamma/W^{\hgamma}$. On voit aisément que $W^\gamma(\,\hgamma\,)$ coincide avec $W(\,\hgamma\,)\cap W^{\gamma}$.

Le résultat suivant est expliqué dans \cite[Lemma 2.2]{Ric12},

\begin{lem}\label{lem:richmond-lemma}
L'application $W(\gamma) \times W^\gamma(\,\hgamma\,) \to W(\,\hgamma\,)$ donnée par $(u,v)\mapsto uv$  est bien définie et bijective.
\end{lem}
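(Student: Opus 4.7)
The plan rests on the classical decomposition theorem for minimal coset representatives in a Coxeter group: for any parabolic subgroup $W^\gamma \subset W$, every $w \in W$ admits a unique factorization $w = uv$ with $u \in W(\gamma)$ and $v \in W^\gamma$, and this factorization is length-additive, i.e., $\ell(w) = \ell(u) + \ell(v)$ (Bourbaki, \emph{Groupes et algèbres de Lie}, Ch.~IV, or Humphreys, \emph{Reflection Groups and Coxeter Groups}, \S 1.10). I will freely use this fact, together with the inclusion $W^{\hgamma} \subset W^\gamma$, which follows from the hypothesis $P(\,\hgamma\,) \subset P(\gamma)$ (itself giving $U^{\hgamma} \subset U^\gamma$, hence the inclusion on Weyl groups).

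For well-definedness, given $u \in W(\gamma)$ and $v \in W^\gamma(\,\hgamma\,)$, I would show that $uv$ has minimal length in its coset $uvW^{\hgamma}$. Fix any $s \in W^{\hgamma}$; since $W^{\hgamma} \subset W^\gamma$, both $v$ and $vs$ lie in $W^\gamma$, so length-additivity gives $\ell(uv) = \ell(u) + \ell(v)$ and $\ell(uvs) = \ell(u) + \ell(vs)$. Because $v \in W^\gamma(\,\hgamma\,)$ is the minimal representative of $vW^{\hgamma}$ inside $W^\gamma$, one has $\ell(v) \leq \ell(vs)$, and hence $\ell(uv) \leq \ell(uvs)$. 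This establishes $uv \in W(\,\hgamma\,)$.

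Bijectivity splits into two standard arguments. For injectivity, if $u_1 v_1 = u_2 v_2$ with $u_i \in W(\gamma)$ and $v_i \in W^\gamma(\,\hgamma\,) \subset W^\gamma$, then reducing modulo $W^\gamma$ yields $u_1 W^\gamma = u_2 W^\gamma$, whence $u_1 = u_2$ by uniqueness of minimal representatives, and $v_1 = v_2$ follows. For surjectivity, take $w \in W(\,\hgamma\,)$ and apply the classical decomposition for $W^\gamma \subset W$ to write $w = uv$ with $u \in W(\gamma)$ and $v \in W^\gamma$. It remains to check that $v \in W^\gamma(\,\hgamma\,)$: if some $s \in W^{\hgamma}$ satisfied $\ell(vs) < \ell(v)$, length-additivity would give $\ell(ws) = \ell(u) + \ell(vs) < \ell(u) + \ell(v) = \ell(w)$, contradicting the minimality of $w$ in $wW^{\hgamma}$.

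The entire argument is a short combinatorial exercise once the classical length-additive decomposition $W = W(\gamma) \cdot W^\gamma$ is available. The only point requiring care is to handle two nested decompositions simultaneously, which is cleanly managed by the inclusion $W^{\hgamma} \subset W^\gamma$ inherited from the parabolic hypothesis; I do not foresee any serious obstacle.
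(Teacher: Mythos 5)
Your argument is correct, and it is in substance the proof of the result the paper relies on: the text does not prove Lemma \ref{lem:richmond-lemma} itself but refers to \cite[Lemma 2.2]{Ric12}, whose proof is exactly this combination of the length-additive factorization $W=W(\gamma)\cdot W^\gamma$ with the inclusion $W^{\hgamma}\subset W^\gamma$, applied once for well-definedness, once for surjectivity, and with uniqueness of minimal coset representatives giving injectivity. One point deserves to be made explicit: the length-additivity $\ell(uv)=\ell(u)+\ell(v)$ you invoke is valid for \emph{standard} parabolic subgroups, i.e. it requires $W^{\gamma}$ and $W^{\hgamma}$ to be generated by simple reflections (equivalently $B\subset P(\hgamma)\subset P(\gamma)$, i.e. $\gamma,\hgamma$ antidominant for the chosen chamber); for a stabilizer $W^\gamma$ that is only conjugate to a standard parabolic, length-additivity fails (already in $S_3$ with $W^\gamma=\{e,s_1s_2s_1\}$). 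This is the setting of the cited reference and of all the applications in the paper (where one reduces to it by conjugation, cf.\ remark \ref{rem:B-paire}), so it is not a gap in the intended generality, but your proof should record this hypothesis rather than quote the decomposition for an arbitrary $\gamma\in\tgot$.
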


Le lemme précédent montre que, pour tout $w\in W(\,\hgamma\,)$, il existe $u\in W(\gamma)$ et  $v\in W^\gamma(\,\hgamma\,)$, 
uniques, tels que $w=uv$. Nous supposerons cette relation entre $w$, $u$ et $v$ pour tout $w\in W(\,\hgamma\,)$.

Nous supposons maintenant que $U$ est un sous-groupe fermé de $\tU$.  On choisit des tores maximaux 
$T\subset U$, et $\tT\subset \tU$ tels que $T\subset \tT$. \`A chaque $\gamma\in\tgot$, 
on associe les sous-groupes paraboliques $\tP(\gamma)\subset \tU_\C$ et $P(\gamma)= \tP(\gamma)\cap U_\C$.

Dans cette section, nous fixons deux vecteurs $\gamma,\hgamma\in\tgot$ vérifiant 
\begin{equation}\label{eq:condition-gamma-hat}
\tP(\,\hgamma\,)\subset \tP(\gamma).
\end{equation} 
Cela implique que $P(\,\hgamma\,)$ est contenu dans $P(\gamma)$. Soit $\tW$ le groupe de Weyl de $(\tU,\tT)$. D'après le lemme \ref{lem:richmond-lemma}, 
pour tout $\tw\in \tW(\,\hgamma\,)$, il existe $\tu\in \tW(\gamma)$ et  $\tv\in \tW^\gamma(\,\hgamma\,)$, 
uniques, tels que $\tw=\tu\tv$.

Ici, nous travaillons avec trois immersions  entre des variétés de drapeaux :
$$
\iota_{\hgamma}:\Fcal_{\hgamma}\croc \tFcal_{\hgamma},\qquad
\iota_{\gamma}: \Fcal_{\gamma}\croc \tFcal_{\gamma},\qquad \mathrm{et}\qquad
\iota_{\hgamma}^\gamma:\Fcal_{\hgamma}^\gamma\croc \tFcal_{\hgamma}^\gamma,
$$
où $\Fcal_{\hgamma}^\gamma:=U_\C^\gamma/\left(U_\C^\gamma\cap P(\,\hgamma\,)\right)$ et 
$\tFcal_{\hgamma}^\gamma:=\tU_\C^\gamma/\left(\tU_\C^\gamma\cap \tP(\,\hgamma\,)\right)$.

\medskip

Le résultat suivant, qui une application du théorème \ref{th:paire-ressayre-multiplicatif}, améliore un résultat obtenu par Ressayre-Richmond 
\cite{Res11b,Ric12,RR11}.

\begin{theorem}\label{th:levi-mobile-multiplicatif}
Soient $\gamma,\hgamma\in\tgot$ vérifiant (\ref{eq:condition-gamma-hat}) et un couple 
$(w,\tw)\in W(\,\hgamma\,)\times\tW(\,\hgamma\,)$ satisfaisant la relation $d(w,\hgamma\,)=c(\tw,\hgamma\,)$. 

Alors, le couple 
$(w,\tw)$ est Lévi-mobile si et seulement si les conditions suivantes sont satisfaites :
\begin{enumerate}\setlength{\itemsep}{8pt}
\item $(u,\tu)\in W(\gamma)\times\tW(\gamma)$ est Lévi-mobile, et vérifie $d(u,\gamma)=c(\tu,\gamma)$.
\item $(v,\tv)\in W^\gamma(\,\hgamma\,)\times\tW^\gamma(\,\hgamma\,)$ est Lévi-mobile, et vérifie $d(v,\hgamma)=c(\tv,\hgamma)$.
\item $\tr\left(w\hgamma_o \circlearrowright \ngot^{w\gamma_o>0}\right)=\tr\left(\tw\hgamma_o \circlearrowright \tngot_{-}^{\tw\gamma_o>0}\right)$.
\end{enumerate}
Lorsque ces conditions sont satisfaites, on peut définir trois entiers naturels
\begin{align*}
[\Xgot_{w,\hgamma}]\cdot \iota_{\hgamma}^*[\tXgot_{\tw,\hgamma}]&= N(w,\tw) [pt]\quad \mathrm{dans}\quad H^{max}(\Fcal_{\hgamma},\Z),\\
[\Xgot_{u,\gamma}]\cdot \iota_{\gamma}^*[\tXgot_{\tu,\gamma}]&= N(u,\tu) [pt]\quad \mathrm{dans}\quad H^{max}(\Fcal_\gamma,\Z),\\
[\Xgot_{v,\hgamma}]\cdot (\iota_{\hgamma}^\gamma)^*[\tXgot_{\tv,\hgamma}]&= N(v,\tv) [pt]\quad \mathrm{dans}\quad H^{max}(\Fcal_{\hgamma}^\gamma,\Z),
\end{align*}
qui satisfont la relation $N(w,\tw)=N(u,\tu)N(v,\tv)$.
\end{theorem}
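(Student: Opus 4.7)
\bigskip

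\noindent\textbf{Proof plan.} The strategy is to realize both pairs $(w,\tw)$ and $(u,\tu)$ as Ressayre pairs of finite degree in the fundamental example $M=\tU_\C$ (as a Kähler $\tU\times U$-Hamiltonian variety from Example \ref{ex:U-tilde-C-Kahler}), and then apply Theorem \ref{th:paire-ressayre-multiplicatif}. Set
\[
\gamma_{\mathbf{u}}:=(\tu\gamma,u\gamma),\quad C_{\gamma,\mathbf{u}}:=\tu\tU_\C^{\gamma}u^{-1},\qquad
\hgamma_{\mathbf{w}}:=(\tw\hgamma,w\hgamma),\quad C_{\hgamma,\mathbf{w}}:=\tw\tU_\C^{\hgamma}w^{-1}.
\]
By Proposition \ref{prop:RP-exemple-fondamental-2} together with Remark \ref{rem:degre-cohomologie}, the pair $(C_{\hgamma,\mathbf{w}},\hgamma_{\mathbf{w}})$ is a Ressayre pair of finite degree for $\tU_\C$ if and only if $(w,\tw)$ is Lévi-mobile with $d(w,\hgamma)=c(\tw,\hgamma)$, and in that case $\deg(q_{\hgamma_{\mathbf{w}}})=N(w,\tw)$; the analogous statement holds with $(u,\tu,\gamma)$ in place of $(w,\tw,\hgamma)$, giving $\deg(q_{\gamma_{\mathbf{u}}})=N(u,\tu)$.

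The first step is to check hypotheses $(H1)$ and $(H2)$ of Theorem \ref{th:paire-ressayre-multiplicatif}. Since $v\in W^\gamma$ (resp. $\tv\in\tW^\gamma$), any lift normalises $P(\gamma)$ (resp. $\tP(\gamma)$); combined with $\tP(\hgamma)\subset\tP(\gamma)$, this yields $\tv\tP(\hgamma)v^{-1}\subset\tP(\gamma)$ and $vP(\hgamma)v^{-1}\subset P(\gamma)$, and conjugating by $(\tu,u)$ gives both $P(\hgamma_{\mathbf{w}})\subset P(\gamma_{\mathbf{u}})$ and $C_{\hgamma,\mathbf{w}}^{-}\subset C_{\gamma,\mathbf{u}}^{-}$. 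The second step is the translation of each of the three conditions of Theorem \ref{th:paire-ressayre-multiplicatif}. Condition $(i)$, that $(C_{\gamma,\mathbf{u}},\gamma_{\mathbf{u}})$ be a Ressayre pair of finite degree for $\tU_\C$, is exactly point (1) by the same Proposition \ref{prop:RP-exemple-fondamental-2}. For condition $(ii)$, we conjugate by $(\tu^{-1},u^{-1})$, which identifies the $\tU_\C^\gamma\times U_\C^\gamma$-variety $C_{\gamma,\mathbf{u}}$ with the fundamental example $\tU_\C^\gamma$ (of the Levi pair $U^\gamma\subset\tU^\gamma$), and carries the pair $(C_{\hgamma,\mathbf{w}},\hgamma_{\mathbf{w}})$ onto $(\tv\tU_\C^{\hgamma}v^{-1},(\tv\hgamma,v\hgamma))$; applying Proposition \ref{prop:RP-exemple-fondamental-2} inside the Levi transforms this into point (2). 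Condition $(iii)$, computed from the explicit tangent-space decompositions of the Bialynicki-Birula strata of $\tU_\C$, becomes the numerical equality
\[
\tr\bigl(w\hgamma_o\circlearrowright \ngot^{w\gamma_o>0}\bigr)=\tr\bigl(\tw\hgamma_o\circlearrowright \tngot_{-}^{\tw\gamma_o>0}\bigr),
\]
which is point (3).

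The final step is the multiplicative degree formula. Theorem \ref{th:paire-ressayre-multiplicatif} gives $\deg(q_{\hgamma_{\mathbf{w}}})=\deg(q_{\Xcal})\,\deg(q_{\gamma_{\mathbf{u}}})$, where $q_\Xcal$ is the morphism associated to the relative Ressayre pair $(\tv\tU_\C^{\hgamma}v^{-1},(\tv\hgamma,v\hgamma))$ inside the Levi. Since this relative pair is itself a fundamental-example situation for the reductive pair $U^\gamma\subset\tU^\gamma$, Remark \ref{rem:degre-cohomologie} applied in $\tU_\C^\gamma$ gives $\deg(q_{\Xcal})=N(v,\tv)$, and we conclude $N(w,\tw)=N(u,\tu)\,N(v,\tv)$. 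The main technical obstacle will be the bookkeeping of step two, in particular verifying that the conjugation by $(\tu^{-1},u^{-1})$ carries the ambient Borel-data on $\tU_\C$ compatibly onto the Borel-data of $\tU_\C^\gamma$ needed to invoke Proposition \ref{prop:RP-exemple-fondamental-2} intrinsically inside the Levi (this is precisely the content of Remark \ref{rem:B-paire}), and checking that $(A_1)$ for $(w,\tw)$ at $\hgamma$ splits as the sum of $(A_1)$ for $(u,\tu)$ at $\gamma$ and $(A_1)$ for $(v,\tv)$ at $\hgamma$ in the Levi; this relies on the root-space decomposition $\Rgot^{\gamma\leq 0}=\Rgot^{\hgamma\leq 0}\cup(\Rgot^{\gamma=0}\cap\Rgot^{\hgamma>0})$ established in the proof of Proposition \ref{prop:hypothesesH1H2}.
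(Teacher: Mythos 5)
Your plan is correct and follows essentially the same route as the paper's own proof: realize both pairs in the fundamental example $N=\tU_\C$, characterize them through Proposition \ref{prop:RP-exemple-fondamental-2}, feed them into Theorem \ref{th:paire-ressayre-multiplicatif}, and identify the degrees with the intersection numbers via Remark \ref{rem:degre-cohomologie}. The only difference is organisational: the paper untwists first by $g\mapsto\tw^{-1}gw$ and applies the multiplicative theorem with the twisted Borel $\B_{\mathbf{w}}$, whereas you apply it to the twisted subvarieties and untwist the Levi condition afterwards by $(\tu^{-1},u^{-1})$ — both reduce to the same Borel bookkeeping handled by Remark \ref{rem:B-paire}, which you correctly flag.
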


\begin{proof} Fixons quelques notations. Notons $G:=\tU_\C\times U_\C$  le groupe réductif avec sous-groupe de Borel $\B:=\tB\times B$.
\`A tout  couple $\mathbf{w}:=(w,\tw)\in W(\,\hgamma\,)\times\tW(\,\hgamma\,)$, on associe les vecteurs $\gamma_{\mathbf{e}}:=(\gamma,\gamma)$,
$\gamma_{\mathbf{w}}:=(\tw\gamma,w\gamma)$ et $\hgamma_{\mathbf{w}}:=(\tw\hgamma,w\hgamma)$, le groupe de Borel 
$\B_{\mathbf{w}}:=\tw^{-1}\tB\tw \times w^{-1}B w$, les sous-groupes paraboliques 
$$
\Pbb(\gamma_{\mathbf{w}}):= \tP(\tw\gamma)\times  P(w\gamma)\quad \mathrm{et}\quad\Pbb(\,\hgamma_{\mathbf{w}}):= \tP(\tw\,\hgamma)\times  P(w\hgamma\,)
$$ 
de $G$.

Nous considérons la variété de K\"ahler $\tU\times U$-hamiltonienne $N:=\tU_C$ (voir l'exemple \ref{ex:U-tilde-C-Kahler}).
La variété $N^{\hgamma_{\mathbf{w}}}:=\{n\in N,\ \hgamma_{\mathbf{w}}\cdot n= 0\}$ est connexe, égale à 
$\hC:=\tw \tU_\C^{\hgamma} w^{-1}$, et la sous-variété de Bia\l ynicki-Birula associée à $\hC$ est $\hC^-:= \tilde{w}\tP(\,\hgamma\,)w^{-1}$. De même, 
la sous-variété $N^{\gamma_{\mathbf{w}}}:=\{n\in N,\ \gamma_{\mathbf{w}}\cdot n= 0\}$ est connexe, égale à $C:=\tw \tU_\C^\gamma w^{-1}$, et la sous-variété de Bia\l ynicki-Birula associée est $C^-:= \tilde{w}\tP(\gamma)w^{-1}$.

Si on utilise les décompositions $w=uv$ et $\tw=\tu\tv$ du lemme \ref{lem:richmond-lemma}, on remarque que $\gamma_{\mathbf{w}}=\gamma_{\mathbf{u}}$ et donc que
$C:=\tu \tU_\C^\gamma u^{-1}$, et $C^-= \tu\tP(\gamma)u^{-1}$.

La $G$-variété $N$ satisfait les conditions de la section \ref{sec:multiplicatif-ressayre-pair}, puisque $\hC\subset C$, $\hC^-\subset C^-$, et 
$$
\Pbb(\,\hgamma_{\mathbf{w}})\subset \Pbb(\gamma_{\mathbf{w}})=\Pbb(\gamma_{\mathbf{u}}).
$$

D'après la proposition \ref{prop:RP-exemple-fondamental-2}, on sait que 
\begin{enumerate}
\item[(1)] $(\hgamma_{\mathbf{w}},\hC)$  est une $\B$-paire de Ressayre de type fini de la $G$-variété $N$ 
si et seulement si $(w,\tw)\in W(\,\hgamma\,)\times \tW(\,\hgamma\,)$ est Lévi-mobile, et vérifie 
$d(w,\hgamma)=c(\tw,\hgamma)$.
\item[(2)] $(\gamma_{\mathbf{u}},C)$  est une $\B$-paire de Ressayre de type fini de la $U_\C$-variété $N$ si et seulement si 
$(u,\tu)\in W(\gamma)\times \tW(\gamma)$ est Lévi-mobile, et vérifie 
$d(u,\gamma)=c(\tu,\gamma)$.
\end{enumerate}

Le point $(1)$ est satisfait lorsque le morphisme
$$
q_{\hgamma,\mathbf{w}}: \B\times_{\B\cap \Pbb(\,\hgamma_{\mathbf{w}})} \hC^- \longrightarrow  \tU_\C
$$
est de type fini et que la condition $(A_2)$ de la définition \ref{def:B-ressayre-pair-alg} est vérifiée. Si on modifie $q_{\hgamma,\mathbf{w}}$ 
à travers le difféomorphisme $g\in \tU_\C\mapsto \tw^{-1}g w\in \tU_\C$, on obtient le morphisme
$$
Q_{\hgamma,\mathbf{w}}: \B_{\mathbf{w}}\times_{\B_{\mathbf{w}}\cap \Pbb(\,\hgamma\,)} \tP(\,\hgamma\,) \longrightarrow  \tU_\C.
$$
Autrement dit, $(1)$ est équivalent $(1')$: $(\hgamma, \tU_\C^{\,\hgamma})$ est une $\B_{\mathbf{w}}$-paire de Ressayre de type 
fini de la $G$-variété $N$ (voir la remarque \ref{rem:B-paire}).

Exploitons maintenant le théorème \ref{th:paire-ressayre-multiplicatif}. Le point $(1')$ est satisfait si et seulement si les conditions suivantes sont vérifiées:
\begin{itemize}\setlength{\itemsep}{5pt}
\item[(3)] $(\gamma, \tU_\C^{\gamma})$ est une $\B_{\mathbf{w}}$-paire de Ressayre de degré fini pour la $G$-variété $N$.
\item[(4)] $(\hgamma, \tU_\C^{\,\hgamma})$ est une $\B_{\mathbf{w}}\cap G^\gamma$-paire de Ressayre de degré fini pour la $G^\gamma$-variété $\tU_\C^{\gamma}$.
\item[(5)] Condition numérique $iii)$.
\end{itemize}

Grâce à la remarque \ref{rem:B-paire}, on voit que le point $(3)$ est équivalent au point $(2)$. D'autre part, une vérification élémentaire 
montre que la condition numérique $iii)$ du théorème théorème \ref{th:paire-ressayre-multiplicatif} correspond à $\tr\left(w\hgamma_o \circlearrowright \ngot^{w\gamma_o>0}\right)=\tr\left(\tw\hgamma_o \circlearrowright \tngot_{-}^{\tw\gamma_o>0}\right)$. Le point $(4)$ est satisfait lorsque le morphisme
$$
r_{\hgamma,\mathbf{w}}: \B_{\mathbf{w}}\cap G^\gamma\times_{\B_{\mathbf{w}}\cap G^\gamma\cap\Pbb(\,\hgamma\,)} 
\left(\tP(\,\hgamma\,) \cap \tU_\C^\gamma\right)\longrightarrow  \tU_\C^\gamma.
$$
est de type fini et que la condition $(A_2)$ de la définition \ref{def:B-ressayre-pair-alg} est vérifiée.

Comme $\B_{\mathbf{w}}\cap G^\gamma=\tv^{-1}\tB^\gamma\tv\times v^{-1}B^\gamma v$, le morphisme $r_{\hgamma,\mathbf{w}}$ 
peut être modifié à travers le difféomorphisme $g\in \tU_\C^\gamma\mapsto \tv g v^{-1}\in \tU_\C^\gamma$, pour obtenir  le morphisme
$$
R_{\hgamma,\mathbf{v}}: \B\cap G^\gamma\times_{\B\cap G^\gamma\cap\Pbb(\gamma_{\mathbf{v}})} 
\left(\tv\tP(\,\hgamma\,)v^{-1} \cap \tU_\C^\gamma\right)\longrightarrow  \tU_\C^\gamma.
$$
Ces considérations montrent que le point $(4)$ est équivalent au fait que $(v,\tv)\in W^\gamma(\,\hgamma\,)\times\tW^\gamma(\,\hgamma\,)$ est Lévi-mobile, et vérifie $d(v,\hgamma)=c(\tv,\hgamma)$.

Lorsque les conditions 1., 2. et 3. du théorème \ref{th:levi-mobile-multiplicatif} sont satisfaites, les applications $q_{\hgamma,\mathbf{w}}$, $R_{\hgamma,\mathbf{v}}$ et 
$q_{\gamma,\mathbf{u}}: \B\times_{\B\cap \Pbb(\gamma_{\mathbf{u}})} C^- \longrightarrow  \tU_\C$ sont de degrés finis, et de plus 
$\deg(q_{\hgamma,\mathbf{w}})=\deg(q_{\gamma,\mathbf{u}})\deg(R_{\hgamma,\mathbf{v}})$. Cette dernière relation implique que $N(w,\tw)=N(u,\tu)N(v,\tv)$
(voir la remarque \ref{rem:degre-cohomologie}). La preuve du théorème est complète.
\end{proof}

\medskip

Réécrivons le théorème \ref{th:levi-mobile-multiplicatif} lorsque $U$ est inclus diagonalement dans $\tU:=U^s$. Dans ce cas, le résultat a été obtenu par Richmond 
\cite[Section 1.1]{Ric12} et Ressayre \cite[Theorem 2]{Res11a}.

\begin{theorem}\label{th:levi-mobile-multiplicatif-2}
Le $(s+1)$-uplet $w_\bullet:=(w_1,\ldots,w_{s+1})\in W(\,\hgamma\,)^{s+1}$ est Lévi-mobile, et vérife $\sum_{i=1}^{s+1} d(w_i,\hgamma)= s\dim_\C(\Fcal_{\hgamma})$ 
si et seulement si
\begin{enumerate}\setlength{\itemsep}{8pt}
\item $u_\bullet:=(u_1,\ldots,u_{s+1})\in W(\gamma)^{s+1}$ est Lévi-mobile, et vérife $\sum_{i=1}^{s+1} d(u_i,\gamma)= s\dim_\C(\Fcal_\gamma)$.
\item $v_\bullet:=(v_1,\ldots,v_{s+1})\in (W^\gamma(\,\hgamma\,))^{s+1}$ est Lévi-mobile, et vérife 
$\sum_{i=1}^{s+1} d(v_i,\hgamma)= s\dim_\C(\Fcal^\gamma_{\hgamma})$.
\item $\sum_{i=1}^{s+1}\tr\left(w_i\hgamma_o \circlearrowright \ngot^{w_i\gamma_o>0}\right)=s\tr\left(\hgamma_o \circlearrowright \ugot_{\C}^{\gamma_o>0}\right)$.
\end{enumerate}

Lorsque ces conditions sont satisfaites, on peut définir trois entiers naturels
\begin{align*}
[\Xgot_{w_1,\hgamma}]\cdots [\Xgot_{w_{s+1},\hgamma}]&= N(w_\bullet) [pt]\quad \mathrm{dans}\quad H^{max}(\Fcal_{\hgamma},\Z),\\
[\Xgot_{u_1,\gamma}]\cdots [\Xgot_{u_{s+1},\gamma}]&= N(u_\bullet) [pt]\quad \mathrm{dans}\quad H^{max}(\Fcal_\gamma,\Z),\\
[\Xgot_{v_1,\hgamma}]\cdots [\Xgot_{v_{s+1},\hgamma}]&= N(v_\bullet) [pt]\quad \mathrm{dans}\quad H^{max}(\Fcal_{\hgamma}^\gamma,\Z),
\end{align*}
qui satisfont la relation $N(w_\bullet)=N(u_\bullet)N(v_\bullet)$.
\end{theorem}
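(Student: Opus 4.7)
Ma stratégie est de déduire ce théorème du théorème précédent \ref{th:levi-mobile-multiplicatif} par spécialisation au cas où $\tU=U^s$ et $U$ est plongé diagonalement. Je prendrais donc $T\subset \tT=T^s$ et $\tgot\subset\ttgot=\tgot^s$ les diagonales respectives. Les éléments $\gamma,\hgamma\in\tgot$ du théorème \ref{th:levi-mobile-multiplicatif-2} correspondent aux diagonaux $(\gamma,\ldots,\gamma),(\hgamma,\ldots,\hgamma)\in\ttgot$, et la condition $\tP(\,\hgamma\,)\subset\tP(\gamma)$ équivaut à $P(\,\hgamma\,)\subset P(\gamma)$, en vigueur ici. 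Un couple $(w,\tw)\in W(\,\hgamma\,)\times\tW(\,\hgamma\,)=W(\,\hgamma\,)\times W(\,\hgamma\,)^s$ s'identifie à un $(s+1)$-uplet $(w_1,\ldots,w_{s+1})$ via $w=w_1$ et $\tw=(w_2,\ldots,w_{s+1})$, et de même pour les décompositions $w_i=u_i v_i$ fournies par le lemme \ref{lem:richmond-lemma}.

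Le premier point serait de traduire la condition de dimension. L'égalité $d(w,\hgamma)=c(\tw,\hgamma)=\sum_{i=2}^{s+1} c(w_i,\hgamma)$, combinée avec $d(w_i,\hgamma)+c(w_i,\hgamma)=\dim_\C\Fcal_{\hgamma}$, donne $\sum_{i=1}^{s+1}d(w_i,\hgamma)=s\,\dim_\C(\Fcal_{\hgamma})$ ; les conditions analogues pour $u_\bullet$ et $v_\bullet$ se traduisent de même. La notion de Lévi-mobilité pour $(w,\tw)$ (définition \ref{def:LM}) coïncide exactement avec la Lévi-mobilité pour $(w_1,\ldots,w_{s+1})$ au sens de la définition \ref{def:LM-1}, car le plongement diagonal $\iota_{\hgamma}:\Fcal_{\hgamma}\croc \Fcal_{\hgamma}^s$ ramène l'intersection transverse de $s+1$ cellules décalées.

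La deuxième étape, plus calculatoire, est la traduction de la condition trace $iii)$. Puisque $\tngot_-=(\ngot_-)^s$, on a $\tr(\tw\hgamma_o\circlearrowright\tngot_-^{\tw\gamma_o>0})=\sum_{i=2}^{s+1}\tr(w_i\hgamma_o\circlearrowright\ngot_-^{w_i\gamma_o>0})$. J'utiliserais ensuite la décomposition $\ugot_\C^{w_i\gamma_o>0}=\ngot^{w_i\gamma_o>0}\oplus\ngot_-^{w_i\gamma_o>0}$ combinée à l'identité $\tr(w_i\hgamma_o\circlearrowright\ugot_\C^{w_i\gamma_o>0})=\tr(\hgamma_o\circlearrowright\ugot_\C^{\gamma_o>0})$ (indépendante de $w_i$) pour obtenir
\[
\tr(w_i\hgamma_o\circlearrowright\ngot_-^{w_i\gamma_o>0})=\tr(\hgamma_o\circlearrowright\ugot_\C^{\gamma_o>0})-\tr(w_i\hgamma_o\circlearrowright\ngot^{w_i\gamma_o>0}).
\]
En sommant et en y ajoutant le terme $i=1$ fourni par la condition originale, on obtient précisément la condition 3. de l'énoncé.

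Enfin, pour la relation multiplicative sur les nombres, je remarquerais que sous le plongement diagonal, $H^*(\tFcal_{\hgamma},\Z)=H^*(\Fcal_{\hgamma},\Z)^{\otimes s}$ et $[\tXgot_{\tw,\hgamma}]=\bigotimes_{i=2}^{s+1}[\Xgot_{w_i,\hgamma}]$, dont l'image par le morphisme de pullback $\iota_{\hgamma}^*$ (correspondant à la multiplication cohomologique, par fonctorialité du cup-produit via la diagonale) est exactement le cup-produit $\prod_{i=2}^{s+1}[\Xgot_{w_i,\hgamma}]$. Il s'ensuit que $N(w,\tw)=N(w_\bullet)$, et de même $N(u,\tu)=N(u_\bullet)$, $N(v,\tv)=N(v_\bullet)$, de sorte que la relation $N(w,\tw)=N(u,\tu)N(v,\tv)$ du théorème \ref{th:levi-mobile-multiplicatif} donne immédiatement $N(w_\bullet)=N(u_\bullet)N(v_\bullet)$. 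Il n'y a en réalité aucun obstacle technique dans cette spécialisation : tout le travail substantiel — la comparaison des paires de Ressayre et la multiplicativité des degrés — est déjà contenu dans le théorème \ref{th:paire-ressayre-multiplicatif} et sa conséquence \ref{th:levi-mobile-multiplicatif}. La seule subtilité demande une vigilance notationelle pour assurer la correspondance entre le dictionnaire général $(U,\tU)$ et la situation diagonale, en particulier pour l'identification de $\iota_{\hgamma}^*$ au cup-produit.
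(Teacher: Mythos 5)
Votre proposition est correcte et suit exactement la voie du texte : le papier présente le théorème \ref{th:levi-mobile-multiplicatif-2} comme une simple réécriture du théorème \ref{th:levi-mobile-multiplicatif} dans le cas diagonal $\tU=U^s$, et votre traduction (identification $(w,\tw)\leftrightarrow(w_1,\ldots,w_{s+1})$, passage de $d(w,\hgamma)=c(\tw,\hgamma)$ à $\sum_i d(w_i,\hgamma)=s\dim_\C\Fcal_{\hgamma}$, conversion de la condition trace via $\ugot_\C^{w_i\gamma_o>0}=\ngot^{w_i\gamma_o>0}\oplus\overline{\ngot}^{\,w_i\gamma_o>0}$ et l'invariance par $\Ad(w_i)$, et identification de $\iota_{\hgamma}^*$ au cup-produit par Künneth) fournit précisément les détails que le papier laisse implicites.
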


%%%%%%%%%%%%%%%%%%%%%%%%%%%%%%%%%%%%%%%%%%%%%%%%%%%%%%
%%%%%%%%%%%%%%%%%%%%%%%%%%%%%%%%%%%%%%%%%%%%%%%%%%%%%%
\section{Quelques exemples}\label{sec:application-multiplicatif}
%%%%%%%%%%%%%%%%%%%%%%%%%%%%%%%%%%%%%%%%%%%%%%%%%%%%%%
%%%%%%%%%%%%%%%%%%%%%%%%%%%%%%%%%%%%%%%%%%%%%%%%%%%%%%

%%%%%%%%%%%%%%%%%%%%%%%%%%%%%%%%%%%%%%%%%%%%%%%%%%%%%%
\subsection{Première application des formules mutliplicatives }\label{sec:application-multiplicatif-1}
%%%%%%%%%%%%%%%%%%%%%%%%%%%%%%%%%%%%%%%%%%%%%%%%%%%%%%

Considérons le contexte où $U=\GL_n(\C)$ est inclus diagonalement dans $\tU:=\GL_n(\C)^s$, avec $s>1$.

Soient $n > b>a\geq 1$. Nous travaillons avec les vecteur $\gamma:=\gamma^{b}$ et $\hgamma:=\gamma^{a,b}$ définis respectivement en 
(\ref{eq:gamma-1-etage}) et (\ref{eq:gamma-a-b}). Ceux-ci satisfont la condition (\ref{eq:condition-gamma-hat}) de la section précédente. Explicitons la bijection $W(\gamma) \times W^\gamma(\,\hgamma\,) \simeq W(\,\hgamma\,)$ dans ce contexte.

Les applications $v\mapsto v([a])$ et $u\mapsto u([b])$ identifient respectivement $W^\gamma(\,\hgamma\,)$ à $\Pcal(a,b)$, et $W(\gamma)$ avec $\Pcal(b,n)$. 
Ainsi le lemme \ref{lem:richmond-lemma} nous fournit une bijection canonique $(\bullet)\quad \Pcal(a,b;n)\simeq \Pcal(b;n)\times \Pcal(a;b)$.

\begin{definition}\label{def:natural}
\begin{enumerate}
\item Pour des sous-ensembles finis $X,Y\subset\N$, on pose $ \sharp X < Y := \mathrm{Cardinal}\{(x,y)\in X\times Y, x<y\}$. Idem pour $\sharp X \leq Y $.
\item Pour des sous ensembles $A\subset B\subset \N-\{0\}$, on pose $A\natural B:=\{\sharp B\leq x , x\in A\}\in \Pcal(\sharp A, \sharp B)$.
\end{enumerate}
\end{definition}

\begin{lem}\label{lem:bijection-lemme-richmond-exemple}
L'image de $(I\subset J)\in \Pcal(a,b;n)$ à travers la bijection $(\bullet)$ est le couple 
$(J, I\natural J)$.
\end{lem}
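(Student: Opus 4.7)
Le plan est d'identifier explicitement la décomposition $w = uv$ fournie par le lemme \ref{lem:richmond-lemma} pour tout $w\in W(\hgamma)$ correspondant à un couple $(I\subset J)\in\Pcal(a,b;n)$, puis de vérifier qu'elle correspond au couple $(J, I\natural J)$ dans $\Pcal(b;n)\times\Pcal(a;b)$.

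La première étape consistera à expliciter les identifications combinatoires en jeu. Les stabilisateurs sont $W^\gamma=\Sgot_b\times\Sgot_{n-b}$ et $W^{\hgamma}=\Sgot_a\times\Sgot_{b-a}\times\Sgot_{n-b}$. Les bijections $W(\hgamma)\simeq\Pcal(a,b;n)$, $W(\gamma)\simeq\Pcal(b;n)$ et $W^\gamma(\hgamma)\simeq\Pcal(a;b)$ sont données respectivement par $w\mapsto(w([a]),w([b]))$, $u\mapsto u([b])$ et $v\mapsto v([a])$. Je rappellerai que le représentant de longueur minimale dans une classe de $W/W^\gamma$ (resp.\ $W^\gamma/W^{\hgamma}$) est précisément caractérisé par le fait qu'il préserve l'ordre à l'intérieur de chaque bloc du stabilisateur.

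La seconde étape sera un calcul direct. Soit $w\in W(\hgamma)$ associé à $(I\subset J)$, et soit $w=uv$ sa décomposition. Comme $v\in\Sgot_b\times\Sgot_{n-b}$ préserve globalement $[b]$, on a $u([b])=uv([b])=w([b])=J$, donc $u$ correspond bien à $J\in\Pcal(b;n)$. De plus, $u$ étant de longueur minimale, on a $u(k)=j_k$ pour tout $k\in[b]$, où $J=\{j_1<\cdots<j_b\}$. Pour déterminer $v([a])$, il suffira d'écrire $v([a])=u^{-1}(uv([a]))=u^{-1}(I)$. Comme $I\subset J$, l'ensemble $u^{-1}(I)$ est égal à $\{k\in[b],\, j_k\in I\}$, qui n'est autre que $I\natural J$ d'après la définition \ref{def:natural}.

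Je ne prévois pas d'obstacle sérieux : il s'agit essentiellement d'un exercice combinatoire. Le point le plus délicat est la caractérisation des représentants de longueur minimale comme permutations préservant l'ordre par blocs; c'est elle qui rend l'identité $u(k)=j_k$ sur $[b]$ immédiate et qui permet de conclure directement.
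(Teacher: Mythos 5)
Votre preuve est correcte et suit essentiellement le même chemin que celle du texte : on utilise $v([b])=[b]$ pour obtenir $u([b])=J$, puis le fait que le représentant minimal $u$ est la bijection croissante $[b]\to J$ pour identifier $v([a])=u^{-1}(I)=\{k\in[b],\ j_k\in I\}=I\natural J$. Votre formulation via $v([a])=u^{-1}(I)$ est simplement une écriture un peu plus condensée du calcul ponctuel du texte.
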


\begin{proof}Soit $w\in W(\,\hgamma\,)$ tel que $I=w([a])$ et $J=w([b])$. Soit $(u,v)\in W(\gamma) \times W^\gamma(\,\hgamma\,)$ tel que 
$w=uv$. Comme $v([b])=[b]$, on a $u([b])=w([b])=J$, i.e. $J\in  \Pcal(b;n)$ correspond à $v\in W(\gamma)$ à travers la bijection $(\bullet)$.

L'application $v$ détermine une permutation de $[b]$, l'application $u$ réalise une bijection croissante entre $[b]$ et $J$, et nous avons 
$u(v(\ell))=w(\ell)\in I$ pour tout $\ell\in [a]$. Ainsi, si $x=u(v(\ell))\in I$, on a $v(\ell)=\sharp J\leq x$. Cela montre que 
$v([a])$ est égal à $I\natural J:=\{\sharp J\leq x , x\in I\}$, i.e. $I\natural J\in  \Pcal(a;b)$ correspond à $v\in W^\gamma(\,\hgamma\,)$ 
à travers la bijection $(\bullet)$.
\end{proof}

Nous utilisons le théorème \ref{th:levi-mobile-multiplicatif-2} dans le contexte où les variétés de drapeaux $\Fcal_{\hgamma}=\GL_n(\C)/P(\,\hgamma\,)$, 
$\Fcal_{\gamma}=\GL_n(\C)/P(\gamma)$, et $\Fcal_{\hgamma}^\gamma=\GL_n(\C)^\gamma/\GL_n(\C)^\gamma\cap P(\,\hgamma\,)$ sont respectivement 
égales à $\Fcal_{\hgamma}=\F(a,b; n)$, $\Fcal_{\gamma}=\G(b; n)$, et $\Fcal_{\hgamma}^\gamma=\G(a;b)$.

Les calculs élémentaires suivants sont laissés  à la discrétion du lecteur. Ici, $\ngot\subset\glgot_n(\C)$ désigne la sous-algèbre nilpotente 
formées des matrices triangulaires supérieures.

\begin{lem}\label{lem:calcul-dimensions}

Soient $w\in \Sgot_n$, $I=w([a])$ et $J=w([b])$.

\begin{itemize}\setlength{\itemsep}{10pt}
\item $\dim_\C \F(a,b; n) = a(b-a)+b(n-b)$.
\item $\dim_\C \Xgot_{I\subset J}^o= |\mu(I)|+|\mu(J)|- \sharp J^c <I= |\mu(J)|+|\mu(I\natural J)|$.
\item $\tr\left(w\gamma^{b}_o \circlearrowright \ngot^{w\gamma^{b}_o>0}\right)=\sharp J^c <J= |\mu(J)| $.
\item $\tr\left(w\gamma^{a,b}_o \circlearrowright \ngot^{w\gamma^{a,b}_o>0}\right)=|\mu(I)|+|\mu(J)|$.
\item $\tr\left(w\gamma^{a,b}_o \circlearrowright \ngot^{w\gamma^{b}_o>0}\right)=|\mu(J)| +  \sharp J^c <I= |\mu(I)|+|\mu(J)|-|\mu(I\natural J)|$.
\item $\tr\left(\gamma_o^{a,b} \circlearrowright \glgot_n(\C)^{\gamma^b_o>0}\right)= (n-b)(a+b)$.
\item $\tr\left(\gamma_o^{a,b} \circlearrowright \glgot_n(\C)^{\gamma^{a,b}_o>0}\right)= a(n-a)+b(n-b)$.
\end{itemize}
\end{lem}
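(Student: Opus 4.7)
Le plan est de tout ramener à un calcul dans le système de racines de $\GL_n(\C)$. Les racines sont les $\epsilon_i-\epsilon_j$ pour $i\neq j$, les racines positives associées à $\ngot$ sont celles avec $i<j$, et l'action d'un élément $\xi=\diag(\xi_1,\ldots,\xi_n)$ sur le sous-espace de poids $\epsilon_i-\epsilon_j$ est la multiplication par $\xi_i-\xi_j$. Pour $\gamma^b_o$, on pose $\lambda_i^b=0$ si $i\leq b$ et $\lambda_i^b=1$ sinon. Pour $\gamma^{a,b}_o$, on pose $\lambda_i^{a,b}=-1$ si $i\leq a$, $0$ si $a<i\leq b$, et $1$ si $i>b$. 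En appliquant le changement de variable $w$, les poids de $w\gamma^b_o$ (respectivement $w\gamma^{a,b}_o$) sur la racine $\epsilon_i-\epsilon_j$ valent $\lambda^b_{w^{-1}(i)}-\lambda^b_{w^{-1}(j)}$ (respectivement $\lambda^{a,b}_{w^{-1}(i)}-\lambda^{a,b}_{w^{-1}(j)}$), ce qui se traduit par l'appartenance de $i$ et $j$ aux ensembles $I$, $J\setminus I$, et $J^c$.

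Je noterai $A:=J^c$ (de cardinal $n-b$), $B:=J\setminus I$ (de cardinal $b-a$), et $C:=I$ (de cardinal $a$). La première identité $\dim_\C \F(a,b;n) = a(b-a)+b(n-b)$ s'obtient en calculant directement $\dim \GL_n(\C)-\dim P(\gamma^{a,b})=n^2-(a^2+(b-a)^2+(n-b)^2+a(b-a)+a(n-b)+(b-a)(n-b))$. Pour les formules de traces, je vais procéder comme suit : pour $w\gamma^b_o$, les poids positifs valent tous $1$ et correspondent aux couples $(i,j)$ avec $i<j$, $i\in A$, $j\in B\cup C$, donnant la trace $\sharp A<B+\sharp A<C=\sharp A<J=|\mu(J)|$. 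Cela prouve la formule 3. Pour $w\gamma^{a,b}_o$ agissant sur $\ngot^{w\gamma^{a,b}_o>0}$, les poids sont $1$ sur les couples $(A,B)$ et $(B,C)$, et $2$ sur les couples $(A,C)$, donnant la trace $\sharp A<B+2\sharp A<C+\sharp B<C$. On vérifie que $|\mu(I)|+|\mu(J)|=(\sharp A<C+\sharp B<C)+(\sharp A<B+\sharp A<C)$ donne précisément cette somme, établissant la formule 4.

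La formule 5 procède de la même manière : l'action de $w\gamma^{a,b}_o$ sur $\ngot^{w\gamma^b_o>0}$ (les couples $(A,B)\cup(A,C)$) donne la trace $\sharp A<B+2\sharp A<C=|\mu(J)|+\sharp A<C$, ce qui vaut $|\mu(J)|+\sharp J^c<I$. Pour l'égalité avec $|\mu(I)|+|\mu(J)|-|\mu(I\natural J)|$, le point-clé combinatoire consiste à identifier $|\mu(I\natural J)|$ avec $\sharp B<C$. En effet, en écrivant $J=\{j_1<\cdots<j_b\}$ et $I=\{j_{k_1}<\cdots<j_{k_a}\}$, on a $I\natural J=\{k_1,\ldots,k_a\}\subset [b]$, et $|\mu(I\natural J)|$ compte les couples $(k,k')$ avec $k\in [b]\setminus I\natural J$, $k'\in I\natural J$, $k<k'$ : ceci correspond exactement, via l'ordre induit par $J$, aux couples $(j,j')$ avec $j\in B$, $j'\in C$, $j<j'$, i.e. $\sharp B<C$. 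La formule 2 s'en déduit en combinant les expressions de $d(w,\gamma^{a,b})=\sharp A<B+\sharp A<C+\sharp B<C$ avec les deux identités obtenues.

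Enfin, les formules 6 et 7 sont des calculs directs dans $\glgot_n(\C)$. Pour la formule 6, les poids positifs de $\gamma^b_o$ apparaissent sur les couples $(i,j)$ avec $i>b$ et $j\leq b$, et l'action de $\gamma^{a,b}_o$ y donne le poids $2$ sur les $a(n-b)$ couples avec $j\leq a$ et le poids $1$ sur les $(b-a)(n-b)$ couples avec $a<j\leq b$, ce qui donne $2a(n-b)+(b-a)(n-b)=(n-b)(a+b)$. Pour la formule 7, on somme les contributions des trois types de couples à poids strictement positif pour $\gamma^{a,b}_o$, à savoir poids $2$ sur $(i>b,j\leq a)$, poids $1$ sur $(i>b,a<j\leq b)$ et poids $1$ sur $(a<i\leq b,j\leq a)$, ce qui donne $2a(n-b)+(b-a)(n-b)+a(b-a)$ ; une simple manipulation algébrique fournit $a(n-a)+b(n-b)$. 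Aucun obstacle sérieux n'est attendu : il s'agit uniquement de traductions soigneuses entre indices de racines et sous-ensembles $I\subset J\subset[n]$ ; le seul point véritablement délicat est l'interprétation combinatoire de $I\natural J$ qui fait le lien entre la partition associée et le décompte $\sharp B<C$.
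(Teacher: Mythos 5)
Votre preuve est correcte : le paper laisse ces calculs « à la discrétion du lecteur », et votre argument par comptage de racines (poids de $w\gamma^b_o$ et $w\gamma^{a,b}_o$ sur les $E_{ij}$, ventilés selon les blocs $J^c$, $J\setminus I$, $I$) est exactement le calcul élémentaire attendu. Le seul point non trivial, l'identité $|\mu(I\natural J)|=\sharp\, (J\setminus I)<I$, est correctement justifié via l'écriture $I=\{j_{k_1}<\cdots<j_{k_a}\}$ dans $J$, et toutes les formules de traces et de dimensions se vérifient.
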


Soit $(I_k\subset J_k)_{1\leq k\leq s+1}$ une collection d'éléments de $\Pcal(a,b;n)$. D'après les calculs menés au lemme \ref{lem:calcul-dimensions}, on sait que le 
produit  $\prod_{k=1}^{s+1} [\Xgot_{I_k\subset J_k}]$ appartient à $H^{max}(\F(a,b; n),\Z)$ si 
\begin{equation}\label{eq:degre-max-F-a-b}
\sum_{k=1}^{s+1}|\mu(J_k)|+|\mu(I_k\natural J_k)| =  s\,\Big(a(b-a)+b(n-b)\Big).
\end{equation}
Une collection de sous-ensembles $(I_k\subset J_k)_{1\leq k\leq s}$ de $\Pcal(a,b;n)$ est dite Lévi-mobile si 
la famille $(w_k)_{1\leq k\leq s}\in (\Sgot_n)^s$ vérifiant $\varphi(w_k)=(I_k,J_k)$ est Lévi-mobile.

Le résultat suivant, qui est une conséquence du théorème \ref{th:levi-mobile-multiplicatif-2}, compare les produits des classes 
$[\Xgot_{I\subset J}]\in H^*(\F(a,b;n),\Z)$, $[\Xgot_{J}]\in H^*(\G(b;n),\Z)$ et $[\Xgot_{I\natural J}]\in H^*(\G(a;b),\Z)$.

\begin{prop} \label{prop:levi-mobile-double-grass}
Soit $(I_k\subset J_k)_{1\leq k\leq s+1}$ une collection d'éléments de $\Pcal(a,b;n)$. Les propriétés suivantes sont équivalentes:
\begin{enumerate}\setlength{\itemsep}{8pt}
\item $\prod_{k=1}^{s+1} [\Xgot_{I_k\subset J_k}]= \ell[pt]$, avec $\ell\geq 1$, et $(I_k\subset J_k)_{1\leq k\leq s+1}$ est Lévi-mobile.
\item $\prod_{k=1}^{s+1} [\Xgot_{I_k\subset J_k}]= \ell[pt]$, avec $\ell\geq 1$, et $\sum_{k=1}^{s+1}|\mu(I_k)|+|\mu(J_k)|= s\left(a(n-a)+b(n-b)\right)$.
\item La collection $(I_k\subset J_k)_{1\leq k\leq s+1}$ vérifie les conditions suivantes :
\begin{enumerate}\setlength{\itemsep}{8pt}
\item $\prod_{k=1}^{s+1} [\Xgot_{J_k}]= \ell'[pt]$, avec $\ell'\geq 1$, dans $H^{max}(\G(b, n),\Z)$.
\item $\prod_{k=1}^{s+1} [\Xgot_{I_k\natural J_k}]= \ell''[pt]$, avec $\ell''\geq 1$, dans $H^{max}(\G(a, b),\Z)$.
\item $\sum_{k=1}^{s+1} |\mu(I_k)|= s\, a(n-a)$.
\end{enumerate}
\end{enumerate}
Lorsque ces conditions sont satisfaites on a $\ell=\ell'\ell''$.
\end{prop}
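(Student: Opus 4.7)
Le plan consiste à établir séparément les équivalences $1 \Leftrightarrow 2$ et $1 \Leftrightarrow 3$, en appliquant respectivement la proposition \ref{prop:RP-exemple-fondamental-3} et le théorème \ref{th:levi-mobile-multiplicatif-2} à la variété de drapeaux $\F(a,b;n) \simeq \GL_n(\C)/P(\gamma^{a,b})$. La relation multiplicative $\ell = \ell'\ell''$ sera alors un corollaire immédiat de la multiplicativité des nombres de Ressayre-Richmond.

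Pour $1 \Leftrightarrow 2$, on applique la proposition \ref{prop:RP-exemple-fondamental-3} avec $\hgamma = \gamma^{a,b}$ et $\Fcal_{\hgamma} = \F(a,b;n)$. D'après le lemme \ref{lem:calcul-dimensions}, on a $\tr\!\left(w_k \hgamma_o \circlearrowright \ngot^{w_k \hgamma_o > 0}\right) = |\mu(I_k)| + |\mu(J_k)|$ et $\tr\!\left(\hgamma_o \circlearrowright \glgot_n(\C)^{\hgamma_o > 0}\right) = a(n-a) + b(n-b)$. La condition $(A_2)$ de la proposition \ref{prop:RP-exemple-fondamental-3} s'écrit donc exactement $\sum_k (|\mu(I_k)| + |\mu(J_k)|) = s(a(n-a) + b(n-b))$, qui est précisément la condition numérique additionnelle de l'énoncé (2). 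L'équivalence $1 \Leftrightarrow 2$ en découle directement : (1) équivaut à \textit{``produit $= \ell[pt]$ avec $\ell \geq 1$ et Lévi-mobilité''}, tandis que (2) remplace la Lévi-mobilité par la condition équivalente $(A_2)$ via la proposition \ref{prop:RP-exemple-fondamental-3}.

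Pour $1 \Leftrightarrow 3$, on applique le théorème \ref{th:levi-mobile-multiplicatif-2} avec $\gamma = \gamma^b$ et $\hgamma = \gamma^{a,b}$, qui vérifient bien la condition $P(\hgamma) \subset P(\gamma)$. Le lemme \ref{lem:bijection-lemme-richmond-exemple} identifie la décomposition $w_k = u_k v_k$ du lemme \ref{lem:richmond-lemma} avec la bijection $(I_k \subset J_k) \leftrightarrow (J_k,\, I_k \natural J_k)$, de sorte que $u_\bullet$ paramètre des cycles de Schubert dans $\Fcal_\gamma = \G(b, n)$ et $v_\bullet$ dans $\Fcal_{\hgamma}^\gamma \simeq \G(a, b)$. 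Comme indiqué à la fin de la section \ref{sec:cadre-involution}, dans une grassmannienne la Lévi-mobilité d'un tuple $(u_\bullet)$ muni de la condition de degré maximal équivaut à ``produit $= \ell'[pt]$ avec $\ell' \geq 1$'' ; les points $(1)$ et $(2)$ du théorème \ref{th:levi-mobile-multiplicatif-2} coïncident donc avec les conditions (3a) et (3b) de l'énoncé. Il reste à traduire la condition numérique $(3)$ du théorème \ref{th:levi-mobile-multiplicatif-2} via le lemme \ref{lem:calcul-dimensions} : elle s'écrit
\begin{equation*}
\sum_{k=1}^{s+1} \Bigl(|\mu(I_k)| + |\mu(J_k)| - |\mu(I_k \natural J_k)|\Bigr) = s\,(n-b)(a+b).
\end{equation*}
Combinée avec les égalités $\sum_k |\mu(J_k)| = s\,b(n-b)$ et $\sum_k |\mu(I_k \natural J_k)| = s\,a(b-a)$, imposées respectivement par (3a) et (3b), un calcul élémentaire donne $\sum_k |\mu(I_k)| = s\,a(n-a)$, soit la condition (3c). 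L'équivalence $1 \Leftrightarrow 3$ est alors acquise, en remarquant que la condition $\sum_k d(w_k, \hgamma) = s\dim_\C \F(a,b;n)$ du théorème \ref{th:levi-mobile-multiplicatif-2} est automatiquement satisfaite dès que $\prod [\Xgot_{I_k \subset J_k}] = \ell[pt]$ avec $\ell \geq 1$.

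L'identité $\ell = \ell'\ell''$ est une conséquence immédiate de la relation multiplicative $N(w_\bullet) = N(u_\bullet)N(v_\bullet)$ établie dans le théorème \ref{th:levi-mobile-multiplicatif-2}. Le principal point technique de cette démonstration sera la vérification soigneuse des diverses conditions de degré et la compatibilité des paramétrisations combinatoires entre $\F(a,b;n)$ et les deux grassmanniennes $\G(b,n)$ et $\G(a,b)$ à travers la bijection du lemme \ref{lem:bijection-lemme-richmond-exemple} : aucun obstacle conceptuel n'est à prévoir, tous les ingrédients essentiels étant déjà rassemblés dans les lemmes \ref{lem:bijection-lemme-richmond-exemple} et \ref{lem:calcul-dimensions}, et dans les résultats généraux de la section \ref{sec:RR-mutliplicative}.
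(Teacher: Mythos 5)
Votre démonstration est correcte et suit essentiellement la même route que celle du texte : l'équivalence $1\Leftrightarrow 2$ par la proposition \ref{prop:RP-exemple-fondamental-3} combinée au lemme \ref{lem:calcul-dimensions}, puis l'équivalence avec $3$ (et la relation $\ell=\ell'\ell''$) par le théorème \ref{th:levi-mobile-multiplicatif-2} via la bijection du lemme \ref{lem:bijection-lemme-richmond-exemple}, la condition numérique se réduisant à $(3c)$ grâce aux égalités de dimensions imposées par $(3a)$ et $(3b)$. Seule remarque mineure : le fait que dans une grassmannienne la Lévi-mobilité découle de la condition cohomologique est énoncé à la fin de la section sur la Lévi-mobilité, et non dans la section \ref{sec:cadre-involution}.
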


\begin{proof} L'équivalence entre 1. et 2. est une conséquence de la proposition \ref{prop:RP-exemple-fondamental-3} 
et des calculs effectués au lemme \ref{lem:calcul-dimensions}. La relation du point 3. de la proposition \ref{prop:RP-exemple-fondamental-3}, donne ici
\begin{equation}\label{eq:preuve-prop-levi-grass-1}
\sum_{k=1}^{s+1}|\mu(I_k)|+|\mu(J_k)|-|\mu(I_k\natural J_k)| =  s\,\Big((a+b)(n-b)\Big).
\end{equation}
D'autre part, les relations $\prod_{k=1}^{s+1} [\Xgot_{J_k}]= \ell'[pt]$ et $\prod_{k=1}^{s+1} [\Xgot_{I_k\natural J_k}]= \ell''[pt]$ fournissent 
des égalités de dimensions suivantes
\begin{equation}\label{eq:preuve-prop-levi-grass-2}
\sum_{k=1}^{s+1}|\mu(J_k)|=  s\,\Big(b(n-b)\Big)\quad\mathrm{et}\quad \sum_{k=1}^{s+1}|\mu(I_k\natural J_k)|=  s\,\Big(a(b-a)\Big).
\end{equation}
Finalement, les relations (\ref{eq:preuve-prop-levi-grass-2}) montre que (\ref{eq:preuve-prop-levi-grass-1}) est équivalent à 
$\sum_{k=1}^{s+1} |\mu(I_k)|= s\, a(n-a)$. Cela termine la preuve du fait que la proposition \ref{prop:levi-mobile-double-grass} 
découle du théorème \ref{th:levi-mobile-multiplicatif-2}.
\end{proof}

\medskip

Nous terminons cette partie, en nous plaçant dans le cadre où $a+b=n$ et $a<\tfrac{n}{2}$. Chaque $I\subset [n]$ de cardinal $a$ vérifiant\footnote{$I^o=\{n+1-i, i\in I\}$  et $I^{o,c}=[n]-I^o$}
$I\cap I^o=\emptyset$ définit la variété de Schubert $\Xgot_{I\subset I^{o,c}}$ dans $\F(a,n-a; n)$. Dans la prochaine proposition, 
nous adaptons la proposition \ref{prop:levi-mobile-double-grass} à ce type de variétés de Schubert.

\begin{prop} \label{prop:levi-mobile-double-grass-theta}
Soit $(I_k)_{1\leq k\leq s+1}$ une collection de sous-ensembles de cardinal $a$ de $[n]$ vérifiant $I_k\cap I_k^o=\emptyset,\forall k$.
Alors, les propriétés suivantes sont équivalentes:
\begin{enumerate}\setlength{\itemsep}{8pt}
\item $\prod_{k=1}^{s+1} [\Xgot_{I_k\subset I_k^{o,c}}]= \ell[pt]$ avec $\ell\geq 1$ et $(I_k\subset I_L^{o,c})_{1\leq k\leq s+1}$ est Lévi-mobile.
\item $\prod_{k=1}^{s+1} [\Xgot_{I_k\subset I_k^{o,c}}]= \ell[pt]$ avec $\ell\geq 1$ et $\sum_{k=1}^{s+1} |\mu(I_k)|= s\, a(n-a)$.
\item La collection $(I_k)_{1\leq k\leq s+1}$ vérifie les conditions suivantes :
\begin{enumerate}\setlength{\itemsep}{8pt}
\item $\prod_{k=1}^{s+1} [\Xgot_{I_k}]= \ell'[pt]$, avec $\ell'\geq 1$, dans $H^{max}(\G(a, n),\Z)$.
\item $\prod_{k=1}^{s+1} [\Xgot_{I_k\natural I_k^{o,c}}]= \ell''[pt]$, avec $\ell''\geq 1$, dans $H^{max}(\G(a, n-a),\Z)$.
\end{enumerate}
\end{enumerate}
Lorsque ces conditions sont satisfaites on a $\ell=\ell'\ell''$.
\end{prop}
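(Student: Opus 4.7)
Le plan consiste à appliquer directement la proposition \ref{prop:levi-mobile-double-grass} au cas particulier $b = n-a$, en prenant $J_k = I_k^{o,c}$ pour chaque $k$. Observons d'abord que l'hypothèse $I_k \cap I_k^o = \emptyset$ équivaut à l'inclusion $I_k \subset I_k^{o,c}$ : ainsi chaque couple $(I_k \subset I_k^{o,c})$ est un élément bien défini de $\Pcal(a,n-a;n)$, et la variété de Schubert $\Xgot_{I_k\subset I_k^{o,c}}$ figurant dans notre énoncé coïncide avec celle intervenant dans la proposition \ref{prop:levi-mobile-double-grass}.

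Pour identifier les conditions cohomologiques du type grassmannien, j'exploiterai l'isomorphisme $H^{*}(\G(n-a,n),\Z) \simeq H^{*}(\G(a,n),\Z)$ induit par $E\mapsto E^{\perp}$, rappelé dans la section \ref{sec:calcul-schubert-grass}. Celui-ci envoie $[\Xgot_{I_k^{o,c}}]$ sur $[\Xgot_{(I_k^{o,c})^{o,c}}] = [\Xgot_{I_k}]$, et permet de transformer la condition (a) de la proposition \ref{prop:levi-mobile-double-grass}, à savoir $\prod_{k} [\Xgot_{I_k^{o,c}}] = \ell'[pt]$ dans $H^{max}(\G(n-a,n),\Z)$, en la condition (a) de notre énoncé. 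La condition (b) se transporte trivialement puisque $I_k\natural J_k = I_k\natural I_k^{o,c}$ et $\G(a,n-a)=\G(a,b)$.

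Le point central sera de montrer que la condition dimensionnelle (c) de la proposition \ref{prop:levi-mobile-double-grass}, à savoir $\sum_{k} |\mu(I_k)| = s\cdot a(n-a)$, devient automatique dans notre cadre, ce qui justifie son omission dans la proposition \ref{prop:levi-mobile-double-grass-theta}. Cela résultera du fait que la condition (a) impose à $\prod_{k} [\Xgot_{I_k}]$ d'appartenir à $H^{2a(n-a)}(\G(a,n),\Z)$, donc $\sum_{k} |\lambda(I_k)| = a(n-a)$ ; combiné à $|\mu(I_k)| + |\lambda(I_k)| = a(n-a)$ on en déduit $\sum_{k=1}^{s+1} |\mu(I_k)| = (s+1)a(n-a) - a(n-a) = s\cdot a(n-a)$. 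De façon analogue, pour l'énoncé 2, l'identité $|\mu(I_k^{o,c})| = |\mu(I_k)|$ --- qui découle de la préservation du degré cohomologique par l'isomorphisme ci-dessus --- permettra de réduire la condition $\sum_{k} |\mu(I_k)| + |\mu(I_k^{o,c})| = 2s\cdot a(n-a)$ de la proposition \ref{prop:levi-mobile-double-grass} à $\sum_{k} |\mu(I_k)| = s\cdot a(n-a)$. La relation multiplicative $\ell = \ell'\ell''$ se reportera alors directement depuis la proposition \ref{prop:levi-mobile-double-grass}.
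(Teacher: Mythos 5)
Votre démonstration est correcte et suit pour l'essentiel la même démarche que celle du texte : on spécialise la proposition \ref{prop:levi-mobile-double-grass} au cas $b=n-a$, $J_k=I_k^{o,c}$, on utilise la dualité $E\mapsto E^\perp$ entre $\G(n-a,n)$ et $\G(a,n)$ (avec $|\mu(I_k^{o,c})|=|\mu(I_k)|$) pour réécrire la condition grassmannienne sur les $J_k$, et on observe que la condition dimensionnelle $\sum_k|\mu(I_k)|=s\,a(n-a)$ découle automatiquement du point 3.~a). La relation $\ell=\ell'\ell''$ se transfère alors comme vous l'indiquez.
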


\begin{proof}
Considérons le produit bilinéaire $(x,y)=\sum_{k=1}^n x_k y_k$ sur $\C^n$. Pour tout sous-espace vectoriel $E\subset \C^n$, notons $E^\perp$ l'orthogonal 
de $E$ par rapport à $(-,-)$. L'application $E\mapsto E^\perp$ définit un difféomorphisme $j: \G(n-a, n)\to \G(a, n)$ tel que 
$$
j^*([\Xgot_{I}])=[\Xgot_{{I}^{o,c}}]\quad \mathrm{et} \quad |\mu(I)|=|\mu(I^{o,c})|
$$
pour tout $I\in\Pcal(a,n)$. Ainsi, la relation $\prod_{k=1}^{s+1} [\Xgot_{I_k}]= \ell'[pt]$ dans $H^{max}(\G(a, n),\Z)$ est équivalente à la relation 
$\prod_{k=1}^{s+1} [\Xgot_{I_L^{o,c}}]= \ell'[pt]$ dans $H^{max}(\G(n-a, n),\Z)$. On voit d'autre part, que l'identité 
$\sum_{k=1}^{s+1} |\mu(I_k)|= s\, a(n-a)$ est une conséquence du point $3.$ $a)$. On a ainsi vérifié que la proposition 
\ref{prop:levi-mobile-double-grass-theta} est une conséquence de la proposition \ref{prop:levi-mobile-double-grass}.
\end{proof}

%%%%%%%%%%%%%%%%%%%%%%%%%%%%%%%%%%%%%%%%%%%%%%%%%%%%%%
\subsection{Deuxième application des formules multiplicatives }\label{sec:application-multiplicatif-2}
%%%%%%%%%%%%%%%%%%%%%%%%%%%%%%%%%%%%%%%%%%%%%%%%%%%%%%

Soient $n,n'>1$. Considérons le cas où $U_\C=\GL_n(\C)\times \GL_{n'}(\C)$ est inclus diagonalement dans $\tU_\C:=\GL_{n+n'}(\C)$. 
Le tore maximal $T\subset\tU:=\upU_{n+n'}$ des matrices diagonales est aussi un tore maximal de $U=\upU_{n}\times\upU_{n'}$.
Le groupe de Weyl de $(U,T)$ est égal à $W:=\Sgot_{n}\times \Sgot_{n'}$, tandis que celui de $(\tU,T)$ est égal à $\tW:=\Sgot_{n+n'}$.

%%%%%%%%%%%%%%%%%%%%%%%%%%%%%%%%%%%%%%%%%%%%%%%%%%%%%%
\subsubsection{Premier cadre }\label{sec:application-multiplicatif-2-1}
%%%%%%%%%%%%%%%%%%%%%%%%%%%%%%%%%%%%%%%%%%%%%%%%%%%%%%

Soient $1\leq a<b<n$ et $1\leq a'<b'<n'$. Dans cette partie, on travaille avec les vecteurs de l'algèbre de Lie $\tgot\simeq \R^n\times\R^{n'}$ suivants :
$$
\hgamma:=(\gamma^{a,b},\gamma^{a',b'})\quad \mathrm{et}\quad \gamma:=(\gamma^{b},\gamma^{b'}).
$$
Notons $w_{\mathrm{I}}\in \Sgot_{n+n'}$ la permutation définie par : $w_{\mathrm{I}}(k)=k+n-a$ si $a+1\leq k\leq a+b'$, $w_{\mathrm{I}}(k)=k-n+a$ si 
$a+b'+1\leq k\leq n+b'$, et $w_{\mathrm{I}}(k)=k$ dans les autres cas. On constate que 
$$
\hgamma=\Ad(w_{\mathrm{I}})\left(\gamma^{a+a',b+b'}\right)\quad \mathrm{et}\quad \gamma=\Ad(w_{\mathrm{I}})\left(\gamma^{b+b'}\right).
$$
Le morphisme $\Ad(w_{\mathrm{I}})$ permet d'identifier $\tW(\,\hgamma\,)$, $\tW(\gamma)$ et $\tW^\gamma(\,\hgamma\,)$, respectivement à 
$\tW(\gamma^{a+a',b+b'})$, $\tW(\gamma^{b+b'})$ et $\tW^{\gamma^{b+b'}}(\gamma^{a+a',b+b'})$. Grâce au 
lemme \ref{lem:bijection-lemme-richmond-exemple}, nous avons un premier résultat.

\begin{lem}\label{lem:bijection-lemme-richmond-exemple-2}
\begin{enumerate}
\item La bijection $W(\,\hgamma\,)\simeq W(\gamma) \times W^\gamma(\,\hgamma\,) $ correspond à l'application 
$$
\Pcal(a,b;n)\times\Pcal(a',b';n') \longrightarrow \Pcal(b;n)\times \Pcal(b';n')\times \Pcal(a;b)\times \Pcal(a';b')
$$
qui envoie $(I\subset J,I'\subset J')$ sur $(J,J',I\natural J, I'\natural J')$.
\item La bijection $\tW(\,\hgamma\,)\simeq \tW(\gamma) \times \tW^\gamma(\,\hgamma\,) $ correspond, à travers $\Ad(w_I)$, à l'application 
$$
\Pcal(a+a',b+b';n+n')\longrightarrow \Pcal(b+b';n+n')\times  \Pcal(a+a';b+b')
$$
qui envoie $(I''\subset J'')$ sur $(J'',I''\natural J'')$.
\end{enumerate}
\end{lem}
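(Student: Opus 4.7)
Le plan est de déduire les deux points directement du Lemme \ref{lem:bijection-lemme-richmond-exemple}, en exploitant respectivement le caractère produit de la situation pour le premier point, et la conjugaison par $w_{\mathrm{I}}$ pour le second.

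Pour le premier point, je commencerais par remarquer que $U_\C=\GL_n(\C)\times \GL_{n'}(\C)$ est un produit et que les vecteurs $\hgamma=(\gamma^{a,b},\gamma^{a',b'})$ et $\gamma=(\gamma^b,\gamma^{b'})$ se décomposent composante par composante. Par conséquent les sous-groupes paraboliques, les stabilisateurs de Weyl et les quotients se décomposent aussi en produits:
\begin{align*}
W(\,\hgamma\,) &\simeq W_n(\gamma^{a,b})\times W_{n'}(\gamma^{a',b'}),\\
W(\gamma) &\simeq W_n(\gamma^b)\times W_{n'}(\gamma^{b'}),\\
W^\gamma(\,\hgamma\,) &\simeq W_n^{\gamma^b}(\gamma^{a,b})\times W_{n'}^{\gamma^{b'}}(\gamma^{a',b'}).
\end{align*}
La bijection du Lemme \ref{lem:richmond-lemma}, étant fonctorielle par rapport aux produits directs, se factorise en le produit des deux bijections $W_n(\gamma^{a,b})\simeq W_n(\gamma^b)\times W_n^{\gamma^b}(\gamma^{a,b})$ et $W_{n'}(\gamma^{a',b'})\simeq W_{n'}(\gamma^{b'})\times W_{n'}^{\gamma^{b'}}(\gamma^{a',b'})$. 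Le Lemme \ref{lem:bijection-lemme-richmond-exemple} identifie chacune de ces bijections à $(I\subset J)\mapsto (J, I\natural J)$ et $(I'\subset J')\mapsto (J', I'\natural J')$, ce qui donne immédiatement la description annoncée.

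Pour le second point, l'observation clef est que la conjugaison $\Ad(w_{\mathrm{I}})$ transforme $\gamma^{a+a',b+b'}$ et $\gamma^{b+b'}$ en $\hgamma$ et $\gamma$ respectivement, donc induit des bijections équivariantes $\tW(\gamma^{a+a',b+b'})\simeq \tW(\,\hgamma\,)$, $\tW(\gamma^{b+b'})\simeq \tW(\gamma)$ et $\tW^{\gamma^{b+b'}}(\gamma^{a+a',b+b'})\simeq \tW^\gamma(\,\hgamma\,)$ qui sont compatibles avec la décomposition du Lemme \ref{lem:richmond-lemma}, car la conjugaison par $w_{\mathrm{I}}$ envoie sous-groupes paraboliques sur sous-groupes paraboliques et préserve les longueurs dans le groupe de Weyl. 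Il suffit alors d'appliquer de nouveau le Lemme \ref{lem:bijection-lemme-richmond-exemple} au triplet $(\tW(\gamma^{a+a',b+b'}),\tW(\gamma^{b+b'}),\tW^{\gamma^{b+b'}}(\gamma^{a+a',b+b'}))$, ce qui fournit la description via $(I''\subset J'')\mapsto (J'',I''\natural J'')$.

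Le point techniquement délicat sera de vérifier que $\Ad(w_{\mathrm{I}})$ respecte bien les représentants de longueur minimale, c'est-à-dire que $w_{\mathrm{I}}\, \tW(\gamma^{a+a',b+b'})\, w_{\mathrm{I}}^{-1}=\tW(\,\hgamma\,)$ et de même pour les autres ensembles. Ceci découle cependant du fait général que si $\gamma_1=\Ad(w)(\gamma_2)$ avec $w$ fixé, alors la conjugaison par $w$ envoie le sous-groupe parabolique $\tP(\gamma_2)$ sur $\tP(\gamma_1)$ et donc induit une bijection sur les quotients $\tW/\tW^{\gamma_2}\simeq \tW/\tW^{\gamma_1}$ préservant la longueur des représentants minimaux (ce qui peut se voir explicitement en inspectant la définition $w_{\mathrm{I}}$ sur les blocs d'indices concernés).
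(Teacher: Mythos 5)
Votre premier point est correct et correspond à l'argument sous-entendu par le texte (qui ne donne d'ailleurs aucune preuve détaillée de ce lemme) : tout — tores, paraboliques, longueurs, représentants minimaux, bijection de Richmond — se décompose en produit, et le lemme \ref{lem:bijection-lemme-richmond-exemple} s'applique facteur par facteur.

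En revanche, pour le second point, la justification de l'étape que vous signalez vous-même comme délicate ne tient pas. Le \og fait général \fg{} invoqué est faux : la conjugaison par un élément du groupe de Weyl ne préserve pas la longueur et n'envoie pas, en général, les représentants minimaux d'un quotient sur ceux du quotient conjugué. Ici, l'égalité $w_{\mathrm{I}}\,\tW(\gamma^{a+a',b+b'})\,w_{\mathrm{I}}^{-1}=\tW(\,\hgamma\,)$ est effectivement fausse : le représentant minimal de $w\tW^{\gamma^{a+a',b+b'}}$ est caractérisé par la croissance sur chaque bloc, celui de $u\tW^{\,\hgamma}$ par la croissance sur chaque ensemble de niveau de $\hgamma$, or $w_{\mathrm{I}}$ n'est pas croissant sur le bloc médian $[a+a'+1,b+b']$ (il envoie le morceau $[a+a'+1,a+b']$ sur $[n+a'+1,n+b']$ et le morceau $[a+b'+1,b+b']$ sur $[a+1,b]$, en renversant leur ordre relatif) ; déjà pour $n=n'=3$, $a=a'=1$, $b=b'=2$, on exhibe sans peine $w$ croissant sur les blocs dont le conjugué $w_{\mathrm{I}}ww_{\mathrm{I}}^{-1}$ n'est pas croissant sur l'ensemble de niveau $0$ de $\hgamma$. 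L'énoncé reste vrai, mais la réparation consiste à raisonner au niveau des classes, c'est-à-dire des cellules de Bruhat : l'identification pertinente est celle induite par $g\mapsto gw_{\mathrm{I}}$ (elle est $\tB$-équivariante à gauche), et elle associe à la classe de $\tw$ le couple $(I''\subset J'')=(\tw(L_{-1}),\tw(L_{-1}\cup L_0))$, où $L_{-1},L_0$ désignent les ensembles de niveau $-1$ et $0$ de $\hgamma$. On reprend alors mot pour mot la preuve du lemme \ref{lem:bijection-lemme-richmond-exemple} : puisque $\tv\in\tW^{\gamma}$ préserve $L_{-1}\cup L_0$, on a $\tu(L_{-1}\cup L_0)=\tw(L_{-1}\cup L_0)=J''$ ; et puisque $\tu$, représentant minimal de $\tw\tW^{\gamma}$, est croissant sur $L_{-1}\cup L_0$, l'ensemble $\tv(L_{-1})=\tu^{-1}(I'')$ correspond, via la bijection croissante $L_{-1}\cup L_0\simeq[b+b']$, à l'ensemble des rangs des éléments de $I''$ dans $J''$, c'est-à-dire à $I''\natural J''$. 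Aucune compatibilité de $\Ad(w_{\mathrm{I}})$ avec les représentants de longueur minimale n'est alors nécessaire, et c'est cette version qu'il faut rédiger.
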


Maintenant, nous travaillons avec les morphismes 
$$
\iota_\gamma: \Fcal_\gamma\to \tFcal_{\gamma},\quad \iota_{\,\hgamma\,}: \Fcal_{\,\hgamma\,}\to \tFcal_{\,\hgamma\,},\quad \mathrm{et}\quad
\iota_{\,\hgamma\,}^\gamma: \Fcal_{\,\hgamma\,}^\gamma\to \tFcal_{\,\hgamma\,}^\gamma.
$$
Nous utilisons les isomorphismes canoniques $\Fcal_\gamma\simeq \G(b;n)\times\G(b';n')$, $\Fcal_{\,\hgamma\,}\simeq \F(a,b;n)\times \F(a',b';n)$, 
et $\Fcal_{\,\hgamma\,}^\gamma\simeq \G(a;b)\times\G(a';b')$. On voit maintenant que l'application $g\mapsto g w_I,\GL_{n+n'}(\C)\to \GL_{n+n'}(\C)$ 
induit les isomorphismes
\begin{align*}
\tFcal_{\,\hgamma\,}&:=\GL_{n+n'}(\C)/\tP(\,\hgamma\,)\hspace{4mm}\simeq \hspace{3mm}\GL_{n+n'}(\C)/\tP(\gamma^{a+a',b+b'})
\hspace{3mm}\simeq \hspace{3mm} \F(a+a',b+b';n+n'),\\
\tFcal_{\gamma}&:=\GL_{n+n'}(\C)/\tP(\gamma)\hspace{5mm}\simeq\hspace{4mm} \GL_{n+n'}(\C)/\tP(\gamma^{b+b'})
\hspace{4mm}\simeq\hspace{4mm}  \G(b+b';n+n'),\\
\tFcal_{\,\hgamma\,}^\gamma&:=\GL_{n+n'}(\C)^\gamma/\tP(\,\hgamma\,)^\gamma\hspace{2mm}\simeq \hspace{2mm}\GL_{n+n'}(\C)^{\gamma^{b+b'}}/\tP(\gamma^{a+a',b+b'})^{\gamma^{b+b'}}\hspace{2mm}\simeq  \hspace{2mm}\G(a+a';b+b').
\end{align*}

Au travers des isomorphismes précédents, on vérifie aisément que les morphismes $\iota_\gamma$, $\iota_{\,\hgamma\,}$ et $\iota_{\,\hgamma\,}^\gamma$ sont définis de la manière suivante:
\begin{itemize}
\item $\iota_\gamma:\G(b;n)\times\G(b';n')\to \G(b+b';n+n')$ envoie $(E,E')$ sur $E\oplus E'$,
\item $\iota_{\,\hgamma\,}:\F(a,b;n)\times\F(a',b';n')\to \F(a+a',b+b';n+n')$ envoie $(E\subset F,E'\subset F')$ sur $E\oplus E'\subset F\oplus F'$,
\item $\iota_{\,\hgamma\,}^\gamma:\G(a;b)\times\G(a';b')\to \G(a+a';b+b')$ envoie $(H,H')$ sur $H\oplus H'$,
\end{itemize}

\medskip

Fixons $(I\subset J)\in \Pcal(a,b;n)$, $(I'\subset J')\in \Pcal(a',b';n')$ et $(I''\subset J'')\in \Pcal(a+a',b+b';n+n')$. On associe à cette donnée les 
classes de cohomologie suivantes: $[\Xgot_{I\subset J}]\in H^*(\F(a,b;n),\Z)$, $[\Xgot_{I'\subset J'}]\in H^*(\F(a',b';n'),\Z)$ et 
$[\Xgot_{I''\subset J''}]\in H^*(\F(a+a',b+b';n+n'),\Z)$. Grâce aux calculs effectués au lemme \ref{lem:calcul-dimensions}, on voit que 
le produit 
$$
[\Xgot_{I\subset J}]\times [\Xgot_{I'\subset J'}]\cdot \iota_{\,\hgamma\,}^*\left([\Xgot_{I''\subset J''}]\right)
$$
appartient à $H^{max}(\F(a,b;n),\Z)\times H^{max}(\F(a',b';n),\Z)$ si 
\begin{equation}\label{eq:dimension-application-2}
|\mu(J)|+|\mu(J')|+|\mu(J'')|+|\mu(I\natural J)|+|\mu(I'\natural J')|+|\mu(I''\natural J'')| =\dim_\C\F(a+a',b+b';n+n').
\end{equation}

Le prochain résultat est une application du théorème \ref{th:levi-mobile-multiplicatif}.

\begin{prop} \label{prop:levi-mobile-application-2}
Les propriétés suivantes sont équivalentes:
\begin{enumerate}\setlength{\itemsep}{8pt}
\item $[\Xgot_{I\subset J}]\times [\Xgot_{I'\subset J'}]\cdot \iota_{\,\hgamma\,}^*\left([\Xgot_{I''\subset J''}]\right)= \ell[pt]$, avec $\ell\geq 1$, et 
$(I\subset J,I'\subset J',I''\subset J'')$ est Lévi-mobile.
\item $[\Xgot_{I\subset J}]\times [\Xgot_{I'\subset J'}]\cdot \iota_{\,\hgamma\,}^*\left([\Xgot_{I''\subset J''}]\right)= \ell[pt]$, avec $\ell\geq 1$, et 
\begin{equation}\label{eq:levi-application-2}
|\mu(I)|+|\mu(I')|+|\mu(I'')|+|\mu(J)|+|\mu(J')|+|\mu(J'')|= (a+a')(n+n'-a-a')+(b+b')(n+n'-b-b').
\end{equation}
\item $(I\subset J,I'\subset J',I''\subset J'')$ satisfait les conditions suivantes:
\begin{enumerate}\setlength{\itemsep}{8pt}
\item $[\Xgot_{J}]\times [\Xgot_{J'}]\cdot \iota_{\gamma}^*\left([\Xgot_{J''}]\right)= \ell'[pt]$, avec $\ell'\geq 1$, dans $H^{max}(\G(b, n))\times H^{max}(\G(b', n'))$.
\item $[\Xgot_{I\natural J}]\times [\Xgot_{I'\natural J'}]\cdot (\iota_{\,\hgamma\,}^\gamma)^*\left([\Xgot_{I''\natural J''}]\right)= \ell''[pt]$, avec $\ell''\geq 1$, 
dans $H^{max}(\G(a,b))\times H^{max}(\G(a', b'))$.
\item $|\mu(I)|+|\mu(I')|+|\mu(I'')|= (a+a')(n+n'-a-a')$.
\end{enumerate}
\item  $(I\subset J,I'\subset J',I''\subset J'')$ satisfait les conditions suivantes:
\begin{enumerate}\setlength{\itemsep}{8pt}
\item $\cc^{(J'')^o}_{J,J'}= \ell'\geq 1$.
\item $\cc^{(I''\natural J'')^o}_{I\natural J,I'\natural J'}= \ell''\geq 1$.
\item $|\mu(I)|+|\mu(I')|+|\mu(I'')|= (a+a')(n+n'-a-a')$.
\end{enumerate}
\end{enumerate}
Lorsque ces conditions sont satisfaites on a $\ell=\ell'\ell''$.
\end{prop}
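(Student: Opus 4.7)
Le plan est d'appliquer le théorème \ref{th:levi-mobile-multiplicatif} (formule multiplicative de Ressayre-Richmond) au contexte où $U_\C = \GL_n(\C) \times \GL_{n'}(\C)$ est inclus diagonalement dans $\tU_\C = \GL_{n+n'}(\C)$, avec $\hgamma = (\gamma^{a,b}, \gamma^{a',b'})$ et $\gamma = (\gamma^b, \gamma^{b'})$ du côté $U$, et leurs $\Ad(w_{\mathrm{I}})$-conjugués $\gamma^{a+a',b+b'}$ et $\gamma^{b+b'}$ du côté $\tU$. La bijection $W(\,\hgamma\,)\simeq W(\gamma)\times W^\gamma(\,\hgamma\,)$ et son analogue pour $\tW$, explicitées au lemme \ref{lem:bijection-lemme-richmond-exemple-2}, envoient $(I\subset J, I'\subset J')$ sur le couple $((J,J'), (I\natural J, I'\natural J'))$ et $(I''\subset J'')$ sur $(J'', I''\natural J'')$. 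Ce sont précisément ces décompositions qui interviennent dans le membre de droite de l'énoncé.

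L'équivalence 1. $\Longleftrightarrow$ 2. est une reformulation immédiate : par la proposition \ref{prop:RP-exemple-fondamental-1} la Lévi-mobilité combinée à l'hypothèse $\prod [\Xgot_{I_k\subset J_k}]\cdot\iota_{\hgamma}^*[\Xgot_{I''\subset J''}] = \ell[pt]$ équivaut à adjoindre la condition numérique $(A_2)$. Cette dernière, évaluée via les formules du lemme \ref{lem:calcul-dimensions} (appliquées dans $\F(a,b;n)$, $\F(a',b';n')$ et $\F(a+a',b+b';n+n')$), prend la forme explicite (\ref{eq:levi-application-2}).

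Pour l'équivalence 1. $\Longleftrightarrow$ 3., on traduit les trois conditions du théorème \ref{th:levi-mobile-multiplicatif}. La première — Lévi-mobilité du triplet $(u,\tu)$ — correspond à la Lévi-mobilité de $(J, J', J'')$ pour l'immersion $\iota_\gamma : \G(b;n) \times \G(b';n') \croc \G(b+b';n+n')$ ; dans le cadre grassmannien la Lévi-mobilité est équivalente à la seule condition cohomologique (voir la remarque qui clôt la section \ref{sec:cadre-involution}), d'où 3(a). La seconde — Lévi-mobilité de $(v,\tv)$ — correspond à la Lévi-mobilité de $(I\natural J, I'\natural J', I''\natural J'')$ pour $\iota_{\hgamma}^\gamma : \G(a;b)\times\G(a';b') \croc \G(a+a';b+b')$, d'où 3(b). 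La troisième (l'identité de traces) devient, après substitution des formules du lemme \ref{lem:calcul-dimensions}, exactement l'égalité 3(c), les autres termes de la relation étant déjà absorbés par les conditions dimensionnelles associées à 3(a) et 3(b). La relation multiplicative $\ell = \ell'\ell''$ s'inscrit dans le cadre général du théorème \ref{th:levi-mobile-multiplicatif}. L'équivalence 3. $\Longleftrightarrow$ 4. est alors une application directe de la proposition \ref{prop:calcul-grassmanienne-a-b}, qui identifie $\ell' = \cc^{(J'')^o}_{J,J'}$ via $\iota_\gamma$ et $\ell'' = \cc^{(I''\natural J'')^o}_{I\natural J, I'\natural J'}$ via $\iota_{\hgamma}^\gamma$.

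Le principal obstacle sera la vérification détaillée de la condition 3. du théorème \ref{th:levi-mobile-multiplicatif} : celle-ci porte sur $\tngot_-^{\tw\gamma>0}$, et il faut analyser précisément les racines négatives de $\glgot_{n+n'}(\C)$ sur lesquelles $\tw\,\hgamma_o$ agit positivement, sachant que $\tw$ est obtenu par conjugaison par $w_{\mathrm{I}}$. Les compensations qui ramènent cette identité à la forme minimale 3(c) — purement en termes de $|\mu(I)|$, $|\mu(I')|$ et $|\mu(I'')|$ — ne deviennent transparentes qu'en combinant plusieurs lignes du lemme \ref{lem:calcul-dimensions} et en utilisant les égalités dimensionnelles automatiquement imposées par 3(a) et 3(b) ; c'est ce calcul algébrique qui constituera le c\oe ur technique de la preuve complète.
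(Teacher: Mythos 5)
Votre proposition est correcte et suit pour l'essentiel la même voie que le texte : équivalence 1.$\Leftrightarrow$2. via la proposition \ref{prop:RP-exemple-fondamental-1} et le lemme \ref{lem:calcul-dimensions}, passage à 3. via le théorème \ref{th:levi-mobile-multiplicatif} avec la décomposition du lemme \ref{lem:bijection-lemme-richmond-exemple-2}, puis 3.$\Leftrightarrow$4. par la proposition \ref{prop:calcul-grassmanienne-a-b}. Le calcul de traces que vous signalez comme le c\oe ur technique est exactement celui que le texte mène à bien : la condition 3. du théorème, réécrite grâce au lemme \ref{lem:calcul-dimensions} et combinée avec (\ref{eq:dimension-application-2}) et (\ref{eq:levi-application-2}), fournit les trois relations dimensionnelles correspondant à 3.~a), b), c).
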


\begin{proof}L'équivalence $1.\Longleftrightarrow 2.$ est une conséquence de la proposition \ref{prop:RP-exemple-fondamental-1}. Grâce aux calculs du lemme 
\ref{lem:calcul-dimensions}, on voit que la condition $(A_2)$ $\tr(w\,\hgamma\, \circlearrowright \ngot^{w\,\hgamma>0})=\tr(\tw\,\hgamma \circlearrowright \tngot_{-}^{\tw\,\hgamma>0})$ 
est équivalente à (\ref{eq:levi-application-2}).

L'équivalence $2.\Longleftrightarrow 3.$ est une conséquence de la proposition \ref{th:levi-mobile-multiplicatif}. Grâce aux calculs du lemme 
\ref{lem:calcul-dimensions}, on voit que la condition $\tr\left(w\hgamma \circlearrowright \ngot^{w\gamma>0}\right)=\tr\left(\tw\hgamma \circlearrowright \tngot_{-}^{\tw\gamma>0}\right)$ 
est équivalente à 
$$
|\mu(I)|+|\mu(I')|+|\mu(I'')|+|\mu(J)|+|\mu(J')|+|\mu(J'')| -|\mu(I\natural J)|-|\mu(I'\natural J')|-|\mu(I''\natural J'')|=(n+n'-b-b')(a+a'+b+b').
$$
Cette dernière relation, combinée avec (\ref{eq:dimension-application-2}) et (\ref{eq:levi-application-2}), donne
\begin{enumerate}
\item[i)] $|\mu(J)|+|\mu(J')|+|\mu(J'')|=(b+b')(n+n'-b-b')$,
\item[ii)] $|\mu(I\natural J)|+|\mu(I'\natural J')|+|\mu(I''\natural J'')|=(a+a')(b+b'-a-a')$,
\item[iii)] $|\mu(I)|+|\mu(I')|+|\mu(I'')|= (a+a')(n+n'-a-a')$.
\end{enumerate}
On remarque que i) et ii) sont les relations de dimensions associées aux identités 3. a) et 3. b). Finalement, on voit que les relations i), ii) et iii) impliquent 
(\ref{eq:dimension-application-2}) et (\ref{eq:levi-application-2}). L'équivalence $2.\Longleftrightarrow 3.$ est donc démontrée.

L'équivalence $3.\Longleftrightarrow 4.$ découle de la proposition \ref{prop:calcul-grassmanienne-a-b}.
\end{proof}

%%%%%%%%%%%%%%%%%%%%%%%%%%%%%%%%%%%%%%%%%%%%%%%%%%%%%%
\subsubsection{Second cadre }\label{sec:application-multiplicatif-2-2}
%%%%%%%%%%%%%%%%%%%%%%%%%%%%%%%%%%%%%%%%%%%%%%%%%%%%%%

Soient $1\leq a<n$ et $1\leq a'<n'$. Dans cette deuxième partie, on travaille avec les vecteurs de l'algèbre de Lie $\tgot\simeq \R^n\times\R^{n'}$ suivants :
$$
\hgamma:=(\underbrace{-1,\ldots,-1}_{a\ termes},\underbrace{0,\ldots,0}_{n-a\ termes},\underbrace{0,\ldots,0}_{n'-a'\ termes},\underbrace{1,\ldots,1}_{a'\ termes}) \quad 
\mathrm{et}\quad \gamma:=(\underbrace{0,\ldots\ldots,0}_{n+n'-a'\ termes},\underbrace{1,\ldots,1}_{a'\ termes}).
$$

L'application $(w_1,w_2)\mapsto (w_1([a]),w_2([n'-a']))$ permet d'identifier $W(\,\hgamma\,)$, $W(\gamma)$ et $W^\gamma(\,\hgamma\,)$ 
respectivement à $\Pcal(a,n)\times\Pcal(n'-a',n')$,  $\Pcal(a,n)$ et $\Pcal(n'-a',n')$. Grâce au lemme \ref{lem:bijection-lemme-richmond-exemple}, 
nous voyons que la bijection $\tW(\,\hgamma\,)\simeq \tW(\gamma) \times \tW^\gamma(\,\hgamma\,)$ 
correspond à l'application $\Pcal(a,n+n'-a';n+n')\longrightarrow \Pcal(n+n'-a';n+n')\times  \Pcal(a;n+n'-a')$ qui envoie $(J\subset L)$ sur $(L,J\natural L)$.

Considérons les morphismes $\iota_\gamma: \Fcal_\gamma\to \tFcal_{\gamma}$,\ $\iota_{\,\hgamma\,}: \Fcal_{\,\hgamma\,}\to \tFcal_{\,\hgamma\,}$\ et \ 
$\iota_{\,\hgamma\,}^\gamma: \Fcal_{\,\hgamma\,}^\gamma\to \tFcal_{\,\hgamma\,}^\gamma$. \`A travers les isomorphismes canoniques 
\begin{align*}
    \Fcal_\gamma&\simeq \G(n'-a';n'),\hspace{20mm}\Fcal_{\,\hgamma\,}\simeq \G(a;n)\times \G(n'-a';n'),\qquad  
\Fcal_{\,\hgamma\,}^\gamma\simeq \G(a;n),   \\
 \tFcal_\gamma&\simeq \G(n+n'-a';n+n'),\qquad \tFcal_{\,\hgamma\,}\simeq \F(a,n+n'-a';n+n'),\qquad  
\Fcal_{\,\hgamma\,}^\gamma\simeq \G(a;n+n'-a'),
    \end{align*}
les morphismes $\iota_\gamma$, $\iota_{\,\hgamma\,}$ et $\iota_{\,\hgamma\,}^\gamma$ sont définis de la manière suivante:
\begin{itemize}
\item $\iota_\gamma:\G(n'-a';n')\to \G(n+n'-a';n+n')$ envoie $E$ sur $\C^n\oplus E$,
\item $\iota_{\,\hgamma\,}:\G(a;n)\times \G(n'-a';n')\to \F(a,n+n'-a';n+n')$ envoie $(F,E)$ sur $F\subset \C^n\oplus E$,
\item $\iota_{\,\hgamma\,}^\gamma:\G(a;n)\to \G(a;n+n'-a')$ envoie $F$ sur $F$.
\end{itemize}

\medskip

Fixons $I\in \Pcal(a;n)$, $I'\in \Pcal(n-a';n')$ et $(J\subset L)\in \Pcal(a,n+n'-a';n+n')$. On associe à cette donnée les 
classes de cohomologie suivantes: $[\Xgot_{I}]\in H^*(\G(a;n))$, $[\Xgot_{I'}]\in H^*(\F(n'-a';n'))$ et 
$[\Xgot_{J\subset J}]\in H^*(\F(a,n+n'-a';n+n'))$. Grâce aux calculs effectués au lemme \ref{lem:calcul-dimensions}, on voit que 
le produit 
$$
[\Xgot_{I}]\times [\Xgot_{I'}]\cdot \iota_{\,\hgamma\,}^*\left([\Xgot_{J\subset L}]\right)
$$
appartient à $H^{max}(\G(a;n))\times H^{max}(\G(n'-a';n'))$ ssi $|\mu(I)|+|\mu(I')|+|\mu(L)|+|\mu(J\natural L)|$ $=$ \break $\dim_\C \F(a,n+n'-a';n+n')$.

Le prochain résultat est une application du théorème \ref{th:levi-mobile-multiplicatif}.

\begin{prop} \label{prop:levi-mobile-application-3}
Soient $I\in \Pcal(a;n)$, $I'\in \Pcal(n'-a';n')$ et $(J\subset L)\in \Pcal(a,n+n'-a';n+n')$. 
Les propriétés suivantes sont équivalentes:
\begin{enumerate}\setlength{\itemsep}{8pt}
\item $[\Xgot_{I}]\times [\Xgot_{I'}]\cdot \iota_{\,\hgamma\,}^*\left([\Xgot_{J\subset L}]\right)= \ell[pt]$, avec $\ell\geq 1$, et 
$(I,I',J\subset L)$ est Lévi-mobile.
\item $[\Xgot_{I}]\times [\Xgot_{I'}]\cdot \iota_{\,\hgamma\,}^*\left([\Xgot_{J\subset L}]\right)= \ell[pt]$, avec $\ell\geq 1$, et 
\begin{equation}\label{eq:levi-application-3}
|\mu(I')|+|\mu(L)|+|\mu(J)|-|\mu(J\natural L)|= a'(n+n'-a') + aa'.
\end{equation}
\item $(I,I',J\subset L)$ satisfait les conditions suivantes:
\begin{enumerate}\setlength{\itemsep}{8pt}
\item $[\Xgot_{I'}]\cdot \iota_{\gamma}^*\left([\Xgot_{L}]\right)= \ell'[pt]$, avec $\ell'\geq 1$, dans $H^{max}(\G(n'-a', n'))$.
\item $[\Xgot_{I}]\cdot (\iota_{\,\hgamma\,}^\gamma)^*\left( [\Xgot_{J\natural L}]\right)= \ell''[pt]$, avec $\ell''\geq 1$, 
dans $H^{max}(\G(a;n))$.
\item $|\mu(J)|-|\mu(J\natural L)|= aa'$.
\end{enumerate}
\item  $(I,I',J\subset L)$ satisfait les conditions suivantes:
\begin{enumerate}\setlength{\itemsep}{8pt}
\item $L=\{n'+1-i',\ i'\in I'\}\cup \{k\in [n+n'], \ k\geq n'+1\}$.
\item $J\natural L=\{ n+n'+1-a' -i, i\in I\}$.
\item $|\mu(J)|-|\mu(J\natural L)|= aa'$.
\end{enumerate}
\end{enumerate}
Lorsque ces conditions sont satisfaites, on a $\ell=\ell'=\ell''=1$.
\end{prop}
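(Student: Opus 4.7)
La démonstration suit la même structure que celle de la proposition \ref{prop:levi-mobile-application-2}, en établissant la chaîne d'équivalences $1\Leftrightarrow 2\Leftrightarrow 3\Leftrightarrow 4$. L'équivalence $1\Leftrightarrow 2$ est une application directe de la proposition \ref{prop:RP-exemple-fondamental-1} au triplet $(\hgamma,w,\tw)$, vu dans la variété $\tU\times U$-hamiltonienne $N=\tU_\C$ de l'exemple \ref{ex:U-tilde-C-Kahler}: la condition $(B'_1)$ fournit la non-nullité en haut degré avec $\ell\geq 1$, et la condition $(A_2)$ se traduit, à l'aide du lemme \ref{lem:calcul-dimensions} (et de l'identité $\tr(\tw\hgamma\circlearrowright\tngot_-^{\tw\hgamma>0})=\tr(\hgamma\circlearrowright\ttgot_\C^{\hgamma>0})-\tr(\tw\hgamma\circlearrowright\tngot^{\tw\hgamma>0})$ obtenue par invariance), précisément en l'identité (\ref{eq:levi-application-3}).

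Pour $2\Leftrightarrow 3$, on applique le théorème \ref{th:levi-mobile-multiplicatif} au couple $(\hgamma,\gamma)$, qui vérifie bien $\tP(\hgamma)\subset\tP(\gamma)$. Les bijections décrites au début de la section \ref{sec:application-multiplicatif-2-2} identifient $\tw\leftrightarrow (J\subset L)$ au couple $(\tu,\tv)$ avec $\tu\leftrightarrow L$ et $\tv\leftrightarrow J\natural L$; du côté $U$, la décomposition $w=uv$ fournit $u\leftrightarrow I'$ et $v\leftrightarrow I$. Les trois sous-conditions du théorème (Lévi-mobilité sur le quotient, Lévi-mobilité sur la fibre, condition de trace) deviennent alors exactement 3(a), 3(b), et une condition numérique qui, combinée aux identités dimensionnelles imposées par la non-nullité des produits en 3(a) et 3(b) (à savoir $|\mu(I')|+|\mu(L)|=a'(n+n'-a')$ et $|\mu(I)|+|\mu(J\natural L)|=a(n+n'-a-a')$), est équivalente à 3(c). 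La formule multiplicative $\ell=\ell'\ell''$ est fournie par le théorème.

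L'équivalence $3\Leftrightarrow 4$ relève d'un calcul de Schubert explicite. Pour 3(a), l'observation clef est que l'application de somme directe $\iota_\gamma:\G(n'-a',n')\to\G(n+n'-a',n+n')$, $F\mapsto\C^n\oplus F$, est un plongement fermé qui envoie bijectivement la cellule de Bruhat $\Xgot_{I'}^o$ sur $\Xgot_{[n]\cup(n+I')}^o$: en plongeant $\C^n$ comme les $n$ premières coordonnées de $\C^{n+n'}$, les sauts du drapeau standard sur $\C^n\oplus F$ sont précisément $[n]\cup(n+I'(F))$, et réciproquement tout $V$ contenant $\C^n$ se reconstitue à partir de $F=V\cap \C_{\mathrm{last}}$. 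On en déduit l'égalité $\iota_{\gamma,*}[\Xgot_{I'}]=[\Xgot_{[n]\cup(n+I')}]$, et la formule de projection ramène 3(a) à l'équation $[\Xgot_L]\cdot[\Xgot_{[n]\cup(n+I')}]=\ell'[pt]$ dans $\G(n+n'-a',n+n')$, ce qui force $L=([n]\cup(n+I'))^o=\{n'+1-i':i'\in I'\}\cup\{n'+1,\ldots,n+n'\}$ et $\ell'=1$ par dualité de Poincaré. Pour 3(b), l'application $\iota_{\,\hgamma\,}^\gamma:\G(a,n)\to\G(a,n+n'-a')$ est une inclusion canonique de grassmanniennes, et la formule rappelée après le théorème \ref{theo:phi-r} montre que $(\iota_{\,\hgamma\,}^\gamma)^*[\Xgot_{J\natural L}]$ est non nul exactement lorsque $(J\natural L)\cap[n'-a']=\emptyset$, et vaut alors $[\Xgot_{\{k-(n'-a'):k\in J\natural L\}}]$; l'appariement avec $[\Xgot_I]$ donnant $[pt]$ impose alors $J\natural L=\{n+n'+1-a'-i:i\in I\}$ et $\ell''=1$.

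L'obstacle principal est le calcul de $\iota_{\gamma,*}[\Xgot_{I'}]$, car $\iota_\gamma$ n'est pas une inclusion standard de grassmanniennes au sens de la section \ref{sec:calcul-schubert-grass}, mais une application de type somme directe. La subtilité est que l'image globale $\iota_\gamma(\G(n'-a',n'))=\{V\,:\, V\supset \C^n\}$ n'est pas elle-même une unique sous-variété de Schubert dès que $n'>2a'$, mais l'analyse fine des sauts du drapeau standard de $\C^{n+n'}$ montre néanmoins que l'image de chaque cellule de Bruhat est une unique cellule de Bruhat, ce qui suffit à obtenir la formule voulue pour le poussé en avant et, par suite, les formes explicites 4(a) et 4(b) avec multiplicité $1$.
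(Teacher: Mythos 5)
Votre démonstration est correcte et suit pour l'essentiel la démarche du texte, qui renvoie mot pour mot à la preuve de la proposition \ref{prop:levi-mobile-application-2} : mêmes étapes $1\Leftrightarrow 2$ (proposition \ref{prop:RP-exemple-fondamental-1} et lemme \ref{lem:calcul-dimensions}), $2\Leftrightarrow 3$ (théorème \ref{th:levi-mobile-multiplicatif} avec les identités dimensionnelles $|\mu(I')|+|\mu(L)|=a'(n+n'-a')$ et $|\mu(I)|+|\mu(J\natural L)|=a(n+n'-a-a')$), puis traduction explicite $3\Leftrightarrow 4$ par calcul de Schubert, où vous remplacez simplement l'appel à la proposition \ref{prop:calcul-grassmanienne-a-b} (avec un facteur réduit à un point) et à la formule pour $\iota_m^*$ par un argument de poussé en avant et de dualité de Poincaré, ce qui revient au même et donne bien $\ell=\ell'=\ell''=1$. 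Seule réserve, sans incidence sur l'argument : contrairement à ce qu'affirme votre dernier paragraphe, l'image $\iota_\gamma\big(\G(n'-a',n')\big)=\{V,\ \C^n\subset V\}$ est bel et bien une unique variété de Schubert (celle d'ensemble de sauts $[n]\cup\{n+a'+1,\ldots,n+n'\}$, définie par la seule condition $\dim(V\cap\C^n)\geq n$) ; votre preuve n'utilise de toute façon que l'énoncé cellule par cellule, qui est exact.
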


\begin{proof}La preuve, qui est identique à celle de la proposition \ref{prop:levi-mobile-application-2}, est laissée à la discrétion du lecteur.
\end{proof}

\begin{coro}\label{coro:pour-B-n}
Considérons le cas particulier où $a=a'=1$ et $n=n'$. Choisissons $I=\{i\}\subset [n]$, $I'=\{i'\}^c\subset [n]$, $J=\{j\}\subset [2n]$ et $L=\{\ell\}^c\subset [2n]$. Alors 
$[\Xgot_{\{i\}}]\times [\Xgot_{\{i'\}^c}]\cdot \iota_{\,\hgamma\,}^*\left([\Xgot_{\{j\}\subset \{\ell\}^c}]\right)= k[pt]$, avec $k\geq 1$ et 
$(\{i\},\{i'\}^c,\{j\}\subset \{\ell\}^c)$ est Lévi-mobile 
si et seulement si $j+i= 2n+1$, et $\ell + i'=n+1$. Lorsque ces conditions sont satisfaites, on a $k=1$.
\end{coro}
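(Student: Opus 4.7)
Le plan est d'appliquer directement la proposition \ref{prop:levi-mobile-application-3} dans le cas particulier où $a=a'=1$ et $n=n'$, et de traduire les conditions 4(a), 4(b) et 4(c) de cette proposition en les deux égalités voulues $j+i=2n+1$ et $\ell+i'=n+1$. Les quatre caractérisations équivalentes de la proposition ramenant la question à des identités purement combinatoires sur les sous-ensembles $I,I',J,L$, il n'y a plus aucun travail géométrique à faire : tout le contenu substantiel est encapsulé dans cette spécialisation.

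Pour la condition 4(a) : avec $I'=\{i'\}^c\subset [n]$, la formule $L=\{n+1-x, x\in I'\}\cup [n+1,2n]$ se simplifie en $L=[2n]\setminus\{n+1-i'\}$ ; comme on a également $L=\{\ell\}^c$, cela impose $\ell+i'=n+1$. Pour la condition 4(b), on doit avoir $J\natural L=\{2n-i\}$ : un calcul direct de $J\natural L=\{\sharp\{y\in L,\, y\leq j\}\}$ donne $\{j\}$ lorsque $\ell>j$ et $\{j-1\}$ lorsque $\ell<j$. La condition 4(c), qui s'écrit $|\mu(J)|-|\mu(J\natural L)|=1$, élimine alors le cas $\ell>j$ (qui donne différence nulle) et force $j-1=2n-i$, soit $j+i=2n+1$. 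Réciproquement, à partir des deux égalités $j+i=2n+1$ et $\ell+i'=n+1$, on a $j\in[n+1,2n]$ et $\ell\in[1,n]$, donc automatiquement $\ell<j$, ce qui assure que $(J\subset L)\in \Pcal(1,2n-1;2n)$ est bien défini, et les conditions 4(a), 4(b), 4(c) sont toutes vérifiées.

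Enfin, pour obtenir $k=1$, on utilise la formule multiplicative $k=\ell'\ell''$ de la proposition \ref{prop:levi-mobile-application-3}. Les facteurs $\ell'$ et $\ell''$ sont ici des produits de classes de Schubert dans les grassmanniennes à un étage $\G(n-1,n)$ et $\G(1,n)$, qui sont toutes deux isomorphes à l'espace projectif $\Pbb^{n-1}$ ; dans l'anneau de cohomologie d'un espace projectif, un produit non nul de deux classes de Schubert de codimensions complémentaires vaut automatiquement $1\cdot[pt]$, donc $\ell'=\ell''=1$ et par conséquent $k=1$.

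Le seul point délicat est de nature purement notationnelle : il faut expliciter l'opération $\natural$ en termes de $j$ et $\ell$ et distinguer proprement les deux cas $\ell<j$ et $\ell>j$ pour interpréter la condition numérique 4(c). Une fois cette traduction effectuée, les deux implications de l'équivalence se dégagent immédiatement.
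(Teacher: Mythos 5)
Votre démonstration est correcte et suit exactement la voie prévue par le texte : le corollaire est une spécialisation directe de la proposition \ref{prop:levi-mobile-application-3} (équivalence $1\Leftrightarrow 4$), et votre traduction combinatoire des conditions 4(a), 4(b), 4(c) en $\ell+i'=n+1$ et $j+i=2n+1$, avec la discussion des cas $\ell<j$ et $\ell>j$, est juste. Notez seulement que l'égalité $k=1$ est déjà affirmée dans l'énoncé de la proposition ($\ell=\ell'=\ell''=1$), de sorte que votre argument via la cohomologie de l'espace projectif, bien que correct, n'est pas nécessaire.
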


%%%%%%%%%%%%%%%%%%%%%%%%%%%%%%%%%%%%%%%%%%%%
%%%%%%%%%%%%%%%%%%%%%%%%%%%%%%%%%%%%%%%%%%%%
%%%%%%%%%%%%%%%%%%%%%%%%%%%%%%%%%%%%%%%%%%%%
\chapter{Le c\^one $\LR(U,\tU)$}\label{sec:LR-U-tilde-U}
%%%%%%%%%%%%%%%%%%%%%%%%%%%%%%%%%%%%%%%%%%%%
%%%%%%%%%%%%%%%%%%%%%%%%%%%%%%%%%%%%%%%%%%%%
%%%%%%%%%%%%%%%%%%%%%%%%%%%%%%%%%%%%%%%%%%%%

Soient $\iota:U\croc \tU$ deux groupes de Lie compacts connexes. Nous choisissons un produit scalaire invariant $(-,-)$ sur l'algèbre de Lie $\tugot$ de $\tU$, 
et nous notons par $\pi:\tugot\to\ugot$ la projection orthogonale.

Sélectionnons des tores maximaux $T$ dans $U$ et $\tT$ dans $\tU$ tels que $T\subset \tT$, et des 
chambres de Weyl $\tgot_+\subset\tgot$ et $\ttgot_+\subset\ttgot$, où $\tgot$ et $\ttgot$ désignent les algèbres de Lie de $T$, resp. $\tT$.
 
Dans le prochain chapitre, nous rappelons la description du cône suivant donnée dans \cite{Berenstein-Sjamaar,Ressayre10}:
$$
\LR(U,\tU)=\left\{(\txi,\xi)\in \ttgot_+\times\tgot_+,\ U\xi\subset \pi\big(\,\tU\txi\,\big)\right\}.
$$
Nous verrons que celle ci est une conséquence du théorème \ref{th:infinitesimal-ressayre-pairs}. 

%%%%%%%%%%%%%%%%%%%%%%%%%%%%%%%%%%%%%%%%%%%%%%%%%%%%%%
%%%%%%%%%%%%%%%%%%%%%%%%%%%%%%%%%%%%%%%%%%%%%%%%%%%%%%
\section{Description g\'en\'erale}
%%%%%%%%%%%%%%%%%%%%%%%%%%%%%%%%%%%%%%%%%%%%%%%%%%%%%%
%%%%%%%%%%%%%%%%%%%%%%%%%%%%%%%%%%%%%%%%%%%%%%%%%%%%%%

Considérons le réseau $\wedge:=\frac{1}{2\pi}\ker(\exp : \tgot\to T)$ et les groupes de Weyl $\tW=N_{\tU}(\tT)/\tT$ et $W=N_{U}(T)/T$. 
Nous désignons par $w_o\in W$ l'élément le plus long. Rappelons qu'n vecteur $\gamma\in \tgot$ est dit {\em rationnel} 
s'il appartient au $\Q$-espace vectoriel $\tgot_\Q$ engendré par $\wedge$. Nous allons voir que le cône 
$\LR(U,\tU)$ est complètement décrit par des inégalités de la forme 
$$
(\tilde{\xi},\tilde{w}\gamma)\geq (\xi,w_ow\gamma)
$$
avec $\gamma$ rationnel anti-dominant et $(w,\tw)\in W\times \tW$.

%%%%%%%%%%%%%%%%%%%%%%%%%%%
\subsubsection{Éléments admissibles}
%%%%%%%%%%%%%%%%%%%%%%%%%%%

Soit $\Rgot(\tugot/\ugot)\subset \tgot^*$ l'ensemble des poids relatifs à l'action de $T$ sur $(\tugot/\ugot)\otimes\C$. Si $\gamma\in\tgot$, nous notons par 
$\Rgot(\tugot/\ugot)\cap \gamma^\perp$ le sous-ensemble des poids s'annulant contre $\gamma$.

\begin{definition}\label{admissible-exemple-1} Un élément rationnel $\gamma\in \tgot$ est dit {\em admissible} lorsque
\begin{equation}\label{eq:condition-gamma}
\vect\big(\Rgot(\tugot/\ugot)\cap \gamma^\perp\big)=\vect\big(\Rgot(\tugot/\ugot)\big)\cap \gamma^\perp.
\end{equation}
\end{definition}

\medskip

\begin{lem}\label{lem:R-tugot-ugot}
L'ensemble $\Rgot(\tugot/\ugot)$ engendre $\tgot^*$ si et seulement si aucun idéal non-nul de $\tugot$ n'est contenu dans $\ugot$. Dans ce cas, $\gamma$ est admissible si 
l'hyperplan $\gamma^\perp\subset\tgot^*$ est engendré par des éléments de $\Rgot(\tugot/\ugot)$.
\end{lem}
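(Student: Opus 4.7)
Le plan est de ramener la première équivalence à l'identité
\begin{equation*}
\Ncal := \{\gamma\in\tgot:[\gamma,\tugot]\subset\ugot\} \ = \ \mathfrak{I}_{\max}\cap\tgot,
\end{equation*}
où $\mathfrak{I}_{\max}$ désigne le plus grand idéal de $\tugot$ contenu dans $\ugot$ (qui existe puisque la somme de deux idéaux de $\tugot$ contenus dans $\ugot$ est encore un tel idéal). En effet, la définition des poids montre directement que $\Ncal$ est l'orthogonal de $\vect(\Rgot(\tugot/\ugot))$ dans $\tgot$, si bien que l'engendrement de $\tgot^*$ par $\Rgot(\tugot/\ugot)$ équivaut à $\Ncal = 0$. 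Il restera alors à établir l'égalité $\Ncal = \mathfrak{I}_{\max}\cap\tgot$ et à vérifier que $\mathfrak{I}_{\max}$ est nul si et seulement si son intersection avec $\tgot$ l'est.

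L'inclusion facile $\mathfrak{I}_{\max}\cap\tgot\subset\Ncal$ vient de ce que $[\gamma,\tugot]\subset\mathfrak{I}_{\max}\subset\ugot$ pour tout $\gamma\in\mathfrak{I}_{\max}\cap\tgot$. Pour l'équivalence $\mathfrak{I}_{\max} = 0\Leftrightarrow\mathfrak{I}_{\max}\cap\tgot = 0$, je m'appuierais sur la décomposition $\tugot = \zgot\oplus\tugot_1\oplus\cdots\oplus\tugot_k$ en centre et idéaux simples. Tout idéal non nul $\mathfrak{I}_{\max}$ s'écrit alors $\zgot'\oplus\bigoplus_{j\in J}\tugot_j$ avec $\zgot'\subset\zgot$ : si $\zgot'\neq 0$, alors $\zgot'\subset\zgot\cap\ugot$ est contenu dans le centre de $\ugot$, lui-même contenu dans le tore maximal $\tgot$ ; si un $\tugot_j$ est inclus dans $\ugot$, alors $\tugot_j$ est aussi un idéal de $\ugot$ (car $[\tugot_j,\ugot]\subset\tugot_j\cap\ugot = \tugot_j$) et sa sous-algèbre de Cartan $\tgot\cap\tugot_j$ est non triviale.

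L'étape principale, et celle que je m'attends à être la plus délicate, est l'inclusion réciproque $\Ncal\subset\mathfrak{I}_{\max}$. Partant de $\gamma\in\tgot$ non nul tel que $[\gamma,\tugot]\subset\ugot$, j'écrirais la décomposition orthogonale $\gamma = \gamma_0 + \sum_j\gamma_j$ avec $\gamma_0\in\zgot$ et $\gamma_j\in\tugot_j$ ; comme $\tgot\subset\ttgot$, chaque $\gamma_j$ appartient à la sous-algèbre de Cartan $\ttgot\cap\tugot_j$ de $\tugot_j$. L'orthogonalité des facteurs simples et leur commutation au centre donnent $[\gamma,Y] = [\gamma_j,Y]\in\tugot_j$ pour $Y\in\tugot_j$, d'où $[\gamma_j,\tugot_j]\subset\ugot\cap\tugot_j$ pour chaque $j$. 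Si tous les $\gamma_j$ s'annulent, alors $\gamma = \gamma_0\in\zgot\cap\tgot$ est non nul, et $\R\gamma$ est un idéal central non trivial de $\tugot$ contenu dans $\ugot$. Sinon, je fixerais $j$ avec $\gamma_j\neq 0$ et travaillerais dans le facteur simple $\tugot_j$ : la décomposition en espaces radiciels de $(\tugot_j)_\C$ par rapport à la Cartan $\ttgot\cap\tugot_j$ montre que chaque espace radiciel associé à une racine $\alpha$ vérifiant $\alpha(\gamma_j)\neq 0$ est contenu dans $(\ugot\cap\tugot_j)_\C$ ; l'obstacle technique sera alors de vérifier, en exploitant l'irréductibilité du système de racines d'un $\tugot_j$ simple, que la sous-algèbre de $(\tugot_j)_\C$ engendrée par ces espaces radiciels et leurs crochets itérés coïncide avec $(\tugot_j)_\C$ tout entier, ce qui forcera $\tugot_j\subset\ugot$. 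Dans les deux cas, $\gamma$ appartient à un idéal non nul de $\tugot$ contenu dans $\ugot$, donc à $\mathfrak{I}_{\max}$.

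La seconde assertion du lemme sera alors tautologique : sous l'hypothèse que $\Rgot(\tugot/\ugot)$ engendre $\tgot^*$, la condition d'admissibilité~(\ref{eq:condition-gamma}) se réduit à $\vect(\Rgot(\tugot/\ugot)\cap\gamma^\perp) = \gamma^\perp$, ce qui est immédiat dès que l'hyperplan $\gamma^\perp$ est engendré par des éléments de $\Rgot(\tugot/\ugot)$, ces éléments appartenant alors à $\Rgot(\tugot/\ugot)\cap\gamma^\perp$.
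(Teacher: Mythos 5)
Votre plan est correct dans ses grandes lignes, mais il suit une route réellement différente de celle du texte, qui est nettement plus courte. Le texte ne travaille pas dans $\tgot$ mais dans $\ugot$ tout entier : il pose $\bgot:=\{X\in\ugot,\ [X,\tugot]\subset\ugot\}$, qui est un idéal de $\ugot$, puis montre en une ligne, grâce à l'invariance du produit scalaire et à $[\bgot,\bgot^\perp]=0$, que $[\bgot,\tugot]$ est orthogonal à $\bgot^\perp$, donc que $[\bgot,\tugot]\subset\bgot$ : ainsi $\bgot$ est le plus grand idéal de $\tugot$ contenu dans $\ugot$ (votre $\mathfrak{I}_{\max}$), et votre $\Ncal$ n'est autre que $\bgot\cap\tgot$. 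L'énoncé suit alors de la chaîne d'équivalences \og aucun idéal non nul $\Leftrightarrow\ \bgot=0\ \Leftrightarrow\ \bgot\cap\tgot=0\ \Leftrightarrow\ \Rgot(\tugot/\ugot)$ engendre $\tgot^*$ \fg, sans aucune décomposition en facteurs simples ni combinatoire de racines. Votre variante, plus structurelle, identifie explicitement quels facteurs simples sont absorbés par $\ugot$, mais au prix du lemme de génération que vous signalez vous-même comme l'obstacle principal.

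Cet obstacle n'est pas fatal, mais tel quel votre texte le laisse ouvert ; voici comment le fermer. Posez $\ggot=(\tugot_j)_\C$, $\hgot$ la sous-algèbre de Cartan, $\lgot=\ggot^{\gamma_j}=\hgot\oplus\bigoplus_{\alpha(\gamma_j)=0}\ggot_\alpha$ et $\kgot$ la sous-algèbre engendrée par les $\ggot_\alpha$ avec $\alpha(\gamma_j)\neq0$ (non nulle car $\gamma_j\neq0$). Pour $m\in\lgot$, on a $[m,\ggot_\alpha]\subset\bigoplus_{\beta(\gamma_j)=\alpha(\gamma_j)}\ggot_\beta$, encore contenu dans l'espace des générateurs ; en appliquant $\ad(m)$ aux crochets itérés de générateurs, on voit que $\kgot$ est stable par $\ad(\lgot)$. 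Comme $\kgot$ est aussi stable par $\ad$ de ses générateurs et que $\lgot+\sum_{\alpha(\gamma_j)\neq0}\ggot_\alpha=\ggot$, $\kgot$ est un idéal non nul de l'algèbre simple $\ggot$, donc $\kgot=\ggot$ : la simplicité fait tout le travail, sans chaînes de racines. On en déduit $(\tugot_j)_\C\subset\ugot_\C$, puis $\tugot_j\subset\ugot_\C\cap\tugot=\ugot$. Deux retouches mineures pour finir : dans votre cas 2, pour placer $\gamma$ lui-même dans $\mathfrak{I}_{\max}$ il faut traiter \emph{tous} les indices $j$ tels que $\gamma_j\neq0$, puis observer que $\gamma_0=\gamma-\sum_j\gamma_j$ appartient à $\zgot\cap\ugot$ ; mais pour le lemme, l'existence d'un seul $\tugot_j\subset\ugot$ (ou de l'idéal central $\R\gamma_0$) suffit, puisqu'on ne cherche qu'un idéal non nul de $\tugot$ contenu dans $\ugot$. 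Votre dernière phrase sur l'admissibilité est correcte (le texte ne la démontre d'ailleurs pas), en gardant à l'esprit que la définition exige aussi la rationalité de $\gamma$.
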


\begin{proof} L'ensemble $\bgot:=\{X\in \ugot, [X,\tugot]\subset\ugot\}$ est un idéal de $\ugot$, donc on a la décomposition orthogonale $\ugot=\bgot\oplus \bgot^\perp$ où 
$\bgot^\perp$ est aussi un idéal de $\ugot$. Comme $[\bgot,\bgot^\perp]=0$, on remarque que $([X,Y],Z)=0$ pour tout $(X,Y,Z)\in \bgot\times\tugot\times\bgot^\perp$. Ainsi 
$[\bgot,\tugot]\subset\bgot$, autrement dit $\bgot$ est un idéal de $\tugot$ contenu dans $\ugot$. D'autre part, si $\cgot\subset \ugot$ est un idéal de $\tugot$, 
on a par définition $\cgot\subset \bgot$. On voit maintenant que les énoncés suivants sont équivalents:
\begin{itemize}
\item Aucun idéal non-nul de $\tugot$ n'est contenu dans $\ugot$. 
\item L'idéal $\bgot$ est réduit à $\{0\}$.
\item $\bgot\cap\tgot=\{0\}$.
\item $\Rgot(\tugot/\ugot)$ engendre $\tgot^*$.
\end{itemize}
\end{proof}

Faisons le lien entre les notions {\em d'admissibilité} introduites dans les définitions \ref{def:admissible-U-action} et \ref{admissible-exemple-1}. 
Pour cela, on reprend le cas de la vari\'et\'e de K\"{a}hler $\tU\times U$-Hamiltonienne $N:=\tU_\C$ (voir l'exemple \ref{ex:U-tilde-C-Kahler}). 
Pour $(\tgamma,\gamma)\in\ttgot\times \tgot$, notons $N^{(\tgamma,\gamma)}$ la sous-variété des points fixé par l'action infinitésimale de $(\tgamma,\gamma)$. 
Une preuve des résultats suivants est donnée dans \cite[Section 6.1]{pep-ressayre}.

\begin{lem}\label{lem:point-fixe-cotangent}
Soit $(\tgamma,\gamma)\in\ttgot\times \tgot$.
\begin{enumerate}
\item $N^{(\tgamma,\gamma)}\neq\emptyset$ si et seulement si il existe $\tw\in\tW$ tel que $\tgamma=\tw\gamma$.
\item Pour tout $\tw\in\tW$, le couple $(\tw\gamma,\gamma)$ est admissible pour l'action $\tU\times U\circlearrowright N$ 
si et seulement si $\gamma$ est rationnel et satisfait la relation (\ref{eq:condition-gamma}).
\end{enumerate}
\end{lem}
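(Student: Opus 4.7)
Pour le premier point, on exploite l'action infinit\'esimale de $(\tgamma,\gamma)\in\ttgot\oplus\tgot$ sur $N=\tU_\C$. Apr\`es translation \`a gauche par $a^{-1}$, le champ de vecteurs associ\'e en $a$ s'\'ecrit $\mathrm{Ad}(a^{-1})\tgamma-\gamma\in\tugot_\C$. Par cons\'equent $a\in N^{(\tgamma,\gamma)}$ si et seulement si $\mathrm{Ad}(a)\gamma=\tgamma$, c'est-\`a-dire que $\tgamma$ et $\gamma$ sont conjugu\'es sous $\tU_\C$. Comme $\tgamma,\gamma\in\ttgot$, un r\'esultat classique (deux \'el\'ements d'une sous-alg\`ebre de Cartan conjugu\'es sous le groupe ambiant le sont sous le groupe de Weyl, via $N_{\tU_\C}(\tT_\C)=\tT_\C\cdot N_\tU(\tT)$) permet de conclure imm\'ediatement dans les deux sens.

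Pour le second point, supposons $\tgamma=\tw\gamma$ et fixons un rel\`evement $\tilde{n}\in N_\tU(\tT)$ de $\tw$, fournissant le point distingu\'e $a_0=\tilde{n}\in N^{(\tw\gamma,\gamma)}$. La rationalit\'e de $(\tw\gamma,\gamma)$ est clairement \'equivalente \`a celle de $\gamma$, puisque $\tW$ pr\'eserve le r\'eseau $\wedge\subset\ttgot$. Il reste donc \`a montrer que la condition $(\ref{eq:condition-gamma})$ \'equivaut \`a
$$
\Delta:=\dim_{\tugot\oplus\ugot}(N^{(\tw\gamma,\gamma)})-\dim_{\tugot\oplus\ugot}(N)\ \in\ \{0,1\}.
$$
Or pour tout $a\in N$ la dimension du stabilisateur de $a$ dans $\tugot\oplus\ugot$ vaut $\dim(\ugot\cap\mathrm{Ad}(a^{-1})\tugot)$, puisque la condition $\tX=\mathrm{Ad}(a)Y$ avec $\tX\in\tugot$ et $Y\in\ugot$ est lin\'eaire en $Y$. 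On param\'etrise alors $N^{(\tw\gamma,\gamma)}$ au voisinage de $a_0$ par $a=a_0\exp(iZ)$ avec $Z\in\tugot^\gamma$, et $N$ au voisinage de $a_0$ par $a=a_0\exp(iZ+X)$ avec $X$ libre dans un suppl\'ementaire de $\tugot^\gamma$ ; on lin\'earise ensuite la condition $\mathrm{Ad}(\exp(-iZ-X))Y\in\tugot$ pour $Y\in\ugot$ \`a l'aide de la d\'ecomposition en espaces de poids de $\tugot_\C$ sous l'action du tore $T$.

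L'analyse au premier ordre montre que le rang du morphisme $Y\in\ugot\mapsto\mathrm{Im}(\mathrm{Ad}(\exp(-iZ))Y)\in i\tugot$ est contr\^ol\'e, pour $Z\in\tugot^\gamma$ g\'en\'erique, par le sous-espace $\vect(\Rgot(\tugot/\ugot)\cap\gamma^\perp)$, et pour $Z$ g\'en\'erique dans $\tugot$, par $\vect(\Rgot(\tugot/\ugot))$. Un comptage de dimensions attentif conduit alors, \`a une contribution $\epsilon\in\{0,1\}$ pr\`es provenant de la direction port\'ee par $\gamma$ lui-m\^eme, \`a la formule
$$
\Delta=\dim\bigl(\vect(\Rgot(\tugot/\ugot))\cap\gamma^\perp\bigr)-\dim\vect(\Rgot(\tugot/\ugot)\cap\gamma^\perp)+\epsilon.
$$
Le premier terme est toujours positif ou nul, et l'admissibilit\'e $\Delta\in\{0,1\}$ \'equivaut pr\'ecis\'ement \`a son annulation, autrement dit \`a la relation $(\ref{eq:condition-gamma})$. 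La difficult\'e technique principale est de justifier rigoureusement que les corrections d'ordre sup\'erieur dans ce d\'eveloppement ne modifient pas le rang g\'en\'erique, et que le choix du rel\`evement $\tilde{n}$ de $\tw$ (ainsi que la structure sp\'ecifique de l'inclusion $\ugot\subset\tugot$ au-del\`a des seuls poids) n'affectent pas ce d\'ecompte ; ces points, de nature purement combinatoire et alg\'ebrique, sont r\'egl\'es en d\'etail dans la Section 6.1 de \cite{pep-ressayre}, dont on reprend la m\'ethode.
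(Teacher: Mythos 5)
Votre premier point est correct et suit l'argument attendu : le champ de vecteurs engendr\'e par $(\tgamma,\gamma)$ s'annule en $a$ si et seulement si $\Ad(a)\gamma=\tgamma$, et deux \'el\'ements de $\ttgot$ conjugu\'es sous $\tU_\C$ le sont sous $\tW$. Notez d'ailleurs que votre mise en place du second point peut \^etre rendue exacte, sans aucune \og correction d'ordre sup\'erieur \fg : en \'ecrivant $a=\tk e^{iX}$ (d\'ecomposition de Cartan, avec $X\in\tugot$, et $X\in\tugot^\gamma$ lorsque $a\in N^{(\tw\gamma,\gamma)}=\tilde{n}\,\tU_\C^\gamma$), la d\'ecomposition en espaces propres de $\ad X$ montre que le stabilisateur de $a$ dans $\tugot\oplus\ugot$ est exactement $\ugot\cap\tugot^{X}=\{Y\in\ugot,\ [X,Y]=0\}$. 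L'admissibilit\'e de $(\tw\gamma,\gamma)$ \'equivaut donc \`a la rationalit\'e de $\gamma$ et \`a $\min_{X\in\tugot^\gamma}\dim(\ugot\cap\tugot^{X})-\min_{X\in\tugot}\dim(\ugot\cap\tugot^{X})\in\{0,1\}$.

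C'est pr\'ecis\'ement l\`a que se situe la lacune : toute la substance du second point consiste \`a montrer que cette diff\'erence de dimensions g\'en\'eriques est $\leq 1$ si et seulement si la relation (\ref{eq:condition-gamma}) est v\'erifi\'ee, et vous ne le d\'emontrez pas. La formule $\Delta=\dim\bigl(\vect(\Rgot(\tugot/\ugot))\cap\gamma^\perp\bigr)-\dim\vect\bigl(\Rgot(\tugot/\ugot)\cap\gamma^\perp\bigr)+\epsilon$ n'est pas \'etablie (le \og comptage de dimensions attentif \fg{} n'est pas fait), et m\^eme admise telle quelle, elle ne donne pas la conclusion : tant que $\epsilon\in\{0,1\}$ n'est pas identifi\'e, $\Delta\in\{0,1\}$ n'\'equivaut pas \`a l'annulation du premier terme (le cas $\epsilon=0$ avec premier terme \'egal \`a $1$ donnerait aussi $\Delta=1$). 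Des exemples simples ($U=\tU$ d'une part, $U=T$ dans $\tU=\SU(2)$ d'autre part) montrent que $\epsilon$ vaut tant\^ot $0$, tant\^ot $1$ ; il faudrait donc prouver que $\epsilon=0$ ne peut se produire que lorsque le premier terme s'annule d\'ej\`a, argument que vous ne donnez pas. Enfin, vous renvoyez la justification de ces points \`a la Section 6.1 de \cite{pep-ressayre}, qui est exactement la r\'ef\'erence que le texte cite en lieu et place d'une preuve : votre proposition ne d\'emontre donc pas la partie non triviale du lemme, \`a savoir le calcul des stabilisateurs g\'en\'eriques de l'action de $\ugot$ sur une orbite adjointe g\'en\'erique de $\tU$ (resp. de $\tU^\gamma$) ; comparez avec le cas r\'eel (proposition \ref{prop:admissible-G-tilde}), o\`u l'analogue de ce calcul requiert la machinerie des sections \ref{sec:principal-cross-section} et \ref{sec:stabilisateur-gen}.
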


\begin{lem}
Supposons qu'aucun idéal non nul de $\ugot$ ne soit un idéal de $\tugot$. Alors l'action $\tU\times U\circlearrowright \tU_\C$ admet 
un nombre fini\footnote{Modulo les identifications $(\tw q\gamma,w q \gamma) \simeq (\tw\gamma,w\gamma)$ pour tout $q\in\Q^{>0}$.}
d'éléments admissibles de la forme $(\tw\gamma,w\gamma)$ avec $\gamma\in\tgot$ anti-dominant, et $(\tw,w)\in \tW\times W$.
\end{lem}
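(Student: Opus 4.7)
Le plan est le suivant. D'abord, j'observe que l'hypothèse de l'énoncé équivaut à celle du lemme \ref{lem:R-tugot-ugot} : un idéal de $\tugot$ contenu dans $\ugot$ est automatiquement un idéal de $\ugot$ (et qui est idéal de $\tugot$), et réciproquement un idéal de $\ugot$ qui est aussi un idéal de $\tugot$ est nécessairement contenu dans $\ugot$. Par conséquent, $\Rgot(\tugot/\ugot)$ engendre $\tgot^*$ d'après le lemme \ref{lem:R-tugot-ugot}, et donc, pour tout $\gamma \in \tgot$ non nul, la condition d'admissibilité (\ref{eq:condition-gamma}) équivaut au fait que l'hyperplan $\gamma^\perp \subset \tgot^*$ est engendré par des éléments de $\Rgot(\tugot/\ugot)$.

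Ensuite, le lemme \ref{lem:point-fixe-cotangent} ramène l'énumération des couples admissibles pour l'action $\tU \times U \circlearrowright \tU_\C$ de la forme $(\tw\gamma,w\gamma)$ à celle des vecteurs $\gamma\in\tgot$ rationnels et satisfaisant la condition ci-dessus. Comme $\Rgot(\tugot/\ugot)$ est un ensemble fini de poids, le nombre d'hyperplans de $\tgot^*$ engendrés par une partie de $\Rgot(\tugot/\ugot)$ est fini. À chaque tel hyperplan $H$ correspond une unique droite $H^\perp \subset \tgot$, qui rencontre la chambre anti-dominante $-\tgot_+$ en au plus une demi-droite. Modulo l'identification $(\tw q\gamma,wq\gamma)\simeq(\tw\gamma,w\gamma)$ pour $q\in\Q^{>0}$, cette demi-droite contient au plus un représentant rationnel. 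On obtient ainsi un nombre fini de vecteurs anti-dominants rationnels $\gamma\in\tgot$ satisfaisant la condition d'admissibilité.

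Enfin, le groupe $\tW\times W$ étant fini, on déduit immédiatement la finitude de l'ensemble des couples admissibles $(\tw\gamma, w\gamma)$. Il n'y a pas d'obstacle technique sérieux : la seule vérification non immédiate est l'équivalence entre les deux formulations de l'hypothèse, déjà expliquée. Le cœur de l'argument est qu'un ensemble fini de vecteurs dans $\tgot^*$ ne peut engendrer qu'un nombre fini d'hyperplans rationnels, ce qui force la finitude des classes d'équivalence d'éléments admissibles.
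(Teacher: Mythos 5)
Votre démonstration est correcte et correspond à l'argument attendu : le mémoire ne donne pas de preuve explicite de ce lemme (les énoncés de cette sous-section renvoient à \cite[Section 6.1]{pep-ressayre}), et votre réduction — équivalence des deux formulations de l'hypothèse sur les idéaux, lemme \ref{lem:R-tugot-ugot}, lemme \ref{lem:point-fixe-cotangent} (la classe $(\tw\gamma,w\gamma)$ se ramenant à $(\tw'\gamma',\gamma')$ avec $\gamma'=w\gamma$, l'ensemble $\Rgot(\tugot/\ugot)$ étant $W$-invariant), puis finitude du nombre d'hyperplans de $\tgot^*$ engendrés par des parties de l'ensemble fini $\Rgot(\tugot/\ugot)$ — est exactement celle qui est mise en \oe uvre dans les cas particuliers traités plus loin (voir par exemple les lemmes \ref{lem:admissible-eigenvalue-1} et \ref{lem:admissible-u-p-q}). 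Seule petite imprécision, sans conséquence : la droite $H^\perp$ peut rencontrer la chambre anti-dominante selon la droite entière lorsqu'elle est contenue dans la partie $W$-fixe de $\tgot$, ce qui donne au plus deux rayons rationnels par hyperplan au lieu d'un ; la finitude n'en est pas affectée.
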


%%%%%%%%%%%%%%%%%%%%%%%%%%%
\subsubsection{Trace polarisée}
%%%%%%%%%%%%%%%%%%%%%%%%%%%

Soient $\Rgot(\ugot)$ et $\Rgot(\tugot)$ les ensembles de racines associés aux algèbres de Lie $\ugot$ et $\tugot$. Le choix des chambres de Weyl $\tgot_{+}$
et $\ttgot_{+}$  définit des sous-ensembles de racines positives $\Rgot^+\subset\Rgot(\ugot)$ et $\widetilde{\Rgot}^+\subset\Rgot(\tugot)$.

Pour $(\gamma,w,\tw)\in \tgot \times W\times \tW$, la relation \ $(A_2)$\ 
$\tr(w\gamma \circlearrowright \ngot^{w\gamma>0})=\tr(\tw\gamma \circlearrowright \tngot_{-}^{\tw\gamma>0})$  
de la proposition \ref{prop:RP-exemple-fondamental-1} est équivalente à 
\begin{equation}\label{eq:trace-condition-w-tilde}
\sum_{\stackrel{\alpha\in\Rgot^+}{\langle\alpha,w\gamma\rangle> 0}}\langle\alpha,w\gamma\rangle+
\sum_{\stackrel{\tilde{\alpha}\in\widetilde{\Rgot}^+}{\langle\tilde{\alpha},\tilde{w}\gamma\rangle< 0}}\langle\tilde{\alpha},\tilde{w}\gamma\rangle=0
\end{equation}

%%%%%%%%%%%%%%%%%%%%%%%%%%%%%%%%%%%%
\subsubsection{Description of $\LR(U,\tU)$}\label{sec:LR-U-tildeU}
%%%%%%%%%%%%%%%%%%%%%%%%%%%%%%%%%%%%

Soit $\gamma\in\tgot$ admissible et $\mathbf{w}:=(w,\tw)\in W/W^\gamma\times\tW/\tW^\gamma$. 
La sous-variété $N^{(\tw\gamma,w\gamma)}$ est connexe, égale à 
$C_{\gamma,\mathbf{w}}:=\tw \tU_\C^\gamma w^{-1}$. \`A la proposition \ref{prop:RP-exemple-fondamental-1}, nous avons montré que 
$(\gamma_{\mathbf{w}},C_{\gamma,\mathbf{w}})$ est une paire de Ressayre de $\tU\times U\circlearrowright  N$ si et seulement si
$[\Xgot_{w,\gamma}]\cdot \iota^*[\tXgot_{\tw,\gamma}]= [pt]$\ dans  $H^{max}(\Fcal_\gamma,\Z)$ et l'identité (\ref{eq:trace-condition-w-tilde}) est satisfaite.

On sait que $(\txi,\xi)\in \LR(U,\tU)$ si et seulement si $(\txi,-w_o\xi)$ appartient au polytope de Kirwan $\Delta_{\tugot\times\ugot}(T^*\tU)$. Ainsi, la description de 
$\Delta_{\tugot\times\ugot}(T^*\tU)$ donnée par le théorème \ref{th:infinitesimal-ressayre-pairs} fournit la description suivante du cône $\LR(U,\tU)$.

\begin{theorem}\label{theo:LR-general} Soit $(\xi,\txi)\in\tgot_{+}\times \ttgot_{+}$. Nous avons  $U\xi\subset \pi\big(\,\tU\txi\,\big)$ si et seulement si 
\begin{equation}\label{eq:inequality-polytope}
\langle \tilde{\xi},\tilde{w}\gamma\rangle\geq \langle \xi,w_ow\gamma\rangle
\end{equation}
pour tout $(\gamma,w,\tilde{w})\in\tgot \times W\times \tilde{W}$ satisfaisant les propriétés suivantes :
\begin{enumerate}\setlength{\itemsep}{8pt}
\item[a)] $\gamma$ est admissible antidominant.
\item[b)] $[\Xgot_{w,\gamma}]\cdot \iota^*([\tilde{\Xgot}_{\tilde{w},\gamma}])= [pt]$ dans $H^*(\Fcal_\gamma,\Z)$.
\item[c)] L'identité (\ref{eq:trace-condition-w-tilde}) est vérifiée.
\end{enumerate}
Le résultat reste valable si l'on remplace  $b)$ par la condition plus faible
$$
b') \qquad [\Xgot_{\gamma}]\cdot \iota^*([\tilde{\Xgot}_{\tilde{w},\gamma}])= \ell[pt], \quad\ell\geq 1,\qquad \mathrm{dans}\quad H^*(\Fcal_\gamma,\Z).
$$
\end{theorem}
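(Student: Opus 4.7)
The plan is to derive Theorem \ref{theo:LR-general} by applying Theorem \ref{th:infinitesimal-ressayre-pairs} to the Kähler $\tU\times U$-hamiltonian variety $N=T^*\tU\simeq \tU_\C$ of Example \ref{ex:U-tilde-C-Kahler}, whose moment map $\Phi_{\tugot,\ugot}(\tk e^{i\tX})=(\tk\tX,-\pi(\tX))$ is proper. The starting observation, noted just before Definition~5 of the excerpt, is that the involution $\xi\mapsto \xi^\vee:=-w_o\xi$ of $\tgot_+$ intertwines the two cones, so that $(\tilde\xi,\xi)\in\LR(U,\tU)$ if and only if $(\tilde\xi,-w_o\xi)\in \Delta_{\tugot\times\ugot}(N)$. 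By Theorem \ref{th:infinitesimal-ressayre-pairs}, the latter is equivalent to the inequalities $\langle (\tilde\xi,-w_o\xi),\gamma'\rangle \geq \langle \Phi_{\tugot,\ugot}(C),\gamma'\rangle$ ranging over all regular infinitesimal Ressayre pairs $(\gamma',C)$ of $N$ (which will yield the version with condition $b')$), or over all regular Ressayre pairs (yielding the version with $b)$).

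Next I will enumerate the relevant Ressayre pairs using Lemma \ref{lem:point-fixe-cotangent}: a fixed-point component $N^{(\tilde\gamma,\gamma'')}$ is non-empty precisely when $\tilde\gamma=\tilde w\gamma''$ for some $\tilde w\in\tW$, so every admissible pair, modulo Weyl conjugation, takes the form $\gamma_{\mathbf w}:=(\tilde w\gamma,w\gamma)$ with $\gamma\in\tgot$ antidominant and $\mathbf w=(w,\tilde w)\in W\times\tW$, and the component $C_{\gamma,\mathbf w}=\tilde w\,\tU_\C^\gamma\,w^{-1}$ contains the genuine fixed point $\tilde w w^{-1}$. The second part of Lemma \ref{lem:point-fixe-cotangent} then shows that the regularity of $(\gamma_{\mathbf w},C_{\gamma,\mathbf w})$, in the sense of Definition \ref{def:admissible-U-action}, is equivalent to $\gamma$ being admissible in the sense of Definition \ref{admissible-exemple-1}. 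Propositions \ref{prop:RP-exemple-fondamental-1} and \ref{prop:RP-exemple-fondamental-2} translate the two flavors of the Ressayre-pair condition into the two Schubert-theoretic statements: $(A_2)+b')$ for the infinitesimal version, and $(A_2)+b)$ for the stronger one; the identity $(A_2)$ written out in terms of roots is exactly (\ref{eq:trace-condition-w-tilde}).

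It remains to compute the two sides of the inequality. Using the Cartan-decomposition formula, evaluation at $\tilde w w^{-1}\in \tU\subset \tU_\C$ (corresponding to $\tk=\tilde ww^{-1}$, $\tX=0$) gives $\Phi_{\tugot,\ugot}(\tilde ww^{-1})=(0,0)$, hence $\langle \Phi_{\tugot,\ugot}(C_{\gamma,\mathbf w}),\gamma_{\mathbf w}\rangle = 0$. The Kirwan inequality therefore becomes
\begin{equation*}
\langle \tilde\xi,\tilde w\gamma\rangle + \langle -w_o\xi,w\gamma\rangle \geq 0,
\end{equation*}
and, using $W$-invariance of the inner product together with $w_o^{-1}=w_o$, this rewrites as $\langle \tilde\xi,\tilde w\gamma\rangle \geq \langle \xi,w_ow\gamma\rangle$, which is exactly (\ref{eq:inequality-polytope}). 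Assembling the three steps produces both versions of the theorem. The main obstacle is purely bookkeeping: one must carefully reconcile the sign conventions (dominant vs.\ antidominant, the $-w_o$ involution, and the minus sign in the second component of the moment map) so that the final inequality lands in the precise form of (\ref{eq:inequality-polytope}); a second small point to verify is that admissibility in the sense of Definition \ref{admissible-exemple-1} is equivalent to the regularity condition on $(\gamma_{\mathbf w},C_{\gamma,\mathbf w})$, but this is exactly the content of Lemma \ref{lem:point-fixe-cotangent}(2).
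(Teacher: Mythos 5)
Your proposal is correct and follows essentially the same route as the paper: reduce to the Kirwan polytope of the Kähler $\tU\times U$-hamiltonian manifold $N=\tU_\C\simeq T^*\tU$ via the involution $\xi\mapsto-w_o\xi$, apply Theorem \ref{th:infinitesimal-ressayre-pairs}, identify the admissible pairs $(\gamma_{\mathbf w},C_{\gamma,\mathbf w})$ through Lemma \ref{lem:point-fixe-cotangent}, and translate the (infinitesimal) Ressayre-pair conditions into b)/b') and (\ref{eq:trace-condition-w-tilde}) via Propositions \ref{prop:RP-exemple-fondamental-1} and \ref{prop:RP-exemple-fondamental-2}. The evaluation $\langle\Phi_{\tugot,\ugot}(C_{\gamma,\mathbf w}),\gamma_{\mathbf w}\rangle=0$ at the point $\tilde w w^{-1}$ and the $w_o$-sign bookkeeping are exactly the computations implicit in the paper's argument.
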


De nombreuses personnes ont contribué au théorème \ref{theo:LR-general}. La première contribution a été apportée par Klyachko \cite{Klyachko}, puis affinée par 
Belkale \cite{Belkale06}, dans le cas de $\SL_n\croc (\SL_n)^s$. Le cas $U_\C\croc (U_\C)^s$ a été traité par Belkale-Kumar \cite{BK06} et par Kapovich-Leeb-Millson 
\cite{KLM-memoir-08}. Enfin, Berenstein-Sjamaar \cite{Berenstein-Sjamaar} et Ressayre \cite{Ressayre10, Res11a}  ont étudié le cas général. 
Ressayre \cite{Ressayre10} a également prouvé l'irréductibilité de la liste d'inégalités fournit par le théorème \ref{theo:LR-general},  
lorsque aucun idéal non-nul de $\tugot$ n'est contenu dans $\ugot$.

Nous renvoyons le lecteur aux articles de synthèse \cite{Fulton00,Brion-bourbaki,Kumar14}  pour plus de détails.

%%%%%%%%%%%%%%%%%%%%%%%%%%%%%%%%%%%%%%%%%%%%%%%%%%%%%%
%%%%%%%%%%%%%%%%%%%%%%%%%%%%%%%%%%%%%%%%%%%%%%%%%%%%%%
\section{Exemple: $\horn(n)$}\label{sec:horn-n}
%%%%%%%%%%%%%%%%%%%%%%%%%%%%%%%%%%%%%%%%%%%%%%%%%%%%%%
%%%%%%%%%%%%%%%%%%%%%%%%%%%%%%%%%%%%%%%%%%%%%%%%%%%%%%

Voici les notations utilisées dans les deux prochaines sections: 

\begin{itemize}
\item $\R^\ell_{+}$ est l'ensemble des suites $x=(x_1\geq \cdots \geq x_\ell)$ de nombres réels.
\item Pour tout entier $\ell\geq 1$, soit $[\ell]$ l'ensemble $\{1,\ldots,\ell\}$.
\item Si $x\in\R^\ell$ et $A\subset [\ell]$, on écrit $|x|_A=\sum_{a\in A}x_a$ et $|x|=\sum_{i=1}^\ell x_i$.
\item Si $I=\{i_1<\cdots< i_p\}$ est une suite croissante d'entiers strictement positifs, on pose $\mu(I)=(i_p-p\geq \cdots \geq i_1-1\geq 0)$.
\item $\Pcal(r,n)$ est l'ensemble des sous-ensembles de cardinal $r$ de $[n]$. 
\item $\e(X)\in\R^n_+$ désigne le vecteur des valeurs propres d'une matrice hermitienne $X\in\herm(n)$.
\end{itemize}

\medskip

Rappelons que $\horn(n)\subset (\R^\ell_{+})^3$ désigne le cône formé des triplets $(\e(X),\e(Y),\e(X+Y))$, où $X,Y\in \herm(n)$. Sachant que l'algèbre de 
Lie du groupe $\upU_n$ s'identifie canoniquement avec $\herm(n)$, on remarque que $\horn(n)$ correspond au cône $\LR(\upU_n, \upU_n\times \upU_n)$.

\begin{definition}
Pour tout $1\leq r<n$, $\LR^n_r$ désigne l'ensemble des triplets $(I,J,L)\in (\Pcal(r,n))^3$ tels que $(\mu(I),\mu(J),\mu(L))\in\horn(r)$.
\end{definition}

Rappelons le lien entre les coefficients de Littlewood-Richardson et le cône $\horn(n)$. Ce résultat découle du théorème de Kempf-Ness \cite{KN78}, qui établit que 
le quotient GIT d'un groupe de Lie complexe $U_\C$ sur une variété projective lisse $\Xcal$ est homéomorphe au quotient symplectique de $\Xcal$ par 
le sous-groupe compact maximal $U$.

\begin{prop}[Kempf-Ness]\label{prop:kempf-ness} Soient $\lambda,\mu$ et $\nu$ trois partitions de longueur inférieure à $n\geq 1$. Alors
\begin{equation}\label{eq:kempf-ness}
\exists k\geq 1,\ \cc^{k\lambda}_{k\mu,k\nu}\neq 0\quad\Longleftrightarrow\quad (\mu,\nu,\lambda)\in \horn(n).
\end{equation}
\end{prop}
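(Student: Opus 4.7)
Voici mon plan de preuve.

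Le plan est d'interpréter les deux conditions comme traduisant l'appartenance de $0$ à l'image de l'application moment d'une certaine variété hamiltonienne, respectivement du côté symplectique et du côté algébrique, puis d'invoquer la correspondance de Kempf-Ness pour conclure. Plus précisément, considérons la variété de Kähler $\upU_n$-hamiltonienne
$$M := \Ocal_\mu \times \Ocal_\nu \times (\Ocal_\lambda)^o,$$
où $(\Ocal_\lambda)^o$ désigne l'orbite coadjointe $\Ocal_\lambda$ munie de la structure kählérienne opposée à la structure KKS (voir l'exemple \ref{ex:orbite-coadjointe-Kahler}). L'application moment pour l'action diagonale de $\upU_n$ est donnée par $\Phi_\ugot(X,Y,Z) = \mathrm{t}(X+Y-Z)$ via l'isomorphisme $\mathrm{t} : \herm(n) \simeq \ugot(n)^*$ de l'exemple \ref{exemple-O-lambda}.

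Première étape: observer l'équivalence
$$(\mu,\nu,\lambda)\in\horn(n) \quad\Longleftrightarrow\quad 0 \in \Phi_\ugot(M).$$
En effet, $0 \in \Phi_\ugot(M)$ signifie qu'il existe $(X,Y,Z) \in \Ocal_\mu\times\Ocal_\nu\times\Ocal_\lambda$ tel que $X+Y = Z$, ce qui est exactement la condition $(\mu,\nu,\lambda) \in\horn(n)$.

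Deuxième étape: pour tout entier $k \geq 1$ tel que $k\mu$, $k\nu$, $k\lambda$ soient intégraux, les orbites $\Ocal_{k\mu}$, $\Ocal_{k\nu}$, $\Ocal_{k\lambda}$ sont des variétés projectives lisses, et la théorie de Borel-Weil fournit des fibrés en droites $\upU_n$-équivariants amples $L_{k\mu}$, $L_{k\nu}$, $L_{-k\lambda}$ dont l'espace des sections holomorphes globales est respectivement $V_{k\mu}^*$, $V_{k\nu}^*$, $V_{k\lambda}$ (la dualité venant de la structure opposée sur $(\Ocal_\lambda)^o$). Puis, le théorème de Kempf-Ness, ou de manière équivalente le principe de Hilbert-Mumford combiné avec le théorème de Kirwan (voir l'équation (\ref{eq:strate-M-0})), nous dit que
$$ 0 \in \Phi_\ugot(M) \quad\Longleftrightarrow\quad \exists k \geq 1,\ H^0(\Ocal_{k\mu}\times\Ocal_{k\nu}\times\Ocal_{k\lambda}^{o}, L_{k\mu}\boxtimes L_{k\nu}\boxtimes L_{-k\lambda})^{\upU_n}\neq 0.$$
Le point clef ici est que multiplier $\mu$, $\nu$, $\lambda$ par un entier $k > 0$ ne modifie pas l'appartenance à $\horn(n)$ (simple homothétie du cône), et ne modifie pas non plus l'appartenance de $0$ à l'image de l'application moment.

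Troisième étape: par la formule de Künneth et la théorie de Borel-Weil, l'espace des sections $\upU_n$-invariantes ci-dessus s'identifie à $[V_{k\mu}^*\otimes V_{k\nu}^*\otimes V_{k\lambda}]^{\upU_n}$, dont la dimension est, par définition (voir (\ref{eq:coeff-LR})), le coefficient de Littlewood-Richardson $\cc^{k\lambda}_{k\mu,k\nu}$. En combinant les deux équivalences, on obtient le résultat annoncé. L'étape potentiellement délicate est la correspondance de Kempf-Ness dans ce cadre projectif: elle repose sur le fait que $0$ appartient à l'image de $\Phi_\ugot$ si et seulement si le lieu semi-stable $M_{\langle 0 \rangle}$ est non vide, et sur l'identification de ce lieu à l'existence d'une section invariante d'un fibré ample, qui pour des poids éventuellement rationnels nécessite précisément le passage à un multiple entier $k$.
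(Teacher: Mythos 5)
Votre preuve est correcte et suit essentiellement la même voie que le texte : le mémoire ne détaille pas l'argument mais renvoie précisément au théorème de Kempf-Ness (quotient GIT homéomorphe au quotient symplectique), et votre rédaction — orbites coadjointes $\Ocal_\mu\times\Ocal_\nu\times(\Ocal_\lambda)^o$, application moment décalée, Borel-Weil et passage aux puissances $L^{\otimes k}$ pour la semi-stabilité — est exactement le déploiement standard de cette citation.
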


Le théorème de saturation de Knutson et Tao \cite{Knutson-Tao-99} est le résultat clef pour obtenir une description récursive de $\horn(n)$.
\begin{theorem}[Knutson-Tao]\label{theo:saturation} Soient $\lambda,\mu$ et $\nu$ trois partitions de longueur inférieure à $n\geq 1$. Alors
\begin{equation}\label{eq:saturation}
\cc^\lambda_{\mu,\nu}\neq 0\quad\Longleftrightarrow\quad \exists k\geq 1,\ \cc^{k\lambda}_{k\mu,k\nu}\neq 0.
\end{equation}
Ainsi, $\cc^\lambda_{\mu,\nu}\neq 0$ si et seulement si $(\mu,\nu,\lambda)\in \horn(n)$.
\end{theorem}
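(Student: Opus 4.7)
Le sens direct de l'équivalence est trivial (prendre $k=1$). Pour la réciproque, ma stratégie est d'utiliser le modèle des \emph{hives} de Knutson-Tao: à chaque triplet $(\mu,\nu,\lambda)$ de partitions de longueur au plus $n$ on associe un polytope convexe rationnel $H(\mu,\nu,\lambda)\subset\R^N$ défini par un système d'inégalités linéaires dites ``de losange'' avec données de bord imposées par $\mu,\nu,\lambda$. La première étape, purement combinatoire, consiste à établir une bijection explicite entre les points entiers de $H(\mu,\nu,\lambda)$ et les tableaux de Littlewood-Richardson de forme $\lambda/\mu$ et contenu $\nu$, fournissant l'identité $\cc^\lambda_{\mu,\nu}=\sharp\bigl(H(\mu,\nu,\lambda)\cap\Z^N\bigr)$. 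Comme les contraintes définissant $H$ sont linéaires, on a en outre la propriété de dilatation $H(k\mu,k\nu,k\lambda)=k\cdot H(\mu,\nu,\lambda)$ pour tout $k\geq 1$.

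Munis de ces deux faits, la saturation se ramène à l'énoncé géométrique suivant: \emph{si $(\mu,\nu,\lambda)$ est un triplet entier et si $H(\mu,\nu,\lambda)$ est non vide, alors $H(\mu,\nu,\lambda)$ contient un point entier.} Admettons-le un instant. Si $\cc^{k\lambda}_{k\mu,k\nu}\neq 0$, alors $k\cdot H(\mu,\nu,\lambda)$ contient un point entier, donc $H(\mu,\nu,\lambda)$ est non vide, donc contient un point entier d'après le lemme, d'où $\cc^\lambda_{\mu,\nu}\neq 0$. L'équivalence avec $(\mu,\nu,\lambda)\in\horn(n)$ en résulte alors immédiatement via la proposition \ref{prop:kempf-ness}: l'appartenance $(\mu,\nu,\lambda)\in\horn(n)$ équivaut à l'existence d'un $k\geq 1$ tel que $\cc^{k\lambda}_{k\mu,k\nu}\neq 0$, donc, par la saturation, à $\cc^\lambda_{\mu,\nu}\neq 0$.

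L'obstacle principal est bien entendu l'énoncé géométrique, qui est le véritable contenu de l'article de Knutson-Tao. Leur approche consiste à passer à l'objet dual des hives, les \emph{honeycombs}: courbes tropicales planaires à trois familles asymptotiques dont les positions à l'infini sont données par $\mu,\nu,\lambda$. L'idée centrale est qu'à tout honeycomb réel de bord entier fixé on peut associer, par un procédé de dégénérescence et d'agrégation des sommets, un honeycomb canonique (le ``plus grand'' au sens d'un ordre partiel de type max-plus) dont on démontre, par une analyse locale minutieuse des multiplicités et des configurations possibles autour de chaque sommet, qu'il est nécessairement à coordonnées entières. C'est cette analyse combinatoire fine, et en particulier l'exclusion de toute ``multiplicité fractionnaire'' en chaque sommet du plus grand honeycomb, qui constitue la difficulté technique centrale de la preuve.
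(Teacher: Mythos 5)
Il faut d'abord noter que le texte ne démontre pas ce théorème : il est cité comme résultat extérieur de Knutson-Tao \cite{Knutson-Tao-99} (avec mention de la preuve alternative de Belkale \cite{Belkale06}), et la seule partie « démontrée » dans le texte est la dernière phrase, qui combine (\ref{eq:saturation}) avec la proposition \ref{prop:kempf-ness} — étape que vous reproduisez correctement. Votre réduction est également fidèle à la stratégie originale : identification de $\cc^\lambda_{\mu,\nu}$ au nombre de points entiers d'un polytope de hives, homogénéité $H(k\mu,k\nu,k\lambda)=k\cdot H(\mu,\nu,\lambda)$, puis réduction de la saturation à l'énoncé « un polytope de hives non vide à bord entier contient un point entier ».

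Mais c'est précisément cet énoncé qui constitue tout le contenu du théorème, et vous l'admettez : le passage aux honeycombs, la construction du plus grand honeycomb à bord fixé et surtout la démonstration de son intégralité (exclusion des multiplicités fractionnaires par analyse locale des sommets) ne sont qu'annoncés, pas effectués. En l'état, votre texte est donc un plan de preuve correct, pas une preuve : le lemme géométrique central reste une boîte noire, et la bijection hives/tableaux de Littlewood-Richardson est elle aussi simplement affirmée. Si l'objectif est de faire comme le texte — citer Knutson-Tao — votre rédaction est cohérente ; si l'objectif est une démonstration autonome, il manque l'essentiel. Signalons enfin que l'argument de Belkale \cite{Belkale06}, par géométrisation du lien entre théorie des invariants et intersections de variétés de Schubert, offre une route alternative qui évite entièrement le formalisme des honeycombs, et que la version $\cc^L_{I,J}=1$ utilisée ailleurs dans le texte (cf. \cite{KTW04}) n'est pas couverte par votre esquisse.
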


Grâce à la proposition \ref{prop:coeff-LR} et aux équivalences (\ref{eq:kempf-ness}) et (\ref{eq:saturation}), il est  facile de vérifier que le prochain théorème est une 
application du théorème \ref{theo:LR-general} au cas où $U=\upU_n$ s'injecte diagonalement dans $\tU:=\upU_n\times \upU_n$.
Cette description récursive de $\horn(n)$ a été conjecturée par Horn \cite{Horn} et initialement prouvée par une combinaison des travaux de 
Klyachko \cite{Klyachko} et Knutson-Tao \cite{Knutson-Tao-99}. 

P. Belkale a ensuite donné une autre preuve des inégalités de Horn et de la propriété de saturation \cite{Belkale06}, en géométrisant la relation classique 
entre la théorie des invariants et la théorie de l'intersection des variétés de Schubert. Pour une belle exposition de la méthode géométrique de Belkale pour les inégalités de Horn, voir \cite{Berline-Vergne-Walter18}.

\begin{theorem}\label{theo:horn} Le triplet $(x,y,z)\in (\R^n_+)^3$ appartient à $\horn(n)$ si et seulement si les conditions suivantes sont vérifiées :
\begin{itemize}
\item $|x|+|y|=|z|$,
\item $|x|_I+|y|_J\geq |z|_L$, pour tout $r<n$ et tout $(I,J,L)\in \LR^n_r$.
\end{itemize}
\end{theorem}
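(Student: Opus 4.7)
Le plan consiste à appliquer directement le Théorème \ref{theo:LR-general} au plongement diagonal $U=\upU_n\croc\tU=\upU_n\times\upU_n$, via l'identification $\horn(n)=\LR(\upU_n,\upU_n\times\upU_n)$, puis à traduire le système d'inégalités obtenu au moyen du dictionnaire fourni par la Proposition \ref{prop:coeff-LR}, la Proposition \ref{prop:kempf-ness} et le Théorème \ref{theo:saturation}.

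Comme première étape, je décrirai les éléments admissibles antidominants $\gamma\in\tgot=\R^n$. Puisque $\tugot/\ugot\simeq\ugot(n)$ sous l'action diagonale, l'espace $\vect(\Rgot(\tugot/\ugot))$ est l'hyperplan $\{\xi:\sum\xi_i=0\}$, et le Lemme \ref{lem:R-tugot-ugot} impose à $\gamma$ de ne prendre qu'au plus deux valeurs distinctes. Modulo multiplication par un scalaire positif, les éléments admissibles antidominants non-centraux sont donc de la forme $\gamma=(\underbrace{0,\ldots,0}_{n-r},\underbrace{1,\ldots,1}_{r})$ pour $r\in\{1,\ldots,n-1\}$, chacun donnant $\Fcal_{\gamma}=\G(n-r,n)$ ; le cas central $\gamma\in\R(1,\ldots,1)$ produit uniquement l'égalité de trace $|x|+|y|=|z|$.

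Je paramétrerai ensuite les inégalités du Théorème \ref{theo:LR-general}, pour chaque rang $r\in\{1,\ldots,n-1\}$, au moyen de triplets $(I,J,L)\in\Pcal(r,n)^3$. Pour $\tw=(w_1,w_2)\in \tW$ et $w\in W$, en posant $I:=[n]-w_1([n-r])$, $J:=[n]-w_2([n-r])$ et $L:=w_o([n]-w([n-r]))$, un calcul direct avec $\tilde{\xi}=(x,y)$ et $\xi=z$ transforme $\langle\tilde{\xi},\tw\gamma\rangle\geq\langle\xi,w_ow\gamma\rangle$ en la forme attendue $|x|_I+|y|_J\geq|z|_L$. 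Par Künneth et la Proposition \ref{prop:coeff-LR} appliquée dans $\G(n-r,n)$, la condition cohomologique $[\Xgot_{w,\gamma}]\cdot\iota^*[\tXgot_{\tw,\gamma}]=[pt]$ s'exprime comme la non-nullité d'un coefficient de Littlewood-Richardson ; après application des symétries classiques de ces coefficients (notamment le Corollaire \ref{coro:coeff-LR-sym}), cette condition se réécrit précisément comme $\cc^{\mu(L)}_{\mu(I),\mu(J)}\neq 0$. Quant à l'identité (\ref{eq:trace-condition-w-tilde}), un calcul élémentaire sur les racines positives $e_i-e_j$ la réduit à l'équation de dimension $|\mu(I)|+|\mu(J)|=|\mu(L)|$ qui accompagne naturellement la non-nullité de $\cc^{\mu(L)}_{\mu(I),\mu(J)}$.

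La conclusion découle alors directement de l'équivalence
\[
\cc^{\mu(L)}_{\mu(I),\mu(J)}\neq 0\ \Longleftrightarrow\ (\mu(I),\mu(J),\mu(L))\in\horn(r)\ \Longleftrightarrow\ (I,J,L)\in\LR^n_r,
\]
fournie par la Proposition \ref{prop:kempf-ness} de Kempf-Ness et le Théorème \ref{theo:saturation} de Knutson-Tao. L'obstacle principal n'est pas conceptuel mais purement combinatoire : il s'agit de gérer rigoureusement les conventions simultanées d'antidominance de $\gamma$, d'action de l'élément le plus long $w_o$, et de complémentation/transposition des partitions apparaissant dans le passage entre les grassmanniennes $\G(n-r,n)$ et $\G(r,n)$.
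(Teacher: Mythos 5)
Votre démonstration est correcte et suit essentiellement la même voie que le texte : le papier se contente d'affirmer que le théorème est une application du théorème \ref{theo:LR-general} au plongement diagonal $\upU_n\croc\upU_n\times\upU_n$, combinée à la proposition \ref{prop:coeff-LR}, au théorème de Kempf-Ness (\ref{eq:kempf-ness}) et à la saturation (\ref{eq:saturation}), ce qui est exactement votre schéma. Vous explicitez simplement les détails (éléments admissibles deux-valués, réduction de la condition de trace à l'égalité des degrés dans la grassmannienne, traduction des cellules en triplets $(I,J,L)$) que le texte laisse au lecteur; la seule précision à ajouter est que la normalisation des éléments admissibles se fait modulo scalaire positif \emph{et} translation centrale, cette dernière étant absorbée par l'égalité $|x|+|y|=|z|$.
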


\begin{rem}
Dans le théorème \ref{theo:horn}, nous pouvons renforcer la deuxième condition en exigeant que $c^L_{I,J}=1$ \cite{Belkale01,Belkale06}. 
Dans ce cas, l'ensemble d'inégalités obtenues est minimal \cite{KTW04}.
\end{rem}

\begin{exemple}
Voici quelques inégalités qui apparaissent dans la liste du théorème \ref{theo:horn} .
\begin{itemize}
\item Inégalités de Weyl (1912): pour $i,j\in [n]$ tel que $i+j-1\leq n$ on a 
$$
 \e_i(A)+\e_j(B)\geq \e_{i+j-1}(A+B),\qquad \forall A,B\in \herm(n).
$$
\item Inégalités de Lidskii-Wielandt (1950): pour tout $I\in\Pcal(r,n)$, on a 
$$
\sum_{k=1}^r\e_{i_k}(A) +\sum_{k=1}^r \e_{k}(B)\geq \sum_{k=1}^r\e_{i_k}(A+B),\qquad \forall A,B\in \herm(n).
$$
\item Inégalités de Thompson-Freede (1971):  pour tout $I,J\in\Pcal(r,n)$, tel que $i_r+j_r-r\leq n$, on a   
$$
\sum_{k=1}^r\e_{i_k}(A) +\sum_{k=1}^r \e_{j_k}(B)\geq \sum_{k=1}^r\e_{i_k+j_k-k}(A+B),\qquad \forall A,B\in \herm(n).
$$
\end{itemize}
\end{exemple}

%%%%%%%%%%%%%%%%%%%%%%%%%%%%%%%%%%%%%%%%%%%%%%%%%%%%%%
%%%%%%%%%%%%%%%%%%%%%%%%%%%%%%%%%%%%%%%%%%%%%%%%%%%%%%
\section{Exemple: $\LR(m,n)$}\label{sec:LR-m-n}
%%%%%%%%%%%%%%%%%%%%%%%%%%%%%%%%%%%%%%%%%%%%%%%%%%%%%%
%%%%%%%%%%%%%%%%%%%%%%%%%%%%%%%%%%%%%%%%%%%%%%%%%%%%%%

Soient $m,n\geq 1$. Écrivons une matrice hermitienne $X\in \herm(m+n)$ par blocs 
$X=\begin{pmatrix}
X_{I}& *\\
*& X_{II}
\end{pmatrix}$
où $X_{I}\in \herm(m)$ et $X_{II}\in \herm(n)$. Dans cette section, nous nous intéressons au cône 
$$
\LR(m,n):=\Big\{\left(\e(X),\e(X_{I}),\e(X_{II})\right); \quad X\in \herm(m+n)\Big\}.
$$
Ce dernier correspond au cône $\LR(\upU_{m}\times \upU_n, \upU_{m+n})$.

Grâce au théorème \ref{theo:LR-general}, nous obtenons la description récursive suivante des cônes $\LR(m,n)$. 
Les détails de la preuve sont donnés dans \cite[Section 2.5]{pep-toshi}.

\begin{theorem}\label{theo:LR-m-n} Le triplet $(z,x,y)\in \R^{m+n}_+\times \R^{m}_+\times \R^{n}_+$ appartient à $\LR(m,n)$ si et seulement si les conditions suivantes sont vérifiées :
\begin{itemize}\setlength{\itemsep}{8pt}
\item $|z|=|x|+|y|$,
\item $z_{n+k}\leq x_k\leq z_k$, $\forall k\in [m]$,
\item $z_{m+\ell}\leq y_\ell\leq z_\ell$, $\forall \ell\in [n]$,
\item $|z|_L\geq |x|_I + |y|_J$, pour tout triplet $I,J,L$ satisfaisant :
\begin{enumerate}\setlength{\itemsep}{8pt}
\item $L\subset [m+n]$, $I\subset [m]$ et $J\subset [n]$ sont des sous-ensembles stricts,
\item $\sharp L =\sharp I +\sharp J$,
\item Le coefficient de Littlewood-Richardson $c^{L}_{I,J}$ est non-nul.
\end{enumerate}
\end{itemize}
De plus, la condition $c^{L}_{I,J}\neq 0$ est équivalente à $(\mu(L),\mu(I),\mu(J))\in \LR(u,v)$, où $u=\sharp I$ et $v=\sharp J$.
\end{theorem}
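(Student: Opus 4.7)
The plan is to specialize Theorem \ref{theo:LR-general} to the embedding $U := \upU_m \times \upU_n \hookrightarrow \tU := \upU_{m+n}$, with common diagonal torus $T = \tT$ and $\pi:\tugot \to \ugot$ the orthogonal projection onto the block-diagonal part. The first step is the enumeration of the admissible antidominant rational $\gamma \in \tgot$. Since the scalar ideal $\R \cdot iI_{m+n}$ of $\tugot$ lies in $\ugot$, the weights $\Rgot(\tugot/\ugot)$ span only the hyperplane $V = \{x \in \tgot : \sum_k x_k = 0\}$; pairing (\ref{eq:inequality-polytope}) with the central generator in both directions immediately yields the linear identity $|z| = |x| + |y|$. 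A direct analysis of Condition (\ref{eq:condition-gamma}) shows that, up to positive rescaling, every non-central admissible antidominant $\gamma$ is of the form $\gamma_{a,b}$ taking value $-1$ on the first $a$ entries of the first block and on the first $b$ entries of the second block, and $0$ elsewhere, with $(a, b) \in \{0, \ldots, m\} \times \{0, \ldots, n\} \setminus \{(0,0), (m,n)\}$. A count of the dimension of the span of the annihilating roots then selects exactly the pairs with $1 \leq a \leq m-1$ and $1 \leq b \leq n-1$ together with the four extremal pairs $(1, 0)$, $(0, 1)$, $(m-1, n)$, $(m, n-1)$.

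For each such admissible $\gamma_{a,b}$, the flag varieties are $\tFcal_{\gamma_{a,b}} = \G(a+b, m+n)$ and $\Fcal_{\gamma_{a,b}} = \G(a, m) \times \G(b, n)$, and the embedding $\iota : \Fcal_{\gamma_{a,b}} \hookrightarrow \tFcal_{\gamma_{a,b}}$ sends $(E, F)$ to $E \oplus F$. Schubert classes are parametrized by $L \in \Pcal(a+b, m+n)$ and $(I, J) \in \Pcal(a, m) \times \Pcal(b, n)$ respectively. By Proposition \ref{prop:calcul-grassmanienne-a-b}, the cohomological condition $b')$ from Theorem \ref{theo:LR-general} becomes the non-vanishing of the Littlewood-Richardson coefficient $\cc^L_{I, J}$, while the trace condition (\ref{eq:trace-condition-w-tilde}) reduces to the dimension matching $\sharp L = \sharp I + \sharp J$, which is automatic once $\cc^L_{I, J} \neq 0$. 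A direct combinatorial evaluation of $\tw \gamma_{a,b}$ and $w_o w \gamma_{a,b}$ then converts the abstract inequality $\langle \tilde{\xi}, \tw\gamma\rangle \geq \langle \xi, w_o w \gamma\rangle$ into the concrete form $|z|_L \geq |x|_I + |y|_J$.

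To match the statement of the theorem, I would unpack the four extremal admissible cases: $(1, 0)$ produces $x_k \leq z_k$, $(0, 1)$ produces $y_\ell \leq z_\ell$, $(m-1, n)$ produces $z_{n+k} \leq x_k$, and $(m, n-1)$ produces $z_{m+\ell} \leq y_\ell$, thereby yielding the sandwich inequalities. The generic inequalities with strictly contained $I \subsetneq [m]$, $J \subsetneq [n]$, $L \subsetneq [m+n]$ all come from the pairs $(a, b)$ with $1 \leq a \leq m-1$, $1 \leq b \leq n-1$ (and from the extremal pairs when $I$ or $J$ is empty, which are already strict subsets). Finally, the equivalence $\cc^L_{I, J} \neq 0 \Leftrightarrow (\mu(L), \mu(I), \mu(J)) \in \LR(u, v)$, with $u = \sharp I$ and $v = \sharp J$, follows by combining Proposition \ref{prop:kempf-ness} (Kempf-Ness) with Theorem \ref{theo:saturation} (Knutson-Tao saturation), delivering the recursive structure since $u \leq m$, $v \leq n$ with $u + v < m + n$ for all triples appearing in the generic list.

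The main obstacle will be the careful enumeration of the admissible antidominant $\gamma$ in this setting where $\Rgot(\tugot/\ugot)$ does not span $\tgot^*$: one must verify both that no admissible element with three or more distinct coordinate values arises in the $U$-antidominant chamber and that the four extremal cases exactly produce the sandwich inequalities. A secondary technical point is translating the abstract pairing $\langle \tilde{\xi}, \tw \gamma\rangle \geq \langle \xi, w_o w \gamma\rangle$ into the combinatorial inequality $|z|_L \geq |x|_I + |y|_J$, which requires careful bookkeeping of the correspondence between coset representatives $(w, \tw) \in W/W^\gamma \times \tW/\tW^\gamma$ and the Schubert data $(I, J; L)$.
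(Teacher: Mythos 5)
Your proposal follows essentially the paper's own route: the paper obtains Theorem \ref{theo:LR-m-n} precisely by specializing Theorem \ref{theo:LR-general} to $\upU_m\times\upU_n\subset\upU_{m+n}$ (deferring the bookkeeping to \cite[Section 2.5]{pep-toshi}), and your enumeration of the admissible antidominant elements, the identification of the cohomological condition via Proposition \ref{prop:calcul-grassmanienne-a-b}, and the use of Kempf--Ness together with saturation for the recursive clause reproduce that argument. The only caveats are cosmetic: which of the four extremal admissible pairs yields which interlacing family depends on the sign and chamber conventions, and the listed inequalities with $I$ or $J$ empty of cardinality at least $2$ are not produced by admissible elements but follow immediately from the interlacing inequalities, so the stated list is equivalent to the one your derivation produces.
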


\begin{rem}
Dans le théorème \ref{theo:LR-m-n}, nous pouvons renforcer la condition 3. en exigeant que \break $c^{L}_{I,J}=1$. 
Dans ce cas, la liste d'inégalités obtenues est minimale \cite{Ressayre10}.
\end{rem}

\begin{exemple}\label{ex:LR-2-2} $(z,x,y)\in\R^4_{+}\times\R^2_{+} \times \R^{2}_{+}$ appartient à $\LR(2,2)$ si et seulement si $|z|=|x|+|y|$ et 
\begin{equation*}
\boxed{
\begin{array}{c}
z_1\geq x_1 \geq  z_3\\
z_1\geq y_1 \geq  z_3\\
z_2\geq x_2\geq z_4\\
z_2\geq y_2\geq z_4\\
z_1+z_2\geq  x_1+y_1\\
z_2+z_3 \geq  x_2+y_2\\
z_1+z_3 \geq \max(x_1+y_2,x_2+y_1)
\end{array}
}
\end{equation*}

\end{exemple}

%%%%%%%%%%%%%%%%%%%%%%%%%%%%%%%%%%%%%%%%%%%%%%%%%%%%%%%%%%
%%%%%%%%%%%%%%%%%%%%%%%%%%%%%%%%%%%%%%%%%%%%%%%%%%%%%%%%%%
%%%%%%%%%%%%%%%%%%%%%%%%%%%%%%%%%%%%%%%%%%%%%%%%%%%%%%%%%%
\chapter[Représentations isotropes des espaces symétriques]{Cônes convexes associés aux représentations isotropes des espaces symétriques}
%%%%%%%%%%%%%%%%%%%%%%%%%%%%%%%%%%%%%%%%%%%%%%%%%%%%%%%%%%
%%%%%%%%%%%%%%%%%%%%%%%%%%%%%%%%%%%%%%%%%%%%%%%%%%%%%%%%%%
%%%%%%%%%%%%%%%%%%%%%%%%%%%%%%%%%%%%%%%%%%%%%%%%%%%%%%%%%%

Cette section traite des propriétés de convexité associées aux représentations isotropes des espaces symétriques. 
Soit $G/K$ un espace symétrique riemannien de type non compact et soit $\pgot:=\T_e G/K$ la représentation isotrope 
du groupe de Lie compact $K$ (que nous supposons connexe). 
Les orbites de $K$ dans $\pgot$ sont paramétrées par un cône fermé $\agot_+$ contenu dans un sous-espace abélien maximal $\agot\subset \pgot$.

Supposons que $G/K\croc \tG/\tK$ soit un plongement d'espaces symétriques riemannien de type non compact. 
Ainsi, $K$ est un sous-groupe fermé de $\tK$ et nous avons une projection orthogonale $K$-équivariante 
$\pi :\tpgot\to \pgot$. L'objectif principal de cette section est la description du cône suivant
\begin{equation}\label{eq:cone-reel}
\horn_\pgot(K,\tK):=\Big\{(\txi,\xi)\in \tagot_+\times\agot,\ K \xi \subset \pi(\tK\txi)\Big\}.
\end{equation}

%%%%%%%%%%%%%%%%%%%%%%%%%%%%%%%%%%%%%%%%%%%%%%%%%%%%%%
%%%%%%%%%%%%%%%%%%%%%%%%%%%%%%%%%%%%%%%%%%%%%%%%%%%%%%
\section{Description du cône $\horn_\pgot(K,\tK)$}
%%%%%%%%%%%%%%%%%%%%%%%%%%%%%%%%%%%%%%%%%%%%%%%%%%%%%%
%%%%%%%%%%%%%%%%%%%%%%%%%%%%%%%%%%%%%%%%%%%%%%%%%%%%%%

On travaille dans le cadre où $\iota:G\croc \tG\subset \GL_N(\R)$ sont deux groupes de Lie réductifs connexes admettant une complexification 
$\iota_\C:G_\C\croc \tG_\C\subset \GL_{N}(\C)$. C'est par exemple le cas lorsque $G$ et $\tG$ sont semi-simples (voir \cite{Knapp-book}, \S VII.1). Notons
\begin{itemize}
\item $K=G\cap \SO_N(\R)$ et $\tK=\tG\cap \SO_N(\R)$ les sous-groupes compacts maximaux de $G$ et $\tG$. Leurs algèbres de Lie sont notées par 
$\iota:\kgot\croc\tkgot$.
\item $U= G_\C\cap U_{N}$ et  $\tU=\tG_\C\cap U_N$ les sous-groupes compacts maximaux de $G_\C$ et $\tG_\C$. Leurs algèbres de Lie sont notées par 
$\iota:\ugot\croc\tugot$.
\item $\theta(X):=-X^*$ l'involution de Cartan sur $\glgot_N(\C)$.
\end{itemize}

\begin{rem}
Notons que les groupes réductifs complexes $G_\C\croc \tG_\C$ sont les complexification des groupes compacts $U\croc \tU$. Dans le reste de cette section, 
nous utiliserons cette notation à la place de la notation $U_\C\croc \tU_\C$ utilisée dans les sections précédentes.
\end{rem}

La conjugaison $g\mapsto \overline{g}$ sur $\GL_N(\C)$ définit une involution anti-holomorphe 
$\sigma$ sur $G_\C\croc \tG_\C$ et sur $U\croc \tU$ telle que $G,\tG,K$ et $\tK$ sont respectivement égaux aux composantes connexes de 
$(G_\C)^\sigma$, $(\tG_\C)^\sigma$, $U^\sigma$ et $\tU^\sigma$. 

Considérons les décompositions de Cartan, $\ggot=\kgot\oplus \pgot$ et $\tggot=\tkgot\oplus \tpgot$, de $G$ et $\tG$. Au niveau des algèbres de Lie, nous avons $\tugot=\tkgot\oplus i \tpgot$ et $\ugot=\kgot\oplus i \pgot$, où $\ugot^{-\sigma}=i\pgot$ et $\tugot^{-\sigma}=i\tpgot$.

Soit $\pi:\tggot_\C\to\ggot_\C$ la projection orthogonale par rapport au produit hermitien $(X,Y)=\Re\tr(XY^*)$ sur $\glgot_N(\C)$.

%%%%%%%%%%%%%%%%%%%%%%%%%%%%%%%%%
\subsection{Idéal $I_{\ggot,\tggot}$ et éléments admissibles}
%%%%%%%%%%%%%%%%%%%%%%%%%%%%%%%%%

\begin{definition}
Nous posons $I_{\ggot,\tggot}:=\Big\{X\in\ggot, [X,\tggot]\subset \ggot\Big\}$
\end{definition}

Rappelons que $\Sigma(\tggot/\ggot)\subset\agot^*$ désigne l'ensemble des poids non-nuls de l'action $\agot\circlearrowright \tggot/\ggot$. 
D'autre part, $Z(\tggot)$ désigne le centre de l'algèbre de Lie $\tggot$. Nous commençons avec quelques propriétés élémentaires.

\begin{lem}
\begin{enumerate}
\item $I_{\ggot,\tggot}$ est le plus grand idéal de $\tggot$ contenu dans $\ggot$.
\item $Z(\tggot)\cap\ggot\subset I_{\ggot,\tggot}$.
\item L'orthognal, pour la dualité, de  $\Sigma(\tggot/\ggot)$ est égal à $I_{\ggot,\tggot}\cap\agot$.
\item $\Sigma(\tggot/\ggot)$ engendre $\agot^*$ si seulement si $I_{\ggot,\tggot}\cap\pgot=0$.
\end{enumerate}
\end{lem}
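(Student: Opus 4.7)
The plan is to dispatch the four points in the order $(2), (1), (3), (4)$, since $(2)$ is trivial and $(1)$ is the technical core on which $(3)$ and $(4)$ rely. Point $(2)$ is immediate from the definition: any $X\in Z(\tggot)\cap\ggot$ satisfies $[X,\tggot]=\{0\}\subset\ggot$, hence lies in $I_{\ggot,\tggot}$.

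For point $(1)$, the ``largest'' half is elementary: any ideal $J$ of $\tggot$ contained in $\ggot$ satisfies $[J,\tggot]\subset J\subset\ggot$, so $J\subset I_{\ggot,\tggot}$ by definition. The real difficulty is to show that $I_{\ggot,\tggot}$ is itself an ideal of $\tggot$; a direct Jacobi-identity attempt fails because $[\tggot,\ggot]$ need not lie in $\ggot$. I would therefore bootstrap from the compact case treated in Lemma \ref{lem:R-tugot-ugot}. The plan is to pick a Cartan involution $\theta$ of $\tggot$ stabilizing $\ggot$, which provides compatible compact real forms $\ugot=\kgot\oplus i\pgot\subset\tugot=\tkgot\oplus i\tpgot$ of $\ggot_\C\subset\tggot_\C$. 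A direct verification shows that $(I_{\ggot,\tggot})_\C$ equals $\{X\in\ggot_\C : [X,\tggot_\C]\subset\ggot_\C\}$, and this last space is $\bgot_\C$, where $\bgot:=\{X\in\ugot : [X,\tugot]\subset\ugot\}$. By Lemma \ref{lem:R-tugot-ugot}, $\bgot$ is an ideal of $\tugot$, so $\bgot_\C=(I_{\ggot,\tggot})_\C$ is an ideal of $\tggot_\C$. Being stable under the conjugation defining the real form $\tggot\subset\tggot_\C$, its real part $I_{\ggot,\tggot}$ is an ideal of $\tggot$, as required.

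For point $(3)$, I would decompose the $\agot$-module $\tggot/\ggot$ into weight spaces indexed by $\Sigma(\tggot/\ggot)\cup\{0\}$. An element $H\in\agot$ is orthogonal to $\Sigma(\tggot/\ggot)$ iff $\alpha(H)=0$ for every non-zero weight, iff $\ad(H)$ acts trivially on $\tggot/\ggot$, iff $[H,\tggot]\subset\ggot$. Since $H\in\agot\subset\pgot\subset\ggot$ automatically, this is precisely the condition $H\in I_{\ggot,\tggot}\cap\agot$.

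Finally, point $(4)$ follows from $(3)$ combined with the $\Ad(K)$-equivariance of $I_{\ggot,\tggot}$, which itself comes from $(1)$: the ideal $I_{\ggot,\tggot}$ of $\tggot$ is $\Ad(\tG)$-stable by connectedness of $\tG$, hence $\Ad(K)$-stable, and $\Ad(K)$ preserves $\pgot$. The hypothesis that $\Sigma(\tggot/\ggot)$ spans $\agot^*$ is equivalent, by $(3)$, to $I_{\ggot,\tggot}\cap\agot=\{0\}$. For any $X\in I_{\ggot,\tggot}\cap\pgot$, pick $k\in K$ with $\Ad(k)(X)\in\agot_+\subset\agot$; then $\Ad(k)(X)\in I_{\ggot,\tggot}\cap\agot=\{0\}$, so $X=0$. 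The converse inclusion $I_{\ggot,\tggot}\cap\pgot=\{0\}\Rightarrow I_{\ggot,\tggot}\cap\agot=\{0\}$ is obvious from $\agot\subset\pgot$. The main obstacle throughout the proof is step $(1)$; once the ideal property is established, the remaining statements are short algebraic consequences.
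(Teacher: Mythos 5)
Your proposal is correct, but for the crucial point (1) it follows a genuinely different route from the paper's. The paper does not complexify: it first notes that $I_{\ggot,\tggot}$ is, directly from its definition, an ideal of $\ggot$ stable under the Cartan involution $\theta$, splits $\ggot=I_{\ggot,\tggot}\oplus(I_{\ggot,\tggot})^\perp$ into two ideals of $\ggot$ with $[I_{\ggot,\tggot},(I_{\ggot,\tggot})^\perp]=0$, and then uses invariance of $b$ and $\theta$-stability to get $([\tilde{Y},X],Z)=(\tilde{Y},[\theta(X),Z])=0$ for $(X,\tilde{Y},Z)\in I_{\ggot,\tggot}\times\tggot\times(I_{\ggot,\tggot})^\perp$; since $[\tilde{Y},X]$ already lies in $\ggot$ by the very definition of $I_{\ggot,\tggot}$ --- which is why the obstruction you raise ($[\tggot,\ggot]\not\subset\ggot$ in general) is harmless here --- orthogonality to $(I_{\ggot,\tggot})^\perp$ inside $\ggot$ yields $[\tggot,I_{\ggot,\tggot}]\subset I_{\ggot,\tggot}$. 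This is exactly the computation done in the compact case inside the proof of Lemma \ref{lem:R-tugot-ugot}, redone in the real setting. You instead reduce to that compact computation by complexifying: the identifications $(I_{\ggot,\tggot})_\C=\{X\in\ggot_\C,\ [X,\tggot_\C]\subset\ggot_\C\}=\bgot_\C$ and the descent of the ideal property from $\tggot_\C$ to the real form $\tggot$ are correct (note only that the fact you invoke --- $\bgot$ is an ideal of $\tugot$ --- is established in the \emph{proof} of Lemma \ref{lem:R-tugot-ugot}, not in its statement). Your route buys reuse of the earlier lemma at the cost of the complexification bookkeeping; the paper's direct argument is shorter and self-contained in the real setting. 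For (2) and (3) you argue as the paper does. For (4) you make explicit what the paper leaves implicit, namely the equivalence $I_{\ggot,\tggot}\cap\agot=0\Longleftrightarrow I_{\ggot,\tggot}\cap\pgot=0$, via $\Ad(K)$-stability of $I_{\ggot,\tggot}$ and $K$-conjugation of any element of $\pgot$ into $\agot$; this is a sound completion (and it would even suffice to use that $I_{\ggot,\tggot}$ is an ideal of $\ggot$, without appealing to point (1)).
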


\begin{proof}
$I_{\ggot,\tggot}:=\{X\in\ggot, [X,\tggot]\subset \ggot\}$ est, par définition, un idéal de $\ggot$ stable sous l'involution de Cartan $\theta$. 
On a la décomposition orthogonale $\ggot=I_{\ggot,\tggot}\oplus (I_{\ggot,\tggot})^\perp$ où 
$(I_{\ggot,\tggot})^\perp:=\{X\in\ggot,\ (X,Y)=0,\ \forall Y\in I_{\ggot,\tggot}\}$ est aussi un idéal de $\ggot$. 
Comme $[I_{\ggot,\tggot},(I_{\ggot,\tggot})^\perp]=0$, on remarque que $([\tilde{Y},X],Z)=(\tilde{Y},[\theta(X),Z])=0$ pour tout 
$(X,\tilde{Y},Z)\in I_{\ggot,\tggot}\times\tggot\times(I_{\ggot,\tggot})^\perp$. Ainsi 
$[\tggot, I_{\ggot,\tggot}]\subset I_{\ggot,\tggot}$:  autrement dit $I_{\ggot,\tggot}$ est un idéal de $\tggot$ contenu dans $\ggot$. 
D'autre part, si $\bgot\subset \ggot$ est un idéal de $\tggot$, on a par définition $\bgot\subset I_{\ggot,\tggot}$. Le premier point est démontré et le second est évident.

Le troisième point est clair car, pour tout $\xi\in\agot$, la condition ``$\langle\beta,\xi\rangle=0,\ \forall \beta\in \Sigma(\tggot/\ggot)$'' 
signifie que $\xi\in I_{\ggot,\tggot}\cap\agot$. Finalement, d'après le troisième point, $\Sigma(\tggot/\ggot)$ engendre $\agot^*$ 
si et seulement si $I_{\ggot,\tggot}\cap\agot = 0$, et cette dernière égalité 
est équivalente à $I_{\ggot,\tggot}\cap\pgot=0$.
\end{proof}

Dans cette section, nous utiliserons l'hypothèse suivante:
\begin{hyp}\label{hypothese-G}
L'idéal $I_{\ggot,\tggot}$ est égal à $Z(\tggot)\cap\ggot$.
\end{hyp}

Lorsque cette hypothèse est satisfaite, on voit que $I_{\ggot,\tggot}\cap\agot=I_{\ggot,\tggot}\cap\pgot=Z(\tggot)\cap\agot$.

\medskip

Un élément $\zeta\in \agot$ est dit {\em rationnel} si les valeurs propres de l'endomorphisme symétrique $\ad(\zeta):\tggot\to\tggot$ sont rationnelles.
Nous notons par $\Sigma(\tggot/\ggot)\cap \zeta^\perp$ le sous-ensemble des poids s'annulant contre $\zeta$.

\begin{definition}\label{def:admissible-reel}
Un élément $\zeta\in \agot$ est {\em admissible} s'il est {\em rationnel} et si 
\begin{equation}\label{eq:condition-zeta}
\vect\big(\Sigma(\tggot/\ggot)\cap \zeta^\perp\big)=\vect\big(\Sigma(\tggot/\ggot)\big)\cap \zeta^\perp.
\end{equation}
\end{definition}

%%%%%%%%%%%%%%%%%%%%%%%%%%%%%
\subsubsection{Calcul de Schubert}
%%%%%%%%%%%%%%%%%%%%%%%%%%%%%

Nous choisissons des tores maximaux $T\subset U$ et $\tT\subset\tU$ invariants par $\sigma$, tels que les sous-espaces correspondants 
$\agot:=\frac{1}{i}\tgot^{-\sigma}\subset\tagot:=\frac{1}{i}\ttgot^{-\sigma}$ sont ab\'eliens maximaux dans $\pgot\subset\tpgot$. Notons 
$W=N_U(T)/T$, resp. $\tW=N_{\tU}(\tT)/\tT$, les groupes de Weyl, et $W_\agot=N_K(\agot)/Z_K(\agot)$, resp. $\tW_{\tagot}=N_{\tK}(\tagot)/Z_{\tK}(\tagot)$, 
les groupes de Weyl restreints.

Nous choisissons 
des chambres de Weyl $\tgot_+$, resp. $\ttgot_+$,  de telle manière que $\agot_+:=\frac{1}{i}(\tgot^{-\sigma}\cap\tgot_+)$, resp. 
$\tagot_+:=\frac{1}{i}(\ttgot^{-\sigma}\cap\ttgot_+)$, sont des chambres de Weyl restreintes. Notons $B\subset G_\C$, resp. $\tB\subset \tG_\C$, 
les sous-groupes de Borel associés à notre choix de chambres de Weyl $\tgot_+$, resp. $\ttgot_+$.

A tout $\zeta\in\agot$, nous associons via (\ref{eq:P-gamma-reel}) les sous-groupes paraboliques $\Pbb(\zeta)\subset \tPbb(\zeta)$ et 
nous d\'efinissons les vari\'et\'es drapeaux $\Fcal_\zeta= G_\C/\Pbb(\zeta)$ et $\tFcal_\zeta= \tG_\C/\tPbb(\zeta)$. Comme 
$\Pbb(\zeta)=G_\C\cap \tPbb(\zeta)$, nous avons un morphisme canonique $\iota:\Fcal(\zeta)\croc \tFcal(\zeta)$. Nous d\'enotons 
$\iota^* : H^{*}(\tFcal(\zeta),\Z)\to  H^{*}(\tFcal(\zeta),\Z)$ le tiré en arrière en cohomologie.

Grâce aux lemmes \ref{lem:groupes-weyl-sigma} et \ref{lem:schubert-reel}, nous pouvons attacher une cellule de Bruhat 
$$
\Xgot^{o}_{w,\zeta}:=Bw\Pbb(\zeta)/\Pbb(\zeta)= \Pbb w\Pbb(\zeta)/\Pbb(\zeta)\quad \subset\quad \Fcal_\zeta
$$
\`a  tout \'el\'ement $w\in W_\agot/W_\agot^\zeta\simeq (W/W^\zeta)^\sigma$. Ici, $\Pbb$ désigne le sous-groupe parabolique $\Pbb(-\zeta_o)$ associé à 
un élément régulier $\zeta_o\in\agot_+$. On associe \`a  tout $w\in W_\agot/W_\agot^\zeta$, la vari\'et\'e de Schubert
$\Xgot_{w,\zeta}:=\overline{\Xgot^{o}_{w,\zeta}}$ et sa classe de cycle en cohomologie $[\Xgot_{w,\zeta}]\in H^{*}(\Fcal(\zeta),\Z)$.

%%%%%%%%%%%%%%%%%%%%%%%%%%%
\subsubsection{R\'esultat principal}
%%%%%%%%%%%%%%%%%%%%%%%%%%%

Soit $w_0\in W_\agot$ l'unique \'el\'ement tel que $w_0(\agot_+)=-\agot_+$. 
Le choix de la chambre de Weyl $\agot_+$ d\'efinit un syst\`eme de racines positives $\Sigma^+$, et nous d\'esignons par 
$\Ngot=\sum_{\beta\in\Sigma^+}\ggot_\beta$ la sous-alg\`ebre nilpotente r\'eelle correspondante de $\ggot$. Idem pour la définition de
$\tNgot\subset\tggot$.

Voici l'un des résultats principaux de cette monographie.

\begin{theorem}\label{theo:main-reel}  Soit $G\subset \tG$ deux groupes réductifs réels linéaires satisfaisant l'hypothèse \ref{hypothese-G}. 

Alors, pour tout $(\tilde{\xi},\xi)\in\tagot_+\times \agot_+$, on a $K\xi\subset \pi\big(\tK\txi\big)$ si et seulement si\footnote{ Ici $(-,-)$ désigne la restriction à $\tagot$ du produit scalaire invariant sur $\tugot_\C$.}
\begin{equation}\label{eq:inegalite-main-th}
(\tilde{\xi},\tilde{w}\zeta)\geq (\xi,w_0w\zeta)
\end{equation}
pour tout \'el\'ement rationnel antidominant $\zeta\in -\agot_+$ et tout $ (w,\tilde{w})\in W_\agot/W_\agot^\zeta\times 
W_{\tagot}/W_{\tagot}^\zeta$ satisfaisant les propri\'et\'es suivantes :
\begin{enumerate}\setlength{\itemsep}{8pt}
\item[a)] $\vect\big(\Sigma(\tggot/\ggot)\cap \zeta^\perp\big)=\vect\big(\Sigma(\tggot/\ggot)\big)\cap \zeta^\perp$.
\item[b)] $[\Xgot_{w,\zeta}]\cdot \iota^*([\tilde{\Xgot}_{\tilde{w},\zeta}])= [pt]$ \quad dans \quad $H^{max}(\Fcal_\zeta,\Z)$.
\item[c)] $\tr(w\zeta \circlearrowright \Ngot^{w\zeta>0})+\tr(\tw\zeta \circlearrowright \tNgot^{\tw\zeta>0})=\tr(\zeta \circlearrowright \tggot^{\zeta>0})$.
\end{enumerate}
Le r\'esultat tient toujours si l'on remplace la condition $b)$ par la condition plus faible 
$$
b')\qquad [\Xgot_{w,\zeta}]\cdot \iota^*([\tilde{\Xgot}_{\tilde{w},\zeta}])=\ell [pt],\quad \mathrm{avec}\ \ell\geq 1,\quad \mathrm{dans}\ H^{max}(\Fcal_\zeta,\Z).
$$
\end{theorem}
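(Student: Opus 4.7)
The plan is to realize $\horn_\pgot(K,\tK)$ as a real Kirwan polytope and then to combine Theorem~\ref{th:real-ressayre-pairs}, Proposition~\ref{prop:caracteriser-RP-reel} and Proposition~\ref{prop:RP-exemple-fondamental-1}, following the same logical skeleton as the proof of Theorem~\ref{theo:LR-general} but adapted to the setting with involution. First I would set up the Kähler $(\tU\times U,\sigma)$-Hamiltonian manifold $N := \tU_\C$ with the action and moment map of Example~\ref{ex:U-tilde-C-Kahler}, equipped with the antiholomorphic involution $\tau(g)=\sigma(g)$. The connected component $\Zcal := \tG$ of the real locus $N^\tau$ carries the action of $\tK \times K$, and an elementary identification (analogous to $\LR(U,\tU) \leftrightarrow \Delta_{\tugot\times\ugot}(T^*\tU)$ via the Weyl involution $\xi \mapsto -w_0\xi$) shows that $(\txi,\xi) \in \horn_\pgot(K,\tK)$ is equivalent to $(\txi,-w_0\xi) \in \Delta_\pgot(\Zcal)$. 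At this point Theorem~\ref{th:real-ressayre-pairs} applies: $\Delta_\pgot(\Zcal)$ is cut out by the inequalities $\langle \eta, \delta\rangle \geq \langle \Phi_\pgot(C), \delta\rangle$, ranging over all regular real Ressayre pairs $(\delta, C)$ of $\Zcal$.

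Second, I would translate these real Ressayre pairs into complex ones using Proposition~\ref{prop:caracteriser-RP-reel}: $(\delta, C)$ is a real Ressayre pair of $\Zcal$ if and only if $(i\delta, C)$ is a complex Ressayre pair of $N$ with $C \cap \Zcal \neq \emptyset$. For $N = \tU_\C$, Proposition~\ref{prop:RP-exemple-fondamental-1} describes these complex Ressayre pairs explicitly: they are parameterized by triples $(\gamma, w, \tw)$ where $\gamma \in \ttgot$ is admissible (Lemma~\ref{lem:point-fixe-cotangent}), $(w,\tw) \in W/W^\gamma \times \tW/\tW^\gamma$, and the Schubert-calculus conditions $(A_1),(A_2),(A_3)$ (equivalently $(B_1)$ and $(B_2)$) are satisfied, with the fixed component $C_{\gamma,\mathbf{w}} = \tw \tU_\C^\gamma w^{-1}$.

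Third, imposing the real-locus constraint $C_{\gamma,\mathbf{w}} \cap \tG \neq \emptyset$ forces two things: the weight $\gamma$ must satisfy $\sigma(\gamma) = -\gamma$, so $\gamma = i\zeta$ with $\zeta \in \agot$, and the pair $(w,\tw)$ must represent a $\tau$-invariant element of $W/W^\gamma \times \tW/\tW^\gamma$. By Lemmas~\ref{lem:groupes-weyl-sigma} and~\ref{lem:schubert-reel} of Section~\ref{sec:cadre-involution}, such $\tau$-invariant classes correspond bijectively to elements of $W_\agot/W_\agot^\zeta \times \tW_\tagot/\tW_\tagot^\zeta$. The admissibility condition $(a)$ transcribes the combination of Lemma~\ref{lem:point-fixe-cotangent} (rationality of $\gamma_{\mathbf{w}}$) and the requirement~\eqref{eq:condition-zeta}, while $(b)$ and $(c)$ are direct translations of $(B_1)$ and of the trace identity $(A_2)$ into the real setting (the exchange between $\tNgot^{\tw\zeta>0}$ and $\tNgot^{\tw\zeta<0}$ after complexification). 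The moment-map formula $\Phi_{\tugot,\ugot}(\tk\exp(i\tX)) = (\tk\tX,-\pi(\tX))$ from Example~\ref{ex:U-tilde-C-Kahler} then allows one to compute $\langle \Phi_\pgot(C_{\gamma,\mathbf{w}}), \gamma_{\mathbf{w}}\rangle$ and produces the stated inequality~\eqref{eq:inegalite-main-th} once the sign twist by $w_0$ is unwound.

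The hard part will be controlling the passage between the real and complex Schubert data. Although the cells with a $\tau$-fixed point are intrinsically real (indexed by $W_\agot$), the cohomological condition $(b)$ must still live in $H^{max}(\Fcal_\zeta,\Z)$ of the \emph{complex} flag variety rather than in $H^{max}(\Fcal_\zeta^\R,\Z_2)$; the Borel--Haefliger theorem recalled in Section~\ref{sec:cadre-involution} is what legitimizes this lift and ensures no inequalities are lost. A second technical point is that the regularity condition for real Ressayre pairs differs from that for complex ones (Remark~\ref{rem:RPreel-vs-RPstandard}), so one must verify that the real admissibility condition $(a)$ exactly compensates for this discrepancy, allowing the weaker Schubert condition $(b')$ to suffice in the final statement; this parallels the passage from $(A_1)+(A_3)$ to $(B_1)$ in the complex case via Proposition~\ref{prop:intersection-number-2}.
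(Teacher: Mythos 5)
Your overall strategy is exactly the paper's: realize $\horn_\pgot(K,\tK)$ as the real Kirwan set $\Delta_{\tpgot\times\pgot}(\tG)$ of the Kähler $(\tU\times U,\sigma)$-Hamiltonian manifold $\tU_\C$ with $\Zcal=\tG$, apply Theorem~\ref{th:real-ressayre-pairs}, pass to complex Ressayre pairs via Proposition~\ref{prop:caracteriser-RP-reel}, describe them by Propositions~\ref{prop:RP-exemple-fondamental-1}--\ref{prop:RP-exemple-fondamental-2}, restrict to $\tau$-fixed data via Lemmas~\ref{lem:groupes-weyl-sigma} and~\ref{lem:schubert-reel}, and unwind the $w_0$-twist at the end. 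However, there is a genuine gap at the one place where Hypothèse~\ref{hypothese-G} must enter: the identification of the \emph{admissible} (i.e.\ regular) elements for the real situation. You write that condition $a)$ ``transcribes the combination of Lemma~\ref{lem:point-fixe-cotangent} and the requirement~\eqref{eq:condition-zeta}'', but Lemma~\ref{lem:point-fixe-cotangent} concerns admissibility for the $\tU\times U$-action on $\tU_\C$, measured with $\dim_\ugot$ and the weight set $\Rgot(\tugot/\ugot)$. What Theorem~\ref{th:real-ressayre-pairs} requires is regularity of \emph{real} Ressayre pairs, measured with $\dim_{\tpgot\times\pgot}$ on $\Zcal=\tG$ and governed by the restricted weights $\Sigma(\tggot/\ggot)$; as you yourself note via Remark~\ref{rem:RPreel-vs-RPstandard}, these two notions do not coincide. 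You flag this as ``a second technical point'' to be verified, but verifying it is not a routine translation: it is precisely Proposition~\ref{prop:admissible-G-tilde} of the paper, whose proof computes $\dim_{\tpgot\times\pgot}\tG=\dim_\pgot\tK\tzeta_o$ and $\dim_{\tpgot\times\pgot}\tG^{(\tw\zeta,\zeta)}=\dim_{\pgot_\zeta}\tK_\zeta\tzeta_o$, identifies the generic stabilizer with $Z(\tggot)\cap\pgot$ using the results of Section~\ref{sec:stabilisateur-gen} (Proposition~\ref{prop:minimal-stabilizer} and Corollaire~\ref{coro:stabilisateur-min}), and only then shows, under $I_{\ggot,\tggot}=Z(\tggot)\cap\ggot$, that the jump $\dim_{\pgot_\zeta}\tK_\zeta\tzeta_o-\dim_\pgot\tK\tzeta_o\in\{0,1\}$ is equivalent to condition $a)$. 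Without this argument your list of inequalities is not justified to be indexed by condition $a)$, and the role of Hypothèse~\ref{hypothese-G} in the statement is never actually used.

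A smaller point: your worry that the cohomological condition $b)$ needs the Borel--Haefliger theorem to be ``lifted'' from $H^{\max}(\Fcal_\zeta^\R,\Z_2)$ is misplaced. In the paper's route (and in yours, once Proposition~\ref{prop:caracteriser-RP-reel} is invoked) the real Ressayre pair $(\zeta_{\mathbf w},C_{\zeta,\mathbf w})$ is characterized \emph{directly} by the complex conditions $(A_2)$ and $(B_1)$ (resp.\ $(B_1')$) on $\tU_\C$, so condition $b)$ lives in $H^{\max}(\Fcal_\zeta,\Z)$ from the start; Borel--Haefliger is only needed later, in Section~\ref{sec:KLM}, to compare with the weaker $\Z_2$-description of Kapovich--Leeb--Millson. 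Likewise, the sufficiency of $b')$ does not come from compensating the real/complex regularity discrepancy, but simply from the fact that Theorem~\ref{th:real-ressayre-pairs} also holds with infinitesimal real Ressayre pairs, which correspond to Ressayre pairs of finite degree, i.e.\ to $(A_2)$ plus $(B_1')$ by Proposition~\ref{prop:RP-exemple-fondamental-2}.
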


\medskip

Dans un travail à venir avec Nicolas Ressayre, nous aborderons la conjecture suivante.

\medskip

\textbf{Conjecture 1:} La liste d'inégalités (\ref{eq:inegalite-main-th}) paramétrées par les conditions $a)$, $b)$ et $c)$ est minimale lorsque aucun idéal non-nul de 
$\tggot$ n'est contenu dans $\ggot$, c'est à dire lorsque $I_{\ggot,\tggot}=0$.

%%%%%%%%%%%%%%%%%%%%%%%%%%%%%%%%%%%%%%%%%%%
\section{Comparaison avec un résultat de Kapovich-Leeb-Millson}\label{sec:KLM}
%%%%%%%%%%%%%%%%%%%%%%%%%%%%%%%%%%%%%%%%%%%

Supposons que $\tG=G^s$ pour $s\geq 2$. Ici, $\tpgot=\pgot^s$, $\pi(X_1,\cdots,X_s)=\sum_{j=1}^s X_j$, 
et l'ensemble $\Sigma(\tggot/\ggot)$ correspond à l'ensemble $\Sigma$  des poids relatifs à l'action $\agot$ sur $\ggot$. Le groupe de Weyl restreint 
$W_{\tagot}$ est égal à $W_{\agot}^s$.

Dans ce cadre, le théorème \ref{theo:main-reel}  devient 

\begin{theorem}\label{theo:delta-exemple-KLM}  Soit $(\xi_0,\xi_1,\cdots,\xi_s)\in(\agot_+)^{s+1}$. 
Nous avons  $K\xi_0\subset\sum_{i=1}^s K\xi_i$ si et seulement si\footnote{ Ici $(-,-)$ désigne la restriction à $\agot$ du produit scalaire invariant sur $\ugot_\C$.} 
$$
\sum_{i=1}^s ( \xi_i,w_i\zeta)\geq ( \xi_0,w_0 w\zeta)
$$
pour tout élément antidominant rationnel $\zeta\in-\agot_+$ et tout $(w,w_1,\cdots,w_s)\in  
(W_\agot/W_\agot^\zeta)^{s+1}$ satisfaisant les propriétés suivantes :
\begin{enumerate}\setlength{\itemsep}{8pt}
\item[a)] $\vect\big(\Sigma\cap \zeta^\perp\big)=\vect\big(\Sigma\big)\cap \zeta^\perp$.
\item[b)] $[\Xgot_{w,\zeta}]\cdot [\Xgot_{w_1,\zeta}]\cdot\ldots\cdot[\Xgot_{w_s,\zeta}]= [pt]$ dans $H^*(\Fcal_\zeta,\Z)$.
\item[c)] $\tr(w\zeta \circlearrowright \Ngot^{w\zeta>0})+\sum_{i=1}^s\tr(w_i\zeta \circlearrowright \Ngot^{w_i\zeta>0})=s\, \tr(\zeta \circlearrowright \ggot^{\zeta>0})$.
\end{enumerate}
\end{theorem}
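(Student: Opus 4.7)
Le théorème \ref{theo:delta-exemple-KLM} est, à ma lecture, la spécialisation directe du théorème \ref{theo:main-reel} au cas de l'inclusion diagonale $G\croc\tG:=G^s$. Mon plan consiste à expliciter chaque ingrédient du théorème principal dans ce contexte produit, et à vérifier au passage l'hypothèse \ref{hypothese-G}. Je commence par celle-ci: pour $X\in\ggot$, la relation $[X,\tggot]\subset\ggot$ avec $\tggot=\ggot^s$ et $\ggot$ plongé diagonalement signifie que pour tout $(Y_1,\ldots,Y_s)\in\ggot^s$, les crochets $[X,Y_i]$ sont tous égaux (car leur liste doit appartenir à la diagonale). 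En choisissant $(Y,0,\ldots,0)$ pour $Y\in\ggot$ arbitraire, cela impose $[X,Y]=0$, donc $X\in Z(\ggot)$. L'inclusion réciproque $Z(\tggot)\cap\ggot\subset I_{\ggot,\tggot}$ étant automatique, on obtient $I_{\ggot,\tggot}=Z(\ggot)=Z(\tggot)\cap\ggot$, et l'hypothèse \ref{hypothese-G} est satisfaite.

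La deuxième étape consiste à traduire chacun des objets du théorème \ref{theo:main-reel} dans ce contexte. On a $\tagot=\agot^s$, $\tagot_+=(\agot_+)^s$, $W_{\tagot}=(W_\agot)^s$, et pour un vecteur antidominant rationnel $\zeta\in-\agot_+$, on prend naturellement son relèvement diagonal $\tzeta:=(\zeta,\ldots,\zeta)\in-\tagot_+$, ce qui donne $\tFcal_{\tzeta}\simeq(\Fcal_\zeta)^s$ et $W_\tagot/W_\tagot^{\tzeta}\simeq(W_\agot/W_\agot^\zeta)^s$. Un élément $\tw=(w_1,\ldots,w_s)$ de ce quotient détermine, via la formule de Künneth, une classe $[\tXgot_{\tw,\tzeta}]=[\Xgot_{w_1,\zeta}]\times\cdots\times[\Xgot_{w_s,\zeta}]$ dans $H^*(\tFcal_{\tzeta},\Z)$. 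Puisque le plongement $\iota:\Fcal_\zeta\croc(\Fcal_\zeta)^s$ est l'inclusion diagonale, on a  $\iota^*(\alpha_1\times\cdots\times\alpha_s)=\alpha_1\cdots\alpha_s$, et la condition cohomologique $b)$ (ou $b')$) du théorème \ref{theo:main-reel} se transforme exactement en celle énoncée dans le théorème \ref{theo:delta-exemple-KLM}.

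La troisième étape traite les conditions algébriques $a)$ et $c)$. Comme $s\geq 2$, on a un isomorphisme de $\ggot$-modules $\tggot/\ggot\simeq \ggot^{s-1}$; par conséquent $\Sigma(\tggot/\ggot)=\Sigma$ est l'ensemble usuel des racines restreintes de $\ggot$, et la condition $a)$ prend la forme annoncée. Pour la condition de trace $c)$, la décomposition $\tNgot=\Ngot^s$ donne immédiatement
$$
\tr(\tw\tzeta\circlearrowright\tNgot^{\tw\tzeta>0})=\sum_{i=1}^s\tr(w_i\zeta\circlearrowright\Ngot^{w_i\zeta>0})
$$
et $\tr(\tzeta\circlearrowright\tggot^{\tzeta>0})=s\,\tr(\zeta\circlearrowright\ggot^{\zeta>0})$, si bien que $c)$ devient exactement l'équation indiquée. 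Quant à l'inégalité principale (\ref{eq:inegalite-main-th}), écrite avec $\txi=(\xi_1,\ldots,\xi_s)$ et $\tw=(w_1,\ldots,w_s)$, elle se lit $\sum_{i=1}^s(\xi_i,w_i\zeta)\geq(\xi_0,w_0 w\zeta)$.

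Aucune difficulté conceptuelle majeure ne se présente: toute la preuve est mécanique, obtenue en dépliant la donnée produit. La seule précaution réelle consiste à bien distinguer $\zeta\in\agot$ de son relèvement diagonal $\tzeta\in\tagot$ dans les formules cohomologiques et numériques, et à s'assurer que l'identification $W_{\tagot}/W_{\tagot}^{\tzeta}\simeq(W_\agot/W_\agot^\zeta)^s$ est bien compatible avec le passage aux classes de Schubert via Künneth. Si l'on souhaitait être exhaustif, on pourrait aussi remarquer que tout $\tzeta\in\tagot$ antidominant rationnel peut être remplacé, sans changer les inéquations produites, par sa moyenne diagonale, grâce à une symétrisation par $(W_\agot)^s$; ceci permet de restreindre sans perte de généralité $\tzeta$ aux vecteurs diagonaux, clôturant la réduction.
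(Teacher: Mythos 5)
Votre proposition est correcte et suit essentiellement la même voie que le texte, qui obtient le théorème \ref{theo:delta-exemple-KLM} par simple spécialisation du théorème \ref{theo:main-reel} au cas $\tG=G^s$ (en notant $\Sigma(\tggot/\ggot)=\Sigma$, $W_{\tagot}=W_\agot^s$, la formule de Künneth jointe au tiré en arrière diagonal pour la condition b), et l'additivité des traces pour c)); votre vérification explicite de l'hypothèse \ref{hypothese-G} ne fait qu'expliciter un détail laissé implicite. Seule votre remarque finale sur la symétrisation d'un $\tzeta\in\tagot$ général est superflue (et inexacte telle quelle) : le théorème \ref{theo:main-reel} paramètre déjà ses inégalités par $\zeta\in-\agot_+$, de sorte que seul le relèvement diagonal $(\zeta,\ldots,\zeta)$ intervient.
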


\medskip

On reprend les éléments de la section \ref{sec:cadre-involution}. La sous-variété de $\Fcal_\zeta$ fixée par l'involution 
anti-holomorphe est difféomorphe à la variété de drapeaux réels $\Fcal_\zeta^\R:=G/P^\R(\zeta)$.  La décomposition de Bruhat 
$\Fcal_\gamma^\R=\bigcup_{w\in W_{\agot}/W_{\agot}^\zeta} P^\R[w]$ permet de montrer que les classes de cycles 
$[\Xgot^\R_{w,\gamma}], w\in W_{\agot}/W_{\agot}^\zeta$ associées aux variétés Schubert réelles 
$\Xgot_{w,\gamma}^\R:=\overline{P^\R[w]}$ définissent une base de la cohomologie $H^*(\Fcal_\zeta^\R,\Z_2)$ à coefficients dans $\Z_2$ (voir \cite{Tak65} 
 et \cite{DKV83}).

Comme les variétés de Schubert réelles $\Xgot_{v,\zeta}^\R$ correspondent à la partie réelle des variétés de Schubert complexes $\Xgot_{v,\zeta}$, 
un résultat de Borel et Haefliger \cite[Proposition 5.14]{BH61} nous dit que la relation
 $$
 b)\qquad  [\Xgot_{w,\zeta}]\cdot [\Xgot_{w_1,\zeta}]\cdot\ldots\cdot[\Xgot_{w_{s+1},\zeta}]= [pt]\quad \mathrm{dans}\quad H^{max} (\Fcal_\zeta,\Z).
 $$ 
 implique 
 $$
 b^{\R})\qquad[\Xgot^\R_{w,\zeta}]\cdot  [\Xgot^\R_{w_1,\zeta}]\cdot \ldots\cdot[\Xgot^\R_{w_{s+1},\zeta}]= [pt]\quad  \mathrm{dans}\quad 
 H^{max}(\Fcal_\zeta^\R,\Z_2).
 $$

La description de $\horn_{\pgot}(K^s,K)$ obtenue par Kapovich-Leeb-Millson \cite{KLM-memoir-08} était en termes d'éléments
$(\zeta,w,w_1,\cdots,w_s)\in\agot\times W_\agot^{s+1}$ vérifiant les conditions $a)$ et $b^{\R})$. 
Notre description est donc plus précise, d'abord en ajoutant la condition de Lévi-mobilité $c)$, puis en prenant la condition affinée $b)$.

%%%%%%%%%%%%%%%%%%%%%%%%%%%%%%%%%%%%%%%%%%%%%%%%%%%%%%
%%%%%%%%%%%%%%%%%%%%%%%%%%%%%%%%%%%%%%%%%%%%%%%%%%%%%%
\section{Preuve du théorème \ref{theo:main-reel}}
%%%%%%%%%%%%%%%%%%%%%%%%%%%%%%%%%%%%%%%%%%%%%%%%%%%%%%
%%%%%%%%%%%%%%%%%%%%%%%%%%%%%%%%%%%%%%%%%%%%%%%%%%%%%%

Nous allons expliquer comment le théorème \ref{theo:main-reel} est une conséquence du 
théorème \ref{th:real-ressayre-pairs} appliqué à la vari\'et\'e de K\"{a}hler $(\tU\times U,\sigma)$-hamiltonienne $\tU_\C$. On travaille ici avec  $\Zcal=\tG$ qui est 
une composante connexe de $(\tU_\C)^\tau$.

%%%%%%%%%%%%%%%%%%%%%%%%%%%%%%%%%%%%%%%%%%%%%%%%%%%%%
\subsection{Eléments admissibles } 
%%%%%%%%%%%%%%%%%%%%%%%%%%%%%%%%%%%%%%%%%%%%%%%%%%%%%

Nous considérons $\tG$ comme une sous-variété de $\tU_\C$ équipée de l'action $\tG\times G$ suivante : $(\tg,g)\cdot \tilde{x}=\tg\tilde{x} g^{-1}$.
Soit $(\tzeta,\zeta)$ un élément de  $\tagot\times \agot$. Dans le lemme suivant, nous décrivons la variété $\tG^{(\tzeta,\zeta)}$ des zéros du champ de vecteurs engendré par
$(\tzeta,\zeta)$. Dans la suite, nous notons $\tG_\zeta=\{\tg\in\tG, \tg\zeta=\zeta\}$ le sous-groupe stabilisateur de $\zeta\in\agot$.

\begin{lem}\label{lem:point-fixe}
Soit $(\tzeta,\zeta)\in\tagot\times \agot$.
\begin{itemize}\setlength{\itemsep}{8pt}
\item $\tG^{(\tzeta,\zeta)}\neq\emptyset$ si et seulement si  $\tzeta\notin \tW_\agot \zeta$.
\item Si $\tzeta=\tw \zeta$ avec $\tw\in \tW_\agot$, alors $\tG^{(\tzeta,\zeta)}=\tw\cdot \tG_\zeta$.
\end{itemize}
\end{lem}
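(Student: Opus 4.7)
Mon plan est d'abord de traduire la condition ``$\tilde{x}\in\tG^{(\tzeta,\zeta)}$'' en une relation de conjugaison. Puisque l'action de $\tG\times G$ sur $\tG$ est donnée par $(\tg,g)\cdot\tilde{x}=\tg\tilde{x}g^{-1}$, le champ de vecteur engendré par $(\tzeta,\zeta)\in\tagot\times\agot$ s'écrit $\tilde{x}\mapsto -\tzeta\tilde{x}+\tilde{x}\zeta$, et s'annule donc si et seulement si $\Ad(\tilde{x}^{-1})\tzeta=\zeta$, ou, de manière équivalente, $\Ad(\tilde{x})\zeta=\tzeta$. Autrement dit,
$$
\tG^{(\tzeta,\zeta)}=\Big\{\tilde{x}\in\tG,\ \Ad(\tilde{x})\zeta=\tzeta\Big\}.
$$

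La première assertion est donc équivalente au fait que deux éléments de $\tagot$ sont $\tG$-conjugués si et seulement si ils sont $\tW_\agot$-conjugués (l'implication triviale étant $\tW_\agot=N_{\tK}(\tagot)/Z_{\tK}(\tagot)\subset\tG/Z_{\tK}(\tagot)$). Pour le sens non trivial, je m'appuierais sur le résultat classique suivant : toute $\tG$-orbite dans $\tpgot$ rencontre la sous-algèbre abélienne maximale $\tagot$ le long d'une unique $\tW_\agot$-orbite (voir par exemple \cite{Knapp-book}, \S VII.2). Comme $\zeta,\tzeta\in\tagot\subset\tpgot$, la relation $\Ad(\tilde{x})\zeta=\tzeta$ impose ainsi $\tzeta\in\tW_\agot\zeta$.

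Pour la seconde assertion, je fixerais un représentant $\tw\in N_{\tK}(\tagot)$ (noté encore $\tw$) de la classe $\tw\in\tW_\agot$ vérifiant $\Ad(\tw)\zeta=\tzeta$. Pour tout $\tilde{x}\in\tG$, la relation $\Ad(\tilde{x})\zeta=\tzeta$ équivaut alors à $\Ad(\tw^{-1}\tilde{x})\zeta=\zeta$, c'est-à-dire à $\tw^{-1}\tilde{x}\in\tG_\zeta$, d'où l'égalité $\tG^{(\tzeta,\zeta)}=\tw\cdot\tG_\zeta$.

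L'obstacle principal est ici essentiellement cosmétique : la démonstration repose entièrement sur le théorème classique de conjugaison des sous-espaces abéliens maximaux d'un espace symétrique, et une fois celui-ci invoqué, tout le reste est un argument direct de centralisation. Signalons que, par cohérence avec la seconde affirmation, la première doit se lire \og$\tG^{(\tzeta,\zeta)}\neq\emptyset$ si et seulement si $\tzeta\in\tW_\agot\zeta$\fg{} (le signe \og$\notin$\fg{} de l'énoncé étant manifestement une coquille).
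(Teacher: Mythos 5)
Votre démonstration est correcte et suit essentiellement la même voie que celle du texte : le papier réduit lui aussi l'énoncé au fait que $\tG^{(\tzeta,\zeta)}\neq\emptyset$ équivaut à $\tzeta\in\tG\zeta$, puis invoque l'égalité classique $\tG\zeta\cap\tagot=\tW_{\tagot}\zeta$; votre traitement explicite du champ de vecteurs et de la seconde assertion ne fait que détailler le même argument. Votre remarque finale est également juste : le \og$\notin$\fg{} de l'énoncé est bien une coquille pour \og$\in$\fg.
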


\begin{proof} L'ensemble $\tG^{(\tzeta,\zeta)}$ est non vide si et seulement si $\tzeta$ appartient à l'orbite adjointe $\tG \zeta$. Le lemme découle alors du fait que 
l'intersection $\tG \zeta\cap \tagot$ est égale à $W_{\tagot} \zeta$.  
\end{proof}

\medskip

Rappelons la definition \ref{def:reel-admissible} dans notre situation : $(\tzeta,\zeta)$ est dit {\em admissible} par rapport à l'action $\tG\times G\circlearrowright \tG$ si 
$(\tzeta,\zeta)$ est rationnel, et si $\dim_{\tpgot\times\pgot}(\tG^{(\tzeta,\zeta)})-\dim_{\tpgot\times\pgot}(\tG)\in\{0,1\}$.

Le but principal de cette section est de prouver le résultat suivant.

\begin{prop}\label{prop:admissible-G-tilde}
Soit $G\subset \tG$ deux groupes réductifs réels linéaires satisfaisant l'hypothèse \ref{hypothese-G}. 

Pour tout $\tw\in\tW$, le couple $(\tw\zeta,\zeta)$ est admissible pour l'action $\tG\times G\circlearrowright \tG$ 
si et seulement si $\zeta$ est rationnel et satisfait la relation (\ref{eq:condition-zeta}).
\end{prop}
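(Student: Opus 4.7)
The plan is to derive the proposition from its complex counterpart, Lemma~\ref{lem:point-fixe-cotangent}, applied to the pair $(\tw(i\zeta), i\zeta) \in \ttgot \times \tgot$ acting on $\tU_\C$. The underlying observation is that $\tG$ sits in $\tU_\C$ as a connected component of the $\tau$-fixed locus, so real dimensions of stabilizers on $\tG$ should be readable as $(-\sigma)$-parts of complex stabilizer dimensions on $\tU_\C$.

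First, I would reduce to $\tw = \mathrm{id}$. Lifting $\tw$ to $\tilde{w} \in N_{\tK}(\tagot)$, left multiplication $L_{\tilde{w}^{-1}} : \tG \to \tG$ sends $\tG^{(\tw\zeta, \zeta)}$ diffeomorphically onto $\tG_\zeta = \tG^{(\zeta,\zeta)}$ (cf.\ Lemma~\ref{lem:point-fixe}); since $\mathrm{Ad}(\tilde{w})$ preserves $\tpgot$, this diffeomorphism preserves the dimension of $(\tpgot \oplus \pgot)$-stabilizers at every point, so admissibility of $(\tw\zeta, \zeta)$ is equivalent to admissibility of $(\zeta, \zeta)$. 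Next, at any real point $\tx \in \tG$, the $\sigma$-equivariance of the $\tU \times U$-action makes the complex stabilizer subspace $(\tugot \oplus \ugot)_{\tx}$ into a $\sigma$-stable subspace of $\tugot \oplus \ugot$, and multiplication by $i$ identifies its $(-\sigma)$-part with $(\tpgot \oplus \pgot)_{\tx}$. Combining this with Proposition~\ref{prop:minimal-stabilizer} and the principal cross-section Theorem~\ref{theo:principal-cross-section}, one obtains the key identity
\[
\dim_{\tpgot \oplus \pgot}(\tG^{(\zeta,\zeta)}) - \dim_{\tpgot \oplus \pgot}(\tG) = \dim_{\tugot \oplus \ugot}(\tU_\C^{(i\zeta, i\zeta)}) - \dim_{\tugot \oplus \ugot}(\tU_\C),
\]
showing that admissibility of $(\zeta, \zeta)$ in the real sense is equivalent to admissibility of $(i\zeta, i\zeta)$ in the complex sense.

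By Lemma~\ref{lem:point-fixe-cotangent}, the latter is equivalent to $i\zeta$ being rational in $\tgot$ and satisfying $\vect(\Rgot(\tugot/\ugot) \cap (i\zeta)^\perp) = \vect(\Rgot(\tugot/\ugot)) \cap (i\zeta)^\perp$. Rationality of $i\zeta$ in $\tgot$ is equivalent to rationality of $\zeta$ in the sense of Definition~\ref{def:admissible-reel}, since both conditions amount to $\mathrm{ad}(\zeta)$ having rational eigenvalues. The linear-span condition transfers through restriction: each $\alpha \in \Rgot(\tugot/\ugot)$ either vanishes identically on $i\agot \subset \tgot$ or restricts to $\pm i\beta$ for some $\beta \in \Sigma(\tggot/\ggot)$, and $\alpha$ vanishes on $i\zeta$ iff its restriction vanishes on $\zeta$. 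Hypothesis~\ref{hypothese-G}, namely $I_{\ggot,\tggot} = Z(\tggot) \cap \ggot$, is precisely what forces the weights in $\Rgot(\tugot/\ugot)$ that vanish identically on $i\agot$ to correspond only to central elements; these therefore contribute neither to the complex span nor to the real span, and the complex linear-span condition becomes equivalent to (\ref{eq:condition-zeta}).

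The main obstacle is the generic-dimension matching in the middle step: proving that the minimum of $\dim(\tugot \oplus \ugot)_{\tx}$ over $\tx \in \tU_\C$ is attained on the real locus $\tG$, and that its $(-\sigma)$-part coincides with the generic value of $\dim(\tpgot \oplus \pgot)_{\tx}$ over $\tG$. This requires running the argument of Section~\ref{sec:stabilisateur-gen} in parallel for the complex and real actions, using that the principal cross-section $\Tcal_{\tsgot}$ lies in $\tG$ and is simultaneously principal for the ambient complex action modulo the central factor controlled by $I_{\ggot, \tggot}$; without Hypothesis~\ref{hypothese-G}, extra ideals inside $\ggot$ would introduce a discrepancy between the real and complex generic stabilizers.
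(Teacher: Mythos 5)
Your reduction to $\tw=\mathrm{id}$ and the transfer of rationality are fine, but the central step — the claimed identity
$\dim_{\tpgot\times\pgot}(\tG^{(\zeta,\zeta)})-\dim_{\tpgot\times\pgot}(\tG)=\dim_{\tugot\times\ugot}(\tU_\C^{(i\zeta,i\zeta)})-\dim_{\tugot\times\ugot}(\tU_\C)$, i.e.\ the equivalence between real admissibility of $(\zeta,\zeta)$ for $\tG\times G\circlearrowright\tG$ and complex admissibility of $(i\zeta,i\zeta)$ for $\tU\times U\circlearrowright\tU_\C$ — is false, so the whole route through Lemma \ref{lem:point-fixe-cotangent} collapses. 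Concretely, take $G=\upU(p,q)$ embedded diagonally in $\tG=\upU(p,q)\times\upU(p,q)$ (the setting of $\sing(p,q)$, with $p>q$ say), and $\zeta=\zeta_r\in\agot$ with diagonal values $-1$ ($r$ times), $0$ ($n-2r$ times), $+1$ ($r$ times). By Lemma \ref{lem:admissible-u-p-q} this $\zeta_r$ is admissible for the real action (the restricted-root span condition in $\agot^*$ holds, the dimension jump is $1$). But $i\zeta_r\in\tgot$ has three distinct eigenvalue blocks, and for the diagonal embedding $\upU_n\subset\upU_n\times\upU_n$ the set $\Rgot(\tugot/\ugot)$ is the type $A_{n-1}$ root system; the roots vanishing on $i\zeta_r$ span a space of codimension $2$ inside $\vect(\Rgot(\tugot/\ugot))\cap(i\zeta_r)^\perp$-plus-one, so the complex span condition fails and the complex stabilizer jump is $2$. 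Thus real admissibility does not imply complex admissibility: the jump for the real action is governed by a hyperplane condition on $\Sigma(\tggot/\ggot)\subset\agot^*$, the jump for the complex action by one on $\Rgot(\tugot/\ugot)\subset\tgot^*$, and equality of spans of restrictions in $\agot^*$ does not lift to $\tgot^*$ (weights with nontrivial $\tgot^\sigma$-components matter upstairs but are invisible downstairs). This is the same phenomenon the paper flags in Remark \ref{rem:RPreel-vs-RPstandard}: the notions of pair coincide, but the \emph{regularity/admissibility} conditions genuinely differ between the real and complex settings — which is precisely why the real cones (e.g.\ $\sing(p,q)$, $\Ecal_{\mathrm{I}}(n)$) are cut out by far fewer inequalities than their complex counterparts.

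What the paper actually does is stay entirely on the real side: using the Cartan decomposition one shows $\dim_{\tpgot\times\pgot}\tG=\dim_{\pgot}\tK\tzeta_o$ and $\dim_{\tpgot\times\pgot}\tG^{(\tw\zeta,\zeta)}=\dim_{\pgot_\zeta}\tK_\zeta\tzeta_o$ for a regular $\tzeta_o\in\tagot$; then, via Proposition \ref{prop:minimal-stabilizer} and the equivalence $\kgot+\tkgot_\xi=\tkgot\Longleftrightarrow[\xi,\tggot]\subset\ggot$, Hypothesis \ref{hypothese-G} identifies the generic stabilizer of $\pgot\circlearrowright\tK\tzeta_o$ with $Z(\tggot)\cap\pgot$, and a direct computation with $\Sigma(\tggot/\ggot)$ (together with Corollary \ref{coro:stabilisateur-min} for the converse) shows that the jump $\dim_{\pgot_\zeta}\tK_\zeta\tzeta_o-\dim_{\pgot}\tK\tzeta_o$ lies in $\{0,1\}$ exactly when (\ref{eq:condition-zeta}) holds. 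If you want to salvage your strategy, you would have to replace the complex comparison by this intrinsic real argument; also note that your final step silently replaces the condition on $\Rgot(\tugot/\ugot)$ in $\tgot^*$ by its restriction to $\agot^*$, which is an independent unproved (and, as above, false in general) equivalence.
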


La preuve de la proposition \ref{prop:admissible-G-tilde} est une conséquence du lemme suivant.

\begin{lem}Soit $\zeta\in\agot$ et $\tzeta_o\in\tagot$ un élément régulier.
\begin{enumerate}\setlength{\itemsep}{8pt}
\item Nous avons
$\dim_{\tpgot\times\pgot}\tG=\dim_{\pgot}\tK\tzeta_o$\  et \ $\dim_{\tpgot\times\pgot}\tG^{(\tilde{w}\zeta,\zeta)}=\dim_{\pgot_\zeta}\tK_\zeta\tzeta_o$.
\item Sous l'hypothèse \ref{hypothese-G}, nous avons
\begin{itemize}\setlength{\itemsep}{8pt}
\item $\dim_{\tpgot\times\pgot}\tG=\dim Z(\tggot)\cap\agot$.
\item $\dim_{\pgot_\zeta}\tK_\zeta\tzeta_o-\dim_{\pgot}\tK\tzeta_o\in\{0,1\}$ si et seulement si 
$$
\vect\big(\Sigma(\tggot/\ggot)\cap \zeta^\perp\big)=\vect\big(\Sigma(\tggot/\ggot)\big)\cap \zeta^\perp.
$$
\end{itemize}
\end{enumerate}
\end{lem}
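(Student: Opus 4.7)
La stratégie pour le point 1 repose sur la décomposition de Cartan $\tG = \tK\exp(\tagot_+)\tK$. Combinée à l'équivariance sous l'action $\tK\times K$ (qui préserve la fonction $\tilde{x}\mapsto \dim(\tpgot\times\pgot)_{\tilde{x}}$), celle-ci permet de ramener le calcul de $\dim_{\tpgot\times\pgot}\tG$ à l'étude du stabilisateur infinitésimal en un point générique de la forme $\tilde{x} = e^{\tzeta_o}\tk$, avec $\tzeta_o\in\tagot$ régulier et $\tk\in\tK$. L'équation $A\tilde{x} - \tilde{x}B = 0$ pour $(A,B)\in\tpgot\times\pgot$ se réécrit $e^{-\ad\tzeta_o}A = \Ad(\tk)B$. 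L'analyse par espaces de racines restreintes $\tggot = \tilde{\mgot}\oplus\tagot\oplus\bigoplus_{\alpha}\tggot_\alpha$ (avec $\theta(\tggot_\alpha)=\tggot_{-\alpha}$), combinée à la régularité de $\tzeta_o$, force $A\in\tagot$~: les conditions $A\in\tpgot$ et $e^{-\ad\tzeta_o}A\in\tpgot$ impliquent en effet que les composantes de $A$ dans chaque paire $\tggot_\alpha\oplus\tggot_{-\alpha}$ s'annulent (puisque $e^{\alpha(\tzeta_o)}\neq e^{-\alpha(\tzeta_o)}$ pour $\alpha\neq 0$). On obtient alors $A\in\tagot$ et $B = \Ad(\tk^{-1})A\in\pgot\cap\Ad(\tk^{-1})\tagot$, ce qui identifie $(\tpgot\times\pgot)_{\tilde{x}}$ avec $\pgot\cap\Ad(\tk^{-1})\tagot = \pgot_{\Ad(\tk^{-1})\tzeta_o}$. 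Prendre le minimum sur $\tk\in\tK$ fournit l'identité $\dim_{\tpgot\times\pgot}\tG = \dim_\pgot\tK\tzeta_o$. La seconde identité s'obtient par le même argument appliqué au sous-groupe réductif $\tG_\zeta = Z_\tG(\zeta)$, en utilisant le lemme \ref{lem:point-fixe} pour ramener $\tG^{(\tw\zeta,\zeta)}=\tw\tG_\zeta$ à $\tG_\zeta$ par conjugaison, ainsi que le fait que $\tagot$ demeure abélienne maximale dans $\tpgot_\zeta$ (car $\zeta\in\agot\subset\tagot$) et que $\Ad(\tk_\zeta)\tagot\subset\tggot_\zeta$ pour $\tk_\zeta\in\tK_\zeta$, garantissant l'identification finale $\pgot\cap\Ad(\tk_\zeta^{-1})\tagot = (\pgot_\zeta)_{\Ad(\tk_\zeta^{-1})\tzeta_o}$.

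Pour le point 2, j'appliquerais la proposition \ref{prop:minimal-stabilizer} à l'orbite $\tK\tzeta_o\subset\tpgot$ vue comme $K$-variété~: elle fournit un sous-espace $\hgot\subset\pgot$ (le stabilisateur générique) satisfaisant $\hgot\subset\pgot_x$ pour tout $x$ avec égalité sur un ouvert dense. D'après l'analyse précédente, $\hgot\subset\pgot\cap\tagot = \agot$. Par $\tK$-équivariance et continuité, les éléments de $\hgot$ centralisent toute l'orbite $\tK\tzeta_o$, donc le sous-espace $\tK$-invariant qu'elle engendre dans $\tpgot$. L'hypothèse \ref{hypothese-G} (équivalente à $I_{\ggot,\tggot}\cap\agot = Z(\tggot)\cap\agot$) assure que cette enveloppe engendre $\tpgot$ modulo les directions centrales, ce qui force $\hgot\subset Z(\tggot)\cap\agot$~; l'inclusion inverse étant triviale, on obtient $\dim_{\tpgot\times\pgot}\tG = \dim(Z(\tggot)\cap\agot)$.

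Pour la dernière équivalence, le même raisonnement appliqué au couple $(\ggot_\zeta,\tggot_\zeta)$ fournirait $\dim_{\pgot_\zeta}\tK_\zeta\tzeta_o = \dim(I_{\ggot_\zeta,\tggot_\zeta}\cap\agot)$. Grâce à la caractérisation $I\cap\agot = \vect(\Sigma)^\perp\cap\agot$ (pour les idéaux respectifs et les racines restreintes $\Sigma(\tggot/\ggot)$, puis $\Sigma(\tggot_\zeta/\ggot_\zeta)=\Sigma(\tggot/\ggot)\cap\zeta^\perp$), la différence des dimensions vaut
\[
\dim\vect(\Sigma(\tggot/\ggot)) - \dim\vect(\Sigma(\tggot/\ggot)\cap\zeta^\perp),
\]
qu'on décompose comme la somme de $\dim\vect(\Sigma(\tggot/\ggot)) - \dim(\vect(\Sigma(\tggot/\ggot))\cap\zeta^\perp)\in\{0,1\}$ (intersection avec l'hyperplan $\zeta^\perp$) et de $\dim(\vect(\Sigma(\tggot/\ggot))\cap\zeta^\perp) - \dim\vect(\Sigma(\tggot/\ggot)\cap\zeta^\perp)\geq 0$. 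La différence totale appartient donc à $\{0,1\}$ si et seulement si le second terme s'annule, c'est-à-dire si $\vect(\Sigma(\tggot/\ggot)\cap\zeta^\perp) = \vect(\Sigma(\tggot/\ggot))\cap\zeta^\perp$. L'obstacle principal sera d'établir rigoureusement l'identité $\dim_{\pgot_\zeta}\tK_\zeta\tzeta_o = \dim(I_{\ggot_\zeta,\tggot_\zeta}\cap\agot)$, qui requiert une transposition adéquate de l'hypothèse \ref{hypothese-G} au couple centralisateur (ou, à défaut, l'établissement direct de la conclusion dimensionnelle requise par un argument propre à la structure de Levi).
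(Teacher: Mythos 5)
Votre traitement du point 1 est correct et suit pour l'essentiel la même démarche que l'article (identification du stabilisateur infinitésimal en un point dont la partie de $\tpgot$ est régulière avec $\pgot\cap\Ad(\tk)(\tagot)$, puis minimisation sur $\tk\in\tK$, resp. $\tk\in\tK_\zeta$). En revanche, le point 2 présente une lacune réelle. L'affirmation \og par $\tK$-équivariance et continuité, les éléments de $\hgot$ centralisent toute l'orbite $\tK\tzeta_o$ \fg{} n'est pas justifiée et est fausse en général : la proposition \ref{prop:minimal-stabilizer} dit seulement que, pour chaque $\txi\in\tK\tzeta_o$, un conjugué $\Ad(k)(\hgot)$ --- avec $k$ dépendant de $\txi$ --- est contenu dans $\pgot_{\txi}$ ; le sous-espace $\hgot$ lui-même ne fixe pas l'orbite point par point. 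S'il le faisait, on en déduirait $\hgot\subset Z(\tggot)$ sans l'hypothèse \ref{hypothese-G}, conclusion fausse sans cette hypothèse (prendre $\ggot=\tggot$ semi-simple, où $\hgot$ est conjugué à $\agot$). Ce que l'on tire réellement de la proposition, c'est l'égalité $K\,(\tK\tzeta_o)^{\xi'}=\tK\tzeta_o$ pour $\xi'\in\hgot$ ; tout le travail consiste alors à convertir cette condition de recouvrement, via la surjectivité d'une application tangente en un point intérieur (qui se réduit à $\kgot+\tkgot_\xi=\tkgot$), en la relation $[\xi,\tggot]\subset\ggot$, c'est-à-dire $\xi\in I_{\ggot,\tggot}$ ; c'est seulement ensuite que l'hypothèse \ref{hypothese-G} intervient. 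Cette étape, qui est le c\oe ur de la preuve, manque dans votre proposition. (Accessoirement, $\hgot\subset\agot$ ne découle pas de votre point 1 : $\hgot$ n'est que $K$-conjugué aux stabilisateurs génériques ; il suffit en fait de conjuguer chaque élément de $\hgot$ dans $\agot$.)

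Pour la dernière équivalence, l'identité $\dim_{\pgot_\zeta}\tK_\zeta\tzeta_o=\dim\bigl(I_{\ggot_\zeta,\tggot_\zeta}\cap\agot\bigr)$ que vous signalez vous-même comme l'obstacle principal n'est pas un simple point technique : rien ne garantit que la paire $(\ggot_\zeta,\tggot_\zeta)$ satisfasse l'analogue de l'hypothèse \ref{hypothese-G}, et sans une telle hypothèse cette identité n'a aucune raison d'être vraie (déjà l'égalité $\dim_{\pgot}\tK\tzeta_o=\dim\bigl(I_{\ggot,\tggot}\cap\agot\bigr)$ repose sur l'hypothèse \ref{hypothese-G}). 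L'article contourne précisément cette difficulté : sous la condition d'engendrement, il montre directement que le stabilisateur générique $\hgot_\zeta$ de l'action $\pgot_\zeta\circlearrowright\tK_\zeta\tzeta_o$ est égal à $Z(\tggot)\cap\pgot\oplus\R\zeta$ --- chaque élément de $\hgot_\zeta$ vérifie $[\xi,\tggot_\zeta]\subset\ggot_\zeta$ par le même argument tangentiel, et la condition d'engendrement identifie $\{\xi\in\agot,\ [\xi,\tggot_\zeta]\subset\ggot_\zeta\}$ à $Z(\tggot)\cap\agot\oplus\R\zeta$ --- d'où une différence de dimensions égale à $1$ (ou $0$ si $\zeta\in Z(\tggot)$) ; et lorsque la condition échoue, il exhibe $\xi\notin Z(\tggot)\cap\agot\oplus\R\zeta$ tel que $[\xi,\tggot_\zeta]\subset\ggot_\zeta$ et invoque le corollaire \ref{coro:stabilisateur-min} pour obtenir $\dim_{\pgot_\zeta}\tK_\zeta\tzeta_o\geq\dim\bigl(Z(\tggot)\cap\agot\bigr)+2$. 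Votre décompte purement linéaire des dimensions ne devient disponible qu'après ce travail géométrique, qu'il vous faudrait donc fournir.
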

\begin{proof} Pour tout $\tg\in \tG$, le stabilisateur infinitésimal $(\tpgot\times\pgot)_{\tg}:=\{(\tX,X)\in \tpgot\times\pgot, (\tX,X)\cdot\tg=0\}$ 
admet une identification canonique avec $\Ad(\tg)(\pgot)\cap\tpgot$. Si nous écrivons 
$\tg=\tk e^{\tY}$, avec $\tk\in\tK$ et $\tY\in\tpgot$, nous voyons que $\Ad(\tg)(\pgot)\cap\tpgot\simeq\Ad(e^{\tY})(\pgot)\cap\tpgot=\pgot\cap\tpgot_{\tY}$.
Ainsi, $\dim_{\tpgot\times\pgot}\tG=\min_{\tY\in\tpgot}\dim(\pgot\cap\tpgot_{\tY})$. Si nous écrivons $\tY=\tk\cdot\tzeta$, avec $\tzeta\in\tagot$, nous avons
$\Ad(\tk)(\tagot)\subset \tk\cdot\tpgot_{\tzeta}=\tpgot_{\tY}$, et l'égalité $\Ad(\tk)(\tagot)=\tpgot_{\tY}$ est vérifiée lorsque $\tY$ est régulier. Donc
$\dim_{\tpgot\times\pgot}\tG=\min_{\tk\in\tK}\dim(\pgot\cap \Ad(\tk)(\tagot))$. D'autre part, puisque $\tzeta_o\in\tagot$ est un élément régulier, il est immédiat que
 $\dim_{\pgot}\tK\tzeta_o=\min_{\tk\in\tK}\dim(\pgot\cap\Ad(\tk)(\tagot))$. Nous obtenons finalement que $\dim_{\tpgot\times\pgot}\tG=\dim_{\pgot}\tK\tzeta_o$.
 
Puisque $\dim_{\tpgot\times\pgot}\tG^{(\tilde{w}\zeta,\zeta)}= \dim_{\tpgot\times\pgot}\tG_{\zeta}$, l'identité $\dim_{\tpgot\times\pgot}\tG^{(\tilde{w}\zeta,\zeta)}=
\dim_{\pgot_\zeta}\tK_\zeta\txi_o$ admet la même preuve que précédemment. On voit d'abord que
$$
\dim_{\tpgot\times\pgot}\tG_{\zeta}=\min_{\tk\in\tK_\zeta}\dim(\pgot\cap\Ad(\tk)(\tagot))=\min_{\tk\in\tK_\zeta}\dim(\pgot_\zeta\cap\Ad(\tk)(\tagot))
$$
puis on utilise que $\min_{\tk\in\tK_\zeta}\dim(\pgot_\zeta\cap\Ad(\tk))=\dim_{\pgot_\zeta}\tK_\zeta\tzeta_o$.
Le premier point est démontré.

Rappelons que l'orbite $\tK\tzeta_o$ est la partie réelle de l'orbite coadjointe $\tU\tzeta_o$. Grâce à la proposition \ref{prop:minimal-stabilizer}, nous savons 
qu'il existe un sous-espace $\hgot\subset \pgot$ tel que  
\begin{equation}\label{eq:stabilisateur-generique}
\forall \txi\in \tK\tzeta_o,\quad \exists k\in K\quad \mathrm{tel\ que}\quad \Ad(k)(\hgot)\subset \pgot_{\txi},
\end{equation} 
et $\dim(\hgot)=\dim(\pgot_{\txi})$ sur un sous-ensemble ouvert dense de $\tK\tzeta_o$ : $\hgot$ est le stabilisateur générique pour l'action infinitésimale de $\pgot$ sur 
$\tK\tzeta_o$ et  $\dim_{\pgot}(\tK\tzeta_o)=\dim(\hgot)$. 

Vérifions que sous l'hypothèse \ref{hypothese-G},  nous avons $\hgot=Z(\tggot)\cap\pgot$.

Si $\xi'\in\hgot$, la relation (\ref{eq:stabilisateur-generique}) implique que $K(\tK\tzeta_o)^{\xi'}=\tK\tzeta_o$. Écrivons $\xi'=\Ad(k')(\xi)$ avec 
$\xi\in\agot$ et $k'\in K$. Puisque $(\tK\tzeta_o)^{\xi}=\cup_{\tw\in W_{\tagot}}\tK_\xi\tw\tzeta_o$, la relation $K(\tK\tzeta_o)^{\xi'}=\tK\tzeta_o$ 
implique qu'un sous-ensemble $K\tK_\xi\tw\tzeta_o\subset \tK\tzeta_o$ a un intérieur non vide. Soit $g=k\tk\tw\in K\tK_\xi\tw$ tel que $g\tzeta_o$ appartienne à l'intérieur 
de $K\tK_\xi\tw\tzeta_o$ : cela implique l'identité 
\begin{equation}\label{eq:submersion}
\kgot+\Ad(k)(\tkgot_\xi)+\Ad(g)(\tkgot_{\tzeta_o})=\tkgot.
\end{equation}
Mais $\Ad(g)(\tkgot_{\tzeta_o})=\Ad(k\tk)(\tkgot_{\tzeta_o})\subset\Ad(k)(\tkgot_{\xi})$, donc (\ref{eq:submersion}) est équivalent à 
$\kgot+\tkgot_\xi=\tkgot$. Maintenant, un calcul standard montre que 
$$
\kgot+\tkgot_\xi=\tkgot\Longleftrightarrow [\xi,\tkgot]\subset\pgot\Longleftrightarrow [\xi,\tpgot]\subset\kgot\Longleftrightarrow [\xi,\tggot]\subset\ggot
\Longleftrightarrow [\xi',\tggot]\subset\ggot.
$$
Nous avons prouvé que tout élément $\xi'\in\hgot\subset\pgot$ appartient à l'idéal $I_{\ggot,\tggot}:=\{X\in\ggot, [X,\tggot]\subset \ggot\}$. 
Grâce à l'hypothèse \ref{hypothese-G}, nous savons que $I_{\ggot,\tggot}=Z(\tggot)\cap\ggot$, et donc que $\hgot\subset Z(\tggot)\cap\agot$.
D'autre part, il est immédiat que $Z(\tggot)\cap\agot\subset \pgot_{\txi}$ pour tout $\txi\in\tK\tzeta_o$. Nous avons prouvé que l'égalité 
$Z(\tggot)\cap\pgot=\pgot_{\txi}$ est vérifiée sur un sous-ensemble ouvert de $\tK\tzeta_o$. La première partie du deuxième point est démontrée.

Concentrons-nous sur la dernière partie du deuxième point. Soit $\zeta\in\agot$.

Si $\zeta\in Z(\tggot)$, alors $\dim_{\pgot_{\zeta}}\tK_{\zeta}\tzeta_o=
\dim_{\pgot}\tK\tzeta_o$ et $\vect\big(\Sigma(\tggot/\ggot)\cap \zeta^\perp\big)=\vect\big(\Sigma(\tggot/\ggot)\big)\cap \zeta^\perp=
\vect\big(\Sigma(\tggot/\ggot)\big)$.

Supposons maintenant que $\zeta\notin Z(\tggot)$. Nous voyons que $Z(\tggot)\cap\agot\oplus \R\zeta\subset (\pgot_\zeta)_{\txi}$ pour tout 
$\txi\in \tK_\zeta\tzeta_o$, d'où $\dim_{\pgot_\zeta}\tK_\zeta\tzeta_o-\dim_{\pgot}\tK\tzeta_o=1$ si et seulement si $Z(\tggot)\cap\agot\oplus \R\zeta$ 
est le stabilisateur générique pour l'action infinitésimale de $\pgot_\zeta$ sur 
$\tK_\zeta\tzeta_o$.

Supposons que $\vect\big(\Sigma(\tggot/\ggot)\cap \zeta^\perp\big)=\vect\big(\Sigma(\tggot/\ggot)\big)\cap \zeta^\perp$. Pour tout $\xi\in\agot$,
 nous avons les équivalences 
\begin{eqnarray*}
[\xi,\tggot_{\zeta}]\subset \ggot_{\zeta}\Longleftrightarrow \langle\beta,\xi\rangle=0,\forall \beta\in \Sigma(\tggot/\ggot)\cap \zeta^\perp
&\Longleftrightarrow&\xi\in \vect\Big(\Sigma(\tggot/\ggot)\cap \zeta^\perp\Big)^\perp\\
&\Longleftrightarrow&\xi\in \left(\vect\big(\Sigma(\tggot/\ggot)\big)\cap \zeta^\perp\right)^\perp\\
&\Longleftrightarrow&\xi\in \vect\big(\Sigma(\tggot/\ggot))^\perp+ \R\zeta\\
&\Longleftrightarrow&\xi\in Z(\tggot)\cap\agot\oplus\R \zeta.
\end{eqnarray*}
Soit $\hgot_\zeta\subset\pgot_\zeta$ le stabilisateur générique pour l'action infinitésimale de $\pgot_\zeta$ sur $\tK_\zeta\tzeta_o$. 
Les mêmes arguments que ceux utilisés précédemment montrent que $[\xi,\tggot_\zeta]\subset \ggot_\zeta$ pour tout $\xi\in \hgot_\zeta$. 
Cela montre que $\hgot_\zeta$ est égal à $Z(\tggot)\cap\pgot\oplus \R\zeta$.

Supposons maintenant que $\vect\big(\Sigma(\tggot/\ggot)\cap \zeta^\perp\big)\neq\vect\big(\Sigma(\tggot/\ggot)\big)\cap \zeta^\perp$. 
Cela signifie qu'il existe $\xi\notin Z(\tggot)\cap\agot\oplus \R\zeta=\vect\big(\Sigma(\tggot/\ggot))^\perp+ \R\zeta$ tel que 
$\langle\beta,\xi\rangle=0,\forall \beta\in \Sigma(\tggot/\ggot)\cap \zeta^\perp$, c'est-à-dire $[\xi,\tggot_\zeta]\subset \ggot_\zeta$. 
Considérons le sous-espace $\hgot_o=Z(\tggot)\cap\pgot\oplus \R\zeta\oplus\R\xi$. Comme nous l'avons montré précédemment, 
l'inclusion $[\xi,\tggot_\zeta]\subset \ggot_\zeta$ montre que 
l'ensemble $L_\zeta(\tK_\zeta\tzeta_o)^{\hgot_o}=L_\zeta(\tK_\zeta\tzeta_o)^{\xi}$ a un intérieur non vide. 
Grâce au lemme \ref{coro:stabilisateur-min}, nous pouvons conclure que $\dim_{\pgot_\zeta}\tK_\zeta\tzeta_o\geq \dim(\hgot_o)$ 
et donc $\dim_{\pgot_\zeta}\tK_\zeta\tzeta_o-\dim_{\pgot}\tK\tzeta_o\geq 2$.
 
La dernière partie du deuxième point est démontrée. 
\end{proof}

%%%%%%%%%%%%%%%%%%%%%%%%%%%%%%%%%%%%%%%%%%%%%%%%%%%%%
\subsection{Fin de la preuve} 
%%%%%%%%%%%%%%%%%%%%%%%%%%%%%%%%%%%%%%%%%%%%%%%%%%%%%

On suppose ici que $G\subset \tG$ satisfont l'hypothèse \ref{hypothese-G}. La restriction de l'application moment 
$\Phi_{\tugot\times\ugot}: \tG_\C\longrightarrow \tugot^*\times\ugot^*$ à la sous-variété $\tG$ détermine\footnote{Nous utilisons les identifications 
$\tugot\simeq\tugot^*$ et $\ugot\simeq\ugot^*$ données par le produit hermitien sur $\glgot_N(\C)$.} une application moment {\em réelle} 
\begin{eqnarray*}
\Phi_{\tpgot\times\pgot} : \tG\simeq \tK\times\tpgot&\longrightarrow & \tpgot\times\pgot\\
(\tk,\tX)&\longmapsto & (\tk\tX,-\pi(\tX)).
\end{eqnarray*}

Nous posons $\Delta_{\tpgot\times\pgot}(\tG):=\mathrm{Image}(\Phi_{\tpgot\times\pgot})\cap \tagot_+\times\agot$. Par définition, le couple $(\txi,\xi)\in \tagot_+\times\agot$ 
appartient à $\Delta_{\tpgot\times\pgot}(\tG)$ si et seulement si $-K\xi\subset \pi(\tK\txi)$.

On sait, d'après la proposition \ref{prop:admissible-G-tilde}, que tous les éléments admissibles
pour l'action de $\tG\times G$ sur $\tG$ sont de la forme $\zeta_{\mathbf{w}}:=(\tilde{w}\zeta,w\zeta)$
avec $\mathbf{w}:=(w,\tilde{w})\in W_\agot/W_\agot^\zeta\times W_{\tagot}/W_{\tagot}^\zeta$ et $\zeta\in\agot$ rationnel 
anti-dominant satisfaisant la relation (\ref{eq:condition-zeta}).

La sous-variété complexe de $\tU_\C$ fixée par $\zeta_{\mathbf{w}}$ est $C_{\zeta,\mathbf{w}}:=\tilde{w}\tU^\zeta_\C w^{-1}$ et la partie réelle contenue dans $\tG$ est 
$C_{\zeta,\mathbf{w}}\cap \tG=\tilde{w}\tG_\zeta w^{-1}$. D'autre part, la fonction $\tg\in \tG\mapsto (\Phi_{\tpgot\times\pgot}(\tg),\zeta_{\mathbf{w}})$ est constante, égale à 
$0$ sur $C_{\zeta,\mathbf{w}}\cap \tG$.

Ainsi le théorème \ref{th:real-ressayre-pairs} permet de voir que  $(\txi,\xi)\in \Delta_{\tpgot\times\pgot}(\tG)$ si et seulement si 
\begin{equation}\label{eq:inequality-polytope-reel}
(\tilde{\xi},\tilde{w}\zeta)+(\xi,w\zeta)\geq 0
\end{equation}
pour tout $(\zeta,w,\tw)$ tel que $(\zeta_{\mathbf{w}},C_{\zeta,\mathbf{w}})$ est un paire de Ressayre réelle régulière de $\tG$.

La proposition qui suit décrit les paires de Ressayre réelles de $\tG$.

\begin{prop}
Les conditions suivantes sont équivalentes
\begin{enumerate}
\item $(\zeta_{\mathbf{w}},C_{\zeta,\mathbf{w}})$ est un paire de Ressayre réelle de $\tG$.
\item $(i\zeta_{\mathbf{w}},C_{\zeta,\mathbf{w}})$ est une paire de Ressayre de $\tG_\C$.
\item $(\zeta,w,\tilde{w})$ satisfait les conditions  
\begin{align*}
(A_2) \qquad& \tr(w\zeta \circlearrowright \Ngot^{w\zeta>0})+\tr(\tw\zeta \circlearrowright \tNgot^{\tw\zeta>0})=\tr(\zeta \circlearrowright \tggot^{\zeta>0}),\\
(B_1') \qquad& [\Xgot_{w,\zeta}]\cdot \iota^*[\tXgot_{\tw,\zeta}]= [pt]\quad \mathrm{dans}\quad H^{max}(\Fcal_\zeta,\Z).\\
\end{align*}
\end{enumerate}
\end{prop}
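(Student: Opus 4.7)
L'idée est de ramener l'énoncé aux caractérisations déjà établies des paires de Ressayre (réelles et complexes) dans le cadre général. Le cadre naturel est celui de la variété de Kähler $(\tU\times U,\sigma)$-hamiltonienne $\tU_\C$ de l'exemple~\ref{ex:U-tilde-C-Kahler}, dont $\tG$ constitue une composante connexe de la partie réelle. Pour les calculs, je poserais $\gamma:=i\zeta\in\ttgot$, de sorte que $i\zeta_{\mathbf{w}}=(\tw\gamma,w\gamma)=\gamma_{\mathbf{w}}$. Comme $\Pbb(\zeta)=P(i\zeta)=P(\gamma)$ (voir la discussion précédant la définition~\ref{def:parabolique-P}), on a $C_{\zeta,\mathbf{w}}=\tw\tU_\C^\zeta w^{-1}=C_{\gamma,\mathbf{w}}$, et les variétés de drapeaux $\Fcal_\zeta,\tFcal_\zeta$ ainsi que leurs sous-variétés de Schubert $\Xgot_{w,\zeta},\tXgot_{\tw,\zeta}$ coïncident respectivement avec $\Fcal_\gamma,\tFcal_\gamma,\Xgot_{w,\gamma},\tXgot_{\tw,\gamma}$. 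Ce dictionnaire permettra de passer sans effort d'un cadre à l'autre.

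Pour l'équivalence $1\Longleftrightarrow 2$, il suffit d'appliquer l'équivalence entre les items (1) et (2) de la proposition~\ref{prop:caracteriser-RP-reel} à $C=C_{\zeta,\mathbf{w}}$ et $\Zcal=\tG$, après avoir observé que $C_{\zeta,\mathbf{w}}\cap\tG=\tw\tG_\zeta w^{-1}$ est non vide, de sorte que la donnée $(\zeta_{\mathbf{w}},C_{\zeta,\mathbf{w}})$ est admissible comme paire de Ressayre réelle potentielle.

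Pour l'équivalence $2\Longleftrightarrow 3$, je m'appuierais sur l'équivalence des items (2) et (5) de la proposition~\ref{prop:RP-exemple-fondamental-1}, appliquée au triplet $(\gamma,w,\tw)$. Avec les identifications ci-dessus, le point (2) de notre énoncé devient la conjonction de la condition cohomologique $(B_1')$ (qui est littéralement le $(B_1)$ de la proposition~\ref{prop:RP-exemple-fondamental-1} réécrit avec le paramètre réel $\zeta$) et de l'identité de traces \emph{complexe}
$$
\tr(w\zeta\circlearrowright \ngot^{w\zeta>0})=\tr(\tw\zeta\circlearrowright \tngot_-^{\tw\zeta>0}).
$$

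Le seul point technique — qui relève davantage de la comptabilité que d'un véritable obstacle — consiste à reformuler cette identité complexe en l'identité réelle $(A_2)$ de l'énoncé. Les ingrédients sont tous disponibles dans le mémoire~: les identifications $\ngot^{w\zeta>0}=\Ngot^{w\zeta>0}\otimes\C$ et $\tngot^{\tw\zeta>0}=\tNgot^{\tw\zeta>0}\otimes\C$ établies dans la preuve de la proposition~\ref{prop:caracteriser-RP-reel}; la décomposition $\tugot_\C^{\tw\zeta>0}=\tngot^{\tw\zeta>0}\oplus\tngot_-^{\tw\zeta>0}$ (le centralisateur $\ttgot_\C$ étant annulé par $\tw\zeta$); et la $\tW$-invariance de la trace totale, qui donne $\tr(\tw\zeta\circlearrowright\tugot_\C^{\tw\zeta>0})=\tr(\zeta\circlearrowright\tggot^{\zeta>0})$. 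En reportant ces trois observations dans l'identité complexe, on obtient précisément l'identité réelle
$$
\tr(w\zeta\circlearrowright\Ngot^{w\zeta>0})+\tr(\tw\zeta\circlearrowright\tNgot^{\tw\zeta>0})=\tr(\zeta\circlearrowright\tggot^{\zeta>0}),
$$
ce qui achève la démonstration.
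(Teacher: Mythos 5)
Your proposal is correct and follows essentially the same route as the paper: the equivalence $1\Leftrightarrow 2$ comes from Proposition~\ref{prop:caracteriser-RP-reel} (after noting $C_{\zeta,\mathbf{w}}\cap\tG=\tw\,\tG_\zeta\, w^{-1}\neq\emptyset$), and $2\Leftrightarrow 3$ from the characterization of Ressayre pairs for the fundamental example (Proposition~\ref{prop:RP-exemple-fondamental-1}) combined with the real-versus-complex trace comparison of Remark~\ref{rem:trace-reelle-versus-complexe}. Your explicit bookkeeping converting the $\tngot_-$-form of the trace condition into the sum form $(A_2)$ is precisely the computation already carried out in the proof of Proposition~\ref{prop:RP-exemple-fondamental-1}, so no new ingredient is required.
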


\begin{proof} L'équivalence entre les deux premiers points est une conséquence de la propriété \ref{prop:caracteriser-RP-reel}, et l'équivalence 
$2.\Leftrightarrow 3.$ est démontrée dans la proposition \ref{prop:RP-exemple-fondamental-2}. Ici, on utilise le fait que la relation $(A_2)$
est équivalente à 
$tr(w\zeta \circlearrowright \ngot^{w\zeta>0})+\tr(\tw\zeta \circlearrowright \tngot^{\tw\zeta>0})=\tr(\zeta \circlearrowright \tggot_\C^{\zeta>0})$ 
(voir la remarque \ref{rem:trace-reelle-versus-complexe}).
\end{proof}

\medskip

Nous devons apporter quelques petites modifications au résultat précédent afin d'obtenir le théorème \ref{theo:main-reel}. 
L'élément le plus long $w_0\in W$, qui est invariant par $\sigma$, peut être considéré comme l'élément unique de $W_\agot$ tel que $w_0(\agot_+)=-\agot_+$. 
Ainsi tout élément $\xi\in \agot_+$ peut s'écrire $\xi=-w_0(\xi')$ avec $\xi'\in\agot_+$, et dans ce cas 
\begin{itemize}\setlength{\itemsep}{8pt}
\item $K\xi'\subset \pi\big(\tK\txi\big)$ est équivalent à $-K\xi\subset \pi\big(\tK\txi\big)$.
\item (\ref{eq:inequality-polytope-reel}) est équivalent à $(\tilde{\xi},\tilde{w}\zeta)\geq (\xi',w_0w\zeta)$.
\end{itemize}

\medskip

On a terminé la preuve du fait que le théorème \ref{theo:main-reel} est une application du théorème \ref{th:real-ressayre-pairs}.

%%%%%%%%%%%%%%%%%%%%%%%%%%%%%%%%%%%%%%%%%%%%%%%%%%%%%
\section{Le cas où $\tG\simeq G_\C$} \label{sec:delta-U-sigma}
%%%%%%%%%%%%%%%%%%%%%%%%%%%%%%%%%%%%%%%%%%%%%%%%%%%%%

Considérons un groupe compact connexe $U$ muni d'un involution $\sigma$. Notons 
$\pi_{-}:  \ugot\to \ugot^{-\sigma}$ la projection associée à la décomposition de l'algèbre de Lie 
$\ugot=\ugot^{\sigma}\oplus\ugot^{-\sigma}$. Soit $K$ la composante connexe de l'identité du sous-groupe $U^\sigma$.

Soit $T\subset U$ un tore maximal adapté à $\sigma$, i.e. $T$ est stable par $\sigma$ et $\tgot^{-\sigma}$ est de dimension maximale. 
On choisit une chambre de Weyl $\tgot_+\subset\tgot$ de telle manière à ce que $\tgot_+\cap\tgot^{-\sigma}$ paramètre les orbites de 
$K$ dans $\ugot^{-\sigma}$. Dans cette section, on s'intéresse au cône 
$$
\Delta(U,\sigma):=\left\{(x,y)\in  \tgot_+\times (\tgot_+\cap\tgot^{-\sigma}), Ky\subset \pi_{-}(Ux)\right\}.
$$

Nous commençons par expliquer en quoi le cône $\Delta(U,\sigma)$ est un cas particulier des cônes $\horn_\pgot(K,\tK)$ étudiés précédemment.

On d\'esigne encore par $\sigma$ l'involution anti-holomorphe sur $U_\C$ d\'efinie par (\ref{eq:sigma-anti-holomorphe}). 
Soit $G$ la composante connexe de l'identité du sous-groupe ferm\'e de $U_\C$ fix\'e par $\sigma$ : c'est un sous-groupe r\'eductif 
réel qui est stable sous l'involution de Cartan. Son algèbre de Lie est égale à $\ggot=\kgot\oplus \pgot$ où 
$\kgot=\ugot^{\sigma}$ et $\pgot=i \ugot^{-\sigma}$.

On considère maintenant le groupe  $\tU_\C:=U_\C\times U_\C$, muni de l'involution anti-holomorphe 
$$
\tilde{\sigma}(g_1,g_2)=\left(\sigma(g_2),\sigma(g_1)\right),\qquad \forall g_1,g_2\in U_\C.
$$

Si $\iota: U_\C\croc \tU_\C$ désigne le morphisme diagonal, on a $\iota\circ \sigma =\tilde{\sigma}\circ\iota$. 
Le sous-groupe de $\tU_\C$ fixé par $\tilde{\sigma}$ est $\tG:=\{(g,\sigma(g)), g\in U_\C\}$. Son algèbre de Lie est égale à $\tggot=\tkgot\oplus \tpgot$ où 
$\tkgot=\{(X,\sigma(X)), X\in\ugot\}$ et $\tpgot:=\{(Y,\sigma(Y)), Y\in i\ugot\}$. Le sous-groupe de $\tU$ fixé par $\tilde{\sigma}$ est $\tK:=\{(k,\sigma(k)), k\in U\}$.

Le tore $\tT:=T\times T\subset \tU$ est adapté à $\tilde{\sigma}$. Le sous-espace $\ttgot^{-\tilde{\sigma}}$ 
est formé des éléments $(Y,-\sigma(Y)), Y\in \tgot$. On choisit pour $(\tU,\tT)$ la chambre de Weyl
\begin{equation}\label{eq:borel-sigma-adapte}
\ttgot_+:=\tgot_+\times -\sigma(\tgot_+),
\end{equation}
de telle manière à ce que $\frac{1}{i}(\ttgot_+\cap\ttgot^{-\sigma})$ soit égal à la chambre de Weyl restreinte $\tagot_+:=\{(\txi,\sigma(\txi)), \txi\in \frac{1}{i}\tgot_+\}$.

La projection orthogonale $\pi: \tugot_\C\to \ugot_\C$ associée au morphisme $\iota: \ugot_\C\croc \tugot_\C$ est définie par la relation
$\pi(X,Y)=\frac{1}{2}(X+Y)$. Les isomorphismes $X\in \ugot\mapsto (iX,\sigma(iX))\in\tpgot$ et $Y\in\ugot^{-\sigma}\mapsto iY\in \pgot$ s'insèrent 
dans le diagramme 
commutatif suivant
\begin{equation}\label{eq:commutatif-delta-U-sigma}
\xymatrixcolsep{5pc}
\xymatrix{
\ugot  \ar[d]  \ar[r]^{\pi_{-}} & \ugot^{-\sigma} \ar[d] \\
\tpgot \ar[r] ^{\pi}         & \pgot.
}
\end{equation}

\begin{definition}\label{def:x-y-prime}
Pour tout $(x,y)\in \tgot_+\times (\tgot_+\cap\tgot^{-\sigma})$, posons $x'=\tfrac{1}{i}x$ et $y'=\tfrac{1}{i}y\in\agot_+$: ainsi $(x',\sigma(x'))\in\tagot_+$. 
\end{definition}

Grâce au diagramme commutatif (\ref{eq:commutatif-delta-U-sigma}), on voit que pour tout $(x,y)\in  \tgot_+\times (\tgot_+\cap\tgot^{-\sigma})$, la relation $Ky\subset \pi_{-}(Ux)$ est équivalente à 
\begin{equation}\label{eq:definition-delta-U-sigma}
Ky'\subset \pi\Big(\tK(x',\sigma(x'))\Big).
\end{equation}

On vient de montrer le fait suivant. 

\begin{lem}
L'application $(x,y)\in \tgot_+\times (\tgot_+\cap\tgot^{-\sigma})\mapsto (x',\sigma(x'),y')\in\tagot_+\times\agot_+$ détermine un isomorphisme
entre $\Delta(U,\sigma)$ et le cône $\horn_\pgot(K,\tK)$ qui est définit au moyen des données géométriques suivantes: 
$$
K=(U^{\sigma})_0,\quad \tK\simeq U,\quad \pgot\simeq \ugot^{-\sigma},\quad \mathrm{et}\quad \tpgot\simeq \ugot.
$$
\end{lem}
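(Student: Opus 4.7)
The plan is to collect the identifications already assembled in the paragraphs immediately preceding the statement and verify that they fit together into the claimed isomorphism; no new ingredient is needed beyond the commutative diagram (\ref{eq:commutatif-delta-U-sigma}) and the choice of Weyl chamber (\ref{eq:borel-sigma-adapte}).

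First I would check that the map $\varphi:(x,y)\mapsto(x',\sigma(x'),y')$ is a well-defined bijection from $\tgot_+\times(\tgot_+\cap\tgot^{-\sigma})$ onto $\tagot_+\times\agot_+$. With $\ttgot_+:=\tgot_+\times(-\sigma(\tgot_+))$, a direct computation gives $\ttgot_+\cap\ttgot^{-\tilde\sigma}=\{(Y,-\sigma(Y)):Y\in\tgot_+\}$; applying $\frac{1}{i}(\cdot)$ and using that the anti-holomorphic extension of $\sigma$ satisfies $\sigma(x/i)=-\sigma(x)/i$ for $x\in\tgot$, one recovers $\tagot_+=\{(x',\sigma(x')):x'\in\tfrac{1}{i}\tgot_+\}$. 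Combined with the identity $\agot_+=\tfrac{1}{i}(\tgot_+\cap\tgot^{-\sigma})$ fixed in the setup, this yields both well-definedness and bijectivity of $\varphi$ (the inverse being $(\tilde x,\sigma(\tilde x),\tilde y)\mapsto(i\tilde x,i\tilde y)$).

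Next I would show that $\varphi$ sends $\Delta(U,\sigma)$ onto $\horn_\pgot(K,\tK)$ for the prescribed data. By definition $(x,y)\in\Delta(U,\sigma)$ iff $Ky\subset\pi_{-}(Ux)$, while $(\tilde x,\tilde y)\in\horn_\pgot(K,\tK)$ iff $K\tilde y\subset\pi(\tK\tilde x)$. The diagram (\ref{eq:commutatif-delta-U-sigma}) asserts precisely that the $\R$-linear isomorphisms $\ugot\simeq\tpgot$, $X\mapsto(iX,\sigma(iX))$, and $\ugot^{-\sigma}\simeq\pgot$, $Y\mapsto iY$, intertwine $\pi_{-}$ with $\pi$. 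The same isomorphisms intertwine the $U$-action on $\ugot$ with the $\tK\simeq U$-action on $\tpgot$ and the $K$-action on $\ugot^{-\sigma}$ with the $K$-action on $\pgot$. Consequently the two membership conditions are equivalent, which is the content of (\ref{eq:definition-delta-U-sigma}).

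Finally, to conclude, I would identify the target cone with the one defined in Section~\ref{sec:U-sigma-hamiltonien}: applied to the pair $\iota:U_\C\croc\tU_\C=U_\C\times U_\C$ with involution $\tilde\sigma$, the recipe of that section produces compact groups $\tK=\{(k,\sigma(k)):k\in U\}\simeq U$ and $K=(U^\sigma)_0$, together with $p$-spaces $\tpgot\simeq\ugot$ and $\pgot\simeq\ugot^{-\sigma}$, which matches the data in the statement. The main (and only) obstacle is the bookkeeping of the involutions and of the $\tgot\leftrightarrow\tfrac{1}{i}\tgot$ identifications; once those are pinned down as above, the lemma is an immediate translation, exactly as the phrase \emph{On vient de montrer le fait suivant} in the excerpt already suggests.
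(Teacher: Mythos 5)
Your proof is correct and follows essentially the same route as the paper: it assembles the identifications $\tagot_+\simeq\{(x',\sigma(x')):x'\in\tfrac{1}{i}\tgot_+\}$, $\agot_+=\tfrac{1}{i}(\tgot_+\cap\tgot^{-\sigma})$, and the equivariant commutative diagram (\ref{eq:commutatif-delta-U-sigma}) to translate $Ky\subset\pi_{-}(Ux)$ into $Ky'\subset\pi\bigl(\tK(x',\sigma(x'))\bigr)$, which is exactly the content of (\ref{eq:definition-delta-U-sigma}) that the paper uses to conclude. The only difference is that you spell out the equivariance of the identifications and the bijectivity of the chamber map, which the paper leaves implicit.
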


Dans un premier temps, nous allons décrire $\Delta(U,\sigma)$ au moyen du théorème d'O'Shea-Sjamaar. Notons $\sigma_+: \tgot_+\to\tgot_+$ 
l'involution de la chambre de Weyl qui est définie par la relation  $-\sigma(Ux)=U\sigma_+(x), \forall x\in \tgot_+$ (on décrira plus précisément 
$\sigma_+$ au lemme \ref{lem:sigma-Borel}).

Rappelons que le cône $\horn(U)$ désigne l'ensemble des triplets $(a,b,c)\in(\tgot_+)^3$ vérifiant $Uc\subset Ua+Ub$.

\begin{prop}\label{prop:horn-U-sigma}Pour tout $(x,y)\in  \tgot_+\times (\tgot_+\cap\tgot^{-\sigma})$, on a l'équivalence
$$
(x,y)\in\Delta(U,\sigma)\quad \Longleftrightarrow \quad (x,\sigma_+(x), 2y)\in\horn(U).
$$
\end{prop}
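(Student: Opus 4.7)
Le plan est de déduire cette équivalence directement du théorème d'O'Shea-Sjamaar (Théorème \ref{theo:OSS}), appliqué à une variété $(U,\sigma)$-hamiltonienne convenablement choisie. Le point clé est que le cône $\horn(U)$ est par définition le polytope de Kirwan d'un produit de deux orbites coadjointes, et que l'involution qui \og couple \fg{} les deux facteurs a pour partie fixe une unique orbite dont l'application moment fait apparaître $2\pi_-$.

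Plus précisément, je considère la variété $U$-hamiltonienne $M:=\Ocal_x\times\Ocal_{\sigma_+(x)}$ munie de l'application moment $\Phi(X_1,X_2)=X_1+X_2$, et je définis l'involution
$$
\tau(X_1,X_2)=(-\sigma(X_2),-\sigma(X_1)).
$$
Comme $-\sigma(\Ocal_x)=\Ocal_{\sigma_+(x)}$ par définition même de $\sigma_+$, $\tau$ préserve $M$, et un calcul direct vérifie les conditions (\ref{eq:condition-phi-tau}) et (\ref{eq:condition-sigma-tau}) faisant de $(M,\Phi,\tau)$ une variété $(U,\sigma)$-hamiltonienne (l'antisymplecticité de $\tau$ se vérifie facteur par facteur à l'aide de la formule (\ref{eq:2-forme-O-lambda})). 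La sous-variété $M^\tau$ est non-vide, et s'identifie à $\Ocal_x$ via $X\mapsto (X,-\sigma(X))$. Sur $M^\tau$ l'application moment prend alors la forme
$$
\Phi(X,-\sigma(X))=X-\sigma(X)=2\pi_-(X),
$$
de sorte que $\Phi(M^\tau)=2\pi_-(Ux)$.

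Il reste à enchaîner les équivalences. Par définition de $\horn(U)$, dire que $(x,\sigma_+(x),2y)\in\horn(U)$ revient à dire que $2y\in\Phi(M)\cap\tgot_+=\Delta_\ugot(M)$. Comme $y\in\tgot^{-\sigma}$ par hypothèse, on a $2y\in\tgot^{-\sigma}$, et la reformulation (\ref{eq:OSS}) du théorème d'O'Shea-Sjamaar fournit
$$
2y\in\Delta_\ugot(M)\ \Longleftrightarrow\ 2y\in\Delta_\ugot(M)\cap\tgot^{-\sigma}=\Delta(M^\tau)=2\pi_-(Ux)\cap\tgot_+.
$$
Cette dernière condition équivaut à $y\in\pi_-(Ux)\cap\tgot_+$, et comme $y\in\tgot_+\cap\tgot^{-\sigma}$ paramètre la $K$-orbite $Ky$ (d'après le choix de la chambre de Weyl rappelé au début de la section \ref{sec:delta-U-sigma}), elle équivaut à $Ky\subset\pi_-(Ux)$, c'est-à-dire à $(x,y)\in\Delta(U,\sigma)$.

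Je m'attends à ce que l'unique point délicat soit la vérification soigneuse que $(M,\Phi,\tau)$ est bien une variété $(U,\sigma)$-hamiltonienne, en particulier que la chambre de Weyl $\tgot_+$ fixée au départ de la section \ref{sec:delta-U-sigma} est adaptée à $\sigma$ au sens requis, ce qui garantit à la fois que $\tgot_+\cap\tgot^{-\sigma}$ paramètre les $K$-orbites dans $\ugot^{-\sigma}$ et que $\sigma_+:\tgot_+\to\tgot_+$ est bien définie comme involution de la chambre. Une fois ce cadre mis en place, l'application d'O'Shea-Sjamaar est mécanique et les équivalences de la dernière étape ne requièrent aucun calcul supplémentaire.
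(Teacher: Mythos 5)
Votre démonstration est correcte ; comme celle du texte, c'est une application du théorème d'O'Shea--Sjamaar, mais via un modèle géométrique différent. Le texte procède en une ligne : le lemme précédent identifie $\Delta(U,\sigma)$ au cône $\horn_\pgot(K,\tK)$ associé au plongement diagonal $U_\C\hookrightarrow U_\C\times U_\C$ muni de l'involution $(g_1,g_2)\mapsto(\sigma(g_2),\sigma(g_1))$, puis la forme \og orbites \fg{} du théorème d'O'Shea--Sjamaar (proposition \ref{prop:OSS-orbites}, issue de l'exemple fondamental non compact $T^*\tU$ de l'exemple \ref{ex:U-tilde-U}) transforme la condition réelle (\ref{eq:definition-delta-U-sigma}) en $2Uy\subset Ux+U\sigma_+(x)$. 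Vous appliquez au contraire la reformulation convexe (\ref{eq:OSS}) à un modèle compact construit pour l'occasion, le produit $Ux\times U\sigma_+(x)$ muni de l'involution échangeant les facteurs composée avec $-\sigma$, dont le lieu fixe est l'anti-diagonale $\{(X,-\sigma(X))\}\simeq Ux$ et dont l'application moment restreinte vaut $2\pi_{-}$ : c'est en substance une redémonstration, dans le cas $\tU=U\times U$, de l'équivalence entre les points $i)$ et $iii)$ de la proposition \ref{prop:OSS-orbites}. Votre variante a le mérite d'être autonome (elle n'utilise ni le cadre $\horn_\pgot(K,\tK)$ ni l'exemple fondamental), celle du texte d'être immédiate une fois la machinerie générale en place. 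Deux précisions de rédaction seulement : pour vérifier $\tau^*\Omega=-\Omega$, invoquez la formule générale de Kirillov--Kostant--Souriau plutôt que (\ref{eq:2-forme-O-lambda}) (propre à $\upU_n$), en notant que chaque facteur $-\sigma$ envoie $Ux$ sur $U\sigma_+(x)$ en renversant la forme KKS ; et l'équivalence finale $y\in\pi_{-}(Ux)\Leftrightarrow Ky\subset\pi_{-}(Ux)$ tient simplement à la $K$-invariance de $\pi_{-}(Ux)$, l'adaptation de la chambre de Weyl étant déjà garantie par les choix faits en tête de la section \ref{sec:delta-U-sigma}.
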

\begin{proof}On sait déjà que la condition $(x,y)\in \Delta(U,\sigma)$ est équivalente à (\ref{eq:definition-delta-U-sigma}), et grâce au théorème d'OShea-Sjamaar 
cette dernière est équivalente à $Uy'\subset \pi\Big(U\times U(x',\sigma(x'))\Big)$, c'est à dire
$$
2Uy\subset Ux +U\sigma_+(x).
$$
\end{proof}

Nous allons maintenant donner une description plus précise du cône convexe $\Delta(U,\sigma)$ au moyen du théorème \ref{theo:main-reel}.

Comme $\ggot$ est une forme réelle de l'algèbre de Lie complexe $\tggot$, l'ensemble des racines par rapport à l'action de $\agot$ sur $\tggot/\ggot$ est égal au 
système de racines restreint $\Sigma_\agot\subset\agot^*$ de l'algèbre de Lie $\ggot$. De plus, le plus grand idéal de $\tggot$ contenu dans $\ggot$ est égal au centre de $\ggot$, ainsi l'hypothèse \ref{hypothese-G} est satisfaite.

Soit $B\subset U_\C$ le sous-groupe de Borel associé à la chambre de Weyl $\tgot_+$.
Soit $W$ le groupe de Weyl de $(U,T)$ : notons $w_0$ son plus long élément. Soit $W'\subset W$ le sous-groupe qui centralise $\tgot^{-\sigma}$ : 
notons $w'_0$ son plus long élément. Rappelons que $W'$ est contenu dans $W^\sigma$, car il est égal au groupe de Weyl du sous-groupe centralisateur 
$K'=Z_{K}(\tgot^{-\sigma})$ (voir l'Annexe dans \cite{OSS}).

\begin{lem}\label{lem:sigma-Borel}
\begin{enumerate}
\item La chambre de Weyl $\sigma(\tgot_+)$ est égale à $\Ad(w_0')(\tgot_+)$.
\item Le sous-groupe de Borel associé à la chambre de Weyl $\sigma(\tgot_+)$ est $\Ad(w_0')(B)$.
\item $\sigma_+(x)=-\Ad(w_0')\sigma(x),\forall x\in\tgot_+$.
\end{enumerate}
\end{lem}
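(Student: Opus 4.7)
The plan is to establish point 1 first, then derive points 2 and 3 as formal consequences. For point 1, the idea is to show that $\Ad(w_0')^{-1}\circ \sigma$ preserves the chamber $\tgot_+$, or equivalently, preserves the positive root system $\Rgot^+\subset \tgot^*$. Both $w_0'$ and $\sigma$ permute the roots, so the composition also permutes $\Rgot$; the content is that it preserves positivity. The key input is the adaptedness of $\tgot_+$: the intersection $\tgot_+\cap \tgot^{-\sigma}$ parametrizes $K$-orbits in $\ugot^{-\sigma}$ and is a fundamental domain for the restricted Weyl group $W_\agot \simeq W^\sigma/W'$ acting on $\tgot^{-\sigma}$. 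I would evaluate roots at a generic interior point $\xi_0\in \tgot_+\cap \tgot^{-\sigma}$ (whose $W$-stabilizer is exactly $W'$) and split $\Rgot^+$ into (a) roots vanishing on $\tgot^{-\sigma}$, which form the root system of the centralizer $Z_U(\tgot^{-\sigma})$ and are sent to $-\Rgot^+$ by $w_0'$ in the usual way, and (b) roots nontrivial on $\tgot^{-\sigma}$, for which positivity is detected by the sign of the pairing with $\xi_0$. The composition $\Ad(w_0')^{-1}\circ \sigma$ acts as $-1$ on $\tgot^{-\sigma}$ composed with $w_0'$, so one checks that sign changes introduced by $\sigma$ on type-(b) roots are compensated, while the longest-element action of $w_0'$ on the type-(a) sub-root system cancels against the trivial $\sigma$-action on those roots.

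For point 2, once point 1 is established, the Borel subgroup associated to a chamber $\Ad(w)(\tgot_+)$ is the conjugate $\Ad(w)(B)$ under the standard bijection between Weyl chambers and Borel subgroups containing $T_\C$. Applying this with $w = w_0'$ immediately yields point 2. For point 3, I would unwind the definition of $\sigma_+$: it is the unique element of $\tgot_+$ in the $W$-orbit of $-\sigma(x)$. By point 1, $\sigma(x)\in \Ad(w_0')(\tgot_+)$, so using $(w_0')^2 = e$, the element $\Ad(w_0')\sigma(x)$ lies in $\tgot_+$; a direct check combining this with the sign flip then confirms that $-\Ad(w_0')\sigma(x)$ is the correct representative of $W\cdot(-\sigma(x))$ in $\tgot_+$.

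The main obstacle is point 1: verifying that $\Ad(w_0')^{-1}\circ \sigma$ preserves $\Rgot^+$ requires a careful case analysis according to how $\sigma$ permutes individual roots (those fixed versus those paired nontrivially by $\sigma$) and how $w_0'$ acts on the sub-root system of roots vanishing on $\tgot^{-\sigma}$, so that the adapted nature of $\tgot_+$ precisely forces the sign changes produced by $\sigma$ to be undone by $w_0'$.
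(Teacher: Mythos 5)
There is a genuine gap, and it is precisely at the step you flag as the main obstacle. The claimed compensation in point 1 is false: the map $\Ad(w_0')^{-1}\circ\sigma$ does \emph{not} preserve $\Rgot^+$; it sends $\Rgot^+$ to $-\Rgot^+$. Indeed, split $\Rgot^+$ as you propose, using an interior point $X_o$ of $\tgot_+\cap\tgot^{-\sigma}$. On the roots vanishing on $\tgot^{-\sigma}$ (equivalently the roots fixed by $\sigma$, i.e.\ the root system of $K'=Z_K(\tgot^{-\sigma})$), $\sigma$ acts trivially while $\Ad(w_0')$, being the longest element of $W'$, sends the positive ones to negative ones — so nothing cancels there. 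On the roots not vanishing on $\tgot^{-\sigma}$, positivity is read off from the sign at $X_o$; since $w_0'$ centralizes $\tgot^{-\sigma}$ it fixes $X_o$ and cannot undo anything, while $\sigma(X_o)=-X_o$ flips the sign. So the composite reverses \emph{all} positive roots, and the identity that actually holds (and is the one the paper proves and uses) is $-\sigma(\tgot_+)=\Ad(w_0')(\tgot_+)$, not $\sigma(\tgot_+)=\Ad(w_0')(\tgot_+)$. A concrete counterexample to the literal statement you set out to prove: $U=\upU_n$ with $\sigma(g)=\overline{g}$, where $\sigma=-\mathrm{id}$ on $\tgot$, $W'=\{e\}$, $w_0'=e$, and $\sigma(\tgot_+)=-\tgot_+\neq\tgot_+$.

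This sign error then propagates into your point 3 and makes it internally inconsistent: if, as you assert, $\Ad(w_0')\sigma(x)\in\tgot_+$, then $-\Ad(w_0')\sigma(x)$ is antidominant and cannot be the dominant representative $\sigma_+(x)$ of $U(-\sigma(x))$. With the corrected point 1 everything works as you intend: $-\Ad(w_0')\sigma(x)=\Ad(w_0')(-\sigma(x))$ lies in $\Ad(w_0')\bigl(-\sigma(\tgot_+)\bigr)=\tgot_+$, belongs to the $W$-orbit of $-\sigma(x)$, hence equals $\sigma_+(x)$; and point 2 follows from the chamber–Borel correspondence exactly as you say. Apart from the sign bookkeeping, your decomposition of $\Rgot^+$ via an interior point of $\tgot_+\cap\tgot^{-\sigma}$ is the same mechanism as the paper's proof — but the compensation you invoke is exactly where the argument as written would fail, so the proof needs the involution $-\sigma$ (equivalently, an extra global minus sign) throughout point 1.
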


\begin{proof}Soit $\Rgot$ le système de racines de $(U,T)$, et $\Rgot_+$ le système de racines positives. Alors
$\tgot_+=\{X\in\tgot, \langle \alpha,X\rangle\geq 0, \forall \alpha\in\Rgot_+\}$, et donc 
$-\sigma(\tgot_+)=\{X\in\tgot, \langle \alpha,X\rangle\geq 0, \forall \alpha\in-\sigma(\Rgot_+)\}$.
Soit $X_o$ un élément contenu dans l'intérieur de la chambre $\tgot^{-\sigma}\cap \tgot_+$. On a alors la décomposition 
$\Rgot_+=\Rgot'_+\cup \Rgot''_+$, où 
$$
\Rgot'_+=\Rgot_+\cap \{\sigma(\alpha)=\alpha\}\qquad \mathrm{et}\qquad \Rgot''_+= \{\alpha\in\Rgot, \langle \alpha,X_o\rangle > 0\}.
$$
Cela permet de voir que $-\sigma(\Rgot'_+)=-\Rgot'_+$ et $-\sigma(\Rgot''_+)=\Rgot''_+$. On a, d'autre part, 
$\Ad(w'_0)(\Rgot'_+)=-\Rgot'_+$ et $\Ad(w'_0)(\Rgot''_+)=\Rgot''_+$. On montre ainsi que $-\sigma(\tgot_+)=\Ad(w_0')(\tgot_+)$.  
Le premier point est démontré, et les deux autres découlent du premier. 
\end{proof}

Voici la description que l'on obtient de $\Delta(U,\sigma)$. Rappelons que $\ngot=\sum_{\alpha\in\Rgot_+}(\ugot_\C)_{\alpha}$.

\begin{theorem}\label{theo-delta-U-sigma} Un élément $(x,y)\in  \tgot_+\times (\tgot_+\cap\tgot^{-\sigma})$ appartient à $\Delta(U,\sigma)$ si et seulement 
si\footnote{On utilise les notations de la définition \ref{def:x-y-prime}.} 
$$
(x',w\zeta)\geq (y',w_0w_1\zeta)
$$
pour tout $(\zeta,w,w_1)\in -\agot_+\times W/W^\zeta\times W_\agot/W_\agot^\zeta$ satisfaisant les conditions suivantes
\begin{enumerate}
\item $\gamma$ est rationnel  et $\vect\big(\Sigma_\agot\cap \zeta^\perp\big)=\vect\big(\Sigma_\agot\big)\cap \zeta^\perp$,
\item $[\Xgot_{w,\zeta}]\cdot [\Xgot_{w_0'\sigma(w),\zeta}]\cdot[\Xgot_{w_1,\zeta}]= [pt]\quad \mathrm{dans}\quad H^{max} (\Fcal_\zeta,\Z)$.
\item $2\tr(w\zeta \circlearrowright \ngot^{w\zeta>0})+\tr(w_1\zeta \circlearrowright \ngot^{w_1\zeta>0})= 2\tr(\zeta \circlearrowright \ugot_\C^{\zeta>0})$.
\end{enumerate}
\end{theorem}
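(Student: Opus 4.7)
The plan is to apply Theorem \ref{theo:main-reel} to the inclusion of real reductive groups $G \subset \tG = \{(g,\sigma(g)) : g \in U_\C\}$ set up in Section \ref{sec:delta-U-sigma}, whose complexifications are the diagonal embedding $G_\C \subset \tG_\C = U_\C \times U_\C$. The hypothesis \ref{hypothese-G} is automatic because $\ggot$ is a real form of $\tggot$, so the system of restricted roots $\Sigma(\tggot/\ggot)$ coincides with the restricted root system $\Sigma_\agot$ of $\ggot$. I would proceed by translating each of the three conditions of Theorem \ref{theo:main-reel} through the product structure.

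First, I would unpack the geometric data. The maximal torus of $\tU = U \times U$ is $\tT = T \times T$ with Weyl group $\tW = W \times W$. By Lemma \ref{lem:sigma-Borel}, the $\tilde\sigma$-adapted Weyl chamber is $\ttgot_+ = \tgot_+ \times \Ad(w_0')(\tgot_+)$. Via the isomorphism $(Y,-\sigma(Y))/i \mapsto Y/i$, the restricted Cartan $\tagot$ is identified with $\agot$, and elements of $\tW_\tagot/\tW_\tagot^\zeta$ fixed by the adapted involution correspond to pairs $(w, w_0'\sigma(w))$ with $w \in W/W^\zeta$, since $\sigma(\tgot_+) = \Ad(w_0')(\tgot_+)$ forces the second factor to be twisted by $w_0'$.

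Second, I would factorize the flag varieties. For an admissible rational antidominant $\zeta \in -\agot_+$, identified with $(\zeta,\sigma(\zeta)) \in -\tagot_+$, the parabolic $\tPbb(\zeta) \subset \tU_\C$ factorizes as $\Pbb(\zeta) \times \Pbb(\sigma(\zeta))$, so $\tFcal_\zeta = \Fcal_\zeta \times \Fcal_{\sigma(\zeta)}$ and the immersion $\iota: \Fcal_\zeta \hookrightarrow \tFcal_\zeta$ coming from the diagonal $U_\C \hookrightarrow U_\C \times U_\C$ is the diagonal embedding. By Künneth, a Schubert class $[\tXgot_{\tw,\zeta}]$ for $\tw = (w, w_0'\sigma(w))$ factorizes as $[\Xgot_{w,\zeta}] \otimes [\Xgot_{w_0'\sigma(w),\sigma(\zeta)}]$; pulling back by the diagonal $\iota$ is the cup product, and after identifying $\Fcal_{\sigma(\zeta)}$ with $\Fcal_\zeta$ via the biholomorphism induced by $w_0'$ this gives the triple product $[\Xgot_{w,\zeta}] \cdot [\Xgot_{w_0'\sigma(w),\zeta}] \cdot [\Xgot_{w_1,\zeta}]$, where $w_1 \in W_\agot/W_\agot^\zeta$ parametrizes the Schubert class on $\Fcal_\zeta$ coming from $G$. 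This matches condition 2 of the theorem.

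Third, I would translate the trace condition $c)$: since $\tNgot = \ngot \oplus \Ad(w_0')(\ngot)$ (with the second summand corresponding to the chamber $\sigma(\tgot_+)$), and using that $\sigma$ is an isometry exchanging the two factors while $w_0'$ preserves the positive system $\Sigma^+$ restricted to $\tgot^{-\sigma}$, one computes $\tr(\tw\zeta \circlearrowright \tNgot^{\tw\zeta>0}) = 2\tr(w\zeta \circlearrowright \ngot^{w\zeta>0})$ and $\tr(\zeta \circlearrowright \tggot^{\zeta>0}) = 2\tr(\zeta \circlearrowright \ugot_\C^{\zeta>0})$, yielding condition 3. Finally, the inequality $(\tilde\xi,\tw\zeta) \geq (\xi, w_0 w_1 \zeta)$ from Theorem \ref{theo:main-reel}, with $\tilde\xi = (x',\sigma(x'))$ and $\xi = y'$, collapses by the $\sigma$-symmetry to $2(x',w\zeta) \geq 2(y', w_0 w_1 \zeta)$, i.e.\ the stated inequality. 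The main technical obstacle will be the bookkeeping involved in tracking the $w_0'$-twist coming from the non-canonical identification $\sigma(\tgot_+) = \Ad(w_0')(\tgot_+)$: specifically, one must verify that the identification of $\Fcal_{\sigma(\zeta)}$ with $\Fcal_\zeta$ transports the Schubert cell $\Xgot_{w_0'\sigma(w),\sigma(\zeta)}$ precisely to $\Xgot_{w_0'\sigma(w),\zeta}$, and that the sum $\tr(\tw\zeta \circlearrowright \tNgot^{\tw\zeta>0})$ splits symmetrically between the two factors under this twisting.
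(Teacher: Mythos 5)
Your proposal is correct and follows essentially the same route as the paper's proof: apply Theorem \ref{theo:main-reel} to the pair $G\subset\tG$ constructed in Section \ref{sec:delta-U-sigma}, observe that $\Sigma(\tggot/\ggot)=\Sigma_\agot$ so Hypothesis \ref{hypothese-G} holds, and translate the admissibility, cohomological, trace and linear conditions through the product $\tU_\C=U_\C\times U_\C$ with adapted chamber $\tgot_+\times(-\sigma(\tgot_+))$, the $w_0'$-twist entering exactly where you flag it, namely when the Schubert class of the second factor, defined with respect to the Borel $\Ad(w_0')(B)$, is rewritten as $[\Xgot_{w_0'\sigma(w),\zeta}]$. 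Only minor bookkeeping should be tightened (the restricted Weyl group is $\tW_{\tagot}=\{(w,\sigma(w))\}$ with the twist appearing only in the Schubert indexing, and $\tNgot=\{(X,\sigma(X)),\,X\in\ngot\}$ rather than a direct sum $\ngot\oplus\Ad(w_0')(\ngot)$), but none of this affects the argument, which matches the paper's.
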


\begin{proof}Utilisons le théorème \ref{theo:main-reel} pour déterminer à quelles conditions $(x',\sigma(x'),y')$ appartient à $\horn_\pgot(K,\tK)$. Pour cela, on 
passe en revue les données nécessaires à nos calculs:
\begin{enumerate}
\item $W$ est le groupe de Weyl de $(U,T)$ et $W_\agot\simeq N_W(\agot)/Z_W(\agot)=W^\sigma/W'$ est le groupe de Weyl restreint.
\item $\tW=W\times W$ est le groupe de Weyl de $(\tU,\tT)$ et le groupe de Weyl restreint correspondant est 
$\tW_{\tagot}=\tW^{\tilde{\sigma}}=\{(w,\sigma(w)), w\in W\}$.
\item Le sous-groupe de Borel associé \`a $\tgot_+$ est noté $B$, et le sous-groupe de Borel associé \`a $\ttgot_+=\tgot_+\times -\sigma(\tgot_+)$ est 
$B\times \Ad(w_0')(B)$.
\item On travaille avec un élément $\zeta\in\agot$ admissible, i.e. $\vect\big(\Sigma_\agot\cap \zeta^\perp\big)=\vect\big(\Sigma_\agot\big)\cap \zeta^\perp$, et nous notons 
$\Pbb(\zeta)\subset U_\C$ le sous-groupe parabolique correspondant.
\item Le morphisme de groupe $\iota: U_\C\croc \tU_\C$ induit le morphisme diagonal $\iota : \Fcal_{\zeta}\croc \Fcal_{\zeta}\times \Fcal_{\zeta}$,
où $\Fcal_{\zeta}:= U_\C/\Pbb(\zeta)$.
\item \`A $w_1\in W_\agot$, on associe la variété de Schubert $\Xgot_{w_1,\zeta}:=\overline{Bw_1\Pbb(\zeta)/\Pbb(\zeta)}\subset \Fcal_{\zeta}$.
\item \`A $\tw=(w,\sigma(w))\in \tW_{\tagot}$, on associe la variété de Schubert 
$$
\tXgot_{\tw,\zeta}:=\overline{Bw\Pbb(\zeta)/\Pbb(\zeta)}\times \overline{\Ad(w_0')(B)\sigma(w)\Pbb(\zeta)/\Pbb(\zeta)}\quad \subset \quad\Fcal_{\zeta}\times \Fcal_{\zeta}.
$$
\end{enumerate}

On vérifie maintenant que l'inégalité (\ref{eq:inegalite-main-th}) associée à $(\zeta,w,w_1)\in -\agot_+\times W/W^\zeta\times W_\agot/W_\agot^\zeta$ est équivalente à $(x',w\zeta)\geq (y',w_0w_1\zeta)$ et que la condition c) du théorème \ref{theo:main-reel} est équivalente à 
$2\tr(w\zeta \circlearrowright \ngot^{w\zeta>0})+\tr(w_1\zeta \circlearrowright \ngot^{w_1\zeta>0})= 2\tr(\zeta \circlearrowright \ugot_\C^{\zeta>0})$.

Comme 
$$
\iota^*([\tilde{\Xgot}_{\tw,\zeta}])=[\overline{Bw\Pbb(\zeta)/\Pbb(\zeta)}]\cdot [\overline{\Ad(w_0')(B)\sigma(w)\Pbb(\zeta)/\Pbb(\zeta)}]=
[\Xgot_{w,\zeta}]\cdot [\Xgot_{w_0'\sigma(w),\zeta}]
$$
la condition cohomologique $[\Xgot_{w_1,\zeta}]\cdot \iota^*([\tilde{\Xgot}_{\tw,\zeta}])= [pt]$ du théorème \ref{theo:main-reel}  est équivalente ici à 
$[\Xgot_{w,\zeta}]\cdot [\Xgot_{w_0'\sigma(w),\zeta}]\cdot[\Xgot_{w_1,\zeta}]= [pt]$. 
\end{proof}

%%%%%%%%%%%%%%%%%%%%%%%%%%%%%%%%%%%%%%%%%%%%%%%%%%%%%
\section{Exemples} 
%%%%%%%%%%%%%%%%%%%%%%%%%%%%%%%%%%%%%%%%%%%%%%%%%%%%%

Au moyen du calcul de Schubert, nous avons décrit le cône $\horn_\pgot(K,\tK)$ associé à la donnée de groupes réductifs réels linéaires $G\subset \tG$.
Dans les sections qui suivent, nous allons expliciter ce cône convexe polyhédral dans les exemples suivants :  

\begin{center}
\renewcommand{\arraystretch}{2}
\begin{tabular}{|c|c|c|c|}
\hline
 &$\horn_\pgot(K,\tK)$ &$G$ & $\tG$  \\ \hline
 \ (1)\ & $\LR(U,\tU)$                  &    $ U_\C$          &    $\tU_\C$ \\ \hline
 \ (2)\ & $\horn(n)$                  &    $\GL_n(\C)$          &    $\GL_n(\C)\times \GL_n(\C)$  \\ \hline
  \ (3)\ & $\LR(m,n)$                  &    $\GL_m(\C)\times\GL_n(\C)$          &    $\GL_{m+n}(\C)$ \\ \hline
 \ (4)\ & $\Ecal_{\mathrm{I}}(n)$      &    $\GL_n(\R)_0$       &    $\GL_n(\C)$\\ \hline
 \ (5)\ & $\Ecal_{\mathrm{II}}(n)$     &    $\GL_n(\C)$           &    $\GL_{2n}(\R)_0$  \\ \hline
 %$\sing^\R(p,q)$         &     $\SO(p,q)_0$        &    $\SO(p,q)_0\times \SO(p,q)_0$  \\ \hline
 \ (6)\ & $\sing(p,q)$         &     $\upU(p,q)$              &     $\upU(p,q)\times \upU(p,q)$  \\ \hline
 \ (7)\ & $\Acal(p,q)$             &       $\upU(p,q)$ & $\GL_{p+q}(\C)$ \\ \hline
 \ (8)\ & $\Bcal(n)$             &       $\GL_n(\C)$ & $\upU(n,n)$ \\ \hline
 \ (9)\ & $\Scal(p,q)$              &     $\upU(p,q)\times \upU(q,p)$ & $\upU(p+q,p+q)$ \\ \hline
\ (10)\ & $\Tcal(p,q)$             &      $\upU(p,p)\times \upU(q,q)$ & $\upU(p+q,p+q)$ \\ \hline

\end{tabular}
\end{center}

\bigskip

Les cas classiques (1), (2) et (3) ont déjà été traités à la section \ref{sec:LR-U-tilde-U}, et les exemples (4) et (7) font parties des sous-cas considérés à la section
\ref{sec:delta-U-sigma}.

%%%%%%%%%%%%%%%%%%%%%%%%%%%%%%%%%%%%%%%%%%%%%%%%%%%%%%
%%%%%%%%%%%%%%%%%%%%%%%%%%%%%%%%%%%%%%%%%%%%%%%%%%%%%%
\chapter{Cônes associés à des valeurs propres}
%%%%%%%%%%%%%%%%%%%%%%%%%%%%%%%%%%%%%%%%%%%%%%%%%%%%%%
%%%%%%%%%%%%%%%%%%%%%%%%%%%%%%%%%%%%%%%%%%%%%%%%%%%%%%

Rappelons que l'on peut associer, à trois sous-ensembles finis $I,J,L\subset \N-\{0\}$, le coefficient de Littlewood-Richardson 
$\cc_{I,J}^L\in  \N$. Voir la section \ref{sec:calcul-schubert-grass}.

%%%%%%%%%%%%%%%%%%%%%%%%%%%%%%%%%%%%%%%%%%%%%%%%%%%%%%
\section{Les cônes $\Ecal_{\mathrm{I}}(n)$ }
%%%%%%%%%%%%%%%%%%%%%%%%%%%%%%%%%%%%%%%%%%%%%%%%%%%%%%

Pour $n\geq 1$, on désigne par $\sym(n)$ l'espace vectoriel des matrices symétriques réelles $n\times n$. 
Nous considérons l'application $\Re : \herm(n)\to \sym(n)$ qui associe à une matrice hermitienne sa partie réelle.
Le spectre d'une matrice $X\in  \herm(n)$ est noté par $\e(X)=(\e_1\geq \cdots\geq \e_n)\in\R^n_+$.

L'objectif principal de cette section est l'étude du cône
$$
\Ecal_{\mathrm{I}}(n)=\Big\{(\e(X), \e(\Re(X))), \ X\in \herm(n)\Big\}\subset \R^n_+\times \R^n_{+}.
$$

Nous obtenons la description récursive suivante.

%\begin{theorem}\label{theo:E-I}
%Un élément $(x,y)\in\R^n_+\times \R^n_{+}$ appartient à $\Ecal_{\mathrm{I}}(n)$ si et seulement si  $|x|=|y|$ et $|x|_{A}\leq |y|_B$  est vérifié pour tout couple
% $(A,B)$ de sous-ensembles de $[n]$ de cardinal $r<n$ tel que 
%$$
%(2\lambda(A),\lambda(B))\in \Ecal_{\mathrm{I}}(r).
%$$
%De plus, la condition $(2\lambda(A),\lambda(B))\in \Ecal_{\mathrm{I}}(r)$ est équivalente à $c^{\lambda(B)}_{\lambda(A),\lambda(A)}\neq 0$.
%\end{theorem}

\begin{theorem}\label{theo:E-I}
Un élément $(x,y)\in\R^n_+\times \R^n_{+}$ appartient à $\Ecal_{\mathrm{I}}(n)$ si et seulement si les conditions suivantes sont satisfaites:
\begin{enumerate}
\item $|x|=|y|$,
\item $|x|_{I}\geq |y|_J$  pour tout $I,J\in\Pcal(r,n)$ tels que
\begin{equation}\label{eq:cone-E-I}
(2\mu(I),\mu(J))\in \Ecal_{\mathrm{I}}(r).
\end{equation}
\end{enumerate}
De plus, la condition $(2\mu(I),\mu(J))\in \Ecal_{\mathrm{I}}(r)$ est équivalente à $\cc^{J}_{I,I}\neq 0$.
\end{theorem}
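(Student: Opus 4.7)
The plan is to recognize $\Ecal_{\mathrm{I}}(n)$ as an instance of the cone $\Delta(U,\sigma)$ studied in Section \ref{sec:delta-U-sigma}, and then specialize Theorem \ref{theo-delta-U-sigma} to this setting. For $U=\upU_n$ with $\sigma(u)=\bar u$, one has $K=(U^\sigma)_0=\SO(n)$, and the projection $\pi_{-}:\ugot(n)\to\ugot(n)^{-\sigma}$ coincides, after multiplication by $i$, with the real-part map $\Re:\herm(n)\to\sym(n)$; thus $(x,y)\in\Ecal_{\mathrm{I}}(n)$ if and only if $(x,y)\in\Delta(\upU_n,\sigma)$. Here $\tgot^{-\sigma}=\tgot$, so the centralizer $W'$ is trivial and $w_0'=e$, and moreover $\sigma$ acts trivially on $W=\Sgot_n$. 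Consequently the cohomological condition of Theorem \ref{theo-delta-U-sigma} collapses to $[\Xgot_{w,\zeta}]^2\cdot[\Xgot_{w_1,\zeta}]=[pt]$ in $H^{max}(\Fcal_\zeta,\Z)$.

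For each $r\in\{1,\ldots,n-1\}$ I would specialize to the antidominant element $\zeta=\gamma^r$ of (\ref{eq:gamma-1-etage}), so that $\Fcal_\zeta\simeq\G(r,n)$; admissibility is immediate because $(\gamma^r)^\perp\subset\agot^*$ is spanned by the restricted roots $e_i-e_j$ with $\{i,j\}\subset[r]$ or $\{i,j\}\subset[n]-[r]$. Parametrizing the cosets $w,w_1$ by $I=w([r])$ and $J=w_1([r])$ in $\Pcal(r,n)$, a direct calculation gives $(x,w\gamma^r)=|x|-|x|_I$ and $(y,w_0w_1\gamma^r)=|y|-|y|_{J^o}$, so the inequality of Theorem \ref{theo-delta-U-sigma}, together with the trace relation $|x|=|y|$, reads $|x|_{I^c}\geq|y|_{(J^c)^o}$. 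By Proposition \ref{prop:coeff-LR}, the identity $[\Xgot_I]^2\cdot[\Xgot_J]=[pt]$ is equivalent to $\cc^{J}_{I^o,I^o}\neq 0$, and the numerical condition of Theorem \ref{theo-delta-U-sigma} becomes $2|\mu(I)|+|\mu(J)|=2r(n-r)$, which is automatic from the non-vanishing of that LR coefficient. Setting $\widetilde I:=I^c$ and $\widetilde J:=(J^c)^o$, which together range bijectively over $\Pcal(n-r,n)^2$, the inequality takes the asserted form $|x|_{\widetilde I}\geq|y|_{\widetilde J}$; and applying Corollary \ref{coro:coeff-LR-sym}, together with the identity $X^{o,c,o,c}=X$, identifies $\cc^{J}_{I^o,I^o}$ with $\cc^{\widetilde J}_{\widetilde I,\widetilde I}$. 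Letting $r$ vary over $\{1,\ldots,n-1\}$ produces exactly the full list of inequalities of the theorem.

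For the final assertion, I would apply Proposition \ref{prop:horn-U-sigma} at rank $r$ (for which $\sigma_+=\mathrm{Id}$ by the same computation as above): $(2\mu(I),\mu(J))\in\Ecal_{\mathrm{I}}(r)\Leftrightarrow(2\mu(I),2\mu(I),2\mu(J))\in\horn(r)$, which by homogeneity of the Horn cone is equivalent to $(\mu(I),\mu(I),\mu(J))\in\horn(r)$, and this in turn is equivalent to $\cc^{J}_{I,I}\neq 0$ by Proposition \ref{prop:kempf-ness} (Kempf-Ness) together with Theorem \ref{theo:saturation} (Knutson-Tao saturation). The main obstacle will be the combinatorial bookkeeping of the subset operations $X\mapsto X^o$, $X\mapsto X^c$ and their compositions: one must verify carefully that the assignment $(I,J)\mapsto(I^c,(J^c)^o)$ is a bijection between Schubert-parameter pairs at level $r$ and theorem-parameter pairs at level $n-r$, and that under this relabelling the Schubert LR-condition is transported onto $\cc^{\widetilde J}_{\widetilde I,\widetilde I}\neq 0$ via exactly one application of Corollary \ref{coro:coeff-LR-sym}, so that no theorem inequality is either missed or double-counted as $r$ varies.
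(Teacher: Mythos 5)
Votre démarche est pour l'essentiel celle du texte : identifier $\Ecal_{\mathrm{I}}(n)$ au cône $\Delta(\upU_n,\sigma)$ pour $\sigma(g)=\overline{g}$, appliquer le théorème \ref{theo-delta-U-sigma} (avec $W'$ trivial, $w_0'=e$, $\sigma$ agissant trivialement sur $\Sgot_n$), traduire la condition de Schubert sur $\G(r,n)$ en non-annulation d'un coefficient de Littlewood--Richardson via la proposition \ref{prop:coeff-LR} et le corollaire \ref{coro:coeff-LR-sym}, et conclure la dernière assertion par la proposition \ref{prop:horn-U-sigma} combinée à Kempf--Ness et à la saturation. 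Votre comptabilité des opérations $I\mapsto I^o, I^c$ est correcte (l'inégalité $|x|_{I^c}\geq|y|_{(J^c)^o}$ coïncide, sous $|x|=|y|$, avec le $|x|_I\leq|y|_{J^o}$ du texte), de même que l'observation que la condition de trace est impliquée par la condition cohomologique sur une grassmannienne, et le remplacement de $\zeta_r$ par $\gamma^r$ (qui n'en diffère que par un élément central) est anodin.

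Il reste toutefois une vraie lacune dans le sens « si ». Le théorème \ref{theo-delta-U-sigma} caractérise l'appartenance par des inégalités indexées par \emph{tous} les éléments rationnels antidominants admissibles $\zeta$ (et tous les couples $(w,w_1)$) ; vérifier que chaque $\gamma^r$ est admissible ne donne que la \emph{nécessité} des inégalités listées. Pour la suffisance, il faut l'analogue du lemme \ref{lem:admissible-eigenvalue-1} : tout $\zeta$ rationnel antidominant vérifiant $\vect\big(\Sigma_\agot\cap\zeta^\perp\big)=\vect\big(\Sigma_\agot\big)\cap\zeta^\perp$ est, à homothétie positive près, de la forme $\zeta_r+t'\zeta_0$ (c'est-à-dire $\gamma^r$ modulo le centre), les rayons centraux $\pm\zeta_0$ fournissant exactement $|x|=|y|$. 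Il faut aussi la remarque (facile mais absente de votre texte) qu'un décalage central $t'\zeta_0$ ne change ni le parabolique $P(\zeta)$ ni les conditions de Schubert et de trace (les racines s'annulent sur le centre) et ne modifie l'inégalité linéaire que d'un multiple de $|x|-|y|$, de sorte qu'un seul représentant par rayon suffit une fois $|x|=|y|$ imposé. Sans ces deux points, rien ne garantit que la liste d'inégalités exigée par le théorème \ref{theo-delta-U-sigma} se réduise à celle de l'énoncé, et la direction « si » du théorème \ref{theo:E-I} n'est pas établie.
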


\begin{rem}
Dans le théorème précédent, on peut remplacer la condition (\ref{eq:cone-E-I}) par $\cc^{J}_{I,I}=1$.
\end{rem}

\begin{exemple}
\begin{itemize}
\item $\Ecal_{\mathrm{I}}(1)=\{x=y\}\subset \R^2$.
\item $(x,y)\in(\R^2_{+})^2$ appartient à $\Ecal_{\mathrm{I}}(2)$ si et seulement si $x_1+x_2=y_1+y_2$ et $x_1\geq y_1$.
\item $(x,y)\in(\R^3_{+})^2$ appartient à $\Ecal_{\mathrm{I}}(3)$ si et seulement si les relations suivantes sont vérifiées
\begin{equation}\label{eq:E-I-3}
\boxed{
\begin{array}{rcl}
x_1+x_2+x_3 & = & y_1+y_2+y_3\\
x_1 &\geq & y_1 \\
x_2 &\geq & y_3 \\
x_3 &\leq & y_3 \\
x_2 &\leq & y_1 \\
\end{array}
}
\end{equation}
\item $(x,y)\in(\R^4_{+})^2$ appartient à $\Ecal_{\mathrm{I}}(4)$ si et seulement si les relations suivantes sont vérifiées
\begin{equation*}
\boxed{
\begin{array}{rcl}
x_1+x_2+x_3 +x_4& = & y_1+y_2+y_3+y_4\\
x_1 &\geq & y_1 \\
x_2 &\geq & y_3 \\
x_1+x_2& \geq & y_1+y_2\\
x_1+x_3 & \geq & y_2+y_3\\
x_1+x_3 & \geq & y_1+y_4\\
x_1+x_4& \geq& y_3+y_4\\
x_2+x_3 & \geq & y_3+y_4\\
x_4 &\leq & y_4 \\
x_3 &\leq & y_2 \\
\end{array}
}
\end{equation*}
\end{itemize}
\end{exemple}

\bigskip

Le reste de cette section est consacré à la preuve du théorème \ref{theo:E-I}.

\bigskip

Nous allons appliquer les résultats de la section \ref{sec:delta-U-sigma} à la situtation suivante: les groupes $U:=\upU_n\subset U_\C:=\GL_n(\C)$ 
sont munis de l'involution $\sigma(g)=\overline{g}$. Ici, la projection $\pi_{-}:\ugot\to \ugot^{-\sigma}$ correspond à $\Re : \herm(n)\to \sym(n)$ à travers 
les identifications $\ugot\simeq  \herm(n)$ et $ \ugot^{-\sigma}\simeq\sym(n)$ données par l'application $x\mapsto x'=\frac{1}{i}x$ (voir la définition \ref{def:x-y-prime}).

Nous obtenons une première description du cône $\Ecal_{\mathrm{I}}(n)$.

\begin{coro}
\begin{enumerate}
\item $\forall (x,y)\in\R^n_+\times \R^n_{+}$, $(x,y)\in\Ecal_{\mathrm{I}}(n)\ \Longleftrightarrow\ (x,x,2y)\in \horn(n)$.
\item Pour toute partitions $\lambda,\mu$ de longueur au plus $n$, $(2\lambda,\mu)\in\Ecal_{\mathrm{I}}(n)\ \Longleftrightarrow\ c^{\mu}_{\lambda,\lambda}\neq 0$.
\end{enumerate}
\end{coro}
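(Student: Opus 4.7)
The plan is to derive both statements as direct applications of earlier material in the paper. Point 1 will follow from identifying $\Ecal_{\mathrm{I}}(n)$ with the cone $\Delta(U,\sigma)$ of Section \ref{sec:delta-U-sigma}, and then invoking Proposition \ref{prop:horn-U-sigma}; point 2 will then reduce to a combinatorial statement via Kempf--Ness and Knutson--Tao.

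For point 1, the forward implication is essentially immediate: given $X\in\herm(n)$, the matrix $Y:=\overline{X}$ is again hermitian with the same spectrum as $X$ (since $\overline{X}=X^T$ has the same characteristic polynomial), and $X+Y=2\Re(X)$, so $(\e(X),\e(Y),\e(X+Y))=(x,x,2y)\in\horn(n)$. For the reverse implication I would set $U=\upU_n$ and equip it with the involution $\sigma(g)=\overline{g}$. Under the canonical $\upU_n$-equivariant identification $\ugot(n)\simeq \herm(n)$, the splitting $\ugot(n)=\ugot(n)^\sigma\oplus\ugot(n)^{-\sigma}$ corresponds to $\herm(n)=i\sogot(n)\oplus\sym(n)$, the projection $\pi_-$ becomes $\Re:\herm(n)\to\sym(n)$, and the $K=\SO(n)$-orbit of $y\in\tgot_+\cap\tgot^{-\sigma}$ corresponds to the set of all real symmetric matrices with spectrum $y$. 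Thus the defining condition $Ky\subset\pi_-(Ux)$ unpacks exactly to $(x,y)\in\Ecal_{\mathrm{I}}(n)$, giving $\Delta(U,\sigma)\simeq \Ecal_{\mathrm{I}}(n)$. I would then apply Proposition \ref{prop:horn-U-sigma}: since $\sigma$ acts as $-\id$ on the diagonal torus $\tgot$, one has $-\sigma=\id$ on $\tgot_+$, whence $\sigma_+(x)=x$, and the proposition yields $(x,y)\in\Ecal_{\mathrm{I}}(n)\Leftrightarrow (x,x,2y)\in\horn(n)$.

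For point 2, I would simply chain point 1 with the two standard results for $\horn(n)$ recalled in Section \ref{sec:horn-n}. Since $\horn(n)$ is a cone (stable under positive scaling), point 1 gives $(2\lambda,\mu)\in\Ecal_{\mathrm{I}}(n)\Leftrightarrow (2\lambda,2\lambda,2\mu)\in\horn(n)\Leftrightarrow (\lambda,\lambda,\mu)\in\horn(n)$. By the Kempf--Ness proposition \ref{prop:kempf-ness} this is equivalent to the existence of an integer $k\geq 1$ with $c^{k\mu}_{k\lambda,k\lambda}\neq 0$, and by the Knutson--Tao saturation theorem \ref{theo:saturation} this in turn is equivalent to $c^{\mu}_{\lambda,\lambda}\neq 0$.

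The only substantive input in this chain is Proposition \ref{prop:horn-U-sigma}, which itself is a consequence of the O'Shea--Sjamaar theorem applied to the $(\upU_n\times\upU_n,\tilde{\sigma})$-hamiltonian variety $\Ocal_x\times\Ocal_x$ together with the anti-symplectic involution $(X,Y)\mapsto(\overline{Y},\overline{X})$; everything else in the argument is a routine unwinding of definitions and the invocation of already-established results, so no genuine obstacle remains at the level of this corollary.
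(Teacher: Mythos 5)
Your proof is correct and follows essentially the same route as the paper: identify $\Ecal_{\mathrm{I}}(n)$ with the cone $\Delta(\upU_n,\sigma)$ for $\sigma(g)=\overline{g}$, apply Proposition \ref{prop:horn-U-sigma} noting that $\sigma_+=\id$ on $\tgot_+$, and deduce point 2 from point 1 together with the Kempf--Ness/Knutson--Tao equivalence $c^{\mu}_{\lambda,\lambda}\neq 0\Leftrightarrow(\lambda,\lambda,\mu)\in\horn(n)$. The only difference is cosmetic: you spell out the easy forward direction and the scaling step explicitly, which the paper leaves implicit.
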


\begin{proof}  Le premier point est une conséquence de la proposition \ref{prop:horn-U-sigma}, car dans notre contexte l'application 
$\sigma_+:\tgot_+\to\tgot_+$ est égale à l'identité. Le second point découle du premier et du fait que 
$c^{\mu}_{\lambda,\lambda}\neq 0$ si et seulement si $(\lambda,\lambda,\mu)\in \horn(n)$ (voir le théorème \ref{theo:saturation}). 
\end{proof}

\medskip

Si nous utilisons les inégalités décrivant $\horn(n)$ (voir section \ref{sec:horn-n}), nous obtenons que $(x,y)\in\Ecal_{\mathrm{I}}(n)$ si et seulement si 
$|x|=|y|$ et $|x|_{I_1}+|x|_{I_2}\geq 2 |y|_{J}$ pour tout $r<n$ et tout $(I_1,I_2,J)\in \LR^n_r$. Dans le cas où $I_1=I_2=I$, l'inégalité devient $|x|_I\geq  |y|_{J}$ 
et la condition $(I,I,J)\in \LR^n_r$ est équivalente à $(2\mu(I),\mu(J))\in \Ecal_{\mathrm{I}}(r)$.

\medskip

Nous allons maintenant montrer que les inégalités $|x|_{I_1}+|x|_{I_2}\geq 2 |y|_{J}$ avec $I_1\neq I_2$ ne sont pas nécessaires pour 
décrire $\Ecal_{\mathrm{I}}(n)$. 

\begin{rem}
Cette propriété de symétrie dans un système d'équations a été obtenue dans un cadre plus général dans des travaux récents de A. Médoc \cite{Medoc24}.
\end{rem}

\begin{exemple}Prenons le cas $n=3$. Alors, pour $(x,y)\in\R^3_+\times \R^3_{+}$, $(x,x,2y)\in \horn(3)$ si et seulement si le système (\ref{eq:E-I-3}) et le système 
\begin{equation}\label{eq:E-I-3-bis}
\boxed{
\begin{array}{rrcrl}
x_1+x_2  & \geq & 2y_2 & \geq & x_2 + x_3\\
2 y_1& \geq &x_1 +x_3 &\geq &  2y_3  \\
\end{array}
}
\end{equation}
sont satisfaits. Exercice intéressant : vérifier que le système (\ref{eq:E-I-3}) $\cup$ (\ref{eq:E-I-3-bis})  est équivalent au sytème (\ref{eq:E-I-3}) pour tout 
$(x,y)\in\R^3_+\times \R^3_{+}$.
\end{exemple}

Nous travaillons avec les sous-algèbre abélienne maximale $\agot=\{\diag(x), x\in \R^n\}$, et le système de racines restreint est 
$\Sigma(\ggot)=\{e_i-e_j, 1\leq i\neq j\leq n\}$. Le cône $\agot_+=\{\diag(x), x\in \R^n_{+}\}$ est une chambre de Weyl restreinte par rapport à l'action de $U$ sur $\herm(n)$. 
Le groupe de Weyl restreint $W_{\agot}$ est égal au groupe de Weyl $W=\Sgot_n$.

\begin{lem}\label{lem:admissible-eigenvalue-1}
Les éléments {\em admissibles} sont de la forme $t\,w\zeta_r + t' \zeta_0$ où $t\in \Q^{\geq 0}$, $t'\in \Q$, $w\in W_\agot$, $\zeta_0=\diag(1,\ldots,1)$ et 
$\zeta_r:=\diag(c_r)$, avec $c_r=(\underbrace{-1,\ldots,-1}_{ r\ fois},0,\ldots,0)$ pour certains $r\in [n]$.
\end{lem}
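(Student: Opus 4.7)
Mon plan consiste à traduire la condition d'admissibilité de la définition \ref{def:admissible-reel} en une condition combinatoire sur les valeurs propres du vecteur diagonal $\zeta \in \agot$, puis à montrer que les vecteurs admissibles sont précisément ceux qui possèdent au plus deux valeurs distinctes. Puisque $\tggot = \glgot_n(\C)$ est la complexification de $\ggot = \glgot_n(\R)$, on a $\Sigma(\tggot/\ggot) = \Sigma(\ggot) = \{e_i - e_j,\ i \neq j\}$, et ainsi $\vect(\Sigma(\tggot/\ggot))$ s'identifie à l'hyperplan $H = \{y \in \R^n, \sum_i y_i = 0\}$, comme rappelé juste avant le lemme.

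Pour $\zeta = \diag(x_1, \ldots, x_n) \in \agot_\Q$, je calculerais d'abord les deux sous-espaces à comparer. Notons $k$ le nombre de valeurs distinctes de $x$ et $A_1, \ldots, A_k$ les ensembles de niveaux correspondants de $\zeta$. Les racines restreintes s'annulant sur $\zeta$ sont exactement les $e_i - e_j$ avec $x_i = x_j$, donc $\vect(\Sigma(\tggot/\ggot) \cap \zeta^\perp)$ est le sous-espace $\{y \in \R^n,\ \sum_{i \in A_\ell} y_i = 0 \text{ pour tout } \ell\}$, de dimension $n - k$. D'autre part, $H \cap \zeta^\perp$ est de dimension $n - 1$ si $\zeta \in \R \zeta_0$, et de dimension $n - 2$ sinon. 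L'inclusion $\vect(\Sigma(\tggot/\ggot) \cap \zeta^\perp) \subset H \cap \zeta^\perp$ étant automatique, l'égalité des deux sous-espaces équivaut à l'égalité de leurs dimensions, ce qui fournit la dichotomie : $\zeta \in \R \zeta_0$ (cas $k=1$), ou $\zeta$ possède exactement deux valeurs distinctes (cas $k=2$).

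Il resterait alors à paramétrer explicitement les éléments admissibles rationnels sous la forme annoncée. Si $\zeta$ a au plus deux valeurs distinctes $a \leq b$, on note $A \subset [n]$ l'ensemble des indices où $\zeta$ vaut $a$ et $r = |A|$. Alors $\zeta = b\zeta_0 - (b-a)\mathbf{1}_A$ ; pour toute permutation $w \in W_\agot = \Sgot_n$ envoyant $[r]$ sur $A$, on a $\mathbf{1}_A = -w\zeta_r$, et donc $\zeta = t\, w\zeta_r + t'\zeta_0$ avec $t = b-a \in \Q^{\geq 0}$ et $t' = b \in \Q$. L'inclusion réciproque est immédiate puisqu'un vecteur de cette forme possède manifestement au plus deux valeurs distinctes, donc satisfait la condition d'admissibilité d'après l'analyse précédente. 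Je ne vois pas d'obstacle sérieux dans cette preuve ; la seule étape un peu technique est le calcul de dimension initial, qui se ramène à compter les composantes connexes du graphe sur $[n]$ dont les arêtes sont les paires $\{i,j\}$ avec $x_i = x_j$.
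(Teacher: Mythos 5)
Votre preuve est correcte et suit essentiellement la même démarche que celle du texte : traduire la condition d'admissibilité $\vect\big(\Sigma(\ggot)\cap\zeta^\perp\big)=\vect\big(\Sigma(\ggot)\big)\cap\zeta^\perp$ en la condition combinatoire « $\zeta$ a au plus deux valeurs diagonales distinctes », puis paramétrer ces vecteurs sous la forme $t\,w\zeta_r+t'\zeta_0$. La seule différence est d'exécution : vous comparez directement les dimensions des deux membres via les ensembles de niveaux de $\zeta$, alors que le texte se ramène d'abord à un élément dominant modulo $W_\agot$ et exige que toutes les racines simples sauf au plus une s'annulent sur $\zeta$ ; les deux arguments sont équivalents.
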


\begin{proof}Considérons un vecteur rationnel $\zeta\in\agot\simeq\R^n$ tel que $(\star)\ \mathrm{Vect}\big(\Sigma(\ggot)\cap \zeta^\perp\big)=
\mathrm{Vect}\big(\Sigma(\ggot)\big)\cap \zeta^\perp$. Modulo l'action du groupe de Weyl $W_\agot$, nous pouvons supposer que 
$\zeta=(\zeta_1\geq\cdots\geq\zeta_n)$ est dominant. Puisque l'ensemble $\Sigma(\ggot)$ est généré par les racines simples 
$S(\ggot)=\{e_{i}-e_{i+1}, 1\leq i\leq n-1\}$, nous voyons que pour $\zeta$ dominant, la condition $(\star)$ équivaut à demander que 
toutes les formes linéaires de $S(\ggot)$ sauf une doivent s'annuler par rapport à $\zeta$. Cette dernière condition implique que $\zeta$ est de la forme 
$t\,\zeta_r + t' \zeta_0$.
\end{proof}

Les \'el\'ements admissibles $\pm \zeta_0$ fournissent les in\'egalit\'es $|x|\geq |y|$ et $-|x|\geq -|y|$, soit $|x| = |y|$.

On travaille maintenant avec l'élément admissibles $\zeta_r$. L'involution $\sigma$ agit triviallement sur $W$, le groupe $W'$ est trivial, et donc $w_0'=Id$. Finalement, le théorème \ref{theo-delta-U-sigma} nous assure que $(x,y)\in \Ecal_{\mathrm{I}}(n)$ si et seulement si $|x|=|y|$ et
\begin{equation}\label{eq:inegalite-E-1-n}
(x,w\zeta_r)\geq (y,w_0w_1\zeta_r)
\end{equation}
pour tout $1\leq r<n$, et tout $(w,w_1)\in W/W^{\zeta_r}\times W/W^{\zeta_r}$ satisfaisant les conditions suivantes
\begin{enumerate}
\item[a)] $[\Xgot_{w,\zeta}]\cdot [\Xgot_{w,\zeta}]\cdot[\Xgot_{w_1,\zeta}]= [pt]\quad \mathrm{dans}\quad H^{max} (\Fcal_\zeta,\Z)$.
\item[b)] $2\tr(w\zeta \circlearrowright \ngot^{w\zeta>0})+\tr(w_1\zeta \circlearrowright \ngot^{w_1\zeta>0})= 2\tr(\zeta \circlearrowright \ugot_\C^{\zeta>0})$.
\end{enumerate}

Si on pose $I=w([r])$ et $J=w_1([r])$, l'inégalité (\ref{eq:inegalite-E-1-n}) devient $|x|_I\leq |y|_{J^o}$, la condition a) est équivalente à $\cc_{I^o, I^o}^J=1$, 
et la condition b) est impliquée par a). En se servant du fait que $\cc_{I^o, I^o}^J=\cc_{I^c, I^c}^{J^{o,c}}$, on obtient finalement que 
$(x,y)\in \Ecal_{\mathrm{I}}(n)$ si et seulement si $|x|=|y|$ et $|x|_{I}\geq |y|_J$  pour tout $I,J\in\Pcal(r,n)$ tels que $\cc^{J}_{I,I}=1$. 
La preuve du théorème \ref{theo:E-I} est complète.

%%%%%%%%%%%%%%%%%%%%%%%%%%%%%%%%%%%%%%
%%%%%%%%%%%%%%%%%%%%%%%%%%%%%%%%%%%%%%
%%%%%%%%%%%%%%%%%%%%%%%%%%%%%%%%%%%%%%
\section{Les cônes $\Ecal_{\mathrm{II}}(n)$}
%%%%%%%%%%%%%%%%%%%%%%%%%%%%%%%%%%%%%%
%%%%%%%%%%%%%%%%%%%%%%%%%%%%%%%%%%%%%%
%%%%%%%%%%%%%%%%%%%%%%%%%%%%%%%%%%%%%%

Considérons le morphisme d'algèbres de Lie $\iota: \glgot_n(\C)\to \glgot_{2n}(\R)$ qui envoie une matrice $X=A+iB$ vers 
$$
\iota(X)= \begin{pmatrix} A& -B \\B & A\end{pmatrix}.
$$
Ce morphisme induit une application linéaire $\iota : \herm(n)\to \sym(2n)$ et la projection orthogonale correspondante
$\pi: \sym(2n)\to \herm(n)$ est définie par la relation : 
$$
\pi(A)= \frac{X+Y}{2}+ i \frac{Z-{}^t Z}{2}\quad si \quad A=\begin{pmatrix} X& {}^t Z \\Z & Y\end{pmatrix}.
$$

Le but de cette section est l'étude du cône
$$
\Ecal_{\mathrm{II}}(n)=\Big\{(\e(A), \e(\pi(A))), \ A\in \sym(2n)\Big\}\subset \R^{2n}_+\times \R^n_{+}.
$$

Dans \cite{Chenciner}, Chenciner a mis en évidence le lien entre l'ensemble des spectres du moment cinétique d'un mouvement d'équilibre relatif d'une 
configuration centrale à $N$ corps et les sous-ensembles $\Delta_\lambda:= \Ecal_{\mathrm{II}}(n)\ \cap\  \{\lambda\}\times \R^n_{+}$. 
Ensuite, Chenciner et P\'erez ont montré que $\Delta_\lambda$ est un polytope convexe pour tout $\lambda\in \R^{2n}_+$ \cite{Chenciner-Perez}. 
Heckman et Zhao ont été les premiers à comprendre que cette propriété de convexité découle du  théorème d'O'Shea-Sjamaar \cite{Heckman-Zhao}.

Voici une description récursive que nous obtenons pour le cône $\Ecal_{\mathrm{II}}(n)$.

\begin{theorem}\label{theo:E-II} 
Un élément $(x,y)\in\R^{2n}_+\times \R^n_{+}$ appartient à $\Ecal_{\mathrm{II}}(n)$ si et seulement si  $|x|=2|y|$ et si $|x|_{I}\geq 2|y|_J$  est vérifié 
pour tout couple $(I,J)$ satisfaisant les conditions suivantes : il existe $r<n$ tel que
\begin{enumerate}
\item $I\subset [2n]$ est de cardinal $2r$,
\item $J\subset [n]$ est de cardinal $r$,
\item $(\mu(I),\mu(J))\in \Ecal_{\mathrm{II}}(r)$.
\end{enumerate}
De plus, la condition $(\mu(I),\mu(J))\in \Ecal_{\mathrm{II}}(r)$ est équivalente à $\cc^{I}_{J,J}\neq 0$.
\end{theorem}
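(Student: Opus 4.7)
The plan is to realize $\Ecal_{\mathrm{II}}(n)$ as a cone of the form $\horn_\pgot(K,\tK)$ and apply Theorem \ref{theo:main-reel}, closely paralleling the strategy used for Theorem \ref{theo:E-I} in Section 8.1. The relevant real reductive pair is $G=\GL_n(\C)\subset\tG=\GL_{2n}(\R)_0$ embedded via $\iota$, with maximal compacts $K=\upU(n)\subset\tK=\SO(2n)_0$ and Cartan decompositions yielding $\pgot=\herm(n)$, $\tpgot=\sym(2n)$; the orthogonal projection $\tpgot\to\pgot$ is precisely the map $\pi$ in the statement, so $\Ecal_{\mathrm{II}}(n)=\horn_\pgot(K,\tK)$. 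The hypothesis \ref{hypothese-G} is verified by noting that $\ggot$ sits inside $\tggot=\glgot_{2n}(\R)$ as the commutant of $J=\iota(iI_n)$, whence $I_{\ggot,\tggot}=\R\,I_{2n}=Z(\tggot)\cap\ggot$.

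Next I would identify the restricted root data. Take $\agot$ the real diagonal matrices in $\pgot$ with $\agot_+=\{a_1\geq\cdots\geq a_n\}$, and similarly $\tagot\subset\tpgot$. The embedding $\iota:\agot\hookrightarrow\tagot$ doubles each coordinate, which introduces a factor of $2$ in the inner product on $\agot$ induced from the trace form on $\tagot$. A direct weight computation shows $\Sigma(\tggot/\ggot)=\{e_i-e_j:i\neq j\}\subset\agot^*$ (type $A_{n-1}$), so as in Lemma \ref{lem:admissible-eigenvalue-1} the admissible antidominant rational elements are, up to $\Q^{>0}$-scaling, the central direction $\pm\zeta_0=\pm(1,\ldots,1)$ or the elements $\zeta_r=(-1,\ldots,-1,0,\ldots,0)$ with $r$ entries equal to $-1$, for $r\in\{1,\ldots,n-1\}$. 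The inequalities coming from $\pm\zeta_0$ yield the equality $|x|=2|y|$, the factor $2$ arising from the normalization of the invariant product.

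For $\zeta=\zeta_r$ the relevant complex flag varieties are $\Fcal_{\zeta_r}=\G(r,n)\times\G(r,n)$ (since $G_\C=\GL_n(\C)\times\GL_n(\C)$) and $\tFcal_{\zeta_r}=\G(2r,2n)$, with the embedding $(E_1,E_2)\mapsto E_1\oplus E_2$. The restricted Weyl-group cosets are $\Pcal(r,n)$ and $\Pcal(2r,2n)$. For $\mathcal{J}\in\Pcal(r,n)$ and $\mathcal{L}\in\Pcal(2r,2n)$, Theorem \ref{theo:main-reel} yields the inequality $(\tilde\xi,\tw\iota(\zeta_r))\geq(\xi,w_0w\zeta_r)$, which unpacks to $|x|_{\mathcal{L}}\leq 2|y|_{\mathcal{J}^o}$ under the cohomology condition $[\Xgot_{\mathcal{J}}]\times[\Xgot_{\mathcal{J}}]\cdot\iota^*[\tXgot_{\mathcal{L}}]=[pt]$. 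By Proposition \ref{prop:calcul-grassmanienne-a-b} this condition reads $\cc^{\mu(\mathcal{L}^o)}_{\mu(\mathcal{J}),\mu(\mathcal{J})}=1$. Using $|x|=2|y|$ to complement the inequality, and setting $I=\mathcal{L}^c$, $J=\mathcal{J}^{o,c}$, $s=n-r$, we obtain $|x|_I\geq 2|y|_J$ with $I\in\Pcal(2s,2n)$ and $J\in\Pcal(s,n)$. Corollary \ref{coro:coeff-LR-sym} then yields $\cc^{\mu(\mathcal{L}^o)}_{\mu(\mathcal{J}),\mu(\mathcal{J})}=\cc^I_{J,J}$, so the cohomological condition becomes exactly $\cc^I_{J,J}\neq 0$ (with equality to $1$ in the Lévi-mobile refinement).

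The remaining, most delicate, step is the equivalence $(\mu(I),\mu(J))\in\Ecal_{\mathrm{II}}(r)\Longleftrightarrow\cc^I_{J,J}\neq 0$. I would establish it by a further application of O'Shea-Sjamaar. Consider the coadjoint orbit $\Ocal_{\mu(I)}\subset\herm(2r)$ of $\upU(2r)$, equipped with the antisymplectic involution $\tau(X)=\bar X$, together with the action of $U_\C=\upU(r)\times\upU(r)$ realized inside $\upU(2r)$ as the commutant of $J$. The moment map for $U_\C$ produces the classical Horn cone $\LR(r,r)$, whose lattice points are characterized by non-vanishing of Littlewood-Richardson coefficients through Proposition \ref{prop:kempf-ness} and the Knutson-Tao saturation Theorem \ref{theo:saturation}. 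The restriction of $\tau$ to $\sym(2r)=\herm(2r)^\tau$ is compatible with an involution on $\upU(r)^2$ which swaps the two factors with conjugation (since complex conjugation swaps the $\pm i$-eigenspaces of $J$). Theorem \ref{theo:OSS-bis} then identifies the real moment image on $\Ocal^\R_{\mu(I)}$ with the $\sigma$-fixed part of $\LR(r,r)$, and since for a hermitian matrix $H$ the spectra of $H$ and $\bar H$ coincide, this fixed part is exactly $\{\mu(J):(\mu(I),\mu(J),\mu(J))\in\LR(r,r)\}=\{\mu(J):\cc^I_{J,J}\neq 0\}$. This completes the equivalence and the theorem.
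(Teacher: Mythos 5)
Your proposal is correct and follows essentially the same route as the paper: you realize $\Ecal_{\mathrm{II}}(n)$ as the cone $\horn_\pgot(K,\tK)$ for $\GL_n(\C)\subset\GL_{2n}(\R)_0$, apply the theorem \ref{theo:main-reel} with the admissible elements $t\,w\zeta_r+t'\zeta_0$, translate the cohomological condition on $\G(r,n)\times\G(r,n)\hookrightarrow\G(2r,2n)$ into $\cc^{I}_{J,J}$ via the proposition \ref{prop:calcul-grassmanienne-a-b}, and obtain the final equivalence from O'Shea--Sjamaar plus saturation, which is exactly the paper's proposition \ref{prop:OS-eigenvalue-2} and lemma \ref{prop:eigenvalue-2-saturation} rederived at rank $r$. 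The only quibble is a citation: the mixed-size symmetry $\cc^{\mu(\mathcal{L}^o)}_{\mu(\mathcal{J}),\mu(\mathcal{J})}=\cc^{I}_{J,J}$ follows from the last assertion of the proposition \ref{prop:calcul-grassmanienne-a-b}, not from the corollaire \ref{coro:coeff-LR-sym}, which concerns three subsets of the same grassmannian.
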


\begin{rem}
Dans le théorème précédent, on peut remplacer la condition $(\mu(I),\mu(J))\in \Ecal_{\mathrm{II}}(r)$ par $\cc^{I}_{J,J}=1$.
\end{rem}

\begin{exemple}
\begin{itemize}
\item $\Ecal_{\mathrm{II}}(1)=\{(x,y)\in \R^2_+\times\R,\ x_1+x_2=2y\}$.
\item $(x,y)\in\R^4_{+}\times\R^2_+$ appartient à $\Ecal_{\mathrm{II}}(2)$ si et seulement si $x_1+x_2+x_3+x_4=2(y_1+y_2)$ et 
\begin{equation*}
\boxed{
\begin{array}{rcl}
x_1+x_2&\geq & 2y_1\\
x_2+x_3 &\geq & 2y_2\\
x_1+x_4 &\geq & 2y_2
\end{array}
}
\end{equation*}
\item $(x,y)\in\R^6_{+}\times\R^3_+$ appartient à $\Ecal_{\mathrm{II}}(3)$ si et seulement si $\sum_{i=1}^6 x_i=2(y_1+y_2+y_3)$ et 
\begin{equation*}
\boxed{
\begin{array}{rrrrl}
x_1+x_2&\geq& 2y_1&\geq&\max(x_1+x_6,x_2+x_5,x_3+x_4)\\
\min(x_2+x_3,x_1+x_4)&\geq& 2y_2&\geq&\max(x_4+x_5,x_3+x_6)\\
\min(x_1+x_6,x_2+x_5,x_3+x_4)&\geq& 2y_3&\geq&x_5+x_6\\
\end{array}
}
\end{equation*}
\end{itemize}
\end{exemple}

%%%%%%%%%%%%%%%%%%%%%%%%%%%%%%%%%%%%%%
\subsection{Description au moyen du cône $\LR(n,n)$}
%%%%%%%%%%%%%%%%%%%%%%%%%%%%%%%%%%%%%%

Considérons le morphisme $\iota: G:= \GL_n(\C)\croc \tG:= \GL_{2n}(\R)_0$. Les sous-groupes compacts maximaux sont $K=\upU_n\croc \tK= \SO_{2n}$. 
Nous avons les décompositions de Cartan $\ggot=\kgot\oplus\pgot$ avec $\pgot=\herm(n)$ et 
$\tggot=\tkgot\oplus\tpgot$ avec $\tpgot=\sym(2n)$.

La complexification du groupe $G$ est $G_\C:= \GL_n(\C)\times \GL_n(\C)$. L'inclusion $G\croc G_\C$ est donnée par l'application 
$g\mapsto (g,\overline{g})$ et l'involution antiholomorphe $\sigma$ sur $G_\C$ est définie par $\sigma(g_1,g_2)= (\overline{g_2},\overline{g_1})$, 
de sorte que $G$ correspond à l'ensemble des points fixes de $\sigma$.

La complexification du groupe $\tG$ est $\tG_\C:= \GL_{2n}(\C)$. L'inclusion $\tG\croc \tG_\C$ est donnée par 
$g\mapsto (g,g)$. L'involution antiholomorphe $\widetilde{\sigma}$ sur $\tG_\C$ est définie par $\widetilde{\sigma}(g)= \overline{g}$, de sorte que 
 $\tG$ correspond à la composante connexe de l'ensemble des points fixes de $\widetilde{\sigma}$.

Le morphisme $\iota: G\croc \tG$ admet une complexification $\iota_\C: G_\C\croc \tG_\C$ définie par
\begin{equation}\label{eq:iota-eigenvalue-2}
\iota_\C(g_1,g_2)=\frac{1}{2}\begin{pmatrix} g_1+g_2& i(g_1-g_2) \\-i(g_1-g_2) & g_1+g_2\end{pmatrix}= P\begin{pmatrix} g_1& 0 \\ 0& g_2\end{pmatrix}P^{-1},
\end{equation}
avec $P=\frac{1}{\sqrt{2}}\begin{pmatrix} I_n& -i I_n \\ -i I_n & Id_n\end{pmatrix}\in \upU_{2n}$.
Remarquez que $\iota_\C\circ \sigma= \widetilde{\sigma}\circ \iota_\C$.

Les groupes $U=\upU_n\times \upU_n$ et $\tU= \upU_{2n}$  sont respectivement des sous-groupes compacts maximaux de $G_\C$ et $\tG_\C$.
Les sous-groupes de points fixes des involutions sont $K=U^\sigma$ et $\tK=(\tU^{\tilde{\sigma}})_0$.
 
Au niveau de l'algèbre de Lie, nous avons une projection orthogonale $\pi : \glgot_{2n}(\C)\longrightarrow \glgot_n(\C)\times \glgot_{n}(\C)$ définie par 
$$
\pi\begin{pmatrix} X&Z \\ T & Y\end{pmatrix}=\tfrac{1}{2}\Big(X+Y-i(Z-T),X+Y+i(Z-T)\Big).
$$

Considérons une autre application $\pi_0 : \glgot_{2n}(\C)\longrightarrow \glgot_n(\C)\times \glgot_{n}(\C)$ définie par 
$\pi_0\begin{pmatrix} X&Z \\ T & Y\end{pmatrix}=(X,Y)$. On vérifie facilement que $\pi=\pi_0\circ \Ad(P^{-1})$.

\medskip

Rappelons la définition du cône $\LR(n,n)$ (voir section \ref{sec:LR-m-n}).

\begin{definition}
Soient $x\in\R^{2n}_+$ et $(y,z)\in(\R^n_+)^2$. Alors $(x,y,z)\in\LR(n,n)$ si et seulement si 
$\upU_{n}\cdot\diag(y)\times \upU_{n}\cdot\diag(z)\subset \pi_0\left(\upU_{2n}\cdot\diag(x)\right)$.
\end{definition}

\medskip

Voici une première description du cône $\Ecal_{\mathrm{II}}(n)$.

\begin{prop}\label{prop:OS-eigenvalue-2} Soit $(x,y)\in\R^{2n}_+\times\R^n_+$.
Les affirmations suivantes sont équivalentes :
\begin{enumerate}
\item $(x,y,y)\in\LR(n,n)$ ;
\item $(x,y)\in \Ecal_{\mathrm{II}}(n)$.
\end{enumerate}
\end{prop}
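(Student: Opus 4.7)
The plan is to apply Theorem \ref{theo:OSS-bis} (the Kählerian O'Shea--Sjamaar theorem) to the coadjoint orbit $\Ocal_x$ of $\upU_{2n}$, regarded as a Kähler $(U, \sigma)$-hamiltonian manifold. Here $U = \upU_n \times \upU_n$ acts on $\Ocal_x \simeq \{H \in \herm(2n) : \e(H) = x\}$ via the embedding $\iota_\C$ from (\ref{eq:iota-eigenvalue-2}), and $\Ocal_x$ is equipped with the anti-holomorphic involution $\tau(H) = \overline{H}$. The identity $\iota_\C \circ \sigma = \widetilde{\sigma} \circ \iota_\C$ recalled in the excerpt ensures compatibility of $\tau$ with $\sigma$ on $U$, and the fixed set $(\Ocal_x)^\tau = \{A \in \sym(2n) : \e(A) = x\}$ is non-empty since it contains $\diag(x)$.

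First I would identify the two moment polytopes. The $U$-moment map on $\Ocal_x$ is $\Phi_\ugot(H) = \pi_\C(H) = \pi_0(P^{-1} H P)$, and since $P \in \upU_{2n}$ preserves $\Ocal_x$ under conjugation, one has $\pi_\C(\Ocal_x) = \pi_0(\Ocal_x)$, hence
\[
\Delta_\ugot(\Ocal_x) = \{(y,z) \in \R^n_+ \times \R^n_+ : (x,y,z) \in \LR(n,n)\}.
\]
For the induced action of $K = (U^\sigma)_0 \simeq \upU_n$ (diagonally embedded as $g \mapsto (g, \overline{g})$) on $(\Ocal_x)^\tau$, substituting $A = \begin{pmatrix} X & {}^tZ \\ Z & Y \end{pmatrix} \in \sym(2n)$ into the formula defining $\pi_\C$ yields $\pi_\C(A) = (\pi_\R(A), \overline{\pi_\R(A)})$, from which it is immediate that the restricted real moment map equals $\pi_\R$ and
\[
\Delta_\pgot((\Ocal_x)^\tau) = \{y \in \R^n_+ : (x,y) \in \Ecal_{\mathrm{II}}(n)\}.
\]

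Next I would identify the $-\sigma$-fixed subspace inside the Weyl chamber. On $\tgot \simeq i\R^n \times i\R^n$ the involution reads $\sigma(X_1,X_2) = (\overline{X_2}, \overline{X_1}) = -(X_2, X_1)$, so $-\sigma$ is the swap involution. Consequently $\tgot^*_+ = \R^n_+ \times \R^n_+$ is adapted to $\sigma$, and $\tgot^*_+ \cap (\tgot^*)^{-\sigma} = \{(y,y) : y \in \R^n_+\}$, identified with $\agot^*_+ = \R^n_+$ by $j^* : (y,y) \mapsto y$. Feeding this into Theorem \ref{theo:OSS-bis} gives the chain
\[
(x,y,y) \in \LR(n,n) \Longleftrightarrow (y,y) \in \Delta_\ugot(\Ocal_x) \cap (\tgot^*)^{-\sigma} \Longleftrightarrow y \in \Delta_\pgot((\Ocal_x)^\tau) \Longleftrightarrow (x,y) \in \Ecal_{\mathrm{II}}(n),
\]
which is the desired equivalence.

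The hard part will be the careful bookkeeping of the various identifications: the choice of invariant inner product on $\glgot_{2n}(\C)$ identifying $\ugot^*$ with $\herm(n)^2$, the check that the restricted moment map on $(\Ocal_x)^\tau$ indeed lands in $\pgot^* \simeq \herm(n)$ and equals $\pi_\R$, and the verification that the chamber $(\R^n_+)^2$ is adapted to $\sigma$ in the precise sense of the paper's definition. None of this introduces a genuinely new idea; once the dictionary between the real and complex sides is pinned down, the proposition is a direct instance of the Kählerian O'Shea--Sjamaar theorem.
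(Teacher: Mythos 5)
Your proof is correct and follows essentially the same route as the paper: the paper's one-line argument invokes Proposition \ref{prop:OSS-orbites} (the orbit form of the O'Shea--Sjamaar theorem) together with the identity $\pi_0\left(\upU_{2n}\cdot\diag(x)\right)=\pi\left(\upU_{2n}\cdot\diag(x)\right)$ coming from $\pi=\pi_0\circ \Ad(P^{-1})$ with $P\in\upU_{2n}$, which are exactly the two ingredients you use when applying Theorem \ref{theo:OSS-bis} to the orbit $\Ocal_x$. The only cosmetic difference is that you re-derive the orbit statement directly on $\Ocal_x$ (via the Kähler version of the theorem) rather than quoting Proposition \ref{prop:OSS-orbites}.
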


\begin{proof}
Cela découle de la proposition \ref{prop:OSS-orbites} et du fait que 
$\pi_0\left(\upU_{2n}\cdot\diag(x)\right)=\pi\left(\upU_{2n}\cdot\diag(x)\right)$. 
\end{proof}

Si nous utilisons les inégalités décrivant $\LR(n,n)$ (voir section \ref{sec:LR-m-n}), nous obtenons que $(x,y)\in\Ecal_{\mathrm{II}}(n)$ si et seulement si 
les conditions suivantes sont vérifiées : $|x|=2|y|$ et 
\begin{enumerate}\setlength{\itemsep}{8pt}
\item[i)] $x_{n+k}\leq y_k\leq x_k$, $\forall k\in [n]$,
\item[ii)] $|x|_L\geq |y|_I + |y|_J$, pour tout triplet $L\subset [2n]$, $I,J\subset [n]$,  satisfaisant $\sharp L =\sharp I +\sharp J$ et $\cc^L_{I,J}\neq 0$.
\end{enumerate}

Dans les prochaines sections, nous verrons que les inégalités $|x|_L\geq |y|_I + |y|_J$, avec $I\neq J$, ainsi que celles de i),  ne sont pas nécessaires pour 
décrire $\Ecal_{\mathrm{II}}(n)$.

\medskip
 
Le résultat suivant, qui découle du théorème \ref{theo:saturation}, est essentiel pour obtenir une description récursive de $\Ecal_{\mathrm{II}}(n)$.

\begin{lem}\label{prop:eigenvalue-2-saturation} Soient $\lambda,\mu$ deux partitions telles que $\leng(\lambda)\leq 2n$ et  $\leng(\mu)\leq n$.
Alors $(\lambda,\mu,\mu)\in \LR(n,n)$ si et seulement si  $c^{\lambda}_{\mu,\mu}\neq 0$.
\end{lem}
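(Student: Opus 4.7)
The plan is to invoke the general equivalence, between membership in an $\LR$-cone and non-vanishing of a Littlewood--Richardson coefficient, that is already stated and used at the end of Theorem~\ref{theo:LR-m-n}. The specific statement of the lemma amounts to specializing this equivalence to the case of the diagonal triple $(\lambda,\mu,\mu)$ inside the cone $\LR(n,n)=\LR(\upU_n\times\upU_n,\upU_{2n})$.

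To make this self-contained, I would proceed in two steps. First I would invoke the Kempf--Ness/Mumford correspondence to reformulate $\LR(n,n)$ as the rational closure of the asymptotic support of branching multiplicities. Explicitly, for any triple of partitions $(\lambda,\mu,\nu)$ of lengths at most $2n, n, n$ respectively, this correspondence yields
\begin{equation*}
(\lambda,\mu,\nu)\in\LR(n,n)\quad\Longleftrightarrow\quad \exists\, k\geq 1,\ \bigl[V^{\upU_n}_{k\mu}\otimes V^{\upU_n}_{k\nu}\ :\ V^{\upU_{2n}}_{k\lambda}\bigr]\neq 0.
\end{equation*}
By the classical Littlewood--Richardson branching rule from $\upU_{2n}$ to the Levi subgroup $\upU_n\times\upU_n$, this branching multiplicity is equal to the coefficient $\cc^{k\lambda}_{k\mu,k\nu}$ appearing in (\ref{eq:coeff-LR}).

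The second step is then immediate: the Knutson--Tao saturation theorem (Theorem~\ref{theo:saturation}) asserts that $\cc^{k\lambda}_{k\mu,k\nu}\neq 0$ for some $k\geq 1$ if and only if $\cc^\lambda_{\mu,\nu}\neq 0$. Chaining the two equivalences and specializing to $\nu=\mu$ yields the claimed characterization. The only delicate point is the Kempf--Ness reduction carried out in the first step, but this is a standard piece of geometric invariant theory and is precisely what is tacitly used at the very end of Theorem~\ref{theo:LR-m-n} to reinterpret the cohomological vanishing conditions as membership in a smaller $\LR$-cone; no new argument beyond this standard correspondence is required.
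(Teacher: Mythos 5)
Your proof is correct and follows essentially the same route as the paper: the lemma is obtained there by combining the Kempf--Ness correspondence (as in Proposition~\ref{prop:kempf-ness}, here for the branching $\upU_{2n}\downarrow\upU_n\times\upU_n$, whose multiplicities are the Littlewood--Richardson coefficients) with the Knutson--Tao saturation theorem~\ref{theo:saturation}, which is exactly the chain of equivalences you spell out.
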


%%%%%%%%%%%%%%%%%%%%%%%%%%%%%%%%%%%%%%
\subsection{Preuve du théorème \ref{theo:E-II} }
%%%%%%%%%%%%%%%%%%%%%%%%%%%%%%%%%%%%%%

\subsubsection{Éléments admissibles}

Nous travaillons avec les sous-algèbres abéliennes maximales $\agot\subset \pgot=\herm(n)$, $\tagot\subset \tpgot=\sym(2n)$ définies par
$\agot=\{\diag(x), x\in \R^n\}$, et $\tagot=\{\diag(z), z\in \R^{2n}\}$. La fonction $\iota: \agot\to \tagot$ est définie par $\iota(\diag(x))=\diag(x,x)$. 
L'ensemble des racines relatives à l'action de $\agot$ sur $\tggot/\ggot$ est $\Sigma(\tggot/\ggot):=\{\epsilon_i-\epsilon_j, 1\leq i\neq j\leq n\}$.
 
Le cône $\agot_+=\{\diag(x), x\in \R^n_{+}\}$ est une chambre de Weyl restreinte par rapport à l'action de $K$ sur $\pgot$. 
Le groupe de Weyl restreint $W_{\agot}$ est naturellement isomorphe au groupe des permutations $\Sgot_n$. 
Comme au lemme \ref{lem:admissible-eigenvalue-1}, on voit que les éléments {\em admissibles} sont ici de la forme $t\,w\zeta_r + t' \zeta_0$.

Considérons le tore maximal $T\times T\subset U$ où $T\subset \upU_n$ est le tore maximal des matrices diagonales. 
Alors l'application $X\mapsto (iX,iX)$ détermine un isomorphisme $\agot\simeq(\tgot\times \tgot)^{-\sigma}$. Nous choisissons la chambre de Weyl 
$(\tgot\times \tgot)_+=\{(\diag(ix_1),\diag(ix_2)), x_1,x_2\in\R^n_+\}$ de sorte que le sous-groupe de Borel correspondant de 
$\GL_n(\C)\times \GL_n(\C)$ soit le produit $B_n\times B_n$.

Soit $\tT\subset \tU$ le tore maximal des matrices diagonales. Ici, $\ttgot^{-\sigma}=\ttgot=i\tagot$, et le groupe de Weyl restreint $W_{\tagot}$ 
est naturellement isomorphe au groupe de permutations $\Sgot_{2n}$. Nous considérons la chambre de Weyl $\ttgot_+=\{\diag(iz) , z\in\R^{2n}_+\}$ : 
le sous-groupe de Borel correspondant $B_{2n}\subset\GL_{2n}(\C)$ est formé par les matrices triangulaires supérieures.

\subsubsection{Conditions cohomologiques}

Les éléments $\pm \zeta_0$ sont des éléments admissibles qui satisfont de manière évidente la condition cohomologique. Dans ce cas, les inégalités correspondantes $\pm(x,\zeta_0)\geq \pm(y,w_0\zeta_0)$ sont équivalentes à $|x|=2|y|$.

Nous travaillons maintenant avec l'élément admissible $\zeta_r$. 
La variété des drapeaux $\GL_n(\C)/P(\zeta_r)$ admet une identification naturelle avec la  grassmannienne $\G(r,n)$.
 Soit $\tzeta_r=\begin{pmatrix} \zeta_r&0 \\ 0 & \zeta_r\end{pmatrix}$ et considérons le sous-groupe parabolique $P(\tzeta_r)\subset \GL_{2n}(\C)$ 
 qui lui est associé. On associe à un sous-espace vectoriel $V\subset \C^n$ deux sous-espaces vectoriels $V'=\{(v,0), v\in V\}$ et $V''=\{(0,v), v\in V\}$ de  
 $\C^n\times \C^n=\C^{2n}$. L'application $g P(\tzeta_r)\mapsto g\left((\C^r)'\oplus(\C^r)''\right)$ définit un isomorphisme entre la variété des drapeaux 
 $GL_{2n}(\C)/P(\tzeta_r)$ et  la grassmannienne $\G(2r,2n)$.
 
Le morphisme $\iota_\C : \GL_n(\C)\times \GL_n(\C) \to \GL_{2n}(\C)$ induit l'immersion
$$
\iota_r =  \G(r,n)\times\G(r,n)\to  \G(2r,2n)
$$ 
définie par $\iota_r(V)= P(V'\oplus V'')$. Ici, $P$ est la matrice unitaire utilisée dans (\ref{eq:iota-eigenvalue-2}).

 Soit $(w,\tw)\in \Sgot_n\times \Sgot_{2n}$. Les variétés de Schubert correspondantes sont 
 $\Xgot_{w,r}= \overline{B_n [w]}\times \overline{B_n [w]}\subset \G(r,n)\times\G(r,n)$ et 
 $\tXgot_{\tw,r}= \overline{B_{2n} [\tw]}\subset \G(2r,2n)$. 
 
 On associe à $(w,\tw)\in \Sgot_n\times \Sgot_{2n}$ les sous-ensembles $I=\tw\left([r]\cup\{n+k, k\in[r]\}\right)\subset [2n]$ et $J=w([r])\subset [n]$. 
 Le résultat qui suit est une conséquence de la proposition \ref{prop:calcul-grassmanienne-a-b}.

\begin{lem} Soit $\ell\in\N-\{0\}$. La condition $[\Xgot_{w,r}]\cdot \iota_r^*([\tilde{\Xgot}_{\tw,r}])= \ell [pt]$  est équivalente à 
$c^{I^o}_{J,J}=\ell$.
\end{lem}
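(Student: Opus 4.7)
The plan is to obtain the lemma as a direct application of Proposition \ref{prop:calcul-grassmanienne-a-b}, applied with parameters $a=b=r$ and $m=n$, after first disposing of the unitary conjugation by $P$ appearing in the definition of $\iota_r$.

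The reduction step goes as follows. Let $\iota_{\oplus}:\G(r,n)\times\G(r,n)\to\G(2r,2n)$ be the standard direct-sum embedding $(V_1,V_2)\mapsto V_1'\oplus V_2''$, so that $\iota_r = L_P\circ\iota_{\oplus}$, where $L_P$ denotes left translation by $P\in\GL_{2n}(\C)$ on $\G(2r,2n)$. Since $\GL_{2n}(\C)$ is a connected Lie group, any continuous path from the identity to $P$ yields an isotopy between $L_P$ and $\mathrm{id}_{\G(2r,2n)}$, so $L_P^*$ acts as the identity on $H^*(\G(2r,2n),\Z)$. I then conclude that
$$
\iota_r^*([\tXgot_{\tw,r}]) \;=\; \iota_{\oplus}^* \circ L_P^*([\tXgot_{\tw,r}]) \;=\; \iota_{\oplus}^*([\tXgot_{\tw,r}]).
$$

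With the conjugation by $P$ removed, I translate into the notation of Proposition \ref{prop:calcul-grassmanienne-a-b}. Setting $I_1 = I_2 := J = w([r]) \in\Pcal(r,n)$ gives $\Xgot_{w,r} = \Xgot_{I_1}\times\Xgot_{I_2}$, and setting $L := I^o\in\Pcal(2r,2n)$ gives $\tXgot_{\tw,r} = \Xgot_I = \Xgot_{L^o}$. Proposition \ref{prop:calcul-grassmanienne-a-b} then yields
$$
[\Xgot_{I_1}]\times[\Xgot_{I_2}]\cdot\iota_{\oplus}^*([\Xgot_{L^o}]) \;=\; \cc^{\mu(L)}_{\mu(I_1),\mu(I_2)}\,[pt]\times[pt] \;=\; \cc^{I^o}_{J,J}\,[pt]\times[pt]
$$
using the convention $\cc^{A}_{B,C} := \cc^{\mu(A)}_{\mu(B),\mu(C)}$ together with the identification $H^{max}(\G(r,n)\times\G(r,n),\Z) \simeq \Z\cdot([pt]\times[pt])$. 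Under this identification the right-hand side is $\cc^{I^o}_{J,J}[pt]$, which proves the equivalence with $\ell = \cc^{I^o}_{J,J}$.

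I do not expect any real obstacle in the argument; the only delicate point is the homotopy invariance step, where one must make explicit that the unitary matrix $P$ disappears at the level of cohomology because $\GL_{2n}(\C)$ is connected. One minor bookkeeping remark is in order: Proposition \ref{prop:calcul-grassmanienne-a-b} carries the implicit dimension hypothesis $|\mu(I_1)|+|\mu(I_2)| = |\mu(L)|$, i.e.\ $2|\mu(J)| = |\mu(I^o)|$; if this fails, the product on the left-hand side lies strictly below the top degree (and in particular cannot be a nonzero multiple of $[pt]$), while $\cc^{I^o}_{J,J}$ vanishes by the author's convention, so the asserted equivalence holds trivially.
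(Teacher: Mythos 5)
Votre démonstration est correcte et suit exactement la voie du texte, qui se contente d'invoquer la proposition \ref{prop:calcul-grassmanienne-a-b} : vous identifiez $\Xgot_{w,r}=\Xgot_J\times\Xgot_J$ et $\tXgot_{\tw,r}=\Xgot_I=\Xgot_{(I^o)^o}$, puis vous appliquez la proposition avec $a=b=r$, $m=n$. Le seul point que vous explicitez au-delà du texte — la disparition de la conjugaison par $P$ en cohomologie via la connexité de $\GL_{2n}(\C)$ — est juste et constitue précisément le détail laissé implicite par l'auteur.
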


\subsubsection{Inégalités linéaires}

Utilisons l'identification $\agot_+\simeq\R^n_+$ et $\tagot_+\simeq\R^{2n}_+$ . Si $(x,y)\in\tagot_+\times \agot_+$, 
les inégalités $(x,\tilde{w}\tzeta_r)\geq (y,w_0w\zeta_r)$, $-|x|_I\geq -2|y|_{J^o}$, et $|x|_{I^c}\geq 2|y|_{J^{o,c}}$ 
sont équivalentes, étant donné que $|x|=2|y|$.

Finalement, le théorème \ref{theo:main-reel} dit que $(x,y)\in\tagot_+\times \agot_+$ appartient à $\Ecal_{\mathrm{II}}(n)$ si et seulement si $|x|= 2|y|$ et 
$|x|_{I^c}\geq 2|y|_{J^{o,c}}$ pour tout 
couple de sous-ensembles $I,J$ tels que
\begin{enumerate}
\item $I\subset [2n]$ est de cardinal $2r$,
\item $J\subset [n]$ est de cardinal $r$,
\item $c^{I^o}_{J,J}\neq 0$.
\end{enumerate}
Comme $c^{I^c}_{J^{o,c},J^{o,c}}=c^{I^o}_{J,J}$ (voir la proposition \ref{prop:calcul-grassmanienne-a-b}), la preuve du théorème \ref{theo:E-II} est terminée.

%%%%%%%%%%%%%%%%%%%%%%%%%%%%%%%%%%%%%%%%%%%%%%%%%%%%%%
%%%%%%%%%%%%%%%%%%%%%%%%%%%%%%%%%%%%%%%%%%%%%%%%%%%%%%
\chapter{Cônes de Horn singulier}
%%%%%%%%%%%%%%%%%%%%%%%%%%%%%%%%%%%%%%%%%%%%%%%%%%%%%%
%%%%%%%%%%%%%%%%%%%%%%%%%%%%%%%%%%%%%%%%%%%%%%%%%%%%%%

Soit $A$ une matrice rectangulaire complexe, de taille $m\times n$, et soit $A^*$ la transposée conjuguée complexe de $A$.
Soit $\e_1(AA^*)\geq \cdots\geq \e_m(AA^*)\geq 0$ les valeurs propres de la matrice semi-définie positive $AA^*$. Remarquons que $\e_k(AA^*)=0$ 
lorsque $k>\ell:=\inf\{m,n\}$.

Les valeurs singulières de la matrice $A$ sont les coordonnées du vecteur 
$$
\s(A):=\left(\sqrt{\e_1(AA^*)},\ldots,\sqrt{\e_\ell(AA^*)}\right)\in \R^\ell_{++}.
$$

Considérons l'action canonique du groupe unitaire $U_m\times U_n$ sur $M_{m,n}(\C)$ : $(g,h)\cdot X= gXh^{-1}$, $\forall (g,h)\in U_m\times U_n$. 
L'application des valeurs singulières $\s : M_{m,n}(\C)\to \R^\ell_{++}$ induit une application bijective  \
$M_{m,n}(\C)/U_m\times U_n\stackrel{\sim}{\longrightarrow}\R^\ell_{++}$.

Dans le reste de cette section on fixe $p\geq q\geq 1$ et on pose $n=p+q$. Si $A$ est une matrice complexe $p\times q$, nous notons 
$\s(A)=(\s_1(A)\geq \cdots \geq\s_q(A)\geq 0)$ son spectre singulier.

\begin{definition}
Le cône de Horn singulier, noté  $\sing(p,q)\subset (\R_{++}^q)^3$, est défini comme l'ensemble des triplets 
$(\s(A),\s(B),\s(A+B))$ où $A,B$ parcourent les matrices complexes de taille 
$p\times q$.
\end{definition}

\begin{rem}
Comme $\s(A+B)=\s(-(A+B))$, le cône $\sing(p,q)$ peut être défini comme l'ensemble des triplets 
$(\s(A),\s(B),\s(C))$ où $A,B, C$ vérifient $A+B+C=0$. Cela permet de voir que $\sing(p,q)$ est stable par rapport 
à l'action de permutation de $\Sgot_3$ sur $(\R_{++}^q)^3$.
\end{rem}

Nous expliquons maintenant la description de $\sing(p,q)$ que nous obtenons en appliquant le théorème \ref{theo:main-reel} . 

\begin{definition}Pour tout $r\in [q]$, soit 
$\Bcal(r,p,q)\subset \Pcal(r,n)$ l'ensemble des sous-ensembles $I\subset [n]$ de cardinal $r$ satisfaisant
$I\cap I^o=\emptyset$ et $I\cap \{q+1,\ldots, p\}=\emptyset$. 
\end{definition}

On voit aisément que tous les éléments de $\Bcal(r,p,q)$ sont de la forme $I= I_+\cup I_{-}^o$ 
où $I_+,I_{-}$ sont deux sous-ensembles disjoints de $[q]$ tels que $\sharp I_+ + \sharp I_{-}= r$.
 
\`A chaque triplet $I,J,L\in\Bcal(r,p,q)$, on associe l'inégalité
$$
(\star)_{I,J,L}\hspace{2cm} |\,x\,|_{I_{-}}+|\,y\,|_{J_{-}}  +|\, z\,|_{L_{-}}   \geq  |\,x\,|_{I_+}+|\,y\,|_{J_+} +|\, z\,|_{L_+},\quad (x,y,z)\in\R^q.
$$

Rappelons que pour des sous ensembles non-vides $A\subset B\subset \N$, on pose $A\natural B:=\{\sharp B\leq x , x\in A\}$. 
Si  $A\subset B\subset [n]$, alors $A\natural B\subset [m]$ avec $m=\sharp B$, et on vérifie facilement que\footnote{Ici $A^o=\{n+1-x;\ x\in A\}$ et 
$(A\natural B)^o=\{m+1 - z;\ z\in A\natural B\}$.}   $(A\natural B)^o= A^o\natural B^o$.

\medskip

Voici le principal résultat de cette section.

\begin{theorem}\label{theo:singular-p-q}
Un élément $(x,y,z)\in(\R^q_{++})^3$ appartient à $\sing(p,q)$ si et seulement si l'inégalité $(\star)_{I,J,L}$ est satisfaite pour 
 tout $r\in [q]$, pour tout $I,J,L\in\Bcal(r,p,q)$ vérifiant les deux conditions
\begin{equation}\label{eq:singular-1}
(I^o,J^o,L)\in \LR_r^n,
\end{equation}
\begin{equation}\label{eq:singular-2}
(I^o\natural I^{c},J^o\natural J^{c},L\natural L^{o,c})\in\LR_r^{n-r}.
\end{equation}
\end{theorem}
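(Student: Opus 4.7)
The overall strategy is to realize $\sing(p,q)$ as an instance of the cone $\horn_\pgot(K,\tK)$ and then unpack Theorem~\ref{theo:main-reel} by concrete calculation.

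\textbf{Setup.} Take $G = \upU(p,q)$ and $\tG = \upU(p,q)\times \upU(p,q)$ with $G$ embedded diagonally. Then $K = \upU_p\times \upU_q$, $\pgot \simeq M_{p,q}(\C)$, and the $K$-orbits in $\pgot$ are parameterized by singular value vectors $s=(s_1\geq\cdots\geq s_q\geq 0) \in \agot_+$ via the identification of Section~4.5. The cone $\sing(p,q)$ then corresponds to the set of triples $(x,y,z)\in\agot_+^3$ with $0\in Kx+Ky+Kz$, which is the $s=2$ version of the cone in Theorem~\ref{theo:delta-exemple-KLM}. Hypothesis~\ref{hypothese-G} holds: the only ideal of $\tggot$ inside the diagonal $\ggot$ is $Z(\tggot)\cap\ggot$, and moreover $Z(\mathfrak{u}(p,q))\cap\pgot = 0$, so $Z(\tggot)\cap\agot = 0$ and no linear equality constraints appear.

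\textbf{Admissible elements and flag varieties.} The restricted root system $\Sigma(\tggot/\ggot) = \Sigma(\ggot)$ of $\upU(p,q)$ is recalled in Section~4.5. By a case analysis analogous to Lemma~\ref{lem:admissible-eigenvalue-1}, the admissible antidominant elements of $-\agot_+$ are (modulo positive rational scalars and the $W_\agot$-action) the vectors $\zeta_r = X((-1)^r,0^{q-r})$ for $r\in[q]$. Using the conjugation by $\theta_{p,q}$ of (\ref{eq:theta}), the flag variety $\Fcal_{\zeta_r}$ becomes the two-step flag variety $\F(r,n-r;n)$ with $n=p+q$, and $\tau_{p,q}$ corresponds to the antiholomorphic involution discussed there. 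By Lemma~\ref{lem:schubert-fixe-involution}, the Schubert cells of $\F(r,n-r;n)$ that meet the real part are precisely the $\Xgot_{I\subset I^{o,c}}$ with $I\in\Bcal(r,p,q)$, so $(W_\agot/W_\agot^{\zeta_r})^\sigma \leftrightarrow \Bcal(r,p,q)$.

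\textbf{Cohomological conditions via the multiplicative formula.} Apply Theorem~\ref{theo:delta-exemple-KLM} with $\zeta = \zeta_r$ and triples $(w,w_1,w_2)$ indexed by $(L,I,J)\in \Bcal(r,p,q)^3$. The resulting condition
\[
[\Xgot_{L\subset L^{o,c}}]\cdot [\Xgot_{I\subset I^{o,c}}]\cdot[\Xgot_{J\subset J^{o,c}}] = \ell[pt],\qquad \ell\geq 1,
\]
in $H^{max}(\F(r,n-r;n),\Z)$ is, by Proposition~\ref{prop:levi-mobile-double-grass-theta} (applied with $a=r$, $n=n$, so $a+b=n$ and $a<n/2$), equivalent to the conjunction
\begin{equation*}
[\Xgot_L]\cdot[\Xgot_I]\cdot[\Xgot_J]=\ell'[pt]\ \text{in}\ H^{max}(\G(r,n),\Z),\qquad [\Xgot_{L\natural L^{o,c}}]\cdot[\Xgot_{I\natural I^{o,c}}]\cdot[\Xgot_{J\natural J^{o,c}}]=\ell''[pt]\ \text{in}\ H^{max}(\G(r,n-r),\Z)
\end{equation*}
with $\ell',\ell''\geq 1$; in particular the L\'evi-mobility (condition c) of Theorem~\ref{theo:delta-exemple-KLM}) is automatic. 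By Proposition~\ref{prop:coeff-LR} the first cohomological equality is equivalent to $\cc^{\mu(L)}_{\mu(I^o),\mu(J^o)}\neq 0$, hence by the saturation theorem \ref{theo:saturation} to $(I^o,J^o,L)\in\LR_r^n$, which is condition (\ref{eq:singular-1}). Using $(A\natural B)^o = A^o\natural B^o$, the same argument applied on $\G(r,n-r)$ yields precisely condition (\ref{eq:singular-2}).

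\textbf{Explicit inequality.} It remains to translate the abstract inequality $(\xi_1,w_1\zeta_r)+(\xi_2,w_2\zeta_r)\geq (\xi_0,w_0w\zeta_r)$ into $(\star)_{I,J,L}$. Writing $\xi = X(s)$ and using $Ad(\theta_{p,q})$ to diagonalize, one computes that $w\zeta_r = \diag(v)$ with $v_k = -1$ for $k\in L$, $v_k=+1$ for $k\in L^o$, $v_k=0$ otherwise, and that $\xi$ diagonalizes to $\diag(s_1,\dots,s_q,0,\dots,0,-s_q,\dots,-s_1)$; the inner product simplifies to $(\xi,w\zeta_r) = 2(|s|_{L_-} - |s|_{L_+})$ where $L=L_+\cup L_-^o$ with $L_\pm\subset[q]$. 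The longest element $w_0\in W_\agot$ acts by $-\id$ on $\agot$, which exchanges the roles of $L_+$ and $L_-$. Substituting and dividing by $2$ produces exactly the symmetric inequality $(\star)_{I,J,L}$. The most delicate step is this last coordinate computation, and in particular keeping track of the orientation conventions so that the asymmetric appearance of $(I^o,J^o,L)$ in (\ref{eq:singular-1}) correctly matches the action of $w_0$ on the Schubert data; everything else is a mechanical assembly of the tools already established.
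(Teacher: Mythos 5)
Your proposal follows essentially the same route as the paper's own proof in Section \ref{sec:preuve-theo-sing-p-q}: realize $\sing(p,q)$ via $G=\upU(p,q)$ diagonally embedded in $\upU(p,q)\times\upU(p,q)$, identify the admissible elements $\zeta_r$ (Lemma \ref{lem:admissible-u-p-q}), transport $\Fcal_{\zeta_r}$ to $\F(r,n-r;n)$ by $\theta_{p,q}$ so that the Bruhat cells meeting the real locus are indexed by $\Bcal(r,p,q)$ (Lemma \ref{lem:schubert-fixe-involution}), convert the cohomological data through Proposition \ref{prop:levi-mobile-double-grass-theta} and the saturation theorem \ref{theo:saturation}, and finish with the coordinate computation $(\zeta(x),w\zeta_r)=2(|x|_{L_-}-|x|_{L_+})$ and the action of $w_0=-\mathrm{id}$ on $\agot$; all of this matches the paper.

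One point must be corrected: your parenthetical claim that the Lévi-mobility is \emph{automatic}. Proposition \ref{prop:levi-mobile-double-grass-theta} does not assert that the top-degree product condition in $\F(r,n-r;n)$ alone is equivalent to the two Grassmannian conditions; it asserts that the product condition \emph{together with} Lévi-mobility (equivalently, together with the dimension condition $\sum_k|\mu(I_k)| = s\,r(n-r)$) is equivalent to them. On a two-step flag variety Lévi-mobility is a genuine extra constraint — it is only on Grassmannians that it follows from the nonvanishing of the top-degree product — and in the present proof it is precisely this extra constraint that the multiplicative formula turns into condition (\ref{eq:singular-2}). Since the set of triples you finally impose, namely (\ref{eq:singular-1}) together with (\ref{eq:singular-2}), is exactly the one delivered by the correct reading of the proposition applied to the pair of conditions b') and c) of Theorem \ref{theo:delta-exemple-KLM}, your argument goes through once this citation is stated accurately; the rest of the proof agrees with the paper's.
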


On notera que les conditions (\ref{eq:singular-1}) et (\ref{eq:singular-2}) sont symétriques en $I,J,L$.

\medskip

Nous montrerons aussi que $\sing(p,q)$ admet une description intermédiare qui répond (en partie) à une conjecture de A. Buch (voir \cite{Fulton00}, \S 5).

\begin{theorem}\label{theo:singular-p-q-faible}
Un élément $(x,y,z)\in(\R^q_{++})^3$ appartient à $\sing(p,q)$ si et seulement si $(\star)_{I,J,L}$ est vérifié 
pour tout $r\in [q]$, pour tout $I,J,L\in\Bcal(r,p,q)$ tels que $(I^o,J^o,L)\in \LR_r^n$.
\end{theorem}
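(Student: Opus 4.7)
The plan is to deduce Theorem \ref{theo:singular-p-q-faible} from the main Theorem \ref{theo:singular-p-q} together with the classical Horn inequalities for $\horn(n)$ transported through the O'Shea--Sjamaar block embedding. The key point is that the faible system differs from the main system only by \emph{adding} inequalities (those indexed by triples satisfying (\ref{eq:singular-1}) but failing (\ref{eq:singular-2})), so one inclusion is automatic; the work lies in the reverse inclusion.

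First I would explain the embedding. To any $A\in M_{p,q}(\C)$ associate the hermitian $n\times n$ matrix $\tilde A:=\bigl(\begin{smallmatrix} 0 & A\\ A^{*}& 0\end{smallmatrix}\bigr)$, whose spectrum is
$\e(\tilde A)=\bigl(\s_1(A),\ldots,\s_q(A),0,\ldots,0,-\s_q(A),\ldots,-\s_1(A)\bigr).$
Since $\widetilde{A+B}=\tilde A+\tilde B$, the map $(A,B)\mapsto (\tilde A,\tilde B)$ realises $\sing(p,q)$ as the image of a triple of hermitian matrices summing to zero in a fixed block form. This is precisely the O'Shea--Sjamaar restriction (Theorem~\ref{theo:OSS}) for the $(U(p,q),\sigma)$-hamiltonian setup of Chapter~7, and it shows in particular that every Horn inequality for $\horn(n)$, evaluated at the extended spectra, is valid on $\sing(p,q)$.

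Second, I would verify the numerical matching. Fix $r\in [q]$ and $I,J,L\in \Bcal(r,p,q)$ with $(I^o,J^o,L)\in \LR_r^n$; write $I=I_+\cup I_-^o$ with $I_\pm\subset[q]$ disjoint, and similarly for $J,L$. The Horn inequality (Theorem~\ref{theo:horn}) applied to $(\e(\tilde A),\e(\tilde B),\e(\widetilde{A+B}))\in\horn(n)$ for the triple $(I^o,J^o,L)$ reads
\[
\sum_{k\in I^o}\e_k(\tilde A)+\sum_{k\in J^o}\e_k(\tilde B)\;\geq\;\sum_{k\in L}\e_k(\widetilde{A+B}).
\]
A direct computation using the explicit spectra (and the block constraint $I\cap[q+1,p]=\emptyset$, equivalently $I\in\Bcal(r,p,q)$, which guarantees no index lands in the zero-block) evaluates the two sides to $-|x|_{I_+}+|x|_{I_-}-|y|_{J_+}+|y|_{J_-}$ and $|z|_{L_+}-|z|_{L_-}$ respectively, whence exactly $(\star)_{I,J,L}$. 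This proves the inclusion $\sing(p,q)\subseteq\{(\star)_{I,J,L}\text{ for triples satisfying }(\ref{eq:singular-1})\}$.

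Third, I would close the loop by observing that the system of inequalities appearing in Theorem~\ref{theo:singular-p-q-faible} is strictly \emph{larger} (as a set of inequalities) than that of Theorem~\ref{theo:singular-p-q}: any triple with both (\ref{eq:singular-1}) and (\ref{eq:singular-2}) trivially satisfies (\ref{eq:singular-1}) alone. Therefore the cone cut out by the faible system is contained in the cone cut out by the main system, which by Theorem~\ref{theo:singular-p-q} equals $\sing(p,q)$. Combined with the previous paragraph this forces equality.

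The only genuinely non-routine step is the explicit sign bookkeeping in the second paragraph: one must track how $I\mapsto I^o$ swaps positive and negative eigenvalues of $\tilde A$, how the definition of $\Bcal(r,p,q)$ (notably $I\cap I^o=\emptyset$ and $I\cap [q+1,p]=\emptyset$) forces all indices to correspond to genuine singular-value contributions (never to the trivial zero block and never to paired conjugate eigenvalues), and how the translation $c_{I^o,J^o}^{L}\neq 0 \Leftrightarrow (I^o,J^o,L)\in\LR_r^n$ (via the saturation Theorem~\ref{theo:saturation} and Proposition~\ref{prop:coeff-LR}) legitimately plugs into the Horn condition of Theorem~\ref{theo:horn}. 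Beyond this calculation the argument is a formal consequence of the main theorem and the classical Horn description of $\horn(n)$.
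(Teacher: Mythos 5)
Your proposal is correct and follows essentially the same route as the paper: the forward inclusion is exactly Proposition \ref{prop:sing-versus-horn} (O'Shea--Sjamaar, via the block matrix $\widehat{A}$) combined with Theorem \ref{theo:horn}, and the reverse inclusion is the observation that the faible system of inequalities contains that of Theorem \ref{theo:singular-p-q}. Your sign computation $|\,\widehat{x}\,|_{I^o}=|x|_{I_-}-|x|_{I_+}$ (and its analogues) is the translation the paper leaves implicit, and it is accurate.
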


\begin{rem}\label{rem:singular-p-q-version-2}
Les théorèmes \ref{theo:singular-p-q} et  \ref{theo:singular-p-q-faible} sont toujours valables si nous renforçons les conditions en exigeant que 
$$
\cc^{L}_{I^o,J^o}\ =\ 1 \qquad \mathrm{et}\qquad \cc_{I^o\natural I^{c},J^o\natural J^{c}}^{L\natural L^{o,c}}\ =\ 1.
$$
\end{rem}

%%%%%%%%%%%%%%%%%%%%%%%%%%%%%%%%%%%%%%%%%%%%%%%%%%%%%%%%%%%%%%
\section[$\sing(p,q)$ versus $\horn(n)$]{Description de $\sing(p,q)$ au moyen de $\horn(n)$}\label{sec:horn-versus-sing}
%%%%%%%%%%%%%%%%%%%%%%%%%%%%%%%%%%%%%%%%%%%%%%%%%%%%%%%%%%%%%%

Il existe un lien naturel entre $\sing(p,q)$ et $\horn(n)$ car, pour toute matrice complexe A de taille $p\times q$, le spectre de la matrice hermitienne $n\times n$ 
\begin{equation}\label{eq:morphisme-h}
\widehat{A}:=\begin{pmatrix}
0 & A\\
A^* &0
\end{pmatrix}
\end{equation}
est égal à $(\s_1(A)\geq \cdots \geq\s_q(A)\geq 0\geq\cdots\geq 0\geq -\s_q(A)\geq \cdots \geq -\s_1(A))$. 

\medskip

\begin{definition}\label{def:x-chapeau}
À tout $x=(x_1,\ldots, x_q)\in \R^q$, nous associons 
le vecteur 
$$\widehat{x}=(x_1,\ldots, x_q,0,\ldots,0,- x_q,\ldots,- x_1)\in\R^n.
$$ 
\end{definition}

\medskip

\begin{prop}\label{prop:sing-versus-horn}
Pour tout $(x,y,z)\in(\R_{++}^q)^3$, nous avons l'équivalence:
$$
(x,y,z)\in \sing(p,q)\Longleftrightarrow (\,\widehat{x},\,\widehat{y}\,,\,\widehat{z}\ )\in \horn(n).
$$
\end{prop}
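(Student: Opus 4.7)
The implication $(\Leftarrow)$ is immediate: given $A,B\in M_{p,q}(\C)$ with $\s(A)=x$, $\s(B)=y$, $\s(A+B)=z$, the hermitian matrices $\widehat{A},\widehat{B},\widehat{A+B}=\widehat{A}+\widehat{B}$ have spectra $\widehat{x},\widehat{y},\widehat{z}$, producing the Horn triple.

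For $(\Rightarrow)$, the plan is to apply the theorem of O'Shea-Sjamaar (Theorem \ref{theo:OSS}) to a product of coadjoint orbits. Set $J:=\diag(I_p,-I_q)\in\upU_n$ and define the inner involution $\sigma(g):=JgJ$ on $\upU_n$, whose fixed subgroup is $\upU_p\times\upU_q$. The corresponding involution $\tau(X):=-JXJ$ on $\herm(n)\simeq\ugot_n^*$ has as fixed set precisely the block-antidiagonal matrices $\{\widehat{A}:A\in M_{p,q}(\C)\}$, and satisfies $\e(\widehat{A})=\widehat{\s(A)}$. Because the spectra $\widehat{x},\widehat{y},\widehat{z}$ are centrally symmetric, the coadjoint orbits $\Ocal_{\widehat{x}},\Ocal_{\widehat{y}},\Ocal_{\widehat{z}}$ are $\tau$-stable, and moreover $\Ocal_{-\widehat{z}}=\Ocal_{\widehat{z}}$.

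A direct computation using $J^2=I_n$ and the cyclicity of trace shows that $\tau$ is antisymplectic for the KKS form (i.e.\ $\tau^*\Omega_\lambda=-\Omega_\lambda$ on each $\Ocal_\lambda$), that $\tau(g\xi)=\sigma(g)\tau(\xi)$, and that the identity moment map $\Phi_\ugot=\id$ satisfies $\Phi_\ugot(\tau(\xi))=-\sigma(\Phi_\ugot(\xi))$. Hence the product
$$
M:=\Ocal_{\widehat{x}}\times\Ocal_{\widehat{y}}\times\Ocal_{\widehat{z}},
$$
equipped with the factorwise involution $\tau$ and the moment map $\Phi(X,Y,Z)=X+Y+Z$ for the diagonal $\upU_n$-action, is a $(\upU_n,\sigma)$-hamiltonian manifold in the sense of Definition~\ref{def:U-sigma-kahler}. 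Its fixed locus $M^\tau$ is non-empty, as witnessed by any triple of rectangular matrices of prescribed singular spectra (e.g.\ the diagonal ones).

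The condition $(\widehat{x},\widehat{y},\widehat{z})\in\horn(n)$ produces hermitian $X\in\Ocal_{\widehat{x}}$, $Y\in\Ocal_{\widehat{y}}$ with $\e(X+Y)=\widehat{z}$; setting $Z:=-(X+Y)$, we have $Z\in\Ocal_{-\widehat{z}}=\Ocal_{\widehat{z}}$ and $X+Y+Z=0$, i.e.\ $0\in\Phi(M)$. Since $0\in(\ugot_n^*)^{-\sigma}$ trivially, Theorem~\ref{theo:OSS} delivers $0\in\Phi(M^\tau)$: there exist $A,B,C\in M_{p,q}(\C)$ with $\s(A)=x$, $\s(B)=y$, $\s(C)=z$ and $A+B+C=0$. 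Then $\s(A+B)=\s(-C)=\s(C)=z$, giving $(x,y,z)\in\sing(p,q)$. The only non-routine step is the explicit verification of the three compatibility axioms of the $(U,\sigma)$-hamiltonian structure, which requires a short but careful KKS computation exploiting $J^2=I_n$; everything else is a direct translation of the Horn and singular conditions through the map $A\mapsto\widehat{A}$.
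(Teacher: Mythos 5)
Your proof is correct and takes essentially the same route as the paper: both directions come down to the O'Shea--Sjamaar theorem for the involution given by conjugation by $\diag(I_p,-I_q)$, whose anti-fixed hermitian matrices are exactly the $\widehat{A}$, combined with the observation $\e(\,\widehat{A}\,)=\widehat{\s(A)}$ and the symmetry $\s(A+B)=\s(-(A+B))$. The only cosmetic difference is that you apply Theorem \ref{theo:OSS} to the triple product $\Ocal_{\widehat{x}}\times\Ocal_{\widehat{y}}\times\Ocal_{\widehat{z}}$ at the zero level (a structure already covered by Examples \ref{ex:O-lambda-involution} and \ref{ex:O-K-involution}, so your ``careful KKS verification'' is essentially already in the paper), whereas the paper invokes the orbit-projection form, Proposition \ref{prop:OSS-orbites}, with $\upU_n$ embedded diagonally in $\upU_n\times\upU_n$; the two formulations are interchangeable here.
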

\begin{proof}Le sens $\Longrightarrow$ est immédiat. Soient $(x,y,z)\in \sing(p,q)$:  il existe $A,B\in M_{p,q}(\C)$ tel que $x=\s(A)$, $y=\s(B)$, et $z:=\s(A+B)$. 
On voit alors que $(\widehat{x},\widehat{y},\widehat{z}\,)\in \horn(n)$ car  $\widehat{x}=\e(\,\widehat{A}\,)$, $\widehat{y}=\e(\,\widehat{B}\,)$, et $\widehat{z}=
\e(\,\widehat{A}+\widehat{B}\,)$.

Expliquons maintenant pourquoi le sens $\Longleftarrow$ est une conséquence du théorème d'O'Shea-Sjamaar. 
Nous travaillons avec l'involution $\sigma_{p,q}(g)=I_{p,q}(g^*)^{-1}I_{p,q}$ sur $GL_n(\C)$, où $I_{p,q}$ est la matrice diagonale $\diag(I_p,-I_q)$. 
Soit $U_\C=GL_n(\C)$ plongé diagonalement diagonalement dans $\tU_\C=GL_n(\C)\times GL_n(\C)$. Nous considérons sur 
$\tU_\C$ l'involution  $\tilde{\sigma}_{p,q}(g_1,g_2)=(\sigma_{p,q}(g_1),\sigma_{p,q}(g_2))$. La projection orthogonale $\pi: \tugot_\C\to \ugot_\C$ est définie par 
$\pi(X,Y)=\frac{1}{2}(X+Y)$. 

Nous voyons alors que $G=U(p,q)$ et $\tG=G\times G$ sont les sous-groupes fixés par $\sigma_{p,q}$ et $\tilde{\sigma}_{p,q}$. Le sous-groupe compact 
$K:=U(p,q)\cap\upU_n$ est canoniquement isomorphe à $\upU_p\times\upU_q$.

Les sous-espaces vectoriels $(\ugot_\C)^{-\sigma_{p,q}}$  et $(\tugot_\C)^{-\tilde{\sigma}_{p,q}}$ sont respectivement égaux à $\{i h(A), A\in M_{p,q}(\C)\}$ et à 
$(\ugot_\C)^{-\sigma_{p,q}}\times (\ugot_\C)^{-\sigma_{p,q}}$. Soit $A,B,C\in M_{p,q}(\C)$. Le théorème d'O'Shea-Sjamaar permet de voir que 
les conditions suivantes sont équivalentes
\begin{enumerate}
\item $\upU_n\cdot\, \widehat{C}\subset \pi\Big(\upU_n\cdot\, \widehat{A}+\upU_n\cdot\, \widehat{B}\,\Big)$,
\item $K\cdot \widehat{C}\subset \pi\Big(K\cdot \widehat{A}+K\cdot \widehat{B}\,\Big)$.
\end{enumerate}
La première condition est équivalente à $(\e(\,\widehat{A}\,),\e(\,\widehat{B}\,),\e(\,\widehat{C}\,))\in\horn(n)$ tandis que la seconde signifie que 
$(\s(A),\s(B),\s(C))\in \sing(p,q)$. Comme
$\e(\,\widehat{X}\,)=\widehat{\s(X)},\forall X\in M_{p,q}(\C)$, la preuve est complète.
\end{proof}

\medskip

\begin{coro}
Les éléments $(x,y,z)$ de $\sing(p,q)$ satisfont les inégalités $(\star)_{I,J,L}$  pour 
 tout $r\in [q]$, pour tout $I,J,K\in\Bcal(r,p,q)$ vérifiant $(I^o,J^o,L)\in \LR_r^n$. Ainsi, le théorème \ref{theo:singular-p-q-faible} est une 
 conséquence du théorème \ref{theo:singular-p-q}.
\end{coro}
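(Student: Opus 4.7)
The plan is straightforward because the first assertion is essentially a bookkeeping translation of the Horn inequalities via the map $x\mapsto \widehat{x}$, and the second assertion is then a purely formal deduction. The strategy is to deduce the first assertion from Proposition \ref{prop:sing-versus-horn} combined with Theorem \ref{theo:horn}, and then to combine it with Theorem \ref{theo:singular-p-q} to obtain Theorem \ref{theo:singular-p-q-faible}.

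First I would establish the key identity. Given $I\in\Bcal(r,p,q)$ written in the form $I=I_+\cup I_-^o$ with $I_+,I_-\subset[q]$ disjoint, Definition \ref{def:x-chapeau} gives directly
\[
|\widehat{x}|_I \;=\; |x|_{I_+} - |x|_{I_-},
\]
since the coordinates of $\widehat{x}$ indexed by $I_+\subset[q]$ equal $x_{I_+}$, while those indexed by $I_-^o\subset\{p+1,\ldots,n\}$ equal $-x_{I_-}$. Applying the involution $A\mapsto A^o$, the decomposition of $I^o$ as $I^o = I_-\cup I_+^o$ then yields $|\widehat{x}|_{I^o} = |x|_{I_-} - |x|_{I_+}$. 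With this in hand, let $(x,y,z)\in\sing(p,q)$. By Proposition \ref{prop:sing-versus-horn} one has $(\widehat{x},\widehat{y},\widehat{z})\in\horn(n)$, so Theorem \ref{theo:horn} provides, for every $r<n$ and every $(A,B,C)\in\LR_r^n$, the inequality $|\widehat{x}|_A+|\widehat{y}|_B\geq |\widehat{z}|_C$. Specializing to $A=I^o$, $B=J^o$, $C=L$ with $I,J,L\in\Bcal(r,p,q)$ satisfying $(I^o,J^o,L)\in\LR_r^n$, and using the identity above (for $L=L_+\cup L_-^o$, one has $|\widehat{z}|_L=|z|_{L_+}-|z|_{L_-}$), this becomes
\[
(|x|_{I_-}-|x|_{I_+}) + (|y|_{J_-}-|y|_{J_+}) \;\geq\; |z|_{L_+}-|z|_{L_-},
\]
which rearranges to $(\star)_{I,J,L}$. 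This proves the first assertion.

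For the second assertion, I would simply observe that Theorem \ref{theo:singular-p-q-faible} uses a strictly larger family of inequalities than Theorem \ref{theo:singular-p-q}, since dropping condition (\ref{eq:singular-2}) can only enlarge the admissible set of triples $(I,J,L)$. The direction $\Leftarrow$ is immediate: if $(x,y,z)$ satisfies $(\star)_{I,J,L}$ for all triples with $(I^o,J^o,L)\in\LR_r^n$, then a fortiori it satisfies them in the subfamily satisfying both (\ref{eq:singular-1}) and (\ref{eq:singular-2}), and Theorem \ref{theo:singular-p-q} gives $(x,y,z)\in\sing(p,q)$. The direction $\Rightarrow$ is precisely the first assertion just established. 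Hence Theorem \ref{theo:singular-p-q-faible} follows from Theorem \ref{theo:singular-p-q}.

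There is no substantive obstacle in this argument; the entire content is the index-set bookkeeping leading to the identity $|\widehat{x}|_{I^o} = |x|_{I_-} - |x|_{I_+}$. The only care needed is to use both defining constraints of $\Bcal(r,p,q)$: the condition $I\cap\{q+1,\ldots,p\}=\emptyset$ ensures that no element of $I$ lies in the zero block of $\widehat{x}$, while $I\cap I^o=\emptyset$ ensures that $I_+$ and $I_-$ are genuinely disjoint subsets of $[q]$ with $|I_+|+|I_-|=r$, so that the decomposition $I=I_+\cup I_-^o$ is unambiguous.
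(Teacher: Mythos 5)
Votre preuve est correcte et suit essentiellement la même démarche que celle du texte : on applique la proposition \ref{prop:sing-versus-horn} puis le théorème \ref{theo:horn} au triplet $(\widehat{x},\widehat{y},\widehat{z})$, et l'identité $|\widehat{x}|_{I^o}=|x|_{I_-}-|x|_{I_+}$ (jointe à ses analogues pour $J$ et $L$) transforme l'inégalité de Horn en $(\star)_{I,J,L}$. Votre explicitation du passage du théorème \ref{theo:singular-p-q} au théorème \ref{theo:singular-p-q-faible} (la famille d'inégalités du second contient celle du premier) rend simplement explicite ce que le texte laisse implicite.
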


\begin{proof} Soient $I,J,L\in\Bcal(r,p,q)$ satisfaisant $(I^o,J^o,L)\in \LR_r^n$ et $(x,y,z)\in\sing(p,q)$. D'après la proposition précédente et la description de 
$\horn(n)$ obtenu dans le théorème \ref{theo:horn}, on a
$$
|\,\widehat{x}\,|_{I^o}+|\,\widehat{y}\,|_{J^o}\geq |\,\widehat{z}\, |_{L},
$$
et cette identité est équivalente à $(\star)_{I,J,L}$.
\end{proof}

\medskip

Le problème principal avec la description de $\sing(p,q)$ donnée dans la proposition \ref{prop:sing-versus-horn} est que la plupart des inégalités de $\horn(n)$ 
sont redondantes lorsqu'elles sont restreintes aux triplets $(\,\widehat{x},\,\widehat{y}\,,\,\widehat{z}\ )$.
Considérons le cas de $\sing(3,3)$. La proposition \ref{prop:sing-versus-horn} nous dit que $\sing(3,3)$ s'identifie à l'intersection 
$\horn(6)\cap V^3$ où $V=\{(a,b,c,-c,-b,-a)\}\subset \R^6$. Grâce à des calculs informatiques, nous savons que $\horn(6)\subset (\R^6)^3$ 
est décrit par une liste minimale de 536 inégalités. \`A la section \ref{sec:example-horn-1-2-3}, nous montrerons que $\sing(3,3)$ est décrit par $96$ inégalités.

%%%%%%%%%%%%%%%%%%%%%%%%%%%%%%%%%%%%%%%%%%%%%%%%%%
\section{Inégalités classiques}\label{sec:WLT-inequalities}
%%%%%%%%%%%%%%%%%%%%%%%%%%%%%%%%%%%%%%%%%%%%%%%%%%

Le but de cette section est d'expliciter quelques exemples de triplets $(I,J,L)$ qui satisfont les deux conditions du théorème \ref{theo:singular-p-q}. 
Nous retrouverons ainsi  les inégalités classiques de Weyl, Lidskii-Wielandt et Thompson-Freede.

\subsubsection*{Inégalités de Weyl}

Pour tout $m\geq 1$, l'ensemble $\LR^m_1$ est l'ensemble des triplets $(i,j,k)\in[m]^3$ tels que $i+j=k+1$.

Soient $i,j\in [q]$ tels que $i+j-1\leq q$. Considérons le triplet suivant de $\Pcal(1,p,q)$ : $I=\{n+1-i\}$, $J=\{n+1-j\}$, et $K=\{i+j-1\}$. Alors 
$(\mu(I^o),\mu(J^o),\mu(K))\in \LR^n_1$. On vérifie maintenant que 
$$
I^o\natural I^{c}=\{i\},\quad J^o\natural J^{c}=\{j\},\quad \mathrm{et}\quad K\natural L^{o,c}=\{i+j-1\}
$$
et donc $(I^o\natural I^{c},J^o\natural J^{c},K\natural L^{o,c})\in \LR^{n-1}_1$.

Ainsi $(I,J,K)$ vérife les deux conditions du théorème \ref{theo:singular-p-q}, et les inégalités correspondantes sont les {\em inégalités de Weyl} 
$$
\s_i(A)+\s_j(B)\geq \s_{i+j-1}(A+B),\qquad \forall A,B\in M_{p,q}(\C).
$$

\subsubsection*{Inégalités de Lidskii-Wielandt}

Pour tout $r\in[q]$, l'ensemble $I^n_r:=\{n-r+1,\ldots,n\}$ appartient à $\Bcal(r,p,q)$. Comme $\mu((I^n_r)^o)=0\in\Z^r$, le triplet 
$(I^n_r,J, J^o)$ satisfait la consition (\ref{eq:singular-1}) pour tout $J\in \Bcal(r,p,q)$. Comme $(I^n_r)^o\natural (I^n_r)^c =[r]$, on voit que 
$(I^n_r,J, J^o)$ satisfait aussi la consition (\ref{eq:singular-2}) pour tout $J\in \Bcal(r,p,q)$.

Tout $J\in \Bcal(r,p,q)$ s'écrit $J=J_+\cup J_{-}^o$ où $J_{-}, J_{+}$ sont deux sous-ensembles disjoints de $[q]$ tels que $\sharp J_{-}+\sharp J_{+}=r$. 
L' inégalité correspondante au triplet  $(I^n_r,J, J^o)$ fait partie des {\em inégalités de Lidskii-Wielandt}:
$$
\sum_{i=1}^r\s_i(A)+ \sum_{j\in J_-}\s_i(B)+\sum_{\ell\in J_+}s_\ell(A+B)
\geq 
 \sum_{\ell\in J_+}\s_\ell(B)+ \sum_{j\in J_-}\s_\ell(A+B) ,\qquad \forall A,B\in M_{p,q}(\C).
$$

\subsubsection*{Inégalités de Thompson-Freede}

On travaille ici avec des sous-ensembles $I,J\subset [q]$ de cardinal $r$ tels que $\max I + \max J\leq q+r$. On peut alors définir le sous ensemble 
$K=\{i_k+j_k-k,\ k\in[r]\}\subset [q]$. On laisse le soin au lecteur de vérifier que le triplet $(I^o, J^o, K)$ vérifie les deux conditions du théorème \ref{theo:singular-p-q}. 
Les inégalités correspondantes sont celles de Thompson-Freede: 
$$
\sum_{k=1}^r \s_{i_k}(A)+ \sum_{k=1}^r \s_{j_k}(B) \geq \sum_{k=1}^r \s_{i_k+j_k - k}(A+B),\qquad \forall A,B\in M_{p,q}(\C).
$$

%%%%%%%%%%%%%%%%%%%%%%%%%%%%%%%%%%%%%%%%%%%%%%%%%%%%%%
\section{Preuve du théorème \ref{theo:singular-p-q}}\label{sec:preuve-theo-sing-p-q}
%%%%%%%%%%%%%%%%%%%%%%%%%%%%%%%%%%%%%%%%%%%%%%%%%%%%%%

Revenons au contexte de la section \ref{sec:drapeaux-2-etage}. On travaille avec les involutions $\sigma_{p,q}(g)=\mathbf{J}_{p,q}(g^*)^{-1}\mathbf{J}_{p,q}$ sur $U_\C:=\GL_n(\C)$ et $\tilde{\sigma}_{p,q}(g_1,g_2)=(\sigma_{p,q}(g_1),\sigma_{p,q}(g_2))$ sur $\tU_\C:=\GL_n(\C)\times \GL_n(\C)$. La matrice 
$\mathbf{J}_{p,q}$ est définie par (\ref{eq:J-p-q}).

Alors $G=\Ad(\theta)(U(p,q))\subset GL_n(\C)$ est le sous-groupe fixé par $\sigma_{p,q}$, où $\theta\in\SO_n$ est définie par (\ref{eq:theta}). 
La décomposition de Cartan de l'algèbre de Lie de 
$G$ est $\ggot=\kgot\oplus\pgot$ où 
$$
\begin{array}{rrl}
\kgot  & = & \Ad(\theta)\left(\left\{\begin{pmatrix} X&  0\\ 0& Y \end{pmatrix},\ (X,Y)\in \ugot(p)\times\ugot(q)\right\}\right)\\
\pgot&= & \Ad(\theta)\left(\left\{\begin{pmatrix} 0&  X\\ X^*& 0 \end{pmatrix},\ X\in M_{p,q}(\C)\right\}\right).
\end{array}
$$
Le tore maximal $T\subset \upU_n$, formé des matrices diagonales, est stable sous l'involutions $\sigma_{p,q}$, et 
$\agot=\frac{1}{i}\tgot^{-\sigma_{p,q}}$ est un sous-espace abélien maximal de $\pgot$. Toutes les matrices de $\agot$ sont de la forme 
$\zeta(x):=\diag(x_1,\cdots,x_q, 0\cdots, 0, -x_q\cdots, -x_1),x\in\R^q$.

On remarque que pour $(x,y,z)\in \R^q_{++}$, on a l'équivalence
$$
(x,y,z)\in\sing(p,q)\Longleftrightarrow K\cdot\zeta(z)\subset K\cdot\zeta(x)+K\cdot\zeta(y),
$$
où $K\simeq \upU_p\times\upU_q$ est le sous-groupe compact maximal de $G$, d'algèbre de Lie $\kgot$.

L'ensemble $\Sigma(\ggot)$ des racines restreintes comprend tous les applications linéaires $\pm f_i\pm f_j$ avec $i\neq j$ et $\pm 2 f_i$ pour tous les $i$. 
De même, les $\pm f_i$ sont des racines restreintes si $p\neq q$ (voir \S 6 dans \cite{Knapp-book}). Nous pouvons donc choisir la chambre de Weyl restreinte suivante
$\agot_+:=\{\zeta(x), x_1\geq x_2\geq \cdots\geq x_q\geq 0\}$.

Le groupe de Weyl restreint $W_{\agot}$ s'identifie à un sous-groupe de $\Sgot_n$:
$$
W_{\agot}= \Big\{w\in\Sgot_n, w(k^o)=w(k)^o, \forall k\in [q]\ \mathrm{et}\ w(k)=k, \forall k\in [q+1,\ldots,p]\Big\}.
$$
Pour tout $r\in[q]$, on pose $\zeta_r= \diag(\underbrace{-1,\ldots,-1}_{ r\ fois},0,\ldots,0, \underbrace{1,\ldots,1}_{ r\ fois})\in\agot$.

\begin{lem}\label{lem:admissible-u-p-q}
Les éléments admissibles sont de la forme $t\,w\, \zeta_r$ avec $(t,w,r)\in \Q^{>0}\times W_{\agot}\times [q]$.
\end{lem}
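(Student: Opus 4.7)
The plan is to reduce admissibility to a short casework on the simple roots of the restricted root system. Since the admissibility condition of Definition~\ref{def:admissible-reel} is manifestly $W_\agot$-invariant, I may replace $\zeta$ by a $W_\agot$-translate lying in the closed dominant chamber $\overline{\agot_+}$, i.e.\ $\zeta=\zeta(x)$ with $x_1\geq\cdots\geq x_q\geq 0$. Because $\tG=G\times G$ with $\ggot$ sitting diagonally inside $\tggot$, the weights of the adjoint $\agot$-action on $\tggot/\ggot$ coincide with those on $\ggot$, so $\Sigma(\tggot/\ggot)=\Sigma(\ggot)$, which spans $\agot^*$. Consequently $\vect(\Sigma(\tggot/\ggot))\cap\zeta^\perp=\zeta^\perp$ has dimension $q-1$ whenever $\zeta\neq 0$, and the admissibility condition boils down to the requirement that $\Sigma(\ggot)\cap\zeta^\perp$ already spans the hyperplane $\zeta^\perp$.

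Next I would use the standard fact that, for $\zeta$ in the closure of the dominant chamber, the parabolic subsystem $\Sigma(\ggot)\cap\zeta^\perp$ is generated by the simple roots annihilating $\zeta$. The admissibility condition therefore becomes: exactly one simple root fails to annihilate $\zeta$. The restricted root system has rank $q$ and is of type $\mathrm{BC}_q$ (resp.\ $\mathrm{C}_q$) when $p>q$ (resp.\ $p=q$); in either case the simple roots are $\alpha_i=f_i-f_{i+1}$ for $1\leq i\leq q-1$ together with a short (resp.\ long) simple root supported at position $q$. Directly from the formula for $\zeta(x)$, one sees that $\alpha_i$ annihilates $\zeta(x)$ iff $x_i=x_{i+1}$, while the last simple root annihilates iff $x_q=0$.

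A two-case analysis then pins $\zeta$ down. If the non-annihilating simple root is some $\alpha_r$ with $r\in[q-1]$, the remaining equalities force $x_1=\cdots=x_r=c$ and $x_{r+1}=\cdots=x_q=0$ for some $c>0$, yielding $\zeta=-c\,\zeta_r$; if instead the non-annihilating simple root is the last one, then all $x_i$ equal a common value $c>0$, giving $\zeta=-c\,\zeta_q$. Since $-\mathrm{id}$ belongs to $W_\agot$ (the long element of $\Sgot_q\rtimes\{\pm 1\}^q$ is $-\mathrm{id}$), this sign can be absorbed into the Weyl group action. Rationality of $\zeta$ is equivalent to $c\in\Q^{>0}$, since the eigenvalues of $\ad(\zeta_r)$ on $\tggot$ are integers. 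The converse (that every $t\, w\,\zeta_r$ with $t\in\Q^{>0}$, $w\in W_\agot$, $r\in[q]$ is admissible) is read off directly from the same casework. I expect no serious obstacle: the whole argument is elementary once admissibility has been rephrased in terms of simple roots, the only mild subtlety being the absorption of signs via $-\mathrm{id}\in W_\agot$.
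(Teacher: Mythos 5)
Your argument is correct and is essentially the paper's own proof: both reduce modulo $W_{\agot}$ to a representative in a (anti-)dominant chamber, note that $\Sigma(\tggot/\ggot)=\Sigma(\ggot)$ spans $\agot^*$ here, and observe that for such a representative the condition (\ref{eq:condition-zeta}) amounts to all but exactly one simple restricted root of the $BC_q$/$C_q$ system vanishing on $\zeta$, which forces $\zeta$ to be a positive rational multiple of a $W_{\agot}$-translate of some $\zeta_r$. The only cosmetic difference is that the paper normalises to the anti-dominant chamber so that $t\,\zeta_r$ appears directly, whereas you work in the dominant chamber and absorb the resulting sign via $-\mathrm{id}\in W_{\agot}$.
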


\begin{proof}Considérons un vecteur rationnel $\zeta(x)\in\agot$ tel que $(\star)\ \mathrm{Vect}\big(\Sigma(\ggot)\cap \zeta(x)^\perp\big)=
\mathrm{Vect}\big(\Sigma(\ggot)\big)\cap \zeta(x)^\perp$. Modulo l'action du groupe de Weyl $W_\agot$, nous pouvons supposer que 
$x=(x_1\leq\cdots\leq x_q\leq 0)$. Puisque les racines restreintes positives s'expriment comme une combinaison lin\'eaire à 
coefficients positifs des \'el\'ements de $S(\ggot):=\{f_{i}-f_{i+1}, 1\leq i\leq n-1\}\cup \{f_n\}$, nous voyons que pour $\zeta(x)$ anti-dominant, 
la condition $(\star)$ équivaut à demander que toutes les formes linéaires de $S(\ggot)$ sauf une doivent s'annuler par rapport à $\zeta(x)$. 
Cette dernière condition implique que $\zeta(x)$ est de la forme $t\,\zeta_r$, avec $t\in  \Q^{>0}$ et $r\in [q]$.
\end{proof}

\medskip

Terminons la preuve en appliquant le théorème \ref{eq:inegalite-main-th} à notre situation: $(x,y,z)\in \sing(p,q)$ 
si et seulement si l'inégalité 
\begin{equation}\label{eq:preuve-sing-p-q}
(\zeta(x),w_1\zeta_r)+(\zeta(y),w_2\zeta_r)\geq (\zeta(z),w_0w_3\zeta_r)
\end{equation}
est satisfaite, pour tout $r\in [q]$, et pour tout $w_1,w_2,w_3\in W_{\agot}/W_{\agot}^{\zeta_r}$ tels que 
\begin{enumerate}\setlength{\itemsep}{8pt}
\item[a)] $[\Xgot_{w_1,\zeta_r}]\cdot[\Xgot_{w_2,\zeta_r}]\cdot[\Xgot_{w_3,\zeta_r}]=\ell [pt], \ell\geq 1$ dans $H^{max}( \Fcal_{\zeta_r},\Z)$,
\item[b)] $(w_1,w_2,w_3)$ est Levi-mobile.
\end{enumerate}

Utilisons le difféomorphisme $\varphi : \Fcal_{\zeta_r}\to \F(r,n-r;n)$ défini par $\varphi(g \Pbb(\zeta_r))= g\cdot (\C^r\subset \C^{n-r})$. 
Comme l'application $w\mapsto I:=w([r])$ identifie le quotient $W_{\agot}/W_{\agot}^{\zeta_r}$ avec $\Bcal(r,p,q)$, on voit que $\varphi$ identifie 
les variétés de Schubert $\Xgot_{w,\zeta_r}, w\in W_{\agot}/W_{\agot}^{\zeta_r}$ aux variétés de Schubert 
$\Xgot_{I\subset I^o},I\in \Bcal(r,p,q)$ définies à la section \ref{sec:drapeaux-2-etage}.

Grâce à la proposition \ref{prop:levi-mobile-double-grass-theta}, nous savons que les conditions a) et b) sont équivalentes à
\begin{enumerate}\setlength{\itemsep}{8pt}
\item[a')] $[\Xgot_{I}]\cdot[\Xgot_{J}]\cdot[\Xgot_{L}]=\ell [pt]$, avec $\ell'\geq 1$ dans $H^{max}(\G(r, n),\Z)$,
\item[b')] $[\Xgot_{I\natural I^{o,c}}]\cdot[\Xgot_{J\natural J^{o,c}}]\cdot[\Xgot_{L\natural L^{o,c}}]= \ell''[pt]$, avec $\ell''\geq 1$, dans $H^{max}(\G(r, n-r),\Z)$,
\end{enumerate}
avec $I:=w_1([r])$, $J:=w_2([r])$, et $L:=w_3([r])$. De plus, les conditions a') et b') sont équivalentes à 
(\ref{eq:singular-1}) et (\ref{eq:singular-2}).

Finalement, un calcul élémentaire montre que l'inégalité (\ref{eq:preuve-sing-p-q}) est équivalente à $(\star)_{I,J,L}$. 
La preuve du théorème \ref{theo:singular-p-q} est complète.

%%%%%%%%%%%%%%%%%%%%%%%%%%%%%%%%%%%%%%%%%%%%%%%%%
\section{Le cone convexe $\sing(\infty,q)$}\label{sec:preuve-singular-horn}
%%%%%%%%%%%%%%%%%%%%%%%%%%%%%%%%%%%%%%%%%%%%%%%%%

Dans cette section, nous étudions le comportement des cônes $\sing(p,q)$ lorsque $p$ varie. Pour ce faire, nous allons changer de notations et paramétrer 
les inégalités $(\star)_{I,J,L}$ au moyen des sous-ensembles $I_\pm,J_\pm,L_\pm\subset [q]$.

%%%%%%%%%%%%%%%%%%%%%%%%%%%%%%%%%%%%%%%%%%%%%%%%%
\subsection{Ensemble $\Ecal_{p,q}$}\label{sec:E-p-q}
%%%%%%%%%%%%%%%%%%%%%%%%%%%%%%%%%%%%%%%%%%%%%%%%%

Nous commençons par quelques définitions.

\begin{definition} Soit $p\geq q$.
\begin{enumerate}\setlength{\itemsep}{8pt}
\item Un sous-ensemble $X_\bullet\subset [q]$ est dit polarisé, s'il admet une partition $X_\bullet=X_+\coprod X_-$.
\item \`A un ensemble polarisé $X_\bullet\subset [q]$ de cardinal $r$, on associe 
\begin{itemize}\setlength{\itemsep}{8pt}
\item $X^p_\bullet=X_+\coprod \{p+q+1-x,x\in X_-\}\ \in\  \Bcal(r,p,q)$,
\item $\widetilde{X}^p_\bullet= X^p_\bullet \natural (X^p_\bullet)^{o}\  \in\  \Pcal(r,p+q-r)$.
\end{itemize}
\end{enumerate}
\end{definition}

Notons $\Ecal_q$ l'ensemble formé des triplets $(I_\bullet,J_\bullet,L_\bullet)$ de sous-ensembles polarisés de $[q]$ de même cardinal. \`A chaque 
$(I_\bullet,J_\bullet,L_\bullet)\in\Ecal_q$, on associe l'inégalité
$$
(\star)_{I_\bullet,J_\bullet,L_\bullet} :\quad  \qquad    |\,x\,|_{I_{-}}+|\,y\,|_{J_{-}} +|\, z\,|_{L_{-}}\geq |\,x\,|_{I_+}+|\,y\,|_{J_+} +|\, z\,|_{L_+},\qquad x,y,z\in \mathbb{R}^q.
$$

Un triplet $(\star)_{I_\bullet,J_\bullet,L_\bullet}$ est dit {\em régulier} 
lorsque\footnote{Cette relation est équivalente à $\sharp I_+ + \sharp J_+ + \sharp L_+ =r$ où $r=\sharp I_\bullet=\sharp J_\bullet=\sharp L_\bullet$.} 
$\sharp I_- +  \sharp J_- + \sharp L_- = 2(\sharp I_+ + \sharp J_+ + \sharp L_+)$.

\begin{definition}\label{def:E-p-q}
Notons $\Ecal_{p,q}\subset\Ecal_q$ l'ensemble des triplets $(I_\bullet,J_\bullet,L_\bullet)$ satisfaisant les conditions du
Théorème \ref{theo:singular-p-q}, c'est à dire 
\begin{itemize}\setlength{\itemsep}{8pt}
\item[1.] $\sharp I_\bullet=\sharp J_\bullet=\sharp L_\bullet=r\in [q]$,
\item[2.] $((I^p_\bullet)^o,(J^p_\bullet)^o,L^p_\bullet)\in \LR^{p+q}_r$,
\item[3.] $((\widetilde{I}^p_\bullet)^o,(\widetilde{J}^p_\bullet)^o,\widetilde{L}^p_\bullet)\in \LR^{p+q-r}_r$.
\end{itemize}
\end{definition}

On peut reformuler le théorème \ref{theo:singular-p-q} de la manière suivante : $(x,y,z)\in(\R^q_{++})^3$ appartient à $\sing(p,q)$ si et seulement si
$(\star)_{I_\bullet,J_\bullet,L_\bullet}$ est vérifiée pour tout triplet $(I_\bullet, J_\bullet,L_\bullet)\in \Ecal_{p,q}$.

\medskip

Nous allons maintenant étudier le comportement des partitions $\mu(X^p_\bullet)$ et $\mu(\widetilde{X}^p_\bullet)$ lorsque $p$ varie. 
Nous introduisons d'autres notations.

\begin{definition} 
\begin{enumerate}
%\item Pour $X_\bullet\subset [q]$, nous notons $\delta_{X_\bullet}$ le cardinal de l'ensemble $\{(a,b)\in X_+\times X_-, a>b\}$.
\item Pour une partition $\mu=(\mu_1,\ldots,\mu_r)$, on pose $|\mu|=\sum_{i=1}^r \mu_i$.
\item Pour un sous-ensemble $X\subset \N-\{0\}$, on pose $|X|=\sum_{x\in X} x$
\item Soit $r\in [q]$. Pour tout $0\leq \alpha \leq r$, nous notons par $1_\alpha^r=(1,\ldots,1,0,\ldots,0)\in \mathbb{R}^r$ le vecteur où $1$ apparaît $\alpha$ fois.
\end{enumerate}
\end{definition}

\begin{lem}\label{lem:lambda-X}Soit $X_\bullet\subset [q]$ un sous-ensemble polarisé de cardinal $r\geq 1$.
\begin{enumerate}
\item Pour tout $p\geq q$, on a
\begin{equation}
\begin{array}{rrl}
|\mu(X^p_\bullet)|  & = &  |X_+| - |X_-|+ (p+q+1)\sharp X_- -\tfrac{r(r+1)}{2}\label{eq:codim-X}\\
|\mu(\widetilde{X}^p_\bullet)| &= & |\mu(X^p_\bullet)|-(\sharp X_-)^2 -2\sharp X_{-}< X_{+}.
\end{array}
\end{equation}
\item Pour tout $p'\geq p\geq q$, nous avons
$$
\mu(X^{p'}_\bullet)-\mu(X^p_\bullet)= (p'-p) 1_{\sharp X_-}^r\qquad \mathrm{et}\qquad
\mu(\widetilde{X}^{p'}_\bullet)-\mu(\widetilde{X}^p_\bullet)= (p'-p) 1_{\sharp X_-}^r.
$$
\end{enumerate}
\end{lem}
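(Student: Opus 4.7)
The plan is to establish both parts by direct combinatorial manipulations on the sets $X^p_\bullet$ and $\widetilde{X}^p_\bullet$, relying throughout on the basic identity $|\mu(I)| = |I| - \tfrac{r(r+1)}{2}$, valid for any $I \in \Pcal(r,m)$, where $|I|$ denotes the sum of the elements of $I$.

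For the first identity of Part 1, I would compute $|X^p_\bullet| = |X_+| + \sum_{x \in X_-}(p+q+1-x) = |X_+| - |X_-| + (p+q+1)\sharp X_-$ and subtract $\tfrac{r(r+1)}{2}$. For the second identity, I interpret $\widetilde{X}^p_\bullet$ as the image of $X^p_\bullet$ under the order-preserving identification of $[p+q] \setminus (X^p_\bullet)^o$ with $[p+q-r]$, so that each $x \in X^p_\bullet$ is sent to $x - \sharp\{y \in (X^p_\bullet)^o : y \leq x\}$. Summing gives
\[ |\widetilde{X}^p_\bullet| = |X^p_\bullet| - N, \qquad N := \sharp\{(x,y) \in X^p_\bullet \times (X^p_\bullet)^o : y < x\}. \]
The count $N$ is then obtained by splitting both factors according to their ``positive'' and ``negative'' pieces, using $X^p_\bullet = X_+ \cup \{p+q+1-x : x \in X_-\}$ and $(X^p_\bullet)^o = \{p+q+1-a : a \in X_+\} \cup X_-$. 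The case $X_+ \times \{p+q+1-a : a \in X_+\}$ is empty because $X_+ \subset [q]$ and $p \geq q$; the case $\{p+q+1-x : x \in X_-\} \times X_-$ is automatic and contributes $(\sharp X_-)^2$; the two remaining mixed cases each contribute $\sharp X_- < X_+$, for a total correction of $(\sharp X_-)^2 + 2\,\sharp X_- < X_+$, as required.

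For Part 2, the key observation is that as $p$ increases by one, the elements of $X_+$ remain fixed while each element $p+q+1-x$ with $x \in X_-$ shifts up by one. Because $X_+ \subset [q]$ and each ``negative'' element lies in $[p+1, p+q]$, the elements of $X_+$ occupy the bottom $\sharp X_+$ positions of $X^p_\bullet$ in increasing order, and the ``negative'' elements the top $\sharp X_-$ positions. Since $\mu$ reads indices in reverse, only the first $\sharp X_-$ components of $\mu(X^p_\bullet)$ are affected, and each increases by one per unit of $p$. The same analysis carries over to $\widetilde{X}^p_\bullet$ using the explicit formulas from the first part: its components coming from $X_+$ are unchanged, while those coming from $X_-$ shift by exactly one. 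Iterating yields both equalities, with the differences equal to $(p'-p)\,1^r_{\sharp X_-}$.

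The main obstacle is the bookkeeping in the second identity of Part 1, where the four-case analysis for $N$ must be handled carefully, including verification of the symmetry between the two cross-cases; everything else is a direct unwinding of definitions.
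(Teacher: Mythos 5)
Your proposal is correct and follows essentially the same route as the paper: the first identity is the same direct computation, and for the second the paper simply invokes the formula $|\mu(I\natural J)|=|\mu(I)|-\sharp\, J^c<I$ from Lemma \ref{lem:calcul-dimensions} and leaves Part 2 as a direct calculation for the reader, which is exactly the four-case count of $N$ and the shift analysis you carry out explicitly. Your bookkeeping (emptiness of the $X_+\times\{p+q+1-a\}$ case via $p\geq q$, the $(\sharp X_-)^2$ block, the two cross-cases each giving $\sharp X_-<X_+$, and the order-preservation making the $X_-$-components the top $\sharp X_-$ entries of $\mu$) is accurate, so you have in effect supplied the details the paper omits.
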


\begin{proof} Prouvons le premier point. Un calcul direct donne $|X^p_\bullet|=|X_+|-|X_-| + (p+q+1)\sharp X_-$, puis, comme 
$|\mu(X^p_\bullet)|=|X^p_\bullet| -\tfrac{r(r+1)}{2}$, nous obtenons la première relation de (\ref{eq:codim-X}). La deuxième relation 
utilise le fait que $|\mu(I\natural J)|=|\mu(I)|-\sharp J^c<I$ (voir le lemme \ref{lem:calcul-dimensions}).

Le deuxième point, qui peut être prouvé par calcul direct, est laissé au lecteur. 
\end{proof}

Les conditions {\em 2.} et {\em 3.} de la définition \ref{def:E-p-q} impliquent  les relations : $|\mu((I^p_\bullet)^o)|+|\mu((J^p_\bullet)^o)|=|\mu(L^p_\bullet)|$ et 
$|\mu((\widetilde{I}^p_\bullet)^o)| + |\mu((\widetilde{J}^p_\bullet)^o)| = |\mu(\widetilde{L}^p_\bullet)|$. Nous terminons cette partie en 
calculant précisément ces deux relations.

\begin{coro}\label{coro:inequality-p-q}
Si $(I_\bullet, J_\bullet,L_\bullet)\in \Ecal_{p,q}$, alors
\begin{enumerate}
\item[(C1)] $\sharp I_\bullet =\sharp J_\bullet = \sharp L_\bullet =r\in [q]$,
\item[(C2)] $|I_+| + |J_+| + | L_+| - (|I_-| + |J_-| + L_-|)+\tfrac{r(r+1)}{2}=(p+q+1)(\sharp I_+ + \sharp J_+ + \sharp L_+ -r)$,
\item[(C3)] $(\sharp I_+)^2 + (\sharp J_+)^2 + (\sharp L_+)^2 + 2\big(\sharp I_{+}< I_{-} +\sharp J_{+}< J_{-} +\sharp L_{+}< L_{-}\big)= r^2$.
\end{enumerate} 
\end{coro}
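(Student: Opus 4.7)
The assertion (C1) is just the first clause of Definition~\ref{def:E-p-q}, so no work is needed. For the remaining two identities the strategy is to extract the two ``dimensional'' consequences of the Littlewood-Richardson conditions 2 and 3 in Definition~\ref{def:E-p-q}, namely
\[
|\mu((I^p_\bullet)^o)|+|\mu((J^p_\bullet)^o)|=|\mu(L^p_\bullet)|,\qquad
|\mu((\widetilde{I}^p_\bullet)^o)|+|\mu((\widetilde{J}^p_\bullet)^o)|=|\mu(\widetilde{L}^p_\bullet)|,
\]
and rewrite them using the two explicit formulas of Lemma~\ref{lem:lambda-X}.

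For (C2), the key observation is that if $Y_\bullet$ denotes the polarised set obtained from $X_\bullet$ by swapping $+$ and $-$, then $(X^p_\bullet)^o=Y^p_\bullet$. Applying the first identity of Lemma~\ref{lem:lambda-X} first to $L^p_\bullet$ and then (via the swap) to $(I^p_\bullet)^o$ and $(J^p_\bullet)^o$ yields three explicit expressions. Substituting them in the dimensional identity $|\mu((I^p_\bullet)^o)|+|\mu((J^p_\bullet)^o)|=|\mu(L^p_\bullet)|$ and using $\sharp L_-=r-\sharp L_+$ to convert $\sharp I_++\sharp J_+-\sharp L_-$ into $\sharp I_++\sharp J_++\sharp L_+-r$ produces exactly the identity (C2). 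This is a direct calculation.

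For (C3), I would combine the second identity of Lemma~\ref{lem:lambda-X}, namely
\[
|\mu(\widetilde{X}^p_\bullet)|=|\mu(X^p_\bullet)|-(\sharp X_-)^2-2\,\sharp X_-<X_+,
\]
with the same swap trick: since $(\widetilde{X}^p_\bullet)^o=\widetilde{Y^p_\bullet}$ where $Y_\bullet$ is the swap of $X_\bullet$, one obtains
\[
|\mu((\widetilde{X}^p_\bullet)^o)|=|\mu((X^p_\bullet)^o)|-(\sharp X_+)^2-2\,\sharp X_+<X_-.
\]
Subtracting the dimensional identity from condition 2 (already used to prove (C2)) from the dimensional identity from condition 3 cancels the $|\mu|$-terms and leaves
\[
(\sharp I_+)^2+2\,\sharp I_+<I_-+(\sharp J_+)^2+2\,\sharp J_+<J_-=(\sharp L_-)^2+2\,\sharp L_-<L_+.
\]
The main step, and what I expect to be the only mildly delicate point, is to turn this asymmetric identity into the fully symmetric (C3). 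For that I would use the two elementary combinatorial identities
\[
\sharp A<B+\sharp B<A=\sharp A\cdot\sharp B\quad\text{for disjoint }A,B,
\]
together with $(\sharp L_-)^2+2\,\sharp L_-\sharp L_++(\sharp L_+)^2=r^2$. Applying the first to the pair $(L_-,L_+)$ to rewrite $\sharp L_-<L_+$ in terms of $\sharp L_+<L_-$, and then the second to eliminate $(\sharp L_-)^2+2\,\sharp L_-\sharp L_+$ in favour of $r^2-(\sharp L_+)^2$, one regroups all the terms and obtains
\[
(\sharp I_+)^2+(\sharp J_+)^2+(\sharp L_+)^2+2\bigl(\sharp I_+<I_-+\sharp J_+<J_-+\sharp L_+<L_-\bigr)=r^2,
\]
which is exactly (C3). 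The whole argument is therefore purely bookkeeping once the swap identity for $(\widetilde{X}^p_\bullet)^o$ is in place; no new geometric input beyond Lemma~\ref{lem:lambda-X} is required.
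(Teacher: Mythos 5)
Your proposal is correct and takes essentially the same route as the paper, which in fact leaves the verification implicit: one extracts the two dimension identities $|\mu((I^p_\bullet)^o)|+|\mu((J^p_\bullet)^o)|=|\mu(L^p_\bullet)|$ and $|\mu((\widetilde{I}^p_\bullet)^o)|+|\mu((\widetilde{J}^p_\bullet)^o)|=|\mu(\widetilde{L}^p_\bullet)|$ from the Littlewood--Richardson conditions and evaluates them with Lemme \ref{lem:lambda-X}. Your swap identity $(X^p_\bullet)^o=Y^p_\bullet$, its tilde analogue (which follows from $(A\natural B)^o=A^o\natural B^o$), and the bookkeeping via $\sharp A<B+\sharp B<A=\sharp A\cdot\sharp B$ correctly supply exactly the computations the paper omits.
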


%%%%%%%%%%%%%%%%%%%%%%%%%%%%%%%%%%%%%%%%%%%%%%%%%
\subsection{Les cônes $\sing(p,q)\subset \sing(p+1,q)\subset\sing(\infty,q) $}
%%%%%%%%%%%%%%%%%%%%%%%%%%%%%%%%%%%%%%%%%%%%%%%%%

Soit $p\geq q$. À toute matrice $X\in M_{p,q}(\C)$, nous associons 
$$
X^\vee=\begin{pmatrix}X\\0\cdots 0\end{pmatrix}\in M_{p+1,q}(\C).
$$
On voit immédiatement que $X$ et $X^\vee$ ont les mêmes valeurs singulières : $\s(X)=\s(X^\vee)$ pour tout $X\in M_{p,q}(\C)$.
Grâce à ces considérations élémentaires, nous voyons que si $(\s(A),\s(B),\s(A+B))\in \sing(p,q)$, alors 
$(\s(A),\s(B),\s(A+B))=(\s(A^\vee),\s(B^\vee),\s(A^\vee + B^\vee))$ appartient à $\sing(p+1,q)$. Par conséquent, 
$\sing(p,q)\subset  \sing(p+1,q)$ pour tout $p\geq q$.

Cela nous amène à définir le cône convexe suivant
$$
\sing(\infty,q)=\bigcup_{p\geq q}\sing(p,q).
$$

Nous commençons par une observation fondamentale.
\begin{lem}
Il existe  $p_o\geq q$, tel que $\sing(\infty,q)=\sing(p_o,q)$. En d'autres termes, \break 
$\sing(p,q)=\sing(p+1,q)$ pour tout $p\geq p_o$. 
\end{lem}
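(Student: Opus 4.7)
Le plan que je suivrais repose sur une simple observation de finitude combinée à la monotonie de la chaîne. D'après le théorème \ref{theo:singular-p-q}, le cône $\sing(p,q)$ est découpé dans $(\R^q_{++})^3$ par la famille d'inégalités $(\star)_{I_\bullet, J_\bullet, L_\bullet}$ indexée par $\Ecal_{p,q}$. Il est crucial de noter que l'inégalité $(\star)_{I_\bullet, J_\bullet, L_\bullet}$ elle-même ne dépend que du triplet polarisé $(I_\bullet, J_\bullet, L_\bullet) \in \Ecal_q$, et absolument pas de $p$: c'est une forme linéaire fixée portant uniquement sur les coordonnées de $x,y,z$. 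Ainsi, quel que soit $p$, le cône $\sing(p,q)$ est de la forme $(\R^q_{++})^3 \cap \bigcap_{(I_\bullet,J_\bullet,L_\bullet) \in \Ecal_{p,q}} H_{(I_\bullet,J_\bullet,L_\bullet)}$, où $H_{(I_\bullet,J_\bullet,L_\bullet)}$ désigne le demi-espace $p$-indépendant associé à $(\star)_{I_\bullet,J_\bullet,L_\bullet}$.

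Ensuite, j'observerais que $\Ecal_q$ est fini: les triplets de sous-ensembles polarisés de $[q]$ de même cardinal sont en nombre fini, borné par $4^{q} \cdot 4^{q} \cdot 4^q$. La famille de demi-espaces $\{H_{(I_\bullet,J_\bullet,L_\bullet)}\}_{(I_\bullet,J_\bullet,L_\bullet)\in \Ecal_q}$ est donc finie, et lorsque $\Ecal_{p,q}$ parcourt les parties de $\Ecal_q$, le cône $\sing(p,q)$ ne peut prendre qu'un nombre fini, au plus $2^{|\Ecal_q|}$, de valeurs distinctes dans l'ensemble des cônes convexes fermés de $(\R^q)^3$.

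Enfin, la chaîne $\big(\sing(p,q)\big)_{p\geq q}$ est croissante (ceci a déjà été établi avant l'énoncé du lemme via l'injection $X \mapsto X^\vee$ qui préserve les valeurs singulières). Une suite croissante dans un ensemble fini étant nécessairement stationnaire à partir d'un certain rang, il existe $p_o \geq q$ tel que $\sing(p,q) = \sing(p_o,q)$ pour tout $p \geq p_o$, et par suite $\sing(\infty,q) = \sing(p_o,q)$.

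Cette stratégie est directe et ne présente pas de réelle difficulté, l'ingrédient essentiel étant la $p$-indépendance des inégalités $(\star)_{I_\bullet,J_\bullet,L_\bullet}$, visible immédiatement sur leur expression. Une question plus fine, non traitée par cette approche, consisterait à déterminer une borne explicite pour $p_o$ en fonction de $q$: celle-ci exigerait d'étudier de manière détaillée la dépendance en $p$ des conditions cohomologiques 2. et 3. de la définition \ref{def:E-p-q}, à l'aide des formules de décalage du lemme \ref{lem:lambda-X} et des égalités de taille $|\mu((I^p_\bullet)^o)| + |\mu((J^p_\bullet)^o)| = |\mu(L^p_\bullet)|$ inhérentes aux coefficients de Littlewood-Richardson; c'est là que résiderait la véritable difficulté d'un résultat plus précis.
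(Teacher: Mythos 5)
Votre preuve est correcte et suit essentiellement la même démarche que celle du texte : les deux arguments reposent sur la finitude de $\Ecal_q$, sur le fait que l'inégalité $(\star)_{I_\bullet,J_\bullet,L_\bullet}$ ne dépend pas de $p$ (de sorte que $\sing(p,q)$ est entièrement déterminé par la partie $\Ecal_{p,q}\subset\Ecal_q$), et sur la croissance de la suite $\big(\sing(p,q)\big)_{p\geq q}$. Le texte conclut par un argument de tiroirs (une partie $\Rcal\subset\Ecal_q$ telle que $\Ecal_{p,q}=\Rcal$ pour une infinité de $p$, avec $p_o=\inf\N(\Rcal)$), tandis que vous invoquez la stationnarité d'une suite croissante ne prenant qu'un nombre fini de valeurs : c'est le même raisonnement sous un emballage légèrement différent.
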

{\em Preuve :} Grâce au théorème \ref{theo:singular-p-q} , nous savons que pour tout $p\geq q$, le cône convexe $\sing(p,q)$ est déterminé par un 
système fini d'inégalités paramétré par $\Ecal_{p,q}\subset \Ecal_q$. Comme $\Ecal_q$ est fini, il existe $\Rcal\subset \Ecal_q$ tel que 
$\N(\Rcal):=\{p\geq q, \Ecal_{p,q}=\Rcal\}$ est infini.
 
Prenons $p_o=\inf \N(\Rcal)$. Vérifions que $\sing(\infty,q)\subset\sing(p_o,q)$ : comme l'autre inclusion $\sing(p_o,q)\subset\sing(\infty,q)$ est évidente, la preuve 
sera terminée. Soit $x\in \sing(\infty,q)$ : il existe $p\geq q$ tel que $x\in\sing(p,q)$. Comme $\N(\Rcal)$ est infini, il existe $p'\in \N(\Rcal)$ tel que 
$p'\geq p$. Il s'ensuit que $x\in\sing(p,q)\subset \sing(p',q)=\sing(p_o,q)$. $\Box$

\medskip

Considérons  le sous-ensemble $\Ecal_{p,q}^{\mathrm{min}}\subset \Ecal_{p,q}$ associé au système minimal d'inégalités décrivant le cône convexe $\sing(p,q)$. 
Nous caractérisons le cône $\sing(\infty,q)$ de la manière suivante.

\begin{prop}\label{prop:inequality-infty-q}
L'identité $\sing(\infty,q)=\sing(p,q)$ est vraie si et seulement si toutes les triplets de $\Ecal_{p,q}^{\mathrm{min}}$ sont réguliers.
\end{prop}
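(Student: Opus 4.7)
The plan is to reduce the statement to a monotonicity property of the parametrizing set $\Ecal_{p,q}$ along the chain $p \leq p+1 \leq p+2 \leq \cdots$. The starting observation is that the inequality $(\star)_{I_\bullet,J_\bullet,L_\bullet}$ depends only on the polarized triple and not on $p$, so only the parametrizing set $\Ecal_{p,q}$ evolves. I will split this set as $\Ecal_{p,q} = \Ecal_{p,q}^{\mathrm{reg}} \sqcup \Ecal_{p,q}^{\mathrm{non-reg}}$. Condition $(C2)$ of Corollary \ref{coro:inequality-p-q} shows that for a non-regular triple the integer $a := \sharp I_+ + \sharp J_+ + \sharp L_+ - r$ is nonzero, so $(C2)$ pins $p+q+1$ to a specific value determined by the triple. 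Hence any non-regular triple belongs to at most one $\Ecal_{p,q}$, and in particular fails to belong to $\Ecal_{p',q}$ for $p'$ sufficiently large.

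The key technical step will be proving the following monotonicity: \emph{every regular triple $T \in \Ecal_{p,q}$ lies in $\Ecal_{p',q}$ for all $p' \geq p$}. Conditions $(C1)$, $(C2)$ (with $a=0$) and $(C3)$ are $p$-independent, so the real content is preservation of the two Littlewood-Richardson conditions of Definition \ref{def:E-p-q}. Writing $\alpha = \sharp I_-$, $\beta = \sharp J_-$, $\gamma = \sharp L_-$, the duality formula combined with Lemma \ref{lem:lambda-X} gives the shift
\[
\bigl(\mu((I^{p'})^o),\,\mu((J^{p'})^o),\,\mu(L^{p'})\bigr) - \bigl(\mu((I^p)^o),\,\mu((J^p)^o),\,\mu(L^p)\bigr) = (p'-p)\bigl(1^r_{r-\alpha},\, 1^r_{r-\beta},\, 1^r_\gamma\bigr),
\]
and an identical shift formula for the $\widetilde{X}$ partitions. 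Regularity $\alpha+\beta+\gamma = 2r$ translates to $(r-\alpha)+(r-\beta) = \gamma$, so that the triple $(1^r_{r-\alpha}, 1^r_{r-\beta}, 1^r_\gamma)$ belongs to $\horn(r)$: it is realized by taking $A,B$ to be orthogonal projections onto complementary subspaces of dimensions $r-\alpha,\, r-\beta$, so that $A+B$ is the projection onto their direct sum, of rank $\gamma \leq r$. Using Knutson-Tao saturation (Theorem \ref{theo:saturation}) to identify LR non-vanishing with membership in $\horn(r)$, convexity of the Horn cone then yields the desired preservation. I expect this combination of the duality computation for $(\cdot)^o$, the stability property $\mu(\widetilde{X}^{p'}) - \mu(\widetilde{X}^p) = (p'-p) 1^r_{\sharp X_-}$, and the Horn realization of $(1^r_{r-\alpha}, 1^r_{r-\beta}, 1^r_\gamma)$ to be the central technical difficulty of the proof.

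With this monotonicity in hand, the \emph{if} direction is immediate: if every $T \in \Ecal_{p,q}^{\mathrm{min}}$ is regular, then $\Ecal_{p,q}^{\mathrm{min}} \subset \Ecal_{p',q}$ for all $p' \geq p$, so the minimal inequalities defining $\sing(p,q)$ are also valid inequalities for $\sing(p',q)$. This gives $\sing(p',q) \subset \sing(p,q)$; combining with the automatic reverse inclusion yields $\sing(p',q) = \sing(p,q)$ for all $p' \geq p$, hence $\sing(\infty,q) = \sing(p,q)$.

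For the \emph{only if} direction, I will use that each inequality $(\star)_T$ has $0/1$ coefficients and therefore uniquely determines the triple $T$; consequently the facet-defining inequalities of $\sing(p,q)$ are in bijection with $\Ecal_{p,q}^{\mathrm{min}}$. Assume $\sing(\infty,q)=\sing(p,q)$. For every $p' \geq p$ we then have $\sing(p,q)=\sing(p',q)$, and since the two cones share their facet-defining inequalities, necessarily $\Ecal_{p,q}^{\mathrm{min}} \subset \Ecal_{p',q}$. Taking $p'$ larger than every value $p''$ realizing the pinning condition $(C2)$ for any non-regular element of the finite set $\Ecal_q$, this inclusion forces $\Ecal_{p,q}^{\mathrm{min}}$ to contain no non-regular triple, which concludes the proof of the proposition.
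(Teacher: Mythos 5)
Your proposal is correct and follows essentially the same route as the paper: non-regular triples are ruled out because condition $(C2)$ pins down $p+q+1$, and regular triples persist because the shift $(p'-p)\bigl(1^r_{\sharp I_+},1^r_{\sharp J_+},1^r_{\sharp L_-}\bigr)$ obtained from Lemma \ref{lem:lambda-X} lies in $\horn(r)$ (your projection construction) and $\horn(r)$ is convex, while the only-if direction uses, as in the paper, that the minimal (facet) systems of equal cones coincide. The only cosmetic differences are that your appeal to Knutson-Tao saturation is unnecessary, since $\LR^n_r$ is defined directly by membership in $\horn(r)$, and that your explicit shift vector (third component $1^r_{\sharp L_-}$) is the trace-consistent form of the one displayed in the paper.
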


\begin{proof}
Soit $p\geq q$ tel que $\sing(\infty,q)=\sing(p,q)$. Cela signifie que $\Ecal_{p',q}^{\mathrm{min}}=\Ecal_{p,q}^{\mathrm{min}}$ pour tout $p'>p$. 
Soit $(I_\bullet,J_\bullet,L_\bullet) \in \Ecal_{p,q}^{\mathrm{min}}$. Le corollaire \ref{coro:inequality-p-q} nous dit que 
$$
(C2)\qquad |I_+| + |J_+| + | L_+| - (|I_-| + |J_-| + L_-|)+\tfrac{r(r+1)}{2}=(p+q+1)(\sharp I_+ + \sharp J_+ + \sharp L_+ -r).
$$
Rappelons que  $(I_\bullet,J_\bullet,L_\bullet)$ est {\em régulier} si et seulement si $\sharp I_+ + \sharp J_+ + \sharp L_+ -r=0$. Par conséquent, si 
$(I_\bullet,J_\bullet,L_\bullet)$ n'est pas régulier, l'identité {\em (C2)} ne vaut plus lorsque l'on remplace $p$ par $p'>p$. Cela implique que
 $(I_\bullet,J_\bullet,L_\bullet)\notin \Ecal_{p',q}$ pour tout $p'>p$. Cela contredit le fait que $(I_\bullet,J_\bullet,L_\bullet)\in\Ecal_{p',q}^{\mathrm{min}}$.

Supposons maintenant que tous les éléments de $\Ecal_{p,q}^{\mathrm{min}}$ soient réguliers. Pour prouver que $\sing(\infty,q)$ coïncide avec 
$\sing(p,q)$, il suffit de montrer que $\Ecal_{p,q}^{\mathrm{min}}\subset \Ecal_{p',q}$ pour tout $p'\geq p$. 
Soit $(I_\bullet,J_\bullet,L_\bullet)\in \Ecal_{p,q}^{\mathrm{min}}$. Par définition, 
nous avons 
\begin{itemize}
\item[1.] $\sharp I_\bullet=\sharp J_\bullet=\sharp L_\bullet=r\in [q]$,
\item[2.] $\sharp I_+ +\sharp J_+ +\sharp L_+=r$,
\item[3.]$\Lambda^{p}:=(\mu((I^p_\bullet)^o),\mu((J^p_\bullet)^o),\mu(L^p_\bullet))\in\horn(r)$,
\item[4.] $\widetilde{\Lambda}^p:=(\mu((\widetilde{I}^p_\bullet)^o),\mu((\widetilde{J}^p_\bullet)^o),\mu(\widetilde{L}^p_\bullet))\in\horn(r)$.
\end{itemize}
L'identité {\em 2.} découle de notre hypothèse selon laquelle tous les éléments de $\Ecal_{p,q}^{\mathrm{min}}$ sont réguliers.

Notre objectif est atteint si nous montrons que $\Lambda^{p'}$ et $\widetilde{\Lambda}^{p'}$ appartiennent à  $\horn(r)$ pour tout $p'>p$.
Nous avons vu au lemme \ref{lem:lambda-X} que pour tout sous-ensemble polarisé $X_\bullet\subset [q]$ de cardinal $r\geq 1$, nous avons 
$\mu(X^{p'}_\bullet)-\mu(X^p_\bullet)= \mu(\widetilde{X}^{p'}_\bullet)-\mu(\widetilde{X}^p_\bullet)= (p'-p) 1_{\sharp X_-}^r$, $\forall p'\geq p$.
 
Cela conduit aux relations
$$
\Lambda^{p'}-\Lambda^{p}=\widetilde{\Lambda}^{p'}-\widetilde{\Lambda}^p=
(p'-p)\left(1^r_{\sharp I_+},1^r_{\sharp J_+},1^r_{\sharp L_+}-1^{r}\right),\qquad \forall p'\geq p.
$$
Il est maintenant facile de vérifier que $(1^r_{\sharp I_+},1^r_{\sharp J_+},1^r_{\sharp L_+}-1^{r})\in \horn(r)$ 
puisque $\sharp I_+ +\sharp J_+ +\sharp L_+=r$. En utilisant le fait que $\horn(r)$ est un cône convexe, nous pouvons conclure 
que $\Lambda^{p'}=\Lambda^p+(p'-p)(1^r_{\sharp I_+},1^r_{\sharp J_+},1^r_{\sharp L_+}-1^{r})$ et  
$\widetilde{\Lambda}^{p'}=\widetilde{\Lambda}^p+(p'-p)(1^r_{\sharp I_+},1^r_{\sharp J_+},1^r_{\sharp L_+}-1^{r})$ 
appartiennent à $\horn(r)$ pour tout $p'\geq p$. 
\end{proof}

%%%%%%%%%%%%%%%%%%%%%%%%%%%%%%%%%%%%%%%%%%%%%%%%%
\section{$\sing(p,1)$, $\sing(p,2)$ et $\sing(p,3)$}\label{sec:example-horn-1-2-3}
%%%%%%%%%%%%%%%%%%%%%%%%%%%%%%%%%%%%%%%%%%%%%%%%%

Dans les exemples suivants, la notation $X_\bullet=\{i_1^\epsilon,\ldots, i_r^\epsilon\}$ signifie que $X_+=\{i_k^\epsilon, \epsilon=+\}$ et \break 
$X_-=\{i_k^\epsilon, \epsilon=-\}$.
On utilise dans cette section la notation $\delta_{X_\bullet}:= \sharp X_{+}< X_{-}$.

\begin{exemple}\label{ex:singular-p-1}
Soit $p\geq q=1$. Ainsi, $(a,b,c)\in(\R_+)^3$ appartient à $\sing(p,1)$ si et seulement si les inégalités de Weyl sont vérifiées :
$a+b\geq c$, $a+c\geq b$, et $b+c\geq a$.
\end{exemple}

\begin{proof} Soient $I_\bullet,J_\bullet,L_\bullet\subset [1]$ satisfaisant les relations  du théorème  \ref{theo:singular-p-q}. Si nous utilisons la relation {\em (C3)} du corollaire 
\ref{coro:inequality-p-q}, nous obtenons que $(\sharp I_+)^2 + (\sharp J_+)^2 + (\sharp L_+)^2=1$. À une permutation près, nous devons avoir $I_\bullet=J_\bullet=\{1^-\}$ et
$L_\bullet=\{1^+\}$. Ce cas correspond à l'inégalité de Weyl $a+b\geq c$. 
\end{proof}

\medskip

\begin{exemple}\label{ex:singular-p-2}
Soit $p\geq q=2$. Alors, $(a,b,c)\in(\R^2_{++})^3$ appartient à $\sing(p,2)$ si et seulement si les 18 inégalités suivantes sont vérifiées
\begin{enumerate}
\item les inégalités de Weyl
\begin{itemize}
\item $a_1+ b_1\geq  c_1$ \quad (et 2 permutations),
\item $a_1+ b_2\geq  c_2$ \quad (et 5 permutations),
\end{itemize}
\item les inégalités de Lidskii-Wielandt
\begin{itemize}
\item $a_1+a_2+b_1+b_2\geq c_1+c_2$\quad (et 2 permutations),
\item $a_1 + a_2 + b_1 + c_2 \geq b_2 + c_1 $\quad (et 5 permutations).
\end{itemize}
\end{enumerate}
\end{exemple}

\medskip

\begin{proof} Soient $I_\bullet,J_\bullet,L_\bullet\subset [2]$ satisfaisant les relations  du théorème  \ref{theo:singular-p-q}. 
Soit $r:=\sharp I_\bullet=\sharp J_\bullet=\sharp L_\bullet$.

Premier cas : $r=1$.  La relation {\em (C3)} du corollaire \ref{coro:inequality-p-q} donne $(\sharp I_+)^2 + (\sharp J_+)^2 + (\sharp L_+)^2=1$. À une permutation près, 
nous avons $I_{\bullet}=\{i^-\}$, $J_{\bullet}=\{j^-\}$, $L_{\bullet}=\{L^+\}$ avec $i,j,k\in\{1,2\}$.   La relation {\em (C2)} du corollaire \ref{coro:inequality-p-q} 
impose que $k=i+j-1$, donc le triplet $(i,j,k)$ appartient à $(1,1,1)$, $(1,2,2)$ et $(2,1,2)$. Tous ces cas correspondent aux inégalités de Weyl : $a_1+ b_1\geq  c_1$, $a_1+ b_2\geq  c_2$, et $a_2+ b_1\geq  c_2$.

Deuxième cas : $r=2$. Ici, la relation {\em (C3)} devient  
$$
(\sharp I_+)^2 + (\sharp J_+)^2 + (\sharp L_+)^2 + 2(\delta_{I_\bullet}+\delta_{J_\bullet}+\delta_{L_\bullet})=4.
$$
À une permutation près, on peut supposer que $\sharp I_+\leq \sharp J_+\leq\sharp L_+$. Il existe deux possibilités :
\begin{itemize}
\item $(\sharp I_+, \sharp J_+,\sharp L_+)=(0,0,2)$. Ainsi, $I_\bullet=J_\bullet=\{1^-,2^-\}$ et $L_\bullet=\{1^+,2^+\}$. Ce cas correspond à l'inégalité de Lidskii-Wielandt 
$a_1+a_2+b_1+b_2\geq c_1+c_2$.
\item $(\sharp I_+, \sharp J_+,\sharp L_+)=(0,1,1)$. À une permutation près, nous avons $\delta_{L_\bullet}=1$ et $\delta_{J_\bullet}=0$. Ici, $I_\bullet=\{1^-,2^-\}$,  
$J_\bullet=\{1^-,2^+\}$ et $L_\bullet=\{1^+,2^-\}$. Cette situation correspond à l'inégalité de Lidskii-Wielandt $a_1+a_2+b_1-b_2\geq c_1-c_2$. 
\end{itemize}
\end{proof}

\medskip

\begin{exemple}\label{ex:singular-3-3}
Un élément $(a,b,c)\in(\R^3_{++})^3$ appartient à $\sing(3,3)$ si et seulement si les 87 inégalités suivantes sont vérifiées :
\begin{enumerate}
\item les inégalités de Weyl
\begin{itemize}
\item $a_1+ b_1\geq  c_1$ \quad (et 2 permutations),
\item $a_1+ b_2\geq  c_2$ \quad (et 5 permutations),
\item $a_2 + b_2\geq c_3$ \quad (et 2 permutations),
\item $a_1 + b_3\geq c_3$ \quad (et 5 permutations).
\end{itemize}
\item les inégalités de Lidskii-Wielandt
\begin{itemize}
\item $a_1 + a_2 + b_1 + b_2\geq c_1+c_2$ \quad (et 2 permutations),
\item $a_1 + a_2 + b_1 + b_3\geq c_1+c_3$ \quad (et 5 permutations),
\item $a_1 + a_2 + b_2 + b_3\geq c_2+c_3$ \quad (et 5 permutations),
\item $a_1 + a_2 + a_3 + b_1 + b_2 + b_3 \geq c_1 + c_2 + c_3$ \quad (et 2 permutations).
\item $a_1 + a_2 + b_1 + c_2 \geq b_2+ c_1 $ \quad (et 5 permutations),
\item $a_1 + a_2 + b_1 + c_3 \geq b_3+ c_1 $ \quad (et 5 permutations),
\item $a_1 + a_2 + b_2 + c_3 \geq b_3+ c_2 $ \quad (et 5 permutations),
\item $a_1 + a_2 + a_3 + b_1 + b_2 +c_3  \geq   b_3 + c_1 + c_2 $ \quad (et 5 permutations),
\item $a_1 + a_2 + a_3 + b_1  + b_3 + c_2 \geq  b_2 + c_1 + c_3$ \quad (et 5 permutations),
\item $a_1 + a_2 + a_3  + b_2 + b_3 + c_1 \geq  b_1 + c_2 + c_3$ \quad (et 5 permutations).
\end{itemize}
\item autres inégalités
\begin{itemize}
\item $a_1 + a_3 + b_1 + b_3\geq c_2+c_3$ \quad (et 2 permutations),
\item $a_1 + a_3 + b_1 + c_3\geq b_3+ c_2$ \quad (et 5 permutations),
\item $a_1 + a_2  + b_1  + b_3 + c_2 + c_3\geq a_3 + b_2 + c_1$ \quad (et 5 permutations).
\end{itemize}
\end{enumerate}
\end{exemple}

\begin{proof}
Soient $I_\bullet,J_\bullet,L_\bullet\subset [3]$ satisfaisant les relations  du théorème \ref{theo:singular-p-q}. Soit $r:=\sharp I_\bullet=\sharp J_\bullet=\sharp L_\bullet$.

\underline{Premier cas : $r=1$.}

Nous obtenons ici les inégalités de Weyl $a_i+b_j\geq c_{i+j-1}$ (à permutations près).

\underline{Deuxième cas : $r=2$.}

Utilisons la relation
$(\sharp I_+)^2 + (\sharp J_+)^2 + (\sharp L_+)^2 + 2(\delta_{I_\bullet}+\delta_{J_\bullet}+\delta_{L_\bullet})=4$.
À une permutation près, nous pouvons supposer que $\sharp I_+\leq \sharp J_+\leq\sharp L_+$. Il existe deux possibilités :
\begin{itemize}
\item $(\sharp I_+, \sharp J_+,\sharp L_+)=(0,0,2)$. Ici, nous avons $I_+=J_+=L_-=\emptyset$. La relation {\em (C3)} du corollaire \ref{coro:inequality-p-q} donne 
$|I_-|+|J_-|=|L_+|+3$. Sachant que $|I_-|,|J_-|, |L_+|\in\{3,4,5\}$, nous obtenons quelques cas liés aux inégalités de Lidskii-Wielandt:
\begin{enumerate}
\item $(|I_-|,|J_-|, |L_+|)=(3,3,3)$ :  $a_1 + a_2 + b_1 + b_2\geq c_1+c_2$,
\item $(|I_-|,|J_-|, |L_+|)=(3,4,4)\ ou\  (4,3,4)$ : $a_1 + a_2 + b_1 + b_3\geq c_1+c_3$ ou $a_1 + a_3 + b_1 + b_2\geq c_1+c_3$,
\item $(|I_-|,|J_-|, |L_+|)=(3,5,5)\ ou \ (5,3,5)$ : $a_1 + a_2 + b_2 + b_3\geq c_2+c_3$ ou $a_2 + a_3 + b_1 + b_2\geq c_2+c_3$.
\end{enumerate}
Le seul cas restant est $(|I_-|,|J_-|, |L_+|)=(4,4,5)$. Ici, $I_\bullet=J_\bullet=\{1^-,3^-\}$ et $L_\bullet=\{2^+,3^+\}$. Un petit calcul montre 
que les conditions de Horn sont satisfaites dans ce cas. Nous obtenons l'inégalité $a_1 + a_3 + b_1 + b_3\geq c_2+c_3$.

\item $(\sharp I_+, \sharp J_+,\sharp L_+)=(0,1,1)$. À une permutation près, nous pouvons supposer que $\delta_{L_\bullet}=1$ et $\delta_{J_\bullet}=0$. Ainsi, 
$I_\bullet=\{i_1^-<i_2^-\}$, $I_\bullet=\{j_1^-<j_2^+\}$ et $L_\bullet=\{L_1^+<L_2^-\}$. La relation {\em (C2)} du corollaire \ref{coro:inequality-p-q} donne 
$\underbrace{i_1^- + i_2^-}_\alpha + \underbrace{j_1^- + L_1^-}_\beta  =\underbrace{j_2^+ + L_1^+}_\gamma  + 3$.

Sachant que $\alpha,\beta,\gamma\in\{3,4,5\}$, nous obtenons 
\begin{enumerate}
\item $(\alpha,\beta,\gamma)= (3,3,3)$, $(3,4,4)$, ou $(3,5,5)$. Cela nous donne les inégalités de Lidskii-Wielandt.
\item $(\alpha,\beta,\gamma)= (4,3,4)$, ou $(5,3,5)$. Aucune solution.
\item $(\alpha,\beta,\gamma)= (4,4,5)$. Ici, $I_\bullet=\{1^-, 3^-\}$, $I_\bullet=\{1^-, 3^+\}$ et $L_\bullet=\{2^+,3^-\}$. Un petit calcul 
montre que les conditions de Horn sont satisfaites dans ce cas. Cela correspond à l'inégalité $a_1 + a_3 + b_1 - b_3\geq c_2 - c_3$.

\end{enumerate}
\end{itemize}

\medskip

\underline{Dernier cas : $r=3$.}

Commençons par la relation {\em (C3)} :
$(\sharp I_+)^2 + (\sharp J_+)^2 + (\sharp L_+)^2 + 2(\delta_{I_\pm}+\delta_{J_\pm}+\delta_{L_\pm})=9$.
À permutation près, on peut supposer que $\sharp I_+\leq \sharp J_+\leq\sharp L_+$. Tout d'abord, listons le triplet $(\sharp I_+\leq \sharp J_+\leq\sharp L_+)$
 qui ne peut pas satisfaire {\em (C3)} : $(0,0,0)$ ; $(0,0,1)$ ; $(0,1,1)$ ; $(0,0,2)$ ; $(1,1,2)$ ; $(0,2,2)$ ; $(2,2,2)$. Il reste quatre cas :
\begin{enumerate}
\item $(\sharp I_+,\sharp J_+ ,\sharp L_+)=(0,0,3)$. Ici, $I_\bullet=J_\bullet= \{1^-, 2^-, 3^-\}$ et $L_\bullet= \{1^+, 2^+, 3^+\}$. Nous obtenons l' inégalité de Lidskii-Wielandt
$a_1 + a_2 + a_3 + b_1 + b_2 + b_3\geq c_1+ c_2 + c_3$.
\item $(\sharp I_+,\sharp J_+ ,\sharp L_+)=(0,1,2)$. Nous devons avoir $\delta_{J_\bullet}+\delta_{L_\bullet}=2$, d'où trois possibilités
\begin{itemize}
\item[a)] $(\delta_{J_\bullet},\delta_{L_\bullet})=(0,2)$ : ici $I_\bullet= \{1^-, 2^-, 3^-\}$, $J_\bullet= \{1^-, 2^-, 3^+\}$ et $L_\bullet= \{1^+, 2^+, 3^-\}$. Nous obtenons l'inégalité signée 
de Lidskii-Wielandt $a_1 + a_2 + a_3 + b_1 + b_2 - b_3\geq c_1+ c_2 - c_3$.
\item[b)] $(\delta_{J_\bullet},\delta_{L_\bullet})=(1,1)$ : ici $I_\bullet= \{1^-, 2^-, 3^-\}$, $J_\bullet= \{1^-, 2^+, 3^-\}$ et $L_\bullet= \{1^+, 2^-, 3^+\}$. Nous obtenons l'inégalité signée 
de Lidskii-Wielandt $a_1 + a_2 + a_3 + b_1 - b_2 + b_3\geq c_1- c_2 + c_3$.
\item[c)] $(\delta_{J_\bullet},\delta_{L_\bullet})=(2,0)$ : ici, $I_\bullet= \{1^-, 2^-, 3^-\}$, $J_\bullet= \{1^+, 2^-, 3^-\}$ et $L_\bullet= \{1^-, 2^+, 3^+\}$. Nous obtenons l'inégalité signée 
de Lidskii-Wielandt $a_1 + a_2 + a_3 - b_1 + b_2 + b_3\geq - c_1+ c_2 + c_3$.
\end{itemize}
\item $(\sharp I_+,\sharp J_+ ,\sharp L_+)=(1,2,2)$. Nous devons avoir $\delta_{I_\bullet}=\delta_{J_\bullet}=\delta_{L_\bullet}=0$, donc $I_\bullet= \{1^-, 2^-, 3^+\}$, $J_\bullet= \{1^-, 2^+, 3^+\}$ et $L_\bullet= \{1^-, 2^+, 3^+\}$. Les partitions associées sont $\lambda(I^3_\bullet)=(1,0,0)$ et $\lambda(J^3_\bullet)=\lambda(L^3_\bullet)=(2,2,0)$. Comme $((1,0,0),(2,2,0),(-1,-1,-3))$ n'appartient pas à $\horn(3)$, ce triplet 
$(I_\bullet,J_\bullet,L_\bullet)$ ne satisfait pas les conditions du théorème \ref{theo:singular-p-q}.
\item $(\sharp I_+,\sharp J_+ ,\sharp L_+)=(1,1,1)$. Nous devons avoir $\delta_{I_\bullet}+\delta_{J_\bullet}+\delta_{L_\bullet}=3$. À permutation près, deux situations sont possibles
\begin{itemize}
\item[a)] $\delta_{I_\bullet}=\delta_{J_\bullet}=\delta_{L_\bullet}=1$ : ici $I_\bullet=J_\bullet= L_\bullet=\{1^-, 2^+, 3^-\}$. La partition correspondante est $\mu=(2,1,0)$, et on vérifie que 
$(\mu,\mu,\mu-3\cdot 1^3)\in \horn(3)$. Ainsi, ce triplet satisfait les conditions du théorème \ref{theo:singular-p-q}. L'inégalité correspondante, 
$$
a_1+a_3+b_1+b_3+c_1+c_3\geq a_2+b_2+c_2,
$$
est néanmoins redondante puisqu'elle découle de l'inégalité $x_1\geq x_2$ satisfaite par $a,b,c$.

\item[b)] $\delta_{I_\bullet}=0$, $\delta_{J_\bullet}=1$ et $\delta_{L_\bullet}=2$ :  ici $I_\bullet= \{1^-, 2^-, 3^+\}$, $J_\bullet= \{1^-, 2^+, 3^-\}$ et $L_\bullet= \{1^+, 2^-, 3^-\}$. Les partitions correspondantes sont 
$\lambda=\lambda(I^3_\bullet)=(1,0,0)$, $\mu=\lambda(J^3_\bullet)=(2,1,0)$ et $\nu=\lambda(L^3_\bullet)=(3,1,1)$. On vérifie  que 
$(\lambda,\mu,\nu-3\cdot 1^3)\in \horn(3)$, donc le triplet $(I_\bullet,J_\bullet,L_\bullet)$ satisfait les conditions du théorème \ref{theo:singular-p-q}. L'inégalité correspondante est $a_1+a_2+b_1+b_3+c_2+c_3\geq a_3+b_2+c_1$.

\end{itemize}

\end{enumerate}

\end{proof}

\begin{rem}
Le cas où $I_\bullet,J_\bullet, K_\bullet$ sont tous égaux à $A_\bullet=\{1^-, 2^+, 3^-\}$ donne l'exemple d'un triplet satisfaisant les conditions de Horn mais donnant une inégalité redondante. Ce phénomène de redondance s'explique par le fait que le triplet $(A_\bullet ,A_\bullet,A_\bullet)$ ne satisfait pas les conditions cohomologiques de la remarque \ref{rem:singular-p-q-version-2} : on a $[\Xgot_{A^3_\bullet}]\cdot [\Xgot_{A^3_\bullet}]\cdot [\Xgot_{A^3_\bullet}]=2 [pt]$  dans $H^*(\G(3,6),\Z)$ (voir \cite[exemple 4.2.1]{Berline-Vergne-Walter18} ).
\end{rem}

\begin{coro}
Nous avons $\sing(p,3)=\sing(3,3)$ pour tout $p\geq 3$.
\end{coro}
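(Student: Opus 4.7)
The plan is to apply Proposition \ref{prop:inequality-infty-q}, which reduces the statement to showing that every triple in $\Ecal_{3,3}^{\mathrm{min}}$ is regular, i.e.\ satisfies $\sharp I_+ + \sharp J_+ + \sharp L_+ = r$ where $r = \sharp I_\bullet = \sharp J_\bullet = \sharp L_\bullet$. Once this is established, the proposition directly yields $\sing(\infty,3) = \sing(3,3)$, and since $\sing(3,3) \subset \sing(p,3) \subset \sing(\infty,3)$ for $p \geq 3$, the equality $\sing(p,3) = \sing(3,3)$ follows.

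First, I would recall the exhaustive case analysis already carried out in Example \ref{ex:singular-3-3}. The inequalities parametrising $\Ecal_{3,3}$ were enumerated according to the value of $r \in \{1,2,3\}$ and the triple $(\sharp I_+, \sharp J_+, \sharp L_+)$ (up to permutation, assuming $\sharp I_+ \leq \sharp J_+ \leq \sharp L_+$). The plan is to revisit this enumeration and verify the regularity condition for each triple that actually appears in the list of 87 inequalities.

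For $r=1$, only the configuration $(0,0,1)$ arises, giving $\sharp I_+ + \sharp J_+ + \sharp L_+ = 1 = r$. For $r=2$, the configurations $(0,0,2)$ and $(0,1,1)$ both satisfy the sum condition, giving $2 = r$. For $r=3$, the possible configurations compatible with relation $(C3)$ of Corollaire \ref{coro:inequality-p-q} are $(0,0,3)$, $(0,1,2)$, $(1,1,1)$ and $(1,2,2)$; the first three are regular since their coordinates sum to $3 = r$. The only case that is not regular is $(1,2,2)$; but the analysis in Example \ref{ex:singular-3-3} shows precisely that this configuration produces the partitions $((1,0,0),(2,2,0),(2,2,0))$, for which the triple $((1,0,0),(2,2,0),(-1,-1,-3))$ does \emph{not} belong to $\horn(3)$, so this case is excluded from $\Ecal_{3,3}$.

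The main (minor) obstacle is to make sure that the redundant inequality noted in the remark following Example \ref{ex:singular-3-3} — coming from the triple $A_\bullet = A_\bullet = A_\bullet = \{1^-,2^+,3^-\}$ — does not affect the argument. Since it \emph{is} regular ($1+1+1 = 3$), it causes no trouble even if it were kept in the system. Moreover, the argument shows more: every element of $\Ecal_{3,3}$, minimal or not, is regular, so one does not even need to isolate $\Ecal_{3,3}^{\mathrm{min}}$ from $\Ecal_{3,3}$ for the conclusion of Proposition \ref{prop:inequality-infty-q} to apply. This finishes the proof.
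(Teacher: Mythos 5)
Your proof is correct and follows essentially the same route as the paper: the paper also deduces the corollary by observing, from the case analysis of the example describing $\sing(3,3)$, that all the triples involved are réguliers, and then invoking la proposition sur $\sing(\infty,q)$. Your additional verification that regularity holds for every admissible configuration $(\sharp I_+,\sharp J_+,\sharp L_+)$, including the redundant triple $\{1^-,2^+,3^-\}$, is exactly the implicit content of the paper's shorter proof.
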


\begin{proof}Dans l'exemple \ref{ex:singular-3-3}, nous avons montré que $\sing(3,3)$ est décrit par des inégalités associées à des triplets {\em réguliers}. 
Grâce à la proposition \ref{prop:inequality-infty-q}, on peut conclure que $\sing(p,3)=\sing(3,3)$ pour tout $p\geq 3$.
\end{proof}

%%%%%%%%%%%%%%%%%%%%%%%%%%%%%%%%%%%%%%%%%%%%%%%%%
\section{Le cone convexe $\horn(\SO_{2q+1})$}\label{sec:horn-so-impair}
%%%%%%%%%%%%%%%%%%%%%%%%%%%%%%%%%%%%%%%%%%%%%%%%%

Rappelons la définition du cône de Horn du groupe $\SO_{2q+1}=\{g\in \SL_{2q+1}(\R), {}^tgg=Id\}$. À $x\in \R^{q}$, nous associons la matrice
$$
R_x:=\begin{pmatrix}
  \begin{pmatrix}
0 & -x_1 \\
x_1 & 0
\end{pmatrix}&  & & & \\
& \ddots & & &\\
& &
 \begin{pmatrix}
  0 & -x_q \\
  x_q & 0
\end{pmatrix} & \\
& & & 0\\
\end{pmatrix}\in\sogot_{2q+1}.
$$

Toute matrice $X\in \sogot_{2q+1}$ appartient à une orbite adjointe $\Ocal_x=\SO_{2q+1}\cdot\, R_x$ pour un unique $x\in \R^{q}_{++}$. 

\medskip

\begin{definition}
$\horn(\SO_{2q+1})$ est l'ensemble des triplets $(x,y,z)\in  (\R^{q}_{++})^3$ tels que $\Ocal_z\subset \Ocal_x+ \Ocal_y$.
\end{definition}

\medskip

Rappelons un résultat de Belkale-Kumar \cite{BK10} qui relie les cônes $\horn(\SO_{2q+1})$ et $\horn(2q+1)$. 
Soit $x\in\R^q\mapsto \widehat{x}=(x_1,\ldots,x_q,0,-x_q,\ldots,-x_1)\in\R^{2q+1}$. Notons que $\widehat{x}\in \R^{2q+1}_{+}$ si et seulement si 
$x\in\R^q_{++}$.

\medskip

\begin{theorem}[Belkale-Kumar]\label{theo:BK-SO-U} Pour tout $x,y,z\in \R^q_{++}$, nous avons
$$(x,y,z)\in \horn(\SO_{2q+1})\Longleftrightarrow  (\,\widehat{x}\,,\,\widehat{y}\,,\,\widehat{z}\ )\in \horn(2q+1).$$
\end{theorem}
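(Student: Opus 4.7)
L'implication $(x,y,z)\in\horn(\SO_{2q+1})\Longrightarrow(\widehat x,\widehat y,\widehat z)\in\horn(2q+1)$ est immédiate via l'inclusion $\sogot_{2q+1}\hookrightarrow\herm_{2q+1}$, $A\mapsto iA$, qui envoie la $\SO$-orbite de $R_x$ dans l'orbite coadjointe $\Ocal_{\widehat x}$ (puisque $\e(iR_x)=\widehat x$) tout en préservant les sommes. La difficulté est la réciproque.

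La stratégie naturelle dans le cadre de ce mémoire, adaptée de la preuve de la proposition~\ref{prop:sing-versus-horn}, est d'appliquer le théorème d'O'Shea-Sjamaar en munissant $U=\upU_{2q+1}$ de l'involution $\sigma(g)=\overline g$, dont la composante connexe du sous-groupe fixe est précisément $K=\SO_{2q+1}$. On considère la variété $(U,\sigma)$-hamiltonienne kählérienne $M=\Ocal_{\widehat x}\times\Ocal_{\widehat y}\times\Ocal_{\widehat z}$, avec l'action diagonale de $U$, l'application moment $\Phi(H_1,H_2,H_3)=H_1+H_2+H_3$ et l'involution antisymplectique standard $\tau(H_1,H_2,H_3)=(\overline{H_1},\overline{H_2},\overline{H_3})$ (cf.\ exemple~\ref{ex:O-lambda-involution}). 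L'auto-dualité $-w_0\widehat z=\widehat z$ entraîne $0\in\Phi(M)\Leftrightarrow(\widehat x,\widehat y,\widehat z)\in\horn(2q+1)$, et le théorème~\ref{theo:OSS} fournit
$$
(\widehat x,\widehat y,\widehat z)\in\horn(2q+1)\ \Longleftrightarrow\ \exists\,S_1,S_2,S_3\in\sym_{2q+1}(\R),\ \e(S_k)=\widehat{\bullet},\ S_1+S_2+S_3=0,
$$
ce qui n'est toutefois que la \emph{version symétrique} du théorème recherché.

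L'obstacle principal est que cette version symétrique ne donne pas directement l'énoncé antisymétrique de Belkale-Kumar. On souhaiterait appliquer O'Shea-Sjamaar avec une involution antisymplectique dont le lieu fixe dans $\Ocal_{\widehat x}$ soit la $\SO_{2q+1}$-orbite de $iR_x$ (i.e.\ les matrices hermitiennes purement imaginaires), mais cette sous-variété a pour dimension réelle $2q^2$, alors que la demi-dimension réelle de $\Ocal_{\widehat x}$ vaut $q(2q+1)=2q^2+q$\,: elle n'est donc pas lagrangienne, et l'involution évidente $\tau'(H)=-\overline H$ qui la fixerait s'avère \emph{symplectique} (et non antisymplectique) pour la structure KKS, ce qu'on vérifie directement en calculant $(\tau')^*\Omega_\lambda=+\Omega_\lambda$. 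Aucun argument direct d'O'Shea-Sjamaar ne peut donc conclure.

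Je propose alors de compléter la preuve par une comparaison cohomologique: appliquer le théorème~\ref{theo:delta-exemple-KLM} au cas $K=\SO_{2q+1}$, $\tK=K^3$ pour obtenir une description de $\horn(\SO_{2q+1})$ via le calcul de Schubert sur les variétés de drapeaux $\SO_{2q+1}(\C)/P$ (avec des conditions de Lévi-mobilité réelle et de $\tau$-stabilité), puis comparer celle-ci avec la description classique de $\horn(2q+1)$ issue du théorème~\ref{theo:horn}, restreinte aux spectres auto-duaux. L'ingrédient central est une identification des classes de Schubert $\tau$-invariantes sur $\SO_{2q+1}(\C)/P$ avec celles qui proviennent, via l'inclusion holomorphe $\SO_{2q+1}(\C)\hookrightarrow\SL_{2q+1}(\C)$ comme lieu fixe de $g\mapsto g^{-T}$, de classes de Schubert sur $\SL_{2q+1}(\C)/P'$. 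L'étape la plus délicate --- qui est au cœur de l'argument original de Belkale-Kumar \cite{BK10} --- est le contrôle des nombres d'intersection entre types $B_q$ et $A_{2q}$ sous cette restriction, ce qui permet de faire coïncider les deux systèmes d'inégalités.
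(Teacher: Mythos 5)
Premier point de cadrage : le mémoire ne démontre pas cet énoncé ; il le rappelle comme un résultat de Belkale--Kumar \cite{BK10}, puis l'utilise (combiné à la proposition \ref{prop:sing-versus-horn}) pour identifier $\horn(\SO_{2q+1})$ à $\sing(q+1,q)$ et en déduire le théorème \ref{theo:horn-so-impair}. Votre tentative doit donc être jugée comme une preuve autonome. Le sens direct est correct, et votre diagnostic négatif est juste et utile : $\tau'(H)=-\overline{H}$ est bien symplectique pour la forme KKS, la $\SO_{2q+1}$-orbite de $iR_x$ (de dimension $2q^2$) n'est pas lagrangienne dans $\Ocal_{\widehat{x}}$, et le théorème \ref{theo:OSS} appliqué à $\sigma(g)=\overline{g}$ ne fournit que la version \og symétrique \fg{} de l'énoncé (avec des matrices symétriques réelles $S_k$ de somme nulle), qui n'implique pas la version antisymétrique recherchée.

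Le problème est que votre dernière étape n'est pas une preuve mais un programme. Appliquer le théorème \ref{theo:delta-exemple-KLM} à $G=\SO_{2q+1}(\C)$, $K=\SO_{2q+1}$, $s=2$ donne bien une description de $\horn(\SO_{2q+1})$ par des inégalités paramétrées par le calcul de Schubert (avec Lévi-mobilité) des variétés de drapeaux isotropes $\SO_{2q+1}(\C)/\Pbb(\zeta)$, et le théorème \ref{theo:horn} décrit $\horn(2q+1)$ ; mais l'identification des deux systèmes d'inégalités sur les spectres auto-duaux --- c'est-à-dire la comparaison des nombres d'intersection, et en fait des produits déformés, entre le type $B_q$ et le type $A_{2q}$ sous le plongement $\SO_{2q+1}(\C)\hookrightarrow\SL_{2q+1}(\C)$ --- est précisément le c\oe ur du théorème de Belkale--Kumar que vous cherchez à démontrer. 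En déclarant cette étape \og la plus délicate\fg{} et en renvoyant à \og l'argument original de \cite{BK10}\fg{} sans la mener, vous réduisez l'énoncé à lui-même : la réciproque reste non démontrée. Pour conclure, il faudrait soit effectuer réellement cette comparaison cohomologique (elle n'est faite ni dans votre texte ni dans le mémoire), soit citer \cite{BK10}, comme le fait le texte.
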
 

\begin{coro}
Les cônes $\horn(\SO_{2q+1})$ et $\sing(q+1,q)$ sont égaux.
\end{coro}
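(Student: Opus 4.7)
Le plan consiste à combiner directement deux résultats déjà établis dans le texte, après avoir vérifié que les deux notations $\widehat{x}$ introduites dans les sections \ref{sec:horn-versus-sing} et \ref{sec:horn-so-impair} coïncident dans le cas considéré.

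Tout d'abord, je vérifierai la compatibilité des notations. Dans la définition \ref{def:x-chapeau}, pour $\sing(p,q)$ avec $n=p+q$, l'application $x\mapsto \widehat{x}$ envoie $x\in\R^q$ sur
$$
\widehat{x}=(x_1,\ldots,x_q,\underbrace{0,\ldots,0}_{p-q\ \text{fois}},-x_q,\ldots,-x_1)\in\R^n.
$$
Dans le cas particulier $p=q+1$, on a $n=2q+1$ et $p-q=1$, donc $\widehat{x}=(x_1,\ldots,x_q,0,-x_q,\ldots,-x_1)\in\R^{2q+1}$. Ceci coincide exactement avec la notation utilisée à la section \ref{sec:horn-so-impair} pour définir le cône $\horn(\SO_{2q+1})$.

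Ensuite, j'appliquerai les deux équivalences suivantes, valables pour tout $(x,y,z)\in(\R^q_{++})^3$:
\begin{itemize}
\item Par la proposition \ref{prop:sing-versus-horn} appliquée à $p=q+1$: $(x,y,z)\in \sing(q+1,q)$ si et seulement si $(\widehat{x},\widehat{y},\widehat{z})\in\horn(2q+1)$.
\item Par le théorème \ref{theo:BK-SO-U} de Belkale-Kumar: $(x,y,z)\in\horn(\SO_{2q+1})$ si et seulement si $(\widehat{x},\widehat{y},\widehat{z})\in\horn(2q+1)$.
\end{itemize}
Les deux membres de droite étant identiques, on obtient immédiatement l'égalité $\sing(q+1,q)=\horn(\SO_{2q+1})$.

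Il n'y a véritablement aucun obstacle technique dans ce raisonnement: la preuve se réduit à une concordance formelle entre deux caractérisations qui passent toutes deux par le cône $\horn(2q+1)$ via la même opération de doublement par symétrie $x\mapsto \widehat{x}$. Le travail de fond a été fait en amont: dans la proposition \ref{prop:sing-versus-horn}, qui repose sur le théorème d'O'Shea-Sjamaar appliqué à la variété $(\upU_n,\sigma_{p,q})$-hamiltonienne associée (avec le plongement diagonal $\upU_n\hookrightarrow \upU_n\times\upU_n$ et l'involution $g\mapsto I_{p,q}\overline{g}I_{p,q}$), et dans le théorème de Belkale-Kumar, qui identifie le cône moment du groupe compact $\SO_{2q+1}$ au cône moment du groupe $\upU_{2q+1}$ restreint aux valeurs propres antisymétriques.
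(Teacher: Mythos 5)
Votre démonstration est correcte et suit exactement la même voie que celle du texte : combiner la proposition \ref{prop:sing-versus-horn} (cas $p=q+1$) avec le théorème \ref{theo:BK-SO-U} de Belkale-Kumar, les deux caractérisations passant par $\horn(2q+1)$ via la même application $x\mapsto\widehat{x}$. La vérification préalable de la concordance des notations est un ajout bienvenu mais ne change rien à l'argument, qui est identique à celui du texte.
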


\begin{proof}
Cela découle du théorème \ref{theo:BK-SO-U}, et de  l'équivalence 
$$
(x,y,z)\in \sing(q+1,q)\Longleftrightarrow (\,\widehat{x},\,\widehat{y}\,,\,\widehat{z}\ )\in \horn(2q+1).
$$
démontrée à la proposition \ref{prop:sing-versus-horn}.
\end{proof}

La description des cônes $\sing(p,q)$ obtenue au théorème \ref{theo:singular-p-q} fournit une description de \break $\horn(\SO_{2q+1})$ 
qui ne fait pas intervenir le calcul de Schubert dans les grassmaniennes orthogonales. Rappelons que 
$\Bcal(r,q+1,q)$ désigne l'ensemble de tous les sous-ensembles $I\subset [2q+1]$ de cardinal $r$, vérifiant 
$I\cap I^o=\emptyset$ et $q+1\notin I$. Le résultat qui suit a été obtenu par Ressayre \cite{ressayre-ABC} par d'autres moyens.

\medskip

\begin{theorem}[Ressayre]\label{theo:horn-so-impair}
Un élément $(x,y,z)\in(\R^q_{++})^3$ appartient à $\horn(\SO_{2q+1})$ si et seulement si l'inégalité 
$$
 |\,x\,|_{I_{-}}+|\,y\,|_{J_{-}}  +|\, z\,|_{L_{-}}   \geq  |\,x\,|_{I_+}+|\,y\,|_{J_+} +|\, z\,|_{L_+}
$$
est satisfaite pour tout $r\in [q]$, et pour tout $I,J,L\in\Bcal(r,q+1,q)$ vérifiant les deux conditions

\medskip

\begin{itemize}\setlength{\itemsep}{8pt}
\item $(I^o,J^o,L)\in \LR_r^{2q+1}$,
\item $(I^o\natural I^{c},J^o\natural J^{c},L\natural L^{o,c})\in\LR_r^{2q+1-r}$.
\end{itemize}
\end{theorem}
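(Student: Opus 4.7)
La stratégie que je propose repose sur la combinaison de deux résultats déjà établis dans le texte : le corollaire précédent, qui affirme que $\horn(\SO_{2q+1}) = \sing(q+1,q)$, et le théorème \ref{theo:singular-p-q}, qui fournit une description explicite des cônes $\sing(p,q)$ en termes de calcul de Schubert dans les grassmanniennes ordinaires. Autrement dit, je ne chercherai pas à donner une preuve directe en travaillant dans $\SO_{2q+1}$, mais j'exploiterai le transfert vers le cadre singulier fourni par le théorème de Belkale-Kumar et par la proposition \ref{prop:sing-versus-horn}.

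Premièrement, je réduirais l'énoncé à décrire le cône $\sing(q+1,q)$ en invoquant le corollaire cité : celui-ci résulte du théorème \ref{theo:BK-SO-U} combiné à l'équivalence $(x,y,z)\in \sing(q+1,q) \Longleftrightarrow (\widehat x,\widehat y,\widehat z)\in\horn(2q+1)$ démontrée à la proposition \ref{prop:sing-versus-horn}. Ensuite, j'appliquerais le théorème \ref{theo:singular-p-q} dans le cas spécifique $p = q+1$, pour lequel $n = p+q = 2q+1$.

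Le point clef à vérifier est que les ensembles et conditions paramétrant les inégalités dans les deux énoncés coïncident. Par définition, $\Bcal(r, q+1, q)$ est constitué des sous-ensembles $I \subset [2q+1]$ de cardinal $r$ satisfaisant $I \cap I^o = \emptyset$ et $I \cap \{q+1, \ldots, p\} = \emptyset$ : comme $p = q+1$, l'intervalle $\{q+1, \ldots, p\}$ se réduit au singleton $\{q+1\}$, et la seconde condition devient simplement $q+1 \notin I$. Cela correspond exactement à la définition de $\Bcal(r, q+1, q)$ utilisée dans l'énoncé du théorème \ref{theo:horn-so-impair}. De même, les inégalités $(\star)_{I,J,L}$ ainsi que les deux conditions combinatoires $(I^o, J^o, L) \in \LR_r^{2q+1}$ et $(I^o \natural I^c, J^o \natural J^c, L \natural L^{o,c}) \in \LR_r^{2q+1-r}$ sont, mot pour mot, celles figurant au théorème \ref{theo:singular-p-q} lorsque $p = q+1$.

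Aucun obstacle substantiel n'apparaît dans cette démarche : tout le travail technique a été effectué en amont (théorème principal \ref{theo:main-reel} appliqué au cadre $U(p,q)\subset U(p,q)\times U(p,q)$ à la section \ref{sec:preuve-theo-sing-p-q}, puis description des éléments admissibles via le lemme \ref{lem:admissible-u-p-q}, et enfin traduction cohomologique via la proposition \ref{prop:levi-mobile-double-grass-theta}). L'intérêt du présent énoncé, par rapport à la preuve originale de Ressayre, est précisément de fournir une description n'impliquant pas le calcul de Schubert sur les grassmanniennes orthogonales, comme le souligne l'auteur --- ce gain étant une conséquence du détour par le groupe indéfini $U(q+1,q)$ et sa représentation isotrope $\pgot\simeq M_{q+1,q}(\C)$.
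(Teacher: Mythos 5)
Votre démarche est correcte et coïncide exactement avec celle du texte : le théorème \ref{theo:horn-so-impair} y est obtenu en combinant le corollaire $\horn(\SO_{2q+1})=\sing(q+1,q)$ (issu du théorème \ref{theo:BK-SO-U} et de la proposition \ref{prop:sing-versus-horn}) avec le théorème \ref{theo:singular-p-q} spécialisé en $p=q+1$, $n=2q+1$. Votre vérification que la condition $I\cap\{q+1,\ldots,p\}=\emptyset$ se réduit à $q+1\notin I$ est précisément le point d'identification des paramétrisations utilisé dans le texte.
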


%%%%%%%%%%%%%%%%%%%%%%%%%%%%%%%%%%%%%%%%%%%%%%%%%%%%%%
%%%%%%%%%%%%%%%%%%%%%%%%%%%%%%%%%%%%%%%%%%%%%%%%%%%%%%
%%%%%%%%%%%%%%%%%%%%%%%%%%%%%%%%%%%%%%%%%%%%%%%%%%%%%%
\chapter{Valeurs propres versus valeurs singuli\`eres}
%%%%%%%%%%%%%%%%%%%%%%%%%%%%%%%%%%%%%%%%%%%%%%%%%%%%%%
%%%%%%%%%%%%%%%%%%%%%%%%%%%%%%%%%%%%%%%%%%%%%%%%%%%%%%
%%%%%%%%%%%%%%%%%%%%%%%%%%%%%%%%%%%%%%%%%%%%%%%%%%%%%%

%%%%%%%%%%%%%%%%%%%%%%%%%%%%%%%%%%%%%%%%%%%%%%%%%%%%%%
%%%%%%%%%%%%%%%%%%%%%%%%%%%%%%%%%%%%%%%%%%%%%%%%%%%%%%
\section{Les c\^ones $\Acal(p,q)$}
%%%%%%%%%%%%%%%%%%%%%%%%%%%%%%%%%%%%%%%%%%%%%%%%%%%%%%
%%%%%%%%%%%%%%%%%%%%%%%%%%%%%%%%%%%%%%%%%%%%%%%%%%%%%%

Soit $p\geq q\geq 1$ et $n=p+q$. Nous considérons l'application $\pi_{p,q} : \herm(n)\to M_{p,q}(\C)$ qui associe à une matrice hermitienne 
$X$, son bloc hors diagonale $\pi_{p,q}(X)\in M_{p,q}(\C)$:
$$
X:=\begin{pmatrix}
 * & \pi_{p,q}(X)\\
 *& * 
\end{pmatrix}.
$$

Afin de comparer le spectre de $X$ avec le spectre singulier de $\pi_{p,q}(X)$, on considère le cône
$$
\Acal(p,q)=\Big\{(\e(X), \s(\pi_{p,q}(X))), \ X\in \herm(n)\Big\}\subset \R^n_+\times \R^q_{++}.
$$

Voici la description que nous obtenons du cône  $\Acal(p,q)$.

\begin{theorem}\label{theo:A-p-q} Soient $p\geq q\geq 1$ et $n=p+q$.
 
Un élément $(x,y)\in\R^n_+\times \R^q_{++}$ appartient à $\Acal(p,q)$ si et seulement si
$$
(\ast)_{I,J,L}\qquad\qquad\qquad |x|_I- | x |_{J^o}\geq 2\left(|y|_{L\cap[q]}-|y|_{L^o\cap[q]}\right)
$$
pour tout triplet  $(I,J,L)$ de sous-ensembles de $[n]$ satisfaisant les conditions suivantes :
\begin{enumerate}
\item[1)] $\sharp I= \sharp J= \sharp L=r \leq q$,
\item[2)] $I\cap J^o=\emptyset$, 
\item[3)] $L\cap L^o=\emptyset$ et $L\cap \{q+1,\ldots,p\}=\emptyset$,
\item[4)] $(I, J,L)\in \LR^n_r$,
\item[5)] $(I\natural J^{o,c}, J\natural I^{o,c}, L\natural L^{o,c})\in \LR^{n-r}_r$.
\end{enumerate}

Le résultat reste valable si l'on remplace  4) et 5) par les conditions plus restrictives suivantes :
\begin{enumerate}
\item[4')] $\cc^L_{I,J}=1$
\item[5')] $\cc^{L\natural L^{o,c}}_{I\natural J^{o,c}, J\natural I^{o,c}}=1$.
\end{enumerate}

\end{theorem}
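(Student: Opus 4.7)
The strategy is to identify $\Acal(p,q)$ with a cone of the form $\horn_\pgot(K,\tK)$ and apply the general framework; equivalently, $\Acal(p,q)$ fits into Section \ref{sec:delta-U-sigma} as the cone $\Delta(\upU_n,\sigma)$ for the involution $\sigma(u) = I_{p,q}\,u\,I_{p,q}$ of $U = \upU_n$, whose fixed-point subgroup is $K = \upU_p \times \upU_q$. Indeed, $\ugot^{-\sigma}$ consists of skew-Hermitian matrices of block-off-diagonal form; after the identification $X \mapsto iX \in \herm(n)$ these become $\widehat{A} = \begin{pmatrix}0 & A \\ A^* & 0\end{pmatrix}$, whose spectrum is $\widehat{\s(A)}$. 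Thus $(x,s) \in \R^n_+ \times \R^q_{++}$ lies in $\Acal(p,q)$ if and only if $(x,\widehat{s}) \in \Delta(\upU_n,\sigma)$, and Theorem \ref{theo-delta-U-sigma} applies.

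First I would determine the admissible elements $\zeta \in -\agot_+$. After the $\theta_{p,q}$-conjugation of Section \ref{sec:drapeaux-2-etage}, which puts $\agot$ in diagonal form, the restricted-root analysis of Lemma \ref{lem:admissible-u-p-q} shows that every admissible $\zeta$ is, up to a positive rational factor and the $W_\agot$-action, of the form $\zeta_r = \diag(-1^r, 0^{n-2r}, +1^r)$ for some $r \in [q]$; the centre of $\glgot_n(\C)$ does not contribute since $\agot$ consists of traceless matrices. The associated flag variety $\Fcal_{\zeta_r}$ is identified with the two-step flag variety $\F(r,n-r;n)$ via the diffeomorphism \eqref{eq:diffeo-F-r-n-r}; $W/W^{\zeta_r}$ is parameterised by pairs $(A \subset B) \in \Pcal(r,n-r;n)$ through $A = w([r])$, $B = w([n-r])$, while $W_\agot/W_\agot^{\zeta_r}$ is parameterised by $\Bcal(r,p,q)$ through Lemma \ref{lem:schubert-fixe-involution}.

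The key combinatorial step is a direct inspection of the permutation $\tau_{p,q}$ representing $\sigma$ and of the longest element $w'_0$ of $W' = \Sgot_{p-q}$ (which acts only on $\{q+1,\ldots,p\}$), yielding the clean identity $w'_0 \tau_{p,q} = w_0$ in $\Sgot_n$; from this one computes that $w'_0\sigma(w)$ corresponds to the pair $(B^{o,c} \subset A^{o,c})$. Applying the multiplicative formula of Proposition \ref{prop:levi-mobile-double-grass} to the triple product
\begin{equation*}
[\Xgot_{(A\subset B)}] \cdot [\Xgot_{(B^{o,c}\subset A^{o,c})}] \cdot [\Xgot_{(L\subset L^{o,c})}] = \ell\,[pt]
\end{equation*}
in $H^{max}(\F(r,n-r;n),\Z)$ with $L \in \Bcal(r,p,q)$, and invoking the duality $\G(n-r,n) \simeq \G(r,n)$ induced by $E \mapsto E^\perp$, the system splits into Schubert conditions on $\G(r,n)$ and $\G(r,n-r)$. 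Relabelling $I = B^c$ and $J = A^o$, the identity $\mu(X^o) = \lambda(X)$ together with Proposition \ref{prop:coeff-LR} converts the $\G(r,n)$-condition into $\cc^L_{I,J} \neq 0$; the $\G(r,n-r)$-condition becomes $\cc^{L\natural L^{o,c}}_{I\natural J^{o,c},\,J\natural I^{o,c}} \neq 0$ upon tracking the $\natural$-operation; the inclusion $A \subset B$ translates into the symmetric condition $I \cap J^o = \emptyset$; and the multiplicative identity $\ell = \ell'\ell''$ yields the refined conditions $4')$ and $5')$.

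Finally, the Ressayre inequality $(x', w\zeta) \geq (y', w_0 w_1\zeta)$ unfolds, after pairing with $\zeta_r$ and using $y = \widehat{s}$ together with the $\Bcal$-constraints on $L$ (which force $y_k = s_k$ on $L \cap [q]$ and $y_k = -s_{n+1-k}$ on $L \cap \{p+1,\ldots,n\}$), into exactly $|x|_I - |x|_{J^o} \geq 2(|s|_{L\cap[q]} - |s|_{L^o\cap[q]})$, which is $(\ast)_{I,J,L}$. I expect the main obstacle to lie in verifying that the labels $(A,B,L)$ coming from the Ressayre-pair construction map correctly to the labels $(I,J,L)$ of the statement --- in particular, in checking the identity $w'_0\tau_{p,q} = w_0$, in keeping track of the Bia\l ynicki-Birula cell dimensions through the multiplicative formula, and in correctly converting Schubert classes between $\G(n-r,n)$ and $\G(r,n)$ via the orthogonal-complement isomorphism --- so that both the cohomological conditions and the sign of the inequality come out as stated.
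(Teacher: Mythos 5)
Your proposal follows essentially the same route as the paper's proof: identify $\Acal(p,q)$ with $\Delta(\upU_n,\sigma_{p,q})$, apply Theorem \ref{theo-delta-U-sigma} with the admissible elements $\zeta_r$ and the identification $\Fcal_{\zeta_r}\simeq\F(r,n-r;n)$, compute the Schubert cell of $w_0'\sigma_{p,q}(w)$ (your identity $w_0'w_{p,q}=w_0$ is exactly what underlies the paper's relation $w_0'\sigma_{p,q}(w)=w_0ww_{p,q}$, giving the pair $(B^{o,c}\subset A^{o,c})$), split the cohomological condition via Proposition \ref{prop:levi-mobile-double-grass} and the $\G(n-r,n)\simeq\G(r,n)$ duality, and translate the Ressayre inequality. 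The only cosmetic difference is that you start from the diagonal involution $\Ad(I_{p,q})$ and then conjugate by $\theta_{p,q}$, whereas the paper works from the outset with the $\mathbf{J}_{p,q}$-form so that the diagonal torus is adapted; the relabelling $I=B^c$, $J=A^o$ you propose matches the paper's final bookkeeping.
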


\medskip

Il est intéressant de noter qu'une application directe du théorème d'O'Shea-Sjamaar donne une autre description du cône $\Acal(p,q)$ (voir \cite{pep-toshi}).

\begin{prop}\label{prop:A-p-q-OS} Soient $p\geq q\geq 1$ et $n=p+q$.
Un élément $(x,y)\in\R^n_+\times \R^q_{++}$ appartient à $\Acal(p,q)$ si et seulement si l'inégalité $(\ast)_{I,J,L}$ est vérifiée 
pour tout triplet  $(I,J,L)$ de sous-ensembles de $[n]$ satisfaisant les conditions suivantes :
\begin{enumerate}
\item[1')] $\sharp I= \sharp J= \sharp L=r\leq  \frac{n}{2}$,
\item[4)] $(I, J,L)\in \LR^n_r$.
\end{enumerate}
\end{prop}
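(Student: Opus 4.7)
Pour prouver la proposition \ref{prop:A-p-q-OS}, je proposerais d'appliquer directement le théorème d'O'Shea-Sjamaar (sous la forme de la proposition \ref{prop:OSS-orbites}) à l'inclusion de groupes réductifs réels $G = \upU(p,q) \subset \tG = \GL_n(\C)$. La stratégie consiste à identifier $\Acal(p,q)$ au cône $\horn_\pgot(K,\tK)$ correspondant, à utiliser OSS pour le décrire comme intersection de $\horn(n)$ avec un sous-espace d'éléments anti-invariants, puis à traduire les inégalités de Horn en les inégalités $(\ast)_{I,J,L}$ de l'énoncé.

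D'abord, j'établirais que $\Acal(p,q) = \horn_\pgot(K,\tK)$ où $K = \upU_p \times \upU_q$, $\tK = \upU_n$, et $\pi_{p,q}$ s'identifie à la projection orthogonale $\pi : \tpgot = \herm(n) \to \pgot \simeq M_{p,q}(\C)$. Les formes compactes des complexifications sont $U = \upU_n$ et $\tU = \upU_n \times \upU_n$ (avec $U$ plongé diagonalement), ce qui donne $\LR(U,\tU) = \horn(n)$. L'involution antiholomorphe pertinente $\sigma$ sur $U_\C = \GL_n(\C)$ est la conjugaison par $J = \diag(I_p, -I_q)$ (de sous-groupe fixe $K$), et l'involution associée $\tilde\sigma$ sur $\tU_\C$ est $\tilde\sigma(g_1,g_2) = (Jg_2J^{-1}, Jg_1J^{-1})$ (de sous-groupe fixe $\tK$, plongé via $g \mapsto (JgJ^{-1}, g)$).

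Ensuite, j'identifierais explicitement les sous-espaces anti-invariants. Un calcul direct montrera que les couples anti-invariants dans $(\ttgot^*)^{-\tilde\sigma}$, après réduction à la chambre de Weyl $\tK$-adaptée, correspondent aux couples $(x, -x^\vee) \in (\tagot_+)^2$ où $x^\vee := (x_n, \ldots, x_1)$ ; et les éléments anti-invariants de $(\tgot^*)^{-\sigma}$ correspondent aux vecteurs $\widehat y \in \R^n$ pour $y \in \R^q_{++}$ (le spectre de la matrice block-hors-diagonale $\widehat A$ avec $s(A) = y$). De plus, pour $(Y_1,Y_2) = (-JZJ^{-1}, Z)$ anti-invariant, on aura $\pi_{\tugot,\ugot}(Y_1,Y_2) = Y_1 + Y_2 = Z - JZJ^{-1} = 2\widehat{\pi_{p,q}(Z)}$, d'où la proposition \ref{prop:OSS-orbites} fournira l'équivalence
\begin{equation}
(x,y) \in \Acal(p,q) \quad \Longleftrightarrow \quad (x, -x^\vee, 2\widehat y) \in \horn(n).
\label{eq:plan-OSS-A-p-q}
\end{equation}

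Enfin, j'appliquerais le théorème \ref{theo:horn} au membre droit : pour $(I,J,L) \in \LR_r^n$, l'inégalité $|x|_I + |-x^\vee|_J \geq |2\widehat y|_L$ se transforme, via les identités élémentaires $|-x^\vee|_J = -|x|_{J^o}$ et $|\widehat y|_L = |y|_{L\cap[q]} - |y|_{L^o\cap[q]}$, en l'inégalité $(\ast)_{I,J,L}$ de l'énoncé (l'identité de trace étant automatiquement vérifiée puisque tous les membres sont nuls). La restriction $r \leq n/2$ viendra de ce que l'opération $(I,J,L) \mapsto (I^{c,o}, J^{c,o}, L^{c,o})$ fournit une bijection entre $\LR_r^n$ et $\LR_{n-r}^n$ qui transforme, compte tenu de $|\widehat y| = 0$ et de la symétrie $\widehat y_{n+1-k} = -\widehat y_k$, l'inégalité associée à $(I^{c,o}, J^{c,o}, L^{c,o})$ en $(\ast)_{J,I,L}$, cette dernière étant déjà présente dans la liste pour $r \leq n/2$ par symétrie de $\LR_r^n$ sous $I \leftrightarrow J$. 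L'obstacle principal sera la vérification minutieuse de l'identification des anti-invariants menant à \eqref{eq:plan-OSS-A-p-q}, notamment du facteur $2$ et du choix des chambres de Weyl adaptées aux involutions $\sigma$ et $\tilde\sigma$.
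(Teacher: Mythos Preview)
Your approach is essentially the same as the paper's: both establish the key equivalence $(x,y)\in\Acal(p,q)\Longleftrightarrow (x,x^\vee,2\,\widehat y\,)\in\horn(n)$ via O'Shea--Sjamaar, then unpack the Horn inequalities. The paper does this by invoking the general framework of Section~\ref{sec:delta-U-sigma} (Lemma~\ref{lem:equivalence-A-p-q} identifies $\Acal(p,q)$ with $\Delta(\upU_n,\sigma_{p,q})$, and Proposition~\ref{prop:horn-U-sigma} gives the Horn equivalence), whereas you re-derive that machinery directly from Proposition~\ref{prop:OSS-orbites}; your explicit argument for the reduction to $r\leq n/2$ via the bijection $(I,J,L)\mapsto(I^{c,o},J^{c,o},L^{c,o})$ is a detail the paper leaves implicit.

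One imprecision to fix: the anti-holomorphic involution on $U_\C=\GL_n(\C)$ is $g\mapsto J(g^*)^{-1}J^{-1}$, not conjugation by $J$ (the latter is holomorphic and has fixed group $\GL_p(\C)\times\GL_q(\C)$, not $K$); your description is correct only when restricted to the compact group $\upU_n$. Relatedly, with $J=I_{p,q}$ the diagonal torus is \emph{not} adapted to $\sigma$ (its $-\sigma$-eigenspace is zero), so you must either conjugate by $\theta_{p,q}$ to pass to $\mathbf{J}_{p,q}$ as the paper does, or work with a non-diagonal adapted torus; this affects the concrete identification of the anti-invariants leading to~\eqref{eq:plan-OSS-A-p-q}.
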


Même si la description donnée dans la proposition \ref{prop:A-p-q-OS} est moins précise que celle du théorème \ref{theo:A-p-q}, elle permet de voir que 
le résultat du théorème \ref{theo:A-p-q} reste valable si l'on ne travaille qu'avec les quatre premières conditions.

%%%%%%%%%%%%%%%%%%%%%%%%%%%%%%%%%%%%%%
\subsection{La description de Fomin-Fulton-Li-Poon}
%%%%%%%%%%%%%%%%%%%%%%%%%%%%%%%%%%%%%%

 Lorsque $I,J,L$ sont des sous-ensembles de $[q]$, les sous-ensembles $I\natural J^{o,c}, J\natural I^{o,c}, L\natural L^{o,c}$ sont respectivement égaux à  
 $I$, $J$ et $L$. Dans ce cas, les conditions $I\cap J^o=\emptyset$ et $L\cap L^o=\emptyset$ sont automatiquement satisfaites, et les conditions 4) et 5)  
 du théorème \ref{theo:A-p-q} sont identiques.

Fomin-Fulton-Li-Poon ont montré, à l'aide d'une méthode ad hoc, le fait remarquable suivant : pour décrire $\Acal(p,q)$, il suffit de considérer les inégalités 
$(\ast)_{I,J,L}$ lorsque $I,J,L$ sont des sous-ensembles de $[q]$ \cite{FFLP}. 

\begin{theorem}[Fomin-Fulton-Li-Poon]\label{theo:FFLP} Un élément $(x,y)\in\R^n_+\times \R^q_{++}$ appartient à $\Acal(p,q)$ si et seulement si 
$$
|x|_I-|x|_{J^o}\geq 2|y|_{L}
$$
pour tout $r\leq q$ et tout $(I,J,L)\in \LR^q_r$.
\end{theorem}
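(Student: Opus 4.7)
My proof plan proceeds in two directions: necessity (which is immediate from Theorem \ref{theo:A-p-q}) and sufficiency (the substantive step).

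For necessity, given $(I, J, L) \in \LR^q_r$ with $I, J, L \subset [q]$, I would verify that the five conditions of Theorem \ref{theo:A-p-q} all hold. Conditions 2) and 3) are automatic since $J^o, L^o \subset [p+1, n]$ are disjoint from $[q]$, and $L \cap \{q+1, \ldots, p\} = \emptyset$ trivially. For sets in $[q]$, one checks directly that $I \natural J^{o,c} = I$, $J \natural I^{o,c} = J$, and $L \natural L^{o,c} = L$, because each of $J^{o,c}, I^{o,c}, L^{o,c}$ contains $[p] \supset [q]$. Thus conditions 4) and 5) both reduce to $\cc^{\mu(L)}_{\mu(I), \mu(J)} \neq 0$, which is exactly $(I, J, L) \in \LR^q_r$ since the partitions $\mu(I), \mu(J), \mu(L)$ automatically fit in the rectangle $r \times (q-r)$. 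Finally, $(\ast)_{I,J,L}$ simplifies to $|x|_I - |x|_{J^o} \geq 2 |y|_L$ because $L \cap [q] = L$ and $L^o \cap [q] = \emptyset$.

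For sufficiency, my plan is to reformulate $\Acal(p,q)$ via the decomposition $X = \widehat{B} + D$, where $B = \pi_{p,q}(X)$, its doubled Hermitian lift $\widehat{B} \in \herm(n)$ (as in (\ref{eq:morphisme-h})) has spectrum $\widehat{y}$ in the notation of Definition \ref{def:x-chapeau}, and $D = \diag(A, C)$ is the block-diagonal part of $X$. Since $D$ can realize any decreasing spectrum $d \in \R^n_+$, this yields
$$(x, y) \in \Acal(p,q) \Longleftrightarrow \exists d \in \R^n_+ \text{ tel que } (\widehat{y}, d, x) \in \horn(n).$$
I would then apply Theorem \ref{theo:horn} to describe $\horn(n)$ explicitly, and eliminate the variable $d$ via Farkas's lemma (or Fourier--Motzkin), obtaining a finite list of inequalities purely in $(x, y)$. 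The goal is to show that, after simplification, this list coincides with the FFLP list.

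The main obstacle will be the elimination step. The palindromic symmetry of $\widehat{y}$ (which is invariant under the involution $z \mapsto -z^o$ on $\R^n$) forces many Horn inequalities to appear in symmetric pairs whose combination produces a single FFLP-type inequality; making this combinatorially precise should require a correspondence between appropriate $\LR^n_r$-triples and $\LR^q_{r'}$-triples compatible with the palindromic structure, together with stability properties of Littlewood-Richardson coefficients under the natural maps $\Pcal(r,n) \to \Pcal(r,q)$. An alternative strategy, probably closer to FFLP's original approach in \cite{FFLP}, would bypass the general framework and proceed by induction on $q$ (or on $r$), constructing a witness $X \in \herm(n)$ realizing $(x, y)$ explicitly via classical interlacing and singular-value perturbation results, starting from the base case $q = 1$ (which reduces to the Weyl inequalities of Section \ref{sec:WLT-inequalities}).
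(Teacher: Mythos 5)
First, note that the paper does not prove this statement at all: it is quoted from Fomin--Fulton--Li--Poon \cite{FFLP} (an ``ad hoc'' argument, as the text says), and the paper only observes that the FFLP inequalities form a subsystem of those of the théorème \ref{theo:A-p-q} (your necessity direction, which is correct: for $I,J,L\subset[q]$ one indeed has $I\natural J^{o,c}=I$, $J\natural I^{o,c}=J$, $L\natural L^{o,c}=L$, conditions 2)--3) are vacuous, and $(\ast)_{I,J,L}$ reduces to $|x|_I-|x|_{J^o}\geq 2|y|_L$). So the entire content you must supply is the sufficiency direction, and there your plan has a genuine flaw before the elimination step you flag as the ``main obstacle''.

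The reformulation $(x,y)\in\Acal(p,q)\Longleftrightarrow \exists d\in\R^n_+,\ (\widehat{y},d,x)\in\horn(n)$ is false in the direction you need. Writing $X=\widehat{B}+D$ with $D=\diag(A,C)$ is fine for the forward implication, but in the converse the Horn condition only gives Hermitian matrices $Y,Z$ with $\e(Y)=\widehat{y}$, $\e(Z)=d$, $\e(Y+Z)=x$, and nothing forces $Z$ to be block-diagonal; the off-diagonal block of $Y+Z$ is then $Y_{12}+Z_{12}$, whose singular values need not be $y$. Concretely, for $p=q=1$: take $y=1$, $d=(1,-1)$, $x=(0,0)$; then $(\widehat{y},d,x)=((1,-1),(1,-1),(0,0))\in\horn(2)$ (take $Z=-Y$), yet $((0,0),1)\notin\Acal(1,1)$, since the only $2\times 2$ Hermitian matrix with both eigenvalues $0$ is $X=0$, whose off-diagonal entry has modulus $0\neq 1$. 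So the block-diagonality constraint on $D$ is essential and is lost when you pass to $\horn(n)$ with a free spectrum $d$; the set you describe strictly contains $\Acal(p,q)$, and no Farkas/Fourier--Motzkin elimination applied to it can recover the FFLP description. Your fallback (``induction following FFLP's original approach'') is a pointer to \cite{FFLP}, not an argument. As it stands, the substantive half of the theorem is not proved; to fix the first route you would have to encode the constraint that the second summand lies in the $\upU_p\times\upU_q$-orbit closure of block-diagonal matrices (e.g.\ via the cone $\LR(p,q)$ relating $\e(D)$ to $(\e(A),\e(C))$, as in the théorème \ref{theo:LR-m-n}), and then carry out the elimination — which is precisely the hard combinatorial content of \cite{FFLP}.
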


Dans un travail à venir avec Nicolas Ressayre, nous aborderons la conjecture suivante.

\medskip

\textbf{Conjecture 2:} 

\begin{itemize}
\item Si $I,J,L$ sont des sous-ensembles de $[n]$ satisfaisant les conditions 1), 2), 3), 4') et 5') du théorème \ref{theo:A-p-q}, alors $I,J,L$ sont des sous-ensembles de $[q]$.
\item La liste d'inégalités $(\ast)_{I,J,L}$, avec $I,J,L\subset [q]$ de même cardinal et satisfaisant $\cc^L_{I,J}=1$, est une liste minimale pour décrire $\Acal(p,q)$.
\end{itemize}

%%%%%%%%%%%%%%%%%%%%%%%%%%%%%%%%%%%%%%
\subsection{Preuve du theorème \ref{theo:A-p-q}}
%%%%%%%%%%%%%%%%%%%%%%%%%%%%%%%%%%%%%%

Nous allons appliquer les résultats de la section \ref{sec:delta-U-sigma} à la situtation suivante: les groupes $U:=\upU_n\subset U_\C:=\GL_n(\C)$ 
sont munis de l'involution\footnote{Ici, la matrice $\mathbf{J}_{p,q}$ est définie par (\ref{eq:J-p-q}).}  $\sigma_{p,q}(g)=\mathbf{J}_{p,q}(g^*)^{-1}\mathbf{J}_{p,q}$. 
Notons $\pi_{-}:\ugot_n\to \ugot_n^{-\sigma_{p,q}}$ la projection associée à la décomposition $\ugot_n=\ugot_n^{\sigma_{p,q}}\oplus \ugot_n^{-\sigma_{p,q}}$.

Le groupe 
$K:=U^{\sigma_{p,q}}$ est formé des matrices de la forme
$$
\Ad(\theta)\left(\begin{pmatrix} g&  0\\ 0& h \end{pmatrix}\right),\qquad (g,h)\in \upU_p\times\upU_q,
$$
et le sous-espace $\ugot_n^{-\sigma_{p,q}}$ est formé des matrices de la forme
$$
\mathrm{j}(Y):=\Ad(\theta)\left(\begin{pmatrix} 0 & iY\\ iY^*& 0\end{pmatrix}\right),\qquad Y\in M_{p,q}(\C).
$$
La matrice orthogonale $\theta$ a été introduite à la section \ref{sec:drapeaux-2-etage}(voit (\ref{eq:theta})).

On reprend une notation utilisée à la section \ref{sec:horn-versus-sing}: à tout $y=(y_1,\ldots, y_q)\in \R^q$, nous associons 
le vecteur $\widehat{y}=(y_1,\ldots, y_q,0,\ldots,0,- y_q,\ldots,- y_1)\in\R^n$.

Le tore maximal $T\subset U$ des matrices diagonales est adapté à l'involution $\sigma_{p,q}$: $T$ est stable par rapport à $\sigma_{p,q}$ 
et le sous-espace $\tgot^{-\sigma_{p,q}}$, qui est formé des matrices $i\, \diag(\,\widehat{y}\,),y\in \R^q$, est de dimension maximale. 
On travaille avec la chambre de Weyl $\tgot_+:=\{i\,\diag(x), x\in\R^n_+\}$: ainsi 
$\tgot^{-\sigma_{p,q}}\cap \tgot_+=\{i\, \diag(\,\widehat{y}\,),y\in \R^q_{++}\}$ paramètre les $K$-orbites dans $\ugot_n^{-\sigma_{p,q}}$.

\begin{lem}\label{lem:equivalence-A-p-q}
Pour $(x,y)\in\R^n_+\times \R^q_{++}$, les conditions suivantes sont équivalentes
\begin{enumerate}
\item $(x,y)\in\Acal(p,q)$,
\item $\diag(i\ \widehat{y}\,)\ \in\  \pi_{-}\left(\upU_n\cdot \diag(ix)\right)$
\item $(i\,\diag(x),i\,\diag(\,\widehat{y}\,))\ \in\ \Delta(\upU_n,\sigma_{p,q})$.
\end{enumerate}
\end{lem}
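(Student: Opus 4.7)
Le plan consiste à établir les trois équivalences en calculant d'abord explicitement la projection $\pi_{-}$ sur les éléments $iX$, $X\in\herm(n)$, puis en invoquant l'équivariance sous $K$ et l'identification standard des valeurs singulières avec la chambre restreinte $\tgot_+\cap\tgot^{-\sigma_{p,q}}$.

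La première étape, qui constitue l'essentiel du travail, consiste à établir l'identité $\pi_{-}(iX) = \mathrm{j}\bigl(\pi_{p,q}(\theta^{-1}X\theta)\bigr)$ pour tout $X\in\herm(n)$. En effet, la dérivée de $\sigma_{p,q}$ agit sur $\ugot_n$ par $Y\mapsto \mathbf{J}_{p,q}Y\mathbf{J}_{p,q}$, donc $\pi_{-}(iX) = \tfrac{i}{2}(X - \mathbf{J}_{p,q}X\mathbf{J}_{p,q})$; la relation $\mathbf{J}_{p,q} = \Ad(\theta)(I_{p,q})$ rappelée dans la section 4 permet de réécrire ceci comme $\Ad(\theta)\bigl(\tfrac{i}{2}(X' - I_{p,q}X'I_{p,q})\bigr)$ où $X' = \theta^{-1}X\theta$, et un petit calcul par blocs montre que si $X'$ admet la décomposition $\begin{pmatrix} A' & B' \\ (B')^* & C'\end{pmatrix}$ avec $B' = \pi_{p,q}(X')$, alors $\tfrac{1}{2}(X' - I_{p,q}X'I_{p,q}) = \begin{pmatrix} 0 & B' \\ (B')^* & 0\end{pmatrix}$, d'où la formule annoncée.

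Pour la seconde étape, je vérifierais que $\mathrm{j}: M_{p,q}(\C)\to\ugot_n^{-\sigma_{p,q}}$ est un isomorphisme $K$-équivariant, où $K\simeq\upU_p\times\upU_q$ agit sur $M_{p,q}(\C)$ par $(u,v)\cdot Y = uYv^{-1}$, et que pour tout $y\in\R^q_{++}$, l'unique $Y_0$ vérifiant $\mathrm{j}(Y_0) = i\diag(\widehat{y})$ a pour spectre singulier $\s(Y_0) = y$. Un calcul explicite, utilisant la formule de $\Ad(\theta)$ donnée à la section 4, montre que $Y_0$ est la matrice $p\times q$ dont le bloc supérieur $q\times q$ est $A(y)$ et le bloc inférieur $(p-q)\times q$ est nul; le calcul de $Y_0^*Y_0 = A(y)^TA(y) = \diag(y_q^2,\ldots,y_1^2)$ fournit alors $\s(Y_0) = y$. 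Par conséquent, pour tout $Y\in M_{p,q}(\C)$, la condition $\s(Y) = y$ équivaut à $\mathrm{j}(Y)\in K\cdot i\diag(\widehat{y})$.

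Une fois ces deux étapes acquises, les équivalences se déduisent aisément. L'équivalence $(2)\Leftrightarrow(3)$ est immédiate par l'équivariance de $\pi_{-}$ sous $K$ (puisque $K$ commute à $\sigma_{p,q}$, donc à $\pi_{-}$): la partie $\pi_{-}(\upU_n\cdot\diag(ix))\subset\ugot_n^{-\sigma_{p,q}}$ est $K$-invariante, elle contient donc $\diag(i\widehat{y})$ si et seulement si elle contient la $K$-orbite toute entière, ce qui est précisément la définition de $(3)$. Pour $(1)\Rightarrow(2)$, si $X\in\herm(n)$ vérifie $\e(X)=x$ et $\s(\pi_{p,q}(X))=y$, on pose $\widetilde{X} = \theta X\theta^{-1}$, de sorte que $\e(\widetilde{X}) = x$ et $\pi_{-}(i\widetilde{X}) = \mathrm{j}(\pi_{p,q}(X))\in K\cdot i\diag(\widehat{y})$ d'après la seconde étape; choisir $k\in K$ conjuguant ce point à $i\diag(\widehat{y})$, puis poser $X_0 = k\widetilde{X}k^{-1}$, fournit un témoin de $(2)$. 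Réciproquement, si $X_0$ réalise $(2)$, alors $X := \theta^{-1}X_0\theta$ vérifie $\mathrm{j}(\pi_{p,q}(X)) = \pi_{-}(iX_0) = i\diag(\widehat{y})$, d'où $\s(\pi_{p,q}(X)) = y$ par injectivité de $\mathrm{j}$, ce qui établit $(1)$.
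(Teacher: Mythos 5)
Your proposal is correct and takes essentially the same route as the paper's proof: the key direct computation expressing $\pi_{-}(iX)$ through $\pi_{p,q}$ after conjugation by $\theta$ (block computation with $I_{p,q}$), the identification of the matrix $Y_0=N(y)$ with singular spectrum $y$ as the preimage of $i\,\diag(\widehat{y}\,)$ under $\mathrm{j}$, and the passage between the point condition and the orbit condition via $K$-equivariance. You merely make explicit the $\upU_p\times\upU_q$-equivariance and the point-versus-orbit step that the paper leaves implicit.
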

\begin{proof} Un calcul direct montre que 
$$
\pi_{-}(iX)=\Ad(\theta)\Big(\mathrm{j}\left(\pi_{p,q}(\Ad(\theta)(X))\right)\Big),\qquad \forall X\in \herm(n).
$$ 
Cela permet de voir que la condition {\em 2.} est équivalente au fait que la matrice 
$$
N(y):=
\begin{pmatrix}
0&\cdots & y_1\\
\vdots &\reflectbox{$\ddots$}& \vdots\\
y_q&\cdots & 0\\
0&\cdots & 0\\
\vdots &\ddots& \vdots\\
0&\cdots & 0\\
\end{pmatrix}\in M_{p,q}(\C)
$$
appartienne à $\pi_{p,q}\left(\upU_n\cdot \diag(x)\right)$. Comme le spectre singulier de $N(y)$ est égal à $y$, on voit que $N(y)\in \pi_{p,q}\left(\upU_n\cdot \diag(x)\right)$ 
si et seulement si $(x,y)\in\Acal(p,q)$. L'équivalence {\em 1.}  $\Leftrightarrow$  {\em 2.} est démontrée et 
l'équivalence {\em 2.} $\Leftrightarrow$ {\em 3.} correspond à la définition du cône $\Delta(\upU_n,\sigma_{p,q})$ (voir section \ref{sec:delta-U-sigma}).
\end{proof}

\medskip

Proposons une première description du cône $\Acal(p,q)\simeq\Delta(\upU_n,\sigma_{p,q})$ qui utilise la proposition \ref{prop:horn-U-sigma}.

\medskip

Le groupe fixé par $\sigma_{p,q}$ est $G=\Ad(\theta)(U(p,q))$. La décomposition de Cartan de l'algèbre de Lie de $G$ est $\ggot=\kgot\oplus\pgot$ où 
$$
\begin{array}{rrl}
\kgot  & = & \Ad(\theta)\left(\left\{\begin{pmatrix} X&  0 \\ 0& Y \end{pmatrix},\ (X,Y)\in \ugot(p)\times\ugot(q)\right\}\right)\\
\pgot  & = & \Ad(\theta)\left(\left\{\begin{pmatrix} 0&  X \\ X^*& 0 \end{pmatrix},\ X\in M_{p,q}(\C)\right\}\right).
\end{array}
$$
Le tore maximal $T\subset \upU_n$, formé des matrices diagonales, est stable sous l'involutions $\sigma_{p,q}$, et 
$\agot=\frac{1}{i}\tgot^{-\sigma_{p,q}}=\{\diag(\,\widehat{y}\,), y\in\R^q\}$ est un sous-espace abélien maximal de $\pgot$.

Considérons l'involution $\sigma_+: \tgot_+\to\tgot_+$ définie par la relation  $-\sigma_{p,q}(\upU_n \xi)=\upU_n\sigma_+(\xi), \forall \xi\in \tgot_+$. 
Un calcul élémentaire montre que $\sigma_+(\diag(ix))=\diag(i x^\vee)$, où l'on note $x^\vee=(-x_n,\ldots,-x_1)$ pour tout $x=(x_1,\ldots,x_n)\in\R^n$. 

Le lemme \ref{lem:equivalence-A-p-q} et la proposition \ref{prop:horn-U-sigma} fournissent une première description de $\Acal(p,q)$.

\begin{prop}Pour tout $(x,y)\in  \R^n_+\times \R^q_{++}$, on a l'équivalence
$$
(x,y)\in\Acal(p,q)\quad \Longleftrightarrow \quad (x,x^\vee, 2\,\widehat{y}\,)\in\horn(n).
$$
\end{prop}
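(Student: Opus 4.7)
The proof should essentially be a direct chaining of two previously established results, together with the explicit description of $\sigma_+$ on the Weyl chamber that has just been computed in the paragraphs preceding the proposition.

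The plan is to combine Lemma \ref{lem:equivalence-A-p-q} with Proposition \ref{prop:horn-U-sigma}. Starting from $(x,y)\in\R^n_+\times \R^q_{++}$, the third item of Lemma \ref{lem:equivalence-A-p-q} reduces $(x,y)\in\Acal(p,q)$ to the equivalent condition
$$
(i\diag(x),\ i\diag(\,\widehat{y}\,))\ \in\ \Delta(\upU_n,\sigma_{p,q}).
$$
Now I would apply Proposition \ref{prop:horn-U-sigma} with $U=\upU_n$ and $\sigma=\sigma_{p,q}$. Since $i\diag(x)$ lies in the chosen chamber $\tgot_+=\{i\diag(x'),\ x'\in\R^n_+\}$ and $i\diag(\,\widehat{y}\,)$ lies in $\tgot_+\cap\tgot^{-\sigma_{p,q}}$, the proposition yields
$$
(x,y)\in\Acal(p,q)\ \Longleftrightarrow\ \Big(i\diag(x),\ \sigma_+(i\diag(x)),\ 2i\diag(\,\widehat{y}\,)\Big)\in\horn(\upU_n).
$$

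The next step is to identify the right-hand side with the condition in $\horn(n)$. Using the computation $\sigma_+(\diag(ix))=\diag(ix^\vee)$ recalled just above, the triple becomes $\big(i\diag(x), i\diag(x^\vee), 2i\diag(\,\widehat{y}\,)\big)$. Through the standard identification $\ugot(n)\simeq\herm(n)$ sending $iX\mapsto X$, the cone $\horn(\upU_n)$ corresponds to $\horn(n)$ via the spectrum map, with dominant elements parametrised by decreasing real tuples. Thus membership of the above triple in $\horn(\upU_n)$ is exactly the membership of $(x,x^\vee,2\,\widehat{y}\,)$ in $\horn(n)$, which concludes the proof.

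There is no real obstacle here: the whole argument is a transcription through the identifications $\tgot_+\leftrightarrow\R^n_+$ and $\tgot_+\cap\tgot^{-\sigma_{p,q}}\leftrightarrow\{\widehat{y},\,y\in\R^q_{++}\}$ that have already been fixed, and the only nontrivial input, namely that $\sigma_+$ sends $\diag(ix)$ to $\diag(ix^\vee)$, has been established explicitly. The heavy lifting (the O'Shea--Sjamaar style reduction underlying Proposition \ref{prop:horn-U-sigma}, and the translation of the real moment map $\pi_{-}$ into the matrix $\pi_{p,q}$ built into Lemma \ref{lem:equivalence-A-p-q}) has been done beforehand, so the proposition follows by concatenation.
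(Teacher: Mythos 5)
Your argument is correct and is exactly the route the paper takes: the paper states this proposition as an immediate consequence of Lemma \ref{lem:equivalence-A-p-q} and Proposition \ref{prop:horn-U-sigma}, using the computation $\sigma_+(\diag(ix))=\diag(ix^\vee)$ established just before, which is precisely your chaining of the two results through the fixed identifications.
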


\begin{rem}
On remarque que le résultat précédent correspond à la proposition \ref{prop:A-p-q-OS}.
\end{rem}

Pour obtenir une description plus fine de $\Acal(p,q)\simeq\Delta(\upU_n,\sigma_{p,q})$, on va maintenant utiliser le théorème \ref{theo-delta-U-sigma}.
On a vu à la section \ref{sec:preuve-theo-sing-p-q}, les faits suivants:
\begin{itemize}\setlength{\itemsep}{8pt}
\item l'ensemble $\Sigma_\agot$ des racines restreintes comprend tous les applications linéaires 
$\pm f_i\pm f_j$ avec $i\neq j$ et $\pm 2 f_i$ pour tous les $i$. De même, les $\pm f_i$ sont des racines restreintes si $p\neq q$. 
Nous pouvons donc choisir la chambre de Weyl restreinte suivante $\agot_+:=\{\zeta(x), x_1\geq x_2\geq \cdots\geq x_q\geq 0\}$. 
\item Le groupe de Weyl restreint $W_{\agot}$ s'identifie à un sous-groupe du groupe de Weyl $W=\Sgot_n$:
$$
W_{\agot}= \Big\{w\in\Sgot_n, w(k^o)=w(k)^o, \forall k\in [q]\ \mathrm{et}\ w(k)=k, \forall k\in [q+1,\ldots,p]\Big\}.
$$
\item Les éléments admissibles sont de la forme $t\,w\, \zeta_r$ avec $(t,w,r)\in \Q^{>0}\times W_{\agot}\times [q]$,  
et où  $\zeta_r= \diag(\underbrace{-1,\ldots,-1}_{ r\ fois},0,\ldots,0, \underbrace{1,\ldots,1}_{ r\ fois})\in\agot$ 
(voir le lemme \ref{lem:admissible-u-p-q}).
\item La variété  de drapeaux $\GL_n(\C)/\Pbb(\zeta_r)$ s'identifie canoniquement à la variété $\mathbb{F}(r,n-r,n)$.
\end{itemize}

\medskip

Le sous-groupe $W'=Z_W(\agot)$ est à égal \`a $\{w\in \Sgot_n, \ w(k)=k,\forall k\notin\{q+1,\ldots,p\}\}$, et l'élément le plus long de $W'$ est représenté 
par la matrice de permutation
$$
w_0':=
\begin{pmatrix} I_q&  0& 0\\
0& J_{p-q}& 0\\
 0&  0& I_q
\end{pmatrix}
\qquad \mathrm{et} \qquad
J_{p-q}=\begin{pmatrix}
0&\cdots & 1\\
\vdots &\reflectbox{$\ddots$}& \vdots\\
1&\cdots & 0\\
\end{pmatrix}\in\GL_{p-q}(\C).
$$

D'après le théorème \ref{theo-delta-U-sigma}, un élément $(x,y)\in  \R^n_+\times \R^q_{++}$ appartient à $\Acal(p,q)$ si et seulement si
$$
(x,w\zeta_r)\geq (\,\widehat{y},w_0w_1\zeta_r)
$$
pour tout $r\in[q]$ et pour tout $(w,w_1)\in \times W/W^{\zeta_r}\times W_\agot/W_\agot^{\zeta_r}$ satisfaisant les conditions suivantes
\begin{enumerate}
\item[(a)] $[\Xgot_{w,\zeta_r}]\cdot [\Xgot_{w_0'\sigma_{p,q}(w),\zeta_r}]\cdot [\Xgot_{w_1,\zeta_r}]= [pt]\quad \mathrm{dans}\quad H^{max} (\Fcal_{\zeta_r},\Z)$.
\item[(b)] $2\tr(w\zeta_r \circlearrowright \ngot^{w\zeta_r>0})+\tr(w_1\zeta_r \circlearrowright \ngot^{w_1\zeta_r>0})= 2\tr(\zeta \circlearrowright \glgot_n(\C)^{\zeta_r>0})$.
\end{enumerate}

\`A travers l'identification $\GL_n(\C)/\Pbb(\zeta_r)\simeq\mathbb{F}(r,n-r,n)$, les classes $[\Xgot_{w,\zeta_r}]$ et $[\Xgot_{w_1,\zeta_r}]$ correspondent 
respectivement à $[\Xgot_{I\subset J}]$ et $[\Xgot_{K\subset L^{o,c}}]$. Ici $I=w([r])$, $J=w([n-r])$, et $K=w_1([r])$. On se sert ici du fait que 
$w_1\in W_\agot$ implique la relation $w_1([n-r])=L^{o,c}$.

L'élément $w_0'\sigma_{p,q}(w)\in W$ est égal à $w_0w w_{p,q}$ où $w_{p,q}$ est représenté par la matrice de permutations $\mathbf{J}_{p,q}$. Alors 
$w_0w w_{p,q}([r])=w_0w([n-r]^c)=J^{o,c}$ et $w_0w w_{p,q}([n-r])=w_0 w w_{p,q}([r]^{o,c})=w_0w([r]^{c})=I^{o,c}$. Finalement, la relation $(a)$ signifie que 
$$
[\Xgot_{J^{o,c}\subset I^{o,c}}]\cdot [\Xgot_{I\subset J}]\cdot [\Xgot_{L\subset L^{o,c}}]= [pt]\quad \mathrm{dans}\quad H^{max} (\mathbb{F}(r,n-r,n),\Z),
$$
tandis que $(b)$ signifie que le triplet $\left(J^{o,c}\subset I^{o,c},I\subset J, L\subset L^{o,c}\right)$ est Lévi-mobile. D'après la proposition 
\ref{prop:levi-mobile-double-grass}, on sait que les conditions $(a)$ et $(b)$ sont satisfaites si et seulement si les trois propriétés suivantes sont vraies:

\begin{enumerate}\setlength{\itemsep}{8pt}
\item $[\Xgot_{I^{o,c}}]\cdot [\Xgot_{J}]\cdot [\Xgot_{L^{o,c}}]= [pt]$,  dans $H^{max}(\G(n-r, n),\Z)$.
\item $[\Xgot_{J^{o,c}\natural I^{o,c}}]\cdot [\Xgot_{I\natural J}]\cdot [\Xgot_{L\natural L^{o,c}}]= [pt]$,  dans $H^{max}(\G(r, n-r),\Z)$.
\item $ |\mu(J^{o,c})|+  |\mu(I)| +|\mu(L)|= 2r(n-r)$.
\end{enumerate}

Le calcul des dimensions dans l'identité $1.$ donne $3.$, et de plus $1.$ est équivalent à 
$[\Xgot_{I}]\cdot [\Xgot_{J^{o,c}}]\cdot [\Xgot_{L}]= [pt]$,  dans $H^{max}(\G(r, n),\Z)$, c'est à dire $\cc_{I^o,J^c}^{L}=1$. De même, 
$2.$ signifie que $\cc_{J^{c}\natural I^{c},I^o\natural J^o}^{L\natural L^{o,c}}=1$

Un dernier calcul permet de voir que $(x,w\zeta_r)\geq (\,\widehat{y},w_0w_1\zeta_r)$ correspond à 
$$
|x|_{J^c}-|x|_{I}\geq 2\left( |y|_{L\cap [q]}-|y|_{L^o\cap [q]}\right).
$$
La preuve du théorème \ref{theo:A-p-q} est complète.

%%%%%%%%%%%%%%%%%%%%%%%%%%%%%%%%%%%%%%%%%%%%%%%%%%%%%%
%%%%%%%%%%%%%%%%%%%%%%%%%%%%%%%%%%%%%%%%%%%%%%%%%%%%%%
\section{Les cônes $\Bcal(n)$}
%%%%%%%%%%%%%%%%%%%%%%%%%%%%%%%%%%%%%%%%%%%%%%%%%%%%%%
%%%%%%%%%%%%%%%%%%%%%%%%%%%%%%%%%%%%%%%%%%%%%%%%%%%%%%

Soit $n\geq 1$. Nous considérons l'application $\pi_{n} : M_{n,n}(\C)\to \herm(n)$  définie par $\pi_n(X)=\frac{1}{2}(X+X^*)$. 
Afin de comparer le spectre singulier de $X$ avec le spectre de $\pi_n(X)$, on considère le cône
$$
\Bcal(n)=\Big\{(\s(X), \e(\pi_n(X))), \ X\in M_{n,n}(\C)\Big\}\subset \R^n_{++}\times \R^n_{+}.
$$

\begin{exemple}
$\Bcal(1)$ est l'ensemble des couples $(x,y)\in \R_+\times \R$ tels que $-x\leq y\leq x$.
\end{exemple}

Commençons par donner la description de $\Bcal(n)$ obtenue avec le théorème d'O'Shea-Sjamaar. 

\begin{prop}\label{prop:horn-B-n}Pour tout $(x,y)\in\R^n_{++}\times \R^n_{+}$, on a l'équivalence
$$
(x,y)\in \Bcal(n)\quad \Longleftrightarrow\quad (\,\widehat{x}\, ,y, y^\vee)\in\LR(n,n).
$$
\end{prop}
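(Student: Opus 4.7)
Le plan consiste à démontrer les deux implications séparément. L'implication directe $(\Rightarrow)$ résulte d'un calcul matriciel explicite, tandis que l'implication réciproque $(\Leftarrow)$ sera obtenue en appliquant le théorème d'O'Shea--Sjamaar (théorème \ref{theo:OSS}) à la variété de Kähler $\upU_{2n}\times(\upU_n\times\upU_n)$-hamiltonienne $T^*\upU_{2n}$ (voir exemple \ref{ex:U-tilde-C-Kahler}), munie d'un couple convenable d'involutions compatibles.

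Pour $(\Rightarrow)$, étant donné $X\in M_{n,n}(\C)$ avec $\s(X)=x$ et $\e(\pi_n(X))=y$, la matrice hermitienne $\widehat{X}=\begin{pmatrix}0&X\\X^*&0\end{pmatrix}$ admet le spectre $\widehat{x}$. Un calcul direct montre que sa conjuguée par l'unitaire $P:=\frac{1}{\sqrt{2}}\begin{pmatrix}I_n&I_n\\I_n&-I_n\end{pmatrix}$ a pour blocs diagonaux $(\pi_n(X),-\pi_n(X))$, de spectres $(y,y^\vee)$; par la définition de $\LR(n,n)$, ceci place $(\widehat{x},y,y^\vee)$ dans $\LR(n,n)$.

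Pour $(\Leftarrow)$, je poserai $Q:=PJP=\begin{pmatrix}0&I_n\\I_n&0\end{pmatrix}$, avec $J:=\diag(I_n,-I_n)$, et j'introduirai l'involution anti-holomorphe $\sigma(g):=Q(g^*)^{-1}Q$ sur $\GL_{2n}(\C)$, dont la forme réelle fixe est conjuguée à $\upU(n,n)$. Sur la partie compacte $\upU_{2n}$, $\sigma$ est la conjugaison par $Q$, et sa restriction au sous-groupe bloc-diagonal $U:=\upU_n\times\upU_n$ est l'involution d'échange $(u_1,u_2)\mapsto(u_2,u_1)$. L'involution linéaire induite sur $\herm(2n)\simeq\ugot(2n)^*$ est $M\mapsto-QMQ$, dont le sous-espace fixe coïncide exactement avec $\{P\widehat{X}P:X\in M_{n,n}(\C)\}$; cette identification cruciale découle de la relation $J\widehat{X}J=-\widehat{X}$ combinée avec $Q=PJP$ et $P^2=I$. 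Appliquant O'Shea--Sjamaar à $T^*\upU_{2n}$ muni de l'involution produit, l'orbite coadjointe $\Ocal=\upU_{2n}\cdot\diag(\widehat{x})$ rencontre le lieu fixe en $\Ocal^\tau=\{P\widehat{X}P:\s(X)=x\}$, et l'application moment $U$-équivariante, évaluée sur cette sous-variété réelle, envoie $P\widehat{X}P$ sur $(-\pi_n(X),\pi_n(X))$. La condition $(\widehat{x},y,y^\vee)\in\LR(n,n)$ équivaut, après normalisation de l'action de $\upU_n\times\upU_n$, à l'existence de $M\in\Ocal$ de blocs diagonaux $(A,-A)$ avec $\e(A)=y$; par O'Shea--Sjamaar, cela équivaut à l'existence d'un tel $M$ dans $\Ocal^\tau$, donc à l'existence de $X\in M_{n,n}(\C)$ tel que $\s(X)=x$ et $\e(\pi_n(X))=y$, c'est-à-dire $(x,y)\in\Bcal(n)$.

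L'obstacle principal que je prévois consiste en la vérification soigneuse des compatibilités requises par la définition \ref{def:U-sigma-kahler}, à savoir l'anti-symplecticité de l'involution induite $\tau$ sur $T^*\upU_{2n}$, la relation de torsion (\ref{eq:condition-phi-tau}) pour l'application moment, et l'équivariance (\ref{eq:condition-sigma-tau}); il faudra ensuite identifier précisément $(T^*\upU_{2n})^\tau$ et $\Ocal^\tau$, et s'assurer que l'action de $\upU_n\times\upU_n$ sur $\Ocal^\tau$ permet bien d'atteindre toutes les paires $(A,-A)$ de spectre prescrit $\e(A)=y$.
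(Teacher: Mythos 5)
Votre proposition est correcte et suit pour l'essentiel la même démarche que la preuve du texte : même involution $g\mapsto Q(g^*)^{-1}Q$ sur $\GL_{2n}(\C)$ (avec $Q=\Upsilon_n$), même identification du lieu fixe avec les matrices $P\widehat{X}P$ de blocs diagonaux $(\pi_n(X),-\pi_n(X))$, et même réduction au théorème d'O'Shea--Sjamaar pour passer des orbites de $\upU_{2n}$ et $\upU_n\times\upU_n$ à celles de $\tK$ et $K$. La seule différence est de présentation : le texte invoque directement la proposition \ref{prop:OSS-orbites} plutôt que de revérifier les axiomes de la définition \ref{def:U-sigma-kahler} sur $T^*\upU_{2n}$, ce qui revient au même.
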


La description du cône $\LR(n,n)$ obtenue au théorème \ref{theo:LR-m-n}, permet de voir que $(x,y)\in \Bcal(n)$ si et seulement si les conditions suivantes sont satisfaites
\begin{enumerate}\setlength{\itemsep}{8pt}
\item[i)] $-x_{n+1-k}\leq y_k\leq x_k$, $\forall k\in [n]$,
\item[ii)] $|x|_{L\cap [n]}-|x|_{L^o\cap [n]}\geq |y|_I - |y|_{J^o}$, pour tout triplet de sous ensembles stricts $L\subset [2n]$, $I,J\subset [n]$, satisfaisant :
$\sharp L =\sharp I +\sharp J$, et $\cc^{L}_{I,J}=1$.
\end{enumerate}

\begin{rem}Les inégalités
$$
-\s_{n+1-k}(X)\leq\e_k\left(\frac{X+X^*}{2}\right)\leq \s_k(X),\quad \forall k\in [n], \quad \forall X\in M_{n,n}(\C)
$$
remontent aux travaux de Fan, Hoffman et Thompson \cite{Fan-Hoffman,Fan74,Thompson75-2}.
\end{rem}

\begin{exemple}
Grâce à la description de $\LR(2,2)$ obtenue dans l'exemple \ref{ex:LR-2-2}, on voit $\Bcal(2)$ est  l'ensemble des couples $(x,y)\in \R^2_{++}\times \R^2_+$ 
vérifiant
\begin{equation*}
\boxed{
\begin{array}{c}
x_1\geq y_1 \geq  -x_2\\
x_2\geq y_2 \geq  -x_1\\
x_1+x_2\geq  y_1-y_2
\end{array}
}
\end{equation*}

\end{exemple}

\medskip

Voici une description plus précise de $\Bcal(n)$ obtenue avec le théorème \ref{theo:main-reel}.

\begin{theorem}\label{theo:pep-B-n}Un élément  $(x,y)\in\R^n_{++}\times \R^n_{+}$ appartient à $\Bcal(n)$ si et seulement si 
\begin{enumerate}\setlength{\itemsep}{8pt}
\item $-x_{n+1-k}\leq y_k\leq x_k$, $\forall k\in [n]$,
\item $|x|_{L\cap [n]}-|x|_{L^o\cap [n]}\geq |y|_I - |y|_{J^o}$, pour tout triplet de sous ensembles stricts $L\subset [2n]$, $I,J\subset [n]$, satisfaisant :
\begin{enumerate}
\item $L\cap L^o=\emptyset$\quad  et\quad  $I\cap J^o=\emptyset$,
\item $\sharp I +\sharp J=\sharp L \leq n$,
\item $\cc^{L}_{I,J}=1$,
\item $\cc^{L\natural L^{o,c}}_{I\natural J^{o,c},J\natural I^{o,c}}=1$.
\end{enumerate}
\end{enumerate}
\end{theorem}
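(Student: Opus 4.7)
The plan is to apply Theorem \ref{theo:main-reel} directly to the embedding $G = \GL_n(\C) \hookrightarrow \tG = \upU(n,n)$ of real reductive linear groups, which is entry (8) in the table of Chapter 7. Under this embedding, $K = \upU(n)$ sits diagonally inside $\tK = \upU(n)\times \upU(n)$, and the Cartan decomposition identifies $\pgot \simeq \herm(n)$ and $\tpgot \simeq M_{n,n}(\C)$ in such a way that the orthogonal projection $\pi : \tpgot \to \pgot$ is exactly $\pi_n(X) = \tfrac{1}{2}(X+X^*)$. Once this is fixed, the cone $\horn_\pgot(K,\tK)$ is (via the usual coordinate switch and the identification of singular values of $X$ with eigenvalues of $\widehat{X}$) identified with $\Bcal(n)$. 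One first checks that Hypothesis \ref{hypothese-G} holds: the center of $\ugot(n,n)$ meets $\glgot_n(\C)$ only along $\R\cdot I$, and no larger ideal of $\tggot$ lies in $\ggot$.

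The next step is to identify the admissible elements. Choose a $\sigma$-stable maximal torus $T\subset \tU = \upU(2n)$ of diagonal matrices; then $\agot \subset \tagot$ correspond to $\{\diag(x)\}$ embedded as $\{\diag(\widehat x)\}$. The restricted root system $\Sigma(\tggot/\ggot)$ is of type $C_n$ (with simple roots $f_i-f_{i+1}$ and $2f_n$), and the argument of Lemmas \ref{lem:admissible-eigenvalue-1} and \ref{lem:admissible-u-p-q} shows that the admissible rational elements are, up to $W_\agot$-action and positive rationals, the vectors $\zeta_r = \diag(\underbrace{-1,\dots,-1}_{r},0,\dots,0,\underbrace{1,\dots,1}_{r})$ for $r\in [n]$. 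The rank-one cases $r=1$ together with the ``linear'' admissible elements produce exactly the Fan--Hoffman--Thompson inequalities $-x_{n+1-k}\le y_k\le x_k$ of item 1.

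The third step is the Schubert-theoretic part. For $\zeta_r$, the flag variety $\tFcal_{\zeta_r}$ identifies with the two-step flag variety $\F(r,2n-r;2n)$ via the conjugation-by-$\theta$ construction of Section \ref{sec:drapeaux-2-etage}, while $\Fcal_{\zeta_r}$ is a Grassmannian factor coming from the Levi subgroup inside $G_\C=\GL_n(\C)\times\GL_n(\C)$. The real Weyl group $W_\agot$ is a hyperoctahedral group, and by Lemma \ref{lem:schubert-fixe-involution} the cells of $\tFcal_{\zeta_r}$ carrying a fixed point of the antiholomorphic involution are parametrized by pairs $(L\subset L^{o,c})$ with $L\cap L^o=\emptyset$, corresponding to subsets $L\subset[2n]$ of size $r$; similarly the admissible $w\in W/W^{\zeta_r}$ give pairs $(I,J)$ with $I\cap J^o=\emptyset$. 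The cohomological condition $[\Xgot_{w,\zeta}]\cdot \iota^*[\tXgot_{\tw,\zeta}] = [pt]$ together with the Lévi-mobility condition can then be factored by Proposition \ref{prop:levi-mobile-double-grass} into two independent Littlewood--Richardson conditions: one on $\G(r,2n)$, giving $\cc^{L}_{I,J}=1$ (condition (c)), and one on $\G(r,2n-r)$ applied to $(I\natural J^{o,c},\,J\natural I^{o,c},\,L\natural L^{o,c})$, giving $\cc^{L\natural L^{o,c}}_{I\natural J^{o,c},\,J\natural I^{o,c}}=1$ (condition (d)). The linear inequality $(\tilde\xi,\tilde w\zeta)\ge (\xi,w_0w\zeta)$ from Theorem \ref{theo:main-reel} then rewrites, after straightforward bookkeeping on the coordinates of $\widehat x$, as $|x|_{L\cap[n]}-|x|_{L^o\cap[n]}\ge |y|_I-|y|_{J^o}$.

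The main obstacle will be the first paragraph: pinning down the explicit embedding $\GL_n(\C)\hookrightarrow \upU(n,n)$ so that the induced inclusion $\pgot\hookrightarrow\tpgot$ is the natural $\herm(n)\hookrightarrow M_{n,n}(\C)$ and the projection is $\pi_n$, then tracking how the antiholomorphic involution $\sigma$ on $\tG_\C=\GL_{2n}(\C)$ acts on the flag variety so that exactly the subsets $L$ with $L\cap L^o=\emptyset$ (and pairs $(I,J)$ with $I\cap J^o=\emptyset$) appear. Once this dictionary is established, the rest of the proof is a systematic combination of the admissibility computation, the involutive Schubert-cell analysis of Lemma \ref{lem:schubert-fixe-involution}, and the multiplicative factorization of Proposition \ref{prop:levi-mobile-double-grass}, paralleling closely the proof of Theorem \ref{theo:A-p-q}.
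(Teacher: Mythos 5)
Your overall strategy --- apply Theorem \ref{theo:main-reel} to the pair $G=\GL_n(\C)\subset\tG\simeq\upU(n,n)$ of entry (8), verify Hypothesis \ref{hypothese-G}, classify the admissible elements, and convert the cohomological plus Lévi-mobility conditions into Littlewood--Richardson conditions via the multiplicative formulas --- is exactly the route the paper takes. However, there is a genuine gap at the admissibility step. You assert that $\Sigma(\tggot/\ggot)$ is the full restricted root system of $\ugot(n,n)$, of type $C_n$, and you then import the classification of Lemmas \ref{lem:admissible-eigenvalue-1} and \ref{lem:admissible-u-p-q}, obtaining only the ``balanced'' vectors $\zeta_r$ with $r$ entries $-1$ and $r$ entries $+1$. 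This is not correct here: since $\agot\simeq\tagot$ in this example and the root spaces of $\tggot$ attached to the weights $e_i^*-e_j^*$ are entirely absorbed by $\ggot\simeq\glgot_n(\C)$, the weight set of $\agot$ on $\tggot/\ggot$ is only $\{\pm(e_i^*+e_j^*),\ i,j\in[n]\}$, and it is on this set that the admissibility condition of Theorem \ref{theo:main-reel} bears. Its solutions, up to $\Q^{>0}\times W_\agot$ with $W_\agot\simeq\Sgot_n$ (not a hyperoctahedral group), are $\pm\zeta_1$ together with the \emph{unbalanced} vectors $\zeta_{r,s}$ with $r,s\geq 1$ independent and $r+s\leq n$. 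Your restricted list of admissible directions can only produce the inequalities of the theorem with $\sharp I=\sharp J$; the inequalities with $\sharp I\neq\sharp J$ never appear in your argument, so neither their necessity nor the sufficiency of the full stated list is established, and the ``si et seulement si'' is not proved.

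A second, related problem is the Schubert-theoretic bookkeeping. For $\zeta_{r,s}$ one has $\Fcal_{\zeta_{r,s}}\simeq\F(r,n-s;n)\times\F(s,n-r;n)$, a product of two two-step flag varieties of $\GL_n(\C)$ coming from the restricted Weyl group $W_\agot=\{(w,w)\}$ of $U_\C=\GL_n(\C)\times\GL_n(\C)$ (not a Grassmannian factor), while $\tFcal_{\zeta_{r,s}}\simeq\F(r+s,2n-r-s;2n)$. Factoring the condition $[\Xgot_{w,\zeta}]\cdot \iota^*[\tXgot_{\tw,\zeta}]=[pt]$ together with Lévi-mobility therefore requires the multiplicative formula for the embedding $\F(a,b;n)\times\F(a',b';n')\to\F(a+a',b+b';n+n')$, i.e. Proposition \ref{prop:levi-mobile-application-2}, and Corollary \ref{coro:pour-B-n} for the elements $\pm\zeta_1$ (which is where item 1, the Fan--Hoffman--Thompson inequalities, actually comes from) --- not Proposition \ref{prop:levi-mobile-double-grass} nor Lemma \ref{lem:schubert-fixe-involution}, which pertain to a single $\GL$ diagonally embedded in a power of itself and to the isotropic-subspace involution of Section \ref{sec:drapeaux-2-etage}. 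Once the admissible elements and the multiplicative formula are corrected, the remaining translation (the linear inequalities and the conditions $I\cap J^o=\emptyset$, $L\cap L^o=\emptyset$ read off from the restricted Weyl groups) does match the paper's proof.
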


\begin{rem}
Le théorème précédent est encore vrai avec les conditions plus faibles : $\cc^{L}_{I,J}\neq 0$ et $\cc^{L\natural L^{o,c}}_{I\natural J^{o,c},J\natural I^{o,c}}\neq 0$. 
La proposition \ref{prop:horn-B-n} permet de voir que le résultat du théorème \ref{theo:pep-B-n} est encore valable 
si on ne considère pas la condition {\em 2.}  {\em d)}.
\end{rem}

%%%%%%%%%%%%%%%%%%%%%%%%%%%%%%%%%%%%%%%%%%%%%%%%%%%%%%
\subsection{Preuve de la proposition \ref{prop:horn-B-n}}
%%%%%%%%%%%%%%%%%%%%%%%%%%%%%%%%%%%%%%%%%%%%%%%%%%%%%%

Fixons quelques matrices avec lesquelles on va travailler:
$$
\Upsilon_n=\begin{pmatrix} 0&  I_n\\ I_n& 0\end{pmatrix},\quad 
\Omega_n:=\frac{1}{\sqrt{2}}\begin{pmatrix} I_n& I_n\\ -I_n& I_n\end{pmatrix}, \quad
I_{n,n}:=\begin{pmatrix} -I_n&  0\\ 0& I_n\end{pmatrix},\quad 
\Lambda_n:=\begin{pmatrix} I_n&  0\\ 0& J_n\end{pmatrix}, \quad
\Theta_n:=\begin{pmatrix} 0&  J_n\\ J_n& 0\end{pmatrix}.
$$

Considérons le groupe $\tU_\C:=\GL_{2n}(\C)$ muni de l'involution $\tilde{\sigma}(g)=\Upsilon_n (g^*)^{-1}\Upsilon_n$. 
Notons $\tG\subset \GL_{2n}(\C)$ le sous-groupe fixé par $\tilde{\sigma}$, et 
$\tK:=\tG\cap\upU_{2n}$ son sous-groupe compact maximal.  Soit $\tggot=\tkgot\oplus\tpgot$ la décomposition de Cartan de l'algèbre de Lie 
de $\tG$. Comme $\Upsilon_n =\Omega_n I_{n,n}\Omega_n^{-1}$, on voit 
que $\tG=\Ad(\Omega_n)\left(U(n,n)\right)$. Ainsi  
$$
\begin{array}{rrl}
\tkgot  & = & \Ad(\Omega_n)\left(\left\{\begin{pmatrix} X&  0 \\ 0& Y \end{pmatrix},\ (X,Y)\in \ugot_n\times\ugot_n\right\}\right),\\
\tpgot  & = & \Ad(\Omega_n)\left(\left\{\begin{pmatrix} 0&  X \\ X^*& 0 \end{pmatrix},\ X\in M_{n,n}(\C)\right\}\right)=
\left\{\begin{pmatrix} \frac{X+X^*}{2}&  \frac{X-X^*}{2} \\ \frac{-X+X^*}{2}& \frac{-X-X^*}{2} \end{pmatrix},\ X\in M_{n,n}(\C)\right\}.
\end{array}
$$

Le tore maximal $\tT\subset \upU_{2n}$ des matrices diagonales est adapté à $\tilde{\sigma}$: $\ttgot$ est stable pour $\tilde{\sigma}$, et 
$\tagot:=\tfrac{1}{i}\ttgot^{-\tilde{\sigma}}:=\left\{\diag(x,-x),\ x\in\R^n\right\}$ est un sous-espace abélien maximal de $\tpgot$. On choisit comme chambre de Weyl 
de $\ttgot$ le cône 
$$
\ttgot_+:=\Ad(\Lambda_n)\left(\left\{i\diag(z),\ z\in\R^{2n}_+\right\}\right),
$$
de telle manière que $\tagot_+:=\tfrac{1}{i}\left(\ttgot^{-\tilde{\sigma}}\cap\ttgot_+\right)=\{\diag(x,-x),\ x\in\R^n_{++}\}$ paramètre les $\tK$-orbites dans $\tpgot$.

\medskip

Soient $U_\C:=\GL_{n}(\C)\times \GL_{n}(\C)$ et le morphisme $\iota : U_\C\to\tU_\C$ défini par 
$\iota(g,h):=\begin{pmatrix} g&  0 \\ 0& h \end{pmatrix}$. La projection $\pi: \glgot_{2n}(\C)\to \glgot_{n}(\C)\times \glgot_{n}(\C)$ 
correspondante envoie $\begin{pmatrix} X&  Y \\ Z& T \end{pmatrix}$ sur $(X,T)$.

On munit $U_\C$ de l'involution 
$\sigma(g,h)=((h^*)^{-1},(g^*)^{-1})$, de telle manière que $\iota\circ \sigma = \tilde{\sigma}\circ \iota$. Le groupe 
$G:=\{(g, (g^*)^{-1}),\ g\in \GL_{n}(\C)\}$ est le sous-groupe fixé par $\sigma$. Son sous-groupe compact maximal est 
$K:= \{(k, k),\ k\in \upU_n)\}$, sa décomposition de Cartan est $\ggot=\kgot\oplus\pgot$ avec $\pgot:=\{(Y,-Y),\ Y\in \herm(n))\}$. 
Le tore maximal $T\subset\upU_n\times \upU_n$ des matrices diagonales est adapté à $\sigma$: $\tgot$ est stable pour $\sigma$, et 
$\agot:=\tfrac{1}{i}\tgot^{-\sigma}:=\left\{(\diag(y),\diag(-y)),\ y\in\R^n\right\}$ est un sous-espace abélien maximal de $\pgot$. 
On choisit comme chambre de Weyl de $\tgot$ le cône  $\tgot_+:=\left\{i\diag(x),\ z\in\R^{n}_+\right\}\times \left\{-i\diag(x),\ z\in\R^{n}_+\right\}$
de telle manière que $\tagot_+:=\tfrac{1}{i}\left(\ttgot^{-\tilde{\sigma}}\cap\ttgot_+\right)=\{(\diag(y),-\diag(y)),\ x\in\R^n_{+}\}$ paramètre les $\tK$-orbites dans $\tpgot$.

Le résultat suivant démontre la proposition \ref{prop:horn-B-n}.

\begin{lem}Pour tout $(x,y)\in\R^n_{++}\times \R^n_{+}$, les relations suivantes sont équivalentes
\begin{enumerate}
\item $(x,y)\in \Bcal(n)$,
\item $K\cdot (\diag(y),\diag(-y))\subset \pi\left(\tK\cdot \diag(x,-x)\right)$,
\item $\upU_n\times\upU_n\cdot \,(\diag(y),\diag(-y))\subset \pi\left(\upU_{2n}\cdot \diag(x,-x)\right)$,
\item $(\,\widehat{x}\, ,y, y^\vee)\in\LR(n,n)$.
\end{enumerate}
\end{lem}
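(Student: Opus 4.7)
The plan is to establish the four equivalences in the order $(1)\!\Leftrightarrow\!(2)$, $(2)\!\Leftrightarrow\!(3)$, $(3)\!\Leftrightarrow\!(4)$, using the geometric setup that has just been constructed. The bulk of the work is bookkeeping: identifying $\tK$-orbits in $\tpgot$ explicitly and matching the various diagonal representatives of orbits.

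First I would make the parametrization of $\tK\cdot \diag(x,-x)$ explicit. Using $\tK=\Ad(\Omega_n)(\upU_n\times\upU_n)$ and the identity $\Omega_n^{-1}\diag(x,-x)\Omega_n=\begin{pmatrix}0&\diag(x)\\ \diag(x)&0\end{pmatrix}$, a direct computation shows that an arbitrary element of $\tK\cdot\diag(x,-x)$ has the form
\[
\begin{pmatrix}\pi_n(A)&\tfrac{1}{2}(A-A^*)\\ -\tfrac{1}{2}(A-A^*)&-\pi_n(A)\end{pmatrix},\qquad A=k_1\diag(x)k_2^{-1},\ (k_1,k_2)\in\upU_n\times\upU_n,
\]
so that $A$ runs through all matrices with $\s(A)=x$. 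Applying $\pi$ one obtains $\pi(\tK\cdot\diag(x,-x))=\{(\pi_n(A),-\pi_n(A)) : \s(A)=x\}$. Since $K\cdot(\diag(y),\diag(-y))=\{(kHk^{-1},-kHk^{-1}) : k\in\upU_n\}$ is exactly the set of pairs $(H,-H)$ with $\e(H)=y$, condition $(2)$ is equivalent to the existence of $A\in M_{n,n}(\C)$ with $\s(A)=x$ and $\e(\pi_n(A))=y$; this is condition $(1)$. Here one uses the invariance of singular values under conjugation by unitaries to pass from the existence of one such $A$ to the full $K$-invariance required in $(2)$.

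For $(2)\!\Leftrightarrow\!(3)$, I would invoke Proposition \ref{prop:OSS-orbites} (the orbit form of O'Shea--Sjamaar) with $U_\C=\GL_n(\C)\times\GL_n(\C)\subset \tU_\C=\GL_{2n}(\C)$ and the pair of compatible anti-holomorphic involutions $\sigma,\tilde\sigma$ defined in the text. The conditions $\Phi_\ugot(\tau(m))=-\sigma(\Phi_\ugot(m))$ and $\tau(um)=\sigma(u)\tau(m)$ are built into the construction, and the chosen Weyl chamber $\tagot_+$ is adapted to $\tilde\sigma$, so the proposition gives the equivalence of the condition over $\upU_n\times\upU_n\subset\upU_{2n}$ with the condition over $K\subset\tK$ at the level of orbits in $(\ugot^*)^{-\sigma}$.

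Finally, for $(3)\!\Leftrightarrow\!(4)$, the point is simply that the diagonal representatives appearing in $(3)$ are not in the chosen dominant chambers, but are conjugate by a permutation to the representatives used to define $\LR(n,n)$ in Section~\ref{sec:LR-m-n}. Explicitly, $\Lambda_n\diag(x,-x)\Lambda_n^{-1}=\diag(\widehat{x})$ because $J_n$ reverses the order of the lower block, and $\upU_n\cdot(-\diag(y))=\upU_n\cdot\diag(y^\vee)$ since $y^\vee$ is the decreasing rearrangement of $-y$. Hence $\upU_{2n}\cdot\diag(x,-x)=\upU_{2n}\cdot\diag(\widehat{x})$ and $\upU_n\times\upU_n\cdot(\diag(y),\diag(-y))=\upU_n\times\upU_n\cdot(\diag(y),\diag(y^\vee))$, which reduces $(3)$ to the defining inclusion of $\LR(n,n)$ applied to the triple $(\widehat{x},y,y^\vee)$, i.e.\ condition $(4)$. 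The only mildly delicate point in the plan is the verification $(1)\!\Leftrightarrow\!(2)$, which is where the identification of $\tpgot$ with matrices $\begin{pmatrix}H&S\\-S&-H\end{pmatrix}$ and the resulting computation of $\pi$ must be carried out carefully; once that is in place, the rest is a routine matching of orbit data.
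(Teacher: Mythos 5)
Your proposal is correct and follows essentially the same route as the paper: the explicit computation of $\tK\cdot\diag(x,-x)$ via $\Ad(\Omega_n)$ for $(1)\Leftrightarrow(2)$, Proposition \ref{prop:OSS-orbites} for $(2)\Leftrightarrow(3)$, and the identification of the unitary orbits of $\diag(x,-x)$ and $-\diag(y)$ with the spectra $\widehat{x}$ and $y^\vee$ for $(3)\Leftrightarrow(4)$. Your write-up merely spells out the orbit computations in more detail than the paper does.
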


\begin{proof} L'équivalence $1. \Longleftrightarrow 2.$ vient du fait que l'orbite $\tK\cdot \diag(x,-x)$ est égal à l'ensemble des matrices 
$$
\begin{pmatrix} \frac{X+X^*}{2}&  \frac{X-X^*}{2} \\ \frac{-X+X^*}{2}& \frac{-X-X^*}{2} \end{pmatrix}
$$
où $X\in M_{n,n}(\C)$ vérifie $\s(X)=x$. L'équivalence $2. \Longleftrightarrow 3.$ découle de la proposition \ref{prop:OSS-orbites}. 
Finalement, l'équivalence $3. \Longleftrightarrow 4.$ découle du fait que l'orbite $\upU_{2n}\cdot \diag(x,-x)$ correspond à l'ensemble 
des matrices $Z\in\herm(2n)$ telles que $\e(Z)=\widehat{x}$.
\end{proof}

%%%%%%%%%%%%%%%%%%%%%%%%%%%%%%%%%%%%%%%%%%%%%%%%%%%%%%
\subsection{Preuve du théorème \ref{theo:pep-B-n}}
%%%%%%%%%%%%%%%%%%%%%%%%%%%%%%%%%%%%%%%%%%%%%%%%%%%%%%

Le groupe de Weyl de $(\tU,\tT)$ est $\tW=\Sgot_{2n}$. Notons $\upsilon_n\in\Sgot_{2n}$, l'élément d'ordre 2 associée à la matrice $\Upsilon_n$. 
Alors, le groupe de Weyl restreint s'identifie au sous-groupe centralisateur $\tW_{\tagot}:=\{\tw \in \Sgot_{2n}, \tw\upsilon_n=\upsilon_n \tw\}$. On voit que 
$\tW_{\tagot}$ est isomorphe au produit semi-direct $\Sgot_n\ltimes \{\pm 1\}^n$ entre le sous-groupe $\{(w,w),w\in\Sgot_n\}$ et le sous-groupe engendré 
par les transpositions $(i,i+n), i\in [n]$. Le groupe de Weyl de $(U,T)$ est $W=\Sgot_{n}\times \Sgot_n$ et le groupe de Weyl restreint 
 est égal à $W_{\agot}:=\{(w,w),w\in\Sgot_n\}$

Le quotient $\tggot_\C/\ggot_\C$ est isomorphe, en tant que $\ggot$-module à l'ensemble des matrices 
$\begin{pmatrix} 0&  X \\Y&0 \end{pmatrix}, X,Y\in M_{n,n}(\C)$. Cela permet de voir que l'ensemble des racines relatives à l'action de $\agot$ sur 
$\tggot/\ggot$ est égal à $\Sigma(\tggot/\ggot)=\{\pm(e^*_i+e^*_j),\ i,j\in [n]\}$. Comme $\Sigma(\tggot/\ggot)$ engendre $\agot^*$, un élément rationnel
$\zeta\in\agot$ est admissible si $\vect(\Sigma(\tggot/\ggot)\cap \zeta^\perp)=\zeta^\perp$.

\begin{lem}
Les éléments admissibles $\zeta\in\agot$ sont égaux, modulo l'action de $\Q^{>0}\times W_{\agot}$, à l'un des éléments suivants:
\begin{itemize}
\item $\zeta_1=\diag(v_1,-v_1)$ où $v_1=(1,0,\ldots,0)$.
\item $-\zeta_1$.
\item $\zeta_{r,s}=\diag(v_{r,s},-v_{r,s})$ où $v_{r,s}=(\underbrace{-1,\ldots,-1}_{r\ termes},\underbrace{0,\ldots,0}_{n-r-s\ termes},\underbrace{1,\ldots,1}_{s\ termes})$, avec 
$r,s\geq 1$ et $r+s\leq n$.
\end{itemize}
\end{lem}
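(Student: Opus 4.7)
My plan is to parametrize an element of $\agot$ by a vector $v\in\R^n$ via $\zeta=\diag(v,-v)$ and to translate the admissibility condition into a purely combinatorial statement on $v$. With the identifications of the preamble, $\Sigma(\tggot/\ggot)=\{\pm(e_i^*+e_j^*),\ i,j\in[n]\}$ (including the cases $i=j$, which contribute the roots $\pm 2e_i^*$), and evaluating $e_i^*+e_j^*$ on $\zeta$ gives $v_i+v_j$. Therefore $\Sigma(\tggot/\ggot)\cap\zeta^{\perp}$ is the set of roots indexed by pairs $(i,j)$, $i\le j$, with $v_i+v_j=0$, and admissibility becomes the condition that this set spans the hyperplane $v^{\perp}\subset\agot^*$ of dimension $n-1$. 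Using $W_\agot\simeq\{(w,w),w\in\Sgot_n\}$, which acts on $v$ by permutation, I would first move $v$ into a dominant position $v_1\ge\cdots\ge v_n$ and split the indices into $P=\{i: v_i>0\}$, $Z=\{i: v_i=0\}$, $N=\{i: v_i<0\}$.

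Next, I would compute $\dim\vect\bigl(\Sigma(\tggot/\ggot)\cap\zeta^{\perp}\bigr)$ in a modular way. The roots $2e_i^*$ for $i\in Z$ give the full subspace $\vect\{e_i^*:i\in Z\}$, contributing dimension $|Z|$. For the remaining roots of the form $e_i^*+e_j^*$ with $v_i+v_j=0$, $i\ne j$, they necessarily pair an $i\in P$ with a $j\in N$ such that $v_i=-v_j$. Grouping the positive values of $v$ by their (distinct) values $a_1>\cdots>a_k>0$ with multiplicities $p_r$ and the negatives of $v$ by $-b_1>\cdots>-b_m$ with multiplicities $n_s$, each matched pair $(a_r,b_s)$ (i.e. $a_r=b_s$) contributes an independent block of dimension $p_r+n_s-1$ (the span of $\{e_i^*+e_j^*:\, v_i=a_r,\ v_j=-b_s\}$ inside the coordinate subspace indexed by the corresponding indices). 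The contributions from different matched pairs sit in disjoint coordinates, so are linearly independent from each other and from the zero contribution.

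Summing everything, admissibility is equivalent to
\[
|Z|+\sum_{\text{matched }(r,s)}\bigl(p_r+n_s-1\bigr)=n-1,
\]
which, after writing $|P|+|Z|+|N|=n$ and regrouping, simplifies to
\[
P_{\mathrm{unmatched}}+N_{\mathrm{unmatched}}+(\#\text{matches})=1,
\]
where $P_{\mathrm{unmatched}}$ (resp.\ $N_{\mathrm{unmatched}}$) counts the positive (resp.\ negative) \emph{indices} whose value has no counterpart of opposite sign. Since all three quantities are non-negative integers, exactly one of them equals $1$ and the other two vanish, giving three mutually exclusive types: either $v$ has a single positive entry and no negative entries (after rational scaling, $v=v_1$, hence $\zeta=\zeta_1$); or symmetrically a single negative entry and no positive entries ($\zeta=-\zeta_1$); or $v$ takes only one positive value and one negative value, equal in absolute value, with multiplicities $s\ge1$ and $r\ge1$ respectively, with $r+s\le n$ (and a positive rational scaling normalizes the absolute value to $1$, giving $\zeta=\zeta_{r,s}$ after a further application of $W_\agot$).

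The main technical step is the dimension computation for the matched blocks; once one checks that $\dim\vect\{e_i^*+e_j^*:i\in A,j\in B\}=|A|+|B|-1$ for disjoint nonempty $A,B$ (a short elementary linear algebra argument) and that the blocks for different matched pairs live in disjoint coordinate subspaces, the enumeration is forced. I expect no serious obstacle; the only point requiring a little care is to include the diagonal roots $2e_i^*$, whose presence is essential to ensure that zero entries contribute the \emph{full} space $\vect\{e_i^*:i\in Z\}$ (and not merely a codimension-$1$ piece of it), and hence to make the final count come out to exactly $1$ on the right-hand side.
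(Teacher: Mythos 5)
Votre démonstration est correcte et suit essentiellement la même voie que celle du texte : dans les deux cas on décompose $\vect\big(\Sigma(\tggot/\ggot)\cap \zeta^\perp\big)$ en le bloc porté par les coordonnées nulles (de dimension $\sharp Z$, grâce aux racines $\pm 2e_i^*$) et un bloc de dimension $\sharp A+\sharp B-1$ pour chaque paire de valeurs opposées appariées, puis la condition de codimension un force exactement l'une des trois configurations énumérées, modulo $\Q^{>0}\times W_\agot$ (qui ne permute que les coordonnées, d'où la distinction entre $\zeta_1$ et $-\zeta_1$). Seule imprécision, sans conséquence : les racines $e_i^*+e_j^*$ avec $i\neq j$ tous deux dans $Z$ appartiennent aussi à $\zeta^\perp$, mais elles sont contenues dans $\vect\{e_i^*,\ i\in Z\}$ et ne modifient donc pas le décompte de dimension.
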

\begin{proof}Pour toute partie $A\subset [n]$, posons $1_A:=\sum_{i\in A} e_i$. Un vecteur non-nul $\zeta\in\agot$ s'écrit 
$\zeta=\sum_{\alpha>0} \alpha (1_{A_\alpha}-1_{B_\alpha})$ où les sous-ensembles $A_\alpha, B_\alpha$ sont disjoints et 
$A_\alpha\cup B_\alpha\neq\emptyset$. Soit $C\subset [n]$ le complémentaire de $\bigcup_{\alpha>0}A_\alpha\cup B_\alpha$. 
On voit que $\Sigma(\tggot/\ggot)\cap \zeta^\perp$ est égal à la réunion disjointe 
$$
\underbrace{\big\{\pm e_i,\ i\in C\big\}}_{\Sigma_0}\bigcup\bigcup_{\alpha>0} \underbrace{\big\{\pm(e_i+e_j), (i,j)\in A_\alpha\times B_\alpha\big\}}_{\Sigma_\alpha}.
$$
On remarque que $\dim \vect(\Sigma_0)= \sharp C$ tandis que 
$$
\dim \vect(\Sigma_\alpha) =
\begin{cases}
0\quad \mathrm{si}\quad A_\alpha=\emptyset\quad \mathrm{ou} \quad B_\alpha=\emptyset\\
\sharp A_\alpha+\sharp B_\alpha - 1 \quad \mathrm{sinon}.
\end{cases}
$$
On voit ainsi que $\Sigma(\tggot/\ggot)\cap \zeta^\perp$ engendre un sous-espace de codimension $1$ seulement dans deux situations: soit $\sharp C= n-1$ ou bien 
$\zeta=\alpha (1_{A_\alpha}-1_{B_\alpha})$ avec $A_\alpha\neq\emptyset$ et $B_\alpha\neq\emptyset$. Notre lemme est démontré.
\end{proof}

%%%%%%%%%%%%%%%%%%%%%%%%%%%%%%%%%%%%%%%%%%%%%%%%%%%%%%
\subsubsection{Inégalités associée à $\pm\zeta_1$}
%%%%%%%%%%%%%%%%%%%%%%%%%%%%%%%%%%%%%%%%%%%%%%%%%%%%%%

Dans la suite, pour tout $I\subset [n]$, on note $\C^I\subset \C^n$ le sous-espace vectoriel engendré par $e_i,i\in I$.
L'application $(g,h)\mapsto (g(\C^{[n]-[1]}),h(\C^{[1]}))$ permet  d'identifier le quotient $\GL_n(\C)\times\GL_n(\C)/P(\zeta_1)$ avec 
$\G(n-1,n)\times \G(1,n)$. De manière similaire, $k\in \GL_{2n}(\C)\mapsto (k(\C^{\{n+1\}})\subset k(\C^{[2n]-[1]}))$ 
identifie le quotient $\GL_{2n}(\C)/\tP(\zeta_1)$ avec
$\F(1,2n-1,2n)$. Ainsi, le morphisme $\GL_n(\C)\times\GL_n(\C)/P(\zeta_1)\to\GL_{2n}(\C)/\tP(\zeta_1)$ correspond à
l'application $\iota_{\zeta_1}:\G(n-1,n)\times \G(1,n)\to \F(1,2n-1,2n)$ qui envoie $(D,E)$ sur $(D\subset \C^n\oplus E)$.

\`A $(w,w)\in W_{\agot}$, on associe la classe de cohomologie $[\Xgot_{w,\zeta_1}]\in H^*(\GL_n(\C)\times\GL_n(\C)/P(\zeta_1))$. Le groupe parabolique 
$P(\zeta_1)$ est égal à $J_n P_{n-1}J_n^{-1}\times P_1$, où $P_k\subset \GL_{n}(\C)$ désigne le sous-groupe parabolique qui fixe le drapeau $\C^k\subset \C^n$.
D'autre part, le sous-groupe de Borel de $\GL_n(\C)\times\GL_n(\C)$ compatible avec notre choix de chambre de Weyl est $B_n\times J_n B_nJ_n^{-1}$. Ainsi 
$$
[\Xgot_{w,\zeta_1}]=\left[\overline{B_n wJ_n P_{n-1}/P_{n-1}}\right]\times \left[\overline{B_n J_nw P_{1}/P_{1}}\right]
$$
En d'autre termes, la classe de cohomologie $[\Xgot_{w,\zeta_1}]$ est égale à
$[\Xgot_{i^c}]\times [\Xgot_{i^o}]\in H^*(\G(n-1,n))\times H^*(\G(1,n))$, où $i=w(1)$.

\`A $\tw\in \tW_{\tagot}$, on associe la classe de cohomologie $[\tXgot_{\tw,\zeta_1}]\in H^*(\GL_{2n}(\C)/\tP(\zeta_1))$. 
Soit $u_n\in\Sgot_{2n}$ l'élément associé à la matrice de permutation $\Lambda_n\Theta_n$. On voit que le groupe parabolique 
$\tP(\zeta_1)$ est égal à $u_n P_{1,1}u_n^{-1}$, où $P_{1,1}\subset \GL_{2n}(\C)$ désigne le sous-groupe parabolique qui fixe le drapeau 
$\C^1\subset \C^{2n-1}\subset \C^{2n}$. Le sous-groupe de Borel de $\GL_{2n}(\C)$ compatible avec notre choix de chambre de Weyl est 
$\Lambda_n  B_{2n}\Lambda_n$. Ainsi 
$$
[\tXgot_{\tw,\zeta_1}]=\left[\overline{B_{2n}\Lambda_n \tw\, u_n P_{1,1}/P_{1,1}}\right]
$$

Posons $\ell=\tw(1)\in [2n]$ et $w':=\Lambda_n \tw\, u_n\in\Sgot_{2n}$. Alors 
\begin{itemize}
\item si $\ell\leq n$, on a  $w'(1)=2n+1-\ell$ et $w'(2n)=\ell$,
\item si $\ell\geq n+1$, on a  $w'(1)=\ell-n$ et $w'(2n)=3n+1-\ell$,
\end{itemize}
On a ainsi montré que 
\begin{itemize}
\item si $\ell\leq n$, on a  $[\tXgot_{\tw,\zeta_1}]=[\tXgot_{\{2n+1-\ell\}\subset \{\ell\}^c}]\in H^*(\F(1,2n-1,2n))$.
\item si $\ell\geq n+1$, on a  $[\tXgot_{\tw,\zeta_1}]=[\tXgot_{\{\ell-n\}\subset \{3n+1-\ell\}^c}]\in H^*(\F(1,2n-1,2n))$.
\end{itemize}

Pour des raisons de dimension, la relation $[\Xgot_{i^o}]\times[\Xgot_{i^c}]\cdot \iota_{\zeta_1}^*\left([\tXgot_{\{\ell-n\}\subset \{3n+1-\ell\}^c}]\right)=[pt]$
ne peut pas se produire. \`A l'opposé, grâce au corollaire \ref{coro:pour-B-n}, on voit que la relation
$$
[\Xgot_{i^o}]\times[\Xgot_{i^c}]\cdot \iota_{\zeta_1}^*\left([\tXgot_{\{2n+1-\ell\}\subset \{\ell\}^c}]\right)=[pt],
$$
plus la condition de Lévi-mobilité, sont satisfaits si et seulement si $\ell+i=n+1$. 
Un calcul élémentaire montre que l'inégalité correspondante est $x_\ell\geq y_\ell$.

Un travail similaire avec l'élément admissible $-\zeta_1$ fournit les inégalités $y_\ell\geq - x_{n+1-\ell},\forall \ell\in [n]$.

%%%%%%%%%%%%%%%%%%%%%%%%%%%%%%%%%%%%%%%%%%%%%%%%%%%%%%
\subsubsection{Inégalités associée à $\zeta_{r,s}$}
%%%%%%%%%%%%%%%%%%%%%%%%%%%%%%%%%%%%%%%%%%%%%%%%%%%%%%

Considérons l'élément admissible $\hgamma:=\zeta_{r,s}$ lorsque $r+s<n$. Le cas $r+s=n$ se traite de manière similaire. 
Ici $\Fcal_{\hgamma}\simeq \F(r,n-s,n)\times \F(s,n-r,n)$ et 
$\tFcal_{\hgamma}\simeq \F(r+s,2n-r-s,2n)$, et le morphisme 
$$
\iota_{\hgamma}:\F(r,n-s,n)\times \F(s,n-r,n)\longrightarrow \F(r+s,2n-r-s,2n)
$$ 
envoie $(E\subset F, E'\subset F')$ sur $E\oplus E'\subset F\oplus F'$.

Lorsque l'on se donne une partition $2n=a_1+\cdots+ a_6$ avec $a_i\geq 1$, on peut représenter $[2n]$ au moyen du tableau
\begin{tabular}{|c|c|c|c|c|c|}
  \hline
  $a_1$   & $a_2$    & $a_3$ & $a_4$   & $a_5$ & $a_6$ \\
  \hline
\end{tabular}
où la case représentée par $a_i$ désigne l'ensemble des entiers $k\geq 1$ vérifiant $a_1+\cdots+ a_i+1\leq k\leq a_1+\cdots+ a_{i+1}$.

Prenons $a_1=a_6=r$, $a_2=a_5=s$, et $a_3=a_4=n-r-s$. Considérons la permutation $w'\in \Sgot_{2n}$ qui envoie 
\begin{tabular}{|c|c|c|c|c|c|}
  \hline
  $a_1$   & $a_2$    & $a_3$ & $a_4$   & $a_5$ & $a_6$ \\
  \hline
\end{tabular} sur 
\begin{tabular}{|c|c|c|c|c|c|}
  \hline
  $a_1$   & $a_3$    & $a_5$ & $a_6$   & $a_4$ & $a_2$ \\
  \hline
\end{tabular}.
Alors $\hgamma=\Ad(w')\gamma^{r+s,r+s}$, et le groupe parabolique $\tP(\,\hgamma\,)$ est égal à 
$\Ad(w')(P_{r+s,r+s})$, où $P_{r+s,r+s}\subset \GL_{2n}(\C)$ désigne le sous-groupe parabolique qui fixe le drapeau
$\C^{r+s}\subset\C^{2n-r-s}\subset\C^{2n}$.

\`A $\tw\in \tW_{\tagot}$, on associe la classe de cohomologie $[\tXgot_{\tw,\zeta_{r,s}}]\in H^*(\GL_{2n}(\C)/\tP(\zeta_{r,s}))$. 
Le sous-groupe de Borel de $\GL_{2n}(\C)$ compatible avec notre choix de chambre de Weyl est 
$\Lambda_n  B_{2n}\Lambda_n$. Ainsi $[\tXgot_{\tw,\zeta_1}]=\left[\overline{B_{2n}\Lambda_n \tw\, w' P_{r+s,r+s}/P_{r+s,r+s}}\right]$
est égal à la classe 
$$
\left[\Xgot_{L\subset L'}\right]\ \in\ H^*(\F(r+s,2n-r-s,2n))
$$
avec $L=u([r+s])$, $L'=u([2n-r-s])$ et $u=\Lambda_n \tw\, w'$.

Le résultat suivant est laissé à la discrétion du lecteur.
\begin{lem}
Nous avons $L'=L^{o,c}$, et donc $L\subset L^{o,c}$. 
\end{lem}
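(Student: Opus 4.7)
Le plan est de réécrire la condition $L'=L^{o,c}$ comme une propriété d'entrelacement de $u$ avec l'involution $\rho:k\mapsto 2n+1-k$ de $[2n]$. Puisque $L=u([r+s])$ et $L'=u([2n-r-s])$, un calcul direct montre que $L'=L^{o,c}$ équivaut à $u([2n-r-s+1,2n])=\rho(L)$, ou encore à $\tau_u([r+s])=[2n-r-s+1,2n]$, où $\tau_u := u^{-1}\rho\, u\in\Sgot_{2n}$. L'inclusion $L\subset L^{o,c}$ en découlera automatiquement : puisque $[r+s]$ et $[2n-r-s+1,2n]$ sont disjoints dès que $r+s<n$, on aura bien $L\cap L^o=u([r+s])\cap u([2n-r-s+1,2n])=\emptyset$.

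La deuxième étape consiste à simplifier l'expression de $\tau_u$. Un petit calcul matriciel direct donne $\Lambda_n\Upsilon_n\Lambda_n=\Theta_n$, ce qui se traduit au niveau des permutations par $\Lambda_n\upsilon_n\Lambda_n=\rho$, soit de façon équivalente $\Lambda_n\rho\Lambda_n=\upsilon_n$. En décomposant $u=\Lambda_n\tw w'$, il vient
\[ \tau_u = w'^{-1}\tw^{-1}(\Lambda_n\rho\Lambda_n)\tw w' = w'^{-1}(\tw^{-1}\upsilon_n\tw)w' = w'^{-1}\upsilon_n w', \]
la dernière égalité résultant du fait que $\tw$ centralise $\upsilon_n$ par définition même de $\tW_{\tagot}$. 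On notera que $\tau_u$ ne dépend pas de $\tw$, ce qui est cohérent avec le fait que $L'=L^{o,c}$ doive valoir pour tout $\tw\in\tW_{\tagot}$.

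Il reste à vérifier l'identité combinatoire $w'^{-1}\upsilon_n w'([r+s])=[2n-r-s+1,2n]$, qui constitue le cœur du lemme. Le sous-ensemble $[r+s]$ est la réunion des blocs sources $a_1=[1,r]$ et $a_2=[r+1,r+s]$. D'après la définition de $w'$ qui envoie $|a_1|a_2|a_3|a_4|a_5|a_6|$ sur $|a_1|a_3|a_5|a_6|a_4|a_2|$, la permutation $w'$ transforme $a_1$ en le bloc cible de position $1$, soit $[1,r]$, et $a_2$ en le bloc cible de position $6$, soit $[2n-s+1,2n]$. L'involution $\upsilon_n$ envoie ces deux ensembles respectivement sur $[n+1,n+r]$ et $[n-s+1,n]$, qui sont précisément les blocs cibles en positions $4$ et $3$ de la réécriture, c'est-à-dire les images par $w'$ des blocs sources $a_6=[2n-r+1,2n]$ et $a_5=[2n-r-s+1,2n-r]$. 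En appliquant $w'^{-1}$, on récupère donc $a_5\cup a_6=[2n-r-s+1,2n]$. L'obstacle principal ne sera pas conceptuel mais organisationnel : il s'agira de suivre avec rigueur la comptabilité des six blocs à travers $w'$, $\upsilon_n$, puis $w'^{-1}$, l'argument demeurant au final purement combinatoire.
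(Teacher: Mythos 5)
Votre preuve est correcte et correspond exactement à la vérification que le texte laisse au lecteur : la réduction à $\tau_u=w'^{-1}\upsilon_n w'$ (via $\Lambda_n\rho\Lambda_n=\upsilon_n$, qui découle de $\Lambda_n\Upsilon_n\Lambda_n=\Theta_n=J_{2n}$, et via la commutation $\tw\upsilon_n=\upsilon_n\tw$ définissant $\tW_{\tagot}$), puis la comptabilité des six blocs, donnent bien $\tau_u([r+s])=[2n-r-s+1,2n]$, c'est-à-dire $L'=L^{o,c}$. Notez seulement que l'inclusion $L\subset L^{o,c}$ s'obtient encore plus directement de $[r+s]\subset[2n-r-s]$, d'où $L\subset L'=L^{o,c}$, sans invoquer l'hypothèse $r+s<n$.
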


\`A $(w,w)\in W_{\agot}$, on associe la classe de cohomologie $[\Xgot_{w,\zeta_{r,s}}]\in H^*(\GL_n(\C)\times\GL_n(\C)/P(\zeta_{r,s}))$. Le groupe parabolique 
$P(\zeta_{r,s})$ est égal à $P_{r,s}\times J_nP_{s,r}J_n^{-1}$. D'autre part, le sous-groupe de Borel de $\GL_n(\C)\times\GL_n(\C)$ compatible avec 
notre choix de chambre de Weyl est $B_n\times J_n B_nJ_n^{-1}$. Ainsi 
$$
[\Xgot_{w,\zeta_{r,s}}]=\left[\overline{B_n w P_{r,s}/P_{r,s}}\right]\times \left[\overline{B_n J_n w J_nP_{s,r}/P_{s,r}}\right]
$$
En d'autre termes, la classe de cohomologie $[\Xgot_{w,\zeta_{r,s}}]$ est égale à
$[\Xgot_{I\subset J}]\times [\Xgot_{J^{o,c}\subset I^{o,c}}]\in H^*(\F(r,n-s,n))\times H^*(\F(s,n-r,n))$, où $I=w([r])$ et $J=w([n-s])$.

Nous nous intéressons aux couples $(w,\tw)$ pour lesquels 
$$
[\Xgot_{I\subset J}]\times [\Xgot_{J^{o,c}\subset I^{o,c}}]\cdot \iota_{\hgamma}^*\left(\left[\Xgot_{L\subset L^{o,c}}\right]\right)=[pt]
$$
et la donnée $(I\subset J,J^{o,c}\subset I^{o,c},L\subset L^{o,c})$ est Lévi-mobile. Grâce à la proposition \ref{prop:levi-mobile-application-2}, on sait que 
ces deux conditions sont satisfaites si et seulement si on a:
\begin{enumerate}\setlength{\itemsep}{8pt}
\item $\cc^{L^c}_{J,I^{o,c}}= 1$.
\item $\cc^{L^o\natural L^{c}}_{I\natural J,J^{o,c}\natural I^{o,c}}= 1$,
\item $|\mu(I)|+|\mu(J^{o,c})|+|\mu(L)|= \dim \G(r+s,2n-r-s)$.
\end{enumerate}

Ici, comme $|\mu(J^{o,c})|=|\mu(J)|$ et $|\mu(I^{o,c})|=|\mu(I)|$, on voit que la condition 3. est impliquée par la relation $\cc^{L^c}_{J,I^{o,c}}= 1$.
Finalement, un calcul direct montre que l'inégalité $((x,-x),\tw\zeta_{r,s})\geq ((y,-y),w_0w\zeta_{r,s})$ correspond à 
$$
|x|_{L^o\cap [n]}-|x|_{L\cap [n]}\geq |y|_{J^{o,c}}-|x|_{I^o}.
$$
Cela termine la preuve du théorème \ref{theo:pep-B-n}.

%%%%%%%%%%%%%%%%%%%%%%%%%%%%%%%%%%%%%%%%%%%%%%%%%%%%%%
%%%%%%%%%%%%%%%%%%%%%%%%%%%%%%%%%%%%%%%%%%%%%%%%%%%%%%
\chapter{Valeurs singulières versus valeurs singuli\`eres}
%%%%%%%%%%%%%%%%%%%%%%%%%%%%%%%%%%%%%%%%%%%%%%%%%%%%%%
%%%%%%%%%%%%%%%%%%%%%%%%%%%%%%%%%%%%%%%%%%%%%%%%%%%%%%

Soit $p\geq q\geq 1$ et $n=p+q$. Une matrice $X\in M_{n,n}(\C)$ s'écrit par blocs 
$X=\begin{pmatrix}
X_{11}& X_{12}\\
X_{21}& X_{22}
\end{pmatrix}$
où $X_{11}\in M_{p,p}(\C)$, $X_{22}\in M_{q,q}(\C)$, $X_{12}\in M_{p,q}(\C)$ et $X_{21}\in M_{q,p}(\C)$.

Le but de cette section est la description des cônes suivants :

$$
\Scal(p,q)=\Big\{(\s(X), \s(X_{12}),\s(X_{21})), \ X\in M_{n,n}(\C)\Big\},
$$
$$
\Tcal(p,q)=\Big\{(\s(X), \s(X_{11}),\s(X_{22})), \ X\in M_{n,n}(\C)\Big\}.
$$

\medskip

Dans cette section, nous utilisons les notations suivantes: 

\begin{itemize}
\item Si $x=(x_1,\ldots,x_m)$, on pose $x^*=(x_m,\ldots,x_1)$.
\item \`A tout $x\in \R^q$, on associe $\widehat{x}\,^{p,q}:=(x_1,\cdots,x_q,0,\cdots,0,-x_q,\cdots,-x_1)\in \R^n$.

\item Pour tout $z\in \R^n$, on pose $\widehat{z}\,^{n,n}:=(z_1,\cdots,z_n,-z_n,\cdots,-z_1)\in \R^{2n}$ 

\item Si $Y$ est une matrice complexe de taille $a\times b$, nous notons\footnote{On note $0_{ab}$ la matrice nulle de taile $a\times b$.}
$\widehat{Y}^{a,b}:=\begin{pmatrix}0_{aa}& Y\\Y^*& 0_{bb}\end{pmatrix}$
la matrice hermitienne de taille $a+b\times a+b$.  

\item \`A tout $x\in \R^a$, on associe la matrice anti-diagonale 
$\adiag(x)=\begin{pmatrix}
0&\cdots & x_1\\
\vdots &\reflectbox{$\ddots$}& \vdots\\
x_q&\cdots & 0\\
\end{pmatrix}
$.
\item $B_m\subset \GL_m(\C)$ est le sous-groupe de Borel des matrices triangulaires supérieures.
\end{itemize}

%%%%%%%%%%%%%%%%%%%%%%%%%%%%%%%%%%%%%%%%%%%%%%%%%%%%%%
%%%%%%%%%%%%%%%%%%%%%%%%%%%%%%%%%%%%%%%%%%%%%%%%%%%%%%
\section{Les cônes $\Scal(p,q)$}
%%%%%%%%%%%%%%%%%%%%%%%%%%%%%%%%%%%%%%%%%%%%%%%%%%%%%%
%%%%%%%%%%%%%%%%%%%%%%%%%%%%%%%%%%%%%%%%%%%%%%%%%%%%%%

Commençons par donner la description de $\Scal(p,q)$ obtenue avec le théorème d'O'Shea-Sjamaar (voir \cite[Section 4]{pep-toshi}).

\begin{prop}\label{prop:equivalence-S-p-q}
Pour $(z,x,y)\in \R^n_{++}\times\R^q_{++}\times\R^q_{++}$, on a l'équivalence suivante :
\begin{equation}\label{eq:equivalence-S-p-q}
(z,x,y)\in \Scal(p,q)\quad \Longleftrightarrow\quad (\,\widehat{z}\,^{n,n},\widehat{x}\,^{p,q}, \widehat{y}\, ^{p,q})\in \LR(n,n).
\end{equation}
\end{prop}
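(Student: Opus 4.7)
My plan is to establish the equivalence via Proposition \ref{prop:OSS-orbites}, using a direct construction for the forward direction and an involution argument for the reverse.

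For the forward direction, given $X \in M_{n,n}(\C)$ with singular value data $(z,x,y)$, I form the Hermitian $2n \times 2n$ matrix $\widehat X^{n,n}$, whose spectrum is $\widehat z^{n,n}$. A permutation matrix $P \in \upU(2n)$ that regroups the natural $(p,q,p,q)$-blocks of $\widehat X^{n,n}$ so that old blocks $1$ and $4$ occupy the first $n$ rows/columns (and old blocks $2$ and $3$ occupy the last $n$) transforms $\widehat X^{n,n}$ into a Hermitian matrix $P\widehat X^{n,n}P^T$ whose top-left $n \times n$ block is $\widehat{X_{12}}^{p,q}$ and whose bottom-right block is $\widehat{X_{21}}^{q,p}$. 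Reading off the three spectra yields $(\widehat z^{n,n}, \widehat x^{p,q}, \widehat y^{p,q}) \in \LR(n,n)$.

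For the reverse direction, I introduce the inner involution $\tilde\sigma(g) := \mathbf{J} g \mathbf{J}$ on $\tU := \upU(2n)$, where $\mathbf{J} := PI_{n,n}P^{T}$ with $I_{n,n} := \diag(I_n, -I_n)$. In the $(n,n)$-split, $\mathbf{J}$ equals $\diag(I_{p,q}, I_{q,p})$ and is block-diagonal, so the subgroup $U := \upU_n \times \upU_n$ (block-diagonal in the $(n,n)$-split) is $\tilde\sigma$-stable, with restricted involution $\sigma(g_1, g_2) = (I_{p,q}g_1 I_{p,q}, I_{q,p}g_2 I_{q,p})$. The $(-\tilde\sigma)$-anti-invariant Hermitian matrices in $\herm(2n)$ form the subspace $\{P\widehat W^{n,n}P^T : W \in M_{n,n}(\C)\}$ (Hermitian matrices anti-commuting with $\mathbf J$), while the $(-\sigma)$-anti-invariant pairs in $\herm(n)^2$ form $\{(\widehat A^{p,q}, \widehat B^{q,p})\}$. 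Picking representatives $W_z, A_x, B_y$ with $\s(W_z) = z$, $\s(A_x) = x$, $\s(B_y) = y$ and setting $\tilde\xi := P\widehat{W_z}^{n,n}P^T$, $\xi := (\widehat{A_x}^{p,q}, \widehat{B_y}^{q,p})$, one identifies the $\tK$-orbit of $\tilde\xi$ with $\{P\widehat W^{n,n}P^T : \s(W) = z\}$ (by Example \ref{ex:O-K-involution}) and the $K$-orbit of $\xi$ with $\{(\widehat A^{p,q}, \widehat B^{q,p}) : \s(A) = x, \s(B) = y\}$. Under the moment map $\pi : \herm(2n) \to \herm(n)^2$ projecting to the $(n,n)$-diagonal blocks, the conditions $(z,x,y) \in \Scal(p,q)$ and $(\widehat z^{n,n}, \widehat x^{p,q}, \widehat y^{p,q}) \in \LR(n,n)$ become respectively $K\xi \subset \pi(\tK\tilde\xi)$ and $U\xi \subset \pi(\tU\tilde\xi)$; Proposition \ref{prop:OSS-orbites} then gives their equivalence.

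The most delicate step will be correctly identifying the centralizer $\tK = \tU^{\tilde\sigma}$ of $\mathbf J$. A superficial reading of the sign pattern of $\mathbf J$ might suggest that $\tK$ is the four-factor group $\upU_p \times \upU_q \times \upU_q \times \upU_p$, but this is wrong: since $\mathbf J$ has only two distinct eigenvalues, its centralizer in $\upU(2n)$ is $\upU(n) \times \upU(n)$, acting on the two $n$-dimensional $\pm$-eigenspaces. After transferring by $P$-conjugation, $P^T\tK P$ is the standard block-diagonal $\upU(n) \times \upU(n) \subset \upU(2n)$, which acts on $\widehat W^{n,n}$ via $W \mapsto H_1 W H_2^*$ — transitive on singular-value orbits of $M_{n,n}(\C)$. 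This transitivity is precisely what validates the $\tK$-orbit identification and closes the argument.
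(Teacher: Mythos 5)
Your proposal is correct and is essentially the paper's own approach: the equivalence is exactly Proposition \ref{prop:OSS-orbites} applied to $U=\upU_n\times\upU_n\subset\tU=\upU_{2n}$ with involutions given by conjugation by signature matrices, whose real forms are $\upU(p,q)\times\upU(q,p)\subset\upU(n,n)$ — precisely the setup the paper puts in place for this cone, the only (cosmetic) difference being that you conjugate by an explicit permutation $P$ so as to work with the standard block-diagonal copy of $\upU_n\times\upU_n$, whereas the paper uses the permuted embedding placing $h$ in the middle $n\times n$ block. Your identifications of the anti-invariant Hermitian matrices, of $\tK$ as the centralizer of $\mathbf{J}$ acting by $W\mapsto H_1WH_2^*$, and of the two orbit inclusions with membership in $\Scal(p,q)$ and $\LR(n,n)$ all check out.
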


La description du cône $\LR(n,n)$ obtenue au théorème \ref{theo:LR-m-n}, permet de voir que $(z,x,y)\in \Scal(p,q)$ si et seulement si les conditions suivantes sont satisfaites
\begin{enumerate}\setlength{\itemsep}{8pt}
\item[i)] $z_k\geq \max(x_k,y_k)$,\quad $\forall k\in [q]$,
\item[ii)] $|z |_{L\cap[n]} \, -\,  |z |_{L^o\cap[n]}\ \geq\   | x |_{I\cap[q]} \,-\,  | x |_{I^o\cap[q]} \,+\,   | y |_{J\cap[q]} \,-\,  | y |_{J^o\cap[q]}$, pour tout triplet de sous ensembles stricts $L\subset [2n]$, $I,J\subset [n]$, satisfaisant $\sharp L =\sharp I +\sharp J$, et $\cc^{L}_{I,J}=1$.
\end{enumerate}

\medskip

Expliquons maintenant la description plus précise de $\Scal(p,q)$ obtenue avec le théorème \ref{theo:main-reel}. 
Pour cela, on a besoin des notations utilisées à la section \ref{sec:preuve-singular-horn}.
Un sous-ensemble $X_\bullet\subset [q]$ est dit polarisé, s'il admet une partition $X_\bullet=X_+\coprod X_-$. \`A un ensemble polarisé $X_\bullet\subset [q]$ de cardinal $r$, on associe 
\begin{itemize}\setlength{\itemsep}{8pt}
\item $X^p_\bullet=X_+\coprod \{p+q+1-x,x\in X_-\}\ \in\  \Bcal(r,p,q)$,
\item $\widetilde{X}^p_\bullet= X^p_\bullet \natural (X^p_\bullet)^{o}\  \in\  \Pcal(r,p+q-r)$.
\end{itemize}

\medskip

\`A chaque triplet $(I_\bullet\subset [q],J_\bullet\subset [q],L_\bullet\subset [n])$ de sous-ensembles polarisés, on associe l'inégalité
$$
(\dagger)_{I_\bullet,J_\bullet,L_\bullet} :\quad  \qquad    
|z |_{L_+} \, -\,  |z |_{L_{-}}\ \geq\   | x |_{I_+} \,-\,  | x |_{I_{-}} \,+\,   | y |_{J_+} \,-\,  | y |_{J_{-}},\qquad (z,x,y)\in \R^n\times \R^q\times\R^q.
$$

Voici le résultat principal de cette section.

\begin{theorem}\label{theo:S-p-q}
Un élément $(z,x,y)\in\R^n_{++}\times\R^q_{++}\times\R^q_{++}$ appartient à $\Scal(p,q)$ si et seulement si 
$z_k\geq \max(x_k,y_k)$,\ $\forall k\in [q]$, et si les inégalités $(\dagger)_{I_\bullet,J_\bullet,L_\bullet}$ sont vérifiées
pour tout triplet de sous-ensembles polarisés $(I_\bullet,J_\bullet,L_\bullet)$  satisfaisant les conditions suivantes :
\begin{enumerate}
\item $I_\bullet\subset [q]$, $J_\bullet \subset [q]$ et $L_\bullet\subset [n]$ sont des sous-ensembles stricts tels que 
$\sharp L_\bullet = \sharp I_\bullet + \sharp J_\bullet$.
\item $\cc^{L^n_\bullet}_{I^p_\bullet,J^p_\bullet}=1$.
\item $\cc^{\widetilde{L}^n_\bullet}_{\widetilde{I}^p_\bullet,\widetilde{J}^p_\bullet}=1$.
\end{enumerate}
\end{theorem}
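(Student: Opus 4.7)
The plan is to apply Theorem \ref{theo:main-reel} to the pair of real reductive linear groups $G = U(p,q)\times U(q,p)$ embedded in $\tG = U(n,n)$ (with $n=p+q$), following exactly the same strategy that was deployed for $\Bcal(n)$ in the proof of Theorem \ref{theo:pep-B-n}. First I would identify the appropriate involutions on $\tU_\C = \GL_{2n}(\C)$ and $U_\C = \GL_n(\C)\times\GL_n(\C)$ (twisting by conjugation with matrices of the form $\Upsilon_n$, $\Omega_n$, $\Lambda_n$ adapted to the $(p,q)$ decomposition) so that the Cartan components $\tpgot$ and $\pgot$ become respectively $M_{n,n}(\C)$ and $M_{p,q}(\C)\times M_{q,p}(\C)$, with the projection $\pi : \tpgot\to \pgot$ extracting precisely the off-diagonal blocks $(X_{12},X_{21})$. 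This shows $\Scal(p,q) \simeq \horn_\pgot(K,\tK)$ up to a reparametrization by the hat operation, and in particular provides an alternative derivation of Proposition \ref{prop:equivalence-S-p-q}.

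Next I would classify the admissible elements $\zeta\in\agot$ in the sense of Definition \ref{def:admissible-reel}. The restricted root system of $(\ggot,\tggot)$ here is the analogue (but richer) of the one computed in the proof of Theorem \ref{theo:pep-B-n}, reflecting both the $U(p,q)$ and $U(q,p)$ factors. As in that case, the analysis will split admissible elements into two families: small ones (analogous to $\pm\zeta_1$) producing the elementary inequalities $z_k \geq \max(x_k,y_k)$, and a main family $\zeta_{r,s}$-type producing the $(\dagger)_{I_\bullet,J_\bullet,L_\bullet}$ inequalities. The flag variety $\Fcal_\zeta$ corresponding to a main admissible element will be (a product involving) $\F(r,n-r;n)$-type varieties for the $G$-side and $\F(r+s,2n-r-s;2n)$ for the $\tG$-side, and the relevant morphism $\iota_\zeta$ between them is of the type covered by Proposition \ref{prop:levi-mobile-application-2}.

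The crucial step is then to translate the combined cohomological and Lévi-mobility conditions of Theorem \ref{theo:main-reel} into two Littlewood--Richardson conditions on Grassmannians. Here I would invoke Proposition \ref{prop:levi-mobile-application-2}: the equality of cohomology classes in $H^*(\F(r,n-r;n)\times\F(r,n-r;n),\Z)$ (or the analogous product) together with Lévi-mobility decomposes, via the Ressayre--Richmond multiplicative formula, as the conjunction of one LR condition on a large Grassmannian (giving $\cc^{L^n_\bullet}_{I^p_\bullet, J^p_\bullet}=1$, associated to the top step of the two-step flag) and a second LR condition on a smaller Grassmannian (giving $\cc^{\widetilde{L}^n_\bullet}_{\widetilde{I}^p_\bullet, \widetilde{J}^p_\bullet}=1$, associated to the bottom step). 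The polarized-set notation $X_\bullet = X_+\coprod X_-$ and the operations $X\mapsto X^p_\bullet$ and $X\mapsto \widetilde{X}^p_\bullet$ are exactly the combinatorial encoding of the action of the restricted Weyl group on Schubert cells, as in the proof of Theorem \ref{theo:singular-p-q} and Section \ref{sec:E-p-q}.

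The hardest part will be the bookkeeping in this combinatorial translation: one has to verify that the parabolic subgroups $\Pbb(\zeta_{r,s})$ and $\tPbb(\zeta_{r,s})$ correspond under the chosen Borel subgroups to the flag varieties stated above, that the restricted Weyl elements parametrize Schubert cells indexed exactly by triples of polarized subsets $(I_\bullet,J_\bullet,L_\bullet)$ with $\sharp L_\bullet = \sharp I_\bullet + \sharp J_\bullet$, and that the numerical condition (the Lévi-mobility trace identity of Theorem \ref{theo:main-reel}$(c)$) becomes automatically satisfied once the two LR conditions hold. Finally, one must check that the elementary inequalities $z_k \geq \max(x_k,y_k)$ coming from the small admissible elements are not already covered by the $(\dagger)_{I_\bullet,J_\bullet,L_\bullet}$ family (they correspond to $r=1$ polarized sets of minimal type) and extract them separately, mirroring the extraction of $-x_{n+1-k}\leq y_k\leq x_k$ in the proof of Theorem \ref{theo:pep-B-n}.
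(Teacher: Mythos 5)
Your proposal follows essentially the same route as the paper's proof: realizing $\Scal(p,q)$ as the cone $\horn_\pgot(K,\tK)$ for $G=U(p,q)\times U(q,p)\hookrightarrow \tG=U(n,n)$, classifying the admissible elements into the rank-one family (giving $z_k\geq\max(x_k,y_k)$, which in the paper corresponds to $(r,s)\in\{(0,1),(1,0)\}$) and the main family $\zeta_{r,s}$, identifying the flag varieties as $\F(r,n-r;n)\times\F(s,n-s;n)\hookrightarrow \F(r+s,2n-r-s;2n)$, and converting the cohomological plus Lévi-mobility conditions of Théorème \ref{theo:main-reel} into the two Littlewood--Richardson conditions via Proposition \ref{prop:levi-mobile-application-2}, with the polarized-set combinatorics encoding the restricted Weyl group. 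This is exactly the structure of the paper's argument, so the plan is sound.
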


\begin{rem}
Le théorème précédent est encore vrai avec les conditions plus faibles : $\cc^{L^n_\bullet}_{I^p_\bullet,J^p_\bullet}\neq 0$ et  
$\cc^{\widetilde{L}^n_\bullet}_{\widetilde{I}^p_\bullet,\widetilde{J}^p_\bullet}\neq 0$. 
La proposition \ref{prop:equivalence-S-p-q} permet de voir que le résultat du théorème \ref{theo:S-p-q} est encore valable 
si on ne considère que les conditions {\em 1.} et {\em 2.}.
\end{rem}

Le reste de cette section est consacrée à la preuve du théorème \ref{theo:S-p-q}.

%%%%%%%%%%%%%%%%%%%%%%%%%%%%%%%%%%%%%%
\subsection{Les groupes $G:= U(p,q)\times U(q,p)\ \hookrightarrow\ \tG:= U(n,n)$}
%%%%%%%%%%%%%%%%%%%%%%%%%%%%%%%%%%%%%%

Nous travaillons avec le morphisme de groupes $\iota: \GL_n(\C)\times \GL_n(\C)\to  \GL_{2n}(\C)$ défini comme suit:
\begin{equation}\label{eq:embedding-p-q}
\iota(g,h)= \begin{pmatrix}
g_{11}& 0 & g_{12}\\
0& h & 0\\
g_{21}& 0 & g_{22}
\end{pmatrix},\qquad \mathrm{lorsque}\qquad g= \begin{pmatrix}
g_{11}&  g_{12}\\
g_{21}&  g_{22}
\end{pmatrix}.
\end{equation}
Ici, $g_{11}\in M_{p,p}(\C)$, $g_{12}\in M_{p,q}(\C)$, $g_{2,1}\in M_{q,p}(\C)$ et $g_{22}\in M_{q,q}(\C)$.

Le groupe $\GL_n(\C)\times \GL_n(\C)$ est muni de l'involution anti-homolorphe 
$$
\sigma(g,h):=\left(I_{p,q}(g^*)^{-1}I_{p,q}, I_{q,p}(h^*)^{-1}I_{q,p}\right).
$$ 
Le sous-groupe fixé par $\sigma$ est $G:= U(p,q)\times U(q,p)$. Le groupe $\GL_{2n}(\C)$ est muni de l'involution anti-homolorphe 
$\tilde{\sigma}(u):=I_{n,n}(u^*)^{-1}I_{n,n}$, ainsi $\tG:= U(n,n)$ est le sous-groupe fixé par $\tilde{\sigma}$. 
Comme $\iota\circ\sigma=\tilde{\sigma}\circ \iota$, on voit que $\iota$ induit un morphisme de groupe 
$\iota: U(p,q)\times U(q,p)\to U(n,n)$.

%%%%%%%%%%%%%%%%%%%%%%%
\subsubsection*{Choix pour $U(n,n)$}
%%%%%%%%%%%%%%%%%%%%%%%

Considérons la complexification $GL_{2n}(\C)$ de $G=U(n,n)$. Son sous-groupe compact maximal $U_{2n}$ est équipé de l'involution 
$\tilde{\sigma}(g)=I_{n,n}(g^*)^{-1}I_{n,n}$. Le sous-espace $\tpgot=\{\widehat{X}^{n,n},X\in M_n(\C)\}\subset\glgot_{2n}(\C)$ admet une action canonique de 
$\tK=U_n\times U_n= (U_{2n})^\sigma$.

À tout $(x,z)\in \R^n\times \R^n$, nous associons la matrice $X(x,z):=\begin{pmatrix}\diag(x)& A(z)\\ A(z^*)& \diag(x^*)\end{pmatrix}$. Nous travaillons avec 
le tore maximal $T_0\subset U_{2n}$ avec algèbre de Lie $\tgot_0:=\left\{i X(x,z), \ x,z\in\R^n\right\}$. Remarquons que $T_0$ 
est invariant sous l'action de $\sigma$ et que $\frac{1}{i}\tgot_0^{-\sigma}$ et égal à la sous-algèbre abélienne maximale 
$\tagot=\{X(0,z), z\in \R^n\}\subset\tpgot$.

L'involution $\Ad(\theta_0)$, avec $\theta_0=\tfrac{1}{\sqrt{2}}\begin{pmatrix}I_n& J_n\\ J_n& -I_n\end{pmatrix}$, définit un isomorphisme entre 
$T_0$ et le tore maximal $T\subset U_{2n}$ formé par les matrices diagonales. En particulier, $\Ad(\theta_0)(X(x,z))=\diag(x+z,x^*-z^*)$. Le groupe de Weyl restreint $W_{\tagot}$ s'identifie, à travers $Ad(\theta)$, au sous-groupe de 
$W_0\subset\Sgot_{2n}$ formé des permutations $w$ satisfaisant $w(k^o)=w(k)^o, \forall k\in [2n]$.

Nous travaillons avec la chambre de Weyl $\tgot_{0,+}=\Ad(\theta_0)(\tgot^{2n}_+)$ où $\tgot^{2n}_+=\{i\diag(u),u\in \R^{2n}_+\}$. Ainsi 
$\frac{1}{i}\left(\tgot_0^{-\sigma}\cap\tgot_{0,+}\right)$ est égal à la chambre de Weyl restreinte $\tagot_+:=\left\{ X(0,z), \ z\in\R^n_{++}\right\}$. 
Le sous-groupe de Borel correspondant
$\tB\subset GL_{2n}(\C)$ est égal à $\Ad(\theta_0)(B_{2n})$.

%%%%%%%%%%%%%%%%%%%%%%%
\subsubsection*{Choix pour $U(p,q)$}
%%%%%%%%%%%%%%%%%%%%%%%

Nous considérons maintenant $G_1=U(p,q)$ et sa complexification $GL_{n}(\C)$ : son sous-groupe compact maximal $U_{n}$ est équipé de l'involution 
$\sigma_1(h)=I_{p,q}(h^*)^{-1}I_{p,q}$. Le sous-espace $\pgot_1=\{\widehat{X}^{p,q},X\in M_{p,q}(\C)\}\subset\glgot_{n}(\C)$ admet une action canonique de 
$K_1=U_p\times U_q= (U_{n})^{\sigma_1}$.

À tout $(x,y,z)\in \R^q\times\R^{p-q}\times \R^q$, nous associons la matrice
$$
X_1(x,y,z)=\begin{pmatrix} 
\diag(x)&  0& A(z)\\
0& \diag(y)& 0\\
 A(z^*)&  0& \diag(x^*)
\end{pmatrix}.
$$

Nous travaillons avec le tore maximal $T_1\subset U_{n}$ avec algèbre de Lie $\tgot_1:=\{i\,X_1(x,y,z), \ (x,y,z)\in \R^q\times\R^{p-q}\times \R^q\}$. 
Remarquons que $T_1$ est invariant sous $\sigma_1$ : nous avons $\sigma_1(iX_1(x,y,z))=iX_1(x,y,-z)$, ainsi 
$\agot_1:=\frac{1}{i}\tgot_1^{-\sigma_1}=\{X_1(0,0,z), \ z\in \R^q\}$ est une sous-algèbre abélienne maximale de $\pgot_1$.

L'involution $\Ad(\theta_1)$, avec $\theta_1\in O(n)$ définie par (\ref{eq:theta}), 
détermine un isomorphisme entre $T_1$ et le tore maximal $T\subset U_{n}$ formé par les matrices diagonales. L'image de $X_1(x,y,z)$
par $\Ad(\theta_1)$ est égale à $\diag(x+z,y,x^*-z^*)$. Le groupe de Weyl restreint $W_{\agot_1}$ s'identifie, à travers $\Ad(\theta_1)$, 
au sous-groupe de $W_1\subset\Sgot_{n}$ formé des permutations $w$ satisfaisant $w(k^o)=w(k)^o, \forall k\in [q]$ et $w(k)=k, \forall k\in [q+1,\ldots,p]$.

Nous travaillons avec la chambre de Weyl $\tgot_{1,+}=\Ad(\theta_1)(\tgot_+)$ où $\tgot^n_+=\{i\diag(u),u\in \R^{n}_+\}$. Ainsi 
$\frac{1}{i}\left(\tgot_1^{-\sigma_1}\cap\tgot_{1,+}\right)$ est égal à la chambre de Weyl restreinte 
$\agot_{1,+}:=\{X_1(0,0,z), z\in \R^q_{++}\}$. 
Le sous-groupe de Borel correspondant
$B_1\subset GL_{n}(\C)$ est égal à $\Ad(\theta_1)(B_{n})$.

%%%%%%%%%%%%%%%%%%%%%%%
\subsubsection*{Choix pour $U(q,p)$}
%%%%%%%%%%%%%%%%%%%%%%%

Nous considérons maintenant $G_2=U(q,p)$ et sa complexification $GL_{n}(\C)$ : son sous-groupe compact maximal $U_{n}$ est équipé de l'involution 
$\sigma_2(h)=I_{q,p}(h^*)^{-1}I_{q,p}$. Le sous-espace $\pgot_2=\{\widehat{X}^{q,p},X\in M_{q,p}(\C)\}\subset\glgot_{n}(\C)$ admet une action canonique de 
$K_2=U_q\times U_p= (U_{n})^{\sigma_2}$.

À tout $(x,y,z)\in \R^q\times\R^{p-q}\times \R^q$, nous associons la matrice
$$
X_2(x,y,z)=\begin{pmatrix} 
\diag(x)& A(z) & 0\\
A(z^*)&  \diag(x^*) & 0 \\
0& 0& \diag(y)
\end{pmatrix}.
$$

Nous travaillons avec le tore maximal $T_2\subset U_{n}$ avec algèbre de Lie $\tgot_2:=\{i\,X_2(x,y,z), \ (x,y,z)\in \R^q\times\R^{p-q}\times \R^{q}\}$. 
Remarquons que $T_2$ est invariant sous $\sigma_2$ : nous avons $\sigma_2(iX_2(x,y,z))=iX_2(x,y,-z)$, donc 
$\agot_2:=\frac{1}{i}\tgot_2^{-\sigma_2}=\{X_2(0,0,z), \ z\in \R^q\}$ 
est une sous-algèbre abélienne maximale de $\pgot_2$.

L'involution $\Ad(\theta_2)$, avec 
$$
\theta_2=\frac{1}{\sqrt{2}}
\begin{pmatrix}
I_q& J_q&0\\
J_q& -I_q & 0\\
0& 0 &\sqrt{2}I_{p-q} 
\end{pmatrix},
$$
définit un isomorphisme entre $T_2$ et le tore maximal $T\subset \upU_{n}$ formé par les matrices diagonales. L'image de $X_2(x,y,z)$
par $\Ad(\theta_2)$ est égale à $\diag(x+z,x^*- z^*, y)$. Soit $\upsilon\in \Sgot_n$ la permutation définie comme suit : $\upsilon(k)=k$ si 
$k\leq q$, $\upsilon(k)=k+q$ si $q+1\leq k\leq p$, et $\upsilon(k)=k+q-p$ si $p+1\leq k\leq n$. 
Soit $Q_\upsilon\in N(T)$ la matrice de permutation associée à $\upsilon$. Ainsi, L'image de $X_2(x,y,z)$
par $\Ad(Q_\upsilon\theta_2)$ est égale à $\diag(x+z,y,x^*- z^*)$.

Le groupe de Weyl restreint $W_{\agot_2}$ s'identifie, à travers $Ad(Q_\upsilon\theta_2)$, 
au sous-groupe $W_2\subset \Sgot_{n}$ formé des permutations $w$ satisfaisant $w(k^o)=w(k)^o, \forall k\in [q]$ et $w(k)=k, \forall k\in [q+1,\ldots,p]$. 
Nous travaillons avec la chambre de Weyl $\tgot_{2,+}=\Ad(\theta_2Q_\upsilon^{-1})(\tgot^n_+)$. Ainsi 
$\frac{1}{i}\left(\tgot_2^{-\sigma_2}\cap\tgot_{2,+}\right)$ est égal à la chambre de Weyl restreinte 
$\agot_{2,+}:=\{X_2(0,0,z), z\in \R^q_{++}\}$. Le sous-groupe de Borel correspondant est égal à $B_2:=\Ad(\theta_2Q_\upsilon^{-1})(B_{n})$.

%%%%%%%%%%%%%%%%%%%%%%%%%%%%%%%%%%%%%%
\subsection{\'Eléments admissibles}
%%%%%%%%%%%%%%%%%%%%%%%%%%%%%%%%%%%%%%

Le groupe $\tK:=\tG\cap \upU_{2n}\simeq \upU_n\times \upU_n$ est un sous-groupe compact maximal de $\tG$, et le sous-espace 
$\tpgot$ admet une identification $\tK$-équivariante avec $M_{n,n}(\C)$ à travers l'application $X\mapsto \widehat{X}^{n,n}$.

Le sous-groupe $K= K_1\times K_2$, avec $K_1:=U(p,q)\cap \upU_{n}\simeq \upU_{p}\times \upU_{q}$ et 
$K_2:=U(q,p)\cap \upU_{n}\simeq \upU_{q}\times \upU_{p}$, est un sous-groupe compact maximal de $G$, et le sous-espace 
$\pgot= \pgot_1\times\pgot_2$ admet une identification $K$-équivariante avec $M_{p,q}(\C)\times M_{q,p}(\C)$:
$$
(Y,Z)\in M_{p,q}(\C)\times M_{q,p}(\C)\longmapsto (\widehat{Y}^{p,q},\widehat{Z}^{q,p})\in \pgot_1\times\pgot_2.
$$

Notons encore $\iota: \glgot_n(\C)\times \glgot_n(\C)\croc \glgot_{2n}(\C)$ le morphisme d'algèbre de Lie défini par (\ref{eq:embedding-p-q}). 
On vérifie que pour tout $(Y,Z)\in M_{p,q}(\C)\times M_{q,p}(\C)$, on a la relation
$$
\iota\left(\widehat{Y}^{p,q},\widehat{Z}^{q,p}\right)=\widehat{X}^{n,n}\quad\mathrm{avec}\quad 
X:=\begin{pmatrix}
0_{pp}&  Y\\
Z&  0_{qq}
\end{pmatrix}.
$$

Considérons les isomorphismes $\varphi:\R^n\to \agot$, $\varphi_1:\R^q\to \agot_1$, et $\varphi_2:\R^q\to \agot_2$ définis respectivement par 
$\varphi(z)=X(0,0,z)$, $\varphi_1(x)=X_1(0,0,x)$ et $\varphi_2(y)=X_2(0,0,y)$. Le morphisme $\iota:\pgot\to\tpgot$ envoie $(\varphi_1(x),\varphi_2(y))$ vers $\varphi(z)$ 
où $z=(x,0,\ldots,0,y)$.

Soit $\Sigma(\tggot/\ggot)\subset\agot^*$ l'ensemble des racines non nulles par rapport à l'action adjointe de $\agot$ sur $\tggot/\ggot$. L'isomorphisme $\varphi_1$ permet d'identifier l'espace dual 
$\agot_1^*$ avec $(\R^q)^*$ : soit $e_i^*,i\in [q]$ la base canonique correspondante de $\agot_1^*$. De même, l'isomorphisme $\varphi_2$ permet de considérer une base, $f_j^*,j\in [q]$, de $\agot_1^*$.

\begin{lem}
Nous avons
$$
\Sigma(\tggot/\ggot)=\left\{\pm e_i^*\pm f_j^*,\ i,j\in[q]\right\}\bigcup\left\{\pm e_i^*,\ i\in[q]\right\}\bigcup\left\{\pm f_j^*,\ j\in[q]\right\}.
$$
\end{lem}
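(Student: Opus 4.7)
\emph{Plan de preuve.} La stratégie consiste à choisir un complément $\mathfrak{m}$ de $\ggot_\C$ dans $\tggot_\C$ adapté à la décomposition en blocs du plongement $\iota$, puis à calculer explicitement les poids de $\agot_1 \oplus \agot_2$ agissant sur $\mathfrak{m}$ via $\ad$.

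En utilisant la structure en blocs $p+n+q$ de $\glgot_{2n}(\C)$ sous-jacente à (\ref{eq:embedding-p-q}), l'image $\iota(\ggot_\C)$ correspond aux matrices dont les blocs aux positions $(1,2), (2,1), (2,3), (3,2)$ sont nuls ; je prendrai pour complément les matrices où seuls ces blocs-là sont non nuls :
$$\mathfrak{m} = \left\{ \begin{pmatrix} 0 & M_{12} & 0 \\ M_{21} & 0 & M_{23} \\ 0 & M_{32} & 0 \end{pmatrix}\right\}.$$
Sa dimension complexe $2n(p+q) = 2n^2$ coïncide bien avec $\dim_\R(\tggot/\ggot)$. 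Un calcul direct de $[\zeta_1 + \zeta_2, M]$, avec $\zeta_1 = \iota(X_1(0,0,x),0)$ et $\zeta_2 = \iota(0, X_2(0,0,y))$, montrera que $\mathfrak{m}$ est stable sous cette action et se décompose en deux sous-espaces $\mathfrak{m}' \oplus \mathfrak{m}''$ de dimension $n^2$ chacun : en empilant verticalement $(M_{12}, M_{32})$ en une matrice $N \in M_n(\C)$ et en concaténant horizontalement $(M_{21}, M_{23})$ en $N' \in M_n(\C)$, les actions deviennent
$$N \longmapsto X_1 N - N X_2 \quad\text{et}\quad N' \longmapsto X_2 N' - N' X_1.$$
Le point crucial est que $X_1(0,0,x)$, vu comme matrice $n \times n$ dans sa décomposition en blocs $p + q$, coïncide exactement avec la matrice reconstituée à partir des blocs $(1,3)$ et $(3,1)$ de $\iota(X_1,0)$.

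Pour conclure, j'utiliserai les identités $\Ad(\theta_1)(X_1(0,0,x)) = \diag(x, 0_{p-q}, -x^*)$ et $\Ad(\theta_2)(X_2(0,0,y)) = \diag(y, -y^*, 0_{p-q})$ rappelées plus haut. Les valeurs propres de $X_1$, vues comme formes linéaires sur $\agot_1$, sont $\pm e_i^*$ (multiplicité $1$, $i \in [q]$) et $0$ (multiplicité $p-q$), et symétriquement pour $X_2$ avec les $\pm f_j^*$. Les poids de $N \mapsto X_1 N - N X_2$ sont donc les différences $\alpha - \beta$ de valeurs propres, produisant exactement les formes linéaires $\pm e_i^* \pm f_j^*$ (toutes combinaisons de signes), $\pm e_i^*$ (de multiplicité $p-q$, pour $\beta = 0$), et $\pm f_j^*$ (de multiplicité $p-q$, pour $\alpha = 0$). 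L'action sur $\mathfrak{m}''$ donne les différences $\beta - \alpha$ qui, grâce à la symétrie $\lambda \mapsto -\lambda$ des spectres de $X_1$ et $X_2$, fournissent le même ensemble de poids non nuls, ce qui établit l'énoncé du lemme.

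L'obstacle principal sera la vérification soigneuse des identifications matricielles — en particulier la coïncidence entre $X_1(0,0,x)$ dans la structure $p+q$ de $\glgot_n(\C)$ et la matrice antidiagonale reconstituée à partir des blocs $(1,3)$ et $(3,1)$ de $\iota(X_1,0)$ dans la structure $p+n+q$ de $\glgot_{2n}(\C)$ — ainsi que le suivi précis des signes dans les deux calculs de crochet.
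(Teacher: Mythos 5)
Votre proposition est correcte, mais elle emprunte une route réellement différente de celle du texte. Le texte réalise $\ggot$ comme points fixes de l'involution $\tau=\Ad(\diag(I_p,-I_q,-I_p,I_q))$ de $\tggot$, identifie $\tggot/\ggot$ à $\tggot^{-\tau}$, puis exploite la décomposition de $\tggot=\ugot(n,n)$ en espaces de racines restreintes relatives à $\tagot$ (les formes $\pm h_i^*\pm h_j^*$) et le fait que $\agot=\tagot^\tau$ : les éléments de $\Sigma(\tggot/\ggot)$ sont les restrictions à $\agot$ des racines non fixées par $\tau$ (qui donnent $\pm e_i^*$, $\pm f_j^*$ et $0$) et des racines fixées dont l'espace radiciel rencontre $\tggot^{-\tau}$ (qui donnent $\pm e_i^*\pm f_j^*$, via la description des espaces radiciels chez Knapp). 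Vous travaillez au contraire dans la complexification : vous prenez le supplément $\ad(\ggot_\C)$-stable $\mathfrak{m}$ formé des blocs hors de l'image de $\iota$, vous l'identifiez à $M_{n,n}(\C)\oplus M_{n,n}(\C)$ muni des actions $N\mapsto X_1N-NX_2$ et $N'\mapsto X_2N'-N'X_1$ (identification que j'ai vérifiée : elle est exacte, et elle vaut d'ailleurs pour $X_1$ quelconque, pas seulement lorsque ses blocs diagonaux sont nuls), puis vous lisez les poids comme différences des valeurs propres de $X_1(0,0,x)$ et $X_2(0,0,y)$, connues par conjugaison par $\theta_1$ et $\theta_2$. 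Votre argument est plus élémentaire et auto-contenu (aucun appel à la classification des espaces radiciels) et fournit en prime les multiplicités ; l'approche du texte reste dans le formalisme des racines restreintes utilisé partout ailleurs et évite le choix d'un supplément explicite. Deux compléments mineurs : il faudrait dire en une phrase que les poids non nuls de $\agot$ sur le module réel $\tggot/\ggot$ coïncident avec ceux sur $\tggot_\C/\ggot_\C\simeq\mathfrak{m}$ (l'action de $\agot$ est diagonalisable à valeurs propres réelles, la complexification ne change donc pas l'ensemble des poids) ; et votre décompte rend visible que $\pm e_i^*$ et $\pm f_j^*$ apparaissent avec multiplicité $p-q$, de sorte que l'égalité de l'énoncé n'est littéralement vraie que pour $p>q$ — pour $p=q$ l'ensemble se réduit à $\{\pm e_i^*\pm f_j^*\}$, subtilité que la preuve du texte partage puisqu'elle repose sur l'existence de racines non fixées par $\tau$.
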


\begin{proof} Considérons l'involution $\tau=\Ad(\diag(I_p,-I_q,-I_p,I_q))$ sur $\tG$, de sorte que $G$ soit le sous-groupe fixé par $\tau$.
Le sous-espace $\tagot$ est invariant par $\tau$ et $\agot\simeq \tagot^\tau$. L'ensemble des racines restreintes $\Sigma(\tggot)\subset\tagot^*$ comprend toutes les formes
$\pm h_i^* \pm h_j^*, i,j\in [n]$ : ici $h_j^*,j\in[n]$ est la base canonique de $\tagot^*$ associée à l'isomorphisme $\varphi:\R^n\to\tagot$. 
En utilisant la décomposition $\tggot=\tagot\oplus\sum_{\tilde{\alpha}\in\Sigma(\tggot)}\tggot_{\tilde{\alpha}}$, on voit que
 
$$
\tggot/\ggot\simeq \tggot^{-\tau}=
\tagot^{-\tau}\oplus\sum_{\tau(\tilde{\alpha})\neq\tilde{\alpha}}\left(\tggot_{\tilde{\alpha}}\oplus \tggot_{\tau(\tilde{\alpha})}\right)^{-\tau}\oplus 
\sum_{\tau(\tilde{\alpha})=\tilde{\alpha}}\left(\tggot_{\tilde{\alpha}}\right)^{-\tau}
$$
Ainsi, $\Sigma(\tggot/\ggot)$ est formé des restrictions $\tilde{\alpha}\vert_{\agot}$, où $\tilde{\alpha}\in\Sigma(\tggot)$ 
satisfait l'une des conditions suivantes: $\tau(\tilde{\alpha})\neq\tilde{\alpha}$ ou $\tau(\tilde{\alpha})=\tilde{\alpha}$ et 
$\left(\tggot_{\tilde{\alpha}}\right)^{-\tau}\neq 0$.
Nous voyons que $\{\tilde{\alpha}\vert_{\agot},\ \tau(\tilde{\alpha})\neq\tilde{\alpha}\}$ est égal à $\{\pm e_i^*,\ i\in[q]\}\cup\{\pm f_j^*,\ j\in[q]\}\cup\{0\}$. 
En utilisant la description des espaces propres $\tggot_{\tilde{\alpha}}$ donnée dans \cite{Knapp-book}, \S VI.4,  nous obtenons que
 $\{\tilde{\alpha}\vert_{\agot},\ \tau(\tilde{\alpha})=\tilde{\alpha}\ \mathrm{et}\ \left(\tggot_{\tilde{\alpha}}\right)^{-\tau}\neq 0\}$ est égal à 
$\{\pm e_i^*\pm f_j^*,\ i,j\in[q]\}$.
\end{proof}

Nous voulons maintenant déterminer l'ensemble des éléments admissibles de $\agot$  par rapport à l'ensemble des racines $\Sigma(\tggot/\ggot)$, c'est-à-dire les vecteurs non nuls $\zeta\in \agot$ satisfaisant $\vect(\Sigma(\tggot/\ggot)\cap\zeta^\perp)=\zeta^{\perp}$. L'ensemble des éléments admissibles est invariant sous l'action du groupe de Weyl restreint $W_\agot=W_{\agot_1}\times W_{\agot_2} \simeq (\Sgot_q\rtimes \{\pm 1\}^q)^2$.

Pour des entiers $r,s\leq q$, nous définissons les vecteurs $\zeta_r^1=\varphi_1(-\sum_{i\leq r} e_i)\in \agot_1$, $\zeta_s^2=\varphi_2(-\sum_{j\leq s} f_j)\in \agot_2$.

\begin{lem}\label{lem:admissible-S-p-q} $\zeta\in\agot$ est un élément rationnel admissible si et seulement si il existe $(q,w)\in \Q^{>0}\times W_\agot$ et 
$(r,s)\in \{(0,1),(1,0)\}\cup [q]\times[q]$, tels que $\zeta= q\, w(\zeta_r^1,\zeta_s^2)$.
\end{lem}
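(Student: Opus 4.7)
The plan is to reduce to a normalized form using the restricted Weyl group action, then translate admissibility into a simple combinatorial counting problem on integer partitions of the indices.

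First, I will write $\zeta = \varphi_1(x) + \varphi_2(y)$ with $x, y \in \Q^q$. Since admissibility is invariant under $W_\agot = (\Sgot_q \rtimes \{\pm 1\}^q)^2$, and this group acts by independent permutations and sign changes on $x$ and on $y$, I can assume $x_1 \geq \cdots \geq x_q \geq 0$ and $y_1 \geq \cdots \geq y_q \geq 0$. Using the explicit description of $\Sigma(\tggot/\ggot)$ from the previous lemma, the orthogonality conditions $\zeta \perp \alpha$ become: $x_i = 0$ (for $\pm e_i^*$), $y_j = 0$ (for $\pm f_j^*$), $x_i = y_j$ (for $\pm(e_i^* - f_j^*)$), and $x_i + y_j = 0$ (for $\pm(e_i^* + f_j^*)$). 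In the chosen chamber, the last condition forces $x_i = y_j = 0$, so the roots $\pm(e_i^* + f_j^*)$ contribute nothing new to $\vect\bigl(\Sigma(\tggot/\ggot) \cap \zeta^\perp\bigr)$ beyond what $\pm e_i^*$ and $\pm f_j^*$ already contribute.

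Next I will partition the index sets by common values: let $A_0 = \{i : x_i = 0\}$, $B_0 = \{j : y_j = 0\}$, and for each positive value $a$ set $A_a = \{i : x_i = a\}$, $B_a = \{j : y_j = a\}$. For fixed $a > 0$, the family $\{e_i^* - f_j^* : i \in A_a, j \in B_a\}$ spans a subspace of $\vect(e_i^*, i\in A_a)\oplus\vect(f_j^*, j\in B_a)$ of dimension $|A_a| + |B_a| - 1$ when both $A_a$ and $B_a$ are non-empty, and contributes nothing otherwise. A direct count then gives
\begin{equation*}
\dim \vect\bigl(\Sigma(\tggot/\ggot) \cap \zeta^\perp\bigr) = 2q - \delta(\zeta),
\end{equation*}
where
\begin{equation*}
\delta(\zeta) := \sum_{a > 0,\, B_a = \emptyset} |A_a| + \sum_{a > 0,\, A_a = \emptyset} |B_a| + \sharp\{a > 0 : A_a \neq \emptyset \neq B_a\}.
\end{equation*}
Admissibility, which asks $\dim \vect(\Sigma(\tggot/\ggot) \cap \zeta^\perp) = 2q - 1$, is therefore equivalent to $\delta(\zeta) = 1$.

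Finally I will enumerate the non-negative integer solutions of $\delta(\zeta) = 1$. Exactly one of the three non-negative summands equals $1$ and the others vanish, yielding three mutually exclusive cases: (i) a unique positive value $a$ is common to $x$ and $y$, occurring $r := |A_a| \in [q]$ times in $x$ and $s := |B_a| \in [q]$ times in $y$, so that $\zeta = a\cdot w(\zeta_r^1, \zeta_s^2)$ for some $w \in W_\agot$; (ii) $y = 0$ and $x$ has a unique nonzero entry, giving $\zeta = a\cdot w(\zeta_1^1, 0)$; (iii) symmetrically, $x = 0$ and $y$ has a unique nonzero entry, giving $\zeta = a\cdot w(0, \zeta_1^2)$. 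This is exactly the parametrization $(r,s) \in [q]\times[q] \cup \{(1,0),(0,1)\}$ with rational scalar $a$, and conversely each such element obviously satisfies $\delta = 1$. The only subtle step is the chamber-dependent collapse of the roots $\pm(e_i^* + f_j^*)$; everything else is routine combinatorial bookkeeping.
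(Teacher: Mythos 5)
Your proof is correct and follows essentially the same route as the paper: normalize $\zeta$ by the restricted Weyl group, group the coordinates into level sets, compute the dimension of $\vect\bigl(\Sigma(\tggot/\ggot)\cap\zeta^\perp\bigr)$ by a rank count, and reduce admissibility to the combinatorial equation $\delta(\zeta)=1$ whose solutions are exactly the three families $(r,s)\in[q]\times[q]\cup\{(1,0),(0,1)\}$. The paper's argument differs only cosmetically (it normalizes the coefficients to be positive rather than dominant and writes out the explicit spanning family), so nothing further is needed.
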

\begin{proof}Soit $\zeta=(\zeta_1,\zeta_2)$ un élément rationnel admissible. \'Ecrivons
$\zeta_1=\sum_{a\in B_1} a \mathbf{1}_{X(a)}$, $\zeta_2=\sum_{a\in B_2} b \mathbf{1}_{Y(b)}$, où $X(a),Y(b)\subset [q]$ sont des sous-ensembles non-vides, $B_1,B_2\subset \R-\{0\}$ sont finis, 
$1_{X(a)}=\sum_{i\in X(a)}e_i^*$ et $1_{Y(b)}=\sum_{j\in Y(b)}f_j^*$. Quitte à multiplier $(\zeta_r^1,\zeta_s^2)$ par un élément de $W_\agot$, on peut supposer que $B_1,B_2\subset \R^{>0}$. Posons 
$X=\cup_{a\in B_1}X(a)$ et $Y=\cup_{b\in B_2}Y(a)$. On voit alors que $\vect(\Sigma(\tggot/\ggot)\cap\zeta^\perp)$ admet comme base la famille 
$$
\{e_i, i\notin X\}\cup\{f_j, j\notin Y\}\cup\bigcup_{a\in B_1\cap B_2}\{e_i-f_j, (i,j)\in X(a)\times Y(a)\}.
$$
Ainsi, la dimension de $\vect(\Sigma(\tggot/\ggot)\cap\zeta^\perp)$ est égale à $N:=\sharp X^c + \sharp Y^c + \sum_{a\in B_1\cap B_2}(\sharp X(a) + \sharp Y(a) -1)$ tandis que la dimension de 
$\agot$ est égale à $N':=\sharp X^c + \sharp Y^c + \sum_{a\in B_1}\sharp X(a) +\sum_{b\in B_2}\sharp Y(b)$.  Un élément rationnel $\zeta\in\agot$ est admissible si et seulement si $N'=N-1$, c'est à dire si 
$$
\sum_{a\in B_1\cap B_2}(\sharp X(a) + \sharp Y(a) -1)=\sum_{a\in B_1}\sharp X(a) +\sum_{b\in B_2}\sharp Y(b) -1.
$$
Cette dernière relation, qui est équivalente à 
$$
\sum_{a\in B_1-B_2}\sharp X(a)+\sum_{b\in B_2-B_1}\sharp Y(b)=1 -\sharp B_1\cap B_2,
$$
est satisfaite seulement dans les cas suivants: $B_1=\emptyset$ et $\exists (b,j)\in \Q-\{0\}\times [q], \zeta_2=b f_j^*$, $B_2=\emptyset$ et $\exists (a,i)\in \Q-\{0\}\times [q],\zeta_1=a e_i^*$, et 
$B_1=B_2$ est un singleton. Cela termine la preuve du lemme.
\end{proof}

%%%%%%%%%%%%%%%%%%%%%%%%%%%%%%%%%%%%%%
\subsection{Calcul de Schubert}
%%%%%%%%%%%%%%%%%%%%%%%%%%%%%%%%%%%%%%

Fixons $(r,s)\in [q]\times[q]$. On associe aux vecteurs $\zeta_r^i\in\agot_i$, les sous-groupe paraboliques $P(\zeta_r^i)\subset \GL_{n}(\C)$.
Notons $\zeta_{r,s}=\iota(\zeta_r^1,\zeta_s^2)\in\tagot$, et $\tP(\zeta_{r,s})\subset \GL_{2n}(\C)$ le sous-groupe parabolique correspondant.

Le morphisme de groupe (\ref{eq:embedding-p-q}) induit le plongement $\iota_{r,s}: \Fcal_r\times\Fcal_s\to \tFcal_{r,s}$, 
où $\Fcal_r:= \GL_{n}(\C)/P(\zeta_r^1)$, $\Fcal_s:= \GL_{n}(\C)/P(\zeta_s^2)$ et $\tFcal_{r,s}:=\GL_{2n}(\C)/\tP(\zeta_{r,s})$. \`A chaque triplet 
$(w_1,w_2,\tw)\in W_{\agot_1}\times W_{\agot_2}\times \tW_{\tagot}$, on associe les variétés de Schubert
$$
\Xgot_{w_1}:=\overline{B_1w_1P(\zeta_r^1)/P(\zeta_r^1)}\subset \Fcal_r,\quad \Xgot_{w_2}:=\overline{B_2w_2P(\zeta_s^2)/P(\zeta_s^2)}\subset \Fcal_s, \quad\mathrm{et}\quad
\tXgot_{\tw}:=\overline{\tB\tw\tP(\zeta_{r,s})/\tP(\zeta_{r,s})}\subset \Fcal_{r,s}.
$$
Nous allons maintenant analyser la condition cohomologique 
\begin{equation}\label{eq:cohomo:S-p-q}
[\Xgot_{w_1}]\times [\Xgot_{w_2}]\cdot \iota_{r,s}^*\left([\tXgot_{\tw}]\right)=[pt].
\end{equation}

\`A tout $r,s\in [q]$, on associe le sous-ensemble $A_{r,s}\subset [n]$ de cardinal $r+s$ : $k\in A_{r+s}$ si 
$1\leq k\leq r$ ou bien $p+1\leq k\leq p+s$. En consid\'erant $A_{r,s}$ comme un sous-ensemble $[2n]$, on d\'efinit
$(A_{r,s})^o=\{2n+1-k, k\in A_{r,s}\}\subset [2n]$ et $(A_{r,s})^{o,c}=[2n]-(A_{r,s})^o$.

Utilisons les isomorphismes suivants:
\begin{enumerate}
\item $\psi_1:  \Fcal_r\to \F(r,n-r; n)$, $gP(\zeta_r^1)\mapsto g\theta_1(\C^r\subset \C^{n-r})$,
\item $\psi_2:  \Fcal_s\to \F(s,n-s; n)$, $hP(\zeta_s^2)\mapsto hQ_\upsilon\theta_2(\C^s\subset \C^{n-s})$,
\item $\psi:  \tFcal_{r,s}\to \F(r+s,2n-r-s; 2n)$, $k\tP(\zeta_{r,s})\mapsto k\theta(E_{r,s}\subset F_{r,s})$,
\end{enumerate}
o\`u $E_{r,s}=\vect\{e_i, i\in A_{r,s}\}\subset \C^n$ et $F_{r,s}=\vect\{e_i, i\in (A_{r,s})^{o,c}\}\subset \C^{2n}$.

Le morphisme $\iota: \F(r,n-r; n)\times\F(s,n-s; n)\to \F(r+s,2n-r-s; 2n)$, d\'efinit par le diagramme commutatif \ref{diagramme-S-p-q}, 
est l'application qui envoie qui envoie le couple $(E\subset F,E'\subset F')$ sur $(E\oplus E'\subset F\oplus F')$.

\begin{equation}\label{diagramme-S-p-q}
\begin{tikzcd}
  \Fcal_r \arrow[d, "\psi_1"] &\times & \Fcal_s \arrow[d, "\psi_2"]  \arrow[r, "\iota_{r,s}"]& \tFcal_{r,s}\arrow[d, "\psi"] \\
  \F(r,n-r; n) &\times & \F(s,n-s; n)\arrow[r, "\iota"]&\F(r+s,2n-r-s; 2n).
\end{tikzcd}
\end{equation}

Consid\'erons les \'el\'ements des groupes de Weyl restreints : 
$u_1=\Ad(\theta_1)(w_1)\in W_1$, $u_2=\Ad(\theta_2Q_\upsilon^{-1})(w_2)\in W_2$ et 
$u=\Ad(\theta)(\tw)\in W_0$. On leur associe les sous ensembles polaris\'es: 
\begin{itemize}
\item $I_\bullet\subset [q]$ de cardinal $r$, tel que $I_\bullet^p=u_1([r])$.
\item $J_\bullet\subset [q]$ de cardinal $s$, tel que $J_\bullet^p=u_2([s])$.
\item $L_\bullet\subset [n]$ de cardinal $r+s$,  o\`u $L_\bullet^n= u((A_{r,s})^{o})$.
\end{itemize}

Un calcul direct montre que la relation \ref{eq:cohomo:S-p-q} est équivalente \`a
\begin{equation}\label{eq:cohomo:S-p-q-bis}
[\Xgot_{I_\bullet^p\subset (I_\bullet^p)^{o,c}}]\times [\Xgot_{J_\bullet^p\subset (J_\bullet^p)^{o,c}}]\cdot \iota^*
\left([\tXgot_{(L_\bullet^n)^{o}\subset (L_\bullet^n)^{c}}]\right)=[pt].
\end{equation}
 
Finalement, gr\^ace \`a la proposition \ref{prop:levi-mobile-application-2}, on sait que la relation \ref{eq:cohomo:S-p-q-bis} 
plus la condition de L\'evi-mobilit\'e sont satisfaits 
si et seulement si les conditions suivantes sont r\'ealis\'ees
\begin{enumerate}
\item $\cc^{(L_\bullet^n)^{o,c}}_{(I_\bullet^p)^{o,c},(J_\bullet^p)^{o,c}}=1$.
\item $\cc^{\widetilde{L}^n_\bullet}_{\widetilde{I}^p_\bullet,\widetilde{J}^p_\bullet}=1$.
\item $\mu(I_\bullet^p)+\mu(J_\bullet^p)+\mu(L_\bullet^n)=(r+s)(2n-r-s)$.
\end{enumerate}

Ici, la consition 1. est \'equivalente \`a $\cc^{L_\bullet^n}_{I_\bullet^p,J_\bullet^p}=1$, et cette derni\`ere relation implique la relation 3.

Finalement, un calcul direct montre que l'inégalité $((z,-z),\tw\zeta_{r,s})\geq ((x,-x),w_0w_1\zeta_{r})+ ((y,-y),w_0w_2\zeta_{s})$ correspond à 
\`a l'in\'egalit\'e $(\dagger)_{I_\bullet,J_\bullet,L_\bullet}$.

Lorsque l'on travaille avec les variables $(r,s)\in \{(0,1),(1,0)\}$, le m\^eme type de m\'ethode permet d'obtenir les conditions 
$z_k\geq \max(x_k,y_k)$,\ $\forall k\in [q]$.

Cela termine la preuve du théorème \ref{theo:S-p-q}

%%%%%%%%%%%%%%%%%%%%%%%%%%%%%%%%%%%%%%%%%%%%%%%%%%%%%%
%%%%%%%%%%%%%%%%%%%%%%%%%%%%%%%%%%%%%%%%%%%%%%%%%%%%%%
\section{Les cônes $\Tcal(p,q)$}
%%%%%%%%%%%%%%%%%%%%%%%%%%%%%%%%%%%%%%%%%%%%%%%%%%%%%%
%%%%%%%%%%%%%%%%%%%%%%%%%%%%%%%%%%%%%%%%%%%%%%%%%%%%%%

Commençons par donner la description de $\Tcal(p,q):=\Big\{(\s(X), \s(X_{11}),\s(X_{22})), \ X\in M_{n,n}(\C)\Big\}$ obtenue avec le théorème d'O'Shea-Sjamaar (voir \cite[Section 4]{pep-toshi}).

\begin{prop}\label{prop:equivalence-T-p-q}
Pour $(z,x,y)\in \R^n_{++}\times\R^p_{++}\times\R^q_{++}$, on a l'équivalence suivante :
\begin{equation}\label{eq:equivalence-T-p-q}
(z,x,y)\in \Tcal(p,q)\quad \Longleftrightarrow\quad (\,\widehat{z}\,^{n,n},\widehat{x}\,^{p,p}, \widehat{y}\, ^{q,q})\in \LR(2p,2q).
\end{equation}
\end{prop}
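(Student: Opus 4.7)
\medskip

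Le plan est de suivre exactement le même schéma que celui employé à la proposition \ref{prop:equivalence-S-p-q} et, plus en amont, à la proposition \ref{prop:sing-versus-horn}: d'abord construire explicitement, à partir d'une matrice $X\in M_{n,n}(\C)$, une matrice hermitienne $\widetilde{X}\in \herm(2n)$ dont les spectres coincident avec les spectres signés $\widehat{z}^{n,n},\widehat{x}^{p,p},\widehat{y}^{q,q}$; puis traiter la réciproque via le théorème d'O'Shea-Sjamaar sous la forme de la proposition \ref{prop:OSS-orbites}.

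\medskip

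Pour le sens $\Longrightarrow$, j'écrirais $X=\begin{pmatrix}X_{11}&X_{12}\\X_{21}&X_{22}\end{pmatrix}$ avec $X_{11}\in M_{p,p}(\C)$, $X_{22}\in M_{q,q}(\C)$, et je formerais $\widehat{X}^{n,n}\in\herm(2n)$. L'étape clef est une simple permutation: en réordonnant les $2n$ indices en $(L_1,\ldots,L_p,R_1,\ldots,R_p,L_{p+1},\ldots,L_n,R_{p+1},\ldots,R_n)$, où $L_i,R_j$ désignent respectivement les indices des lignes/colonnes provenant de la partie ``gauche'' et ``droite'' dans la définition de $\widehat{X}^{n,n}$, la matrice $\widehat{X}^{n,n}$ se trouve conjuguée, par une matrice de permutation $P$, en une matrice hermitienne $\widetilde{X}\in\herm(2n)$ dont les blocs diagonaux $(2p)\times(2p)$ et $(2q)\times(2q)$ sont exactement $\widehat{X_{11}}^{p,p}$ et $\widehat{X_{22}}^{q,q}$. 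Comme la conjugaison par $P$ préserve le spectre des matrices hermitiennes, on a $\e(\widetilde{X})=\widehat{\s(X)}^{n,n}$, $\e(\widetilde{X}_I)=\widehat{\s(X_{11})}^{p,p}$, $\e(\widetilde{X}_{II})=\widehat{\s(X_{22})}^{q,q}$, ce qui donne précisément $(\widehat{z}^{n,n},\widehat{x}^{p,p},\widehat{y}^{q,q})\in \LR(2p,2q)$.

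\medskip

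Pour la réciproque, je placerais le problème dans le cadre $U_\C:=\GL_{2p}(\C)\times \GL_{2q}(\C)$ plongé diagonalement dans $\tU_\C:=\GL_{2n}(\C)$, avec la projection orthogonale $\pi:\tugot\to\ugot$ correspondante. J'équiperais $\tU_\C$ de l'involution anti-holomorphe $\widetilde{\sigma}(g)=\mathbf{J}_{n,n}(g^*)^{-1}\mathbf{J}_{n,n}$ (après une permutation similaire à celle ci-dessus) et $U_\C$ de l'involution produit analogue, de sorte que $\tG=U(n,n)$ et $G=U(p,p)\times U(q,q)$ soient les sous-groupes fixés. Il faut alors vérifier deux identifications équivariantes: d'une part $\tugot^{-\widetilde{\sigma}}$ s'identifie au sous-espace des $\widehat{X}^{n,n}$, $X\in M_{n,n}(\C)$ (à travers la conjugaison de Cayley par la matrice $\theta$ utilisée à la section \ref{sec:drapeaux-2-etage}), et d'autre part $\ugot^{-\sigma}$ s'identifie au sous-espace des $(\widehat{A_1}^{p,p},\widehat{A_2}^{q,q})$, $(A_1,A_2)\in M_{p,p}(\C)\times M_{q,q}(\C)$. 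La proposition \ref{prop:OSS-orbites} fournit alors l'équivalence
\[
\upU_{2n}\cdot \diag(\widehat{z}^{n,n})\cap \pi^{-1}\Big(\upU_{2p}\cdot\diag(\widehat{x}^{p,p})\times \upU_{2q}\cdot\diag(\widehat{y}^{q,q})\Big)\neq\emptyset
\]
si et seulement si le même énoncé vaut en remplaçant $\upU_{2n}$ et $\upU_{2p}\times\upU_{2q}$ par leurs sous-groupes compacts maximaux des formes réelles $\tK$ et $K$. Le membre de gauche est exactement $(\widehat{z}^{n,n},\widehat{x}^{p,p},\widehat{y}^{q,q})\in \LR(2p,2q)$, tandis que le membre de droite est, via la permutation inverse de l'étape 1, la condition $(z,x,y)\in\Tcal(p,q)$.

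\medskip

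L'obstacle principal sera le suivi précis des conjugaisons: il faut s'assurer que la permutation des indices utilisée dans l'étape 1 et la matrice de Cayley $\theta$ nécessaire pour mettre $U(p,p)$, $U(q,q)$ et $U(n,n)$ sous leur forme standard (celle où le sous-groupe compact maximal est le sous-groupe diagonal par blocs) sont compatibles, c'est-à-dire que le plongement diagonal par blocs $\GL_{2p}(\C)\times \GL_{2q}(\C)\hookrightarrow \GL_{2n}(\C)$ induit bien, après ces conjugaisons, le plongement $U(p,p)\times U(q,q)\hookrightarrow U(n,n)$ correspondant à la décomposition en blocs $M_{n,n}(\C)=\bigoplus_{ij}M_{p_i,p_j}(\C)$ avec $(p_1,p_2)=(p,q)$. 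Une fois ces compatibilités vérifiées, le reste de l'argument est formel et reproduit mot pour mot la preuve de la proposition \ref{prop:sing-versus-horn}.
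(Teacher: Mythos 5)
Votre proposition est correcte et suit pour l'essentiel la même démarche que celle du texte (renvoyée à \cite[Section 4]{pep-toshi} et calquée sur la proposition \ref{prop:sing-versus-horn}) : doublement hermitien explicite et permutation par blocs pour le sens direct, puis théorème d'O'Shea-Sjamaar sous la forme de la proposition \ref{prop:OSS-orbites} appliqué au couple $U(p,p)\times U(q,q)\subset U(n,n)$, c'est-à-dire exactement le cadre mis en place ensuite pour la preuve du théorème \ref{theo:T-p-q}. Seule petite précision : en invoquant la proposition \ref{prop:OSS-orbites}, il faut prendre des représentants des orbites dans les parties anti-invariantes $\tpgot\simeq M_{n,n}(\C)$ et $\pgot\simeq M_{p,p}(\C)\times M_{q,q}(\C)$ (et non les matrices diagonales $\diag(\widehat{z}\,^{n,n})$, qui n'y appartiennent pas), ce qui est immédiat grâce aux identifications équivariantes et à la compatibilité des conjugaisons que vous décrivez.
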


La description du cône $\LR(2p,2q)$ obtenue au théorème \ref{theo:LR-m-n}, permet de voir que $(z,x,y)\in \Tcal(p,q)$ si et seulement 
si les conditions suivantes sont satisfaites :
\begin{equation}\label{eq:T-p-q-basique}
\boxed{z_k\geq x_k, \ \forall k\leq p},\quad \boxed{z_j\geq y_j, \ \forall j\leq q}, \quad \boxed{z_{2q+\ell}\leq x_\ell, \ \forall \ell\leq p-q},
\end{equation}
et $| z |_{A\cap[n]} \, -\,  | z |_{A^o\cap[n]}\ \geq\   | x |_{B\cap[p]} \,-\,  | x |_{B^o\cap[p]} \,+\,   | y |_{C\cap[q]} \,-\,  | y |_{C^o\cap[q]}$
pour tout triplet $(A\subset [2n], B\subset [2p],C\subset [2q])$ vérifiant $\sharp A = \sharp B + \sharp C$ et $\cc^{A}_{B,C}\neq 0$.

\medskip

Expliquons maintenant la description plus précise de $\Tcal(p,q)$ obtenue avec le théorème \ref{theo:main-reel}. 
Pour cela, on utilise les notations de la section \ref{sec:preuve-singular-horn}.

\begin{theorem}\label{theo:T-p-q} Un élément $(z,x,y)\in \R^n_{++}\times\R^p_{++}\times\R^q_{++}$ appartient à $\Tcal(p,q)$ 
si et seulement si les inégalités (\ref{eq:T-p-q-basique}) sont vérifiées et 
si nous avons $| z |_{L_+} - | z  |_{L_-}\geq | x |_{I_+} -  | x |_{I_-} + | y |_{J_+} -  | y |_{J_-}$ pour tout triplet de sous-ensembles polarisés 
$(I_\bullet,J_\bullet, L_\bullet)$  satisfaisant les conditions suivantes :
\begin{enumerate}
\item $I_\bullet\subset [p]$, $J_\bullet \subset [q]$ et $L_\bullet\subset [n]$ sont des sous-ensembles stricts tels que 
$\sharp L_\bullet = \sharp I_\bullet + \sharp J_\bullet$.
\item $\cc^{L^n_\bullet}_{I^p_\bullet,J^q_\bullet}=1$.
\item $\cc^{\widetilde{L}^n_\bullet}_{\widetilde{I}^p_\bullet,\widetilde{J}^q_\bullet}=1$.
\end{enumerate}
\end{theorem}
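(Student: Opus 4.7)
The plan is to deduce Theorem \ref{theo:T-p-q} from the main Theorem \ref{theo:main-reel} applied to a suitable pair of linear reductive groups $G\subset \tG$, along the same lines as the proof of Theorem \ref{theo:S-p-q} carried out in the previous section. More precisely, I would realize $\Tcal(p,q)$ as (an affine image of) a cone $\horn_\pgot(K,\tK)$ attached to the embedding $G := U(p,p)\times U(q,q)\hookrightarrow \tG := U(n,n)$, whose respective compact forms are $K = (\upU_p\times\upU_p)\times(\upU_q\times\upU_q)$ and $\tK = \upU_n\times \upU_n$. The embedding $\iota:\glgot_p(\C)^2\times\glgot_q(\C)^2\hookrightarrow \glgot_{2n}(\C)$ is chosen so that, for a block matrix $X=\begin{pmatrix}X_{11}&X_{12}\\X_{21}&X_{22}\end{pmatrix}$, one has $\iota\bigl(\widehat{X_{11}}^{p,p},\widehat{X_{22}}^{q,q}\bigr)=\widehat{Y}^{n,n}$ where $Y$ is the block-diagonal part $\diag(X_{11},X_{22})$. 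Proposition \ref{prop:equivalence-T-p-q} already shows that membership in $\Tcal(p,q)$ is equivalent to an orbit inclusion for the projection $\pi:\tpgot\to\pgot$, and the reformulation via the O'Shea--Sjamaar theorem (in the form of Proposition \ref{prop:OSS-orbites}) shows that this cone coincides with $\horn_\pgot(K,\tK)$ after the substitution $z\mapsto \widehat{z}^{\,n,n}$, $x\mapsto \widehat{x}^{\,p,p}$, $y\mapsto \widehat{y}^{\,q,q}$.

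Next I would set up the restricted root data. Choose adapted maximal tori so that the restricted Cartan subspaces are $\agot\simeq \R^p\oplus \R^q$ inside $\pgot$ and $\tagot\simeq \R^n$ inside $\tpgot$, with $\iota:\agot\to\tagot$ given by $(x,y)\mapsto (x,0,\ldots,0,y)$ (placing the $x$-coordinates in positions $[p]$ and the $y$-coordinates in positions $p+[q]$, up to a Weyl conjugation). The restricted Weyl group is $W_\agot = (\Sgot_p\rtimes\{\pm 1\}^p)\times (\Sgot_q\rtimes\{\pm 1\}^q)$ and $W_{\tagot} = \Sgot_n\rtimes\{\pm 1\}^n$. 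A computation analogous to that of Lemma \ref{lem:admissible-S-p-q} then identifies the set of admissible rational elements $\Sigma(\tggot/\ggot)\subset \agot^*$: this set consists of the forms $\pm e_i^*\pm f_j^*$ ($i\in[p],j\in[q]$), together with the short roots $\pm e_i^*,\pm f_j^*$ coming from the non-equal-rank part of the embedding. Up to the action of $\Q^{>0}\times W_\agot$, the admissible elements are exhausted by the vectors $\zeta_r^1\in\agot_1$, $\zeta_s^2\in\agot_2$, and the pairs $(\zeta_r^1,\zeta_s^2)$ for $r\in[p]$, $s\in[q]$. The single-factor admissible vectors yield precisely the basic inequalities (\ref{eq:T-p-q-basique}) (the analogue of the interlacing inequalities produced for $\Scal(p,q)$), while the pairs $(\zeta_r^1,\zeta_s^2)$ are the ones that produce the substantive inequalities of the theorem.

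For each pair $(r,s)$, the flag varieties involved are $\Fcal_r\simeq \F(r,2p-r;2p)$, $\Fcal_s\simeq \F(s,2q-s;2q)$, and $\tFcal_{r,s}\simeq \F(r+s,2n-r-s;2n)$, and the map $\iota_{r,s}$ sends $(E\subset F,E'\subset F')\mapsto(E\oplus E'\subset F\oplus F')$ after conjugation by the Cayley-type matrices $\theta$, $\theta_{p,p}$, $\theta_{q,q}$ introduced in Section \ref{sec:drapeaux-2-etage}. As in the proof of Theorem \ref{theo:singular-p-q}, each Weyl class $w_1\in W_{\agot_1}$ (resp. $w_2\in W_{\agot_2}$) corresponds to a polarized set $I_\bullet\subset[p]$ (resp. $J_\bullet\subset[q]$), and each $\tw\in \tW_{\tagot}$ corresponds to $L_\bullet\subset[n]$; the Schubert classes are $[\Xgot_{I_\bullet^p\subset(I_\bullet^p)^{o,c}}]$, $[\Xgot_{J_\bullet^q\subset(J_\bullet^q)^{o,c}}]$, $[\tXgot_{(L_\bullet^n)^o\subset(L_\bullet^n)^c}]$. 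Applying the multiplicative formula of Proposition \ref{prop:levi-mobile-application-2} to the two-step flag variety $\F(r+s,2n-r-s;2n)$ factorizes the Levi-movable cohomological condition of Theorem \ref{theo:main-reel} exactly into the two Littlewood--Richardson identities $\cc^{L_\bullet^n}_{I_\bullet^p,J_\bullet^q}=1$ and $\cc^{\widetilde{L}^n_\bullet}_{\widetilde{I}^p_\bullet,\widetilde{J}^q_\bullet}=1$. A direct calculation then checks that the inequality $\bigl((z,-z),\tw\zeta_{r,s}\bigr)\geq \bigl((x,-x),w_0w_1\zeta_r^1\bigr)+\bigl((y,-y),w_0w_2\zeta_s^2\bigr)$ is precisely $|z|_{L_+}-|z|_{L_-}\geq |x|_{I_+}-|x|_{I_-}+|y|_{J_+}-|y|_{J_-}$.

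The main obstacle will be the bookkeeping of the three conjugating orthogonal matrices $\theta$, $\theta_{p,p}$, $\theta_{q,q}$ and the permutations needed to place everything into the standard Borel of $\GL_{2n}(\C)$: one must verify that after these conjugations the three Schubert varieties align in such a way that Proposition \ref{prop:levi-mobile-application-2} applies cleanly to the map $\iota:\F(r,2p-r;2p)\times\F(s,2q-s;2q)\to\F(r+s,2n-r-s;2n)$, and that the trace condition $(A_2)$ of Proposition \ref{prop:RP-exemple-fondamental-1} coincides with the numerical identity $\sharp L_\bullet=\sharp I_\bullet+\sharp J_\bullet$ (plus the dimension balance forced by the first LR coefficient being nonzero). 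Once this combinatorial/geometric dictionary is set up exactly as for $\Scal(p,q)$, the remainder of the argument is a parallel transcription, and the asymmetry between $p$ and $q$ in the two diagonal blocks only affects the explicit partition of $[n]$ used to define the polarized-to-ambient maps $X_\bullet\mapsto X_\bullet^p$ and $X_\bullet\mapsto X_\bullet^q$.
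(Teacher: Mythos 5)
Your plan follows essentially the same route as the paper's proof: realize $\Tcal(p,q)$ through the embedding $U(p,p)\times U(q,q)\hookrightarrow U(n,n)$ of (\ref{eq:embedding-T-p-q}), reduce membership to an orbit inclusion via Proposition \ref{prop:equivalence-T-p-q} and O'Shea--Sjamaar, classify the admissible elements, identify the relevant flag varieties with $\F(r,2p-r;2p)\times\F(s,2q-s;2q)\to\F(r+s,2n-r-s;2n)$, and factor the Levi-movable cohomological condition of Theorem \ref{theo:main-reel} through Proposition \ref{prop:levi-mobile-application-2} into the two Littlewood--Richardson conditions of the statement; the linear inequalities then come out by the same direct computation as for $\Scal(p,q)$.

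Two slips in your root-system bookkeeping should nevertheless be corrected, although they do not change the final list of inequalities. Since here $\iota$ identifies $\agot$ with $\tagot$ (the embedding has equal real rank, $\dim\agot=p+q=n$), the map on Cartan subspaces is simply $(x,y)\mapsto(x,y)$ — your formula $(x,y)\mapsto(x,0,\ldots,0,y)$ is the one for $\Scal(p,q)$ — and the restricted roots of $\agot$ on $\tggot/\ggot$ are only $\pm e_i^*\pm f_j^*$, $(i,j)\in[p]\times[q]$: there are no short roots $\pm e_i^*$, $\pm f_j^*$ in this case. Consequently the admissible elements are, up to $\Q^{>0}\times W_\agot$, the pairs $(\zeta^1_r,\zeta^2_s)$ with $(r,s)\in[p]\times[q]$ together with only the two rank-one elements $(\zeta^1_1,0)$ and $(0,\zeta^2_1)$ (see Lemma \ref{lem:admissible-T-p-q}); for $r\geq 2$ the roots orthogonal to $(\zeta^1_r,0)$ span a subspace of codimension $r$, not a hyperplane, so your list of single-factor admissible vectors is too large. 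With this correction the two rank-one elements, combined with the Weyl group action and the cohomological conditions, produce exactly the inequalities (\ref{eq:T-p-q-basique}), and the pairs $(\zeta^1_r,\zeta^2_s)$ produce the remaining inequalities, as in your outline and as in the paper's proof of Theorem \ref{theo:S-p-q}.
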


Le reste de cette section est consacrée à la preuve de ce théorème.

%%%%%%%%%%%%%%%%%%%%%%%%%%%%%%%%%%%%%%%%%%%%%%
\subsection{Les groupes $G:= U(p,p)\times U(q,q)\ \hookrightarrow\ \tG:= U(n,n)$}
%%%%%%%%%%%%%%%%%%%%%%%%%%%%%%%%%%%%%%%%%%%%%%

Nous travaillons avec le morphisme de groupes $\iota: \GL_{2p}(\C)\times \GL_{2q}(\C)\to  \GL_{2n}(\C)$ défini comme suit:
\begin{equation}\label{eq:embedding-T-p-q}
\iota(g,h)= \begin{pmatrix}
g_{11}& 0 & g_{12}\\
0& h & 0\\
g_{21}& 0 & g_{22}
\end{pmatrix},\qquad \mathrm{lorsque}\qquad g= \begin{pmatrix}
g_{11}&  g_{12}\\
g_{21}&  g_{22}
\end{pmatrix}.
\end{equation}
Ici, le matrices $g_{ij}$ appartiennent à $M_{p,p}(\C)$.

Le groupe $\GL_{2q}(\C)\times \GL_{2q}(\C)$ est muni de l'involution anti-homolorphe 
$$
\sigma(g,h):=\left(I_{p,p}(g^*)^{-1}I_{p,p}, I_{q,q}(h^*)^{-1}I_{q,q}\right).
$$ 
Le sous-groupe fixé par $\sigma$ est $G:= U(p,q)\times U(q,p)$. Le groupe $\GL_{2n}(\C)$ est muni de l'involution anti-homolorphe 
$\tilde{\sigma}(u):=I_{n,n}(u^*)^{-1}I_{n,n}$, ainsi $\tG:= U(n,n)$ est le sous-groupe fixé par $\tilde{\sigma}$. 
Comme $\iota\circ\sigma=\tilde{\sigma}\circ \iota$, on voit que $\iota$ induit un morphisme de groupe 
$\iota: U(p,p)\times U(q,q)\to U(n,n)$.

%%%%%%%%%%%%%%%%%%%%%%%
\subsubsection*{Choix pour $U(n,n)$}
%%%%%%%%%%%%%%%%%%%%%%%

Considérons la complexification $GL_{2n}(\C)$ de $G=U(n,n)$. Son sous-groupe compact maximal $U_{2n}$ est équipé de l'involution 
$\tilde{\sigma}(g)=I_{n,n}(g^*)^{-1}I_{n,n}$. Le sous-espace $\tpgot=\{\widehat{X}^{n,n},X\in M_n(\C)\}\subset\glgot_{2n}(\C)$ admet une action canonique de 
$\tK=U_n\times U_n= (U_{2n})^{\tilde{\sigma}}$.

À tout $(x,z)\in \R^n\times \R^n$, nous associons la matrice $X(x,z):=\begin{pmatrix}\diag(x)& A(z)\\ A(z^*)& \diag(x^*)\end{pmatrix}$. Nous travaillons avec 
le tore maximal $T_0\subset U_{2n}$ avec algèbre de Lie $\tgot_0:=\left\{i X(x,z), \ x,z\in\R^n\right\}$. Remarquons que $T_0$ 
est invariant sous l'action de $\sigma$ et que $\frac{1}{i}\tgot_0^{-\sigma}$ et égal à la sous-algèbre abélienne maximale 
$\tagot=\{X(0,z), z\in \R^n\}\subset\tpgot$.

L'involution $\Ad(\theta_0)$, avec $\theta_0=\tfrac{1}{\sqrt{2}}\begin{pmatrix}I_n& J_n\\ J_n& -I_n\end{pmatrix}$, définit un isomorphisme entre 
$T_0$ et le tore maximal $T\subset U_{2n}$ formé par les matrices diagonales. En particulier, $\Ad(\theta_0)(X(x,z))=\diag(x+z,x^*-z^*)$. Le groupe de Weyl restreint $W_{\tagot}$ s'identifie, à travers $Ad(\theta)$, au sous-groupe de 
$W_0\subset\Sgot_{2n}$ formé des permutations $w$ satisfaisant $w(k^o)=w(k)^o, \forall k\in [2n]$.

Nous travaillons avec la chambre de Weyl $\tgot_{0,+}=\Ad(\theta_0)(\tgot^{2n}_+)$ où $\tgot^{2n}_+=\{i\diag(u),u\in \R^{2n}_+\}$. Ainsi 
$\frac{1}{i}\left(\tgot_0^{-\sigma}\cap\tgot_{0,+}\right)$ est égal à la chambre de Weyl restreinte $\tagot_+:=\left\{ X(0,z), \ z\in\R^n_{++}\right\}$. 
Le sous-groupe de Borel correspondant
$\tB\subset GL_{2n}(\C)$ est égal à $\Ad(\theta_0)(B_{2n})$.

%%%%%%%%%%%%%%%%%%%%%%%
\subsubsection*{Choix pour $U(p,p)$}
%%%%%%%%%%%%%%%%%%%%%%%

Nous considérons maintenant $G_1=U(p,p)$ et sa complexification $GL_{2p}(\C)$ : son sous-groupe compact maximal $U_{2p}$ est équipé de l'involution 
$\sigma_1(h)=I_{p,p}(h^*)^{-1}I_{p,p}$. Le sous-espace $\pgot_1=\{\widehat{X}^{p,p},X\in M_{p,p}(\C)\}\subset\glgot_{n}(\C)$ admet une action canonique de 
$K_1=U_p\times U_p= (U_{2p})^{\sigma_1}$.

À tout $(z,x)\in \R^p\times \R^p$, nous associons la matrice $X_1(x,z):=\begin{pmatrix}\diag(z)& A(x)\\ A(x^*)& \diag(z^*)\end{pmatrix}$. Nous travaillons avec 
le tore maximal $T_1\subset U_{2p}$ avec algèbre de Lie $\tgot_1:=\left\{i X_1(z,x), \ x,z\in\R^p\right\}$. Remarquons que $T_1$ 
est invariant sous l'action de $\sigma_1$ et que $\frac{1}{i}\tgot_1^{-\sigma_1}$ et égal à la sous-algèbre abélienne maximale 
$\tagot_1:=\{X_1(0,x), x\in \R^p\}\subset\tpgot$.

L'involution $\Ad(\theta_1)$, avec $\theta_1=\tfrac{1}{\sqrt{2}}\begin{pmatrix}I_p& J_p\\ J_p& -I_p\end{pmatrix}$, définit un isomorphisme entre 
$T_1$ et le tore maximal $T\subset U_{2p}$ formé par les matrices diagonales. Le groupe de Weyl restreint $W_{\tagot_1}$ s'identifie, à travers 
$Ad(\theta_1)$, au sous-groupe de $W_1\subset\Sgot_{2p}$ formé des permutations $w$ satisfaisant $w(k^o)=w(k)^o, \forall k\in [2p]$.

Nous travaillons avec la chambre de Weyl $\tgot_{1,+}=\Ad(\theta_1)(\tgot^{2p}_+)$ où $\tgot^{2p}_+=\{i\diag(u),u\in \R^{2p}_+\}$. Ainsi 
$\frac{1}{i}\left(\tgot_1^{-\sigma_1}\cap\tgot_{1,+}\right)$ est égal à la chambre de Weyl restreinte $\tagot_{1,+}:=\left\{ X_1(0,x), \ x\in\R^p_{++}\right\}$. 
Le sous-groupe de Borel correspondant $\tB_1\subset GL_{2p}(\C)$ est égal à $\Ad(\theta_1)(B_{2p})$.

%%%%%%%%%%%%%%%%%%%%%%%
\subsubsection*{Choix pour $U(q,q)$}
%%%%%%%%%%%%%%%%%%%%%%%

Nous considérons maintenant $G_2=U(q,q)$ et sa complexification $GL_{2q}(\C)$ : son sous-groupe compact maximal $U_{2p}$ est équipé de l'involution 
$\sigma_2(h)=I_{q,q}(h^*)^{-1}I_{q,q}$. Le sous-espace $\pgot_2=\{\widehat{X}^{q,q},X\in M_{q,q}(\C)\}\subset\glgot_{n}(\C)$ admet une action canonique de 
$K_1=U_p\times U_p= (U_{2p})^{\sigma_1}$.

À tout $(z,y)\in \R^p\times \R^p$, nous associons la matrice $X_1(z,y):=\begin{pmatrix}\diag(z)& A(y)\\ A(y^*)& \diag(z^*)\end{pmatrix}$. Nous travaillons avec 
le tore maximal $T_2\subset U_{2q}$ avec algèbre de Lie $\tgot_1:=\left\{i X_2(z,y), \ z,y\in\R^q\right\}$. Remarquons que $T_2$ 
est invariant sous l'action de $\sigma_2$ et que $\frac{1}{i}\tgot_2^{-\sigma_2}$ et égal à la sous-algèbre abélienne maximale 
$\tagot_2:=\{X_2(0,y), y\in \R^q\}\subset\tpgot$.

L'involution $\Ad(\theta_2)$, avec $\theta_2=\tfrac{1}{\sqrt{2}}\begin{pmatrix}I_q& J_q\\ J_q& -I_q\end{pmatrix}$, définit un isomorphisme entre 
$T_2$ et le tore maximal $T\subset U_{2q}$ formé par les matrices diagonales. Le groupe de Weyl restreint $W_{\tagot_2}$ s'identifie, à travers 
$Ad(\theta_2)$, au sous-groupe de $W_2\subset\Sgot_{2q}$ formé des permutations $w$ satisfaisant $w(k^o)=w(k)^o, \forall k\in [2q]$.

Nous travaillons avec la chambre de Weyl $\tgot_{2,+}=\Ad(\theta_2)(\tgot^{2q}_+)$ où $\tgot^{2q}_+=\{i\diag(u),u\in \R^{2q}_+\}$. Ainsi 
$\frac{1}{i}\left(\tgot_2^{-\sigma_2}\cap\tgot_{2,+}\right)$ est égal à la chambre de Weyl restreinte $\tagot_{2,+}:=\left\{ X_1(0,y), \ y\in\R^q_{++}\right\}$. 
Le sous-groupe de Borel correspondant $\tB_2\subset GL_{2q}(\C)$ est égal à $\Ad(\theta_2)(B_{2q})$.

%%%%%%%%%%%%%%%%%%%%%%%%%%%%%%%%%%%%%%
\subsection{\'Eléments admissibles}
%%%%%%%%%%%%%%%%%%%%%%%%%%%%%%%%%%%%%%

Le groupe $\tK:=\tG\cap \upU_{2n}\simeq \upU_n\times \upU_n$ est un sous-groupe compact maximal de $\tG$, et le sous-espace 
$\tpgot$ admet une identification $\tK$-équivariante avec $M_{n,n}(\C)$ à travers l'application $X\mapsto \widehat{X}^{n,n}$.

Le sous-groupe $K= K_1\times K_2$, avec $K_1:=U(p,p)\cap \upU_{2p}\simeq \upU_{p}\times \upU_{p}$ et 
$K_2:=U(q,q)\cap \upU_{2q}\simeq \upU_{q}\times \upU_{q}$, est un sous-groupe compact maximal de $G$, et le sous-espace 
$\pgot= \pgot_1\times\pgot_2$ admet une identification $K$-équivariante avec $M_{p,p}(\C)\times M_{q,q}(\C)$:
$$
(Y,Z)\in M_{p,p}(\C)\times M_{q,q}(\C)\longmapsto (\widehat{Y}^{p,p},\widehat{Z}^{q,q})\in \pgot_1\times\pgot_2.
$$

Notons encore $\iota: \glgot_{2p}(\C)\times \glgot_{2q}(\C)\croc \glgot_{2n}(\C)$ le morphisme d'algèbre de Lie défini par (\ref{eq:embedding-T-p-q}). 
On vérifie que pour tout $(Y,Z)\in M_{p,q}(\C)\times M_{q,p}(\C)$, on a la relation
$$
\iota\left(\widehat{Y}^{p,p},\widehat{Z}^{q,q}\right)=\widehat{X}^{n,n}\quad\mathrm{avec}\quad 
X:=\begin{pmatrix}
0&  Y\\
Z&  0
\end{pmatrix}.
$$

Considérons les isomorphismes $\varphi:\R^n\to \agot$, $\varphi_1:\R^p\to \agot_1$, et $\varphi_2:\R^q\to \agot_2$ définis respectivement par 
$\varphi(z)=X(0,z)$, $\varphi_1(x)=X_1(0,x)$ et $\varphi_2(y)=X_2(0,y)$. Le morphisme $\iota:\pgot\to\tpgot$ envoie 
$(\varphi_1(x),\varphi_2(y))$ vers $\varphi(z)$ où $z=(x,y)$, et il r\'ealise un isomorphisme $\agot\simeq \tagot$.

Soit $\Sigma(\tggot/\ggot)\subset\agot^*$ l'ensemble des racines non nulles par rapport à l'action adjointe de $\agot$ sur $\tggot/\ggot$. 
L'isomorphisme $\varphi_1$ permet d'identifier l'espace dual 
$\agot_1^*$ avec $(\R^p)^*$ : soit $e_i^*,i\in [p]$ la base canonique correspondante de $\agot_1^*$. 
De même, l'isomorphisme $\varphi_2$ permet de considérer une base, $f_j^*,j\in [q]$, de $\agot_2^*$.

\begin{lem}
Nous avons
$$
\Sigma(\tggot/\ggot)=\left\{\pm e_i^*\pm f_j^*,\ (i,j)\in[p]\times [q]\right\}.
$$
\end{lem}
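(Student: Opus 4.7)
Notre stratégie suit le modèle de la preuve du lemme précédent. Au vu de la structure par blocs (\ref{eq:embedding-T-p-q}) du plongement $\iota$, nous introduirons l'involution $\tau = \Ad(D)$ sur $\tG = U(n,n)$ où $D = \diag(I_p, -I_{2q}, I_p)$ : les matrices $\Ad(D)$-invariantes ont exactement la forme bloc de l'image de $\iota$, et un petit calcul montrera que $G$ est la composante connexe de $\tG^\tau$. On aura alors un isomorphisme $\agot$-équivariant $\tggot/\ggot \simeq \tggot^{-\tau}$.

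La deuxième étape consistera à observer que $\tau$ agit \emph{trivialement} sur $\tagot$ -- un calcul direct exploitant la forme antidiagonale de $X(0,z)$ et la structure bloc de $D$ donnera $D_1 A(z) D_2 = A(z)$ avec $D_1 = \diag(I_p, -I_q)$ et $D_2 = \diag(-I_q, I_p)$. Ceci est d'ailleurs cohérent avec le fait, noté juste avant le lemme, que $\iota: \agot \to \tagot$ est un isomorphisme. Par conséquent, $\tau$ commute avec l'action de $\tagot$ sur $\tggot$ et préserve chaque espace de racines $\tggot_{\tilde{\alpha}}$ dans la décomposition restreinte $\tggot = \tagot \oplus \mgot \oplus \bigoplus_{\tilde{\alpha} \in \Sigma(\tggot)} \tggot_{\tilde{\alpha}}$. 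Comme la partie $(\tagot \oplus \mgot)^{-\tau}$ est de poids nul pour $\agot$, nous obtiendrons
$$
\Sigma(\tggot/\ggot) = \big\{\,\tilde{\alpha}|_\agot : \tilde{\alpha} \in \Sigma(\tggot),\ (\tggot_{\tilde{\alpha}})^{-\tau} \neq 0\,\big\}.
$$

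L'étape clef, qui constituera la principale difficulté technique, sera de déterminer l'action de $\tau$ sur chaque $\tggot_{\tilde{\alpha}}$. Le plan consistera à utiliser la transformée de Cayley $\Ad(\theta_0)$ pour identifier $\tagot$ avec un sous-espace du tore diagonal $\tgot \subset \tugot_\C$, ce qui ramènera le calcul à la décomposition standard $\glgot_{2n}(\C) = \tgot_\C \oplus \bigoplus_{i\neq j} \C E_{ij}$. Nous vérifierons d'abord (par un calcul bloc direct exploitant la symétrie $J_n D_2 J_n = D_1$) que $D$ commute avec $\theta_0$, si bien que $\tau$ s'écrit encore $\Ad(D)$ après conjugaison et chaque $E_{ij}$ est vecteur propre de $\tau$ de valeur propre $\epsilon_i \epsilon_j$, où $\epsilon_k := D_{kk}$ vaut $+1$ pour $k \in [1,p]\cup[p+2q+1, 2n]$ et $-1$ pour $k \in [p+1,p+2q]$. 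Les espaces de racines restreints associés aux $\pm h_a^* \pm h_b^*$ ($a,b \in [n]$) sont engendrés par des paires explicites de $E_{ij}$ (par exemple $\tggot_{h_a^* + h_b^*}$, pour $a \neq b$, est engendré par $E_{a, 2n+1-b}$ et $E_{b, 2n+1-a}$); grâce à la symétrie $\epsilon_{2n+1-k} = \epsilon_k$ pour $k \in [1,n]$, l'action de $\tau$ y est la multiplication par $\epsilon_a \epsilon_b$. Un examen cas par cas donnera donc $(\tggot_{\tilde{\alpha}})^{-\tau} \neq 0$ si et seulement si $\epsilon_a \epsilon_b = -1$, c'est-à-dire exactement pour les racines \og mixtes \fg{} $\pm h_a^* \pm h_b^*$ avec $a \in [1,p]$ et $b \in [p+1, n]$. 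Via la dualité de l'isomorphisme $\iota : \agot \to \tagot$, qui envoie $h_i^*$ sur $e_i^*$ pour $i \in [p]$ et $h_{p+j}^*$ sur $f_j^*$ pour $j \in [q]$, on concluera que $\Sigma(\tggot/\ggot) = \{\pm e_i^* \pm f_j^*,\ (i,j) \in [p] \times [q]\}$.
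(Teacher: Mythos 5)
Votre démonstration est correcte et suit essentiellement la même démarche que celle du texte : même involution $\tau=\Ad(\diag(I_p,-I_{2q},I_p))=\Ad(\diag(I_p,-I_q,-I_q,I_p))$ dont $G$ est le sous-groupe fixé, même observation que $\tau$ agit trivialement sur $\tagot$ (et sur la partie de poids nul), et même réduction au critère $(\tggot_{\tilde{\alpha}})^{-\tau}\neq 0$. La seule différence est que là où le texte invoque la description des espaces de racines de Knapp, \S VI.4, vous la vérifiez explicitement via la conjugaison par $\theta_0$ et les valeurs propres $\epsilon_i\epsilon_j$ sur les $E_{ij}$, ce qui est un remplissage légitime du même argument.
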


\begin{proof} Considérons l'involution $\tau=\Ad(\diag(I_p,-I_q,-I_q,I_p))$ sur $\tG$, de sorte que $G$ soit le sous-groupe fixé par $\tau$. Ici,
$\tau$ agit trivialement sur $\tagot\simeq\agot$. En utilisant la décomposition $\tggot=\tagot\oplus\sum_{\tilde{\alpha}\in\Sigma(\tggot)}\tggot_{\tilde{\alpha}}$, 
on voit que 
$$
\tggot/\ggot\simeq \tggot^{-\tau}=\sum_{\tilde{\alpha}\in\Sigma(\tggot)}\left(\tggot_{\tilde{\alpha}}\right)^{-\tau}
$$
Ainsi, $\Sigma(\tggot/\ggot)$ est des racines $\tilde{\alpha}\in\Sigma(\tggot)$ satisfaisant $\left(\tggot_{\tilde{\alpha}}\right)^{-\tau}\neq 0$.
En utilisant la description des espaces propres $\tggot_{\tilde{\alpha}}$ donnée dans \cite{Knapp-book}, \S VI.4,  nous obtenons que
$\Sigma(\tggot/\ggot)=\{\pm e_i^*\pm f_j^*,\ (i,j)\in[p]\times [q]\}$.
\end{proof}

Nous voulons maintenant déterminer l'ensemble des éléments admissibles de $\agot$  par rapport à l'ensemble des racines $\Sigma(\tggot/\ggot)$, c'est-à-dire les vecteurs non nuls $\zeta\in \agot$ satisfaisant $\vect(\Sigma(\tggot/\ggot)\cap\zeta^\perp)=\zeta^{\perp}$. L'ensemble des éléments admissibles est invariant sous l'action du groupe de Weyl restreint $W_\agot=W_{\agot_1}\times W_{\agot_2} \simeq (\Sgot_p\rtimes \{\pm 1\}^p)\times (\Sgot_q\rtimes \{\pm 1\}^q)$.

Pour des entiers $r\leq q$ et $s\leq q$, nous définissons les vecteurs 
$\zeta_r^1=\varphi_1(-\sum_{i\leq r} e_i)\in \agot_1$, $\zeta_s^2=\varphi_2(-\sum_{j\leq s} f_j)\in \agot_2$.

\begin{lem}\label{lem:admissible-T-p-q} $\zeta\in\agot$ est un élément rationnel admissible si et seulement si il existe $(q,w)\in \Q^{>0}\times W_\agot$ et 
$(r,s)\in \{(0,1),(1,0)\}\cup [p]\times[q]$, tels que $\zeta= q\, w(\zeta_r^1,\zeta_s^2)$.
\end{lem}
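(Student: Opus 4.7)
Le plan est de reproduire la démarche de la preuve du lemme \ref{lem:admissible-S-p-q}, en l'adaptant au système de racines restreint $\Sigma(\tggot/\ggot)=\{\pm e_i^*\pm f_j^*,\ (i,j)\in[p]\times[q]\}$, qui est ici strictement plus petit que dans le cas $S(p,q)$ puisque les racines pures $\pm e_i^*$ et $\pm f_j^*$ n'y figurent plus. Écrivons $\zeta=(\zeta_1,\zeta_2)\in\agot_1\times\agot_2$. En utilisant les changements de signe contenus dans $W_\agot=W_{\agot_1}\times W_{\agot_2}$, on peut normaliser $\zeta$ de sorte que $\zeta_1=\sum_{a\in B_1}a\,\mathbf{1}_{X(a)}$ et $\zeta_2=\sum_{b\in B_2}b\,\mathbf{1}_{Y(b)}$, où $B_1,B_2\subset\R^{>0}$ sont finis et où les familles $(X(a))_{a\in B_1}$, $(Y(b))_{b\in B_2}$ sont formées de sous-ensembles non vides deux à deux disjoints de $[p]$ et $[q]$. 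On pose $X=\bigcup_a X(a)$, $Y=\bigcup_b Y(b)$, $C_1=[p]\setminus X$ et $C_2=[q]\setminus Y$.

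Une racine $\epsilon e_i^*+\delta f_j^*$ s'annule sur $\zeta$ si et seulement si $\epsilon(\zeta_1)_i+\delta(\zeta_2)_j=0$. Puisque les composantes non nulles de $\zeta_1,\zeta_2$ sont strictement positives, deux situations seulement produisent une racine s'annulant: soit $(i,j)\in C_1\times C_2$ (et alors les quatre signes conviennent), soit $(i,j)\in X(a)\times Y(a)$ avec $a\in B_1\cap B_2$ et $\epsilon=-\delta$. Un calcul standard du rang d'un « graphe biparti complet » montre alors que
\[
\dim\vect(\Sigma\cap\zeta^\perp)\;=\;\epsilon_1\epsilon_2(|C_1|+|C_2|)\;+\;\sum_{a\in B_1\cap B_2}\bigl(|X(a)|+|Y(a)|-1\bigr),
\]
où $\epsilon_i=1$ si $C_i\neq\emptyset$ et $\epsilon_i=0$ sinon; l'absence du terme $|C_1|+|C_2|$ quand l'un des $C_i$ est vide est précisément ce qui distingue ce cas de celui de $S(p,q)$.

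Comme $\dim\zeta^\perp=p+q-1$ et $p+q=|C_1|+|C_2|+\sum_{B_1}|X(a)|+\sum_{B_2}|Y(b)|$, la condition d'admissibilité se traduit par une équation diophantienne de la forme
\[
\sum_{a\in B_1\setminus B_2}|X(a)|+\sum_{b\in B_2\setminus B_1}|Y(b)|+|B_1\cap B_2|+(1-\epsilon_1)|C_1|+(1-\epsilon_2)|C_2|=1,
\]
où tous les termes sont des entiers positifs. On discutera ensuite trois configurations mutuellement exclusives: (i) $C_1\neq\emptyset$ et $C_2\neq\emptyset$, qui impose $|B_1\cap B_2|=1$ et $B_1=B_2=\{a_0\}$, donnant $\zeta\in\Q^{>0}\cdot W_\agot\cdot(\zeta_r^1,\zeta_s^2)$ avec $r=|X(a_0)|$, $s=|Y(a_0)|$; (ii) les cas où l'un des $\zeta_i$ est nul, qui mènent à $(r,s)\in\{(1,0),(0,1)\}$; (iii) le cas $C_1=C_2=\emptyset$, qui force $B_1=B_2=\{a_0\}$ avec $X(a_0)=[p]$ et $Y(a_0)=[q]$, et donne $(r,s)=(p,q)$.

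La principale difficulté sera le traitement soigneux des configurations où exactement un des ensembles $C_1,C_2$ est vide: l'absence des racines pures dans $\Sigma(\tggot/\ggot)$ rend ce cas a priori plus restrictif que dans le lemme \ref{lem:admissible-S-p-q}, et il faudra vérifier méticuleusement que ces configurations se recollent correctement aux précédentes via l'action de $W_\agot$ pour obtenir le paramétrage $(r,s)\in\{(0,1),(1,0)\}\cup[p]\times[q]$ annoncé. Une fois ces cas correctement identifiés, le fait que chaque classe d'équivalence sous $\Q^{>0}\times W_\agot$ soit représentée exactement une fois par un couple $(\zeta_r^1,\zeta_s^2)$ découlera de la structure disjointe des supports des $X(a_0)$ et $Y(a_0)$.
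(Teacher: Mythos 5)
Your overall route is the one the paper intends (its own proof is the one-line reduction \og c'est la même preuve que celle du lemme \ref{lem:admissible-S-p-q}\fg), and your setup is sound up to and including the dimension formula: the normalisation by $W_\agot$, the observation that the only vanishing roots are the $\pm e_i^*\pm f_j^*$ with $(i,j)\in C_1\times C_2$ and the $\pm(e_i^*-f_j^*)$ with $(i,j)\in X(a)\times Y(a)$, $a\in B_1\cap B_2$, and the rank count $\epsilon_1\epsilon_2(|C_1|+|C_2|)+\sum_{a\in B_1\cap B_2}(|X(a)|+|Y(a)|-1)$ are all correct. The gap comes immediately after: the Diophantine equation is mis-transcribed. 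Subtracting the dimension formula from $\dim\zeta^\perp=p+q-1$ produces the extra term $(1-\epsilon_1\epsilon_2)(|C_1|+|C_2|)$, not $(1-\epsilon_1)|C_1|+(1-\epsilon_2)|C_2|$. The two expressions coincide except in the configuration you explicitly postpone (exactly one of $C_1,C_2$ empty, both $\zeta_1,\zeta_2$ nonzero), where the correct term equals the cardinality of the nonempty $C_i$ and forces the equation to have \emph{no} solution in that configuration. So the \og recollement \fg{} you announce cannot produce the boundary pairs: for instance $\zeta=(\zeta_r^1,\zeta_q^2)$ with $1\le r<p$ has $\Sigma(\tggot/\ggot)\cap\zeta^\perp=\{\pm(e_i^*-f_j^*),\ i\le r,\ j\in[q]\}$, whose span has dimension $r+q-1<p+q-1$; this element is not admissible, precisely because the pure roots $\pm e_i^*$, $\pm f_j^*$ (which saved the analogous element in the case de $\Scal(p,q)$) are absent here.

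Carrying your computation through with the corrected term, the case discussion gives, up to the action of $\Q^{>0}\times W_\agot$: $(\zeta_1^1,0)$ (only when $p\ge 2$), $(0,\zeta_1^2)$ (only when $q\ge 2$), the pairs $(\zeta_r^1,\zeta_s^2)$ with $1\le r\le p-1$, $1\le s\le q-1$, and $(\zeta_p^1,\zeta_q^2)$ — and nothing else. In particular the pairs with $r=p,\ s<q$ or $r<p,\ s=q$ must be excluded, so the parametrisation $\{(0,1),(1,0)\}\cup[p]\times[q]$ of the statement is not what your (corrected) computation yields; the difficulty you flag is not a technicality that glues back to the announced list, it is the point where the $\Tcal(p,q)$ situation genuinely differs from that of the lemme \ref{lem:admissible-S-p-q}, and where the paper's \og même preuve \fg{} would need the same correction.
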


\begin{proof}C'est la même preuve que celle du Lemme \ref{lem:admissible-S-p-q}.

\end{proof}

%%%%%%%%%%%%%%%%%%%%%%%%%%%%%%%%%%%%%%
\subsection{Calcul de Schubert}
%%%%%%%%%%%%%%%%%%%%%%%%%%%%%%%%%%%%%%

Fixons $(r,s)\in [p]\times[q]$. On associe aux vecteurs $\zeta_r^i\in\agot_i$, les sous-groupe paraboliques $P(\zeta_r^1)\subset \GL_{2p}(\C)$ et 
$P(\zeta_r^2)\subset \GL_{2q}(\C)$. Notons $\zeta_{r,s}=\iota(\zeta_r^1,\zeta_s^2)\in\tagot$, et $\tP(\zeta_{r,s})\subset \GL_{2n}(\C)$ 
le sous-groupe parabolique correspondant.

Le morphisme de groupe (\ref{eq:embedding-p-q}) induit le plongement $\iota_{r,s}: \Fcal_r\times\Fcal_s\to \tFcal_{r,s}$, 
où $\Fcal_r:= \GL_{2p}(\C)/P(\zeta_r^1)$, $\Fcal_s:= \GL_{2q}(\C)/P(\zeta_s^2)$ et $\tFcal_{r,s}:=\GL_{2n}(\C)/\tP(\zeta_{r,s})$. \`A chaque triplet 
$(w_1,w_2,\tw)\in W_{\agot_1}\times W_{\agot_2}\times \tW_{\tagot}$, on associe les variétés de Schubert
$$
\Xgot_{w_1}:=\overline{B_1w_1P(\zeta_r^1)/P(\zeta_r^1)}\subset \Fcal_r,\quad \Xgot_{w_2}:=\overline{B_2w_2P(\zeta_s^2)/P(\zeta_s^2)}\subset \Fcal_s, \quad\mathrm{et}\quad
\tXgot_{\tw}:=\overline{\tB\tw\tP(\zeta_{r,s})/\tP(\zeta_{r,s})}\subset \Fcal_{r,s}.
$$

Consid\'erons les \'el\'ements des groupes de Weyl restreints : 
$u_1=\Ad(\theta_1)(w_1)\in W_1$, $u_2=\Ad(\theta_2)(w_2)\in W_2$ et 
$u=\Ad(\theta)(\tw)\in W_0$. On leur associe les sous ensembles polaris\'es: 
\begin{itemize}
\item $I_\bullet\subset [p]$ de cardinal $r$, tel que $I_\bullet^p=u_1([r])$.
\item $J_\bullet\subset [q]$ de cardinal $s$, tel que $J_\bullet^q=u_2([s])$.
\item $L_\bullet\subset [n]$ de cardinal $r+s$,  o\`u $L_\bullet^n= u((A_{r,s})^{o})$.
\end{itemize}

Comme \`a la section précédente, un calcul direct montre que l'identit\'e $[\Xgot_{w_1}]\times [\Xgot_{w_2}]\cdot \iota_{r,s}^*\left([\tXgot_{\tw}]\right)=[pt]$ 
est équivalente \`a
\begin{equation}\label{eq:cohomo:T-p-q}
[\Xgot_{I_\bullet^q\subset (I_\bullet^q)^{o,c}}]\times [\Xgot_{J_\bullet^p\subset (J_\bullet^p)^{o,c}}]\cdot \iota^*
\left([\tXgot_{(L_\bullet^n)^{o}\subset (L_\bullet^n)^{c}}]\right)=[pt].
\end{equation}
 Ici, $\iota$ d\'esigne le morphisme $\F(r,2p-r; 2p)\times\F(s,2q-s; 2q)\to \F(r+s,2n-r-s; 2n)$.
 
Finalement, gr\^ace \`a la proposition \ref{prop:levi-mobile-application-2}, on sait que la relation \ref{eq:cohomo:S-p-q-bis} 
plus la condition de L\'evi-mobilit\'e sont satisfaits 
si et seulement si $\cc^{L_\bullet^n}_{I_\bullet^p,J_\bullet^q}=1=\cc^{\widetilde{L}^n_\bullet}_{\widetilde{I}^p_\bullet,\widetilde{J}^q_\bullet}=1$.

De plus, un calcul direct montre que l'inégalité $((z,-z),\tw\zeta_{r,s})\geq ((x,-x),w_0w_1\zeta_{r})+ ((y,-y),w_0w_2\zeta_{s})$ correspond à  
l'in\'egalit\'e $(\dagger)_{I_\bullet,J_\bullet,L_\bullet}$.

Lorsque l'on travaille avec les variables $(r,s)\in \{(0,1),(1,0)\}$, le m\^eme type de m\'ethode permet d'obtenir les inégalités (\ref{eq:T-p-q-basique}).

Cela termine la preuve du théorème \ref{theo:T-p-q}.

%%%%%%%%%%%%%%%%%%%%%%%%%%%%%%%%%%%%%%%%%%%%%%%%%%% %%%%%%%%%%%%%%%%%%%%
\chapter{Annexe}
%%%%%%%%%%%%%%%%%%%%%%%%%%%%%%%%%%%%%%%%%%%%%%%%%%%%%%%%%%%%%%%%%%%%%%%

%%%%%%%%%%%%%%%%%%%%%%%%%%%%%%%%%%%%%%
\section*{Annexe A}
%%%%%%%%%%%%%%%%%%%%%%%%%%%%%%%%% %%%%%

Dans cette monographie, $U_\C$ désigne un groupe de Lie réductif complexe connexe avec un sous-groupe compact maximal $U$. 
Soit $\sigma$ une involution complexe conjuguée sur $U_\C$ laissant le sous-groupe $U$ invariant. Soit $T$ un tore maximal de $U$ 
stable sous l'action de $\sigma$ : nous supposons que le sous-espace vectoriel $\tgot^{-\sigma}=\{X\in\tgot,\sigma(X)=-X\}$ a une dimension maximale. 

Prenons une représentation fidèle $\rho :U\to U(V)$ où $V$ est un espace vectoriel hermitien. 
Elle s'étend à un morphisme $\rho :U_\C\to \mathrm{GL}(V)$, de sorte que $\rho(U_\C)$ est un sous-groupe 
fermé stable sous l'involution de Cartan $\Theta:\mathrm{GL}(V)\to\mathrm{GL}(V)$. Nous considérons la application bilinéaire réelle 

\begin{equation}\label{eq:bilinear}
b:\ugot_\C\times \ugot_\C\longrightarrow \R
\end{equation}

définie par $b(X,Y)= \mathrm{Re}\Big(\tr(d\rho(X)d\rho(Y)) +\tr(d\rho(\sigma(X))d\rho(\sigma(Y)))\Big)$. Nous voyons alors que 

\begin{itemize}
\item $b(\sigma(X),\sigma(Y))=b(X,Y)$, $\forall X,Y\in\ugot_\C$, 
\item $b(uX,uY)=b(X,Y)$, $\forall u\in U_\C$, $\forall X, Y\in\ugot_\C$, 
\item $b(iX,iY)=-b(X,Y)$, si $X,Y\in\ugot_\C$,\item $b(X,Y)\in \Z$, si $X,Y\in\wedge:=\frac{1}{2\pi}\ker(\exp:\tgot\to T)$.
\end{itemize} 

Dans cet article, nous travaillons avec le produit scalaire $U$-invariant suivant sur $\ugot_\C$ :
$$
(X,Y)= -b(X,\Theta(Y)).
$$

Si $V\subset \ugot_\C$ est un sous-espace vectoriel réel, nous avons un isomorphisme $\xi\in V^* =\hom(V,\R)\to \xi^\flat \in V$ défini par la relation 
$$
\langle\xi, X\rangle=(\xi^\flat,X),\qquad \forall (\xi,X)\in V^*\times V.
$$

Soit $G$ le groupe réductif réel égal à la composante connexe de $(U_\C)^\sigma$ : son sous-groupe compact maximal est $K=G\cap U$. 
Au niveau des algèbres de Lie, nous avons $\ggot=\kgot\oplus \pgot$ où $\kgot=\ugot^{\sigma}$ et $\pgot=i\ugot^{-\sigma}$. 
L'application bilinéaire (\ref{eq:bilinear}) définit une application bilinéaire $G$-invariante $b:\ggot\times\ggot\to\R$ qui est définie positive sur $\pgot$ 
et définie négative sur $\kgot$.  

La commutativité du diagramme suivant est fréquemment utilisée dans le corps du présent article :
$$
\xymatrixcolsep{5pc}\xymatrix{(\ugot^*)^{-\sigma}\ar[d]^{\flat} \ar[r]^{j^*} & \pgot^*\ar[d]^{\flat} \\\ugot^{-\sigma}          & \pgot\ar[l]^{j} .}
$$

Nous considérons le système de racines restreint $\Sigma(\ggot)$ associé à un sous-espace abélien maximal $\agot\subset\pgot$ et 
un système de racines positives $\Sigma^+$ associé à un choix de chambre de Weyl $\agot^*_+$.  Soit ${\tsgot}$ une face de $\agot^*_+$ : l'ensemble des racines positives orthogonales à ${\tsgot}$ est noté $\Sigma^+_{\tsgot}$. Nous considérons $$\rho_{\tsgot}:=\sum_{\alpha\in\Sigma^+_{\tsgot}}\alpha\quad \in\agot^*$$et l'élément dual correspondant $(\rho_{\tsgot})^\flat=-\zeta_{\tsgot}\in \agot$ (voir \S \ref{sec:preuve-th-real-RP} où l'élément $\zeta_{\tsgot}$ est utilisé) .

\begin{lem}\label{lem:appendix}
Nous avons $\langle\alpha,\zeta_{\tsgot}\rangle < 0$ pour tout $\alpha\in\Sigma^+_{\tsgot}$.
\end{lem}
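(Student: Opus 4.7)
The plan is to reformulate the inequality as a statement about the restricted root system $\Sigma(\ggot)$, and then reduce it to a classical property of the Weyl vector. Using the identification $\agot^* \simeq \agot$ provided by the invariant product, the relation $(\rho_{\tsgot})^\flat = -\zeta_{\tsgot}$ translates the claim $\langle\alpha,\zeta_{\tsgot}\rangle < 0$ into
\[
(\alpha, \rho_{\tsgot}) > 0, \qquad \forall \alpha \in \Sigma^+_{\tsgot},
\]
where the pairing on the right is the induced inner product on $\agot^*$, and $\rho_{\tsgot} = \sum_{\beta \in \Sigma^+_{\tsgot}} \beta$.

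The first step is to observe that $\Sigma_{\tsgot} := \{\beta \in \Sigma(\ggot) : \beta\vert_{\tsgot}=0\}$ is a closed sub-root system of $\Sigma(\ggot)$ (closedness under reflection and negation follows because orthogonality to $\tsgot$ is preserved by these operations), and that $\Sigma^+_{\tsgot} = \Sigma^+ \cap \Sigma_{\tsgot}$ is a system of positive roots for $\Sigma_{\tsgot}$. In particular, $\Sigma^+_{\tsgot}$ possesses its own set of simple roots $\alpha_1,\ldots,\alpha_\ell$, and every element $\alpha \in \Sigma^+_{\tsgot}$ is a non-negative integer combination $\alpha = \sum_i c_i\, \alpha_i$ with at least one $c_i > 0$.

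The second (and main) step is to invoke the classical fact that for the half-sum $\rho^{1/2}_{\tsgot} := \tfrac{1}{2}\rho_{\tsgot}$, each simple reflection $s_{\alpha_i}$ (relative to $\Sigma^+_{\tsgot}$) permutes $\Sigma^+_{\tsgot}\setminus\{\alpha_i\}$ and sends $\alpha_i$ to $-\alpha_i$, so $s_{\alpha_i}(\rho^{1/2}_{\tsgot}) = \rho^{1/2}_{\tsgot} - \alpha_i$. Writing this reflection explicitly yields $(\rho^{1/2}_{\tsgot},\alpha_i) = \tfrac{1}{2}(\alpha_i,\alpha_i) > 0$, and hence $(\rho_{\tsgot}, \alpha_i) > 0$ for every simple root $\alpha_i$ of $\Sigma^+_{\tsgot}$.

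The last step is purely formal: for $\alpha = \sum_i c_i\,\alpha_i \in \Sigma^+_{\tsgot}$ with $c_i \geq 0$ and not all zero, one expands
\[
(\rho_{\tsgot}, \alpha) = \sum_i c_i\, (\rho_{\tsgot}, \alpha_i) > 0,
\]
which via the identification $\flat$ gives exactly $\langle\alpha,\zeta_{\tsgot}\rangle < 0$. There is no real obstacle here: the only point that requires care is ensuring that the restriction of the invariant product to $\agot$ really does induce a positive definite pairing on $\agot^*$ (so that the classical reflection argument applies verbatim), which is guaranteed by the construction of the product in Appendix A (positive definite on $\pgot$, hence on $\agot$).
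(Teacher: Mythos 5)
Your route is the same as the paper's: translate the claim into $(\rho_{\tsgot},\alpha)>0$ via $(\rho_{\tsgot})^\flat=-\zeta_{\tsgot}$, prove it for the simple roots of $\Sigma^+_{\tsgot}$ by a reflection argument, and extend to all of $\Sigma^+_{\tsgot}$ by writing a positive root as a non-negative combination of simple roots. The one inaccuracy is in your "classical fact": the restricted root system $\Sigma(\ggot)$ need not be reduced (type $BC$ occurs in the paper's main examples, e.g.\ $\upU(p,q)$ with $p>q$, where both $f_i$ and $2f_i$ are restricted roots), and in that case a simple reflection $s_{\alpha_i}$ does \emph{not} permute $\Sigma^+_{\tsgot}\setminus\{\alpha_i\}$: it also sends $2\alpha_i$ (when it is a root) to $-2\alpha_i$, so the identity $s_{\alpha_i}(\tfrac12\rho_{\tsgot})=\tfrac12\rho_{\tsgot}-\alpha_i$ can fail. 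The paper's proof is phrased precisely to absorb this: $s_{\alpha_o}$ permutes the positive roots of $\Sigma^+_{\tsgot}$ not proportional to $\alpha_o$ and negates those proportional to it, giving $s_{\alpha_o}(\rho_{\tsgot})=\rho_{\tsgot}-2N\alpha_o$ with an integer $N\geq 1$, which still yields $(\rho_{\tsgot},\alpha_o)=N(\alpha_o,\alpha_o)>0$. With this small repair (replace "permutes $\Sigma^+_{\tsgot}\setminus\{\alpha_i\}$" by "permutes the positive roots not proportional to $\alpha_i$ and negates the proportional ones") your argument is correct and coincides with the paper's; the remaining points you raise (that $\Sigma_{\tsgot}$ is a sub-root system with $\Sigma^+_{\tsgot}$ as a positive system, and that the form is positive definite on $\agot$) are fine and implicit in the paper.
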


\begin{proof}
Soit $\alpha_o$ une racine simple de $\Sigma^+_{\tsgot}$. Soit $s_{\alpha_o}:\agot^*\to\agot^*$ la symétrie orthogonale associée. Pour tout $\beta\in \Sigma^+_{\tsgot}$, des calculs standard donnent que $s_{\alpha_o}(\beta)\in \Sigma^+_{\tsgot}$ si $\beta$ n'est pas proportionnel à $\alpha$, et 
$s_{\alpha_o}(\beta)=-\beta$ si $\beta$ est proportionnel à $\alpha$. Cela montre que $s_{\alpha_o}(\rho_{\tsgot})=\rho_{\tsgot}-2N\alpha_o$ 
pour un entier $N\geq 1$, donc $(\alpha_o,\rho_{\tsgot})>0$. Cela implique que $(\alpha,\rho_{\tsgot})>0$ pour tout $\alpha\in \Sigma^+_{\tsgot}$. 
\end{proof}

%%%%%%%%%%%%%%%%%%%%%%%%%%%%%%%%%%%% %%
\section*{Annexe B}
%%%%%%%%%%%%%%%%%%%%%%%%%%%%%%%%%%%%%%

Soit $M$ une variété complexe de dimension $n$ équipée d'une action holomorphe d'un groupe réductif $U_\C$ et d'une involution anti-holomorphe $\tau$.  
Un sous-ensemble $\Xcal\subset M$ est analytique si $\Xcal$ est fermé et si, pour tout $x\in \Xcal$, il existe un voisinage ouvert $V$ de $x$ dans $M$ et une collection finie de fonctions holomorphes $h_1,\cdots,h_s \in\Ocal(V)$ telles que $\Xcal\cap V=\{m\in V, h_1(m)=\cdots=h_s(m)=0\}$. Un sous-ensemble ouvert de Zariski $\Vcal^\C$ de $M$ est un sous-ensemble ouvert de la forme $\Vcal^\C =M\setminus \Xcal$ où $\Xcal$ est un sous-ensemble analytique.

\begin{lem}\label{lem:zariski} Soit $\Vcal^\C$ un sous-ensemble ouvert de Zariski non vide de $M$. 
\begin{enumerate}
\item Pour tout sous-groupe $H\subset U_\C$, $H\cdot \Vcal^\C$ est un sous-ensemble ouvert de Zariski invariant par $H$. 
\item $\Vcal^\C\cup \tau(\Vcal^\C)$ et $\Vcal^\C\cap \tau(\Vcal^\C)$ sont des sous-ensembles ouverts de Zariski invariants par $\tau$. 
\item $M^\tau\cap \Vcal^\C$ est un sous-ensemble ouvert dense de $M^\tau$. 
\end{enumerate}
\end{lem}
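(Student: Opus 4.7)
The plan is to treat the three claims in order. Items (1) and (2) are formal consequences of standard properties of analytic subsets, while (3) is the only point requiring genuine work and rests on the local linearization of $\tau$ at a fixed point combined with the principle of analytic continuation from a totally real submanifold.

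For (1), I would write $\Vcal^\C = M\setminus \Xcal$ with $\Xcal$ analytic. Then $H\cdot \Vcal^\C = \bigcup_{h\in H} h\,\Vcal^\C$ is open and $H$-invariant, and its complement equals $\bigcap_{h\in H} h\,\Xcal$. Each translate $h\,\Xcal$ is analytic (pull the local defining equations back by the biholomorphism $h^{-1}$), and an arbitrary intersection of analytic subsets is again analytic, by the Noetherian property of the coherent structure sheaf $\Ocal_M$. For (2), I would first observe that $\tau(\Xcal)$ is analytic: locally, if $\Xcal$ is cut out near $\tau(x)$ by $h_1,\dots,h_s$, then $\tau(\Xcal)$ is cut out near $x$ by the holomorphic functions $\overline{h_i\circ\tau}$. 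The identities
$$
\Vcal^\C\cup\tau(\Vcal^\C)=M\setminus(\Xcal\cap\tau(\Xcal)),\qquad \Vcal^\C\cap\tau(\Vcal^\C)=M\setminus(\Xcal\cup\tau(\Xcal)),
$$
together with the fact that finite unions and intersections of analytic sets are analytic, conclude the argument; $\tau$-invariance is obvious from the formulas.

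The main assertion is (3), and the hard part is to rule out the possibility that $\Xcal$ contains a nonempty open piece of the real submanifold $M^\tau$. Reducing to a connected component of $M$ meeting $\Vcal^\C$ (components swapped by $\tau$ contribute nothing to $M^\tau$, and on a component contained in $\Xcal$ there is nothing to prove), I would invoke the following normal form: at any $x_0\in M^\tau$, there exist holomorphic coordinates $(z_1,\dots,z_n)$ on a neighborhood $\Wcal$ of $x_0$ in $M$ such that $\tau(z)=\bar z$, and therefore $\Wcal\cap M^\tau$ is identified with $\Wcal\cap \R^n$. This is the classical linearization of an anti-holomorphic involution at a fixed point: choose an $\R$-basis of the totally real subspace $(T_{x_0}M)^{d\tau_{x_0}}$ (of real dimension $\dim_\C M$) to obtain a $\C$-basis of $T_{x_0}M$, and then transfer to a local holomorphic chart by a Bochner-type averaging argument.

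Suppose for contradiction that $\Xcal\cap M^\tau$ contains a nonempty open subset of $M^\tau$; take $x_0$ in its interior and work on a polydisc $D\subset\Wcal$ in which $\Xcal\cap D$ is the common zero locus of finitely many holomorphic functions $h_1,\dots,h_s$. Each $h_i$ vanishes on the nonempty open subset $D\cap \R^n$, so all real partial derivatives of $h_i$ at $0$ vanish; by the Cauchy--Riemann equations every complex partial derivative at $0$ vanishes as well, and the Taylor series shows $h_i\equiv 0$ on $D$. Hence $\Xcal\supset D$, so $\Xcal$ has nonempty interior in the connected complex manifold $M$, which forces $\Xcal=M$ and contradicts $\Vcal^\C\neq\emptyset$. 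This gives density in (3) and completes the lemma.
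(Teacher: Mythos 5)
Your proof is correct and follows essentially the same route as the paper: points (1) and (2) are the paper's argument verbatim (translates and the $\tau$-image of an analytic set are analytic, and arbitrary intersections of analytic sets are analytic), and for (3) both proofs reduce to the fact that a holomorphic function vanishing on the maximal totally real submanifold $M^\tau$ must vanish identically — the paper proves this intrinsically via the splitting $\T_m M=\T_m M^\tau\oplus \J(\T_m M^\tau)$ and an induction on derivatives, whereas you pass through the local normal form $\tau(z)=\bar z$ at a fixed point, a cosmetic difference. One shared caveat: your aside that a connected component of $M$ contained in $\Xcal$ requires ``nothing to prove'' is not accurate (density of $M^\tau\cap\Vcal^\C$ would actually fail on such a component if it met $M^\tau$), but the paper's proof makes the same implicit assumption when it asserts that $\Vcal^\C$ is dense in $M$, so this is not a gap relative to the paper.
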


\begin{proof}
Soit $\Xcal$ le sous-ensemble analytique tel que $\Vcal^\C =M\setminus \Xcal$.Alors $H\cdot \Vcal^\C= M\setminus \Xcal'$, où 
$\Xcal'=\cap_{h\in H}h\cdot\Xcal$. Comme $U_\C$ agit de manière holomorphe sur $M$, chaque $h\cdot\Xcal$ est analytique. 
Comme une intersection arbitraire de sous-ensembles analytiques est analytique, nous pouvons conclure que $H\cdot \Vcal^\C$ 
est un  sous-ensemble ouvert de Zariski. Pour le deuxième point, il suffit d'expliquer pourquoi $\tau(\Xcal)$ est analytique. 
Soit $x\in\Xcal$ et $x'=\tau(x)\in \tau(\Xcal)$. Considérons un voisinage ouvert $V$ de $x$ dans $M$ et une collection finie de fonctions holomorphes 
$h_1,\cdots,h_s \in\Ocal(V)$ telles que $\Xcal\cap V=\{m\in V, h_1(m)=\cdots=h_s(m)=0\}$. Soit $V'=\tau(V)$, et définissons $h'_i\in \Ocal(V')$ par les relations $h'_i(m')=\overline{h_i(\tau(m'))}, \forall m'\in V'$. On voit alors que $\tau(\Xcal)\cap V'=\{m'\in V', h'_1(m')=\cdots=h'_s(m')=0\}$. Prouvons le dernier point. Considérons un élément $x\in M^\tau\cap \Xcal$, un voisinage ouvert $V$ de $x$ dans $M$ et une collection finie de fonctions holomorphes $h_1,\cdots,h_s \in\Ocal(V)$ telles que $\Xcal\cap V=\{m\in V, h_1(m)=\cdots=h_s(m)=0\}$. Nous devons prouver que $M^\tau\cap V\cap \Vcal^\C$ n'est pas vide. Supposons que le contraire soit vrai :  $M^\tau\cap V\subset \Xcal\cap V$,  en d'autres termes, toutes les fonctions holomorphes $h_i \in\Ocal(V)$ s'annulent sur $M^\tau\cap V$. Dans le lemme suivant, nous vérifions que cela implique que les fonctions holomorphes $h_i \in\Ocal(V)$ sont toutes identiques à $0$. Ce dernier point est en contradiction avec le fait que $\Vcal^\C$ est dense dans $M$, donc $\Vcal^\C\cap V\neq\emptyset$ : il existe $m\in V$ et $h_i \in\Ocal(V)$ tels que $h_i(m)\neq 0$. Notre dernière étape est réglée par le lemme suivant

\begin{lem} 
Soit $h\in\Ocal(V)$ tel que $h\vert_{M^\tau\cap V}=0$. Alors $h=0$.
\end{lem}

\begin{proof} 
Puisque $h\vert_{M^\tau\cap V}=0$, la différentielle $\T_m h:\T_m M\to\C$ s'annule sur $\T_m M^\tau$ pour tout $m\in M^\tau\cap V$. Mais $\T_m M =\T_m M^\tau\oplus \J(\T_m M^\tau), \forall m\in M^\tau\cap V$ et $h$ est holomorphe, donc on obtient $\T_m h=0, \forall m\in M^\tau\cap V$. Supposons que $V$ soit suffisamment petit pour que nous ayons des coordonnées holomorphes $z_1,\cdots, z_n\in\Ocal (V)$. Les arguments précédents montrent que si $h\vert_{M^\tau\cap V}=0$, alors $\frac{\partial h}{\partial z_k}\vert_{M^\tau\cap V}=0$ pour tout $k$. Nous en déduisons alors par récurrence que $\frac{\partial^\alpha h}{\partial z^\alpha}\vert_{M^\tau\cap V}=0$ pour tout $\alpha\in\N^n$. Soit $x\in M^\tau\cap V$. Comme $h\in\Ocal(V)$ est une fonction analytique, les identités $\frac{\partial^\alpha h}{\partial z^\alpha}(x)=0,\forall\alpha\in\N^n$ impliquent que $h=0$. 
\end{proof}
\end{proof}

%%%%%%%%%%%%%%%%%%%%%%%%%%%%%%%%%%%%%%%%%%%%%%%%%%%%%%%%%%%%%%%%%%%%%%%%%

\chapter{Liste de symboles}
\begin{tabular}{ll}
    $\sharp A$ & le cardinal de $A$\\
    $[\ell]$ & l'ensemble des entiers naturels $\{1,\dots,\ell\}$\\
    $I^o$ & l'ensemble $\{\ell+1-i, i\in I\}$ associ\'e à $I\subset [\ell]$\\
     $I^c$ & l'ensemble $[\ell]-I$ associ\'e à $I\subset [\ell]$\\
     $I^{o,c}$ & l'ensemble $[\ell]-I^o$ associ\'e à $I\subset [\ell]$\\
     $\mu(I)$ & la partition $(i_r-r\geq \cdots\geq i_1-1)$ associ\'ee à $I=\{i_1<\cdots<i_r\}\subset\N-\{0\}$\\
    $x^*$ & le vecteur $(x_n,\ldots, x_1)$ associ\'e à $x=(x_1,\ldots, x_n)$\\
    $x^\vee$ & le vecteur $(-x_n,\ldots, -x_1)$ associ\'e à $x=(x_1,\ldots, x_n)$\\
    $\widehat{x}\,^{p,q}$ & le vecteur $(x_1,\cdots,x_q,0,\cdots,0,-x_q,\cdots,-x_1)\in\R^{p+q}$ associé à $x=(x_1,\ldots, x_q)$\\
    $\R^\ell_{+}$ & l'ensemble des suites $x=(x_1\geq \cdots \geq x_\ell)$ de nombres réels \\
    $\R^\ell_{++}$ & l'ensemble des suites $x=(x_1\geq \cdots \geq x_\ell\geq 0)$ de nombres réels \\
    $|x|$ & le réel $\sum_{i=1}^n x_i$ associé à $x\in \R^n$\\
    $|x|_I$ & le réel $\sum_{i\in I} x_i$ associé à $x\in \R^n$ et à $I\subset [n]$\\
    $\Pcal(r,n)$ & l'ensemble des sous-ensembles de cardinal $r$ de $[n]$\\
    $\Bcal(r,p,q)$ & l'ensemble des sous-ensembles $I\subset [n]$ satisfaisant: $\sharp I=r$, 
    $I\cap I^o=\emptyset$ et $I\cap \{q+1,\ldots, p\}=\emptyset$\\
    $M_{a,b}(\C)$ & l'espace vectoriel des matrices complexes de taille $a\times b$\\
    $\herm(n)$ & l'espace vectoriel des matrices hermitiennes $n\times n$\\
    $\sym(n)$ & l'espace vectoriel  des matrices symétriques $n\times n$\\
    $\e(X)\in \R^n_{+}$ & le vecteur des valeurs propres d'une matrice hermitienne $X\in\herm(n)$\\
    $\s(Y)\in \R^q_{++}$ & le vecteur des valeurs singulières d'une matrice $Y\in M_{p,q}(\C)$, avec $q\leq p$\\
    $P(\gamma)$ & le sous-groupe parabolique associé à $\gamma$\\
    $ \tr\left(\gamma \circlearrowright E\right)$ & la trace de l'action linéaire de $\gamma$ sur un espace vectoriel $E$\\
    $B_n$ & le sous-groupe de Borel de $GL_n(\C)$ formé des matrices triangulaires supérieures\\
    $\Fcal_\gamma$ & une variété de drapeaux\\
    $\G(r;n)$ & la grassmanienne des sous-espaces vectoriels $F\subset \C^n$ de dimension $r$\\
    $\F(a,b; n)$ & la variété des couples de sous-espaces vectoriels $E\subset F\subset \C^{n}$ où $\dim E= a$ et $\dim F= b$\\
    $\Xgot_{I\subset J}$ & la variété de Schubert de $\F(a,b; n)$ associée à $I\subset J\subset [n]$, $\sharp I=a$, $\sharp J=b$\\
    $\cc_{I,J}^L$ & le coefficient de Littlewood-Richardson associé à $I,J,L\subset\N-\{0\}$\\
\end{tabular}

%%%%%%%%%%%%%%%%%%%%%%%%%%%%%%%%%%%%%%%%%%%%%%%%%%%%%%%%%%%%%%%%%%%%%%%%%

\bigskip

{\small

}

\end{document}